\newdefinition{definition}{Definition}
\newproof{proof}{Proof}
\newtheorem{thm}[subsubsection]{\bf Theorem}
\newtheorem{mainthm}[subsubsection]{\bf Main Theorem}
\newtheorem{defi}[subsubsection]{\bf Definition}
\newtheorem{coro}[subsubsection]{\bf Corollary}
\newtheorem{cor}[subsubsection]{\bf Corollary}
\newtheorem{lem}[subsubsection]{\bf Lemma}
\newtheorem{hyp}[subsubsection]{\bf Hypothesis}
\newtheorem{hype}[subsubsection]{\bf Hypotheses}
\newtheorem{ass}[subsubsection]{\bf Assumption}
\newtheorem{fact}[subsubsection]{\bf Fact}
\newtheorem{prop}[subsubsection]{\bf Proposition}
\newtheorem{rmk}[subsubsection]{\it Remark}
\newcommand{\divides}{\mid}
\newcommand{\ndivides}{\nmid}
\newcommand{\nin}{\notin}
\newcommand{\A}{{\mathbb A}}
\newcommand{\C}{{\mathbb C}}
\newcommand{\Cp}{{\C_p}}
\newcommand{\G}{{\mathbb G}}
\newcommand{\Q}{{\mathbb Q}}
\newcommand{\Ql}{{\Q_\ell}}
\newcommand{\Qp}{\Q_p}
\newcommand{\Qptimes}{\Q_p^\times}
\newcommand{\bQ}{{\overline{\Q}}}
\newcommand{\bQp}{{\bQ_p}}
\newcommand{\bQptimes}{{\bQ_p^\times}}
\newcommand{\R}{{\mathbb R}}
\newcommand{\bT}{{\mathbf{T}}}
\newcommand{\bX}{{\mathbf{X}}}
\newcommand{\Z}{{\mathbb Z}}
\newcommand{\Zp}{{\Z_p}}
\newcommand{\bZ}{{\overline\Z}}
\newcommand{\bZpp}{{\bZ_{(p)}}}
\newcommand{\wZ}{{\widehat\Z}}
\newcommand{\CA}{\mathcal A}
\newcommand{\CE}{\mathcal E}
\newcommand{\CF}{\mathcal F}
\newcommand{\CK}{\mathcal K}
\newcommand{\CL}{\mathcal L}
\newcommand{\CO}{\mathcal O}
\newcommand{\CP}{\mathcal P}
\newcommand{\CS}{\mathcal S}
\newcommand{\CV}{\mathcal V}
\newcommand{\CI}{\mathcal I}
\newcommand{\veps}{\varepsilon}
\newcommand{\bmu}{\mathbf{\mu}}
\newcommand{\kap}{\kappa}
\newcommand{\lap}{\kappa^c}
\newcommand{\vphi}{\varphi}
\newcommand{\s}{{\sigma}}
\newcommand{\grg}{{\mathfrak{g}}}
\newcommand{\grb}{{\mathfrak{b}}}
\newcommand{\grz}{{\mathfrak{z}}}
\newcommand{\grk}{{\mathfrak{k}}}
\newcommand{\grt}{{\mathfrak{t}}}
\newcommand{\grm}{{\mathfrak{m}}}
\newcommand{\grp}{{\mathfrak{p}}}
\newcommand{\grX}{{\mathfrak{X}}}
\newcommand{\grV}{{\mathfrak{V}}}
\newcommand{\alg}{{\mathrm{alg}}}
\newcommand{\cond}{{\mathrm{cond}}}
\newcommand{\diag}{\mathrm{diag}}
\newcommand{\D}{\mathrm{D}}
\newcommand{\Disc}{\mathrm{Disc}}
\newcommand{\End}{\mathrm{End}}
\newcommand{\nullset}{{\emptyset}}
\newcommand{\Gal}{{\mathrm{Gal}}}
\newcommand{\GL}{\mathrm{GL}}
\newcommand{\Gm}{{\mathbf{G}_m}}
\newcommand{\Hom}{\mathrm{Hom}}
\newcommand{\Ind}{\mathrm{Ind}}
\newcommand{\Isom}{\mathrm{Isom}}
\newcommand{\K}{\CK}
\newcommand{\la}{\langle}
\newcommand{\Lie}{{\mathrm{Lie}}}
\newcommand{\M}{{\mathrm M}}
\newcommand{\Meas}{{\mathrm{Meas}}}
\renewcommand{\O}{\CO}
\newcommand{\oM}{\overline{\M}}
\newcommand{\ord}{\mathrm{ord}}
\newcommand{\padic}{{p\mathrm{-adic}}}
\newcommand{\pair}{{\langle\cdot,\cdot\rangle}}
\newcommand{\ppair}{{(\cdot,\cdot)}}
\newcommand{\ra}{{\rangle}}
\newcommand{\rank}{\mathrm{rank}}
\newcommand{\Res}{\mathrm{Res}}
\renewcommand{\SS}{\mathrm{S}}
\newcommand{\sq}{{\square}}
\newcommand{\supp}{\mathrm{supp}}
\renewcommand{\t}{{}^t}
\newcommand{\T}{\mathrm{T}}
\newcommand{\tor}{\mathrm{tor}}
\newcommand{\trace}{\mathrm{trace}}
\newcommand{\vol}{\mathrm{vol}}
\newcommand{\op}{\mathrm{op}}
\newcommand{\gl}{\GL}
\newcommand{\gln}{\GL_n}
\newcommand{\Oe}{\mathcal{O}_{\K^+}}
\newcommand{\incl}{\mathit{incl}}
\newcommand{\cusp}{\mathrm{cusp}}
\newcommand{\IQ}{\mathbb{Q}}
\newcommand{\IC}{\mathbb{C}}
\newcommand{\Oev}{\mathcal{O}_{\K^+_v}}
\newcommand{\adeles}{\A}
\newcommand{\ad}{\mathbf{A}}
\newcommand{\OK}{\CO}
\newcommand{\OCp}{\CO_{\Cp}}
\newcommand{\ocp}{\OCp}
\newcommand{\tb}{\tilde{r}}
\newcommand{\tc}{\tilde{s}}
\newcommand{\ub}{\underline{\tb}}
\newcommand{\uc}{\underline{\tc}}
\newcommand{\ZZ}{\mathbb{Z}}
\newcommand{\ci}{C^\infty}
\renewcommand{\Im}{\mathrm{Im}}
\newcommand{\IR}{\mathbb{R}}
\newcommand{\isomto}{\overset{\sim}{\rightarrow}}
\newcommand{\ram}{\mathrm{ram}}
\newcommand{\hern}{\mathrm{Her}_n}
\newcommand{\rar}{{~\rightarrow~}}
\newcommand{\st}{{\star}}
\newcommand{\grP}{{\mathfrak{P}}}
\newcommand{\Tb}{\mathbf{T}}
\newcommand{\TT}{\mathbb{T}}
\newcommand{\DD}{\mathbb{D}}
\newcommand{\Dc}{\mathbb{D}_c}
\newcommand{\aord}{\text{\rm a-ord}}
\newcommand{\Frac}{\mathrm{Frac}}
\newcommand{\Serre}{\mathrm{Ser}}
\newcommand{\Pet}{\mathrm{Pet}}
\newcommand{\pairS}{\pair^\Serre}
\newcommand{\pairP}{\pair^\Pet}
\newcommand{\vpair}[1]{\langle {#1} \rangle}
\newcommand{\unitary}{\mathrm{unitary}}
\newcommand{\Eis}{\mathrm{Eis}}
\newcommand{\Eisab}{\Eis_{a, b}}
\newcommand{\Vol}{\mathrm{Vol}}
\newcommand{\isoarrow}{{~\overset\sim\longrightarrow~}}
\begin{document}

\authorheadline{Ellen Eischen and Michael Harris and Jianshu Li and Christopher Skinner}
\runningtitle{$p$-adic $L$-functions for unitary groups}
 
\begin{frontmatter}

\title{$p$-adic $L$-functions for unitary groups}
\author[1]{Ellen Eischen}
 
\address[1]{Department of Mathematics, University of Oregon,
Eugene, OR 97403,
USA
  \ead{eeischen@uoregon.edu}}

\author[2]{Michael Harris}
 
\address[2]{Department of Mathematics, Columbia University, New York, NY  10027, USA
  \ead{harris@math.columbia.edu}
}
  
\author[3]{Jianshu Li}

\address[3]{
Institute for Advanced Study in Mathematics,
Zhejiang University,
Hangzhou, China
\ead{matom@ust.hk}
\ead{jianshu@sjtu.edu.cn}}

\author[4]{Christopher Skinner}

\address[4]{
Department of Mathematics,
Princeton University,
 Fine Hall, Washington Road,
Princeton, NJ 08544-1000,
USA
\ead{cmcls@math.princeton.edu}
}

\received{}
 
\begin{abstract}
This paper completes the construction of $p$-adic $L$-functions for unitary groups.  More precisely, in \cite{HLS}, three of the authors proposed an approach to constructing such $p$-adic $L$-functions (Part I).  Building on more recent results, including the first named author's construction of Eisenstein measures and $p$-adic differential operators \cite{apptoSHL,EDiffOps}, Part II of the present paper provides the calculations of local $\zeta$-integrals occurring in the Euler product (including at $p$).  Part III of the present paper develops the formalism needed to pair Eisenstein measures with Hida families in the setting of the doubling method.
\end{abstract}
\MSC[2010]{11F85, 11F66, 14G10, 11F55 (primary);   11R23, 14G35, 11G10, 11F03  (secondary)}
 
\end{frontmatter}

\section{Introduction}

This paper completes the construction of $p$-adic $L$-functions for unitary groups.  More precisely, in \cite{HLS}, three of the authors proposed an approach to constructing such $p$-adic $L$-functions (Part I).  Building on more recent results, including the first named author's construction of Eisenstein measures and $p$-adic differential operators \cite{apptoSHL,EDiffOps}, Part II of the present paper provides the calculations of local $\zeta$-integrals occurring in the Euler product (including at $p$).  Part III of the present paper develops the formalism needed to pair Eisenstein measures with Hida families in the setting of the doubling method.

The construction of $p$-adic $L$-functions consists of several significant steps, including studying certain $\zeta$-integrals occurring in the Euler products of the corresponding $\IC$-valued $L$-functions (one of the main parts of this paper, which involves certain careful choices of local data and which is the specific step about which we are most frequently asked by others in the field) and extending and adapting earlier constructions of $p$-adic $L$-functions (e.g. Hida's work in \cite{hidasearch}, which recovers Katz's construction from \cite{kaCM} as a special case). We also note that the last three named authors had already computed local zeta integrals for sufficiently regular data as far back as 2003, but the computations were not 
included in \cite{HLS} for lack of space.  Since then, a new approach to choosing local data and computing local zeta integrals at primes dividing $p$ has allowed us to treat the general case.  These are the computations presented here.

In Section  \ref{about}, we put this paper in the context of the full project to construct $p$-adic $L$-functions (which comprises the present paper and \cite{HLS}), and we describe the key components and significance of the broader project.  The exposition in the present paper, especially the description of the geometry, was written especially carefully to provide a solid foundation for future work both by the authors of this paper and by other researchers in the field.

\subsection{About the project}\label{about}

Very precise and orderly conjectures predict how certain integer values of $L$-functions of motives over number fields, suitably modified, 
fit together into $p$-adic analytic functions (e.g. \cite{coatesII,CPR,pan, hidasearch}).     These functions directly generalize the $p$-adic zeta function of Kubota and Leopoldt 
that has played a central role in algebraic number theory, through its association with Galois cohomology, in the form of Iwasawa's Main Conjecture.
Such $p$-adic $L$-functions have been defined in a number of settings.   In nearly all cases they are attached to automorphic forms rather 
than to motives; no systematic way is known to obtain information about special values of motivic $L$-functions unless they can be identified with
automorphic $L$-functions.  However, the procedures for attaching $L$-functions to automorphic forms other than Hecke characters
are by no means orderly; any given $L$-function can generally be obtained by a number of methods that have no relation to one another, and in general no obvious
relation to the geometry of motives.  And while these procedures are certainly precise, they also depend on arbitrary choices:  the $L$-function is
attached abstractly to an automorphic representation, but as an analytic function it can only be written down after choosing a specific automorphic form,
and in general there is no optimal choice.

When Hida developed the theory of analytic families of ordinary modular forms he also expanded the concept of $p$-adic $L$-functions.
Hida's constructions naturally gave rise to analytic functions in which the modular forms are variables, alongside the character of $GL(1)$ that
plays the role of the $s$ variable in the complex $L$-function.  This theory has also been generalized, notably to overconvergent modular forms.  There seems to be a consensus among experts
on how this should go in general, but as far as we know no general conjectures have been made public.  This is in part because constructions
of $p$-adic families are no more orderly than the construction of automorphic $L$-functions, except in the cases Hida originally studied:  
families are realized in the coherent or topological cohomology\footnote{In principle, completed cohomology in Emerton's sense could
also be used for this purpose, and would give rise to more general families.  As far as we know $p$-adic $L$-functions have
not yet been constructed in this setting.} of a locally symmetric space; but the connection of the latter to motives is tenuous and in many cases
purely metaphoric.

The present project develops one possible approach to the construction of $p$-adic $L$-functions.  We study complex $L$-functions
of automorphic representations of unitary groups of $n$-dimensional hermitian spaces, by applying the doubling method of Garrett and Piatetski-Shapiro-Rallis
\cite{ga,GPSR} to the
automorphic representations that contribute to the coherent cohomology of Shimura varieties in degree $0$; in other words, to
holomorphic modular forms.  When $n = 1$, we recover
Katz's theory of $p$-adic $L$-functions of Hecke characters \cite{kaCM}, and much of the analytic theory is an adaptation of Katz's constructions to higher 
dimensions.   For general $n$, the theory of ordinary families of holomorphic modular forms on Shimura varieties of PEL type has been
developed by Hida, under hypotheses on the geometry of compactifications that have subsequently been proved by
Lan.  It is thus no more difficult to construct $p$-adic $L$-functions of Hida families than to study the $p$-adic versions of complex
$L$-functions of individual automorphic representations.  Interpreting our results poses a special challenge, however.    The conjectures on
motivic $p$-adic $L$-functions are formulated in a framework in which the Betti realization plays a central role, in defining complex as well as
$p$-adic periods used to normalize the special values.  Betti cohomology exists in the automorphic setting as well, but it cannot be detected by
automorphic methods.   The doubling method provides a substitute:  the cup product in coherent cohomology.  Here one needs to exercise some
care.  Shimura proved many years ago that the critical value at $s = 1$ of the adjoint $L$-function attached to a holomorphic modular form $f$ 
equals the Petersson square norm $\la f, f \ra$, multiplied by an elementary factor.   If one takes this quantity as the normalizing period, the resulting 
$p$-adic adjoint $L$-function is identically equal to $1$.  Hida observed that the correct normalizing period is not $\la f, f \ra$ but rather the product
of (normalized) real and imaginary periods; using this normalization, one obtains a $p$-adic adjoint $L$-function whose special values measures
congruences between $f$ and other modular forms.   This is one of the fundamental ideas in the theory of deformations of modular forms and Galois
representations; but it seems to be impossible to apply in higher dimensions, because the real and imaginary periods are defined by means of Betti
cohomology.  One of the observations in the present project is that the integral information provided by these Betti periods can naturally be recovered
in the setting of the doubling method, provided one works with Hida families that are free over their corresponding Hecke algebras, and one assumes
that the Hecke algebras are Gorenstein.  These hypotheses are not indispensable, but they make the statements much more natural, and we have
chosen to adopt them as a standard; some of the authors plan to indicate in a subsequent paper what happens when they are dropped.

This approach to families is the first of the innovations of the present project, in comparison with the previous work \cite{HLS}.  We stress that the Gorenstein hypothesis, suitably interpreted, is  particularly natural in the setting
of the doubling method.   Our second, most important innovation, is the use of the general Eisenstein measure constructed  in \cite{apptoSHL, apptoSHLvv}.   

In order to explain the contents of this project more precisely, we remind the reader what is expected
of a general theory of $p$-adic $L$-functions.  We are given a $p$-adic analytic space $Y$ and a subset $Y^{class}$
of points such that, for each $y \in Y^{class}$ there is a motive $M_y$, and possibly an additional datum $r_y$ (a {\it refinement})
such that $0$ is a critical value
of the $L$-function $L(s,M_y)$.  The $p$-adic $L$-function is 
then a meromorphic function $L_p$ on $Y$ whose values at $y \in Y^{class}$ can be expressed in terms of $L(0,M_y)$.
More precisely, there is a $p$-adic period $p(M_y,r_y)$ such that $\frac{L_p(y)}{p(M_y,r_y)}$ is an algebraic number, and then we have the relation
\begin{equation}\label{genplfunction}
\frac{L_p(y)}{p(M_y,r_y)} = Z_{\infty}(M_y)Z_p(M_y,r_y)\cdot \frac{L(0,M_y)}{c^+(M_y)}.
\end{equation}
Here $c^+(M_y)$ is the period that appears in Deligne's conjecture on special values of $L$-functions, so that
$\frac{L(0,M_y)}{c^+(M_y)}$ is an algebraic number, while $Z_{\infty}$ and $Z_p$ are correction factors that are
built out of Euler factors and $\varepsilon$-factors of the zeta function of $M_y$ at archimedean primes and primes dividing $p$,
respectively.

In our situation, we start with a CM field $\K$ over $\Q$, a quadratic extension of a totally real field
$\K^+$, and an $n$-dimensional hermitian vector space $V/\K$.
Then $Y$ is the space of pairs $(\lambda,\chi)$, where $\lambda$ runs through the set of ordinary $p$-adic modular forms
on the Shimura variety $Sh(V)$ attached to $U(V)$ and $\chi$ runs through $p$-adic Hecke characters of $\K$; both $\lambda$ and
$\chi$ are assumed to be unramified outside a finite set $S$ of primes of $\K$, including those dividing $p$, and of bounded level 
at primes not dividing $p$.    Because we are working with  ordinary forms -- more precisely, what Hida calls {\it nearly ordinary} forms, though the terminology is used inconsistently in the literature -- the ring $\CO(Y)$ of holomorphic functions on $Y$ is finite
over some Iwasawa algebra, and the additional refinement is superfluous.   In the project, $\lambda$ denotes a character of Hida's (nearly) ordinary
Hecke algebra.
If $(\lambda,\chi) \in Y^{class}$ then 
\begin{itemize}
\item $\lambda = \lambda_{\pi}$
for some automorphic representation $\pi$ of $U(V)$; it is the character of the ordinary Hecke algebra acting on vectors that are spherical outside
$S$ and (nearly) ordinary at primes dividing $p$;
\item $\chi$ is a Hecke character of type $A_0$; 
\item the standard $L$-function $L(s,\pi,\chi)$ has a critical value at $s = 0$.  
\end{itemize}
(By replacing $\chi$ by its multiples by powers of the norm
character, this definition accommodates all critical values of $L(s,\pi,\chi)$.)  Under hypotheses to be discussed below,
the automorphic version of Equation \eqref{genplfunction} is particularly simple to understand:

\begin{equation}\label{plfunction}
L_p(\lambda_{\pi},\chi) = c(\pi)\cdot Z_{\infty}(\pi,\chi)Z_p(\pi,\chi)Z_S\cdot \frac{L(0,\pi,\chi)}{P_{\pi,\chi}}
\end{equation}

The left-hand side is the specialization to the point $(\lambda_{\pi},\chi)$ of an element $L_p \in \CO(Y)$.  The right hand side is purely
automorphic.  The $L$-function is the standard Langlands $L$-function of $U(V) \times GL(1)_{\K}$.   Its analytic and arithmetic
properties have been studied most thoroughly using the doubling method.  If $U(V)$ is the symmetry group of the hermitian form
$\pair_V$ on $V$, let $-V$ be the space $V$ with the hermitian form $-\pair_V$, and let $U(-V)$ and $Sh(-V)$ be the corresponding unitary group
and Shimura variety.  The groups $U(V)$ and $U(-V)$ are canonically isomorphic, but the natural  identification of $Sh(-V)$ with
$Sh(V)$ is anti-holomorphic; thus holomorphic automorphic forms on $Sh(V)$ are identified with anti-holomorphic automorphic forms, or
coherent cohomology classes of top degree,
on $Sh(-V)$, and vice versa.  The space $W = V \oplus (-V)$, endowed with the hermitian form $\pair_V \oplus -\pair_V$, is always
maximally isotropic, so $U(W)$ has a maximal parabolic subgroup $P$ with Levi factor isomorphic to $GL(n)_{\K}$.  To any Hecke
character $\chi$ of $\K$ one associates the family of degenerate principal series
$$I(\chi,s) = \Ind_{P(\adeles)}^{U(W)(\adeles)} \chi\circ \det\cdot \delta_P^{-s/n}$$
and constructs the meromorphic family of Eisenstein series $s \mapsto E(\chi,s,f,g)$ with $f = f(s)$ a section of $I(\chi,s)$ and $g \in U(W)(\adeles)$.  
On the other hand, $U(V) \times U(-V)$ naturally embeds in $U(W)$.  Thus if $\phi$ and $\phi'$ are cuspidal automorphic forms on $U(V)(\adeles)$ and
$U(-V)(\adeles)$, respectively, the integral 
$$I(\phi,\phi',f,s) = \int_{[U(V)\times U(-V)]} E(\chi,s,f, (g_1,g_2))\phi(g_1)\phi'(g_2)\chi^{-1}(\det(g_2)) dg_1 dg_2,$$
defines a meromorphic function of $s$.  
Here $[U(V)\times U(-V)] = U(V)(F)\backslash U(V)(\adeles) \times U(-V)(F)\backslash U(-V)(\adeles)$, $g_1 \in U(V)(\adeles)$, $g_2 \in U(-V)(\adeles)$, 
and $dg_1$ and $dg_2$ are Tamagawa measures.  

The doubling method asserts that, if $\pi$ is a cuspidal automorphic representation
of $U(V)$ and $\phi \in \pi$, then $I(\phi,\phi',f,s)$ vanishes identically unless $\phi' \in \pi^{\vee}$; and if $\la \phi, \phi' \ra \neq 0$,
then the integrals $I(\phi,\phi',f,s)$ unwind and factor as an Euler product whose unramified terms give the standard $L$-function $L(s+ \frac{1}{2},\pi,\chi)$ and
(as $f$, $\phi$, $\phi'$ vary) provide the meromorphic continuation and functional equation of the standard $L$-function.   Another way to
look at this construction is to say that the {\it Garrett map}
$$\phi \mapsto G(f,\phi,s)(g_2) = \chi^{-1}\circ\det(g_2)\cdot \int_{U(V)(F)\backslash U(V)(\adeles)]} E(\chi,s,f, (g_1,g_2))\phi(g_1) dg_1$$
is a linear transformation from the automorphic representation $\pi$ of $U(V)$ to $\pi$ viewed as an automorphic reprepresentation of $U(-V)$; and the matrix
coefficients of this linear transformation give the adelic theory of the standard $L$-function.  We develop
a theory that allows us to interpret these matrix coefficients integrally in Hida families, under special hypotheses on the localized Hecke
algebra described below.  Note that when $\pi$ is an anti-holomorphic representation
of $U(V)$, its image under the Garrett map is $\pi$, but viewed as a {\it holomorphic} representation of $U(-V)$.

The factor $P_{\pi,\chi}$ is a product
of several terms, of which the most important is a normalized Petersson inner product of holomorphic forms on $U(V)$.  Although it arises
naturally as a feature of the doubling method, its definition involves
some choices that are reflected in the other terms.  The local term $Z_S$, in our normalization, is a local volume multiplied
by a local inner product (depending on the choices).  The correction factors $Z_{\infty}$ and $Z_{p}$ are explicit local zeta
integrals given by the doubling method.  The archimedean factor has not been evaluated explicitly, except when $\pi$ is associated
to a holomorphic modular form of scalar weight (by Shimura) or, more generally, of weight that is ``half scalar"  
at every archimedean place (by Garrett) \cite{sh, ga06}.   In the present paper we leave it unspecified; it depends only
on the archimedean data (the weights) and not on the Hecke eigenvalues.

The explicit calculation of the local term $Z_p$ is our third major innovation {\bf and one of the key pieces of the current paper}, and it occupies the longest
single section of this paper (Section  \ref{ESeriesZIntegrals-section}).   It has the expected form:  a quotient of a product of Euler factors (evaluated at $s$)
by another product of Euler factors (evaluated at $1-s$) multiplied by a local $\varepsilon$ factor and a volume factor.  The key observation is that the denominator arises by applying the Godement-Jacquet local functional equation
to the input data.  This is the step in the construction that owes the most to (adelic) representation theory.  The input
data for the Eisenstein measure represent one possible generalization of Katz's construction in \cite{kaCM}.  The local integral
has been designed to apply to overconvergent families as well as to ordinary families; one of us plans to explore this in future work.  The precise form of the local factor at a prime $w$ dividing $p$ depends on the signatures of the
hermitian form at the archimedean places associated to $p$ as part of the ordinary data; this appears mysterious but
in fact turns out to be a natural reflection of the PEL structure at primes dividing $p$, or alternatively of the
embedding of the ordinary locus of the Shimura variety attached to (two copies of) $U(V)$ in that attached to the doubled
group.   

A different calculation of the local term had been carried out at the time of \cite{HLS}.  It was not published
at the time because of space limitations.  It was more ad hoc than the present version and applied only 
when the adelic local components at primes dividing $p$ of an ordinary form could be identified as an
explicit function in a principal series.  The present calculation is more uniform and yields a result in the
expected form.

Before explaining the final factor $c(\pi)$ it is preferable to explain the special hypotheses underlying the formula \ref{plfunction}, which represent the fourth innovation in this project. 
The point $(\lambda_{\pi},\chi)$ belongs to a Hida family, which for the present purposes means a {\it connected} component, which we denote
$Y_{\pi,\chi}$, of the space $Y$; in other contexts one works with an {\it irreducible} component.  The ring of functions on $Y_{\pi,\chi}$ is of the form
$\Lambda\hat{\otimes} \TT_{\pi}$, where $\Lambda$ is an Iwasawa algebra attached to $\chi$ and $\TT_{\pi}$ is the localization of the
big Hecke algebra at the maximal ideal attached to $\pi$.  The principal hypotheses are that $\TT_{\pi}$ is Gorenstein, and that
the module of ordinary modular forms (or its $\Zp$-dual, to be more precise) is free over $\TT_{\pi}$.  There are also local hypotheses that
correspond to the hypothesis of minimal level in the Taylor-Wiles theory of deformations of modular Galois representations.  These hypotheses
make it possible to define $L_p$ as an element of $\CO_{Y_{\pi, \chi}}$.  The presence of the factor $c(\pi)$ is a sign that $L_p$ is not
quite the $p$-adic $L$-function; $c(\pi)$ is a generator of the {\it congruence ideal} which measures congruences between $\lambda_{\pi}$ and
other characters $\lambda_{\pi'}$ of $\TT_{\pi}$ (of the same weight and level).  
The specific generator $c(\pi)$ depends on the same choices used to define $P_{\pi,\chi}$, so that
the product on the right-hand side is independent of all choices.

In the absence of the special hypotheses, it is still possible to define $L_p$ in the fraction field of 
$\Lambda\hat{\otimes} \TT_{\pi}$, but the statement
is not so clean.  In any case, the $p$-adic valuations of $c(\pi)$ are in principle unbounded, and so the $p$-adic interpolation of the normalized
critical values of standard $L$-functions is generally given by a meromorphic function on $Y$.

\subsubsection{Clarifications}

The above discussion has artificially simplified several points.  The Shimura variety is attached not to $U(V)$ but rather to
the subgroup, denoted $GU(V)$, of the similitude group of $V$ with rational similitude factor.  All of the statements above
need to be modified to take this into account, and this is done in the paper.   This detail plagues the paper
from beginning to end, as it seems at least to some degree also to plague every paper on Shimura varieties attached to unitary groups.  One can hope that
a far-sighted colleague will find an efficient way to do away with this.  

What we called the moduli space of PEL type
associated to $V$ is in general a union of several isomorphic Shimura varieties, indexed by the defect of the Hasse principle;
$p$-adic modular forms are most naturally defined on a single Shimura variety rather than on the full moduli space.  We need the
moduli space in order to define $p$-adic modular forms, but in the computations we work with a single fixed Shimura variety.  

Although the $p$-adic $L$-functions are attached to automorphic forms on unitary (similitude) groups, they are best
understood as $p$-adic analogues of the standard $L$-functions of cuspidal automorphic representations of $GL(n)$.
The passage from unitary groups to $GL(n)$ is carried out by means of stable base change.  A version of this
adequate for our applications was developed by Labesse in \cite{lab}.  Complete results, including precise multiplicity formulas,
were proved by Mok for quasi-split unitary groups \cite{mok}; however, we need to work with unitary groups over totally real fields
with arbitrary signatures, and the quasi-split case does not suffice.   The general case is presently being completed by Kaletha, Minguez,
Shin, and White, and we have assumed implicitly that Arthur's multiplicity conjectures are known for unitary groups.
The book \cite{gangof4} works out the multiplicities of tempered representations and is probably sufficient for the purposes of the
present project.

From the standpoint of automorphic representations of $GL(n)$, the ordinary hypothesis looks somewhat special; in fact, the critical
values of $L$-functions of $GL(n)$ can be interpreted geometrically on unitary groups of different signatures, and the ordinary
hypotheses for these different unitary groups represent different branches of a $p$-adic $L$-function that can only be related
to one another in a general overconvergent family.  The advantage of restricting
our attention to ordinary families is that the $p$-adic $L$-functions naturally belong to integral Hecke algebras.  To add to the confusion, however,
Hida's theory of (nearly) ordinary modular forms applies to holomorphic automorphic representations, but the doubling method requires
us to work with {\it anti-holomorphic} representations.  The eigenvalues of the $U_p$-operators on representations do not coincide with
those on their holomorphic duals; for lack of a better terminology, we call these representations {\it anti-ordinary}.  Keeping track of
the normalizations adds to the bookkeeping but involves no essential difficulty.

\subsubsection{What this project does not accomplish}

Although we have made an effort to prove rather general theorems, limitations of 
patience have induced us to impose restrictions on our results.  Here are
some of the topics we have not covered.

First of all, we have not bothered to verify that the local  terms ($Z_p, Z_\infty, Z_S$) and the global terms ($L(0,\pi,\chi), Q_{\pi,\chi}$)  in Equation \eqref{plfunction} 
correspond termwise with those predicted by the general conjectures on $p$-adic $L$-functions for motives.  The correspondence between automorphic representations and (de Rham realizations of) motives is not
straightforward.  In a general sense, comparing  \eqref{genplfunction} with \eqref{plfunction},  we can say that the motive $M_y$ that appears in \eqref{genplfunction} corresponds in \eqref{plfunction} to the hypothetical motive attached to the automorphic representation $\pi$, whose $\ell$-adic realization is the $n$-dimensional  representation of $Gal(\bar{\Q}/\CK)$ constructed in \cite{CH} (among many other places), twisted by the $\ell$-adic Galois character attached to the Hecke character $\chi$.   The local factor $Z_p$ certainly has the same shape as the  local factors that appear in the conjectures of \cite{coatesII,CPR}, but we have not checked that the Frobenius eigenvalues that appear in the latter conjectures are exactly the ones we find.
We expect to address these issue in a subsequent paper; however, until we find a simple way to
compute the archimedean term $Z_{\infty}(\pi,\chi)$ explicitly, we will not be able to compare it with anything motivic.

We have also not attempted to analyze the local factors at ramified finite primes for $\pi$ and $\chi$.  The geometry
of the moduli space has no obvious connection to the local theory of the doubling method.  Moreover, a complete
treatment of ramified local factors requires a $p$-integral version of the doubling method.  This may soon be available,
thanks to work of Minguez, Helm, Emerton-Helm, and Moss, but for the moment we have preferred to simplify
our presentation by choosing local data that give simple volume factors for the local integrals at bad primes.

One of us plans to adapt the methods of the present project to general overconvergent families,
where Hida theory is no longer appropriate.  On the other hand, the methods of Hida theory do apply to more
general families than those we consider.  In \cite{H98}, Hida introduces the notion of {\it $P$-ordinary} modular forms on 
a reductive group $G$, where $P$ denotes a parabolic subgroup of $G$.  One obtains the usual (nearly) ordinary forms 
when $P = B$ is a Borel subgroup; in general, for $P$ of $p$-adic rank $r$, the $P$-ordinary forms vary in an $r$-dimensional
family, up to global adjustments (related to Leopoldt's conjecture in general).  Most importantly, a form can be
$P$-ordinary without being $B$-ordinary.    Our theory applies to $P$-ordinary forms as well; we hope to return to
this point in the future, and \cite{EiMa} is a first step in this direction.

Our $p$-adic $L$-function, when specialized at a classical point corresponding to the automorphic
representation $\pi$, gives the
corresponding value of the classical complex $L$-function,  divided by what appears
to be the correctly normalized complex period invariant, and multiplied by a factor $c(\pi)$ measuring
congruences between $\pi$ and other automorphic representations.  This is a formal consequence of
the Gorenstein hypothesis and is consistent with earlier work of Hida and others on $p$-adic $L$-functions
of families.  It is expected that the factor $c(\pi)$ is the specialization at $\pi$ of the ``genuine'' $p$-adic $L$-function
that interpolates normalized values at $s = 1$ of the adjoint $L$-function (of $\pi$, or one of the Asai $L$-functions
for its base change to $GL(n)$).  As far as we know, no one has constructed this $p$-adic adjoint
$L$-function in general.  We do not know how to construct a $p$-adic analytic function on the ordinary
family whose specialization at $\pi$ equals $c(\pi)$, not least because $c(\pi)$ is only well-defined
up to multiplication by a $p$-adic unit.  Most likely the correct normalization will have to take account of
$p$-adic as well as complex periods.

Finally, we have always assumed that our base field $\K$ is unramified at $p$.   This
hypothesis is unnecessary, thanks to Lan's work in \cite{Lan2}, but it simplifies a number of statements.

\subsection{History}  Work on this paper began in 2001 as a collaboration between two of the authors, around the time of a visit by one of us (M.H.) to the second one (J.-S. L.) in Hong Kong.  The initial objective was to study congruences between endoscopic and stable holomorphic modular forms on unitary groups.  The two authors were soon joined by a third (C. S.), and a report on the results was published in \cite{HLS05}.  The subsequent article \cite{HLS} carried out the first part of the construction of a $p$-adic analytic function for a single automorphic representation.  Because $p$-adic differential operators had not yet been constructed for unitary group Shimura varieties, this function only provided the $p$-adic interpolation for the right-most critical value of the $L$-function, and only applied to scalar-valued holomorphic modular forms.  Moreover, although the local computation of the zeta integrals at primes dividing $p$, which was not included in \cite{HLS}, was based on similar principles to the computation presented here, it had only been completed for ramified principal series and only when the conductors of the local inducing characters were aligned with the slopes of the Frobenius eigenvalues.  After the fourth author (E.E.) had defined $p$-adic differential operators in \cite{EDiffOps, emeasurenondefinite} and constructed the corresponding Eisenstein measure in \cite{apptoSHL,apptoSHLvv}, it became possible to treat general families of holomorphic modular forms and general ramification.  

The delay in completing the paper, for which the authors apologize, can be attributed in large part to the difficulty of reconciling the different notational conventions that had accumulated over the course of the project.   In the meantime, Xin Wan had constructed certain $p$-adic $L$-functions in the same setting in \cite{xinwan}, by a method based on computation of Fourier-Jacobi coefficients, as in \cite{SkUr}.  More recently, Zheng Liu has constructed $p$-adic $L$-functions for symplectic groups \cite{ZL}.  Among other differences, Liu makes consistent use of the theory of nearly overconvergent $p$-adic modular forms, thus directly interpreting nearly holomorphic Eisenstein series as $p$-adic modular forms; and her approach to the local zeta integrals is quite different from ours.

\subsection{Contents and structure of this paper}

After establishing notation and conventions in Section  \ref{notation-section} below, we begin in Section  \ref{padic-unitary-section} by recalling the theory of modular forms on unitary groups, as well as Hida's theory of $p$-adic modular forms on unitary groups.  This section has carefully set up the framework needed for our project and will likely also provide a solid foundation for others working in this area.  In Section  \ref{doublingsetup}, we discuss the geometry of restrictions of automorphic forms, since the restriction of an Eisenstein series is a key part of the doubling method (Section  \ref{doubling16}) used to construct $L$-functions.  In Section  \ref{ESeriesZIntegrals-section}, we discuss the doubling method.  This section also contains the local zeta calculations mentioned at the beginning of the introduction.  The most important of these is the calculation at primes dividing $p$ (Section  \ref{pchoices-section}), which is also the longest single step of this paper.  
Section  \ref{EMeasure-padicLfunctions-section} provides statements about measures, which depend on the local data chosen in Section  \ref{ESeriesZIntegrals-section}.   A formalism for relating duality pairings to complex conjugation and to the action of Hecke algebras is developed in Section  \ref{serreduality-section}; this is extended to Hida families in Section  \ref{families}, which also begins the formalism for construction of $p$-adic $L$-functions in families.  Section  \ref{localtheory} establishes the relation between $p$-adic and $C^{\infty}$-differential operators, and develops the local theory of ordinary and anti-ordinary vectors in representations at $p$-adic places.  Finally, Section  \ref{lastchapter} states and proves the main theorems about the existence of the $p$-adic $L$-function.

\subsection{Notation and conventions}\label{notation-section}

\subsubsection{General notation}

Let $\bQ\subset\C$ be the algebraic closure of $\Q$ in $\C$ and
let the complex embeddings of a number field $F\subset\bQ$ be $\Sigma_F=\Hom(F,\C)$; so $\Sigma_F=\Hom(F,\bQ)$.
Throughout, $\K\subset\bQ$ is a CM field with ring of integers $\O$, and $\K^+$ is the maximal totally real subfield of $\K$.
The non-trivial automorphism in $\Gal(\K/\K^+)$ is denoted by $c$. Given a place $v$ of $\K$,
the conjugate place $c(v)$ is usually denoted $\bar v$.

Let $p$ be a fixed prime that is unramified in $\K$ and such that every place above
$p$ in $\K^+$ splits in $\K$.
Let $\bQp$ be an algebraic closure of $\Qp$ and fix an embedding $\incl_p:\bQ\hookrightarrow\bQp$.
Let $\bZpp\subset\bQ$ be the valuation ring for the
valuation determined by $\incl_p$.  Let $\Cp$ be the completion of $\bQp$ and let $\O_\Cp$ be the valuation ring
of $\Cp$ (so the completion of $\bZpp$). Let $\iota_p:\C\isoarrow \Cp$ be an isomorphism extending $\incl_p$.

When $V$ is a hermitian space over $\CK$, with hermitian form $\langle, \rangle$, we let $GU^+(V)$ denote the group of unitary similitudes of $V$; this is a group scheme over $\K^+$, defined by
$$GU^+(V)(R) = \{g \in GL(V\otimes_{\K^+} R) ~|~ \langle g(v),g(v') \rangle = \nu(g) \langle v, v' \rangle ~ \forall v, v' \in V\otimes_{\K^+} R \}$$
where $\nu(g) \in R^\times$; here $R$ is any $\K^+$-algebra.  
This is the group that is usually denoted $GU(V)$.  However, we prefer to reserve the notation $GU(V)$ for the  $\Q$-subgroup scheme
of $R_{\K^+/\Q} GU^+(V)$ which is the fiber product
$$GU(V) = R_{\K^+/\Q} GU^+(V) \times_{R_{\K^+/\Q} \mathbb{G}_{m,\K^+}} \mathbb{G}_m$$
where the map $\mathbb{G}_m \hookrightarrow R_{\K^+/\Q} \mathbb{G}_{m,\K^+}$ is the canonical inclusion.

For any $\sigma\in \Sigma_\K$ let $\grp_\sigma$ be the prime of $\O$ determined by the embedding $\incl_p\circ\sigma$. Note that $c(\grp_\sigma) = \grp_{\sigma c}$.
For a place $w$ of $\K$ over $p$ we will write $\grp_w$ for the corresponding prime of $\O$.
Let $\Sigma_p$ be a set containing exactly one place
of $\K$ over each place of $\K^+$ over $p$.

\begin{rmk}\label{grouppoints} It is often more convenient to denote an algebraic group over a ring $R$ which is a number field, a $p$-adic field, or an integer ring, by its group of points $G(R)$.  For example, if $V$ is a free $R$-module we may write $G = GL_R(V)$ as shorthand for the group scheme over $Spec(R)$ whose group of $S$-valued points, for any $R$-scheme $S$, is given by $G(V\otimes_R S)$.  
\end{rmk}

Let $\Z(1)\subset\C$ be the kernel of the exponential map $\exp:\C\rightarrow\C^\times$.
This is a free $\Z$-module of rank one with non-canonical basis $2\pi \sqrt{-1}$.  
For any commutative ring $R$ let $R(1)=R\otimes\Z(1)$.

In what follows, 
when $(G,X)$ is a Shimura datum, an automorphic representation of $G$ is defined to be a $(\grg,K)\times G(\A_f)$-submodule of the space of automorphic forms,
where $K$ is {\it the stabilizer of a point} in $X$; in particular, $K$ contains the center of $G(\R)$ but
{\it does not} generally contain a full maximal compact subgroup.  
In this way, holomorphic and anti-holomorphic representations are kept separate.  This is of fundamental importance for applications to
coherent cohomology and thus to our construction of $p$-adic $L$-functions.

\subsubsection{Measures and pairings}

We will need to fix a Haar measure $dg$ on the ad\`ele group of a reductive group $G$ over a number field $F$.  For the sake of definiteness we take $dg$ to be Tamagawa measure.  In this paper we will not be so concerned with the precise choice of measure, because we will not be calculating local zeta integrals at archimedean primes explicitly, but we do want to be consistent.  When we write $dg = \prod_v dg_v$, where $v$ runs over places of $F$ and $dg_v$ is a Haar measure on the $F_v$-points $G(F_v)$, we will want to
make the following additional hypotheses:
\begin{hype}\label{meas1}
\begin{enumerate}
\item At all finite places $v$ at which the group $G$ is unramified, $dg_v$ is the measure that gives volume $1$ to a hyperspecial maximal compact subgroup.\label{hyp1meas1}

\item At all finite places $v$ at which the group $G$ is isomorphic to $\prod_i GL(n_i,F_{i,w_i})$, where
$F_{i,w_i}$ is a finite extension of $F_v$ with integer ring $\CO_i$, 
(whether or not $F_{i,w_i}$ is ramified over the corresponding completion of $\Q$),
$dg_v$ is the measure that gives volume $1$ to the group $\prod_i GL(n_i,\CO_i)$.\label{hyp2meas1}

\item At all finite places $v$, the values of $dg_v$ on open compact subgroups are rational numbers.
\item At archimedean places $v$, we choose measures such that $\prod_v dg_v$ is Tamagawa measure.
\end{enumerate}
\end{hype}

Let $Z_G \subset G$ denote the  center of $G$, and let $Z \subset Z_G(\ad)$
be any closed subgroup such that $Z_G(\ad)/Z$ is compact; for example, one can take $Z$ to be
the group of real points of the maximal $F$-split subgroup of $Z_G$.  We choose a Haar measure on $Z$ that satisfies the conditions of \ref{meas1}
if $Z$ is the group of ad\`eles of an $F$-subgroup of $Z_G$.  
The measure $dg$ defines a bilinear pairing $\la , \ra$ on $L^2(Z\cdot G(F)\backslash G(\ad))$; more generally,
if $f_1(zg)f_2(zg) = f_1(g)f_2(g)$ for all $z \in Z$, we can extend the pairing to write
\begin{equation}\label{bilinearpairing} \la f_1, f_2 \ra_Z = \int_{Z \cdot G(F)\backslash G(\ad)} f_1(g)f_2(g) dg, \end{equation}
and if not, we set $\la f_1, f_2 \ra_Z = 0$.  

Suppose $\pi$ and $\pi^{\vee}$ are irreducible cuspidal automorphic representations of $G$.
Then $\la , \ra_Z:  \pi \otimes \pi^{\vee} \rightarrow \C$ is a canonically defined pairing.  Now suppose we have factorizations
\begin{equation}\label{factor}
fac_{\pi}:  \pi \isoarrow  \otimes'_v \pi_v,~~ fac_{\pi^{\vee}}: \pi^{\vee} \isoarrow \otimes'_v \pi_v^{\vee}
\end{equation}
where $\pi_v$ is an irreducible representation of $G(F_v)$.  Assume moreover that we are given non-degenerate pairings of $G(F_v)$-spaces
\begin{equation}\label{localpair}   \la, \ra_{\pi_v}:  \pi_v \otimes \pi_v^{\vee} \rar \C    \end{equation}
for all $v$.  Then there is a constant $C = C(dg,fac_{\pi},fac_{\pi^{\vee}}, \prod_v \la, \ra_{\pi_v})$ such that, for all
vectors $\varphi \in \pi$, $\varphi^{\vee} \in \pi^{\vee}$ that are factorizable in the sense that
$$fac_{\pi}(\varphi) = \otimes_v \varphi_v; ~ fac_{\pi^{\vee}}(\varphi^{\vee}) = \otimes_v \varphi_v^{\vee}$$
we have

\begin{equation}\label{schur} 
\la \varphi,\varphi^{\vee} \ra_Z = C(dg,fac_{\pi},fac_{\pi^{\vee}}, \prod_v \la, \ra_{\pi_v}) \prod_v  \la \varphi_v, \varphi_v^{\vee}\ra_{\pi_v}.
\end{equation}

When $G$ is quasi-split and unramified over $F_v$ and $\pi_v$ is a principal series representation, induced from a Borel subgroup $B \subset G(F_v)$,
we choose a hyperspecial maximal compact subgroup $K_v \subset G(F_v)$ and define the {\it standard local pairing} to be:
\begin{equation}\label{prin}
\la f, f^{\vee} \ra_{\pi_v} = \int_{K_v} f(g_v)f^{\vee}(g_v) dg_v.
\end{equation}

In situation \eqref{hyp2meas1} of Hypotheses \ref{meas1}, we take $K_v = \prod_i GL(n_i,\CO_i)$; however, the pairing \eqref{prin} does not depend on the choice of $K_v$.

\part*{Part II: zeta integral calculations}

\section{Modular forms and $p$-adic modular forms on unitary groups}\label{padic-unitary-section}

This section introduces details about modular forms and $p$-adic modular forms on unitary groups that we will need for our applications.  For alternate discussions of modular forms and $p$-adic modular forms on unitary groups, see \cite{Hida, CEFMV}.

\subsection{PEL moduli problems: generalities}\label{PELdata}
By a PEL datum we will mean a tuple $P=(B,*,\O_B,L,\pair,h)$ where
\begin{itemize}
\item $B$ is a semisimple $\Q$-algebra with positive involution $*$, the action of which we write
as $b\mapsto b^*$;
\item $\O_B$ is a $*$-stable $\Z$-order in $B$;
\item $L$ is a $\Z$-lattice with a left $\O_B$-action and a
non-degenerate alternating pairing $\pair:L\times L\rightarrow \Z(1)$ such that
$\la bx,y\ra = \la x,b^*y\ra$ for $x,y\in L$ and $b\in \O_B$;
\item $h:\C\rightarrow \End_{\O_B\otimes\R}(L\otimes\R)$ is a homomorphism such that
$\la h(z)x,y\ra = \la x,h(\bar z)y\ra$ for $x,y\in L\otimes\R$ and $z\in\C$ and
$-\sqrt{-1}\la \cdot,h(\sqrt{-1})\cdot\ra$ is positive definite and symmetric.
\end{itemize}

For the purposes of subsequently defining $p$-adic modular forms for unitary groups
we assume that the PEL data considered also satisfy:
\begin{itemize}
\item $B$ has no type $D$ factor;
\item $\pair: (L\otimes\Zp)\times (L\otimes\Zp) \rightarrow\Zp(1)$ is a perfect pairing;
\item $p\nmid \Disc(\O_B)$, where $\Disc(\O_B)$ is the discriminant of $\O_B$ over $\Z$
defined in \cite[Def.~1.1.1.6]{Lan}; this condition implies that $\O_B\otimes\Z_{(p)}$ is a maximal $\Z_{(p)}$-order in $B$ and that $\O_B\otimes\Zp$ is a product of matrix algebras.
\item The technical \cite[Condition 1.4.3.10]{Lan} is satisfied. 
\end{itemize}
We associate a group scheme $G=G_P$ over $\Z$ with such a PEL datum $P$:
for any $\Z$-algebra $R$
$$
G(R)=\{(g,\nu)\in \GL_{\O_B\otimes R}(L\otimes R)\times R^\times\ : \la gx,gy\ra = \nu\la x,y\ra \ \forall x,y\in L\otimes R\}.
$$
Then $G_{/\Q}$ is a reductive group, and by our hypotheses with respect to $p$, $G_{/\Zp}$ is smooth and $G(\Zp)$ is a hyperspecial maximal
compact of $G(\Qp)$.

Let $F\subset\C$ be the reflex field of $(L,\pair,h)$ (or of $P$) as defined in \cite[1.2.5.4]{Lan} and let
$\CO_{F}$ be its ring of integers.
Let $\sq=\{p\}$ or $\nullset$, and let $\Z_{(\sq)}$ be the localization of $\Z$ at the primes
in $\sq$. Let $S_\sq=\O_{F}\otimes\Z_{(\sq)}$. Let $K^\sq\subset G(\A_f^\sq)$
be an open compact subgroup
and let $K\subset G(\A_f)$ be $K^\sq$ if $\sq = \nullset$ and $G(\Zp)K^\sq$ otherwise.
Suppose that $K$ is neat, as defined in \cite[Def.~1.4.1.8]{Lan}. Then, as explained in \cite[Cor.~7.2.3.10]{Lan},
there is a smooth, quasi-projective $S_\sq$-scheme $\M_K=\M_K(P)$ that represents the functor on locally noetherian $S_\sq$-schemes
that assigns to such a scheme $T$ the set of equivalence classes of quadruples $(A,\lambda,\iota,\alpha)$
where
\begin{itemize}
 \item $A$ is an abelian scheme over $T$;
\item $\lambda:A\rightarrow A^\vee$ is a prime-to-$\sq$ polarization;
\item $\iota:\O_B\otimes\Z_{(\sq)}\rightarrow \End_{T}A\otimes\Z_{(\sq)}$ such that
$\iota(b)^\vee\circ\lambda = \lambda\circ\iota(b^*)$;
\item $\alpha$ is a $K^\sq$-level structure: this assigns to a geometric point $t$ on each connected component of
$T$ a $\pi_1(T,t)$-stable $K^\sq$-orbit of $\O_B\otimes\A_f^\sq$-isomorphisms
$$
\alpha_t:L\otimes\A_f^\sq\isoarrow H_1(A_t,\A_f^\sq)
$$ 
that identify
$\pair$ with a $\A_f^{\sq,\times}$-multiple of the symplectic pairing on the Tate module $H_1(A_t,\A_f^\sq)$
defined by $\lambda$ and the Weil-pairing;
\item $\Lie_TA$ satisfies the Kottwitz determinant condition defined by $(L\otimes\R,\pair,h)$
(see \cite[Def.~1.3.4.1]{Lan});
\end{itemize}
and two quadruples $(A,\lambda,\iota,\alpha)$ 
and $(A',\lambda',\iota',\alpha')$ are equivalent if there
exists a prime-to-$\sq$ isogeny $f:A\rightarrow A'$ such that $\lambda$ equals $f^\vee\circ\lambda'\circ f$
up to some positive element in $\Z_{(\sq)}^\times$, $\iota'(b)\circ f = f\circ\iota(b)$ for all $b\in \O_B$,
and $\alpha'=f\circ\alpha$.

\subsection{PEL moduli problems related to unitary groups}\label{PELunitary}
Suppose
\begin{equation*}
\label{unitaryPELdata}
P=(B,*,\O_B,L,\pair,h)
\end{equation*}
is a PEL datum as in Section  \ref{PELdata} with
\begin{itemize}
 \item $B=\K^m$, the product of $m$ copies of $\K$; 
 \item $*$ is the involution acting as $c$ on each factor of $\K$;
\item $\O_B\cap \K =\O$ where $\K$ maps to $B = \K^m$ diagonally.
\end{itemize}
We say such a $P$ is {\it of unitary type.}
By maximality, $\O_B\otimes\Z_{(p)} = \O_1\times \cdots\times \O_m = \O_{(p)}\times\cdots\times\O_{(p)}$
(each $\O_i$ is a maximal $\Z_{(p)}$-order in $\K$),
so $\O_B\otimes\Zp \cong  \prod_{w|p} \prod_{i=1}^m \O_w$.  
Let $e_i\in \O_B\otimes\Z_{(p)}$ be
the idempotent projecting $B$ to the $i$th copy of $\K$. Let 
$n_i = \dim_\K e_i(L\otimes\Q)$.

Over $\Zp$ there is a canonical isomorphism
\begin{equation}\label{GL-iso}
\GL_{\O_{B}\otimes\Zp}(L\otimes\Zp)\isoarrow \prod_{w|p}\prod_{i=1}^m\GL_{\O_{w}}(e_iL_{w}),
\ \ g\mapsto (g_{w,i}),
\end{equation}
induced by the $\O_{B}\otimes\Z_p=\prod_{w|p}\O_{B,w}$-decomposition
$L\otimes\Zp = \prod_{w|p}L_{w}$.
This in turn induces 
\begin{equation}\label{G-iso}
G_{/\Zp} \isoarrow \G_m\times
\prod_{w\in\Sigma_p}\prod_{i=1}^m \GL_{\O_{w}}(e_iL_w), \ \ (g,\nu)\mapsto (\nu,(g_{w,i}))
\end{equation}
(Here and elsewhere we use the convention of Remark \ref{grouppoints}.)

The homomorphism $h$ determines a pure Hodge structure of weight $-1$ on $V=L\otimes\C$. Let
$V^0\subset V$ be the degree $0$ piece of the Hodge filtration; this is an $\O_B\otimes\C$-submodule.
For each $\sigma\in\Sigma_\K$, let $a_{\sigma,i} =\dim_\C e_i(V^0\otimes_{\O\otimes\C,\sigma}\C)$.
Let $b_{\sigma,i} = n_i - a_{\sigma,i}$. We call
the collection of pairs $\{(a_{\sigma,i},b_{\sigma,i})_{\sigma\in\Sigma_\K}\}$, the
signature of $h$. Note that $(a_{\sigma c,i},b_{\sigma c,i})= (b_{\sigma,i},a_{\sigma,i})$.
The following fundamental hypothesis will be assumed throughout:
\begin{hyp}[Ordinary hypothesis]\label{ord-hyp}
$$\grp_\sigma=\grp_{\sigma'} \implies a_{\sigma,i} = a_{\sigma',i}.$$
\end{hyp}
For $w|p$ a place of $\K$, we can then define $(a_{w,i},b_{w,i})=(a_{\sigma,i},b_{\sigma,i})$ for any
$\sigma\in \Sigma_\K$ such that $\grp_w=\grp_\sigma$.
Let $\O_{B,w}=\O_B\otimes_\O\O_w$ and $L_w = L\otimes_\O\O_w$.
We fix an $\O_B\otimes\Zp$-decomposition $L\otimes\Zp = L^+\oplus L^-$ such that
\begin{itemize}
\item $L^+ = \prod_{w|p} L_w^+$ is an
$\O_B\otimes\Zp=\prod_{w|p}\O_{B,w}$-module with
$\rank_{\O_w} (e_iL_w^+) =a_{w,i}$ (so $L^- = \prod_{w|p}L_w^-$ with $\rank_{\O_w}(e_iL_w^-) = b_{w,i}$ and $L_w = L_w^+\oplus L_w^-$);
\item $L_w^\pm$ is the annihilator of $L_{\bar w}^\pm$ for the perfect pairing
$\pair:L_w\times L_{\bar w}\rightarrow\Zp(1)$.
\end{itemize}

We fix a decomposition of
$e_iL_{w}^+$ as a direct sum of copies of $\O_w$. Taking $\Zp$-duals via $\pair$
yields a decomposition of $e_iL_{\bar w}^-$ as a direct sum of copies of $\O_{\bar w} \cong
\Hom_\Zp(\O_{w},\Zp)$ (the $\O_B$-action 
on $\Hom_\Zp(E_{i,w},\Zp)$ factors through $e_i\O_B\otimes\Z_{(p)}$ 
and is given by $b\phi(x) = \phi(b^* x)$).
The choice of these decompositions determines
isomorphisms
\begin{equation}\label{G-iso2}
\begin{split}
\GL_{\O_{i,w}}(e_iL_{w}^+) & \cong \GL_{a_{w,i}}(\O_w), \ \
\GL_{\O_{i,w}}(e_iL_{w}^-) \cong \GL_{b_{w,i}}(\O_w), \\ 
& \text{and}  \ \ \GL_{\O_{i,w}}(e_iL_{w}) \cong \GL_{n_i}(\O_w).
\end{split}
\end{equation}
With respect to these isomorphisms, the embedding
$$
\GL_{\O_{i,w}}(e_iL_{w}^+)\times\GL_{\O_{i,w}}(e_iL_{w}^-) \hookrightarrow
\GL_{\O_{i,w}}(e_iL_{w}) = \GL_{\O_{i,w}}(e_iL_w^+\oplus e_iL_w^-)
$$
is just the block diagonal map $(A,B) \mapsto \left(\smallmatrix A & 0 \\ 0 & B\endsmallmatrix\right)$.

\subsection{Connections with unitary groups and their Shimura varieties}\label{unitarygroupPEL}
We recall how PEL data of unitary type naturally arise from unitary groups.
Let $\CV=(V_i,\pair_{V_i})_{1\leq i\leq m}$ be a collection of hermitian pairs over $\K$: $V_i$ is a finite-dimensional $\K$-space and
$\pair_{V_i}:V_i\times V_i\rightarrow \K$ is a hermitian form
relative to $\K/\K^+$. Let $\delta\in\O$ be totally imaginary and prime to $p$, and
put $\pair_i=\trace_{\K/\Q}\delta\pair_{V_i}$. Let $L_i\subset V_i$ be an $\O$-lattice such that
$\la L_i,L_i\ra_i\subset \Z$ and $\pair_i$ is a perfect pairing on $L_i\otimes\Zp$.
Such an $L_i$ exists because of our hypotheses on $p$ and its prime divisors in $\K$ and on $\delta$.
For each $\sigma\in\Sigma_\K$, $V_{i,\sigma}=V_i\otimes_{\K,\sigma}\C$ has a $\C$-basis with respect to which
$\pair_{i,\sigma}= \pair_{V_{i,\sigma}}$ is given by a matrix of the form
$\diag(1_{r_{i,\sigma}}, -1_{s_{i,\sigma}})$. Fixing such a basis, let
$h_{i,\sigma}:\C\rightarrow \End_{\R}(V_{i,\sigma})$ be $h_{i,\sigma}(z) =
\diag(z1_{r_{i,\sigma}},\bar z 1_{s_{i,\sigma}})$.
Let $\Sigma=\{\sigma\in\Sigma_\K\ : \ \grp_\sigma\in \Sigma_p\}$. Then
$\Sigma$ is a CM type of $\K$, and we let
$h_i = \prod_{\sigma\in\Sigma} h_{i,\sigma}: \C \rightarrow \End_{\K^+\otimes\R}(V_i\otimes\R) =
\prod_{\sigma\in\Sigma} \End_{\R}(V_{i,\sigma})$.
Let $B=\K^m$, $*$ the involution that acts by $c$ on each $\K$-factor of $B$,
$\O_B=\O^m$, $L=\prod_i L_i$ with the $i$th factor of $\O_B=\O^m$ acting by scalar multiplication on the $i$th factor of $L$,
$\pair = \sum_i\pair_i$,
and $h = \prod_i h_i$. Then $P=(B,*,\O_B,L,2\pi\sqrt{-1}\pair,h)$ is a PEL datum of unitary type as defined above.
Note that $(a_{\sigma,i},b_{\sigma,i})$ equals $(r_{i,\sigma},s_{i,\sigma})$ if
$\sigma\in \Sigma$ and otherwise equals $(s_{i,\sigma},r_{i,\sigma})$.
The reflex field of this PEL datum $P$ is just the field
$$
F = \Q[\{\sum_{\sigma\in\Sigma_\K} a_{\sigma,i} \sigma(a) \ : \ a\in\K, i=1,..,m\}]\subset \C.
$$
This follows, for example, from \cite[Cor.~1.2.5.6]{Lan}. Note that $F$ is contained in the Galois
closure $\K'$ of $\K$ in $\C$.

As explained in \cite[\S 8]{Kottwitz} (see also Equations \eqref{cpx-1} below), over the reflex field $F$, a moduli
space ${\M_{K}}_{/F}$ associated with $P$ is the union of $|\ker^1(\Q,G)|$ copies of the canonical model of the Shimura variety $S_K(G,X_P)$
associated to $(G,h_P,K)$; here $(G,X_P)$ is the Shimura datum for which $h_P=h \in X_P$ and 
$\ker^1(\Q,G) := \ker \left(H^1(\Q,G)\rightarrow \prod_{v} H^1(\Q_v,G)\right)$.  
More precisely, the elements of $\ker^1(\Q,G)$ classify isomorphism classes of hermitian tuples $\CV'=(V_j',\pair_{V_j'})_{1\leq j\leq m}$ 
that are locally isomorphic to $\CV$ at every place of $\Q$. Let $\CV = \CV^{(1)}, ...., \CV^{(k)}$ be representatives for these isomorphism classes.
Then ${\M_K}_{/F}$ is naturally a disjoint union of $F$-schemes indexed by the $\CV^{(j)}$: ${\M_K}_{/F} = \sqcup  \M_{K,\CV^{(j)}}$. The scheme
$\M_{K,\CV}=\M_{K,\CV^{(1)}}$ is the canonical model of $S_K(G,X_P)$, and for each $j$ there is  an $F$-automorphism of ${\M_K}_{/F}$ mapping
$\M_{K,\CV}$ isomorphically onto $\M_{K,\CV^{(j)}}$. 
In \cite{Kottwitz}, Kottwitz only treats the case where $m = 1$, but the reasoning is the same in the general case.

If $m = 1$ and $\dim_\K V_1$ is even,
then the group $G$ satisfies the Hasse principle (that is, $\ker^1(\Q,G)=0$).
In this case $\M_{K}$ is an integral model of the Shimura variety $S_K(G,X_P)$.  If $\dim_\K V_1$ is odd or $m\geq 1$, this is no longer
the case.  However, for applications to automorphic forms, we only need one copy of $S_K(G,X_P)$. We let 
$\M_{K,L}$ be the scheme theoretic closure of the $F$-scheme $\M_{K,\CV}$ in $\M_K$; this is a smooth, quasi-projective $S_\sq$-scheme. We let
\begin{equation}\label{hasseprin} 
s_L:  \M_{K,L} \hookrightarrow \M_{K} 
\end{equation} 
be the inclusion.  We will refer to $\M_{K}$ as the {\it moduli space} and $\M_{K,L}$ as the {\it Shimura variety}.

\begin{rmk} 
For any PEL datum $P$, Lan has explained how the canonical model of the Shimura variety $S_K(G,X_P)$ is
realized as an open and closed subscheme of ${\M_K}_{/F}$ \cite[\S 2]{lanalgan},
with a smooth, quasi-projective
$S_0$-model provided by its scheme-theoretic closure in $\M_K$. This is just the model
described above.
\end{rmk}

\subsubsection{Base points}\label{basepoints}

Suppose $m=1$. Let $(V,\pair_V)=(V_1,\pair_{V_1})$, and let $n=\dim_\K V$.
Suppose $\K_1,...,\K_r$ are finite CM extensions 
of $\K$
with $\sum_{i = 1}^r [\K_i:\K] = n$.  For $i=1,...,r$, let $J_{0,i}$ be the Serre subtorus (defined in, e.g.,  \cite[Definition A.4.3.1]{ChCoOo})
of $\Res_{\K_i/\Q}\G_m$ and let $\nu_i:  J_{0,i} \rar \G_m$ be its similitude map.
Let $J'_0 \subset \prod_{i=1}^r J_{0,i}$ be the subtorus defined by equality of all the $\nu_i$.   
Let $V'_i=\K_i$, viewed as a $\K$-space of dimension $[\K_i:\K]$.  
Each $V'_i$ can be given a $\K_i$-hermitian structure such that
$\oplus_i V'_i$ is isomorphic to $V$ as an hermitian space over $\K$. Such an isomorphism determines an embedding of $J'_0$ in $G$.  Moreover, with respect
to such an embedding, there exists a point $h_0 \in X_P$ that factors through the image of $J'_0(\R)$ in $G(\R)$.   The corresponding
embedding of Shimura data $(J'_0,h_0) \rar (G,X)$ defines a CM Shimura subvariety of $\M_{K,L}$.

For the case $\K_i =\K$ for all $i$ (so $r=n$),
we write $J_{0}^{(n)}$ for $J'_0$; this corresponds to a PEL datum as in Section  \ref{PELdata} with
$B = \K^n$.  The base point $h \in X_P$ is called {\it standard} if it factors through an inclusion of $J_{0}^{(n)}$.  We henceforward assume that the 
base point $h$ in the PEL datum $P$ is standard.
This will guarantee that later constructions involving Harish-Chandra modules are rational over the Galois closure of $\K$.

Concretely, the assumption that $h$ is standard just means that $V$ has a $\K$-basis with respect to which $\pair_V$ is diagonalized and that each $h_\sigma$
has image in the diagonal matrices with respect to the induced basis of $V\otimes_{\K,\sigma}\C$.

\subsection{Toroidal compactifications}
One of the main results of \cite{Lan} is the existence of smooth toroidal
compactifications of $\M_K$ over $S_\sq$ associated to certain smooth projective polyhedral cone
decompositions (which we do not make precise here); when $\sq=\nullset$ this was already known.   
We denote such a compactification by $\M_{K,\Sigma}^\tor$.  See \cite[Section 6.4]{Lan} for the main statements used below.
There is a notion of one polyhedral cone decomposition refining another
that partially orders the $\Sigma$'s. If $\Sigma'$ refines $\Sigma$, then there is a canonical proper surjective map $\pi_{\Sigma',\Sigma}:\M_{K,\Sigma'}^\tor\rightarrow\M_{K,\Sigma}^\tor$ that is the identity
on $\M_K$. We write
$\M_K^\tor$ for the tower of compactications $\{\M_{K,\Sigma}^\tor\}_\Sigma$.
In certain situations (e.g., changing the group $K$, defining Hecke operators) it is
 more natural work to work with this tower, avoiding making specific compatible choices of $\Sigma$ or  having to
vary the `fixed' choices.

If $K_1^\sq\subset K_2^\sq$ then the natural map
$\M_{K_1}\rightarrow\M_{K_2}$ extends canonically to a map (of towers) $\M_{K_1}^\tor\rightarrow \M_{K_2}^\tor$. Similarly, if $g\in G(\A^\sq_f)$, then the map
$[g]:\M_{gKg^{-1}}\rightarrow \M_K$, $(A,\lambda,\iota,\alpha)\mapsto (A,\lambda,\iota, \alpha g)$, extends canonically to a map $\M_{gKg^{-1}}^\tor\rightarrow \M_K^\tor$.
This defines a right action of $G(\A_f^\sq)$ on the tower (of towers!) $\{\M_K^\tor\}_{K^\sq\subset G(\A_f^\sq)}$.

In the setting of Section  \ref{unitarygroupPEL}, we 
let $\M_{K,L,\Sigma}^\tor$ be the scheme-theoretic closure of $\M_{K,\CV}$ in $\M_{K,\Sigma}^\tor$. This is a smooth toroidal compactification of the Shimura variety $\M_{K,L}$, as discussed in, e.g., \cite[\S4.1]{Lan-Suh-vanishing} and \cite[\S3-\S4]{lanalgan};
the base change to $F$ is just the usual toroidal compactification of the canonical model. We continue to denote by $s_L$ the induced inclusion $\M_{K,L,\Sigma}^\tor\subset
\M_{K,\Sigma}^\tor$. 
Varying $\Sigma$ and $K$ as above induces maps between the $\M_{K,L,\Sigma}^\tor$. We let $\M_{K,L}^\tor$ be the tower $\{\M_{K,L,\Sigma}^\tor\}_\Sigma$. The action
of $G(\A_f^\sq)$ on $\{\M_K^\tor\}_{K^\sq\subset G(\A_f^\sq)}$ induces an action on $\{\M_{K,L}^\tor\}_{K^\sq\subset G(\A_f^\sq)}$.

Our convention will be to describe constructions over $\M_K^\tor$ as though $\M_K^\tor$ 
were a single scheme. The reader should bear in mind that this means a tower of such constructions over each $\M_{K,\Sigma}^\tor$.
In particular, when we define a sheaf $\CF$ over $\M_K^\tor$ (or some similar tower of schemes), this will be a sheaf $\CF_\Sigma$ on each $\M_{K,\Sigma}^\tor$ such that there is a natural map $\pi_{\Sigma',\Sigma}^*: \pi_{\Sigma',\Sigma}^{-1}\CF_\Sigma\rightarrow \CF_{\Sigma'}$ for any $\Sigma'$ that
refines $\Sigma$. By $H^i(\M_K^\tor,\CF)$ we will mean the direct limit
$\varinjlim_\Sigma H^i(\M_{K,\Sigma}^\tor,\CF_\Sigma)$. In practice, the maps of cohomology groups appearing in such a limit will all be isomorphisms.

\subsection{Level structures at $p$}\label{levelp}

Let $H=\GL_{\O_B\otimes\Zp}(L^+)$.
The identification 
\eqref{G-iso2} determines an isomorphism
\begin{equation}\label{H-iso}
H \isoarrow \prod_{w|p}\prod_{i=1}^m \GL_{a_{w,i}}(\O_w).
\end{equation}
Let $B_H\subset H$ be the $\Zp$-Borel that corresponds via this isomorphism with the product of the upper-triangular Borels and
let $B_H^u$ be its unipotent radical. Let $T_H=B_H/B_H^u$; this is identified by isomorphism \eqref{H-iso} with the diagonal matrices.

Suppose $\sq=\{p\}$.
There exists a semiabelian
scheme $\CA$ over $\M^\tor_{K}$ 
that is part of a degenerating family as in \cite[Thm.~6.4.1.1]{Lan}. In particular,
there exists a dual semiabelian scheme $\CA^\vee$ 
(in the sense of \cite[Thm.~3.4.3.2]{Lan}) together with
a homomorphism $\lambda:\CA\rightarrow \CA^\vee$,
a homomorphism $\iota:\O_B\otimes_{\Z_{(p)}}\rightarrow \End_{\M^\tor_{K}}(\CA)$,
and a $K^{(p)}$-level structure on $\CA_{/\M_{K}}$ such that the restriction
of $(\CA,\lambda,\iota,\alpha)$ over $\M_{K}$ represents the 
universal tuple (that is, the tautological tuple in the sense of \cite[Thm.~6.4.1.1(1)]{Lan}).

We define $\oM_{K_r}$ to be the scheme over $\M^\tor_{K}$ whose $S$-points
classify the $B_H^u(\Zp)$-orbits of $\O_B\otimes\Zp$-injections
$\phi:L^+\otimes\bmu_{p^r}\hookrightarrow \CA^\vee[p^r]_{/S}$ of group schemes with
image an isotropic subgroup scheme.
We write $\M_{K_r}$ for its restriction over $\M_{K}$. The group $B_H(\Zp)$ acts on $\oM_{K_r}$ on the right
through its quotient $T_H(\Zp/p^r\Zp)$. We let $\oM_{K_r,L}$ be the pullback of $\oM_{K_r}$ over $\M_{K,L}^\tor$
and let $\M_{K_r,L}$ be the pullback over $\M_{K,L}$. Generally, the scheme $\oM_{K_r}$ (resp.~$\oM_{K_r,L}$)
is \'etale and quasi-finite but not finite over $\M_K^\tor$ (resp.~$\M_{K_r,L}^\tor$). 
We continue to denote by $s_L$ the inclusions $\M_{K_r,L}\hookrightarrow \M_{K_r}$ and 
$\oM_{K_r,L}\hookrightarrow \oM_{K_r}$ determined by these restrictions.

Let $B^+\subset G_{/\Zp}$ be the Borel that stabilizes $L^+$ and such that
\begin{equation}\label{B+TH}
B^+\twoheadrightarrow \G_m\times B_H\subset \G_m\times H,
\end{equation}
where the map to the first factor is the similitude character $\nu$ and the map to the
second is projection to $H$. Let $B^u\subset B^+$ be the unipotent radical.
Let $I_r^0\subset G(\Zp)$ consist of those $g$ such that $g\mod p^r \in B^+(\Zp/p^r\Zp)$, and
let $I_r\subset I_r^0$ consist of those $g$ projecting under the surjection \eqref{B+TH} to an element in $(\Zp/p^r\Zp)^\times\times B_H^u(\Zp/p^r\Zp)$. Then $I_r^0/I_r\isoarrow T_H(\Zp/p^r\Zp)$.
The choice of a basis of $\Zp(1)$ naturally identifies ${\M_{K_r}}_{/F}$ (resp.~$\M_{K_r,L}$)
with ${\M_{I_rK^p}}_{/F}$ (resp.~${\M_{I_rK^p,L}}_{/F} = S_{I_rK^p}(G,X_P)$), and ${\oM_{K_r}}_{/F}$ (resp.~${\oM_{K_r,L}}_{/F}$) is
the normalization of ${\M^\tor_{K}}_{/F}$ (resp.~${\M^\tor_{K,L}}_{/F}$) in
${\M_{K_r}}_{/F}$ (resp.~$\M_{K_r,L}/F$). Since it should therefore cause no ambiguity, we also put $K_r = I_rK^p$.
We similarly put $K_r^0 = I_r^0K^p$.

Note that under the isomorphisms \eqref{G-iso} and \eqref{G-iso2}, $B^+$ is identified with the group
\begin{equation}\label{B+-iso}
B^+\isoarrow \G_m \times \prod_{w\in\Sigma_p}\prod_{i=1}^m \left\{ \left(\smallmatrix A & B \\ 0 & D \endsmallmatrix\right) \in \GL_{n_i}(\O_w) \ : \ 
\begin{smallmatrix} \text{$A\in\GL_{a_{w,i}}(\O_w)$ is upper-triangular} \\ \text{$D\in\GL_{b_{w,i}}(\O_w)$ is lower-triangular} \end{smallmatrix}\right\}.
\end{equation}

\subsection{Modular forms} We define spaces of modular forms for the groups $G$ and
various Hecke operators acting on them.

\subsubsection{The groups $G_{0}$ and $H_{0}$}\label{G0H0}
Let $V = L\otimes\C$. The homomorphism $h$ defines a pure Hodge structure $V = V^{-1,0}\oplus V^{0,-1}$
of weight $-1$. Let $W = V/V^{0,-1}$. This is defined over the reflex
field $F$. Let $\Lambda_{0}\subset W$ be an $\O_{B}$-stable $S_\sq$-submodule
such that $\Lambda_{0}\otimes_{S_\sq}\C = W$.
Let $\Lambda_{0}^\vee=\Hom_{\Z_{(p)}}(\Lambda_{0},\Z_{(p)}(1))$
with $\O_{B}\otimes S_\sq$-action: $(b\otimes s)f(x) = f(b^{*}sx)$.
Put $\Lambda= \Lambda_{0}\oplus\Lambda_{0}^\vee$, and let $\pair_{can}:\Lambda\times\Lambda
\rightarrow\Z_{(p)}(1)$ be the alternating pairing 
\begin{align*}
\la (x_1,f_1),(x_2,f_2)\ra_{can} = f_2(x_1)-f_1(x_2).
\end{align*}
Note that $\Lambda_{0}$ and $\Lambda_{0}^\vee$ are isotropic submodules
of $\Lambda$.
Note also that the $\O_{B}$-action on $\Lambda$ is such that
$\la bx,y\ra_{can} = \la x, b^{*}y\ra_{can}$. Let $G_{0}$ be the group scheme
over $S_\sq$ such that for any $S_\sq$-algebra $R$
\begin{equation*}
G_{0}(R) = \left\{ (g,\nu)\in \GL_{\O_{B}\otimes R}(\Lambda\otimes_{S_\sq}R)\times R^\times \ : \
\begin{matrix} \la gx,gy\ra_{can} = \nu\la x,y\ra_{can}, \\
 \forall x,y\in \Lambda\otimes_{S_\sq}R \end{matrix} \right\}.
\end{equation*}
Let $H_{0}\subset G_{0}$ be the stabilizer of the polarization
$\Lambda =\Lambda_{0}\oplus \Lambda_{0}^\vee$.
The projection $H_{0}\rightarrow \G_m\times\GL_{\O_{B}\otimes S_\sq}(\Lambda_{0}^\vee)$
is an isomorphism (the projection to $\G_m$ is the similitude factor $\nu)$.
There is a canonical isomorphism
$V\cong \Lambda\otimes_{S_\sq}\C$ of $\O_{B}\otimes\C$-modules that
identifies $V^{-1,0}$ with $\Lambda_{0}\otimes_{S_\sq}\C$ and $V^{0,-1}$ with $\Lambda_{0}^\vee\otimes_{S_\sq}\C$ and
the pairing $\pair$ with $\pair_{can}$, and so identifies ${G}_{/\C}$ with ${G_{0}}_{/\C}$.
Let $C\subset {G}_{/\R}$ be the
centralizer of the homomorphism $h$ and set $U_\infty = U_h := C(\R)$.
The identification of ${G}_{/\C}$ with ${G_{0}}_{/\C}$ identifies $C(\C)$ with $H_{0}(\C)$.

\subsubsection{The canonical bundles}\label{canonicalbun}
Let $\CA$ be the semiabelian scheme over $\M_{K}^\tor$ as in Section  \ref{levelp} and let $\CA^\vee$ be the associated dual semiabelian scheme.
Let $\omega$ be the $\O_{\M_{K}^\tor}$-dual of $\Lie_{\M_{K}^\tor}\CA^\vee$.
The Kottwitz determinant condition implies  that $\omega$ is locally isomorphic
to $\Lambda_{0}^\vee\otimes_{S_\sq}\O_{\M_{K}^\tor}$ as an $\O_{B}\otimes\O_{\M_{K}^\tor}$-module.
Let
$$
\CE = \Isom_{\O_{B}\otimes \O_{\M_{K}^\tor}}
((\omega,\O_{\M_{K}^\tor}(1)),(\Lambda_{0}^\vee\otimes_{S_\sq}\O_{\M_{K}^\tor},\O_{\M_{K}^\tor}(1))).
$$
This is an $H_{0}$-torsor over $\M_{K}^\tor$.
Let $\pi:\CE\rightarrow \M_{K}^\tor$
be the structure map. 
Let $R$ be an $S_\sq$-algebra. An $R$-valued point $f \in \CE$ 
can be viewed as a functorial rule assigning to a pair $(\underline{A},\veps)$ over
an $R$-algebra $S$ an element $f(\underline{A},\veps)\in S$. Here $\underline{A}$
is a tuple classified by $\M_{K}(S)$
and $\veps$ is a corresponding
element of $\CE(S)$.
We let $\CE_{r} = \CE\times_{\M_{K}^\tor}\oM_{K_r}$ and
let $\pi_{r}:\CE_{r}\rightarrow \oM_{K_r}$ be its structure map. Sections of the bundle
$\pi_{r,*}\O_{\CE_r}$ have interpretations as functorial rules of pairs $(\underline{X},\veps)$, where
$\underline{X}=(\underline{A},\phi)$ is a tuple classified by $\M_{K_r}(S)$ and
$\veps$ is a corresponding element in $\CE_r(S)$.

\subsubsection{Representations of $H_{0}$ over $S_0$}\label{H0-OF-reps}
Recall that $\K'$ is the Galois closure of $\K$,
and let $\grp'\subset\O_{\K'}$ be the prime determined by $incl_p$. Let
$$
S_0 = S_\sq\otimes_{\O_{F,(p)}}\O_{\K',(\grp')}.
$$
(So $S_0 = \K'$ if $\sq=\nullset$ and $S_0 = \O_{\K',(\grp')}$ if $\sq=\{p\}$.)
The isomorphism 
$\O\otimes S_0  \isoarrow \prod_{\sigma\in \Sigma_\K} S_0$, $a\otimes s \mapsto (\sigma(a)s)_{\sigma_\in\Sigma_\K},$
induces a decomposition
$$
\O_B \otimes S_0'\isoarrow \O_B\otimes_\O(\O\otimes S_0) \isoarrow \prod_{\sigma\in\Sigma_\K} \O_B\otimes_{\O,\sigma} S_0
= \prod_{\sigma\in\Sigma_\K} \O_{B,\sigma}.
$$
This in turn induces $\O_B\otimes S_0 = \prod_{\sigma\in\Sigma_\K} \O_{B,\sigma}$-decompositions
$\Lambda_0\otimes_{S_0}S_0 = \prod_{\sigma\in\Sigma_\K} \Lambda_{0,\sigma}$ and 
$\Lambda_0^\vee\otimes_{S_0}S_0 = \prod_{\sigma\in\Sigma_\K}\Lambda_{0,\sigma}^\vee$.
The pairing $\pair_{can}$ identifies $\Lambda_{0,\sigma c}^\vee = \Hom_{\Z_{(p)}}(\Lambda_{0,\sigma},\Z_{(p)}(1))$.

Since $S_0$ is a PID, $e_i\Lambda_{0,\sigma}$ and $e_i\Lambda_{0,\sigma}^\vee$ are free $S_0$-modules, of respective 
ranks $a_{\sigma,i}$ and $b_{\sigma,i}$. We fix an $S_0$-basis of $e_i\Lambda_{0,\sigma}$.
By duality, this determines an $S_0$-basis of $e_i\Lambda_{0,\sigma c}^\vee$. This yields an isomorphism
\begin{equation}\label{H0-iso}
{H_0}_{/S_0} \isoarrow \G_m\times \prod_{\sigma\in\Sigma_\K}\prod_{i=1}^m \GL_{\O_i\otimes_{\O,\sigma} S_0'}(e_i\Lambda_{0,\sigma}^\vee)
\cong \G_m\times \prod_{\sigma\in\Sigma_\K}\prod_{i=1}^m \GL_{b_{\sigma,i}}(S_0).
\end{equation}

Let $B_{H_0}\subset {H_{0}}_{/S_0}$ be the $S_0$-Borel that corresponds via the isomorphism \eqref{H0-iso} to
the product of the lower-triangular Borels. Let $T_{H_0}\subset B_{H_0}$ be the diagonal
torus and let $B_{H_0}^u\subset B_{H_0}$ be the unipotent radical. We say that a 
character $\kap$ of $T_{H_0}$ that is defined over an $S_0$-algebra $R$ is a
{\it dominant character of $T_{H_0}$} if it is dominant with respect  
to the opposite (so upper-triangular) Borel $B_{H_0}^{\op}$. Via the isomorphism \eqref{H0-iso}, the characters of $T_{H_0}$
can be identified with the tuples $\kap = (\kap_0,(\kap_{\sigma,i})_{\sigma\in\Sigma_\K,1\leq i\leq m})$, 
$\kap_0\in \Z$ and
$\kap_{\sigma,i} =(\kap_{\sigma,i,j})\in \Z^{b_{\sigma,i}}$, and the dominant characters are those that satisfy
\begin{equation}\label{dom-wt-ineq}
\kap_{\sigma,i,1}\geq \cdots \geq \kap_{\sigma,i,b_{\sigma,i}}, \ \ \forall \sigma\in\Sigma_\K, \ \ i=1,...,m.
\end{equation}
The identification is just
\begin{equation*}\begin{split}
\kap(t) & = t_0^{\kap_0} \cdot
\prod_{\sigma\in\K} \prod_{i=1}^m \prod_{j=1}^{b_{\sigma,i}} t_{\sigma,i,j}^{\kap_{\sigma,i,j}}, \\
 t =(t_0,(\diag&(t_{\sigma,i,1},...,t_{\sigma,i,b_{\sigma,i}}))_{\sigma\in\Sigma_\K,1\leq i\leq m}) \in T_{H_0}.
\end{split}
\end{equation*}

Given a dominant character $\kap$ of $T_{H_0}$ over an $S_0$-algebra $R$, let
$$
W_\kap(R)=\{\phi: {H_{0}}_{/R}\rightarrow\G_a \ : \ \phi(bh) = \kap(b)\phi(h), \ b\in B_{H_0}\},
$$
where $\kap$ is extended trivally to $B_{H_0}^u$.
If $R$ is a flat $S_0$-algebra then this is an $R$-model of the
irreducible algebraic representation of $H_{0}$ of highest weight $\kap$ with respect to $(T_{H_0},B_{H_0}^{\op})$. 
Let $w\in W(T_{H_0},{H_0}_{/S_0})$ be the longest element in the
Weyl group and let $\kap^\vee$ be the dominant character of $T_{H_0}$ defined by
$\kap^\vee(t) = \kap(w^{-1}t^{-1}w)$. The dual
$$
W_\kap^\vee(R) = \Hom_R(W_\kap(R),R)
$$
is, for a flat $S_0$-algebra $R$, an $R$-model of the representation with highest weight $\kap^\vee$.

The submodule $W_\kap(R)^{B_{H_0}^u}$ is a free $R$-module of
rank one spanned by $\phi_\kap$, the function with support containing the big cell $B_{H_0}wB_{H_0}$
(and equal to the big cell if $\kap$ is regular)
and such that $\phi_\kap(wB_{H_0}^u)=1$; $w\phi_\kap$ is a highest-weight vector.
The module $W_\kap^\vee$ is generated over $R$ as an $H_0$-representation by the functional $\ell_{\kap} = (\text{evaluation at $1$})$; $w\ell_\kap$ is a highest-weight vector.  Also,
$$
\Hom_{H_0}\left(W_\kap^\vee(R),W_{\kap^\vee}(R)\right) = R$$
with basis the homomorphism that sends   $\ell_\kap$ to $\phi_{\kap^\vee}$.   (For all this, see \cite{Hida}, Section  8.1.2, and the text of Jantzen cited there.)

For future reference, we also note that via the isomorphism \eqref{H0-iso}
the identification of $C(\C)$ with $H_0(\C)$ identifies
\begin{equation}\label{Uinfty-iso}
U_\infty = C(\R) \isoarrow \{ (h_0,(h_\sigma)_{\sigma\in\Sigma_\K})\in H_0(\C) \ : \ h_0\in \R^\times, h_0{}^t\bar h_\sigma^{-1} = h_\sigma \},
\end{equation}
where the `$\bar{\ }$' denotes complex conjugation on $\C$. That is, $U_\infty$ is identified with the subgroup of the product
$\prod_{\sigma\in\Sigma_\K}GU^+(b_\sigma)$ of {\it full} unitary similitude groups (see Section  \ref{notation-section}) in which  all the similitude factors agree.

\subsubsection{The modular sheaves}\label{aut-sheaves}

Let $R$ be a $S_0$-algebra and $\kap$ a dominant $R$-character of $T_{H_0}$.
Let
$$
\omega_{\kap,M} = \pi_{*}\O_{\CE}[\kap] \ \ \text{and} \ \
\omega_{r,\kap,M} = \pi_{r,*}\O_{\CE_{r}}[\kap]
$$
be the subsheaf of the quasicoherent sheaf $\pi_{*}\O_{\CE}$ on ${\M_{K}^\tor}_{/R}$ and ${\oM_{K_r}}_{/R}$, respectively, on which $B_{H_0}$ acts via $\kap$.  (See \cite{Hida}, Sections 8.1.2 and 8.1.3, for this construction.)
We let 
\begin{equation}\label{byzero}
\omega_{\kap} = s_{L,*}s_L^*\omega_{\kap,M} \ \ \text{and} \ \ \omega_{r,\kap} = s_{L,*}s_L^*\omega_{r,\kap,M}.
\end{equation}
These are the respective restrictions to the toroidal compactifications of the Shimura varieties $\M_{K,L_{/R}}$ and $\M_{K_r,L_{/R}}$ of the sheaves $\omega_{\kap,M}$ and $\omega_{r,\kap,M}$ , 
extended by zero to the full moduli space.  We will use the same notation to denote the restriction of these sheaves over $\M_{K,L}$ and $\M_{K_r,L}$.

\subsubsection{Modular forms over $S_0$ of level $K$}\label{levelK}
Let $R$ be a $S_0$-algebra.
The
$R$-module of modular forms (on $G$) over $R$ of weight $\kap$ and level $K$ is
$$
M_\kap(K;R) = H^0({\M_{K,L}^\tor}_{/R},\omega_{\kap}).
$$
The K\"ocher principle \cite{KLkoecher} and the definition \eqref{byzero}  implies that
\begin{equation}\label{Kocher}
M_\kap(K;R) = H^0\left({\M_{K,L}}_{/R},\omega_{\kap}\right)
\end{equation}
except when $F_0=\Q$ and ${G^{\mathrm{der}}}{/\Q}$ has an irreducible factor isomorphic to $SU(1,1)$. However, in this exceptional case
the toroidal compactifications are the same as the minimal compactification and therefore canonical; we leave it to the reader to make the necessary 
adjustments to our arguments in this case (or to find them in the literature).  We will generally refrain from referring explicitly to this exception.

By \eqref{Kocher}
a modular form $f\in M_\kap(K;R)$ can be viewed as a functorial rule assigning to a pair
$(\underline{A},\veps)$, over an $R$-algebra $S$, that is an $S$-valued point of the Shimura variety $\M_{K,L}$, an element $f(\underline{A},\veps)\in S$ and satisfying
$f(\underline{A},b\veps) = \kap(b)f(\underline{A},\veps)$ for $b\in B_{H_{0}}(S)$.

Let $D_\infty$ be the Cartier divisor $\M_{K}^\tor - \M_{K}$, equipped with its structure of reduced closed subscheme.  The $R$-module of cuspforms (on $G$) over $R$ of weight $\kappa$ and level $K$ is the submodule 
$$
S_\kap(K;R) = H^0\left({\M_{K}^\tor}_{/R},\omega_{\kap}\left(-D_{\infty}\right)\right)
$$
of $M_\kap(K;R)$.  It follows from Proposition 7.5 of \cite{lanIMRN} that $S_\kap(K;R)$ is independent of the choice of toroidal compactification.

\subsubsection{Modular forms over $S_0[\psi]$ with Nebentypus $\psi$}

Let $\psi:T_H(\Zp)\rightarrow \bQ^\times$ be a character factoring through
$T_H(\Zp/p^r\Zp)$. Suppose $R$ is an algebra over $S_0[\psi]$, the ring obtained by adjoining the values of $\psi$ to $S_0$ (we use the analogous notation without comment below).  
We define the $R$-module of modular forms (on $G$) over $R$ of weight $\kappa$, level $K_r$,
and character $\psi$ to be
$$
M_\kap\left(K_r,\psi;R\right)= \left\{f\in H^0\left({\oM_{K_r}}_{/R},\omega_{r,\kap}\right) \ : \ t\cdot f = \psi(t)f \
\forall t\in T_H(\Zp)\right\}.
$$
It follows from Remark 10.2 of \cite{KLkoecher} that the  K\"ocher principle
applies\footnote{We ignore the exceptional case when $F_0=\Q$ and $G^{der}/\Q$ contains a factor isomorphic to $SU(1,1)$.  On the other hand, we are using the fact that the $M_{K_r}$ we have defined here is a special case of the ordinary locus of \cite{Lan2}.}
and we have
\begin{equation}\label{Kocher2}
M_\kap\left(K_r,\psi;R\left[\frac{1}{p}\right]\right) = \left\{f\in H^0\left({\M_{K_r}}_{/R\left[\frac{1}{p}\right]},\omega_{r,\kap}\right) \ :
\ t\cdot f = \psi(t)f \ \forall t\in T_H(\Zp)\right\}.
\end{equation}
A section $f\in M_\kap(K_r,\psi;R)$ can be interpreted as a functorial rule
assigning to a pair $(\underline{X},\veps)$, over an $R$-algebra $S$, that is an $S$-valued point of the Shimura variety  
$\M_{K_r,L}$), an element  
$f(\underline{X},\veps)\in S$,
where $\underline{X}=(\underline{A},\phi)$, satisfying
$f(\underline{A},\phi\circ t,b\veps) = \psi(t)\kap(b)f(\underline{X},\veps)$
for all $t\in T_H(\Zp)$ and $b\in B_{H_0}(S)$.

We similarly define the submodule of cuspforms of character $\psi$ to be
$$
S_\kap\left(K_r,\psi;R\right) = \left\{f\in H^0\left({\oM_{K_r}}_{/R},\omega_{r,\kap}(-D_{\infty,r})\right) \ : \ t\cdot f = \psi(t)f \
\forall t\in T_H(\Zp)\right\}.
$$
Here $D_{\infty,r}$ is the Cartier divisor $D_{\infty,r} : = \oM_{K_r} - M_{K_r}$ with the structure of a reduced 
closed subscheme (cf.~\cite[Thm.~5.2.1.1(3)]{Lan2})

\subsubsection{The actions of $G(\A_f^\sq)$ and $G(\A_f^p)$} 
The action of $G(\A_f^\sq)$ on $\{\M_{K}^\tor\}_{K^\sq}$ gives an action 
of $G(\A_f^\sq)$ on 
$$
\varinjlim_{K^\sq} M_\kap(K;R) \ \ \text{and} \ \ \varinjlim_{K^\sq} S_\kap(K;R).
$$
Similarly, the action of $G(\A_f^p)$ extends to an action on
$\{\oM_{K_r}\}_{K^p}$,  giving
an action of $G(\A_f^p)$ on
$$
\varinjlim_{K^p} M_\kap(K_r,\psi;R)  \ \ \text{and} \ \ \varinjlim_{K^p} S_\kap(K_r,\psi;R).
$$
The submodules fixed by $K^\sq$ (resp.~$K^p$) are just the modular forms and cuspforms of weight $\kap$
and level $K$ (resp.~prime-to-$p$ level $K^p$).  (Here we are using the fact that the toroidal compactifications are normal schemes over $Spec(R)$; see \cite[Proposition ~7.5]{lanIMRN}.)

\subsubsection{Hecke operators away from $p$}\label{Heckeprimetop}

Let $K_j=G(\Zp)K_j^p \subset G(\A_f)$, $j=1,2$,
be neat open compact supgroups. For $g\in G(\A_f^p)$ we define Hecke operators
\begin{equation*}
\begin{split}
\left[K_{2} g K_{1}\right]:M_\kap\left(K_{1};R\right) & \rightarrow M_\kap\left(K_{2};R\right),  \\
[K_{2,r} g K_{1,r}]:M_\kap\left(K_{1,r},\psi;R\right) & \rightarrow M_\kap\left(K_{2,r},\psi;R\right)
\end{split}
\end{equation*}
(in the obvious notation) through the action of $G(\A_f^p)$ on the modules of modular forms:
\begin{equation}
[K_{2,r}gK_{1,r}]f = \sum_{g_j} [g_j]^*f, \ \ K_{2}^p g K_{1}^p = \sqcup_{g_j} g_jK_{1}^p.
\end{equation}
In particular,
\begin{equation}\label{Hecke-eq1}
([K_{2,r} g K_{1,r}]f)(A,\lambda,\iota,\alpha K_2^p,\phi,\veps)
=  \sum_{g_j }f(A,\lambda,\iota,\alpha g_jK_1^p,\phi,\veps).
\end{equation}
These actions map cuspforms to cuspforms.

When $K_2 = K_1$ is understood we write $T(g)$ instead of $[K_1 g K_{1}]$ and $T_r(g) = [K_{1,r} g K_{1,r}]$;
we drop the subscript $r$ when that is also understood.

\subsubsection{Hecke operators at $p$}\label{Heckeatp}
If $p$ is invertible in $R$ (so $R$ is a $\Qp$-algebra) we define Hecke operators
$T(g) = [K g K]$ and $T_r(g) = [K_r g K_r]$ on the spaces of modular forms and cuspforms over $R$ just
as we did in \ref{Heckeprimetop}.
We single out some particular operators: for $w\in \Sigma_p$,
$1\leq i\leq m$, $1\leq j\leq n_i$, we let 
$t_{w,i,j}^+\in B^+(\Qp)$ be the element
identified via \eqref{B+-iso} with $(1,(t_{w',i',j}))$ where
$$
t_{w',i',j} = \begin{cases} \diag(p1_j, 1_{n-j}) & w=w', i=i', j\leq a_{w}  \\
\diag(p1_{a_w}, 1_{n-j}, p1_{j-a_w}) & w=w', i=i', j > a_w \\
1_n & \text{otherwise}.
\end{cases}
$$
Note that $t_{w,i,j}^+$ has the property that
$$
t_{w,i,j}^+ I_r^0 t_{w,i,j}^{+,-1} \subset I_r^0.
$$
Let $t_{w,i,j}^- = (t_{w,i,j}^+)^{-1}$.
We put
\begin{equation}\label{Up-operator}
U_{w,i,j} = K_r t_{w,i,j}^+ K_r, \ \ U^-_{w,i,j} = K_r t_{w,i,j}^- K_r;
\end{equation}

Hida has shown that these Hecke operators can even be defined on $p$-adic modular forms and cuspforms
when $p$ is not a zero-divisor but not necessary invertible (see Section \ref{Heckeatp2}).

\begin{rmk} To define the actions of these Hecke operators on higher coherent cohomology of
automorphic vector bundles it is necessary to use the
class of smooth projective polyhedral cone decompositions used to define toroidal compactifications 
in \cite{Lan,Lan2}.  For holomorphic forms this is generally superfluous because of the Koecher principle \cite{KLkoecher}.
\end{rmk}

\subsubsection{Comparing spaces of modular forms of different weight}\label{compare-weight}
Given an integer $a$, let $\kap_a$ be the weight $\kap_a = (a,(0))$. We define a modular
form $f_a \in M_{\kap_{a}}(K;R)$ by the rule $f_a(\underline{A},\veps) = \lambda^a$, where
$(\underline{A},\veps)$ is a pair over an $R$-algebra $S$ and the isomorphism
from $S(1)$ to itself induced by $\veps$ is multiplication by $\lambda\in S^\times$. 

Let $\kap = (\kap_0,(\kap_{\sigma,i}))$ be a weight, and put $\kap' = (\kap_0+a,(\kap_{\sigma,i}))$.
Then there are isomorphisms
$$
M_\kap(K;R)\stackrel{f\mapsto f_a\cdot f}{\rightarrow} M_{\kap'}(K;R) \ \ \text{and } 
M_\kap(K_r,\psi;R)\stackrel{f\mapsto f_a\cdot f}{\rightarrow} M_{\kap'}(K_r,\psi;R).
$$
These maps induce isomorphisms on spaces of cuspforms, and the Hecke operators $T(g)$ satisfy
$$
f_a\cdot T(g)f = ||\nu(g)||^{a} T(g)(f_a\cdot f).
$$

\subsection{Complex uniformization}\label{cpx}
We relate the objects defined so far to the usual complex analytic description of modular forms on Shimura varieties.  For the identification of the adelic double coset spaces with the sets of complex points we refer to Section 8 of \cite{Kottwitz}

\subsubsection{The spaces}\label{spaces-section}
Let $X$ be the $G(\R)$-orbit under conjugation of the homomorphism $h$.
Recall that the stabilizer of $h$ is the group $U_\infty= C(\R)$, so there is a natural
identification $G(\R)/C(\R)\isoarrow X$, $g\mapsto ghg^{-1}$, which gives
$X$ the structure of a real manifold.
Let $P_0\subset G_0$ be the stabilizer of $\Lambda_0$.
Via the identification of ${G}_{/\C}$
with ${G_{0}}{}_{/\C}$, which identifies $C(\C)$ with $H_0(\C)$, $X$
is identified with an open subspace of $G_{0}(\C)/P_{0}(\C)$,
which gives $X$ a complex structure.   There are natural
complex analytic identifications
\begin{equation}\label{cpx-1}
\begin{split}
\M_{K,L}(\C) = & G(\Q)\backslash X\times G(\A_f) /K \\
\M_{K_r,L}(\C) = & G(\Q)\backslash X\times G(\A_f)/K_r,
\end{split}
\end{equation}
where the class of $(h',g)\in X\times G(\A_f)$, with $g_p \in G(\ZZ_p)$,
corresponds to the equivalence class
of the tuple $\underline{A}_{h',g}=(A_{h'},\lambda_{h'},\iota,\eta_g)$
(or $\underline{X}_{h',g}=(\underline{A}_{h',g},\phi_g)$) consisting of
\begin{itemize}
\item the abelian variety $A_{h'}= (L\otimes\R)/L$ with the complex
structure on $L\otimes\R$ being that determined by $h'$; its dual abelian variety is $A_{h'}^\vee:=(L\otimes\R)/L^\#$, where
again $L\otimes\R$ has the complex structure defined by $h'$ and
where $L^\#=\{x\in L\otimes\R\ : \ \la x,L\ra\subseteq\Z(1)\}$;
\item $\lambda_{h'}:A_{h'}\rightarrow A_{h'}^\vee$ is the isogeny
induced by the identity map on $L\otimes\R$;
\item $\iota$ is induced from the canonical action of $\O_{B}$ on $L$;
\item $\eta_{g}$ is the $K^p$-orbit of the translation by $g$ map
$g^p:L\otimes\A_f^p\isoarrow L\otimes\A_f^p = H_1(A_{h'},\A_f^p)$;
\item in the case of $\M_{K_r,L}$, $\phi_g$ is the $B_H^u(\Zp)$-orbit of the
map $L^+\otimes\mu_{p^r}\hookrightarrow A_{h'}^\vee[p^r] = \frac{1}{p^r}L^\#/L^\#
= (L^\#\otimes\Zp)/\left(p^rL^\#\otimes\Zp\right)$,
$v\otimes e^{2\pi \sqrt{-1}/p^r}\mapsto g_pv \mod\,\left(p^rL^\#\otimes\Zp\right)$.
\end{itemize}
Here we are using that the simple factors of ${G}^{{\rm der}}_{/\R}$ are all of type $A$
(see \cite[\S7-\S8]{Kottwitz} for how this enters into the identifications \eqref{cpx-1}).

\subsubsection{Classical modular forms}\label{classicalmod}
The dual of the Lie algebra of $A_{h'}^\vee$ is
$\omega_{A_{h'}^\vee} = \Hom_\C(L\otimes\R,\C)$ with the complex structure
on $L\otimes\R$ being that determined by $h'$.
Recalling that
$L\otimes\R\isoarrow W = \Lambda_{0}\otimes_{S_0}\C$ is a $\C$-linear
isomorphism for the complex structure on $L\otimes\R$ determined by $h$,
we find that there is a canonical
$\O_{B}\otimes\C$-identification
 $\veps_0:\omega_{A^\vee_{h}}\isoarrow \Lambda_{0}^\vee\otimes_{S_0}\C$.
If $h'=ghg^{-1}$, then $\veps_{h'}(\lambda) = \veps_0(g^{-1}\lambda)$ is an $\O_{B}\otimes\C$-identification of $\omega_{A_{h'}^\vee}$ with $\Lambda_{0}^\vee\otimes_{S_0}\C$.
The complex points of the $H_{0}$-torsors $\CE/\M_{K,L}$ and $\CE_{r}/\M_{K_r,L}$ are then given by
\begin{equation}\label{CE-cpx}
\begin{split}
\CE(\C) = & G(\Q)\backslash G(\R)\times H_{0}(\C)\times G(\A_f)/U_\infty K \\
\CE_{r}(\C) = & G(\Q)\backslash G(\R)\times H_{0}(\C)\times G(\A_f)/U_\infty K_r,
\end{split}
\end{equation}
with the class of $(g,x,g_f)\in G(\R)\times H_{0}(\C)\times G(\A_f)$ corresponding
to the classes of 
$$(\underline{A}_{ghg^{-1},g_f}, (x\veps_0(g^{-1}\cdot),\nu(x)))$$
and 
$$(\underline{X}_{ghg^{-1},g_f}, (x\veps_0(g^{-1}\cdot),\nu(x))),$$
respectively.

As $\C$ is a $\Zp$-algebra via $\iota_p$, a weight $\kappa$ modular form over $\C$ is
therefore identified with a smooth function
$\vphi:G(\A)\times H_{0}(\C)\rightarrow\C$ such that
$\vphi(\gamma g uk, bxu) = \kap(b)\vphi(g,x)$
for $\gamma\in G(\Q)$,
$g\in G(\A)$, $x\in H_{0}(\C)$, $u\in U_\infty$, $b\in B_{H_{0}}(\C)$, and
$k\in K$ or $K_r$. The space
$$
W_\kap(\C) = \{ \phi:H_{0}(\C)\rightarrow\C\ : \ \phi \text{ holomorphic}, \phi(bx)=\kap(b)\phi(x) \
\forall b\in B_{H_{0}}(\C)\}
$$
is the irreducible $\C$-representation of $H_{0}$ of highest weight $\kap$ with respect
to $(T_{H_0},B_{H_0}^{\op})$ (this is the Borel-Weil theorem),
so a weight $\kap$ modular form
is also identified with a smooth function $f:G(\A)\rightarrow W_\kap(\C)$ such that
$f(\gamma guk) = u^{-1}f(g)$ for $\gamma\in G(\Q)$, $u\in U_\infty$, and $k\in K$ or $K_r$.
Here $U_\infty$ acts on  $W_\kap(\C)$ as $u\phi(x) =\phi(xu)$. The connection between $f$ and $\vphi$ is
$f(g)(x)=\vphi(g,x)$.
The condition that the modular form is holomorphic can be interepreted as follows.
Let $\grg= \Lie(G(\R))_\C$, and let $\grg = \grp^-\oplus\grk\oplus\grp^+$
be the Cartan decomposition for the involution $h(\sqrt{-1})$: $ad\,h(\sqrt{-1})$ acts as $\pm \sqrt{-1}$ on
$\grp^\pm$. The identification of $G(\C)$ with $G_{0}(\C)$ identifies
$\Lie(P_{0}(\C))$ with $\grk\oplus\grp^+$, and so $f$ corresponds
to a holomorphic form if and only if $\grp^-*f =0$.

Let $\psi:T_H(\Zp)\rightarrow \bQ^\times$ be a finite character that factors through
$T_H(\Zp/p^r\Zp)$. The condition
that a modular form have character $\psi$ becomes $f(gt) = \psi(t)f(g)$ for all $t\in T_H(\Zp)$,
where the action of $t$ comes via \eqref{B+TH}.

\subsubsection{Hecke operators}\label{Hecke-complex} The actions of the Hecke operators
in \ref{Heckeprimetop} and \ref{Heckeatp}
correspond to the following actions on the functions $f:G(\A)\rightarrow W_\kap(\C)$:
the action of $[K_2gK_1]$ is just
\begin{equation}\label{cpx-hecke1}
f(g) \mapsto \sum_{g_j} f(gg_j), \ \ \ K_2 gK_1  = \sqcup g_j K_1,
\end{equation}
and similarly with $K_i$ replaced by $K_{i,r}$.

\subsection{Igusa towers}\label{Igusatower}
Let $\sq=\{p\}$.
Recall $\CA$ and
$\omega$ from Section \ref{canonicalbun}.
Recall that the hypothesis \eqref{ord-hyp} implies that the completion of $\incl_p(S_\sq)$ is $\Zp$; in this way we
consider $\Zp$ an $S_\sq$-algebra. 
Let $k>0$ be so large that the $k$th-power of the Hasse invariant has a lift
to a section $E\in M_{\det^k}(K;\Zp)$.
(See \cite[Section  6.3.1]{Lan2} for the definition of the Hasse invariant on the toroidal compactification.)  
Put
$$
\SS_{m} = \M_{K,L}^\tor[\frac{1}{E}]_{/\Zp/p^m\Zp}.
$$
Let $\SS_{m}^0 = \M_{K,L}[\frac{1}{E}]_{/\Zp/p^m\Zp}$; this is an open subscheme of
$\SS_{m}$, and $\SS_1$ is dense in the special fiber of $\M_{K,L}^\tor$ 
(this follows from our hypotheses on the moduli problem and the discussion in \cite[Section~6.3.3]{Lan2}).
For $n\geq m$ let $\T_{n,m}/\SS_{m}$ be the finite \'etale scheme over $\SS_{m}$ \cite[Section  8.1.1]{Hida} such that
for any $\SS_{m}$-scheme $S$
$$
\T_{n,m}(S)  = \Isom_S(L^+\otimes \bmu_{p^n},\CA^\vee[p^n]^\circ),
$$
where  the superscript $^\circ$ denotes the identity component and the isomorphisms are of finite flat group schemes over $S$ with
$\O_{B}\otimes\Zp$-actions. The scheme $\T_{n,m}$ is Galois over $\SS_{m}$ with Galois group canonically isomorphic to $H(\Zp/p^n\Zp)$.  
The collection $\{\T_{n,m}\}_{n}$ is called the Igusa tower over $\SS_{m}$.

\subsection{$p$-adic modular forms}\label{padicmforms-section}
Let $\D_{n,m}$ be the preimage of $\D_m=\SS_{m}-\SS_{m}^0$ (with reduced closed subscheme structure)  in
$\T_{n,m}$  (the preimage is also reduced because the morphism is \'etale). For a $p$-adic ring $R$ (that is, $R = \varprojlim_m R/p^mR$),
let
$$
V_{n,m}(R) = H^0(\T_{n,m}/R, \O_{\T_{n,m}})  \ \ \text{and} \ \ V_{n,m}^{\cusp}(R) =
H^0(\T_{n,m}/R,\O_{\T_{n,m}}(-\D_{n,m})).
$$
The group $H(\Zp)$ acts on each through its
quotient $H(\Zp/p^n\Zp)$, the Galois group of $\T_{n,m}/\SS_{m}$.
The $R$-module of $p$-adic modular forms (for $G$) over $R$ of level $K^p$ is
$$
V(K^p,R) = \varprojlim_m\varinjlim_n V_{n,m}(R)^{B^u_H(\Zp)},
$$
and the $R$-module of $p$-adic cuspforms (for $G$) over $R$ of level $K^p$ is
$$
V(K^p,R)^{\cusp} = \varprojlim_m\varinjlim_n V_{n,m}^{\cusp}(R)^{B^u_H(\Zp)}.
$$
The group $T_H(\Zp)=B_H(\Zp)/B_H^u(\Zp)$ acts on these modules.

A $p$-adic modular form over $R$ can be viewed as a functorial rule that assigns
an element of a $p$-adic $R$-algebra $S$ to each tuple $(\underline{A},\phi)$ over $S$, where
$\underline{A}=(\underline{A}_m)\in \varprojlim\SS_m(S)$ and $\phi=(\phi_{n,m})\in\varprojlim_{m}\varprojlim_n \T_{n,m}(S)$ with each $\phi_{n,m}$ over
$\underline{A}_m$.

\subsubsection{$p$-adic modular forms of weight $\kap$ and character $\psi$}
\label{padicweight}
Let $\mathcal{L}'\subset\bQp$ be the extension of $\Qp$  generated by the images of all the embeddings of $\K$
into $\bQp$, and let $\O'$ be its ring of integers.
Let
$$
\kap= (\kap_{\sigma,i})_{\sigma\in\Sigma_\K,1\leq i\leq m}, \  \kap_{\sigma,i}\in \Z^{a_{\sigma,i}}.
$$
We denote also
by $\kap$ the $\O'$-valued character of $T_H(\Zp)$ defined by
\begin{equation*}\begin{split}
\kap(t) & =
\prod_{w|p} \prod_{\sigma\in\Sigma_\K \atop \grp_\sigma=\grp_w} \prod_{i=1}^m
\prod_{j=1}^{a_{\sigma,i}}\sigma(t_{w,i,j})^{\kap_{\sigma,i,j}}, \\
 t =(\diag&(t_{w,i,1},...,t_{w,i,a_{w,i}}))_{w|p,1\leq i\leq m} \in T_H(\Zp).
\end{split}
\end{equation*}
If $\psi:T_H(\Zp)\rightarrow\bQp^\times$ is a finite-order character, then we define
an $\O'[\psi]$-valued character $\kap_\psi$ of $\T_H(\Zp)$ by $\kap_\psi(t) = \psi(t)\kap(t)$.
For $R$ a $p$-adic ring that is also an $\O'[\psi]$-algebra,
the spaces of $p$-adic modular forms and cuspforms of weight $\kap$ and character $\psi$
are
$$
V_\kap(K^p,\psi,R) = \{f\in V(K^p,R) \ : \ t\cdot f = \kap_\psi(t)f \ \forall t\in T_H(\Zp)\}
$$
and
$$
V_\kap^{\cusp}(K^p,\psi,R) = \{f\in V^{\cusp}(K^p,R) \ : \ t\cdot f = \kap_\psi(t)f \ \forall t\in T_H(\Zp)\}.
$$
As a functorial rule, a $p$-adic modular form of weight $\kap$ and character $\psi$
satisfies $f(\underline{A},\phi\circ t) = \kap_\psi(t)f(\underline{A},\phi)$ for
all $t\in T_H(\Zp)$.

\subsubsection{The action of $G(\A_f^p)$}
The action of $G(\A_f^p)$ on $\{\M_{K,L_{/R}}^\tor\}_{K^p}$ induces an action on
$\{\SS_m\}_{K^p}$ and on $\{\T_{n,m}\}_{n,K^p}$, and these actions give
an action of $G(\A_f^p)$ on
$$
\varprojlim_{K^p} V(K^p,R)  \ \ \text{and} \ \ \varprojlim_{K^p} V_\kap(K^p,\psi,R)
$$
and on their submodules of cuspforms.  Indeed, while the lift $E$ is not necessarily a Hecke eigenform, its reduction mod $p$ is invariant under $G(\A_f^p)$.  This implies that the spaces of $p$-adic modular forms don't depend on the choice of lift.   (See \cite{Lan2}, especially sections 8.1.4 and 8.3.6, for a more canonical construction of this action.)

The submodules fixed by $K^p$ are just the $p$-adic modular forms and cuspforms of weight $\kap$ and prime-to-$p$ level $K^p$.
(Here we are using the fact that the Igusa varieties are normal schemes over $Spec(R)$; see Theorem 5.2.1.1 of  \cite{Lan2}.)

\subsubsection{Hecke operators away from $p$}\label{Heckeprimetop2} Let $K_j^p \subset G(\A_f^p)$, $j=1,2$,
be neat open compact supgroups. For $g\in G(\A_f^p)$ we define a Hecke operator
$[K_{2}^p g K_{1}^p]$ on the spaces of $p$-adic modular forms and cuspforms just as in
Section  \ref{Heckeprimetop}.

\subsubsection{Modular forms as $p$-adic modular forms} Let $\sq=\{p\}$.
Under Hypothesis \ref{ord-hyp}, the completion of $\incl_p(S_{(p)})$
is $\Zp$, so $incl_p$ identifies $\Zp$ as an $S_{(p)}$-algebra and $\O'$ as an $S_0$-algebra.
As $\O_B\otimes \Z_{(p)} = \O_{(p)}^m$, we have 
$$
\O_B\otimes\O' = (\O_{(p)}\otimes\O')^m = \prod_{w|p} \prod_{i=1}^m \O_w\otimes\O'  \isoarrow \prod_{w|p}\prod_{\sigma\in\Sigma_\K \atop \grp_\sigma=\grp_w} 
\prod_{i=1}^m\O = \prod_{\sigma\in\Sigma_\K} \prod_{i=1}^m\O' .
$$
The choices in Sections \ref{H0-OF-reps} and \ref{PELunitary} induce 
$\O_B\otimes\O'$-decompositions
$$
\Lambda_0\otimes_{S_\sq}\O' = \prod_{\sigma\in\Sigma_\K}\prod_{i=1}^m e_i\Lambda_{0,\sigma}\otimes_{S_0}\O'= \prod_{\sigma\in\Sigma_\K}\prod_{i=1}^m (\O')^{a_{\sigma,i}}
$$
and
$$
L^+\otimes_\Zp\O' = \prod_{w|p}\prod_{i=1}^m e_iL_w\otimes_\Zp\O' = \prod_{w|p}\prod_{i=1}^m (\O_w\otimes_\Zp\O')^{a_{w,i}}
= \prod_{w|p}\prod_{\sigma\in\Sigma_\K \atop \grp_\sigma = \grp_w} \prod_{i=1}^m (\O')^{a_{\sigma,i}}.
$$
Equating these identifications yields an $\O_B\otimes\O'$-identification $\Lambda_0\otimes_{S_\sq}\O'= L^+\otimes_\Zp~\O'$.
Recalling that $H_0\subset G_0$ is the stabilizer of the polarization $\Lambda=\Lambda_0\oplus\Lambda_0^\vee$ and hence that
${H_0}_{/\O'}\isoarrow \G_m\times\GL_{\O_B\otimes\O'}(\Lambda_0\otimes_{S_\sq}\O')$, this then determines an isomorphism
$$
{H_0}_{/\O'}\isoarrow \G_m\times H_{/\O'}
$$
which is given explicitly in terms of \eqref{H-iso}
and $\eqref{H0-iso}$ by
\begin{equation}\label{H0-H-iso}
{H_0}_{/\O'}\ni (\nu,(g_{\sigma,i})_{\sigma\in\Sigma_\K})\mapsto (\nu,(\prod_{\sigma\in\Sigma_\K\atop \grp_\sigma=\grp w} \nu\cdot {}^tg_{\sigma c,i}^{-1})_{w|p})
\in \G_m\times H_{/\O'},
\end{equation}
where we have used the identification 
$\GL_r(\O_w\otimes_\Zp\O') \isoarrow  \prod_{\sigma\in\Sigma_\K, \grp_\sigma=\grp_w} \GL_r(\O')$.
This identifies ${B_{H_0}}_{/\O'} = \G_m\times {B_H}_{/\O'}$, ${B_{H_0}^u}_{/\O'} = {B_H^u}_{/\O'}$, and ${T_{H_0}}_{/\O'} = \G_m\times {T_H}_{/\O'}$.

To each weight $\kap = (\kap_0, (\kap_{\sigma,i}))$ as in \eqref{H0-OF-reps}, we associate
a $\kap_p$ as in \ref{padicweight}:
$$
\kap_p = (\kap_{\sigma c,i}).
$$
Note that $\kap_{\sigma c,i}\in \Z^{b_{\sigma c,i}} = \Z^{a_{\sigma,i}}$.
If $t\in T_H(\Zp)$, $t=(\diag(t_{w,i,1},...,t_{w,i,a_{w,i}})$, then 
$$
\kap_p(t) = \prod_{w|p}\prod_{\sigma\in\Sigma_\K \atop \grp_\sigma=\grp_w} \prod_{i=1}^m\prod_{j=1}^{a_{w,i}}
\sigma(t_{w,i,j})^{\kap_{\sigma c,i,j}}.
$$
Note that if $x=(t_0,t) \in \Z_p^\times\times T_H(\Zp) \subset T_{H_0}(\O')$, then 
\begin{equation*}\label{kapstar}
\kap(x)= t_0^{c_0}\kap_p(t^{-1}), \ \ \ c_0 = \kap_0+{\sum_{\sigma,i,j}\kap_{\sigma,i,j}}.
\end{equation*}

As we explain in the following, for $\psi:T_H(\Zp)\rightarrow\Q_p$ a finite order character and $R$ a $p$-adic
$\O'[\psi]$-algebra, if $\kap$ satisfies the inequalities \eqref{dom-wt-ineq}, then the modular forms over $R$ of weight $\kap$ and character $\psi$
are $p$-adic modular forms of weight $\kap$ and character $\psi$.

Fixing $\Gm = \mathrm{Spec}(\Z[x,\frac{1}{x}])$ as usual yields an identification
$\mu_{p^n} = \mathrm{Spec}(\Z[x,\frac{1}{x}]/(x^{p^n}-1))$ for each $n\geq 1$, and hence
an identification $\Lie_{\Zp}(\mu_{p^n})=\Z x\frac{d}{dx}$. For any scheme $S$, this identifies
$\Lie_S(\mu_{p^n})$ with $\O_S$, compatibly as $n$ varies.
If $n\geq m$, $S$ is a $\Zp/p^m\Zp$-scheme, and $\phi\in \T_{n,m}(S)$, then this identification
gives an isomorphism
$$
\Lie(\phi):L^+\otimes\O_S = L^+\otimes\Lie_S(\mu_{p^n}) \isoarrow \Lie_S(\CA_{/S}^\vee[p^n]^\circ) = \Lie_S\CA_{/S}^\vee.
$$
The identification $\Lambda_0\otimes\Zp=L^+$ gives
$(\Lie(\phi)^\vee,id) \in \CE_n(S)$. If $f\in M_{\kap}(K_r,\psi;R)$ for $R$ a $p$-adic $\O'[\psi]$-algebra, then
the value of the $p$-adic modular form $f_\padic$ determined by $f$ on
a ($p$-adic) test object $(\underline{A},\phi)$ over a $p$-adic $R$-algebra $S$ is
$$
f_\padic(\underline{A},\phi) = \varprojlim_{m} f(\underline{A}_m,\phi_{m,m,r},(\Lie(\phi_{m,m,r})^\vee,id)) \in \varprojlim_m S/p^mS = S,
$$
where for $n\geq \max\{r,m\}$, $\phi_{n,m,r}$ is the isomorphism
$L^+\otimes\mu_{p^r}\isoarrow \CA_{/S}^\vee[p^r]^\circ$ determined by $\phi_{n,m}$.
If $t\in T_H(\Zp)$ then $\Lie(\phi\circ t)^\vee = t^{-1}\cdot\Lie(\phi)^\vee$, so
$$(t\cdot f_\padic)(\underline{A},\phi) = \varprojlim f(\underline{A}_m,\phi_{m,m,t}\circ t,
(\Lie(\phi_{m,m}\circ t)^\vee,id)) = \psi(t)\kap_p(t) f_\padic(\underline{A},\phi),
$$
hence $f_\padic$ is a $p$-adic modular form of weight $\kap_p$ and character $\psi$.
Clearly, if $f$ is a cuspform, then $f_\padic$ is a $p$-adic cuspform\footnote{A modular form can be a $p$-adic cuspform but not be cuspidal. A simple example is the
the critical $p$-stabilization $E_{2k}^*(z) = E_{2k}(z) - E_{2k}(pz)$ of the
level $1$ weight $2k\geq 4$ Eisenstein series $E_{2k}$.}.
Also, the corresponding $R$-module homomorphisms
\begin{equation}\label{padicclassical}
M_{\kap}(K_r,\psi;R)\hookrightarrow V_{\kap_p}(K^p,\psi,R)  \ \  \text{and} \ \
S_{\kap}(K_r,\psi;R) \hookrightarrow V_{\kap_p}^{\cusp}(K^p,\psi,R)
\end{equation}
-- these are injective because, as already noted, $S_1$ is dense in the special fiber of $\M_{K,L}^\tor$ -- are compatible with Hecke operators
 in the sense that
\begin{equation}\label{p-adic-hecke}
(T(g)\cdot f)_\padic = ||\nu(g)||^{-\kap_0}T(g)\cdot f_\padic
\end{equation}
for $g\in G(\A_f^p)$.

Note that if $\kap' = (\kap_0+a,(\kap_{\sigma,i}))$, then $\kap_p' = \kap_p$. Furthermore,
for $f\in M_\kap(K;R)$ and $f' = f_a f \in M_{\kap'}(K;R)$ (see \ref{compare-weight}),
$$
f_\padic = f'_\padic.
$$

\subsubsection{Hecke operators at $p$}\label{Heckeatp2}  Hida (\cite[8.3.1]{Hida}) has defined an action of
the double cosets $u_{w,i,j} = B^u_H(\Zp)t_{w,i,j}B^u_H(\Zp)$ on the modules of $p$-adic modular forms and
cuspforms; this action is defined via correspondences on the Igusa tower (see also \cite{SkUr-SKlifts}).
Moreover, as Hida shows, if $R$ is a $p$-adic domain in which $p$ is not zero, $\kap$ as in Section  \ref{padicweight},
and $f\in M_{\kap}(K_r,\psi;R)$, then
$u_{w,i,j}\cdot f \in M_{\kap}(K_r,\psi;R)$ and
\begin{equation}\label{Hecke-p-comp}
u_{w,i,j}\cdot f=|\kap_{norm}(t_{w,i,j})|_p^{-1} U_{w,i,j}\cdot f, \ \ \
\kap_{norm}=(\kap_{\sigma,i'}-b_{\sigma,i'}).
\end{equation}
We put
$$
u_p = \prod_{w\in\Sigma_p}\prod_{i=1}^m \prod_{j=1}^{n_i} u_{w,i,j}
$$
and define a projector
\begin{equation}\label{ordprojector}
e = \varinjlim_{n} u_p^{n!}.
\end{equation}

\subsubsection{Ordinary forms}\label{ordinaryforms} Let $R$ be a $p$-adic ring. The submodules of ordinary $p$-adic forms over $R$ are
$$
V^\ord\left(K^p,R\right) = eV\left(K^p,R\right), \ \ \text{and} \ \ V^{\ord,\cusp}\left(K^p,R\right) = eV^{\cusp}\left(K^p,R\right),
$$
and those of weight $\kap$ and character $\psi$ are
$$
M_\kap^\ord\left(K_r,\psi;R\right)  = eM_\kap\left(K_r,\psi;R\right), \ \ S_\kap^\ord\left(K_r,\psi;R\right) = eS_\kap(K_r,\psi;R),
$$
$$
V_{\kap_p}^\ord\left(K^p,\psi,R\right) = eV_{\kap_p}\left(K^p,\psi,R\right), \ \ V_{\kap_p}^{\ord,\cusp}\left(K^p,\psi,R\right) =
eV_{\kap_p}^{\cusp}\left(K^p,\psi,R\right).
$$
Hida's classicality theorem for ordinary forms  establishes that if
$R$ is a finite $\O'[\psi]$-domain (resp. a finite $\O'$-domain) then
\begin{equation}\label{classicalitythm1}
\begin{split}
V_{\kap_p}^{\ord,\cusp}\left(K^p,\psi,R\right) & = S_{\kap}^\ord\left(K_r,\psi;R\right) \\ 
(\text{resp. } V_{\kap_p}^{\ord,\cusp}\left(K^p,R\right) & = S_{\kap}^\ord\left(K_r;R\right) ) \\ \text{if }  \kap_{\sigma,i,a_{\sigma,i}}+\kap_{\sigma c,i,b_{\sigma,i}} & \gg n_ir_i  \ \forall \sigma\in\Sigma_\K, 1\leq i\leq m.
\end{split}
\end{equation}

This theorem (with more precise inequalities on $\kap_{\sigma,i,a_{\sigma,i}}+\kap_{\sigma c,i,b_{\sigma,i}}$) is proved in \cite{Hida, H02} 
assuming conditions denoted (G1)-(G3) (see Section 7 of \cite{H02}), which were subsquently proved by Lan in \cite{Lan}.\footnote{Theorem 6.4.1.1 of \cite{Lan} contains (G2) and part of (G1).  The remaining (projectivity) assertion in (G1) is contained in Theorem 7.3.3.4 of \cite{Lan}.  Condition (G3) is proved in Section  7.2.3 of \cite{Lan}.}
Let $R$ be as in Equation \eqref{classicalitythm1} and let $O^+$ denote the integral closure of $\Z_{(p)}$ in $R$.  
The fraction field
$\Frac(O^+)$ of $O^+$ is a number field over which $S_{\kap}\left(K_r,\psi;R\right)\otimes \Q$ has a rational model,
given by the space of $\Frac(O^+)$-rational cusp forms of type $\kap$ and level $K_r$.  The intersection of this space
with $S_{\kap}^\ord\left(K_r,\psi;R\right)$ is an $O^+$-lattice $S_{\kap}^\ord\left(K_r,\psi;O^+\right)$.  
Given any embedding $\iota:  O^+ \hookrightarrow \C$,
the image of $S_{\kap}^\ord\left(K_r,\psi;O^+\right)$ in the space $S_{\kap}\left(K_r,\psi;\C\right)$ will be
called the space of ordinary complex cusp forms (relative to $\iota$) of type $\kap$ and level $K_r$.

 \subsection{Measures and $\Lambda$-adic families}
 We recall $p$-adic measures and their connections with Hida's theory
 of $\Lambda$-adic modular forms.

 \subsubsection{$p$-adic measures}
 Let $R$  be a $p$-adic ring. The space of $R$-valued measures on $T_H(\Zp)$ is
 $$
 \mathrm{Meas}(T_H(\Zp);R) = \Hom_{\Zp} (C(T_H(\Zp),\Zp),R),
 $$
 where $C(T_H(\Zp),\Zp)$ is the $\Zp$-module of continuous $\Zp$-valued functions on $T_H(\Zp)$.
 Note that 
$C(T_H(\Zp),R) = C(T_H(\Zp),\Zp)\hat\otimes_\Zp R$, so we also have
$$
 \mathrm{Meas}(T_H(\Zp);R) = \Hom_{R} (C(T_H(\Zp),R),R).
 $$
 More generally, if $M$ is a complete $R$-module we can define the $R$-module of $M$-valued measures in the same way.
 The $R$-module of $M$-valued measures is naturally identified with $R[\![T_H(\Zp)]\!]\hat\otimes_R M$; the identification of
 a measure $\mu$ with an element $f$ of the completed group ring is such that for any
 continuous homomorphism $\chi:T_H(\Zp)\rightarrow R^\times$, $\mu(\chi) = \chi(f)$, where
 $\chi(f)$ is the image of $f$ under the homomorphism $R[\![T_H(\Zp)]\!]\rightarrow R$ induced by $\chi$.

 \subsubsection{$\Lambda$-adic forms}
 Let
 $$
 \Lambda_H = \O'[\![T_H(\Zp)]\!].
 $$
 Both $V(K^p,R)$ and $V^{\cusp}(K^p,R)$, $R$ a $p$-adic $\O'$-algebra, are $\Lambda_H$-modules via the actions of $T_H(\Zp)$ on them. A $\Lambda_H$-adic modular form over $R$ is a
 $\mu\in\mathrm{Meas}(T_H(\Zp);V(K^p;R))$ such that $\mu(t\cdot f) = t\cdot\mu(f)$
 for all $t\in\Lambda_H$. In particular, it follows that if $R$ is an $\O'[\psi]$-algebra, then $\mu(\kap_\psi) \in V_\kap(K^p,\psi,R)$. A $\Lambda_H$-adic cuspform is defined in the same way, replacing the $p$-adic modular forms with cuspforms. Similarly, an ordinary $\Lambda_H$-adic modular form or cuspform is also defined in the same way, replacing the
 modular forms and cuspforms with the ordinary forms. Clearly, if $\mu$ is a $\Lambda$-adic modular form, then $e\mu$ (the composition of $\mu$ with the $R$-linear projector
 $V(K^p,R)\rightarrow eV(K^p,R) = V^\ord(K^p,R)$) is an ordinary $\Lambda_H$-adic form.
 Let
 $$
 \CS^\ord(K^p,R) = \{\text{ordinary $\Lambda_H$-adic cuspforms $\mu\in\mathrm{Meas}(T_H(\Zp);V^{\ord,\cusp}(K^p,R))$}\}.
 $$
 The Hecke operators in \ref{Heckeprimetop2} and \ref{Heckeatp2} act on $\CS^\ord(K^p,R)$ through their actions on $V^{\ord,\cusp}(K^p,R)$.

 Let $\Delta\subset T_H(\Zp)$ be the torsion subgroup. Since $p$ is unramified in $\K$ by hypothesis, \eqref{H-iso} induces an identification
 $$
 \Delta\isoarrow \prod_{w|p}\prod_{i=1}^m (k_w^\times)^{a_{w,i}}
 $$
 where $k_w$ is the residue field of $\O_w$. In particular, $\Delta$ has order prime-to-$p$,
 so $\CS^\ord(K^p,R)$ decomposes as a direct sum of isotypical pieces for the $\O'$-characters
 $\omega\in\hat\Delta$ of $\Delta$:
 $$
 \CS^\ord(K^p,R) = \oplus_{\omega\in\hat\Delta} \CS_\omega^\ord(K^p,R).
 $$
 Let $W\subset T_H\left(\Zp\right)$ be a free $\Zp$-submodule such that  $T_H(\Zp)$ is isomorphic to $\Delta\times W$. Then $\Lambda_H = \O'[\![\Delta\times W]\!] = \Lambda^o[\Delta]$, where
 $$
 \Lambda^o = \O'[\![W]\!].
 $$
 Each $\CS_\omega^\ord(K^p,R)$ is a $\Lambda$-module.

 Let $R\subset\bQp$ be a finite $\O'$-algebra and let
 $$
 \Lambda^o_R = \Lambda^o\otimes_{\O'}R = R[\![W]\!].
 $$
 Theorem 7.1 (5) of Hida's paper \cite{H02} asserts (again, under conditions (G1)-(G3), which were proved in \cite{Lan})
 that
 \begin{equation}\label{controlthm2}
 \text{$\CS_\omega^\ord(K^p,R)$ is a free $\Lambda^o_R$-module of finite rank},
 \end{equation}
 and for any finite character $\psi:W\rightarrow \bQ_p^\times$ trivial on
 $W^{p^{r-1}}$ and $\kap$ as in
 \ref{padicweight} satisfying the restriction in \eqref{classicalitythm1},
 \begin{equation}\label{controlthm3}
 (\CS^\ord_\omega(K^p,R)\otimes_RR[\psi])/\grp_{\kap\psi}\CS^\ord_\omega(K^p,R)
 \otimes_RR[\psi] \xrightarrow{\mu\mapsto \mu(\kap\psi)} S_{\kap}^\ord(K_r,\omega\omega_\kap\psi,R[\psi])
 \end{equation}
 is an isomorphism, where $\grp_{\kap\psi}$ is the kernel of the homomorphism
 $\Lambda^o_R\otimes_R R[\psi]\rightarrow R[\psi]$ induced by the character
 $\kap\psi$ and $\omega_\kap\in\hat\Delta$ is $\kap|_\Delta$.

 \begin{rmk} Clearly, one can include types (i.e., irreducible representations $W_S$ of compact open subgroups of $K_S \subset K^p$, for some finite set $S$ of primes) in the definition of $\Lambda$-adic cuspforms, and we write
 the module of $\Lambda$-adic cuspforms of type $W_S$ as $\CS^\ord(K^p,W_S,R)$.  It can be shown that the analogs of the maps
 \eqref{controlthm3} in this context are also isomorphisms, using the fact
 that $\grp_{\kap\psi}$ is generated by a regular sequence.
 \end{rmk}

\section{The PEL data and restriction of forms}\label{doublingsetup}

In this section, we discuss restrictions of modular forms from a larger unitary group to a product of unitary groups, which is important for interpreting the doubling method (first introduced in Section  \ref{doubling16}) geometrically.

\subsection{The PEL data}\label{PELprob}
Let $P=(\K,c,\O,L,\pair,h)$
be a PEL datum of unitary type associated with a
hermitian pair $(V,\pair_V)$ as in Sections \ref{PELdata}, \ref{PELunitary}, and \ref{unitarygroupPEL},
together with all the associated objects, choices, and conventions from Section 
\ref{padic-unitary-section}. In particular, the index $m$ equals $1$.
In what follows we will consider four unitary PEL data $P_i = (B_i,*_i,\O_{B_i},L_i,\pair_i,h_i)$
together with $\O_{B_i}\otimes\Zp$ decompositions $L_i\otimes\Zp = L_i^+\oplus L_i^-$:
\begin{itemize}
\item $P_1 = P=(\K,c,\O,L,\pair,h)$, $L_1^\pm = L^\pm$;
\item $P_2 = (\K,c,\O,L,-\pair,h(\bar{\cdot}))$, $L_2^\pm = L^\mp$;
\item $P_3 = (\K\times\K, c\times c,\O\times\O, L_1\oplus L_2, \pair_1\oplus\pair_2, h_1\oplus h_2)$,
$L_3^\pm = L_1^\pm\oplus L_2^\pm$;
\item $P_4 = (\K, c, \O, L_3, \pair_3, h_3)$, $L_4^\pm = L_3^\pm$.
\end{itemize}
Given the hypotheses, there should be no confusion with the subscript `$i$'
being used in this section for the objects associated to the PEL datum $P_i$.

The reflex fields for $P_1$, $P_2$ and $P_3$ are all equal to the reflex field $F$ of $P$. The reflex field of $P_4$ is $\Q$. We put $G_i = G_{P_i}$ for $i=1,...,4$ and
$H_i = \GL_{\O_{B_i}\otimes\Zp}(L_i^+)$. Then $G_1=G_2$ and
there are obvious, canonical inclusions
$G_3\hookrightarrow G_4$ and $G_3\hookrightarrow G_1\times G_2$ which induce the obvious, canonical inclusions $H_3\hookrightarrow H_4$ and $H_3\hookrightarrow H_1\times H_2$.
For $K\subset G_i(\A_f)$ a neat open compact with $K=G_i(\Zp)K^\sq$ if $\sq=\{p\}$,
let $\M_{i,K} = \M_{K}(P_i)$ be the moduli scheme over $S_\sq$.

The choice of the $\O_{w}$-decomposition of
$L_w^\pm$ determines $\O_{B_i,w}$-decompositions of the modules $L_{i,w}^\pm = L_i\otimes_{\CO\otimes\Zp}\CO_w$ and so determines isomorphisms
\begin{equation}\label{Gi-iso}
{G_i}_{/\Zp} \isoarrow \G_m \times \prod_{w\in\Sigma_p} \begin{cases}
\GL_{n}(\O_w) &  i = 1,2 \\
\GL_{n}(\O_w)\times\GL_{n}(\O_w) & i = 3 \\
\GL_{2n}(\O_w) & i=4
\end{cases}
\end{equation}
and
\begin{equation}\label{Hi-iso}
{H_i}_{/\Zp} \isoarrow \prod_{w|p} \begin{cases}
\GL_{a_w}(\O_w) & i = 1 \\
\GL_{b_w}(\O_w) & i=2 \\
\GL_{a_w}(\O_w)\times\GL_{b_w}(\O_w) & i = 3 \\
\GL_{n}(\O_w) & i = 4.
\end{cases}
\end{equation}
The canonical inclusions in the preceding paragraph just correspond to the identity
map on the similitude factors and the obvious inclusions of the $\GL$-parts (being the
diagonal map in the case of the inclusions $G_3\hookrightarrow G_4$ and $H_3\hookrightarrow H_4$.)

Let $K_i^\sq \subset G_i(\A_f^\sq)$ be neat open compact subgroups.
Let $K_i = K_i^\sq$ if $\sq=\emptyset$ and $K_i = G_i(\Zp)K_i^\sq$ otherwise.
If $K_3^\sq\subset K_4^\sq\cap G_3(\A_f^\sq)$, then there is a natural
$S_\sq$-morphism
\begin{equation}\label{PELmap1}
\M_{3,K_3}\rightarrow \M_{4,K_4}, \ \
\underline{A}=(A,\lambda,\iota,\alpha)\mapsto \underline{A}_4=(A,\lambda,\iota\circ\diag, \alpha K_4^\sq),
\end{equation}
where $\diag:K\hookrightarrow K\oplus K$ is the diagonal embedding.
Let $e_i\in \O\oplus\O$, $i=1,2$, be the idempotent corresponding to the projection
to the $i$th factor.
If $K_3^\sq\subset (K_1^\sq\times K_2^\sq)\cap G_3(\A_f^\sq)$, then there is a natural
$S_\sq$-morphism
\begin{equation}\label{PELmap2}\begin{split}
\M_{3,K_3} &\rightarrow \M_{1,K_1}\times_{S_\sq}\M_{2,K_2}, \\
\underline{A}=(A,\lambda,\iota,\alpha) \mapsto & (\underline{A}_1,\underline{A}_2)= (A_1,\lambda_1,\iota_1,\alpha_1)\times(A_2,\lambda_2,\iota_2,\alpha_2),
\end{split}
\end{equation}
where $A_i = \iota(e_i)A$ (so $A=A_1\times A_2$), $\lambda_i =\iota^\vee(e_i)\circ\lambda\circ\iota(e_i)$,
$\iota_i$ is the restriction of $\iota$ to the $i$th factor, and
$\alpha_{i,s}:L_i\otimes\A_f^\sq\isoarrow H_1(A_{i,s},\A_f^\sq)$ is the restriction of
$\alpha_s$ to $L_i\otimes\A_f^\sq \subset L_3\otimes\A_f^\sq = (L_1\otimes\A_f^\sq)\oplus
(L_2\otimes\A_f^\sq)$  composed with the projection
$H_1(A_s,\A_f^\sq)\rightarrow H_1(A_{i,s},\A_f^\sq)$.

For suitably compatible choices of polyhedral cone decompositions, the morphisms
\eqref{PELmap1} and \eqref{PELmap2} extend to maps of toroidal compactifications 
(\cite{Harris-ToroidalCompactifications}, Proposition 3.4).\footnote{Although \cite{Harris-ToroidalCompactifications} only develops the theory in characteristic zero, the polyhedral cone decompositions used to define the toroidal compactifications in Lan's book \cite{Lan} are independent of characteristic.  Thus the arguments of  \cite{Harris-ToroidalCompactifications} go through without change.}

\subsubsection{Level structures at $p$} The definitions of level structures at $p$ in Section  \ref{levelp} for the
PEL data $P_i$ are compatible, and the morphisms \eqref{PELmap1} and \eqref{PELmap2}
extend to $S_\sq$-morphisms with each $\M_{i,K_i}$ replaced by $\M_{i,K_{i,r}} = \M_{K_{i,r}}(P_i)$.

\subsubsection{The canonical bundles}
To define the groups $G_{0,i}$ and $H_{0,i}$ as in Section  \ref{G0H0} in a compatible manner,
we need to specify the choice of the $\Lambda_{0,i}\subset W_i=V_i/V_i^{0,-1}$, where $V_i = L_i\otimes\C$ with the Hodge structure defined by the complex structure on $L_i\otimes\R$ determined by $h_i$. As $V_1=V$ with the same Hodge structure we
take $\Lambda_{0,1} = \Lambda_0$, but since $V_2= V_1$ with the Hodge indices reversed (so $V_2^{0,-1} = V_1^{-1,0}$)  we take $\Lambda_{0,2}$ to
be the image of $\Lambda_0^\vee$ in $W_2 = V_2/V_2^{0,-1} = V_1/V_1^{-1,0}$ using the
canonical identification $V_1^{0,-1}=V^{0,-1}\cong \Lambda_0^\vee\otimes_{S_0}\C$.
Then $\Lambda_1= \Lambda$ with its canonical pairing,
and $\Lambda_2 = \Lambda_0^\vee\oplus(\Lambda_0^\vee)^\vee = \Lambda$
with its canonical pairing. We then set $\Lambda_{0,3} = \Lambda_{0,4} = \Lambda_{0,1}\oplus\Lambda_{0,2}$ and
$\Lambda_3=\Lambda_4= \Lambda_1\oplus\Lambda_2$.

The fixed decompositions of $\Lambda_0$ and $\Lambda_0^\vee$ as $\O_B\otimes\Zp$-modules then determine compatible isomorphisms
\begin{equation}\label{H0i-iso}
{H_{0,i}}_{/\Zp} \isoarrow \G_m \times \prod_{w|p}
\begin{cases} \GL_{b_w}(\O_w) & i =1 \\ \GL_{a_w}(\O_w) & i=2 \\
\GL_{b_w}(\O_w)\times\GL_{a_w}(\O_w) & i =3 \\ GL_{n}(\O_w) & i=4.
\end{cases}
\end{equation}
There are canonical inclusions $H_{0,3}\hookrightarrow H_{0,4}$ and $H_{0,3}\hookrightarrow H_{0,1}\times H_{0,2}$ which correspond to the obvious inclusions under the isomorphisms
\eqref{H0i-iso}.  They both induce the identity map on the $\G_m$-factor; on the $\GL$-factors they induce the diagonal mapping and identity map, respectively. This gives similar inclusions among the (lower-triangular) Borels $B_{H_{0,i}}$ and the (diagonal) tori $T_{H_{0,i}}$. In particular, a dominant character $\kap$ of
$T_{H_{0,4}}$ or a pair $\kap=(\kap_1,\kap_2)$ consisting of dominant characters $\kap_1$ of $T_{H_{0,1}}$ and $\kap_2$ of $T_{H_{0,2}}$ restricts to
a dominant character of $T_{H_{0,3}}$, which we also denote by $\kap$.

Let $\pi_i:\CE_i\rightarrow \M_{K_i}$ be the canonical bundle.
The maps
\eqref{PELmap1} and \eqref{PELmap2} extend to maps of bundles
\begin{equation}\label{Emap1}
\CE_{3} \rightarrow \CE_4, \ \ (A,\lambda,\iota,\alpha,\veps)\mapsto (A,\lambda,\iota\circ\diag,\alpha K_4^\sq,\veps),
\end{equation}
and
\begin{equation}\begin{split}\label{Emap2}
\CE_{3}\rightarrow & \CE_1\times_{S_\sq}\CE_2, \\
(A,\lambda,\iota,\alpha,\veps) \mapsto (A_1,\lambda_1, & \iota_1,\alpha_1,\veps_1)\times(A_2,\lambda_2,\iota_2,\alpha_2,\veps_2),
\end{split}
\end{equation}
where $\veps_i = e_i\circ \veps \circ \iota(e_i)$.  There are similar maps of the bundles
$\CE_{i,r} = \CE_i\times_{\M^\tor_{i,K_i}}\overline{\M}_{i,K_{i,r}}$ with level structure at $p$.

\subsubsection{The Igusa towers}\label{igusatowers}
Let $\T_{n,m,i}/\SS_{m,i}$, $i=1,..,4$, be the Igusa tower for $\M_{i,K_i}$ as in \ref{Igusatower}.
The maps \eqref{PELmap1} and \eqref{PELmap2} extend to maps of Igusa towers in the obvious ways:
\begin{equation}\label{Igusamap1}
\T_{n,m,3}\rightarrow \T_{n,m,4},
\ \ (\underline{A},\phi)\mapsto (\underline{A}_4,\phi)
\end{equation}
and
\begin{equation}\label{Igusamap2}
\T_{n,m,3}\rightarrow \T_{n,m,1}\times_{\Zp} \T_{n,m,2}, \ \
(\underline{A},\phi)\mapsto ((\underline{A}_1,\phi_1),(\underline{A}_2,\phi_2)),
\end{equation}
where $\phi_i$ is the restriction of $\phi$ to $L^+_i\otimes\mu_{p^n}$ composed with
the projection to $\CA_i^\vee\left[p^n\right]^\circ$.  

\begin{rmk}\label{Igusaembedding} {\rm As explained in \cite[Section  2.1.11]{HLS}, the inclusion
\eqref{Igusamap1} {\it does not} restrict on complex points to the map $i_3$ of Shimura varieties determined by
the inclusion of $G_3$ in $G_4$.  For each prime $w$ of $F^+$ dividing $p$, let 
$$\gamma_{V_w} = \begin{pmatrix}1_{a_w} & 0 & 0 & 0\\
0 & 0 & 0 & 1_{b_w}\\
0 & 0 & 1_{a_w} & 0\\
0& 1_{b_w} & 0 & 0\end{pmatrix} \in G_4(F^+_w);$$
$\gamma_{V_p} = (\gamma_{V_w})_{w \mid p} \in G_4(F^+_p)$.
Then the inclusion \eqref{Igusamap1} is given by $i_3$ composed with right translation by $\gamma_{V_p}$.  (See map \eqref{siegeltranslation}.)}
\end{rmk}

When working with $p$-adic modular forms in subsequent sections, we will consider all the $T_{n,m,i}$ simultaneously,
$i = 1, 2, 3, 4$.  The collection $\{T_{n,m,i}\}$, or equivalently $\varinjlim_m\varprojlim_n T_{n,m,i}$ will be denoted $Ig_i$, $1 \leq i \leq 4$.
Thus, if $K^p_i, i = 1, 2, 3, 4$, are prime-to-$p$ level subgroups of $G_i(\A_f)$,  with $K^p_3 \subset K^p_4$,
$K^p_3 \subset K^p_1 \times K^p_2$, we similarly define 
Igusa varieties ${}_{K^p_i}Ig_i$ and inclusions
\begin{equation}\label{igp}  \gamma_{V_p}\circ i_3:  {}_{K^p_3}Ig_3 \rar {}_{K^p_4}Ig_4; ~~ i_4:  {}_{K^p_3}Ig_3 \rar {}_{K^p_1}Ig_1\times  {}_{K^p_2}Ig_2
\end{equation}

\subsection{Restrictions of forms}\label{restrictions}
The maps between the various moduli spaces and bundles induce
maps between spaces of modular forms.

\subsubsection{Restricting modular forms} Let $R$ be a $\Zp$-algebra and $\kap$ either a dominant character of
$T_{H_{0,4}}$ or a pair $\kap=(\kap_1,\kap_2)$ consisting of dominant characters
$\kap_1$ of $T_{H_{0,1}}$ and $\kap_2$ of $T_{H_{0,2}}$. Then the maps \eqref{Emap1} and
\eqref{Emap2} yield maps of modular forms
\begin{equation*}\label{restriction1}
res_1: M_\kap(K_4;R) \rightarrow M_\kap(K_3;R),
\end{equation*}
and
\begin{equation*}\label{restriction2}
res_2: M_{\kap_1}(K_1;R)\otimes_R M_{\kap_2}(K_2;R)\rightarrow M_{\kap}(K_3;R).
\end{equation*}
Let $\psi$ be either a $\bQ_p^\times$-valued character of $T_{H_4}\left(\Zp/p^r\Zp\right)$ or
a pair $\psi=\left(\psi_1,\psi_2\right)$ consisting of a $\bQ_p^\times$-valued character $\psi_1$ of
$T_{H_1}\left(\Zp/p^r\Zp\right)$ and $\psi_2$ of $T_{H_2}\left(\Zp/p^r\Zp\right)$. Then $\psi$ defines a
character of
$T_{H_3}\left(\Zp/p^r\Zp\right)$ that we continue to denote $\psi$. Let $R$ be a $\Zp[\psi]$-algebra.
The analogs of the maps \eqref{Emap1} and \eqref{Emap2} for
level structures at $p$ yield maps
\begin{equation}\label{restriction3}
res_3: M_\kap\left(K_{4,r},\psi;R\right) \rightarrow M_\kap\left(K_{3,r},\psi;R\right),
\end{equation}
and
\begin{equation*}
res_4: M_{\kap_1}\left(K_{1,r},\psi_1;R\right)\otimes_R M_{\kap_2}\left(K_{2,r},\psi_2;R\right)\rightarrow
M_{\kap}\left(K_{3,r},\psi;R\right).
\end{equation*}

\subsubsection{Restrictions of classical forms} In terms of the complex uniformizations
\eqref{CE-cpx}, the restrictions \eqref{Emap1} and \eqref{Emap2} correspond
to the maps induced by the canonical inclusions of $G_3$ and $H_{0,3}$ into
$G_4$ and $H_{0,4}$ and into $G_1\times G_2$ and $H_{0,1}\times H_{0,2}$, respectively.
In particular, if $\vphi: G_4(\A)\times H_{0,4}(\C)\rightarrow \C$ corresponds to a weight
$\kap$ modular form on $G_4$ of level $K_4$, then the image of $\vphi$ under
$res_1$ or $res_3$ corresponds to the restriction of $\vphi$ to $G_3(\A)\times H_{0,3}(\C)$.
Moreover, if $\vphi$ corresponds to $f:G_4(\A)\rightarrow W_{\kap,4}(\C)$ (we include the subscript `$i$' to indicate that $W_{\kap,i}$ is the
irreducible representation of $H_{0,i}$ of highest weight $\kap$), then its image under
$res_1$ or $res_3$ is just the restriction of $f$ to $G_3(\A)$ composed with the
projection $W_{\kap,4}(\C)\rightarrow W_{\kap,3}(\C)$, $\phi\mapsto \phi|_{H_{0,3}(\C)}$.
The same holds for the maps $res_2$ and $res_4$.

\subsubsection{Restrictions of $p$-adic forms}\label{rest-padic}
The maps \eqref{Igusamap1} and \eqref{Igusamap2} induce the obvious restriction maps on
modules of $p$-adic modular forms - which we also denote by $res_i$ - compatible with
weights $\kap$ and characters $\psi$ in the obvious way, as well as with the inclusion of
spaces of modular forms and with restriction to similitude components. In particular,
the isomorphisms described above extend to isomorphism of spaces of $p$-adic modular forms
(with the tensor product $\otimes_R$ replaced by the completed tensor product $\hat\otimes_R$).

\subsubsection{Base point restrictions}\label{basepointres}

Let $V = V_i$ for $i \in \{1, 2, 3, 4\}$, $G = G_{P_i}$ the corresponding unitary similitude group, so that $(G,X)$ is the Shimura datum associated to the moduli problem $P_i$.  Let $J'_0$ be a torus as in Section  \ref{basepoints}, and let $(J'_0,h_0) \rightarrow (G,X)$ be the morphism of Shimura data defined there.  Say $(J'_0,h_0)$ is {\it ordinary} if the points in the image of the map $S(J'_0,h_0) \rar S(G,X)$  of Shimura varieties reduce to points corresponding to ordinary abelian varieties.  If $(J'_0,h_0)$ is ordinary, then  it has an associated Igusa tower, denoted $T_{n,m}(J'_0,h_0)$ for all $n, m$.   We have $T_{0,m}(J'_0,h_0)  = S_m(J'_0,h_0)$, in the obvious notation, which is the reduction modulo $p^m$ of an integral model of $S(J'_0,h_0)$; each $T_{n,m}(J'_0,h_0)$ is finite over the corresponding $S_m$.

  Moreover, letting $T_{n,m}(G,X)  = T_{n,m}(P_i)$ in the obvious notation, there is a morphism of Igusa towers 
  \begin{equation}\label{CMIgusa}    T_{n,m}(J'_0,h_0) \rar T_{n,m}(G,X).  \end{equation}
 
Thus for any $r$ there are restriction maps $res_{J'_0,h_0}:  M_{\kap}(K_{i,r},R) \rar M_{\kap}((J'_0,h_0),R)$, in the obvious notation; the image is contained in forms of level $r$ on  $S(J'_0,h_0)$, in an appropriate sense, but we don't specify the level.
 The restriction maps behave compatibly with respect to classical, complex, and $p$-adic modular forms; the restriction map for $p$-adic modular forms is denoted $res_{p,J'_0,h_0}$.  In order to formulate a precise statement, we write $V_{\kap_p}((G,X);K^p,R)$ for $p$-adic modular forms of weight $\kap_p$ and level $K^p$ on the Igusa tower for $S(G,X)$ (resp. $V_{\kap_p}^{\ord}((G,X);K^p,R)$ for the image of the ordinary projector) and $V_{\kap_p}((J'_0,h_0),R)$ for the corresponding object for $S(J'_0,h_0)$ (the level away from $p$ is not specified).

\begin{prop}\label{CMrest}  Let $(J'_0,h_0) \rar (G,X)$ be a morphism of Shimura data, with $J'_0$ a torus, and suppose $(J'_0,h_0)$ is ordinary.   Let $G = G_{P_i}$ for $i = 1, 2, 3, 4$, and let $\kappa$ be a dominant weight; let $\kappa_p$ be the corresponding $p$-adic weight, as in  \ref{padicweight}.  Let $R$ be a $p$-adic ring.

(i)  The following diagram is commutative:
$$\begin{CD}
M_{\kap}(K_{i,r},R)  @>R_{\kap,G,X} >>   V_{\kap_p}(K_i^p,R) \\
@Vres_{J'_0,h_0}VV @Vres_{p,J'_0,h_0}VV\\
M_{\kap}((J'_0,h_0),R) @>R_{\kap,J'_0,h_0} >> V_{\kap_p}((J'_0,h_0),R)
\end{CD}$$
Here the horizontal maps are the ones defined in \eqref{padicclassical}

(ii)  Let $f \in V_{\kap_p}^{\ord}((G,X);K^p,R)$.  Suppose for every ordinary CM pair $(J'_0,h_0)$ mapping to $(G,X)$, the restriction $res_{J'_0,h_0}(f) = 0$.  Then $f = 0$.
\end{prop}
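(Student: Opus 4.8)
The plan is to obtain (i) by a direct diagram chase and to deduce (ii) from (i) together with the Zariski density of ordinary CM points. For (i): unwinding the functorial-rule descriptions, $R_{\kap,G,X}$ of \eqref{padicclassical} pulls a section of $\omega_{\kap}$ on $\M^\tor_{K_{i,r},L}$ back to the Igusa tower and trivializes it via the isomorphisms $\Lie(\phi)$, while the morphism \eqref{CMIgusa} of Igusa towers is by construction induced by restricting $\phi$ to the $p$-divisible subgroup cut out by the CM torus $J'_0$; hence the two ways around the square agree on test objects, which is the assertion. This requires no new ingredient beyond the definitions already in place.

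For (ii) I would first reduce to showing that $f_\padic := R_{\kap,G,X}(f) \in V_{\kap_p}(K_i^p,R)$ is zero: the map \eqref{padicclassical} is an inclusion, so $f_\padic = 0$ forces $f = 0$. Now by part (i), for every ordinary CM pair $(J'_0,h_0)$ one has $res_{p,J'_0,h_0}(f_\padic) = R_{\kap,J'_0,h_0}(res_{J'_0,h_0}(f)) = 0$; reading $f_\padic$ as a functorial rule on $\varprojlim_m\varprojlim_n T_{n,m}(G,X)$, this says exactly that $f_\padic$ vanishes at every ordinary CM point of the Igusa tower. The remaining, and central, point is that these ordinary CM points are Zariski dense in each connected (equivalently, similitude) component of the Igusa tower. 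I would get this from the irreducibility of the Igusa tower — Hida's theorem, valid under the conditions (G1)--(G3) established by Lan in \cite{Lan} — together with the nonemptiness of the ordinary CM locus, which holds here because the standard base point $h$ of the PEL datum is itself an ordinary CM point, the hypothesis that every place of $\K^+$ over $p$ splits in $\K$ being precisely what makes the CM type $\Sigma$ ordinary; one then spreads a single such point out using Hecke operators in $G(\A_f^p)$ and the Galois action of the tower. A function on an integral scheme that vanishes on a dense set is zero, so $f_\padic = 0$.

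An alternative is to argue classically, bypassing (i): since $S(J'_0,h_0)$ is zero-dimensional, the hypothesis says precisely that the section $f$ of $\omega_{\kap}$ vanishes at every ordinary CM point of $\M_{K_i,L}$, and ordinary CM points are Zariski dense there — Hecke translates under $G(\A_f)$ of one such point are again ordinary CM points (isogenies preserve ordinariness of the reduction and leave the CM type unchanged), and the $G(\A_f)$-Hecke orbit of any point is dense because $G(\Q)$ is dense in $G(\R)$ for the connected group $G$; this forces $f = 0$ over any field of characteristic zero, after which a routine base-change argument (using that $M_{\kap}(K_{i,r};-)$ is coherent cohomology of the proper scheme $\M^\tor_{K_i,L}$, reducing modulo $p^m$, and invoking the Chai--Hida density of the reductions of ordinary CM points in the special fibre) handles a general $p$-adic ring $R$. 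In either approach the hard part is the density input — Hida's irreducibility theorem for the Igusa tower in the $p$-adic route, the Chai--Hida density of ordinary Hecke orbits in the classical route — and one must take care that it is genuinely the ordinary CM points (the only ones for which the restriction maps of the Proposition are defined) that are dense, which is why exhibiting the standard base point as an explicit ordinary CM point, and moving it only by operations that preserve ordinariness, is an essential step rather than a cosmetic one.
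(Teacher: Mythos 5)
Your proof is correct and takes the same essential route as the paper: part (i) by unwinding definitions, part (ii) by Zariski density. The paper disposes of both points in one line, citing Wedhorn for ``Zariski density of the ordinary locus''; you have correctly observed that this citation is, as stated, slightly short of what the proposition needs --- the hypothesis of (ii) is vanishing at ordinary \emph{CM} points, not on the entire ordinary locus, so one must know that ordinary CM points are themselves dense. Your argument supplies exactly this: the standard base point $h$ is an ordinary CM point (here the split-at-$p$ hypothesis on $\K/\K^+$ is doing the work), prime-to-$p$ Hecke translates of it remain ordinary and CM, and these translates are dense by the usual approximation argument, using Hida's irreducibility of the Igusa tower to conclude on each component. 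That makes the density input explicit where the paper's one-liner implicitly bundles density of the ordinary locus with density of CM/Hecke orbits inside it (\`a la Chai). The alternative classical route you sketch is also sound. One small point worth keeping in mind: in the Igusa-tower version you should phrase the Hecke translation in terms of $G(\A_f^p)$ rather than $G(\A_f)$, since the action on the tower is prime-to-$p$ and only prime-to-$p$ isogenies manifestly preserve the ordinary trivialization $\phi$; you do switch to this in the second paragraph, and the density statement is unaffected.
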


\begin{proof}  Point (i) is an immediate consequence of the definitions.  Note that $V_{\kap_p}^{\ord}((G,X);K^p,R)$ is the $R$-dual to a finite rank module over an appropriate Iwasawa algebra, by Hida theory; thus $f$ belongs to a finite rank $R$ module.  Point (ii) follows from the Zariski density of the ordinary locus in the integral model of $S(G,X)$ \cite{wedhorn}, provided we know there are enough CM points in the ordinary locus.  But every point on the ordinary locus over a finite field lifts (with its polarization, endomorphisms, and prime-to-$p$ level structure) to the generic fiber (see for example \cite[Proposition 2.3.12]{moonen}).
\end{proof}

\section{Eisenstein series and zeta integrals}\label{ESeriesZIntegrals-section}

\subsection{Eisenstein series and the doubling method}\label{doubling16}

We begin this section by introducing certain Eisenstein series and (global) zeta functions.  Then we choose specific local data and compute local zeta integrals (whose product gives the global zeta function).  

We assume throughout this section that we are in the setting of Section  \ref{doublingsetup}. In particular,
there is a hermitian pair $\left(V,\pair_V\right)$ over $\K$ such that $V=L_1\otimes\Q$ and
$\pair_1 = \trace_{\K/\Q}\delta\pair_V$. Then $G_1/\Q$ is the unitary similitude group
of the pair $\left(V,\pair_V\right)$. Let $\left(W,\pair_W\right)$ be the hermitian pair with $W=V\oplus V$ and
$\pair_W = \pair_V\oplus -\pair_V$. Then $G_4/\Q$ is the unitary similitude group of
the pair $\left(W,\pair_W\right)$. Most of the constructions to follow take place on the group $G_4/\Q$,
which we denote throughout by $G$ for ease of notation.  We write $Z_i$ to denote the center of $G_i$.

An important observation is that $G_2(\A)=G_1(\A)$, so a function or representation of one of these groups
can be viewed as a function or representation of the other; we use this repeatedly.

In part to aid with the comparison with calculations in the literature, we introduce the unitary
groups $U_i = \ker\left(\nu:G_i\rightarrow \G_m\right)$.

Let $n=\dim_\K V$.
Let $S_0$ be the set of primes dividing either
the discriminant of the pairing $\pair_1$ or the discriminant of $\K$.

\subsubsection*{Plan of this section}
We begin by recalling the general setup for Siegel Eisenstein series on $G$
and the zeta integrals in the context of the doubling method, explaining how the global integral factors as a product over primes of $\K$.  The local factors fall into three classes, which are treated in turn.  The factors at non-archimedean places prime to $p$ are the easiest to address:  in Section  \ref{localvnotp} we recall the unramified factors, which have been known for more than 20 years, and explain how to choose data at ramified places to trivialize the local integrals.   

Factors at primes dividing $p$ are computed in Section  \ref{pchoices-section}.  This is the most elaborate computation in the paper.  The local data defining the Eisenstein series have to be chosen carefully to be compatible with the $p$-adic Eisenstein measure which is recalled in Section   \ref{emeas-section}.   The local data for the test forms on $G_3$ are chosen to be {\it anti-ordinary vectors}, a notion that will be defined explicitly in \ref{aordinarylocal}, and that provide the local expression of the fact, built into Hida theory, that the test forms are naturally {\it dual} to ordinary forms.   The result of the computation is given in Theorem \ref{GJintegrals}:  we obtain $p$-stabilized Euler factors, as predicted by conjectures
of Coates and Perrin-Riou.

Sections \ref{holo-sec} and \ref{archimedeanchoices} are devoted to the local integrals at archimedean places.  Much of the material here is a review of the theory of holomorphic differential operators developed elsewhere, and of classical invariant theory.  We prove in particular (Proposition \ref{archnonvan}) that the archimedean zeta integrals do not vanish; as explained in the introduction, in most cases we do not know explicit formulas for these integrals.

\subsubsection{The Siegel parabolic}\label{SiegelParabolic-section}
Let $V^d = \left\{(x,x)\in W\ : x\in V\right\}$ and $V_d = \left\{(x,-x)\in W\ : \ x\in V\right\}$, so
$W=V_d\oplus V^d$ is a polarization of $\pair_W$.
Projection to the first summand fixes identifications of $V^d$ and $V_d$
with $V$.   Recall that $G$ acts on the right on $W$.  Let $P\subset G$ be the stabilizer
of $V^d$; this is a maximal $\Q$-parabolic, the Siegel parabolic.
Let $M\subset P$ be the stabilizer of the polarization $W=V_d\oplus V^d$
and $N\subset P$ the group fixing both $V^d$ and $W/V^d$, so
$M$ is a Levi subgroup and $N$ the unipotent radical.
Denote by $\Delta$ the canonical map $\Delta:P\rightarrow \GL_\K(V^d)=\GL_\K(V)$.
Then $M\isoarrow \GL_\K(V)\times \G_m$, $m\mapsto (\Delta(m),\nu(m))$; the inverse
map is $(A,\lambda)\mapsto m(A,\lambda) = \diag(\lambda \left(A^*\right)^{-1},A)$, where
$A^* = \t A^{c}$ is the transpose of the conjugate under the action of $c$.  Also, fixing a basis for $V$ gives an identification $\Delta': N\isoarrow \hern \left(\K\right)$, where $\hern$ denotes the space of $n\times n$ hermitian matrices; with respect to this basis and the polarization above, we obtain an identification $N\isoarrow\begin{pmatrix}1_n &\Delta'(N)\\ 0 & 1_n\end{pmatrix}\subseteq \gl_{2n}\left(\K\right)$.

Define $\delta_P(\cdot)=|\det\circ\Delta(\cdot)|^{n}$.

\subsubsection{Induced representations}\label{induced} Let $\chi=\otimes\chi_w$ be a character of
$\K^\times\backslash \A_\K^\times$. For $s\in\C$ let
$$
I\left(\chi,s\right) = \Ind_{P(\A)}^{G(\A)}\left(\chi\left(\det\circ\Delta(\cdot)\right)\cdot\delta_P^{-s/n}\left(\cdot \right)\cdot \left|\nu(\cdot)\right|^{sn/2}\right),
$$
with the induction smooth and unitarily normalized. This factors as a restricted tensor product
$$
I(\chi,s) = \otimes_{v} I_v(\chi_v,s),
$$
with $v$ running over the places of $\Q$, $I_v(\chi_v,s)$ the analogous local induction from
$P(\Q_v)$ to $G(\Q_v)$, and $\chi_v = \otimes_{w\divides v}\chi_w$.

\subsubsection{Eisenstein series} For $f = f_{\chi,s} \in I(\chi,s)$ we form the {\it standard (non-normalized)} Eisenstein series,
$$
E(f,g) = \sum_{\gamma\in P(\Q)\backslash G(\Q)} f(\gamma g).
$$
If $Re(s)$ is sufficiently large, this converges absolutely and uniformily on compact subsets
and defines an automorphic form on $G(\A)$. 
Given a unitary character $\chi$ and a {\it Siegel--Weil section} $f \in I(\chi, s)$, \footnote{Siegel-Weil sections are the functions used to define the Siegel-Weil Eisenstein series, as in \cite{Harris-rationality}} we put
\begin{align*}
f_s := f_{\chi, s} := f\\
E_f(s, g) : = E_{f_s}(g).
\end{align*}
When $f_s\in I(\chi,s)$ is $K$-finite for a maximal compact subgroup $K\subset G$, the Eisenstein
series $E_f(s,g)$ have a meromorphic continuation in $s$.

\subsubsection{Zeta integrals}\label{zetaintegral}
In this section, we briefly summarize key details of the doubling method, which we use to obtain zeta integrals.  The doubling method holds for general classes of cuspidal automorphic representations of $G_1(\adeles)$.  In order to carry out our full $p$-adic interpolation, however, our approach requires us (later in the paper) to place additional conditions on the representations $\pi$ that we consider.  In particular, we will need $\pi$ to be contained in certain induced representations for places dividing $p$ and in certain discrete series representations for places dividing $\infty$.

Denote by $\Oe$ the ring of integers of $\K^+$.  For $i = 1, 2, 3, 4,$ we write 
$U_i(\adeles)=  \prod'_{v}U_{i, v}$, with the (restricted) products over all the places of $\K^+$ and 
$U_{i, v}$ the points of groups defined over $\Oev$.  Similarly, we write $G(\adeles) = G_{\infty}\times \prod'_q G_q$ and $P(\adeles) = P_{\infty}\times \prod'_q P_q$, where the (restricted) products are over 
rational primes $q$.  We can nevertheless write
$$G_p = \Qp^{\times} \times \prod_{w \in \Sigma_p} G_w; P_p = \Qp^{\times} \times \prod_{w \in \Sigma_p} P_w.$$

Let $\pi$ be an irreducible cuspidal automorphic representation of $G_1(\adeles)$, and let $\pi^{\vee}$ be its contragredient; it is a twist of the complex conjugate of $\pi$ and is therefore also a cuspidal automorphic representation.
Let $S_\pi$ be the set of finite primes $v$ in $\Oe$ for which $\pi_v$ is ramified.
Before introducing the zeta integral for $\pi$, we would like to explain what it means for a function in $\pi$ to be factorizable
over places in $\K^+$.  However, $G_1$ is a $\Q$-group that is not the restriction of scalars of a group over $\K^+$.
We therefore choose an irreducible $U_1(\adeles)$-constituent $\underline{\pi} \subset \pi$ that occurs in the space of automorphic forms on
$U_1$, and denote the dual $\underline{\pi}^{\flat}$; note that $\underline{\pi}^{\flat}$ occurs inside the
restriction of $\pi^{\vee}$ to $U_1(\adeles)$.  
We assume $\underline{\pi}$ contains the spherical vectors for $K^{S_\pi}$.  It is well-known (and follows from the unfolding computation recalled below) that {\it the standard $L$-function does not depend
on this choice}.   We fix non-zero unramified vectors $\varphi_{w,0}$ and $\varphi'_{w,0}$ in $\pi_w$ and $\pi_w^\vee$, respectively,
for all 
finite places $w$ outside $S_{\pi}$, and choose factorizations as in  \eqref{factor} compatible with the unramified choices:
\begin{equation}\label{facpi}
\underline{\pi} \isoarrow  \underline{\pi}_{\infty}\otimes \underline{\pi}_f;~~ \underline{\pi}_f \isoarrow \underline{\pi}^{S_{\pi},p}\otimes \underline{\pi}_p\otimes \underline{\pi}_{S_\pi};  
\underline{\pi}_p \isoarrow \otimes_{w \mid p} \underline{\pi}_w; ~  \underline{\pi}_{S_{\pi}} \isoarrow \otimes_{w \in S_\pi} \underline{\pi}_w;
\end{equation}
and analogous factorizations for $\underline{\pi}^{\flat}$.   
We also think of $\underline{\pi}^{\flat}$ as an anti-holomorphic automorphic representation
of $G_2$.  Let $\varphi \in \underline{\pi}^{K^{S_{\pi}}}$,
$\varphi^{\flat} \in \underline{\pi}^{\flat,K^{S_{\pi}}}$; we think of $\varphi$ and $\varphi^{\flat}$ as forms on $G_1$ and $G_2$, respectively.  We suppose they decompose as tensor products with respect to the above
factorizations:
\begin{equation}\label{facphi}  \varphi = \otimes_v \varphi_v;~~ \varphi^{\flat} = \otimes_v \varphi^{\flat}_v \end{equation}
with $\varphi_v$ and $\varphi^{\flat}_v$ equal to the chosen $\varphi_{v,0}$ and $\varphi'_{v,0}$ when $v \notin S_{\pi}$.
We write equalities but the formulas we write below depend on the factorizations in \eqref{facpi} and its counterpart for  $\underline{\pi}^{\flat}$.

In Sections \ref{pchoices-section} (resp. \ref{holo-sec}-\ref{archimedeanchoices}), we will choose specific local components at primes dividing $p$ (resp. at archimedean places).  These will turn out to be anti-ordinary (resp. anti-holomorphic) vectors:  
\begin{equation}\label{facphip}  \varphi_p := \otimes_{w \mid p} ~\varphi_w = \otimes_{w \mid p} ~\phi^{\aord}_{w,r,\pi_w};  \varphi^{\flat}_p := \otimes_{w \mid p} ~\varphi^{\flat}_w = \otimes_{w \mid p} ~\phi^{\aord}_{w,r,\pi^{\flat}_w} 
\end{equation}
and
\begin{equation}\label{facphiinf} \varphi_\infty := \otimes_{\sigma \mid \infty} ~\varphi_\sigma = \varphi_{\kappa_{\sigma},-};  \varphi^{\flat}_\infty := \otimes_{\sigma \mid \infty} ~\varphi^{\flat}_\sigma
= \varphi_{\kappa^{\flat}_{\sigma},-}. \end{equation}

\begin{rmk}\label{conditions} The meaning of the notation in \eqref{facphip} and \eqref{facphiinf} will be explained in Sections \ref{aordinarylocal} and \ref{aordinary-localrep-2} and Section \ref{aholovectors}, respectively.  Here we just note that 
\begin{itemize}
\item[(i)] $\varphi_p$ (resp. $\varphi^{\flat}_p$) is anti-ordinary with respect to a group $I_r$  (resp. $I_r^{\flat}$), in the sense to be described  in Section \ref{aordinarylocal} (resp. \ref{aordinary-localrep-2}), and
\item[(ii)] $r$ is an integer chosen in \eqref{dgeq2t} to be sufficiently large.  
\end{itemize}
\end{rmk}

Having made the choice of irreducible constituent $\underline{\pi}$, we will henceforth forget about the choice.  In order not to make the notation too difficult to read, we will use $\pi$ and $\pi^\flat$ to denote irreducible $U_1(\adeles)$ representations, but we will mean irreducible constituents of the restrictions of representations of $G_1$.   

We also fix local $U_1\left(\K^+_v\right)$-invariant pairings
$\pair_{\pi_v}:\pi_v\times\pi_v^\vee\rightarrow \C$ for all $v$ such that
$\langle \varphi_{v,0}, \varphi_{v,0}^{\flat} \rangle_{\pi_v}=1$ for all $v \notin S_\pi$.

Let $f = f_s(\bullet) \in I(\chi,s)$.
Let $\varphi\in\pi$ and $\varphi^{\flat}\in \pi^{\flat}$ be factorizable vectors as above.  The zeta integral for $f$,$\varphi$, and $\varphi^{\flat}$ is
$$
I(\varphi,\varphi^{\flat},f,s) = \int_{Z_3(\adeles)G_3(\Q)\backslash G_3(\A)} E_f(s,(g_1,g_2))\chi^{-1}(\det g_2)\varphi(g_1)\varphi^{\flat}(g_2)d(g_1,g_2).
$$
By the cuspidality of $\varphi$ and $\varphi^\flat$ this converges absolutely for those values of $s$
at which $E_f(s,g)$ is defined and defines a meromorphic function in $s$ (holomorphic
wherever $E_f(s,g)$ is).   Moreover, it follows from the unfolding in \cite{GPSR} that, for any pair $\pi, \pi'$ of automorphic representations of $G_1$, the map
$$(\varphi,\varphi') \mapsto I(\varphi,\varphi',f,s)$$ defines a $G_1(\adeles)$-invariant pairing
between $\pi$ and $\pi'$.  The multiplicity one hypothesis \ref{multone} implies that the space of such pairings is at most one-dimensional, and is exactly one-dimensional provided $\pi' = \pi^{\vee}$ (upon restriction to $U(V)$.   Thus:

If $\langle \varphi,\varphi^{\flat}\rangle = 0$ then $I(\varphi,\varphi^{\flat},f,s) = 0$ for all $s$.

Here $\langle \bullet, \bullet \rangle$ denotes the standard $L_2$ pairing for the Tamagawa 
measure $dg$.  So we suppose $\langle \varphi,\varphi^{\flat}\rangle \neq 0$.  Then 
$\langle \varphi_{v}\otimes \varphi_{v}^{\flat}\rangle_{\pi_v} \neq 0$ for all $v$.
For $Re(s)$ sufficiently large, `unfolding' the Eisenstein series then yields
$$
I(\varphi,\varphi^{\flat},f,s) = \int_{U_1(\A)} f_s(u,1)\langle \pi(u)\varphi,\varphi^{\flat}\rangle_\pi du.
$$
Denote by $f_U$ the restriction of $f$ to $U_4(\adeles)$.  Henceforward we assume $f_U(g)=\otimes_v f_v(g_v)$ with
$$f_v = f_{v,s} \in I_v(\chi_v,s), \chi_v = \otimes_{w\divides v}\chi_w.$$
Then the last expression for $I(\varphi,\varphi^{\flat},s)$ factors as
\begin{equation}\label{normalizedzeta}
\begin{split}
I(\varphi,\varphi^{\flat},f,s) = \prod_v I_v(\varphi_v,\varphi_v^{\flat},f_v,s) \cdot \langle \varphi, \varphi^{\flat} \rangle, {\rm ~~where} \\
I_v(\varphi_v,\varphi_v^{\flat},f_v,s) = \frac{\int_{U_{1, v}} f_{v,s}(u,1)\langle \pi_v(u)\varphi_v,\varphi^{\flat}_v\rangle_{\pi_v} du}{\langle \varphi_v,\varphi^{\flat}_v \rangle_{\pi_v}}.
\end{split}
\end{equation}
By hypothesis, the denominator of the above fraction equals $1$ whenever $v \notin S_\pi$.
We denote the integral in the numerator by $Z_v(\vphi_v,\vphi_v^\flat,f_v,s)$.

\begin{rmk}\label{chitwist}  In order to define the proper arithmetically normalized period, we want to consider the vectors $\varphi \in \pi$ and 
$$\varphi^\flat_\chi := \varphi^\flat\otimes \chi^{-1}\circ\det \in \pi^\flat \otimes \chi^{-1}\circ\det$$
as the input to the integral.  Thus for $\varphi_1 \in \pi$, $\varphi_2 \in \pi \otimes \chi^{-1}\circ\det$ we define
$\langle \varphi_1,\varphi_2 \rangle_\chi$ to be the $L_2$ pairing of $\varphi_1$ with $\varphi_2\otimes \chi\circ \det$.  Then
$$\langle \varphi,\varphi_\chi^\flat \rangle_\chi = \langle \varphi,\varphi^\flat \rangle.$$
It seems as if nothing has changed, but in the final result we will be taking vectors of the form $\varphi \otimes \varphi_\chi^\flat$ that are integral with respect to a structure native to the space $\pi\bigotimes \pi \otimes \chi^{-1}\circ\det$, and the resulting periods will differ by CM periods.  
\end{rmk}

As in Section  \ref{unitarygroupPEL}, let $\Sigma = \{\sigma\in\Sigma_\K\ : \ \grp_\sigma\in\Sigma_p\}$.
This is a CM type for $\K$.  Throughout the remainder of this section, we take $\chi:\K^\times\backslash \A_\K^\times\rightarrow \C^\times$ to be a unitary character such that 
$\chi_\infty = \otimes_{\sigma\in\Sigma}\chi_\sigma$ is given by
\begin{equation}\label{chi-infinity}
\chi_\infty\left(\left(z_\sigma\right)\right)= \prod_{\sigma\in\Sigma}
z_\sigma^{-\left(k_\sigma+2\nu_\sigma\right)}\left(z_\sigma\overline{z_\sigma}\right)^{\frac{k_\sigma}{2}+\nu_\sigma}, \ \ \
(z_\sigma)\in\prod_{\sigma\in \Sigma}\C^\times,
\end{equation}
where $k = \left(k_\sigma\right)\in\Z^\Sigma_{\geq 0}$, and $(\nu_\sigma)\in \Z^{\Sigma}$.

For the remainder of this section, we choose specific local Siegel--Weil sections $f_v\in I_v(\chi_v,s)$ and
compute the corresponding local zeta integrals (whose product is the Euler product of the global zeta function discussed at the beginning of this section).

\subsection{Local zeta integral calculations at nonarchimedean places $v\ndivides p$}\label{localvnotp}

Let $S_\ram=S_\pi\cup S_\chi\cup S_\K$, where $S_\chi$ denotes the set of finite primes $v$ in $\Oe$ for which $\chi_v= \otimes_{w\divides v}\chi_w$ is ramified and $S_\K$ denotes the set of finite primes in $\Oe$ that ramify in $\K$.  Let $S$ be a finite set of finite primes in $\IQ$ such that $p\nin S$ and such that for all rational primes $\ell$, if a prime in $\K^+$ above $\ell$ is in $S_\ram$, then $\ell\in S$.  Let $S'$ be the set of primes of $\K^+$ lying above the primes of $S$.  In particular, $S'$ contains $S_{\mathrm{ram}}$.

\subsubsection{Unramified case}\label{unrameuler}
For the moment, assume that $\ell\neq p$ is a finite place of $\IQ$ such that $\ell\not\in S$. Then $K_\ell:=G_4\left(\Z_\ell\right)$ is a hyperspecial maximal
compact of $G\left(\Ql\right) = G_4\left(\Ql\right) = \prod_{v\divides \ell}G_{4, v}$, and we choose $f_\ell = \otimes_{v\divides \ell} f_v\in I_\ell(\chi_\ell,s)$ to be the unique $K_\ell$-invariant function
such that $f_\ell(K_\ell)=1$. These sections are used to construct the Eisenstein measure in
\cite{apptoSHL}.
For each prime $v\nin S'$, let $\varphi_v\in\pi_v$ and $\varphi_v'\in\pi_v^\vee$ be the normalized spherical vectors such that $\langle\varphi_v, \varphi_v'\rangle_{\pi_v} = 1$.   The primes $v\nin S'$ fall into two categories: split and inert.  For split places $v\nin S'$, $U_{1, v}\cong \gln\left(\K^+_v\right)$; the zeta integral computations in this case reduce to those in \cite{jacquet} and \cite[Section  6]{GPSR}.  For inert places $v\nin S'$, the computations were completed in \cite[Section  3]{JSLi}.  In either case, we have
\begin{align*}
d_{n,v}\left(s, \chi_v\right)I_v\left(\varphi_v, \varphi_v', f_v, s\right) = L_v\left(s+\frac{1}{2}, \pi_v, \chi_v\right),
\end{align*}
where\footnote{From the formula for $d_{n,v}(s)$ given in \cite[Section  3]{JSLi}, it appears that there is a typographical error in the exponent in the formula for $d_{n, v}$ given in \cite[Equation (3.1.2.5)]{HLS}.  More precisely, according to the final formula in \cite[Theorem 3.1]{JSLi}, the $n-1$ should not appear in the exponent in \cite[Equation (3.1.2.5)]{HLS}.} 
\begin{align}\label{dnv-equ}
d_{n, v}\left(s, \chi_v\right) = d_{n, v}\left(s\right)=\prod_{r=0}^{n-1}L_v\left(2s+n-r, \chi_v\mid_{\K^+}\eta_v^r\right),
\end{align}
$\eta_v$ is the character on $\K^+_v$ attached by local class field theory to the extension $\K_w/\K^+_v$ (where $w$ is a prime of $\K$ lying over $v$),
and $L_v\left(s, \pi_v, \chi_v\right)$ denotes the value at $s$ of the standard local Langlands Euler factor attached to the unramified representation $\pi_v$ of $U_{1, v}$, the unramified character $\chi_v$ of $\K_v$, and the standard representation of the $L$-group of $U_{1, v}$.  As noted on \cite[p. 439]{HLS}\footnote{There is a typographical error on \cite[p. 439]{HLS}.  Although \cite[p. 439]{HLS} gives a base change to $\gl_m$, the base change should actually be to $\gln$.}, for each $v\nin S'$,
\begin{align*}
L_v\left(s, \pi_v, \chi_v\right) = L_v\left(s, \mathrm{BC}\left(\pi_v\right)\otimes\chi_v\circ\det\right),
\end{align*}
where $\mathrm{BC}$ denotes the local base change from $U_{1, v}$ to $\gln\left(\K_v\right)$ and the right hand side is the standard Godement-Jacquet Euler factor.

\subsubsection{Ramified case}\label{nonarchchoices-section}

Now, assume that $\ell\in S$, and let $v\in S'$ be a prime lying over $\ell$.  By \cite[p. 439]{HLS}, $P_v\cdot \left(U_{1, v}\times 1_n\right)\subseteq P_v\cdot U_{3, v}$ is open in $U_{4, v}$.   
Since the big cell $P_vwP_v$ is also open in $U_{4, v}$, we see that $\left(P_v\cdot (U_{1, v}\times 1_n)\right)\cap P_vwP_v$ is open in $U_{4, v}$.  As noted in \cite[Equation (3.2.1.5)]{HLS}, $P_vw = P_v\cdot \left(-1_n, 1_n\right)\subseteq P_v\cdot U_{3, v}$ and $P_v\cap \left(U_{1, v}\times 1_n\right) = \left(1_n, 1_n\right)\in U_{3, v}$.  Therefore $\left(P_v\cdot \left(U(V)\times 1_n\right)\right)\cap P_vwP_v$ is an open neighborhood of $w$ in $P_vwP_v$ and hence is of the form $P_vw \mathfrak{U}$ for some open subset $\mathfrak{U}$ of the unipotent radical $N_v$ of $P_v$.  Let $\varphi_v\in\pi_v$ and $\varphi'_v\in\pi_v'$ be such that $\langle \varphi_v, \varphi_v'\rangle_{\pi_v}=1$.  Let $K_v$ be an open compact subgroup of $G_{1, v}$ that fixes $\varphi_v$.  

For each place $v\in S'$, let $L_v$ be a small enough lattice so that $\mathfrak{U}_v$ contains the open subgroup $N(L_v)$ of $N_v$ defined by
\begin{align*}
N(L_v) = \left\{\begin{pmatrix}1_n & x\\ 0 & 1_n\end{pmatrix} \mid x\in L_v\right\}
\end{align*}
(where we identify $N$ with $\Delta'(N)$ as in Section  \ref{SiegelParabolic-section}) and so that 
\begin{align*}
P_vwN(L_v) \subseteq P_v\cdot \left(-1_n\cdot K_v\times 1_n\right)\subseteq P_v\cdot U_{3, v}.
\end{align*}
Then
\begin{align*}
P_vwN(L_v) = P_v\cdot \left(\mathcal{U}_v\times 1_n\right)
\end{align*}
for some open neighborhood neighborhood $\mathcal{U}_v$ of $-1_n$ contained in the open subset $-1_n\cdot K_v$ of $U_{1, v}$.  Let $\delta_{L_v}$ denote the characteristic function of $N(L_v)$.  As explained on \cite[pp. 449-50]{HLS}, for each finite place $v$ of $\K^+$, there is a Siegel section $f_{L_v}$ supported on $P_vwP_v$ such that
\begin{align*}
f_{L_v}\left(w x\right) = \delta_{L_v}(x)
\end{align*}
for all $x\in N_v$.

For each of the primes $v\in S'$, we define a local Siegel section $f_v\in I(\chi_v, s)$ by
\begin{align*}
f_v& = f_{L_v}^-,
\end{align*}
where
\begin{align*}
f_{L_v}^-(g) = f_{L_v}\left(g\cdot (-1, 1)\right)
\end{align*}
for all $g\in U_{4, v}$.  (Note that $f_{L_v}^-$ is just a translation by $(-1, 1)\in U_{3, v} = U_{1, v}\times U_{2, v} = U_{1, v}\times U_{1, v}$ of local Siegel sections discussed in \cite[Sections (3.3.1)-(3.3.2)]{HLS} and that, where nonzero, the Fourier coefficients associated to $f_{L_v}^-$ are the same as the Fourier coefficients associated to similar Siegel sections discussed in \cite[Section  2.2.9]{apptoSHL} and \cite{sh}.  Therefore, this minor modification of the choice of Siegel sections in \cite{HLS, apptoSHL, sh} will not affect the $p$-adic interpolation of the $q$-expansion coefficients of the Eisenstein series that is necessary to construct an Eisenstein measure.)

\begin{lem}
Let $v\in S'$, and let $f_v = f_{L_v}^-.$  Then
\begin{align*}
I_v\left(\varphi_v, \varphi^\prime_v, f_v, \chi\right) = \mathrm{volume}\left(\mathcal{U}_v\right).
\end{align*}
\end{lem}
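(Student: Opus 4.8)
\emph{Proof strategy.} The plan is to unwind the local zeta integral \eqref{normalizedzeta} for $f_v=f_{L_v}^-$ and to show that the integrand over $U_{1,v}$ is the characteristic function of a small open neighbourhood of $1_n$ inside $K_v$ of volume $\mathrm{volume}(\mathcal U_v)$, on which both $f_v$ and the relevant matrix coefficient of $\pi_v$ are identically $1$. Substituting into \eqref{normalizedzeta} and using that $\langle\varphi_v,\varphi_v^{\flat}\rangle_{\pi_v}=1$ by the normalization of the test vectors at $v$, together with $f_{v,s}(u,1)=f_{L_v}\bigl((u,1)\cdot(-1,1)\bigr)=f_{L_v}\bigl((-u,1)\bigr)$ (since $(u,1)(-1,1)=(-u,1)$ in $U_{3,v}=U_{1,v}\times U_{2,v}$), the problem becomes to evaluate $\int_{U_{1,v}}f_{L_v}\bigl((-u,1)\bigr)\,\langle\pi_v(u)\varphi_v,\varphi_v^{\flat}\rangle_{\pi_v}\,du$.

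First I would determine the support. Since $f_{L_v}$ is supported on $P_vwP_v=P_vwN_v$ with $f_{L_v}(pwn)=\Phi(p)\,\delta_{L_v}(n)$ for $p\in P_v$, $n\in N_v$ — here $\Phi$ denotes the quasicharacter of $P_v$ through which $I_v(\chi_v,s)$ is induced — the integrand vanishes unless $(-u,1)\in P_vwN(L_v)$. By hypothesis $P_vwN(L_v)=P_v\cdot(\mathcal U_v\times 1_n)$, and the map $m\mapsto P_v\cdot(m,1_n)$ of $U_{1,v}$ into $P_v\backslash U_{4,v}$ is injective because $P_v\cap(U_{1,v}\times 1_n)=\{(1_n,1_n)\}$; hence $(-u,1)\in P_vwN(L_v)$ precisely when $-u\in\mathcal U_v$, i.e.\ when $u$ lies in $(-1_n)\mathcal U_v:=\{u\in U_{1,v}:-u\in\mathcal U_v\}$. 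Because $\mathcal U_v\subseteq -1_n\cdot K_v$, this set is contained in $K_v$, and since left translation by the central element $-1_n$ preserves Haar measure it has volume $\mathrm{volume}(\mathcal U_v)$.

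Next I would evaluate the integrand on $(-1_n)\mathcal U_v$. There $u\in K_v$ fixes $\varphi_v$, so $\langle\pi_v(u)\varphi_v,\varphi_v^{\flat}\rangle_{\pi_v}=\langle\varphi_v,\varphi_v^{\flat}\rangle_{\pi_v}=1$. For $f_{L_v}((-u,1))$, take the representative $w=(-1_n,1_n)$ (legitimate since $P_vw=P_v(-1_n,1_n)$); then $(-u,1)$ lies in the big cell $P_vwN_v$ and equals $w$ at $u=1_n$, so in the Bruhat decomposition $(-u,1)=p_u\,w\,n_u$ one has $p_{1_n}=1$ and, by continuity, $p_u\to 1$ as $u\to 1_n$, while $n_u\in N(L_v)$ as guaranteed by $P_vwN(L_v)=P_v\cdot(\mathcal U_v\times 1_n)$. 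Shrinking $K_v$ if necessary (it still fixes $\varphi_v$), hence $L_v$ and $\mathcal U_v$, we may assume $(-1_n)\mathcal U_v$ is small enough that $p_u$ always lies in a compact open subgroup of $P_v$ on which $\Phi$ is trivial; then $f_{L_v}((-u,1))=\Phi(p_u)\,\delta_{L_v}(n_u)=1$. Hence the integrand is the characteristic function of $(-1_n)\mathcal U_v$, and $I_v(\varphi_v,\varphi_v^{\flat},f_v,\chi)=\mathrm{volume}\bigl((-1_n)\mathcal U_v\bigr)=\mathrm{volume}(\mathcal U_v)$; note the answer is independent of $s$, which is why the statement suppresses it.

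The only genuinely delicate step is the vanishing $\Phi(p_u)=1$: the assertion that the inducing character of the degenerate principal series contributes trivially on the $P_v$-component of the Bruhat decomposition of $(-u,1)$. This is exactly what dictates how small $L_v$ (equivalently $\mathcal U_v$) must be chosen, and it is the local analogue of the corresponding computation in \cite[\S3.3]{HLS}; everything else is bookkeeping.
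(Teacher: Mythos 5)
Your proof is correct and follows essentially the same route as the paper's: identify the support of $f_{L_v}^-$ on $U_{1,v}\times 1_n$ as $-1_n\cdot\mathcal U_v$, observe that this set lies in $K_v$ so the matrix coefficient is constant $=1$ there, and conclude that the integral is the volume of $\mathcal U_v$.

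The one place where you go beyond the paper's argument is the step asserting $f_{L_v}^{-}\bigl((-u,1)\bigr)=1$ (not merely nonzero) on $-1_n\cdot\mathcal U_v$. The paper simply states that the restriction of $f_{L_v}^-$ to $U_{1,v}\times 1_n$ equals the characteristic function of $-1_n\cdot\mathcal U_v\times 1_n$, without addressing the potential contribution of the inducing quasicharacter $\Phi$ on the $P_v$-component of the Bruhat decomposition $(-u,1)=p_u\,w\,n_u$. You rightly flag this and resolve it by shrinking $L_v$ (via $K_v$): since $\Delta(p_u)\to 1$ as $u\to 1_n$ and $\Phi$ is locally constant, $\Phi(p_u)=1$ once $\mathcal U_v$ is small enough, and in fact $|\det\Delta(p_u)|_v=1$ on a small neighbourhood so the triviality is uniform in $s$. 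This is implicitly part of the phrase ``let $L_v$ be a small enough lattice'' preceding the lemma, but the paper does not spell it out; your observation that this is ``the only genuinely delicate step'' is accurate. Note that it is cleaner to shrink $L_v$ directly (which shrinks $\mathcal U_v$ via $P_vwN(L_v)=P_v\cdot(\mathcal U_v\times 1_n)$) rather than routing through $K_v$, since the only constraint on $K_v$ other than being small is that it fix $\varphi_v$.
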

\begin{proof}
The support of $f_{L_v}^-$ in $U_{1, v}\times 1_n$ is $-1_n\cdot\mathcal{U}_v\times 1_n$, and for $g\in U_{1, v}\times 1_n$, 
\begin{align*}
f_{L_v}^{-}(g) = \delta_{-1_n\cdot\mathcal{U}_v\times 1_n}(g)
\end{align*}
where $\delta_{-1_n\cdot\mathcal{U}_v\times 1_n}$ denotes the characteristic function of $-1_n\cdot\mathcal{U}_v\times 1_n$.  Since $\pi_v(g)\varphi_v = \varphi_v$ for all $g\in K_v\supseteq -1_n\cdot\mathcal{U}_v$, we therefore see that
\begin{align*}
I_v\left(\varphi_v, \varphi^\prime_v, f_v, \chi\right) &= \frac{\int_{-1_n\cdot\mathcal{U}_v}\langle\varphi_v, \varphi_v^\prime\rangle_{\pi_v} dg}{\langle\varphi_v, \varphi_v^\prime\rangle_{\pi_v} }\\
& = \mathrm{volume}\left(\mathcal{U}_v\right).
\end{align*}
\end{proof}

\subsection{Local zeta integral calculations at places dividing $p$}\label{pchoices-section}

\subsubsection*{Plan of this section}  We begin by choosing local Siegel--Weil sections at the primes $w$ dividing $p$ that are compatible with the Eisenstein measure, and then turn to choosing test vectors (anti-ordinary vectors) in the local representations $\pi_w$ and $\pi^{\flat}_w$.   The last six pages or so contain explicit matrix calculations that reduce the zeta integral to a product of integrals of Godement-Jacquet type, which can then be computed explicitly.    

The reader may observe that the representations $\pi_w$ and $\pi^{\flat}_w$, like the automorphic representations of which they are local components, are logically prior to the local Siegel--Weil sections, inasmuch as our goal is to define $p$-adic $L$-functions of (ordinary) families and the Eisenstein measure is a means to this end.  One of the subtleties of this construction is that a global automorphic representation $\pi$ automatically picks out the function whose integral is the desired value of the Eisenstein measure.  This is unfortunately concealed in the technical details of the construction, but the reader should be able to spot the principle at work in Section  \ref{classpairings}.

The calculations presented here are more general than those needed for our construction of the $p$-adic $L$-functions of ordinary families.  The $p$-adic place $w$ is assigned to an archimedean place $\sigma$ and thus to a signature $(a_w,b_w)$ of the unitary group at $\sigma$; but we also introduce partitions of $a_w$ and $b_w$.  These partitions can be used to study the variation of $p$-adic $L$-functions in {\it $P$-ordinary families}, where $P$ is a parabolic subgroup of $G_1(\Qp)$.  However, this application has been postponed in order not to make the paper any longer than it already is, and we restrict our attention to the usual ordinary families, corresponding to $P = B$ a Borel subgroup.

\subsubsection{Definition of the Siegel--Weil sections}\label{SWsections}

 With a few minor changes,
the description of the Siegel--Weil section at $p$ given below is the same as in \cite{apptoSHL, apptoSHLvv}.
For $w|p$ a place of $\K$ and $U$ a $\K$-space we let $U_w=U\otimes_\K\K_w$.

To describe the section $f_p$ we make use of the isomorphisms \ref{G-iso}. The isomorphism for $G_4$
identifies $G(\Qp)$ with $\Qptimes\times\prod_{w\in\Sigma_p}\GL_{\K_w}(W_w)$ and $P(\Qp)$ with
$\Qptimes\times\prod_{w\in\Sigma_p}P_n(\K_w)$ with $P_n\subset\GL_\K(W)$ the parabolic
stabilizing $V^d$. So $M(\Qp)$ is identified with $\Qptimes\times\prod_{w\in\Sigma_p}
\GL_{\K_w}(V_{d,w})\times \GL_{\K_w}(V^d_w)$ (the factors embedded diagonally in $\GL_{\K_w}(W_w)$), 
and $N(\Qp)$ is identified with $\prod_{w\in\Sigma_p} N_n(\K_w)$ 
with $N_n\subset P_n$ the unipotent radical.

For $w\in\Sigma_p$ let $\chi_{w,1}=\chi_w$ and $\chi_{w,2} = \chi_{\bar w}^{-1}$, where we identify
$\K_w = \K^+_{w^+} = \K_{\bar w}$ and where $w^+ = w|_{\K^+} = \bar w|_{\K^+}$.
The pair $\left(\chi_{w,1}, \chi_{w,2}\right)$ determines a character
$$
\psi_w:P_n(\K_w)\rightarrow \C^\times,  \ \ \
\psi_w(\left(\smallmatrix A & B \\ 0 & D \endsmallmatrix \right))=
\chi_{w,1}\left(\det D\right)\chi_{w,2}\left(\det A\right).
$$
Here we have written an element of $P_n$ with respect to the direct sum decomposition $W = V_d\oplus V^d$.
We put
\begin{align*}
\psi_{w, s} = (\left(\smallmatrix A & B \\ 0 & D \endsmallmatrix \right))=
\chi_{w,1}\left(\det D\right)\chi_{w,2}\left(\det A\right) \left|A^{-1}D\right|_w^{-s}
\end{align*}
Given $\otimes_{w\in\Sigma_p} f_{w, s}\in \otimes_{w\in\Sigma_p}\Ind_{P_n(\K_w)}^{\GL_{\K_w}\left(W_w\right)}\left(\psi_{w, s}\right)$,
we set
\begin{align}\label{fpproductdefinition}
f_{p, s}(g) = \left|\nu\right|_p^{sn/2}\otimes_{w\in\Sigma_p}f_{w, s}(g_w), \ \ \ g=(\nu,(g_w))\in G(\Qp).
\end{align}
Then, as explained in \cite{apptoSHL}, $f_p\in I_p(\chi_p,s)$.

The choice of a level structure at $p$ for the PEL datum $P_1$ amounts to choosing
an $\O_w$-basis of $L_{1,w}$, and hence a $\K_w$-basis of $V_w$, for each $w\in\Sigma_p$.
This then determines a $\K_w$-basis of $V^d_w$ and $V_{d,w}$, via their identifications with $V_w$,
and hence a $\K_w$-basis\footnote{This is not in general the basis 
corresponding to the the level structure for $P_4$ determined by that for $P_1$.}
of $W_w = V_{d,w}\oplus V^d_w$.  This basis identifies
$\Isom_{\K_w}(V^d_w, V_w)$, $\Isom_{\K_w}(V_{d,w}, V_w)$, and an ordered choice of this  basis identifies $\GL_{\K_w}(V_w)$ with $\GL_n(\K_w)$.
This ordered basis also identifies $\GL_{\K_w}(W_w)$ with $\GL_{2n}(\K_w)$, $P_n(\K_w)$ with the subgroup of 
upper-triangular $n\times n$-block matrices and $M_n(\K_w)$ with the subgroup of diagonal $n\times n$-block
matrices.

Let $w\in \Sigma_p$. To each Schwartz function
$\Phi_w: \Hom_{K_w}(V_w, W_w)\rightarrow \C$ (so $\Phi_w$ has compact support),
we attach a Siegel--Weil section
$f^{\Phi_w}\in \Ind_{P_n(\K_{w})}^{\GL_{2n}(\K_{w})}\psi_{w, s}$
as follows.
Consider the decomposition
\begin{align*}
\Hom_{\K_w}(V_w, W_w) = \Hom_{\K_w}(V_w, V_{d, w})\oplus\Hom_{\K_w}(V_w, V^d_w), \ \ \
X = (X_1, X_2).
\end{align*}
Let
\begin{align*}
\mathbf{X}= \left\{X\in\Hom_{\K_w}(V_w, W_w)|X(V_w) = V_w^d\right\} =
\left\{(0, X)|X: V_w\isoarrow V^d_w\right\}.
\end{align*}
For $X\in \mathbf{X}$, the composition
$V_w\xrightarrow{X} V_w^d\isoarrow V_w$, where the last arrow comes from the fixed
identification of $V^d$ with $V$,
is an isomorphism of $V_w$ with itself.
This identifies $\mathbf{X}$ with $\GL_{\K_w}(V_w)$.

We define the section 
$f^{\Phi_w}\in \Ind_{P_n(\K_w)}^{\GL_{2n}(\K_w)}\psi_{w, s}$ 
by\footnote{The minor difference between the definitions of the Siegel section at $p$ in Equation 
\eqref{sectionininducedrepnatp} in this paper and in \cite[Equation (21)]{apptoSHL} is due to the 
fact that we use normalized induction in the present paper, while we did not use normalized induction 
in \cite{apptoSHL}.}
\begin{align}\label{sectionininducedrepnatp}
f^{\Phi_w}(g): = \chi_{2, w}(\det g)\left|\det g\right|_{w}^{\frac{n}{2}+s}\int_{\mathbf{X}}
\Phi_w(Xg)\chi_{1, w}^{-1}\chi_{2, w}(\det X)\left|\det X\right|_{w}^{n+2s}d^{\times}X.
\end{align}
Linear operations are viewed here as acting on the vector space $W_w$ on the right.
We recall that $\bX$ is identified with $\GL_{n}(\K_w)$; $d^\times X$ is the measure
identified with the right Haar measure on the latter.
To define the Siegel sections $f_{w, s}$, we make specific choices of the Schwartz functions $\Phi_w$.

Let $(a_w,b_w)$ be the signature associated to $w|p$ and $L_1,\pair_1$. 
For each $w\in\Sigma_p$, fix partitions
\begin{equation*} \label{rvdefn}
a_w = n_{1, w} + \cdots + n_{t(w), w} \ \ \text{and} \ \
b_w = n_{t(w)+1, w}+ \cdots + n_{r(w), w}.
\end{equation*}
Let $\mu_{1, w} , \ldots, \mu_{r(w), w}$ be characters of $\O_w^\times$, and let 
$\mu_w = (\mu_{1, w}, \ldots, \mu_{r(w), w})$ and $\mu = \prod_{w\in\Sigma_p}\mu_w$.  
We view each character $\mu_{i, w}$ as a character of $\GL_{n_{i, w}}(\O_w)$ via composition with the determinant.
Let
$$
\nu_{i, w} = \chi_{1, w}^{-1}\chi_{2, w}\mu_{i, w}, \ \ \ i=1,...,r(w),
$$
and let $\nu_w = \left(\nu_{1, w}, \ldots, \nu_{r(w), w}\right)$. 

Let $\mathfrak{X}_w\subset M_n(\O_w)$ comprise the matrices 
$\left(\smallmatrix A & B\\ C& D\endsmallmatrix\right)$, with $A\in M_{a_w}(\O_w)$
and $D\in M_{b_w}(\O_w)$, such that the determinant of the leading principal $n_{1, w}+\cdots +n_{i, w}$-th 
minor of $A$ is in $\O_w^\times$ for $i = 1, \ldots, t(w)$ and the determinant of the leading principal 
$n_{t(w)+1, w}+\cdots +n_{i, w}$-th minor of $D$ is in $\O_w^\times$ for $i = t(w)+1, \ldots, r(w)$.
Let $A_i$ be the determinant of the leading principal $i$-th minor of $A$ and $D_i$ the determinant of the
leading principal $i$-th minor of $D$. Define $\phi_{\nu_w}:M_n(\K_w)\rightarrow\C$ to be the function supported on 
$\mathfrak{X}_w$ and defined for $X = \left(\smallmatrix A & B\\ C& D\endsmallmatrix\right)\in\mathfrak{X}_w$ by
\begin{align*}
\phi_{\nu_v}(X) & = \nu_{t(w), w}(A)\cdot 
\prod_{i = 1}^{t(w)-1} (\nu_{i, w}\cdot \nu_{i+1, w}^{-1})(A_{n_{1, w}+\cdots+ n_{i, w}}) \\ 
& \nu_{r(w), w}(D)\cdot
\times \prod_{i=t(w)+1}^{r(w)-1}(\nu_{i, w}\cdot \nu_{i+1, w}^{-1})(D_{n_{t(w)+1, w}+\cdots+ n_{i, w}}).
\end{align*}

Let
\begin{align}\label{tbound}
t\geq \max_{w\in\Sigma_p, 1\leq i\leq r(w)}(1, \ord_w(\cond(\mu_{i, w})), \ord_w(\cond(\chi_w)))),
\end{align}
and let
$\Gamma_w = \Gamma_w(t)\subset\GL_{n}(\O_w)$
be the subgroup of $\GL_n(O_w)$ consisting of matrices whose terms below the $n_{i, w}\times n_{i, w}$-blocks 
along the diagonal are in $\mathfrak{p}_w^t$ and such that the upper right $a_w\times b_w$ block is also in 
$\mathfrak{p}_w^t$.  For each matrix $m\in\Gamma_w$ with $n_{i, w}\times n_{i,w}$-blocks $m_i$ running down the 
diagonal, we define
\begin{align*}
\mu_w(m) = \prod_i \mu_{i, w}(\det(m_i)).
\end{align*}
Let $\Phi_{1, w}$ be the function on $M_{n\times n}(\K_w)$ supported on $\Gamma_w(t)$ 
and such that $\Phi_{1,w}(x) = \mu_w(x)$ for all $x\in\Gamma_w(t)$.  
Let $\Phi_{2, w}$ be the function on $M_{n\times n}(\K_w)$ defined by
\begin{align}\label{Phi2defn}
\Phi_{2, w}(x) = \hat{\phi}_{\nu_w}(x) = \int_{M_{n\times n}(\K_w)} \phi_{\nu_w}(y)e_w(-\trace\, y\t x)dy.
\end{align}  
Note that $\hat{\phi}_{\nu_w}$ is the Fourier transform of $\phi_{\nu_w}$, 
as discussed in \cite[Lemma 10]{apptoSHL}.

For $X=\left(X_1,X_2\right)\in \Hom_{\K_w}\left(V_w,W_w\right) = \Hom_{\K_w}(V_w,V_{d,w})\oplus\Hom_{\K_w}\left(V_w,V^d_w\right)$,
let
\begin{align*}
\Phi_w(X)=\Phi_{\chi, \mu, w} (X_1, X_2) = \vol(\Gamma_w)^{-1}\Phi_{1, w}(-X_1)\cdot \Phi_{2, w}(2X_2).
\end{align*}
Recall that we have identified $X_1$ and $X_2$ with matrices through a choice of basis for $V_w$ (coming
from the level structure at $p$ for $P_1$). Note that $\Phi_{\chi, \mu, w}$ is a partial Fourier transform in 
the second variable in the sense of \cite[Lemma 10]{apptoSHL}.
We then define
\begin{align}\label{fwchimu}
f_{w, s}:=f_w^{\chi, \mu} := f^{\Phi_w}=f^{\Phi_{\chi, \mu, w}}.
\end{align}  
We then define $f_{p, s}\in I_p(\chi_p,s)$ by \eqref{fpproductdefinition}.

The following lemma describes the support of $\Phi_{2,w}$.
\begin{lem}\label{lemmaphisupport} \hfill
\begin{itemize}
\item[(i)] \label{phinuequ}
For $\gamma_1, \gamma_2\in\Gamma_w$,
\begin{align*} 
\phi_{\nu_w}(\t\gamma_1X\gamma_2)=\mu_w(\gamma_1\gamma_2)\chi_{1,w}^{-1}\chi_{2,w}(\det\gamma_1\gamma_2)
\phi_{\nu_w}(X).
\end{align*}
\item[(ii)] \label{phi2xequ} For $X = \left(\smallmatrix 
A & B \\ C & D\endsmallmatrix\right)$ with $A\in M_{a_w\times a_w}(\K_w)$, $B\in M_{a_w\times b_w}(\K_w)$, 
$C\in M_{b_w\times a_w}(\K_w)$, and $D\in M_{b_w\times b_w}(\K_w)$,
\begin{align*}
\Phi_{2,w}(X) = \Phi_w^{(1)}(A)\Phi_w^{(2)}(B)\Phi_w^{(3)}(C)\Phi_w^{(4)}(D),
\end{align*}
with
\begin{align*}
\Phi_w^{(2)} = \mathrm{char}_{M_{a_w\times b_w}(\O_w)}, & \ \ \
\Phi_w^{(3)} = \mathrm{char}_{M_{b_w\times a_w}(\O_w)}\\
\supp(\Phi_w^{(1)})\subseteq \grp_w^{-t}M_{a_w\times a_w}(\O_w), & \ \ 
\supp(\Phi_w^{(4)})\subseteq \grp_w^{-t}M_{b_w\times b_w}(\O_w).
\end{align*}
Here $t$ is as in Inequality \eqref{tbound}.
\end{itemize}
\end{lem}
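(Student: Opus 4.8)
The plan is to establish (i) as a direct covariance computation for $\phi_{\nu_w}$ and to deduce (ii) from (i) together with an elementary Fourier-transform argument over $\K_w$. For (i) the starting point is the structure of $\phi_{\nu_w}$: it is supported on $\mathfrak{X}_w$, it depends only on the diagonal blocks $A$ and $D$ of $X=\left(\smallmatrix A&B\\ C&D\endsmallmatrix\right)$ (merely requiring $B,C$ integral), and on $\mathfrak{X}_w$ it is the product of $\nu_{t(w),w}(\det A)$, $\nu_{r(w),w}(\det D)$ and the characters $\nu_{i,w}\nu_{i+1,w}^{-1}$ evaluated at leading principal minors of $A$ and $D$. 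A telescoping identity rewrites this: for a matrix block upper-triangular with respect to the partitions $a_w=n_{1,w}+\cdots+n_{t(w),w}$ and $b_w=n_{t(w)+1,w}+\cdots+n_{r(w),w}$, whose $i$-th diagonal block is $g^{(i)}$, the product collapses to $\prod_i\nu_{i,w}(\det g^{(i)})$, i.e.\ the restriction of $\phi_{\nu_w}$ to such matrices is the character inducing $\otimes_i\nu_{i,w}$ from the block Levi; this is the form I will match against.

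For (i) itself I would proceed in two steps. Step one: $\mathfrak{X}_w$ is preserved by $X\mapsto \t\gamma_1 X\gamma_2$. Integrality is automatic since $\gamma_1,\gamma_2\in\GL_n(\O_w)$, and modulo $\grp_w$ the $A$-block of $\t\gamma_1 X\gamma_2$ is a block lower-triangular invertible matrix (the transpose of the reduced $A$-part of $\gamma_1$) times $\bar A$ times a block upper-triangular invertible matrix (the reduced $A$-part of $\gamma_2$), because every cross-contribution from the off-block parts of $\gamma_1,\gamma_2$ lies in $\grp_w$ — the entries below the blocks are in $\grp_w^t\subseteq\grp_w$ and the upper-right $a_w\times b_w$ block is in $\grp_w$ by definition of $\Gamma_w$ — and similarly for the $D$-block; hence the leading minors cutting out $\mathfrak{X}_w$ stay units, and the same argument run with $\gamma_j^{-1}\in\Gamma_w$ gives the converse. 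Step two: using that the leading principal $k$-minor of $PAQ$ with $P$ block lower-triangular and $Q$ block upper-triangular is the product of the three leading $k$-minors, one finds that every leading minor of the $A$- and $D$-blocks, and $\det A$, $\det D$, get multiplied by the corresponding products of determinants of the diagonal blocks of $\gamma_1$ and $\gamma_2$, up to a correction which the characters $\nu_{i,w}$ annihilate by virtue of \eqref{tbound} and the congruences defining $\Gamma_w$ (verifying this is the delicate point, addressed below). Telescoping the $\gamma_1$- and $\gamma_2$-contributions separately then yields the factor $\prod_i\nu_{i,w}(\det\gamma_1^{(i)})\cdot\prod_i\nu_{i,w}(\det\gamma_2^{(i)})$, and substituting $\nu_{i,w}=\chi_{1,w}^{-1}\chi_{2,w}\mu_{i,w}$ together with $\prod_i\det\gamma_j^{(i)}=\det\gamma_j$ turns this into $\mu_w(\gamma_1\gamma_2)\,\chi_{1,w}^{-1}\chi_{2,w}(\det\gamma_1\gamma_2)$, which is (i).

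For (ii), I would use that $\phi_{\nu_w}$ is a pure tensor in the four block-coordinates, $\phi_{\nu_w}=\phi^{(A)}\otimes\mathrm{char}_{M_{a_w\times b_w}(\O_w)}\otimes\mathrm{char}_{M_{b_w\times a_w}(\O_w)}\otimes\phi^{(D)}$, where $\phi^{(A)}$ and $\phi^{(D)}$ are the $A$- and $D$-dependent factors of $\phi_{\nu_w}$. Since $\trace(y\,\t x)$ is the sum of the four corresponding block traces, $\Phi_{2,w}=\widehat{\phi_{\nu_w}}$ is the tensor product of the four block Fourier transforms; with the normalization of $e_w$ and $dy$ fixed so that $\mathrm{char}_{M(\O_w)}$ is self-dual (cf.\ \cite[Lemma 10]{apptoSHL}) this gives $\Phi_w^{(2)}=\mathrm{char}_{M_{a_w\times b_w}(\O_w)}$, $\Phi_w^{(3)}=\mathrm{char}_{M_{b_w\times a_w}(\O_w)}$, $\Phi_w^{(1)}=\widehat{\phi^{(A)}}$, $\Phi_w^{(4)}=\widehat{\phi^{(D)}}$. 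To bound the support of $\Phi_w^{(1)}$, I would apply (i) with $\gamma_1,\gamma_2$ in the principal congruence subgroup $1+\grp_w^t M_n(\O_w)\subseteq\Gamma_w$, where the character factor is $1$ and the conditions defining $\mathfrak{X}_w$ depend only on $X\bmod\grp_w$: this shows $\phi^{(A)}$ is invariant under addition of $\grp_w^t M_{a_w\times a_w}(\O_w)$, and the standard computation then forces $\supp\widehat{\phi^{(A)}}\subseteq\grp_w^{-t}M_{a_w\times a_w}(\O_w)$; likewise for $\phi^{(D)}$.

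The hard part will be the error-term bookkeeping in Step two of (i): one must check that the two-sided multiplication changes each leading minor of the diagonal blocks only by the expected product of block-determinants times a unit lying in a congruence subgroup on which the (possibly ramified) characters $\nu_{i,w}$ are trivial, so that only the predicted factor survives. This is exactly where the mixed congruence conditions in the definition of $\Gamma_w$ — entries below the blocks in $\grp_w^t$, the upper-right $a_w\times b_w$ block in $\grp_w$ — and the lower bound \eqref{tbound} on $t$ enter essentially; once that is pinned down, the rest of the argument (the block-triangular determinant identity, the telescoping, and the Fourier factorization) is formal.
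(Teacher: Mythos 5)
Your proposal is correct and takes essentially the same route as the paper: for (ii), both arguments exploit the fact that $\phi_{\nu_w}$ factors as a function of the four blocks, so that $\Phi_{2,w}$ is a tensor product of four block Fourier transforms, with self-duality of $\mathrm{char}_{M(\O_w)}$ giving $\Phi_w^{(2)}$ and $\Phi_w^{(3)}$.

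The one point of divergence is how the support bound on $\Phi_w^{(1)}$ and $\Phi_w^{(4)}$ is obtained. You derive the $\grp_w^t$-periodicity of the $A$-block factor $\phi^{(A)}$ by routing through (i) applied to $\gamma_1,\gamma_2\in 1+\grp_w^tM_n(\O_w)$, converting the multiplicative invariance to additive invariance (using that the leading minor conditions force $\alpha$ to be invertible in $M_{a_w}(\O_w)$ on the support, and that membership in $\mathfrak{X}_w$ depends only on $X\bmod\grp_w$). The paper instead observes the periodicity directly from the definition -- the characters $\nu_{i,w}$ have conductor at most $t$ by \eqref{tbound} and the minors are integral polynomials -- and then writes out the Fourier transform of $\phi^{(A)}$ explicitly as a finite sum over $\grX\bmod\grp_w^t$ multiplied by $\mathrm{char}_{\grp_w^{-t}M_{a_w\times a_w}(\O_w)}$. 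Your detour through (i) is valid but slightly longer; the direct observation is cleaner. As for (i), the paper asserts it ``follows immediately from the definition'' without writing out the minor bookkeeping that you correctly flag as the substantive content; your Step two sketch is exactly that computation, and it does close up along the lines you indicate (each leading minor $A'_k$ of $\t\gamma_1 X\gamma_2$ equals $A_k\cdot\prod_{j\leq i}\det(\gamma_1^{(j)}\gamma_2^{(j)})$ up to a factor in $1+\grp_w^t$ that the characters annihilate, and the product over $i$ telescopes to $\prod_j\nu_{j,w}(\det\gamma_1^{(j)}\gamma_2^{(j)})$).
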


\begin{proof}
Part (i) follows immediately from the definition of $\phi_{\nu_w}$.
It remains to prove part (ii).  
Let
$$
\grX^{(1)} = \{ \alpha\in M_{a_w\times a_w}(\O_w) \ : \ \left(\smallmatrix \alpha & 0 \\ 0 & 1 \endsmallmatrix\right) \in \grX\},
$$
and
$$
\grX^{(4)} = \{ \delta\in M_{b_w\times b_w}(\O_w) \ : \ \left(\smallmatrix 1 & 0 \\ 0 & \delta \endsmallmatrix\right) \in \grX\}.
$$
We have
\begin{align*}
\Phi_{2,w}(X) & = \int_{M_n(\K_w)}\phi_{\nu_w}(Y)e_w\left(-\trace\, Y\left(\smallmatrix \t A & \t C \\ 
\t B & \t D\endsmallmatrix\right)\right)dY \\
& = \int_{\grX} \phi_{\nu_w}(\left(\smallmatrix\alpha & \beta \\ \gamma &\delta\endsmallmatrix\right))
e_w\left(-\trace\left(\alpha\t A+\beta\t B+\gamma\t C+\delta\t D\right)\right)d\alpha d\beta d\gamma d\delta \\
& = \Phi_w^{(1)}\left(A\right)\Phi_w^{(2)}\left(B\right)\Phi_w^{(3)}\left(C\right)\Phi_w^{(4)}\left(D\right),
\end{align*}
where
\begin{align*}
\Phi_w^{(2)}(B) = \int_{M_{a_w\times b_w}(\O_w)}e_w(-\trace\,\beta\t B)d\beta =
\mathrm{char}_{M_{a_w\times b_w}(\O_w)}(B), \\
\end{align*}
\begin{align*}
\Phi_w^{(3)}(C) = \int_{M_{b_w\times a_w}(\O_w)} e_w(-\trace\, \gamma\t C)d\gamma = 
\mathrm{char}_{M_{b_w\times a_w}(\O_w)}(C),
\end{align*}
\begin{align*}
\Phi_w^{(1)}(A) & = \int_{\grX^{(1)}} \phi_{\nu_w}(\left(\smallmatrix \alpha & 0 \\ 0 & 1\endsmallmatrix\right))
e_w(-\trace\,\alpha\t A) d\alpha  \\
& = \Vol(M_{a_w\times a_w}(\grp_w^{t})) \sum_{x = \left(\smallmatrix \alpha & 0 \\ 0 & 1\endsmallmatrix\right)
\in\grX \mod \grp_w^t}\phi_{\nu_w}(x) e_w(-\trace\,\alpha\t A)\mathrm{char}_{\grp_w^{-t}M_{a_w\times a_w}
(\O_w)}(A),
\end{align*}
and
\begin{align*}
\Phi_w^{(4)}(D) & = \int_{\grX^{(4)}} \phi_{\nu_w}(\left(\smallmatrix 1 & 0 \\ 0 & \delta \endsmallmatrix\right))
e_w(-\trace\,\delta\t A) d\alpha  \\
& = \Vol(M_{b_w\times b_w}(\grp_w^t)) \sum_{x = \left(\smallmatrix 1 & 0 \\ 0 & \delta\endsmallmatrix\right)
\in\grX \mod \grp_w^t}\phi_{\nu_w}(x) e_w(-\trace\,\delta\t D)\mathrm{char}_{\grp_w^{-t}M_{b_w\times b_w}
(\O_w)}(D).
\end{align*}
\end{proof}

\subsubsection{Local induced representations}\label{localinducedreps}
Having chosen a Siegel section for each prime $w\in\Sigma_p$, we move on to the zeta integral calculations for such a $w$.

First, we introduce some additional notation.  Let $B_{a_w}\subseteq\GL_{a_w}$ be the standard parabolic subgroup associated to the partition $a_w = n_{1, w}+\cdots + n_{t(w), w}$.  Let $B_{b_w}\subseteq\GL_{b_w}$ be the standard parabolic subgroup associated to the partition $b_w = n_{t(w)+1, w}+\cdots+ n_{r(w), w}$.  
Let $R_{a_w, b_w}\subseteq \GL_n$ be the standard parabolic subgroup associated to $n = a_w+b_w$.  
Let $L_{a_w, b_w}=\GL_{a_w}\times\GL_{b_w}$ denote the Levi subgroup of $R_{a_w, b_w}$.
Let $R_w \subset R_{a_w,b_w}$ be the 
parabolic 
such that $R_w\cap L_{a_w,b_w}  = B_{a_w}\times B_{b_w}^\op$.

Recall the characters $\mu_{i, w}$ from Section  \ref{SWsections}, which define characters on $\GL_{n_{i, w}}(\O_w)$ via composition with the determinant.  
We define characters $\mu_{i, w}'$ by
\begin{align*}
\mu_{i, w}' = \begin{cases} \chi_{2, w}^{-1}\mu_{i, w}^{-1}, & \mbox{if } 1\leq i\leq t(w) \\ \chi_{1, w}^{-1}\mu_{i, w}, & \mbox{if } t(w)+1\leq i\leq r(w), \end{cases}
\end{align*}
for all $w\in\Sigma_p$.  Let
\begin{gather*}
\mu_{a_w}' = \otimes_{i=1}^{t(w)} \mu_{i,w}, \ \ \mu_{b_w}' = \otimes_{i=t(w)+1}^{r(w)}\mu_{i,w}', \\
\mu_w' = \otimes_{i = 1}^{r(w)}\mu_{i, w}' = \mu_{a_w}' \otimes\mu_{b_w}', \\
\mu' = \otimes_{w\in\Sigma_p}\mu'_w
\end{gather*}
denote characters 
on 
$\prod_{i=1}^{t(w)}\GL_{n_{i, w}}(\O_w)$, $\prod_{i=t(w)+1}^{r(w)}\GL_{n_{i, w}}(\O_w)$, $\prod_{i=1}^{r(w)}\GL_{n_{i, w}}(\O_w)$ and $\prod_w\left(\prod_{i=1}^{r(w)}\GL_{n_{i, w}}(\O_w)\right)$, respectively (again, by composing with determinants).

For all $w\in\Sigma_p$, let
$\tau_{i,w}$, $1\leq i \leq r(w)$, be an unramified irreducible admissible representation of $\GL_{n_{i,w}}(\K_w)$. Let $\beta_{i,w}$, $1\leq i\leq r(w)$, be a character of 
$\GL_{n_{i,w}}(\CK_w)$ such that $\beta_{i,w}|_{\GL_{n_{i,w}}(\O_w)} = \mu_{i,w}'$. Let  $\pi_{i,w} = \tau_{i,w}\otimes\beta_{i,w}$.
Let 
$$
\Ind_{B_{a_w}}^{\GL_{a_w}}(\otimes_{i=1}^{t(w)} \pi_{i,w})\twoheadrightarrow \pi_{a_w} \ \ \text{and} \ \ 
\Ind_{B^\op_{b_w}}^{\GL_{b_w}}(\otimes_{i=t(w)+1}^{r(w)} \pi_{i,w}) \twoheadrightarrow \pi_{b_w}
$$
be irreducible admissible quotients.  Similarly, let 
$$
\Ind_{R_{a_w,b_w}}^{\GL_n} (\pi_{a_w}\otimes\pi_{b_w}) \twoheadrightarrow \pi_w
$$ 
be an irreducible admissible quotient.  By composition of these quotients, $\pi_w$ is realized as a quotient
of $\Ind_{R_w}^{\GL_n} (\otimes_{i=1}^{r(w)}\pi_{i,w})$.

For all $w\in\Sigma_p$, let $\tilde\tau_{i,w}$ be the contragredient of $\tau_{i,w}$, and let $\tilde\beta_{i,w}$ be the contragredient of $\beta_{i,w}$.
Let $\tilde\pi_{i,w} = \tilde\tau_{i,w}\otimes\tilde\beta_{i,w}$; this is the contragredient of $\pi_{i,w}$.
Let $\tilde\pi_{a_w}$, $\tilde\pi_{b_w}$, and $\tilde\pi_w$ be the respective contragredients of $\pi_{a_w}$, $\pi_{b_w}$, and $\pi_w$.

For all $w\in\Sigma$ and $1\leq i\leq r(w)$, let $\pair_{\pi_{i,w}}:\pi_{i,w}\times\tilde\pi_{i,w}$ be the tautological pairing of a representation and its contragredient.
Let $\ppair_{a_w} = \otimes_{i=1}^{t(w)} \pair_{\pi_{i,w}}$ and $\ppair_{b_w}=\otimes_{i=t(w)+1}^{r(w)}\pair_{\pi_{i,w}}$.
Then 
\begin{gather*}
\pair_{a_w}:\Ind_{B_{a_w}}^{\GL_{a_w}}(\otimes_{i=1}^{t(w)} \pi_{i,w})\times\Ind_{B_{a_w}}^{\GL_{a_w}}(\otimes_{i=1}^{t(w)} \tilde\pi_{i,w}) \rightarrow \C, \\
\langle \vphi,\tilde\vphi\rangle_{a_w} = \int_{\GL_{a_w}(\O_w)} (\vphi(k),\tilde\vphi(k))_{a_w} dk, 
\end{gather*}
and
\begin{gather*}
\pair_{b_w}:\Ind_{B^\op_{b_w}}^{\GL_{b_w}}(\otimes_{i=t(w)+1}^{r(w)} \pi_{i,w})\times\Ind_{B^\op_{b_w}}^{\GL_{b_w}}(\otimes_{i=t(w)+1}^{r(w)} \tilde\pi_{i,w})\rightarrow\C, \\
\langle \vphi,\tilde\vphi\rangle_{b_w} = \int_{\GL_{b_w}(\O_w)} (\vphi(k),\tilde\vphi(k))_{b_w} dk
\end{gather*}
are, respectively, perfect $\GL_{a_w}(\K_w)$-invariant and $\GL_{b_w}(\K_w)$-invariant pairings that identify the pairs of representations as 
contragredients (and the pairings with the tautalogical ones). With respect to these identifications, the dual  of the surjections onto $\pi_{a_w}$ and $\pi_{b_w}$ are 
inclusions of irreducible admissible representations
$$
\tilde\pi_{a_w} \hookrightarrow \Ind_{B_{a_w}}^{\GL_{a_w}}(\otimes_{i=1}^{t(w)} \tilde\pi_{i,w})\ \ \text{and} \ \  
\tilde\pi_{b_w} \hookrightarrow \Ind_{B^\op_{b_w}}^{\GL_{b_w}}(\otimes_{i=t(w)+1}^{r(w)} \tilde\pi_{i,w})
$$
such that the tautological pairings
$$
\pair_{\pi_{a_w}}:\pi_{a_w}\times\tilde\pi_{a_w} \rightarrow\C \ \ \text{and} \ \ \pair_{\pi_{b_w}}:\pi_{b_w}\times\tilde\pi_{b_w} \rightarrow\C
$$
are the pairings induced from $\pair_{a_w}$ and $\pair_{b_w}$
by composition with the projections to $\pi_{a_w}$ and $\pi_{b_w}$ and the inclusions of $\tilde\pi_{a_w}$ and $\tilde\pi_{b_w}$.
Similarly, $\ppair_w = \pair_{\pi_{a_w}}\otimes\pair_{\pi_{b_w}}$ determines 
a pairing $\pair_w: \Ind_{R_{a_w,b_w}}^{\GL_n} (\pi_{a_w}\otimes\pi_{b_w})\times\Ind_{R_{a_w,b_w}}^{\GL_n} (\tilde\pi_{a_w}\otimes\tilde\pi_{b_w})\rightarrow \C$
that is identified with the tautological pairing and so induces an inclusion 
$$
\tilde\pi_w \hookrightarrow \Ind_{R_{a_w,b_w}}^{\GL_n} (\tilde\pi_{a_w}\otimes\tilde\pi_{b_w})
$$
(and hence by composition an inclusion $\tilde\pi_w\hookrightarrow
\Ind_{R_w}^{\GL_n}(\otimes_{i=1}^{r(w)}\tilde\pi_{i,w})$)
such that the tautological pairing $\pair_{\pi_w}:\pi_w\times\tilde\pi_w\rightarrow\C$ is the pairing induced from $\pair_w$ via
the projection to $\pi_w$ and the inclusion of $\tilde\pi_w$.
In particular, for $\phi\in \pi_w$ and 
$\tilde\phi\in \tilde\pi_w$, let $\varphi \in \Ind_{R_{a_w,b_w}}^{\GL_n} (\pi_{a_w}\otimes\pi_{b_w})$ project to $\phi$ and let $\tilde\varphi\in \Ind_{R_{a_w,b_w}}^{\GL_n} (\tilde\pi_{a_w}\otimes\tilde\pi_{b_w})$ be the image of $\tilde\phi$. Then 
\begin{equation}\label{pairing1}
\langle\phi,\tilde\phi\rangle_{\pi_w} = \int_{\GL_n(\O_w)} \langle \vphi(k),\tilde\vphi(k)\rangle_w dk.
\end{equation}

\subsubsection{Local congruence subgroups and (anti-ordinary) test vectors}\label{localtestvectors}

Let $t$ satisfy the inequality \eqref{tbound}, and let
\begin{equation}\label{dgeq2t}
r\geq 2t.
\end{equation}
Consider the following groups:
\begin{align*}
\Gamma_{R, w} & = \left\{\gamma\in\gln\left({\O}_w\right)\mid \gamma\mod\mathfrak{p}_w^r\in R_w\left(\O/\mathfrak{p}^r_w\O\right)\right\}, \ \
\Gamma_R = \prod_{w\in\Sigma_p}\Gamma_{R, w},\\
\Gamma_{a_w, w} & = \left\{\gamma\in\gl_{a_w}\left({\O}_w\right)\mid \gamma\mod\mathfrak{p}_w^r\in B_{a_w}\left(\O/\mathfrak{p}^r_w\O\right)\right\}, \ \
\Gamma_a  = \prod_{w\in\Sigma_p}\Gamma_{a_w, w},\\
\Gamma_{b_w, w} & = \left\{\gamma\in\gl_{b_w}\left({\O}_w\right)\mid \gamma\mod\mathfrak{p}_w^r\in B_{b_w}\left(\O/\mathfrak{p}^r_w\O\right)\right\}, \ \
\Gamma_b = \prod_{w\in\Sigma_p}\Gamma_{b_w, w}, \\
\Gamma_{a_w,b_w} & = \left\{\gamma\in\GL_n\left(\O_w\right) \mid \gamma\mod\mathfrak{p}_w^r \in R_{a_w,b_w}\left(\O/\mathfrak{p}^r\O\right)\right\}, 
\end{align*}
By the choice of $r$, the character $\mu_w$ extends to a character of both $\Gamma_R$ and its transpose $\t\Gamma_R$ such that 
for $\gamma$ in $\Gamma_R$ or $\t\Gamma_R$, $\mu'_w(\gamma) = \prod_{i=1}^{r_w} \mu'_{i,w}(\gamma_{ii})$, where $\gamma =(\gamma_{ij})$ is the block matrix form corresponding
to the partition $n = n_{1,w} + \cdots + n_{r(w),w}$. Similarly, $\mu'_{a_w}$ (resp.~$\mu'_{b_w}$) extend to characters of $\Gamma_{a_w,w}$ and $\t\Gamma_{a_w,w}$
(resp.~$\Gamma_{b_w,w}$ and $\t\Gamma_{b_w,w}$). The same holds for $\tilde\mu_w'$, $\tilde\mu_{a_w}'$, and $\tilde\mu_{b_w}'$.

For all $w\in\Sigma_p$ and $1\leq i\leq r(w)$, let $0\neq \phi_{i,w} \in \pi_{i,w}$ such that $\pi_{i,w}(k)\phi_{i,w} = \mu_{i,w}'(k)\phi_{i,w}$ for all $k\in\GL_{n_{i,w}}(\O_w)$.
Such a $\phi_{i,w}$ exists (and is unique up to non-zero scalar) since $\pi_{i,w} = \tau_{i,w}\otimes\beta_{i,w}$ with $\tau_{i,w}$ unramified.
Let $\phi_{a_w}^0 = \otimes_{i=1}^{t(w)}\phi_{i,w}$ and $\phi_{b_w}^0 = \otimes_{i=t(w)+1}^{r(w)}\phi_{i,w}$.
Let $\varphi_{a_w} \in \Ind_{B_{a_w}}^{\GL_{a_w}}(\otimes_{i=1}^{t(w)} \pi_{i,w})$ be the unique function such that: 
\begin{equation}\label{phiinvariancea}
\text{the support of $\vphi_{a_w}$ is $B_{a_w}(\K_w)\Gamma_{a_w,w}$ and $\varphi_{a_w}(\gamma) = \mu_{a_w}'(\gamma) \phi_{a_w}^0$ $\forall \gamma\in\Gamma_{a_w,w}$.} 
\end{equation}
Similarly, let $\varphi_{b_w} \in \Ind_{B^\op_{b_w}}^{\GL_{b_w}}(\otimes_{i=t(w)+1}^{r(w)} \pi_{i,w})$ be the unique function such that: 
\begin{equation}\label{phiinvarianceb}
\text{the support of $\vphi_{b_w}$ is $B^\op_{b_w}(\K_w)\t\Gamma_{b_w,w}$ and $\varphi_{b_w}(\gamma) = \mu_{b_w}'(\gamma) \phi_{b_w}^0$ $\forall \gamma\in\t\Gamma_{b_w,w}$.} 
\end{equation}
We assume that 
\begin{equation}\label{phiinvarianceab}
\text{the image $\phi_{a_w}$ (resp. $\phi_{b_w}$) of $\vphi_{a_w}$ in $\pi_{a_w}$ (resp.~of $\vphi_{b_w}$ in $\pi_{b_w}$) is nonzero.}
\end{equation}
Let $\phi_w^0 = \phi_{a_w}\otimes\phi_{b_w} \in \pi_{a_w}\otimes\pi_{b_w}$. 
Let $\vphi_w\in \Ind_{R_{a_w,b_w}}^{\GL_n}\pi_{a_w}\otimes\pi_{b_w}$ be the unique function such that
\begin{equation}\label{phiinvariancew}
\text{the support of $\vphi_{w}$ is $R_{a_w,b_w}(\K_w)\Gamma_{R,w}$ and $\varphi_{w}(\gamma) = \mu_{w}'(\gamma) \phi_{w}^0$ $\forall \gamma\in\Gamma_{R,w}$.} 
\end{equation}
We also assume that 
\begin{equation}\label{phinvariancew2}
\text{the image $\phi_{w}$ of $\vphi_{w}$ in $\pi_{w}$ is nonzero.}
\end{equation}
Note that 
\begin{equation}\label{phiinvariancew3}
\pi_w(\gamma) \phi_w = \mu'_w(\gamma)\phi_w \ \forall \gamma\in \Gamma_{R,w}.
\end{equation}

Let $0\neq \tilde\phi_{i,w}  \in \tilde\pi_{i,w}$ be such that $\tilde\pi_{i,w}(k)\tilde\phi_{i,w}= \tilde\mu'_{i,w}(k)\tilde\phi_{i,w}$ for all $k\in \GL_{n_i}(\O_w)$.
Let $\tilde\phi_{a_w}^{0} = \otimes_{i=1}^{t(w)} \tilde\phi_{i,w}$ and $\tilde\phi_{b_w}^{0} = \otimes_{i=t(w)+1}^{r(w)} \tilde\phi_{i,w}$.
We suppose that $\tilde\phi_{a_w} \in \tilde\pi_{a_w}$, $\tilde\phi_{b_w} \in \tilde\pi_{b_w}$, and $\tilde\phi_{w} \in \tilde\pi_{w}$
are such that 
\begin{equation}\begin{split}\label{tphiinvarianceab}
\text{the image $\tilde\vphi_{a_w}$ of $\tilde\phi_{a_w}$ in $\Ind_{B_{a_w}}^{\GL_{a_w}} (\otimes_{i=1}^{t(w)}\tilde\pi_{i,w})$ satisfies $\tilde\vphi_{a_w}(1) = \tilde\phi_{a_w}^0$,} \\
\text{the image $\tilde\vphi_{b_w}$ of $\tilde\phi_{b_w}$ in $\Ind_{B_{b_w}^\op}^{\GL_{b_w}} (\otimes_{i=t(w)+1}^{r(w)}\tilde\pi_{i,w})$ satisfies $\tilde\vphi_{b_w}(1) = \tilde\phi_{b_w}^0$,}
\end{split}\end{equation}
and
\begin{equation}\label{tphiinvariancew}
\text{the image $\tilde\vphi_{w}$ of $\tilde\phi_{w}$ in $\Ind_{R_{a_w,b_w}}^{\GL_{n}} (\tilde\pi_{a_w}\otimes\tilde\pi_{b_w})$ satisfies $\tilde\vphi_{w}(1) = \tilde\phi_{a_w}\otimes\tilde\phi_{b_w}$.}
\end{equation}
We also suppose that 
\begin{equation}\label{tphiinvariancew2}
\tilde\pi_w(\gamma)\tilde\phi_w = \tilde\mu_w'(\gamma)\tilde\phi_w \ \forall\gamma\in\t\Gamma_{R,w}.
\end{equation}
One consequence of \eqref{tphiinvariancew} and \eqref{tphiinvariancew2} is that 
\begin{equation}\label{phi'invariancew2} \text{the support of $\tilde\vphi_w$  contains $R_{a_w,b_w}(\K_w)\t\Gamma_{R,w}$.}
\end{equation}

All of the above conditions imposed on $\varphi_w$ and $\tilde{\varphi}_w$ will be used in our computations of the local zeta integrals later in this section.  We note in particular that \eqref{phiinvariancew} and \eqref{phi'invariancew2} (resp. \eqref{dgeq2t}) correspond to condition (i) (resp. condition (ii)) of Remark \ref{conditions}.

By \eqref{pairing1},
$$
\langle \phi_w,\tilde\phi_w\rangle_{\pi_w} = \int_{\GL_n(\O_w)} \langle\varphi_w(k),\tilde\varphi_w(k)\rangle_w dk.
$$
By the choice of the support of $\varphi_w$ in \eqref{phiinvariancew}, 
the integrand is zero outside of $\Gamma_{a_w,b_w} = \GL_n(\O_w) \cap R_{a_w,b_w}(\K_w)\Gamma_{R,w}$.
Let $k \in \Gamma_{a_w,b_w}$. Then $k$ can be expressed as a product
$$
k = \left(\smallmatrix 1 & B \\ 0 & 1\endsmallmatrix\right) \left(\smallmatrix A & 0 \\ 0 & D\endsmallmatrix\right) \left(\smallmatrix 1 & 0 \\ C & 1\endsmallmatrix\right)
$$
with $B\in M_{a_w,b_w}(\O_w)$, $A\in \GL_{a_w}(\O_w)$, $D\in \GL_{b_w}(\O_w)$, and $C\in \M_{b_w,a_w}(\grp_w^t)$.
Then
$$
\vphi_w(k) = \vphi_w( \left(\smallmatrix A & 0 \\ 0 & D\endsmallmatrix\right)) = \pi_{a_w}(A)\phi_{a_w} \otimes \pi_{b_w}(D)\phi_{b_w},
$$
and
$$
\tilde\vphi_w(k) = \tilde\vphi_w( \left(\smallmatrix A & 0 \\ 0 & D\endsmallmatrix\right)) = \tilde\pi_{a_w}(A)\tilde\phi_{a_w} \otimes \tilde\pi_{b_w}(D)\tilde\phi_{b_w}.
$$
In particular, 
$$
\langle \vphi_w(k),\tilde\vphi_w(k)\rangle_w = \langle\phi_{a_w},\tilde\phi_{a_w}\rangle_{\pi_{a_w}} \cdot \langle \phi_{b_w},\tilde\phi_{b_w}\rangle_{\pi_{b_w}}
$$
for all $k\in \Gamma_{a_w,b_w}$.
It follows that
\begin{equation}\label{pair=ab}
\langle \phi_w,\tilde\phi_w\rangle_{\pi_w} = \Vol(\Gamma_{a_w,b_w})\cdot 
\langle\phi_{a_w},\tilde\phi_{a_w}\rangle_{\pi_{a_w}} \cdot \langle \phi_{b_w},\tilde\phi_{b_w}\rangle_{\pi_{b_w}}.
\end{equation}
Here the volume $\Vol(\Gamma_{a_w,b_w})$ is with respect to the chosen Haar measure on $\GL_n(\O_w)$.

Similar considerations show that 
$$
\langle\phi_{a_w},\tilde\phi_{a_w}\rangle_{\pi_{a_w}} = \Vol(\Gamma_{a_w,w})\prod_{i=1}^{t(w)} \langle \phi_{i,w}, \tilde\phi_{i,w}\rangle_{\pi_{i,w}}
$$
and
$$
\langle\phi_{b_w},\tilde\phi_{b_w}\rangle_{\pi_{b_w}}  = \Vol(\Gamma_{b_w,w})\prod_{i=t(w)+1}^{r(w)} \langle \phi_{i,w}, \tilde\phi_{i,w}\rangle_{\pi_{i,w}}.
$$
As $\Vol(\Gamma_{R,w}) = \Vol(\Gamma_{a_w,b_w}) \Vol(\Gamma_{a_w,w}) \Vol(\Gamma_{b_w,w})$, it follows that
\begin{equation}\label{pairneq0}
\langle \phi_w,\tilde\phi_w\rangle_{\pi_w} = \Vol(\Gamma_{R,w}) \prod_{i=1}^{t(w)} \langle \phi_{i,w}, \tilde\phi_{i,w}\rangle_{\pi_{i,w}} \neq 0.
\end{equation}
The nonvanishing of each $\langle \phi_{i,w},\tilde\phi_{i,w}\rangle_{\pi_{i,w}}$ is an easy consequence of the choice of $\phi_{i,w}$ and $\tilde\phi_{i,w}$.

\begin{rmk}\label{testvectors-remark}
In Sections \ref{ordvec} and \ref{ordvec-2} we identify
specific vectors in certain local representations (constituents of global representations of interest) that satisfy the conditions imposed in this section (see especially Remarks \ref{testvectors-remark} and \ref{testvector-remarkII}). These sections, denoted
$\phi_{w,r}^\aord$ and $\phi_{w,r}^{\flat,\aord}$ are natural choices from the perspective of Hida theory and (anti-)ordinary automorphic forms. The main result of the following section therefore shows that the local zeta integral for these natural choices of test vectors contributes a factor at $p$ of the expected form for a $p$-adic $L$-function.
\end{rmk}

\subsubsection{The main calculation}\label{maincalculation}

The ordered $\K_w$-basis for $V_w$ chosen above (that comes from the choice of a level structure for $P_1$) determines a $\K_w$-basis for $W_w= V_w\oplus V_w$.  This ordered basis for $W_w = V_w\oplus V_w$ identifies $\gl_{\K_w}\left(W_w\right)$ with $\gl_{2n}\left(\K_w\right)$ and identifies $\gl_{\K_w}\left(V_w\right)\times \gl_{\K_w}\left(V_w\right)\subseteq\gl_{\K_w}\left(V_w\oplus V_w\right)$ with $\gln\left(V_w\right)\times \gln\left(\K_w\right)\subseteq \gl_{2n}\left(\K_w\right)$.  {\em Note that this is a different identification of $\gl_{\K_w}\left(W_w\right)$ with $\gl_{2n}\left(\K_w\right)$ from the identification coming from the decomposition $W_w = V_{d, w}\oplus V_w^d$.}  With respect to this new decomposition, $\mathbf{X}$ no longer consists of elements $(0, X)$ but instead elements $(X, X)$.  (The switch between these two decompositions is often convenient in similar computations, e.g. in the computations in the doubling method introduced in \cite{GPSR}.)
Recall the Siegel section $f_w^{\chi, \mu}$ defined in Equation \eqref{fwchimu}.
In the computation of the zeta integrals, we replace $f_w^{\chi, \mu}$ with the translation $\tilde{f}_{w^+}$, defined by:
\begin{align}\label{siegeltranslation}
g\mapsto  g\cdot  \begin{pmatrix}1_{a_w} & 0 & 0 & 0\\
0 & 0 & 0 & 1_{b_w}\\
0 & 0 & 1_{a_w} & 0\\
0& 1_{b_w} & 0 & 0\end{pmatrix}.
\end{align}
(This is the translation from Remark \ref{Igusaembedding}.)  The matrices in Equation \eqref{siegeltranslation} are given with respect to the identification of $\gl_{\K_w}\left(W_w\right)$ with $\gl_{2n}\left(\K_w\right)$ introduced at the beginning of this paragraph.

To avoid cumbersome notation, we will denote $\Phi_{\chi, \mu, w}$ by $\Phi$ for the remainder of this section.  
The identification $\K_w = \K_{w^+}^+$ identifies the representation $\pi_w$ with a representation $\pi_{w^+}$
(and hence $\tilde\pi_w$ with $\tilde\pi_{w^+}$).
The sections $\phi_w\in\pi_w$ and $\tilde\phi_w\in\tilde\pi_w$ are then identified with sections $\phi_{w^+}\in\pi_{w^+}$ and $\tilde\phi_{w^+}\in \tilde\pi_{w^+}$, respectively.
The local zeta integral $Z_{w^+}\left(\phi_{w^+}, \tilde\phi_{w^+}, \tilde{f}_{w^+}, s\right)$,  the numerator of the local factor
$I_{w^+}\left(\phi_{w^+}, \tilde\phi_{w^+}, \tilde{f}_{w^+}, s\right)$ defined in \eqref{normalizedzeta}, then equals
\begin{align*}
Z_w:=
 \int_{\gln\left(\K_w\right)} & \chi_{2, w}\left(g\right)\left|\det g\right|_w^{s+\frac{n}{2}}\int_{\gln\left(\K_w\right)}\Phi\left(\left(Xg, X\right)\begin{pmatrix}1_{a_w} & 0 & 0 & 0\\
0 & 0 & 0 & 1_{b_w}\\
0 & 0 & 1_{a_w} & 0\\
0& 1_{b_w} & 0 & 0\end{pmatrix}\right)\\
&\times\chi_{1, w}^{-1}\chi_{2, w}\left(\det X\right)\left|\det X\right|_w^{2s+n}\langle\pi_w(g)\phi_w, \tilde\phi_w\rangle_{\pi_w} d^\times X d^\times g.\nonumber
\end{align*}
We put
\begin{align*}
Z_p & := 
\prod_{w\in\Sigma_p} Z_w = 
\prod_{w\in\Sigma_p}Z_{w^+}\left(\phi_{w^+}, \tilde\phi_{w^+}, f_{w^+}, \chi\right).
\end{align*}

Given $g, X\in\gln\left(\K_w\right)$, we denote by $Z_1 = \left( Z_1', Z_1''\right)$ and $Z_2=\left( Z_2', Z_2''\right)$ the matrices in $M_{n\times n}\left(\K_w\right) = M_{n\times a_w}\left(\K_w\right)\times M_{n\times b_w}\left(\K_w\right)$ given by
\begin{align*}
Z_1 & = Xg  = \left[ Z_1', Z_1''\right]\\
Z_2 & = X  = \left[ Z_2', Z_2''\right],
\end{align*}
with $Z_1', Z_2'\in M_{n\times a_w}\left(\K_w\right)$ and $Z_ 1'', Z_2''\in M_{n\times b_w}\left(\K_w\right)$.  So
\begin{align*}
\Phi\left(\left(Xg, X\right)\right) = \Vol\left(\Gamma_w\right)^{-1}\Phi_{1, w}\left(Z_1', Z_2''\right)\Phi_{2, w}\left(Z_2', Z_1'' \right),
\end{align*}
and
\begin{align*}
\left\langle\pi_w\left(g\right)\phi_w, \tilde\phi_w\right\rangle_{\pi_w} = \left\langle\pi_w \left(Xg\right)\phi_w, \tilde{\pi}_w\left(X\right)\tilde\phi_w\right\rangle_{\pi_w}
= \left\langle \pi_w\left(Z_1\right)\phi_w, \tilde{\pi}_w\left(Z_2\right)\tilde\phi_w\right\rangle_{\pi_w}.
\end{align*}
Therefore,
\begin{align}\label{IZ1Z2}
Z_w = \Vol\left(\Gamma_w\right)^{-1}\int_{\gln\left(\K_w\right)}&\int_{\gln\left(\K_w\right)}\chi_{2, w}\left(\det Z_1\right)\chi_{1, w}^{-1}\left(\det Z_2\right)\left|\det(Z_1Z_2)\right|_w^{s+\frac{n}{2}}&\\
&\times\Phi_{1, w}\left(Z_1', Z_2''\right)\Phi_{2, w}\left(Z_2', Z_1''\right)
\left\langle\pi_w\left(Z_1\right)\phi_w, \tilde{\pi}_w\left(Z_2\right)\tilde\phi_w\right\rangle_{\pi_w} d^\times Z_1 d^\times Z_2.\nonumber
\end{align}

We take the integrals over the following open subsets of full measure.  We take the integral in $Z_1$ over
\begin{align*}
\left\{\begin{pmatrix}1 & 0\\
C_1 & 1\end{pmatrix}\begin{pmatrix}A_1 & 0\\
0 & D_1\end{pmatrix}\begin{pmatrix}1 & B_1 \\ 0 & 1\end{pmatrix}\mid C_1, { }^tB_1\in M_{b_w\times a_w}\left(\K_w\right), A_1\in \gl_{a_w}(\K_w), D_1\in \gl_{b_w}(\K_w) \right\},
\end{align*}
with the measure
\begin{align*}
\left|\det A_1^{b_w}\det D_1^{-a_w}\right|_wdC_1d^\times A_1d^\times D_1dB_1,
\end{align*}
and we take the integral in $Z_2$ over
\begin{align*}
\left\{\begin{pmatrix}1 & B_2 \\ 0 & 1\end{pmatrix}\begin{pmatrix}A_2 & 0\\
0 & D_2\end{pmatrix}\begin{pmatrix}1 & 0\\
C_2 & 1\end{pmatrix}\mid C_2, { }^tB_2\in M_{b_w\times a_w}\left(\K_w\right), A_2\in \gl_{a_w}(\K_w), D_2\in \gl_{b_w}(\K_w)\right\},
\end{align*}
with the measure
\begin{align*}
\left|\det A_2^{-b_w}\det D_2^{a_w}\right|_wdC_2d^\times A_2d^\times D_2dB_2.
\end{align*}
So
\begin{align}
\Phi_{1, w}\left(Z_1', Z_2''\right) & = \Phi_{1, w}\left(\begin{pmatrix}A_1 & B_2D_2\\C_1A_1 &D_2\end{pmatrix}\right)\label{Phi1abcd}, \\
\Phi_{2, w}\left(Z_2', Z_1''\right) & = \Phi_{2, w}\left(\begin{pmatrix} A_2+ B_2D_2C_2& A_1B_1\\ D_2C_2&C_1A_1B_1+D_1\end{pmatrix}\right).\label{Phi2abcd}
\end{align}

\begin{prop}\label{Jfactors}
The product $\Phi_{1, w}\left(\begin{pmatrix}A_1 & B_2D_2\\C_1A_1 & D_2\end{pmatrix}\right)\Phi_{2, w}\left(\begin{pmatrix}A_2+B_2D_2C_2 & A_1 B_1\\
D_2C_2 & C_1A_1B_1+D_1\end{pmatrix}\right)$ is zero unless all of the following conditions are met:
\begin{align*}
A_1&\in \Gamma_{a_w, w}(t)\\
C_1&\in \mathfrak{p}_w^tM_{b_w\times a_w}\left(\O_w\right)\\
D_2&\in \Gamma_{b_w, w}(t)\\
B_2&\in \mathfrak{p}_w^tM_{a_w\times b_w}\left(\O_w\right)\\
C_2&\in M_{b_w\times a_w}\left(\O_w\right)\\
A_2&\in \mathfrak{p}_w^{-t}M_{a_w\times a_w}\left(\O_w\right)\\
B_1&\in M_{a_w\times b_w}\left(\O_w\right)\\
D_1&\in \mathfrak{p}_w^{-t}M_{b_w\times b_w}\left(\O_w\right).
\end{align*}
When all of the above conditions are met, we have the following factorization at each prime $w\in\Sigma_p$:
\begin{align}\label{phivarphi}
\Phi_{1, w}\left(Z_1', Z_2''\right)\Phi_{2, w}\left(Z_2', Z_1''\right)\langle \pi_w\left(Z_1\right)\phi_w, \tilde{\pi}_w\left(Z_2\right)\tilde\phi_w\rangle_{\pi_w} = 
\Vol(\Gamma_{a_w,b_w})\cdot
J_1\cdot J_2,
\end{align}
where
\begin{align}\label{J1equ}
J_1 & = \chi_{2, w}\left(\det A_1\right)^{-1}\Phi_w^{(4)}\left(D_1\right)\left|\det D_1^{a_w}\right|_w^{1/2}\left\langle \phi_{b_w}, \tilde\pi_{b_w}\left(D^{-1}_1\right)\tilde\phi_{b_w}\right\rangle_{\pi_{b_w}}\\
J_2 & = \chi_{1, w}\left(\det D_2\right)\Phi_w^{(1)}\left(A_2\right)\left|\det A_2^{b_w}\right|_w^{1/2}\left\langle \phi_{a_w}, \tilde\pi_{a_w}\left(A_2\right)\tilde\phi_{a_w}\right\rangle_{\pi_{a_w}}\label{J2equ}.
\end{align}
\end{prop}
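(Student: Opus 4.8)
The plan is to substitute the block identities \eqref{Phi1abcd}--\eqref{Phi2abcd} into the integrand appearing in \eqref{IZ1Z2} and then read off both halves of the statement — the vanishing and the factorization — from the explicit descriptions of the Schwartz functions in Lemma \ref{lemmaphisupport} together with the support and transformation properties of the anti-ordinary test vectors recorded in Definition \ref{phiinvarianceab}. No new idea is needed beyond careful bookkeeping; the substance of the proof is keeping the $\chi$-, $\mu$- and normalized-induction modulus twists straight while all the block variables get pinned down or integrated away.

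First I would determine the support. Applying Lemma \ref{lemmaphisupport}(ii), factor $\Phi_{2,w}\left(\begin{smallmatrix}A_2+B_2D_2C_2 & A_1B_1\\ D_2C_2 & C_1A_1B_1+D_1\end{smallmatrix}\right)$ as $\Phi_w^{(1)}(A_2+B_2D_2C_2)\,\Phi_w^{(2)}(A_1B_1)\,\Phi_w^{(3)}(D_2C_2)\,\Phi_w^{(4)}(C_1A_1B_1+D_1)$, and recall that $\Phi_{1,w}$ is $\mu_w$ times the characteristic function of $\Gamma_w(t)\subseteq\GL_n(\O_w)$. Requiring $\left(\begin{smallmatrix}A_1 & B_2D_2\\ C_1 & D_2\end{smallmatrix}\right)\in\Gamma_w(t)$ forces this matrix to be integral with unit determinant, hence $A_1\in\GL_{a_w}(\O_w)$ and $D_2\in\GL_{b_w}(\O_w)$; its congruence conditions then place $A_1\in\Gamma_{a_w,w}(t)$, $D_2\in\Gamma_{b_w,w}(t)$ and constrain $C_1,B_2$ to the stated $\grp_w$-powers. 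Since $\Phi_w^{(2)},\Phi_w^{(3)}$ are the characteristic functions of the integral matrices and $A_1,D_2$ are now units, dividing them off forces $B_1,C_2$ integral; and since the cross terms $B_2D_2C_2$, $C_1A_1B_1$ then lie in $\grp_w^tM(\O_w)$ while $\supp\Phi_w^{(1)},\supp\Phi_w^{(4)}\subseteq\grp_w^{-t}M(\O_w)$, one reads off the constraints on $A_2,D_1$. This chain of elementary implications yields all eight conditions, and in particular proves the vanishing claim.

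Assuming the eight conditions hold, I would simplify each factor in turn. The additive character $e_w$ is trivial on $\O_w$, hence on $\grp_w^t$, so $\Phi_w^{(1)}$ and $\Phi_w^{(4)}$ are unchanged under translation by $\grp_w^tM(\O_w)$; thus $\Phi_{2,w}(Z_2',Z_1'')=\Phi_w^{(1)}(A_2)\,\Phi_w^{(4)}(D_1)$. Since $\mu_w$ sees only the diagonal $n_{i,w}$-blocks, $\Phi_{1,w}(Z_1',Z_2'')=\mu_{a_w}(A_1)\,\mu_{b_w}(D_2)$ (using \eqref{tbound} to guarantee that $\mu_w$ depends only on the reduction mod $\grp_w^t$), which rewrites via the relations $\mu'_{i,w}=\chi_{2,w}^{-1}\mu_{i,w}^{-1}$ ($i\leq t(w)$) and $\mu'_{i,w}=\chi_{1,w}^{-1}\mu_{i,w}$ ($i>t(w)$) in terms of $\mu'_{a_w},\mu'_{b_w},\chi_{1,w},\chi_{2,w}$. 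For the matrix coefficient I would write $Z_1=\left(\begin{smallmatrix}1&0\\ C_1&1\end{smallmatrix}\right)\left(\begin{smallmatrix}A_1&A_1B_1\\0&D_1\end{smallmatrix}\right)$ and $Z_2=\left(\begin{smallmatrix}A_2&B_2D_2\\0&D_2\end{smallmatrix}\right)\left(\begin{smallmatrix}1&0\\ C_2&1\end{smallmatrix}\right)$, with the middle factors in $R_{a_w,b_w}(\K_w)$, and push these Levi-plus-unipotent-radical factors across the invariant pairing using the defining transformation law of the normalized induced representations $\pi_w=\Ind_{R_{a_w,b_w}}^{\GL_n}\pi_{a_w}\otimes\pi_{b_w}$ and $\tilde\pi_w$. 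This produces the modulus factors $|\det D_1^{a_w}|_w^{1/2}$ and $|\det A_2^{b_w}|_w^{1/2}$ and the operators $\tilde\pi_{b_w}(D_1^{-1})$ on $\tilde\varphi_{b_w}$, $\tilde\pi_{a_w}(A_2)$ on $\tilde\varphi_{a_w}$, while the outer lower-unipotent factors, the unipotent-radical contributions, and the $A_1$- and $D_2$-dependence of the inducing vectors are absorbed using the support and invariance conditions of Definition \ref{phiinvarianceab} and the normalizations $\varphi_w(1)=\varphi_{a_w}\otimes\varphi_{b_w}$, \eqref{apairing1}. Invoking the factorization $\langle\,,\,\rangle_{\pi_{a_w}\otimes\pi_{b_w}}=\langle\,,\,\rangle_{\pi_{a_w}}\cdot\langle\,,\,\rangle_{\pi_{b_w}}$ to split off the $a_w$- and $b_w$-parts, and collecting the surviving $\chi$- and $\mu'$-twists against the measure factors $|\det A_1^{b_w}\det D_1^{-a_w}|_w$, $|\det A_2^{-b_w}\det D_2^{a_w}|_w$ and the global twist $|\det(Z_1Z_2)|_w^{s+n/2}$ (with $\det Z_i=\det A_i\det D_i$), one lands on $J_1\cdot J_2$ exactly as in \eqref{J1equ}--\eqref{J2equ}.

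I expect the matrix-coefficient step to be the main obstacle. What must be shown is that, on the support, the coefficient depends on $A_1,B_1,C_1,B_2,C_2,D_2$ only through $\chi_{2,w}(\det A_1)^{-1}$ and $\chi_{1,w}(\det D_2)$ — so the $B$- and $C$-variables and the ``block-unipotent'' parts of $A_1,D_2$ drop out, while $A_2$ and $D_1$ alone survive as matrix-coefficient variables. This asymmetry is precisely what the level-$\grp_w^d$ invariance conditions of Definition \ref{phiinvarianceab} (with $d\geq 2t$, see \eqref{dgeq2t}) and the choice of the characters $\mu'_{a_w},\mu'_{b_w}$ built from $\chi_{1,w},\chi_{2,w},\mu_w$ are designed to produce, but verifying it requires carefully tracking how the Iwahori-type invariance of $\varphi_w,\varphi'_w$ meshes with the coarser level-$t$ congruences forced on $A_1,C_1,B_1,D_2$ by the support of $\Phi$, and keeping the normalized-induction modulus exponents in balance with the Jacobians of the two parametrizations of $Z_1$ and $Z_2$. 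Everything else reduces to formal manipulation of the block matrices in \eqref{Phi1abcd}--\eqref{Phi2abcd}.
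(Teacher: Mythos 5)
Your proposal matches the paper's proof in both structure and ingredients: the vanishing via Lemma \ref{lemmaphisupport}, the $\grp_w^t$-translation invariance collapsing $\Phi_{2,w}$ to $\Phi_w^{(1)}(A_2)\Phi_w^{(4)}(D_1)$, the reading of $\Phi_{1,w}$ as $\mu_{a_w}(A_1)\mu_{b_w}(D_2)$ and its rewriting through $\mu'_{a_w},\mu'_{b_w}$, the parabolic decompositions of $Z_1,Z_2$, the normalized-induction modulus exponents, and the factored pairing $\langle\,,\,\rangle_{\pi_{a_w}}\cdot\langle\,,\,\rangle_{\pi_{b_w}}$. The one step you explicitly defer --- showing the matrix coefficient depends on the parameters only through $\chi_{2,w}(\det A_1)^{-1}$, $\chi_{1,w}(\det D_2)$, $A_2$ and $D_1$ --- is exactly where the paper's proof does its real work: it moves $\left(\begin{smallmatrix}1&-B_2\\0&1\end{smallmatrix}\right)$ across to the first argument of the pairing, re-decomposes $\left(\begin{smallmatrix}1&-B_2\\0&1\end{smallmatrix}\right)\left(\begin{smallmatrix}1&0\\C_1&D_1\end{smallmatrix}\right)$ into lower-unipotent $\times$ diagonal $\times$ upper-unipotent with blocks $A=1-B_2C_1\in 1+\grp_w^{2t}M_{a_w}(\O_w)$, $C\in\grp_w^tM_{b_w\times a_w}(\O_w)$, $B\in M_{a_w\times b_w}(\O_w)$, $D=(1+CB_2)D_1$, uses $d\geq 2t$ so the invariance conditions of Definition \ref{phiinvarianceab} swallow all of these, symmetrically discards $\left(\begin{smallmatrix}1&0\\-C_1A_2&1\end{smallmatrix}\right)$ on the other side, and then identifies $\aleph_w=\mathrm{vol}(V(d),dg)$ before unwinding the modulus. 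So you have named all the right tools and the right obstacle; what remains to complete the argument is precisely this chain of block re-decompositions and congruence-level checks, which is straightforward but is the content of the proof rather than a routine verification.
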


\begin{proof}
By Lemma \ref{lemmaphisupport} and the definition of $\Phi_{1, w}$, the product
\begin{align*}
\Phi_{1, w}\left(\begin{pmatrix}A_1 & B_2D_2\\C_1A_1 & D_2\end{pmatrix}\right)\Phi_{2, w}\left(\begin{pmatrix}A_2+B_2D_2C_2 & A_1 B_1\\
D_2C_2 & C_1A_1B_1+D_1\end{pmatrix}\right)
\end{align*}
 is zero unless all of the above conditions are met.  For the remainder of the proof, we will work only with matrices meeting the above conditions.  We now prove the second statement of the proposition.  Note that when the above conditions are met,
\begin{align*}
\pi_w\left(Z_1\right)\phi_w &= \pi_w\left(\begin{pmatrix}1 & 0\\ C_1 & D_1\end{pmatrix}\right)\mu_w'\left(\begin{pmatrix}A_1 & 0\\ 0&1\end{pmatrix}\right)\phi_w,\\
\tilde{\pi}_w\left(Z_2\right)\tilde\phi_w& = \tilde{\pi}_w\left(\begin{pmatrix}A_2 & B_2\\ 0 & 1\end{pmatrix}\right)\left(\mu_w'\right)^{-1}\left(\begin{pmatrix}1 & 0\\ 0 & D_2\end{pmatrix}\right)\tilde\phi_w.
\end{align*}
So
\begin{align*}
\Phi_{1, w}\left(Z_1', Z_2''\right)&\Phi_{2,w}\left(Z_2', Z_1''\right)\langle \pi_w\left(Z_1\right)\phi_w, \tilde{\pi}\left(Z_2\right)\tilde\phi_w\rangle_{\pi_w}\\
&  = \chi_{2, w}^{-1}\left(\det A_1\right)\chi_{1, w}\left(\det D_2\right)\left\langle\pi_w\left(\begin{pmatrix}1 & 0\\ C_1& D_1\end{pmatrix}\right)\phi_w, \tilde{\pi}_w\left(\begin{pmatrix}A_2 & B_2\\ 0 & 1\end{pmatrix}\right)\tilde\phi_w\right\rangle_{\pi_w}.
\end{align*}

Let $A\in M_{a_w}\left(\K_w\right)$, $D\in M_{b_w}\left(\K_w\right)$, $C\in M_{b_w\times a_w}\left(\K_w\right)$, and $B\in M_{a_w\times b_w}\left(\K_w\right)$ be matrices such that
\begin{align*}
\begin{pmatrix}
1 & -B_2\\
0& 1
\end{pmatrix}\begin{pmatrix}
1 & 0\\
C_1 & D_1
\end{pmatrix} =
\begin{pmatrix}
1 & 0\\
C & 1
\end{pmatrix}\begin{pmatrix}
A & 0\\
0 & D
\end{pmatrix}
\begin{pmatrix}
1 & B\\
0 & 1
\end{pmatrix}.
\end{align*}
Then
\begin{align*}
A & = 1-B_2C_1 \in 1+ \mathfrak{p}_w^{2t} M_{a_w}\left(\O_w\right)\\
CA &  = C_1\in \mathfrak{p}_w^t M_{b_w\times a_w}\left(\O_w\right)\\
AB & = -B_2 D_1\in M_{a_w\times b_w}\left(\O_w\right).
\end{align*}
So
\begin{align*}
C&\in \mathfrak{p}_w^tM_{b_w\times a_w}\left(\O_w\right)\\
B&\in M_{a_w\times b_w}\left(\O_w\right)\\
D& = D_1-CAB = \left(1+CB_2\right)D_1\in \left(1+\mathfrak{p}^{2t}M_{b_w}\left(\O_w\right)\right)D_1.
\end{align*}
Therefore, applying the invariance conditions \eqref{phiinvariancew3} and \eqref{tphiinvariancew2}
we obtain
\begin{align*}
\left\langle\pi_w\left(\begin{pmatrix}1 & 0 \\ C_1 & D_1\end{pmatrix}\right)\phi_w, \tilde{\pi}_w\left(\begin{pmatrix}A_2 & B_2\\ 0 & 1\end{pmatrix}\right)\tilde\phi_w\right\rangle_{\pi_w} & = \left\langle \pi_w\left(\begin{pmatrix}1 & -B_2\\ 0 & 1\end{pmatrix}\begin{pmatrix}1 & 0 \\ C_1 & D_1\end{pmatrix}\right)\phi_w, \tilde{\pi}_w\left(\begin{pmatrix}A_2 & 0 \\
0 & 1\end{pmatrix}\right)\tilde\phi_w \right\rangle_{\pi_w}\\
& = \left\langle \pi_w\left(\begin{pmatrix}1& 0 \\ C_1& 1 \end{pmatrix}\begin{pmatrix}1 & 0\\ 0 & D_1\end{pmatrix}\right)\phi_w, \tilde{\pi}_w\begin{pmatrix}A_2 & 0\\ 0 & 1\end{pmatrix}\tilde\phi_w  \right\rangle_{\pi_w}\\
 & = \left\langle \pi_w\left(\begin{pmatrix}1 & 0\\ 0 & D_1\end{pmatrix}\right)\phi_w, \tilde{\pi}_w\left(\begin{pmatrix}A_2 & 0 \\ 0& 1\end{pmatrix}\begin{pmatrix}1 & 0 \\-C_1 A_2 &1\end{pmatrix}\right)\tilde\phi_w\right\rangle_{\pi_w}\\
& = \left\langle\pi_w\left(\begin{pmatrix}1 & 0\\ 0 & D_1\end{pmatrix}\right)\phi_w, \tilde{\pi}_w\left(\begin{pmatrix}A_2 & 0 \\ 0& 1\end{pmatrix}\right)\tilde\phi_w\right\rangle_{\pi_w}\\
& = \left\langle\phi_w, \tilde{\pi}_w\left(\begin{pmatrix}A_2 & 0\\ 0 & D_1^{-1}\end{pmatrix}\right)\tilde\phi_w\right\rangle_{\pi_w}.
\end{align*}
Note that since $r\geq 2t$,  $A_2\in \mathfrak{p}_w^{-\lfloor{\frac{r}{2}}\rfloor}M_{a_w}\left(\O_w\right)$ and $D_1\in \mathfrak{p}_w^{-\lfloor{\frac{r}{2}}\rfloor}M_{b_w}\left(\O_w\right)$.

From the definition of $\phi_w$ we find that the support of $\varphi_w \in \Ind_{R_{a_w,b_w}}^{\GL_n}(\pi_{a_w}\otimes\pi_{b_w})$ inside of $\GL_n\left(\CO_w\right)$ is 
$\Gamma_{a_w,b_w}$. It follows that 
\begin{align*}
&\left\langle\phi_w, \tilde{\pi}_w\left(\begin{pmatrix}A_2 & 0 \\ 0& D_1^{-1}\end{pmatrix}\right)\tilde\phi_w\right\rangle_{\pi_w} \\
& = \Vol(\Gamma_{a_w,b_w})\cdot
\left\langle\varphi_w(1), \left|\det A_2^{b_w}\det D_1^{a_w}\right|_w^{1/2}\tilde\pi_{a_w}\left(A_2\right)\otimes\tilde\pi_{b_w}\left(D_1^{-1}\right)\tilde\varphi_w(1)\right\rangle_{\pi_{a_w}\otimes \pi_{b_w}}.
\end{align*}
As $\varphi_w(1) = \phi_{a_w}\otimes\phi_{b_w}$ and $\tilde\varphi(1) = \tilde\phi_{a_w}\otimes\tilde\phi_{b_w}$ by definition,
\begin{align*}
\langle\varphi_w(1), \tilde\pi_{a_w}\left(A_2\right) & \otimes\tilde\pi_{b_w}\left(D_1^{-1}\right)\tilde\varphi_w(1)\rangle_{\pi_{a_w}\otimes \pi_{b_w}} \\
& = \langle \phi_{a_w}, \tilde\pi_{a_w}\left(A_2\right)\tilde\phi_{a_w}\rangle_{\pi_{a_w}}\cdot\langle \phi_{b_w}, \tilde\pi_{b_w}\left(D_1^{-1}\right)\tilde\phi_{b_w}\rangle_{\pi_{b_w}}.
\end{align*}
Consequently,
\begin{align*}
\Phi_{1, w}&\left(Z_1', Z_2''\right)\Phi_{2, w}\left(Z_2', Z_1''\right)\langle \pi_w\left(Z_1\right)\phi_w, \tilde{\pi}_w\left(Z_2\right)\tilde{\phi}_w\rangle_{\pi_w} = 
\Vol(\Gamma_{a_w,b_w})\cdot J_1 J_2,
\end{align*}
where
\begin{align}
J_1 & = \chi_{2, w}\left(\det A_1\right)^{-1}\Phi_w^{(4)}\left(D_1\right)\left|\det D_1^{a_w}\right|_w^{1/2}\left\langle \phi_{b_w}, \tilde\pi_{b_w}\left(D^{-1}_1\right)\tilde\phi_{b_w}\right\rangle_{\pi_{b_w}}\label{j1def}\\
J_2 & = \chi_{1, w}\left(\det D_2\right)\Phi_w^{(1)}\left(A_2\right)\left|\det A_2^{b_w}\right|_w^{1/2}\left\langle \phi_{a_w}, \tilde\pi_{a_w}\left(A_2\right)\tilde\phi_{a_w}\right\rangle_{\pi_{a_w}}.\label{j2def}
\end{align}
\end{proof}

\begin{cor}\label{Ifactorsatp}
The integral $Z_w$
factors as
\begin{align*}
Z_w &=  \Vol(\Gamma_{a_w,b_w})\cdot I_1\cdot I_2,\\
I_1 &= \int_{\gl_{b_w}\left(\K_w\right)}\chi_{2, w}\left(\det D\right)\Phi_w^{(4)}(D)\left|\det D\right|_w^{s+\frac{b_w}{2}}\langle\pi_{b_w}(D)\phi_{b_w}, \tilde{\phi}_{b_w}\rangle_{\pi_{b_w}} d^\times D\\
I_2 &= \int_{\gl_{a_w}\left(\K_w\right)}\chi_{1, w}^{-1}\left(\det A\right)\Phi_w^{(1)}(A)\left|\det A\right|_w^{s+\frac{a_w}{2}}\langle\phi_{a_w}, \tilde{\pi}_{a_w}\left(A\right)\tilde{\phi}_{a_w}\rangle_{\pi_{a_w}} d^\times A.
\end{align*}
\end{cor}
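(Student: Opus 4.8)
The plan is to start from the form of $I_w(\varphi,\varphi',\chi,\mu)$ recorded in \eqref{IZ1Z2} (obtained from \eqref{Ivexpn} by passing to the variables $Z_1=Xg$, $Z_2=X$) and to carry out the two iterated integrations in the Bruhat--Iwasawa coordinates for $Z_1$ and $Z_2$ relative to the parabolic $R_{a_w,b_w}$ that are written down just before Proposition \ref{Jfactors}. Substituting $Z_1=\bigl(\begin{smallmatrix}1&0\\ C_1&1\end{smallmatrix}\bigr)\bigl(\begin{smallmatrix}A_1&0\\0&D_1\end{smallmatrix}\bigr)\bigl(\begin{smallmatrix}1&B_1\\0&1\end{smallmatrix}\bigr)$ with measure $|\det A_1^{b_w}\det D_1^{-a_w}|_w\,dC_1\,d^\times\! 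A_1\,d^\times\! D_1\,dB_1$, and the analogous parametrization of $Z_2$, turns \eqref{IZ1Z2} into an iterated integral in the eight block variables $A_1,C_1,D_1,B_1$ and $A_2,B_2,D_2,C_2$. Since $\det Z_1=\det A_1\det D_1$ and $\det Z_2=\det A_2\det D_2$, the prefactor $\chi_{2,w}(\det Z_1)\chi_{1,w}^{-1}(\det Z_2)|\det(Z_1Z_2)|_w^{s+n/2}$ and the two Jacobians all split multiplicatively across these variables.

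Next I would apply Proposition \ref{Jfactors}: on the open set of full measure cut out by the support conditions listed there, $\Phi_{1,w}(Z_1',Z_2'')\Phi_{2,w}(Z_2',Z_1'')\langle\pi_w(Z_1)\varphi_w,\tilde\pi_w(Z_2)\varphi'_w\rangle_{\pi_w}=J_1\cdot J_2$, with $J_1$ depending only on $(A_1,D_1)$ and $J_2$ only on $(A_2,D_2)$. Multiplying $J_1J_2$ into the split prefactor and Jacobians, one checks: (a) the $\chi_{2,w}(\det A_1)^{-1}$ inside $J_1$ exactly cancels the $\chi_{2,w}(\det A_1)$ coming from $\chi_{2,w}(\det Z_1)$, and likewise $\chi_{1,w}(\det D_2)$ in $J_2$ cancels the $D_2$-part of $\chi_{1,w}^{-1}(\det Z_2)$; (b) on the supports $A_1\in\Gamma_{a_w,w}(t)$, $D_2\in\Gamma_{b_w,w}(t)$ one has $|\det A_1|_w=|\det D_2|_w=1$; and (c) the surviving powers of $|\det D_1|_w$ and $|\det A_2|_w$ collapse to $|\det D_1|_w^{\,s+b_w/2}$ and $|\det A_2|_w^{\,s+a_w/2}$. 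Consequently the full integrand factors as a function of $(A_1,C_1,B_1,D_1)$ times a function of $(A_2,B_2,D_2,C_2)$, with no residual dependence on $A_1,C_1,B_1$ or $D_2,B_2,C_2$ beyond their compact domains of integration.

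Integrating out those six nuisance variables then just produces volume constants, and after renaming $D_1=D$, $A_2=A$ and using the contragredient identity $\langle\varphi_{b_w},\tilde\pi_{b_w}(D^{-1})\tilde\varphi_{b_w}\rangle_{\pi_{b_w}}=\langle\pi_{b_w}(D)\varphi_{b_w},\tilde\varphi_{b_w}\rangle_{\pi_{b_w}}$, the $D$-integral is visibly $I_1$ and the $A$-integral is visibly $I_2$. The only remaining point is that the overall constant is $1$: the prefactor $\mathrm{vol}(\Gamma_w)^{-1}$ from \eqref{IZ1Z2} must cancel the product of the six volumes $\mathrm{vol}(\Gamma_{a_w,w}(t))$, $\mathrm{vol}(\Gamma_{b_w,w}(t))$, $\mathrm{vol}(\mathfrak{p}_w^t M_{b_w\times a_w}(\O_w))$, $\mathrm{vol}(\mathfrak{p}_w^t M_{a_w\times b_w}(\O_w))$, $\mathrm{vol}(M_{a_w\times b_w}(\O_w))$, $\mathrm{vol}(M_{b_w\times a_w}(\O_w))$. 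This is exactly the Iwahori-type factorization of $\Gamma_w=\Gamma_w(t)$ along $R_{a_w,b_w}$ — block-diagonal part in $\Gamma_{a_w,w}(t)\times\Gamma_{b_w,w}(t)$, lower-left block the relevant congruence lattice in $M_{b_w\times a_w}(\O_w)$, upper-right block the relevant congruence lattice in $M_{a_w\times b_w}(\O_w)$ — combined with the compatibility of Haar measures and the normalization $\mathrm{vol}(M_{\ast}(\O_w))=1$.

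I expect the main obstacle to be precisely this last bookkeeping: one must make sure that the congruence levels in the support conditions of Proposition \ref{Jfactors}, in the definition of $\Gamma_w(t)$, and in the Haar-measure normalizations (including the constant $\aleph_w=\mathrm{vol}(V(d),dg)$ already absorbed into Proposition \ref{Jfactors}) match up so that no stray power of the residue cardinality $q_w$ survives; everything else is a mechanical unwinding of Proposition \ref{Jfactors}.
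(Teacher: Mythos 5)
Your proposal takes essentially the same route as the paper: pass to the Iwasawa-type parametrizations of $Z_1$ and $Z_2$ introduced just before Proposition \ref{Jfactors}, substitute the factorization of the integrand as $J_1 J_2$ on the open set of full measure, note that $|\det A_1|_w = |\det D_2|_w = 1$ and that the character and power-of-determinant factors collapse exactly as you describe, integrate out the six nuisance block variables, and then verify the surviving volume constants cancel against $\mathrm{vol}(\Gamma_w)^{-1}$. The paper handles the final bookkeeping you flag by regrouping the six volume factors as $\mathrm{vol}(\Gamma_w)\cdot\bigl(\mathrm{vol}(M_{a_w\times b_w}(\O_w))\bigr)^2$, which with the normalizations in force gives $1$, so your stated concern is resolved exactly as you anticipated.
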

\begin{proof}
By Equation \eqref{IZ1Z2} and Proposition \ref{Jfactors},
\begin{align}\label{J1J2Integrand}
Z_w= \Vol\left(\Gamma_w\right)^{-1}&\int_{A_1, A_2, B_1, B_2,C_1, C_2, D_1, D_2}\chi_{2, w}\left(\det \left(A_1\right)\det\left(D_1\right)\right)\chi_{1, w}^{-1}\left(\det \left(A_2\right)\det\left(D_2\right)\right)\\
&\times \left|\det\left(A_1\right)\det\left(D_1\right)\det\left(A_2\right)\det\left(D_2\right)\right|_w^{s+\frac{n}{2}} \Vol(\Gamma_{a_w,b_w}) J_1 J_2\nonumber \\
&\times \left|\det A_1^{b_w}\det D_1^{-a_w}\right|_w\left|\det A_2^{-b_w} \det D_2^{a_w}\right|_wd^\times A_1d^\times A_2dB_1dB_2dC_1dC_2d^\times D_1d^\times D_2,\nonumber
\end{align}
where $J_1$ and $J_2$ are defined as in Equations \eqref{j1def} and \eqref{j2def}, respectively, and
\begin{align*}
A_1&\in \Gamma_{a_w}(t)\\
C_1&\in \mathfrak{p}_w^t\prod_{w\divides p}M_{b_w\times a_w}\left(\O_w\right)\\
D_2&\in \Gamma_{b_w}(t)\\
B_2&\in \mathfrak{p}_w^t\prod_{w\divides p}M_{a_w\times b_w}\left(\O_w\right)\\
C_2&\in\prod_{w\divides p} M_{b_w\times a_w}\left(\O_w\right)\\
A_2&\in \mathfrak{p}_w^{-t}\prod_{w\divides p}M_{a_w\times a_w}\left(\O_w\right)\\
B_1&\in \prod_{w\divides p}M_{a_w\times b_w}\left(\O_w\right)\\
D_1&\in \mathfrak{p}_w^{-t}\prod_{w\divides p}M_{b_w\times b_w}\left(\O_w\right).
\end{align*}
Note that for such $A_1$ and $D_2$, $\left|\det A_1\right|_w = \left|\det D_2\right|_w = 1.$
Applying Equations \eqref{j1def} and \eqref{j2def}, we therefore see that the integrand in Equation \eqref{J1J2Integrand} equals
\begin{align*}
\chi_{2, w}\left(\det D_1\right)\Phi_w^{(4)}&\left(D_1\right)\langle\pi_{b_w}(D_1)\phi_{b_w}, \tilde{\phi}_{b_w}\rangle_{\pi_{b_w}}\left| D_1\right|_w^{s+\frac{b_w}{2}}\\
&\times\chi_{1, w}^{-1}\left(\det A_2\right)\Phi_w^{(1)}\left(A_2\right)\langle\phi_{a_w}, \tilde{\pi}_{a_w}\left(A_2\right)\tilde{\phi}_{a_w}\rangle_{\pi_{a_w}}\left|A_2\right|_w^{s+\frac{a_w}{2}}.
\end{align*}
Therefore,
\begin{align*}
Z_w = &\Vol\left(\Gamma_w\right)^{-1}\Vol\left(\Gamma_w\right)\left(\Vol\left(M_{a_w\times b_w}\left(\O_w\right)\right)\right)^2 \Vol(\Gamma_{a_w,b_w})\\
&\times \int_{\gl_{b_w}\left(\K_w\right)}\chi_{2, w}\left(\det D_1\right)\Phi_w^{(4)}\left(D_1\right)\langle\pi_{b_w}(D_1)\phi_{b_w}, \tilde{\phi}_{b_w}\rangle_{\pi_{b_w}}\left| D_1\right|_w^{s+\frac{b_w}{2}} d^\times D_1\\
&\times\int_{\gl_{a_w}\left(\K_w\right)}\chi_{1, w}^{-1}\left(\det A_2\right)\Phi_w^{(1)}\left(A_2\right)\langle\phi_{a_w}, \tilde{\pi}_{a_w}\left(A_2\right)\tilde{\phi}_{a_w}\rangle_{\pi_{a_w}}\left|A_2\right|_w^{s+\frac{a_w}{2}}d^\times A_2\\
=  & \Vol(\Gamma_{a_w,b_w}) \times \int_{\gl_{b_w}\left(\K_w\right)}\chi_{2, w}\left(\det D\right)\Phi_w^{(4)}\left(D\right)\langle\pi_{b_w}(D)\phi_{b_w}, \tilde{\phi}_{b_w}\rangle_{\pi_{b_w}}\left| D\right|_w^{s+\frac{b_w}{2}} d^\times D\\
&\times\int_{\gl_{a_w}\left(\K_w\right)}\chi_{1, w}^{-1}\left(\det A\right)\Phi_w^{(1)}\left(A\right)\langle\phi_{a_w}, \tilde{\pi}_{a_w}\left(A\right)\tilde{\phi}_{a_w}\rangle_{\pi_{a_w}}\left|A\right|_w^{s+\frac{a_w}{2}}d^\times A.
\end{align*}
\end{proof}

\subsubsection{The main local theorem}\label{mainlocaltheorem}

In Theorem \ref{GJintegrals}, we calculate the integrals $I_1$ and $I_2$ from Corollary \ref{Ifactorsatp}.

\begin{thm}\label{GJintegrals}  
The integrals $I_1$ and $I_2$ are related to familiar $L$-functions as follows.
\begin{align*}
I_1  &= \frac{L\left(s+\frac{1}{2}, \pi_{b_w}\otimes\chi_{2, w}\right)}{\varepsilon\left(s+\frac{1}{2}, \pi_{b_w}\otimes\chi_{2, w}\right)L\left(-s+\frac{1}{2}, \tilde\pi_{b_w}\otimes\chi^{-1}_{2, w}\right)}\cdot \Vol(\grX^{(4)})\cdot \langle \phi_{b_w},\tilde\phi_{b_w}\rangle_{\pi_{b_w}} \\
I_2 &= \frac{\varepsilon\left(-s+\frac{1}{2}, \pi_{a_w}\otimes\chi_{1, w}\right)L\left(\frac{1}{2}+s, \tilde\pi_{a_w}\otimes\chi_{1, w}^{-1}\right)}{L\left(-s+\frac{1}{2}, \pi_{a_w}\otimes\chi_{1, w}\right)} \cdot \Vol(\grX^{(1)})\cdot\langle \phi_{a_w},\tilde\phi_{a_w}\rangle_{\pi_{a_w}}.
\end{align*}
Consequently, upon setting $\mathfrak{V}_w := \Vol(\grX^{(1)})\Vol(\grX^{(4)})$, 
\begin{equation*}
Z_w  = \frac{L\left(s+\frac{1}{2}, \pi_{b_w}\otimes\chi_{2, w}\right) \cdot \varepsilon\left(-s+\frac{1}{2}, \pi_{a_w}\otimes\chi_{1, w}\right)L\left(\frac{1}{2}+s, \tilde\pi_{a_w}\otimes\chi_{1, w}^{-1}\right)} {\varepsilon\left(s+\frac{1}{2}, \pi_{b_w}\otimes\chi_{2, w}\right)L\left(-s+\frac{1}{2}, \tilde\pi_{b_w}\otimes\chi^{-1}_{2, w}\right)\cdot L\left(-s+\frac{1}{2}, \pi_{a_w}\otimes\chi_{1, w}\right)} \cdot \grV_w \cdot \langle \phi_w, \tilde\phi_w\rangle_{\pi_w},
\end{equation*}
and thus
\begin{equation}\label{zetapfinalform}
I_w = L(s+ \frac{1}{2},\ord,\pi_w,\chi_w) \cdot \grV_w\cdot \langle \phi_w, \tilde\phi_w\rangle_{\pi_w},
\end{equation}
where $L(s+ \frac{1}{2},\ord,\pi_w,\chi_w)$ is the ratio of $L$-factors and $\varepsilon$-factors that appear in the formula for $Z_w$.
\end{thm}
\begin{proof}
The integrals $I_1$ and $I_2$ are of the same form as the ``Godement-Jacquet" integral defined in \cite[Equation (1.1.3)]{jacquet}.  Applying the ``Godement-Jacquet functional equation" in \cite[Equation (1.3.7)]{jacquet}, we obtain
\begin{align}
I_1  = &\frac{L\left(s+\frac{1}{2}, \pi_{b_w}\otimes\chi_{2, w}\right)}{\varepsilon\left(s+\frac{1}{2}, \pi_{b_w}\otimes\chi_{2, w}\right)L\left(-s+\frac{1}{2}, \tilde\pi_{b_w}\otimes\chi^{-1}_{2, w}\right)}\nonumber\\
&\times\int_{\gl_{b_w}\left(\K_w\right)}\left(\Phi_w^{(4)}\right)^{\wedge}\left(D\right)\left|\det D\right|_w^{-s+\frac{b_w}{2}}\chi_{2, w}^{-1}\left(D\right)\langle\phi_{b_w}, \tilde{\pi}_{b_w}(D)\tilde\phi_{b_w}\rangle_{\pi_{b_w}} d^\times D,\label{I1integral}
\end{align}
where $\left(\Phi_w^{(4)}\right)^{\wedge}$ denotes the Fourier transform of $\Phi_w^{(4)}$.  

From its definition, 
\begin{align}\label{Phi4hat}
\left(\Phi_w^{(4)}\right)^{\wedge}\left(D\right) = \phi_{\nu_w}\left(\begin{pmatrix}1_{a_w}& 0\\ 0 &D\end{pmatrix}\right).
\end{align}
In particular, the support of $\left(\Phi_w^{(4)}\right)^{\wedge}$ is 
$\grX^{(4)} = {}^t\Gamma_{b_w}\Gamma_{b_w}$.
Let $D = \gamma_1 \gamma_2$, $\gamma_1 \in {}^t\Gamma_{b_w}$ and $\gamma_2 \in \Gamma_{b_w}$.
Applying Equations \eqref{phiinvarianceb} and \eqref{tphiinvariancew2} and the definition of $\phi_{\nu_w}$ we see that 
\begin{align*}
\langle \phi_{b_w}, \tilde{\pi}_{b_w}(D)\tilde\phi_{b_w}\rangle_{\pi_{b_w}} 
& = \langle \pi_{b_w}(\gamma_1^{-1})\phi_{b_w}, \tilde{\pi}_{b_w}(\gamma_2)\tilde\phi_{b_w}\rangle_{\pi_{b_w}} \\
& = \mu'_{b_w}(\gamma_1^{-1})\mu_{b_w}(\gamma_2^{-1}) \langle \phi_{b_w}, \tilde\phi_{b_w}\rangle_{\pi_{b_w}}
& = \chi_{2,w}(D)\phi_{\nu_w}\left(\begin{pmatrix}1_{a_w}& 0\\ 0 &D\end{pmatrix}\right)^{-1} \langle \phi_{b_w}, \tilde\phi_{b_w}\rangle_{\pi_{b_w}}.
\end{align*}
Plugging this into Equation \eqref{I1integral} and applying Equation \eqref{Phi4hat}, we obtain
\begin{align*}
I_1  = \frac{L\left(s+\frac{1}{2}, \pi_{b_w}\otimes\chi_{2, w}\right)}{\varepsilon\left(s+\frac{1}{2}, \pi_{b_w}\otimes\chi_{2, w}\right)L\left(-s+\frac{1}{2}, \tilde\pi_{b_w}\otimes\chi^{-1}_{2, w}\right)} \cdot \Vol(\grX^{(4)}) \cdot \langle \phi_{b_w}, \tilde\phi_{b_w}\rangle_{\pi_{b_w}}.
\end{align*}
The computation of $I_2$ is similar.  
The consequence for $I_w$ then follows from Corollary \ref{Ifactorsatp} and Equations \eqref{pair=ab}
and \eqref{normalizedzeta}.
\end{proof}

\begin{rmk}
Let $\omega_{a_w}$ denote the central quasi-character of $\pi_{a_w}$.  Then
\begin{align*}
\varepsilon\left(-s+\frac{1}{2}, \pi_{a_w}\otimes\chi_{1, w}\right) = \frac{\omega_{a_w}(-1)}{\varepsilon\left(s+\frac{1}{2}, \tilde\pi_{a_w}\otimes\chi_{1, w}^{-1}\right)}.
\end{align*}
So we may rewrite $I_2$ as
$$
I_2 = \frac{\omega_{a_w}(-1)L\left(\frac{1}{2}+s, \tilde\pi_{a_w}\otimes\chi_{1, w}^{-1}\right)}{\varepsilon\left(s+\frac{1}{2}, \tilde\pi_{a_w}\otimes\chi_{1, w}^{-1}\right)L\left(-s+\frac{1}{2}, \pi_{a_w}\otimes\chi_{1, w}\right)}\cdot\Vol(\mathfrak{X}^{(1)}),
$$
and hence $I_w/\grV$ as  
$$
\frac{I_w}{\grV} = \frac{\omega_{a_w}(-1) L\left(\frac{1}{2}+s, \tilde\pi_{a_w}\otimes\chi_{1, w}^{-1}\right)}{\varepsilon\left(s+\frac{1}{2}, \tilde\pi_{a_w}\otimes\chi_{1, w}^{-1}\right)L\left(-s+\frac{1}{2}, \pi_{a_w}\otimes\chi_{1,w}\right)}\frac{L\left(s+\frac{1}{2}, \pi_{b_w}\otimes\chi_{2, w}\right)}{\varepsilon\left(s+\frac{1}{2}, \pi_{b_w}\otimes\chi_{2, w}\right)L\left(-s+\frac{1}{2}, \tilde\pi_{b_w}\otimes\chi^{-1}_{2, w}\right)}.
$$
Therefore, the Euler factor at $p$, which is the product 
$$\prod_{w \mid p} L(s+ \frac{1}{2},\ord,\pi_w,\chi_w)$$
of the factors defined in equation \eqref{zetapfinalform},
can also be written 
\begin{equation}\label{Ireexpressed}
\prod_{w\divides p}\frac{\omega_{a_w}(-1) L\left(\frac{1}{2}+s, \tilde\pi_{a_w}\otimes\chi_{1, w}^{-1}\right)}{\varepsilon\left(s+\frac{1}{2}, \tilde\pi_{a_w}\otimes\chi_{1, w}^{-1}\right)L\left(-s+\frac{1}{2}, \pi_{a_w}\otimes\chi_{1,w}\right)}\frac{L\left(s+\frac{1}{2}, \pi_{b_w}\otimes\chi_{2, w}\right)}{\varepsilon\left(s+\frac{1}{2}, \pi_{b_w}\otimes\chi_{2, w}\right)L\left(-s+\frac{1}{2}, \tilde\pi_{b_w}\otimes\chi^{-1}_{2, w}\right)}.
\end{equation}

Note the similarity of the form of the zeta integral at $p$ in Equation \eqref{Ireexpressed} with the form of the modified Euler factor at $p$ for the $p$-adic $L$-functions predicted by Coates in \cite[Section  2, Equation 18b]{coatesII}.
\end{rmk}

\begin{rmk} \label{volume-rmk}
The factor $\grV_w$ can be written as      
\begin{equation}\label{grV-formula}
\grV_w = \frac{\Vol(\Gamma_{R,w})\cdot \Vol(\t\Gamma_{R,w})}{\Vol(\Gamma_{R,w}\cap\t\Gamma_{R,w})}
\end{equation}
for any $r\geq 1$.
From this we conclude that
\begin{equation*}
\frac{I_w}{\Vol(\Gamma_{R,w})\cdot \Vol(\t\Gamma_{R,w})} 
= L(s+ \frac{1}{2},\ord,\pi_w,\chi_w) \frac{\langle \phi_w,\tilde\phi_w\rangle_{\pi_w}}{\Vol(\Gamma_{R,w}\cap\t\Gamma_{R,w})}.
\end{equation*}
The right-hand side (and hence the left-hand side) is easily seen to be independent of $r$.

In order to explain the cancellation of various intermediate volume factors appearing along the way
to the final expression for the values of our $p$-adic $L$-function in Theorem \ref{axiomaticmainthm2}, 
we identify the volumes in this last expression (in a special case) with volumes of groups defined elsewhere.

Suppose that $n_{i,w} = 1$ for all $i$. Then $R_w$ is just the Borel $B_w$ and $\Gamma_{R,w} = I_{w,r}^0$,
where $I_{w,r}^0$ is the $w$ factor of the image of $I_{r,V}^0 = I_r^0$ under the isomorphism \eqref{B+-iso}.
Then $\t\Gamma_{R,w} = \t I_{w,r}^0$ can be identified with the corresponding factor of $I_{r,-V}^0$.
\end{rmk}

\subsection{Holomorphic representations of enveloping algebras and anti-holomorphic vectors}\label{holo-sec}

\subsubsection{Holomorphic and anti-holomorphic modules}\label{holoreps}
Throughout this section, we identify $\Sigma$ with $\Sigma_{\K^+}$, and we identify each element $\sigma\in\Sigma$ with the restriction $\sigma|_{\K^+}$. 
To simplify notation, we let $G^* = GU^+_1 = R_{\K/\Q}GU^+(V)$ where $GU^+(V)$ denotes the full unitary similitude group
of $V$.  Thus $G^*(\R) = \prod_{\sigma \in \Sigma_{\K^+}} G_{\sigma}$, with $G_{\sigma} = GU^+(V)_{\K^+_{\sigma}} \simeq GU^+(a_{\sigma},b_{\sigma})$. 
For any $h: R_{\C/\R}(\mathbb{G}_{m,\C}) \rar G^*_{\R}$ as in Section  \ref{PELdata}, the image of $h$ is contained in the subgroup $G$
of $(g_{\sigma}, \sigma \in \Sigma_{\K^+})$ for which the similitude factor $\nu(g_{\sigma})$ is independent of $\sigma$, and it is to this
latter subgroup that the Shimura variety is attached.

 Let $U_\infty = C(\IR)\subseteq G(\IR)$ and $X$ be as in Section  \ref{spaces-section}.  
We assume $U_{\infty}$ is the centralizer of  
a {\it standard} $h$ as in Section  \ref{basepoints}; let $U_{\sigma} \subset G_{\sigma}$ denote its intersection with $G_{\sigma}$
and let $K^o_{\sigma} \subset U_{\sigma}$ denote its maximal compact subgroup; $K^o_{\sigma}$ is isomorphic to the product of compact unitary
groups $U(a_{\sigma}) \times U(b_{\sigma})$.   We have
$$\grk_{\sigma} := Lie(U_{\sigma}) = \grz_{\sigma}\oplus Lie(K^o_{\sigma})$$
where $\grz_{\sigma}$ is the $\R$-split center of $\grg_{\sigma} := Lie(GU^+(a_{\sigma},b_{\sigma}))$.  
We let $U(\grg_{\sigma})$ denote the enveloping algebra of $\grg_{\sigma}$.

For $\sigma \in \Sigma_{\K^+}$, we write the Harish-Chandra decomposition
$$\grg_{\sigma}  = \grk_{\sigma} \oplus \grp^-_{\sigma} \oplus \grp^+_{\sigma}.$$
Because $h$ was chosen to be standard, this decomposition is naturally defined over $\sigma(\K) \subset \C$. 
For any irreducible representation $(\tau_{\sigma},W_{\tau_{\sigma}})$ of $U_{\sigma}$
of $G_{\sigma} := G(\K^+_{\sigma})$, we let
\begin{equation}\label{verma}
\DD(\tau_{\sigma}) = U(\grg_{\sigma})\otimes_{U(\grk_{\sigma} \oplus \grp^-_{\sigma})} W^{\vee}_{\tau_{\sigma}}
\end{equation}

We have assumed that our chosen $h$ takes values in a rational torus $T (= J_{0,n}) \subset G$ (so that $(T,h)$ is a CM Shimura datum), and let $T_\s \subset G_\s$
be the $\s$-component of $T(\R)$, $\grt_\s$ its Lie algebra.  We choose a positive root system $R^+_{\s}$ for $T_{\s}$ so that the roots on
$\grp^+_{\sigma}$ are positive, and let $\grb^+_{\s}$ be the corresponding Borel subalgebra.

Let $R^{+,c}_{\s} \subset R^+_{\s}$ be the set of positive compact roots.
The highest weight of $\tau_{\sigma}$ relative to $R^{+,c}_{\s}$ can be denoted 
$\kap_{\sigma} = (c_{\s}; \kap_{1,\s} \geq \dots  \geq \kap_{a_{\sigma},\s}; \lap_{1,\s} \geq \dots \geq \lap_{b_{\sigma},\s}) \in \Z \times  \Z^{a_\sigma} \times \Z^{b_{\sigma}}$,
where $c_\s$ is the character of $\grz_\s$.  
We call $(\tau_{\sigma},W_{\tau_{\sigma}})$ {\it strongly positive} if there exists an irreducible representation $\mathcal{W}_\s$ of $G_{\sigma}$,
with highest weight $\mu = (-c_{\s}; a_1 \geq \dots \geq a_n) \in \Z \times \Z^n$ relative to $R^+_{\s}$, such that, setting $a = a_\s$ and $b = b_\s$,
\begin{equation}\label{coeffs}   (a_1,\dots, a_n) = (-\lap_{b,\s}-a, \dots, -\lap_{1,\s}-a; -\kap_{a,\s}+b, \dots, -\kap_{1,\s}+b);
\end{equation}
in other words, if and only if  $-\lap_{1,\s} - a \geq -\kap_{a,\s} + b$.
The contragredient of $\DD(\tau_{\sigma})$ is denoted
\begin{equation}\label{vermadual}
\Dc(\tau_{\sigma}) = \DD(\tau_{\sigma})^{\vee} \cong U(\grg_{\sigma})\otimes_{U(\grk_{\sigma} \oplus \grp^+_{\sigma})} W_{\tau_{\sigma}}.
\end{equation}
It is the complex conjugate representation of $\DD(\tau_{\sigma})$ with respect to the $\R$-structure on $\grg_{\sigma}$; we call this
the {\it anti-holomorphic representation} of type $\tau_{\sigma}$.

In what follows, we usually write $\DD(\kap_\s)$ instead of $\DD(\tau_\s)$.   It is well known that
if $\tau_{\sigma}$ is strongly positive then $\DD(\kap_{\sigma})$ (resp. $\Dc(\kap_{\sigma})$) is the $(U(\grg_{\sigma}),U_{\sigma})$-module
of a {\it holomorphic } (resp. {\it anti-holomorphic}) {\it discrete series} representation of $G_{\sigma}$, and moreover that
$$\dim H^{ab}(\grg_{\s},U_\s; \DD(\kap_{\sigma})\otimes \mathcal{W}_\s) =  \dim H^{ab}(\grg_{\s},U_\s; \Dc(\kap_{\sigma})\otimes \mathcal{W}^{\vee}_\s) = 1$$
with $\mathcal{W}_\s$ the representation with highest weight given by \eqref{coeffs}, and
$\mathcal{W}^{\vee}_\s$ its dual, with highest weight 
\begin{equation}\label{coeffsc} (c_\s; -a_n,\dots, -a_1) = (c_\s; \kap_{1,\s} - b; \dots, \kap_{a,\s} - b, \lap_{1,\s} + a,\dots, \lap_{b,\s} + a).
\end{equation}

The {\it minimal $U_{\sigma}$-type} of $\DD(\kap_{\sigma})$ (resp. of $\Dc(\kap_{\sigma}$))
is the 
subspace 
$$1\otimes W^\vee_{\tau_{\sigma}} \subset U(\grg_{\sigma})\otimes_{U(\grk_{\sigma} \oplus \grp^-_{\sigma})} W^\vee_{\tau_{\sigma}}
~~\text{(resp. $1\otimes W_{\tau_{\sigma}} \subset U(\grg_{\sigma})\otimes_{U(\grk_{\sigma} \oplus \grp^+_{\sigma})} W_{\tau_{\sigma}}$)}.$$
The minimal $U_{\sigma}$-type of $\DD(\kap_{\sigma})$ (resp. of $\Dc(\kap_{\sigma})$) is also called the space
of {\it holomorphic vectors} (resp. {\it anti-holomorphic vectors}).   

\subsubsection{Canonical automorphy factors and representations}\label{automorphy}

The $(U(\grg_\s),U_{\s})$ module $\DD(\kap_{\sigma})$ can be realized as a subrepresentation
of the right regular representation on $C^{\infty}(G_{\sigma})$ generated by a canonical automorphy factor.
We recall this construction below when $G_{\s} = G_{4,\s} \simeq GU(n,n)$ and $\tau_\s$ is a scalar representation. 

Let $M_n$ be the affine group scheme of $n \times n$-matrices over $Spec(\Z)$, $M_n = Spec(\CP(n))$.   For $\s \in \Sigma$,
let $\CP(n)_\s$ denote the base change of $\CP(n)$ to $\CO_\s = \sigma(\CO_{\CK})$.  
Corresponding to the factorization $G^*(\R) = \prod_{\sigma} G_{\sigma}$, we write
$X = \prod_{\sigma\in\Sigma} X_\sigma$.    The maximal parabolic $P_n$, together with $U_\s$, defines
an unbounded realization of a connected component $X_\s^+  \subset X_\s$ as a tube domain in $\grp^+_{4,\s}$ (\cite{harris86} (5.3.2)).  A choice of 
basis
for $L_1$, together with the identification of $V$ with $V_d$ and $V^d$ introduced in Section  \ref{SiegelParabolic-section}, 
identifies $\grp^+_{4,\s}$ with $M_n(\C)$ and therefore
identifies $X^+_\sigma$ with a  tube domain in $M_n(\C)$.  Let $\gimel_\s \in X^+_\s$ be the fixed point of $U_\s$.
Without loss of generality, we may assume $\gimel_\s$ to be a diagonal matrix with values in $\s(\CK)$ whose entries 
have trace zero down to $\CK^+$.  Then $X^+_{\sigma}$ is identified with
the standard tube domain 
$$X_{n,n} := \left\{z\in M_n\left(\IC\right)\mid \gimel_\s\left({ }^t\bar{z}-z\right)>0\right\}.$$
With respect to this identification,  any $g_\sigma = \begin{pmatrix}a_\sigma & b_\sigma\\ c_\sigma & d_\sigma\end{pmatrix} \in G_\sigma$ acts by 
$g_\sigma(z) = \left(a_\sigma z +b_\sigma\right)\left(c_\sigma z+ d_\sigma\right)^{-1}$.  (Here $a_\sigma, b_\sigma, c_\sigma,$ and $ d_\sigma$ are $n\times n$ matrices.)

For $z = (z_\s)_{\s \in\Sigma}\in X = \prod_{\s \in\Sigma}X_\s$ and $g = (g_\s)_{\s \in\Sigma} \in \prod_{\s \in\Sigma}G(\IR)$, let
\begin{align*}
J'(g_\s, z_\s) &= \overline{c_\s}\cdot{ }^tz_\s + \overline{d_\s} ~~~ &J'(g, z) &= \prod_{\s\in\Sigma}J'(g_\s, z_\s)\\
J(g_\s, z_\s) &= c_\s z_\s + d_\s ~~~
&J(g, z)  &= \prod_{\s\in\Sigma}J(g_\s, z_\s)
\end{align*}
Let
\begin{align*}
j_{g_\s}(z_\s) & = j(g_\s, z_\s) = \det J(g_\s, z_\s)\\
 &(= \nu(g_\s)^{-n}\det(g_\s)\det(J'(g_\s, z_\s)) = \nu(g_\s)^n\det(\overline{g_\s})^{-1}\det(J'(g_\s, z_\s))\\
j_g(z)&=j(g, z) = \prod_{\s\in\Sigma}j_{g_\s}(z_\s).
\end{align*}

Fix $\s \in \Sigma$.  For $g \in G_{\sigma}$, let
$$J(g) = J(g,\gimel_\sigma); J'(g) = J'(g,\gimel_\sigma).$$
These are $C^{\infty}$-functions on $G_{\sigma}$ with values in $\GL(n,\C)$, and any polynomial function
of $J$ and $J'$ is annihilated by $\grp^-_{\sigma}$ and is contained in a finite-dimensional $\grk_{\sigma}$ subrepresentation of 
$C^{\infty}(G_{\sigma})$.  Similarly, let
$$j(g) = \det(J(g));~~ j'(g) = \det(J'(g)),$$
viewed as $C^{\infty}$-functions on $G_{\sigma}$ with values in $\C^{\times}$.

Let $\chi = ||\bullet||^m\cdot\chi_0$ be an algebraic Hecke character of $\K$, where
 $m\in \ZZ$ and 
 $$\chi_{0,\s}(z) = z^{-a\left(\chi_\sigma\right)}\bar{z}^{-b\left(\chi_\sigma\right)}$$
 for any archimedean place $\s$.  
Define
$\DD^{2}(\chi_\s) = \DD^{2}(m,\chi_{0,\sigma})$ 
to be the holomorphic $(Lie(G_{4,\s}),U_{\s})$-module with highest $U_\sigma$-type
$$\Lambda(\chi_{\sigma}) = \Lambda(m,\chi_{0,\sigma}) = (m-b(\chi_\s),m-b(\chi_\s),\dots, m-b(\chi_\s); -m + a(\chi_\s),\dots, -m + a(\chi_\s);\bullet)$$
in the notation of \cite[(3.3.2)]{harriscrelle}.  Here $\bullet$ is the character of the $\R$-split center of $U_{\s}$ (denoted $c$ in \cite{harriscrelle}), which we omit
to specify because it has no bearing on the integral representation of the $L$-function.
We define a map of $(U(\grg_\s),U_{\s})$-modules
\begin{equation}\label{archautomorphy} \iota(\chi_{\sigma}):  \DD^{2}(\chi_{\sigma})  \rar C^{\infty}(G_{\sigma}) \end{equation}
as follows.  Let $v(\chi_\s)$ be the tautological generator of the $\Lambda(m,\chi_{0,\sigma})$-isotypic subspace (highest $U_\sigma$-type subspace)
of $\DD^{2}(m,\chi_{\sigma})$.  Let
$$\iota(\chi_{\sigma})(v(\chi_{\sigma})) = J_{\chi_{\sigma}}(g) 
:= j(g)^{-m + a(\chi_\s)}\cdot  j'(g)^{-m + b(\chi_\s)}\nu(g)^{n(m+a(\chi_\s)+b(\chi_\s))} $$
and extend this to a map of $U(\grg_\s)$-modules.    Let $C(G_{\sigma},\chi_{\sigma})$ denote the image of $\iota(\chi_{\sigma})$.

\begin{rmk}\label{independence}  Note that $J_{\chi_{\sigma}}$ depends only on the archimedean character 
$\chi_{\s} = ||\bullet||_\s^m\chi_{0,\s}$.  
\end{rmk}

We will only take $m$ in the closed right half-plane bounded by the center of symmetry of the functional equation of
the Eisenstein series, as in \cite{Harris-rationality}.   
For such $m$, the restriction of  $\DD^{2}(m,\chi_{0,\sigma})$ to $U_{3,\sigma} = U(a_{\sigma},b_{\sigma})\times U(b_{\sigma},a_{\sigma})$
decomposes as an infinite direct sum of irreducible holomorphic discrete series representations of the kind introduced in \ref{holoreps}:
\begin{equation}\label{decomp}
\DD^{2}(m,\chi_{0,\sigma}) = \bigoplus_{\kap_\s \in C_3(m,\chi_{0,
\sigma})} \DD(\kap_\s) \otimes \DD(\kap^{\flat}_\s\otimes \chi_{0,\s}) = 
\bigoplus_{\kap_\s \in C_3(\chi_\sigma)} \DD(\kap_\s) \otimes \DD(\kap^{\flat}_\s\otimes \chi_{\s})
\end{equation}
where $C_3(\chi_{\sigma}) = C_3(m,\chi_{0,\sigma})$ is a countable set of highest weights:
\begin{equation}\label{C3m} C_3(\chi_{\sigma}) = \{(-m+b(\chi_\s)-r_{a_\s},\dots,-m+b(\chi_\s) - r_1; m-a(\chi_\s) + s_1,\dots, m - a(\chi_\s) + s_{b_\s})\}
\footnote{Note the change of sign relative to $\Lambda(\chi_{\sigma})$! This is due to the duality in the definition \eqref{verma}.} 
\end{equation} 
where 
\begin{equation}\label{parameters} r_1 \geq r_2 \geq \dots \geq r_{a_\s} \geq 0; s_1  \geq s_2 \geq \dots \geq s_{b_\s}  \geq 0.\end{equation}
(Compare \cite[Lemma 3.3.7]{harriscrelle} when $a(\chi_s) = 0$.)   There is an explicit formula for $\kappa^{\flat}$ in \eqref{kapflat}, but the simplest explanation is probably that, if we identify holomorphic representations of $U(b_{\sigma},a_{\sigma})$ with anti-holomorphic representations of $U(a_{\sigma},b_{\sigma})$, then 
$$\DD(\kappa^{\flat}_\s) \isoarrow \DD(\kappa_\s)^{\vee}$$
as representations of $U(a_{\sigma},b_{\sigma})$.

For each $\sigma \in \Sigma$, we define 
$$(\alpha(\chi_{\sigma}),\beta(\chi_{\sigma})) =  (-m+b(\chi_{\sigma}),\dots,-m+b(\chi_{\sigma}); m-a(\chi_\s),\dots,m-a(\chi_\s)) \in\ZZ^{a_{\sigma} + b_{\sigma}}$$
and let
\begin{equation}\label{shift} (\alpha(\chi),\beta(\chi)) = (\alpha(\chi_{\sigma}),\beta(\chi_{\sigma}))_{\sigma \in \Sigma}
\end{equation}
For $\kappa = (\kappa_{\sigma})_{\sigma \in \Sigma}$, with $\kappa_{\sigma} \in C_3(\chi_{\sigma})$, we define 
\begin{equation}\begin{split}
\label{iota} \rho_\s = \kap_\s - (\alpha(\chi_{\sigma}),\beta(\chi_{\sigma})) = (-r_{a_\s},\dots, - r_1;  s_1,\dots,  s_{b_\s}); \\
\rho^{\upsilon}_\s = ( r_1, \dots,r_{a_\s};  s_1,\dots,  s_{b_\s});  \rho = (\rho_\s)_{\s \in \Sigma}; \rho^{\upsilon} = (\rho^{\upsilon}_\s)_{\s \in \Sigma}
\end{split}
\end{equation}
The involution $\upsilon$  on the parameters $(r_i,s_j)$ corresponds to an algebraic involution, also denoted $\upsilon$, of the torus $T$.

The algebraic characters $\rho$, $\rho^{\upsilon}$, and $\kappa$ all determine one another and will be used in the characterization of the
Eisenstein measure in subsequent sections.

Note that the twist by $\chi_{0,\s}$ coincides with the twist by $\chi_\s$ because
the norm of the determinant is trivial on $U(b_{\sigma},a_{\sigma})$.  We prefer to write the twist by $\chi_\s$, which is more appropriate for
parametrizing automorphic representations of unitary similitude groups.
  
\begin{lem}\label{d2ms}   For such $\chi$, the map $\iota(\chi_{\sigma})$ of \eqref{archautomorphy} is injective for all $\sigma$.   In particular, the image
$C(G_{\sigma},\chi_{\sigma})$ of $\iota(\chi_{\sigma})$ is a free $U(\grp^+_{\sigma}) \isoarrow S(\grp^+_{\sigma})$-module of rank $1$.
\end{lem}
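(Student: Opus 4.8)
The statement asserts that for $m$ in the appropriate right half-plane, the $(U(\grg_\sigma),U_\sigma)$-module map $\iota(\chi_\sigma)\colon \DD^2(\chi_\sigma) \rar C^\infty(G_\sigma)$ is injective, and consequently that its image is free of rank $1$ over $S(\grp^+_\sigma) \cong U(\grp^+_\sigma)$. First I would recall the structure of the source module. Since $\DD^2(m,\chi_{0,\sigma})$ is the holomorphic $(\Lie(G_{4,\sigma}),U_\sigma)$-module with highest $U_\sigma$-type $\Lambda(m,\chi_{0,\sigma})$, it is by construction of the form $U(\grg_\sigma)\otimes_{U(\grk_\sigma\oplus\grp^-_\sigma)} W_{\Lambda}$; by the Poincar\'e--Birkhoff--Witt theorem this is a free $U(\grp^+_\sigma) \cong S(\grp^+_\sigma)$-module with generator $1\otimes v(\chi_\sigma)$. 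The map $\iota(\chi_\sigma)$ is defined by sending $v(\chi_\sigma)$ to the automorphy factor $J_{\chi_\sigma}$ and extending $U(\grg_\sigma)$-linearly, so $\iota(\chi_\sigma)(\DD^2(\chi_\sigma))$ is the $U(\grp^+_\sigma)$-submodule of $C^\infty(G_\sigma)$ generated by $J_{\chi_\sigma}$. Thus the ``consequently'' clause is automatic once injectivity is established, since a $U(\grp^+_\sigma)$-module quotient of a rank-one free module that is injective is itself free of rank one; so the whole content is the injectivity of $\iota(\chi_\sigma)$.

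**Reduction to a weight/nonvanishing argument.** To prove injectivity I would use the decomposition \eqref{decomp}: upon restriction to $U_{3,\sigma} = U(a_\sigma,b_\sigma)\times U(b_\sigma,a_\sigma)$, the module $\DD^2(m,\chi_{0,\sigma})$ breaks up as a multiplicity-free direct sum $\bigoplus_{\kap_\sigma\in C_3(\chi_\sigma)} \DD(\kap_\sigma)\otimes \DD(\kap^\flat_\sigma\otimes\chi_\sigma)$ of irreducible (holomorphic discrete series) $(U(\grg_{3,\sigma}),U_{3,\sigma})$-modules. A $U(\grg_\sigma)$-submodule of $\DD^2(\chi_\sigma)$ killed by $\iota(\chi_\sigma)$ is in particular a $U(\grg_{3,\sigma})$-submodule, hence a sub-sum of these irreducible pieces; so $\ker\iota(\chi_\sigma)$ is a direct sum of a subset of the $\DD(\kap_\sigma)\otimes\DD(\kap^\flat_\sigma\otimes\chi_\sigma)$. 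It therefore suffices to show that for each $\kap_\sigma\in C_3(\chi_\sigma)$, the restriction of $\iota(\chi_\sigma)$ to the corresponding summand is nonzero — equivalently, that the image under $\iota(\chi_\sigma)$ of a (nonzero) highest-$U_{3,\sigma}$-type vector in $\DD(\kap_\sigma)\otimes\DD(\kap^\flat_\sigma\otimes\chi_\sigma)$ is a nonzero $C^\infty$ function on $G_\sigma$. Concretely, the highest-$U_{3,\sigma}$-type vector of the $\kap_\sigma$-summand is realized as $P_{\rho_\sigma^\upsilon}(\partial)\cdot v(\chi_\sigma)$ for an explicit polynomial $P_{\rho_\sigma^\upsilon}$ in $\grp^+_\sigma$ (the parameters $r_i,s_j$ of \eqref{parameters} indexing the relevant $K^o_\sigma$-dominant weight), and $\iota(\chi_\sigma)$ sends it to an explicit holomorphic differential operator applied to the power of the Jacobian $j(g)^{-m+a(\chi_\sigma)}j'(g)^{-m+b(\chi_\sigma)}\nu(g)^{\bullet}$. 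The task is to check this is not identically zero.

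**The key step — nonvanishing of the differentiated automorphy factor.** The heart of the matter is to verify that the polynomial differential operator corresponding to $P_{\rho_\sigma^\upsilon}$, applied to the automorphy factor $J_{\chi_\sigma}$, does not annihilate it; this is where the hypothesis on $m$ (that it lie in the closed right half-plane bounded by the center of symmetry, so that the exponents $-m+a(\chi_\sigma)$, $-m+b(\chi_\sigma)$ are in a range where the relevant discrete series are nontrivial and the Shimura differential operators do not degenerate) is used. The cleanest way is to transfer to the bounded symmetric domain / tube domain realization $X_{n,n}$ using the identification of $\grp^+_\sigma$ with $M_n(\C)$, where $J_{\chi_\sigma}$ becomes (a power of) $\det(c_\sigma z_\sigma + d_\sigma)$ evaluated at the base point; the holomorphic differential operators act as constant-coefficient operators in the coordinates on the tube domain against which the Jacobian-power transforms, and one computes the top-order symbol. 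This is precisely the type of computation carried out in the theory of holomorphic (Shimura--Maass) differential operators, e.g. in \cite{harriscrelle, harris86, Harris-rationality}, and the nonvanishing for $m$ in the stated half-plane is one of the standard outputs (it amounts to the fact that the branching coefficients / the constants appearing in the Shimura differential operators are nonzero in that range — they only vanish to the left of the center of symmetry at ``trivial zeros''). I expect this top-symbol nonvanishing to be the main obstacle: it is not formal, it genuinely uses the half-plane restriction, and the bookkeeping matching the abstract parameters $(\rho_\sigma,\rho_\sigma^\upsilon,\kap_\sigma)$ with the explicit operators requires care. Once that is in hand, assembling the argument is immediate: no summand is in the kernel, so $\ker\iota(\chi_\sigma) = 0$, and the freeness of $C(G_\sigma,\chi_\sigma)$ over $S(\grp^+_\sigma)$ follows from the PBW freeness of $\DD^2(\chi_\sigma)$ noted above.
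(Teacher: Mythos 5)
Your proposal is correct in outline but takes a genuinely different and considerably more laborious route than the paper. The paper's entire proof is three sentences: for $m$ in the indicated right half-plane, $\DD^{2}(\chi_{\sigma})$ is \emph{irreducible as a $U(\grg_{\sigma})$-module}; since $\iota(\chi_{\sigma})$ is nonzero (indeed $J_{\chi_\sigma}$ is manifestly a nonzero function), the kernel is a proper submodule, hence zero. Freeness of the image over $S(\grp^+_\sigma)$ then follows from PBW-freeness of $\DD^2(\chi_\sigma)$ exactly as you say.

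Your proposal instead restricts to the smaller group $U(\grg_{3,\sigma})$, uses the multiplicity-free decomposition \eqref{decomp} to write $\ker\iota(\chi_\sigma)$ as a sub-sum of the pieces $\DD(\kap_\sigma)\otimes\DD(\kap^\flat_\sigma\otimes\chi_\sigma)$, and then proposes to verify nonvanishing of $\iota(\chi_\sigma)$ on each piece separately — i.e.\ that each $\delta_{\chi_\sigma,\kap_\sigma}\cdot J_{\chi_\sigma}$ is a nonzero function on $G_\sigma$. This is the step you yourself flag as ``not formal,'' and it is where your argument is weaker than the paper's. Two caveats are worth naming. First, the internal nonvanishing in Corollary \ref{pluridecomp} (that $P_{\kap_\sigma,\chi,\sigma}\neq 0$) is a statement about projections \emph{inside} $\DD^2(\chi_\sigma)$ and does not by itself give you that the image under $\iota(\chi_\sigma)$ in $C^\infty(G_\sigma)$ is nonzero; those two nonvanishing statements are equivalent only after one knows $\iota(\chi_\sigma)$ is injective, which is what you are trying to prove. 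Second, the nonvanishing of $\delta_{\chi_\sigma,\kap_\sigma}\cdot J_{\chi_\sigma}$ for \emph{every} $\kap_\sigma\in C_3(\chi_\sigma)$ is in fact equivalent to the $U(\grg_\sigma)$-irreducibility the paper invokes; if you establish it summand by summand via top-symbol computations on the tube domain you are in effect re-proving irreducibility the hard way. Your route does have the advantage of being insensitive to whether $\DD^2(\chi_\sigma)$ is irreducible as a $\grg_\sigma$-module, and so in principle gives sharper information (it would tell you exactly which summands, if any, die as one moves $m$ to the left of the center of symmetry). But for the lemma as stated, where $m$ is constrained to the irreducible range, the paper's appeal to irreducibility is both the standard reference point and the cleanest proof, and you should state it rather than routing through the branching decomposition.
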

\begin{proof}  Indeed,  $\DD^{2}(\chi_{\sigma})$ is always a free rank one $U(\grp^+_{\sigma})$-module,
and for $m$ in the indicated range is irreducible as $U(\grg_{\sigma})$-module.  Since $\iota(\chi_{\sigma})$ is not the zero homomorphism,
it is therefore injective.
\end{proof}

\begin{defi}  Let $\kap = (\kap_\s, \s \in \Sigma)$, where for each $\s$, $\kap_\s$ is the highest weight of an irreducible
representation $\tau_\s$ of $U_\s$.  Let $(\chi_\s, \s \in \Sigma)$ be the archimedean parameter of an algebraic Hecke character $\chi$ of
$\K$.  The pair $(\kap,\chi)$ (or the triple $(\kappa,m,\chi_0)$) is {\rm critical} if
$\kap_{\sigma} \in C_3(\chi_\s)$ for all $\s \in \Sigma$.

If $\pi$ is an anti-holomorphic automorphic representation of $G_1$ of type $\kap$, we say $(\pi,\chi)$ is {\rm critical} if $(\kap,\chi)$ is critical.
\end{defi}

\begin{rmk}  When $\K$ is imaginary quadratic, the discussion in \cite[Section  3]{harriscrelle} shows that, for fixed $\pi$ and $\chi$, the set of $m$
such that $(\pi,m,\chi_0)$ is critical is exactly the set of critical values of $L(s+ \frac{1}{2},\pi,\chi)$ greater than or equal to the center of symmetry of the functional equation.
The same considerations show that this is true for an arbitrary CM field.  The verification is simple but superfluous unless one wants to compare the
results of the present paper to conjectures on critical values of $L$-functions.
\end{rmk}

Let $v_{\kap_\s}\otimes v_{\kap^{\flat}_\s\otimes \chi_{\s}}$ denote a highest weight vector in the minimal $K_3$-type of $\DD(\kap_\s) \otimes \DD(\kap^{\flat}_\s\otimes \chi_{\s})$, relative to a choice of compact maximal tori in $U_{3,\sigma}$ as in \ref{holoreps}.   
The holomorphic module $\DD^{2}(\chi_{\sigma})$  is a free rank one module
over $U(\grp_4^+)$, generated by $v(\chi_\s) \in \Lambda(\chi_{\sigma})$.  There is therefore a unique element 
$\delta_{\chi_\s,\kap_\s} \in U(\grp_4^+)$
such that
\begin{equation}\label{kappadiffops}  \delta_{\chi_\s,\kap_\s}\cdot v(\chi_\s) = v_{\kap_\s}\otimes v_{\kap^{\flat}_\s\otimes \chi_{\s}}.\end{equation}
The differential operator $\delta_{\chi_\s,\kap_\s}$ depends on the choice of basis vectors but is otherwise well-defined up to scalar multiples.
The module $\DD(\kap_\s) \otimes \DD(\kap^{\flat}_\s\otimes \chi_{\s})$ has a natural rational structure over the field of definition $E(\tau_\s,\chi_\s)$ of
$\tau_\s \boxtimes \tau^{\flat}_\s\otimes \chi_\s$.
Let $span(v_{\kap_\s}\otimes v_{\kap^{\flat}_\s\otimes \chi_{\s}})$ denote the $E(\tau_\s,\chi_\s)$-line in  $\DD(\kap_\s) \otimes \DD(\kap^{\flat}_\s\otimes \chi_{\s})$
spanned by the indicated vector.  We always choose $v_{\kap_\s}\otimes v_{\kap^{\flat}_\s\otimes \chi_{\s}}$ to be rational over
$E(\tau_\s,\chi_\s)$.

\subsubsection{Holomorphic projection}\label{holoprojection-sec}

We let $pr_{\kap,\s}:  \DD^{2}(\chi_{\sigma}) \rar \DD(\kap_\s) \otimes \DD(\kap^{\flat}_\s\otimes \chi_{\s})$
denote the natural projection and 
$$pr_{\kap,\s}^{hol} = pr_{\kap,\s}^{hol;a_\s,b_\s}:  \DD^{2}(\chi_{\sigma}) \rar span(v_{\kap_\s}\otimes v_{\kap^{\flat}_\s\otimes \chi_{\s}})$$
denote $pr_{\kap,\s}$ followed by orthogonal projection on the highest weight component of the holomorphic subspace.  
Let 
$$\DD^{2}(\chi_{\sigma})^{hol;a_\s,b_\s} = \bigoplus_{\kap_\s \in C_3(m,\chi_\s)} im(pr_{\kap,\s}^{hol})$$
and let 
$$pr^{hol} = \bigoplus pr_{\kap,\s}^{hol}:  \DD^{2}(\chi_{\sigma})  \rar \DD^{2}(\chi_{\sigma})^{hol;a_\s,b_\s}.$$

Because we have
chosen $h$ standard, the enveloping algebra $U(\grg_\s)$ and its subalgebra
$U(\grp^+_{4,\s}) \simeq S(\grp^+_{4,\s})$ have
models over $\CO_\s$.
We define an isomorphism of $\CO_\s$ algebras
 \begin{equation}\label{pluridiff} S(\grp_{4,_\s}^+) \isoarrow \CP(n)_\s \end{equation} 
using the identification of section \ref{automorphy}.

Let $n = a_\s + b_\s$ be a signature at $\s$.  We write $X \in M_n$ in the form
$X = \begin{pmatrix} A(X) & B(X) \\ C(X) & D(X) \end{pmatrix}$ with $A(X) \in M_{a_\s}$ (an $a_\s \times a_\s$) block, $D(X) \in M_{b_\s}$,
and $B(X)$ and $C(X)$ rectangular matrices.  With respect to this decomposition and the isomorphism \eqref{pluridiff} we obtain a natural
map
$$j(a_\s,b_\s):  \CP(a_\s)_\s \otimes \CP(b_\s)_\s \hookrightarrow \CP(n)_\s \isoarrow U(\grp_{4,\s}^+).$$
  For $i = 1,\dots, a_\s$ (resp. $j = 1,\dots, b_\s$) let 
$\Delta_i(X)$ (resp. $\Delta'_j(X)$) be the element of $\CP(a_\s)_\s$ (resp. $\CP(b_\s)_\s$)
given by the $i$th minor of $A$ (resp. the $j$th minor of $D$) starting from the upper
left corner.  
Let $r_{1,\s} \geq \dots \geq r_{a_\s,\s} \geq r_{a_\s + 1,\s} = 0$, $s_{1,\s} \geq \dots \geq s_{b_\s,\s} \geq s_{b_\s + 1,\s} = 0$ be
descending sequences of integers as in Inequalities \eqref{parameters}.   Let 
$$\tb_{i,\s} = r_{i,\s} - r_{i+1,\s}, i = 1, \dots, a_\s;  \tc_{j,\s} = s_{j,\s} - s_{j+1,\s}, j = 1, \dots, b_\s .$$
and define 
$p(\ub_\s,\uc_\s) \in \CP(n)_\s$ by
\begin{equation}\label{plurihar} p(\ub_\s,\uc_\s)(X)  = j(a_\s,b_\s)(\prod_{i = 1}^{a_\s} \Delta_i(X)^{\tb_{i,\s}}\cdot  \prod_{j = 1}^{b_\s} \Delta'_j(X)^{\tc_{j,\s}}) 
\end{equation}
Let $\delta(\ub_\s,\uc_\s) \in U(\grp^+_\s)$ be the differential operator corresponding to $p(\ub_\s,\uc_\s)$ under the isomorphism \eqref{pluridiff}.

The group $GL(a_\s)\times GL(a_\s)$ (resp. $GL(b_\s)\times GL(b_\s)$) acts on $\CP(a_\s)_\s$ (resp. $\CP(b_\s)_\s$) by the map
$(g_1,g_2)(X) = {}^tg_1^{-1}Xg_2$, and the action preserves the grading by degree.  
With respect to the standard upper-triangular Borel subgroups, we can index representations
of $GL(a_\s)$ (resp. $GL(b_\s)$) by their highest weights, which are $a_\s$-tuples  of integers $r_1  \geq r_2 \geq \dots \geq r_{a_\s}$ (resp.
$b_\s$-tuples $s_1 \geq s_2 \geq \dots \geq s_{b_\s}$).   The following is a statement of classical Schur-Weyl duality:

\begin{lem}\label{schurweyl}  Let $u = a_\s$ or $b_\s$.   As a representation of $GL(u)\times GL(u)$, the degree $d$-subspace $\CP(u)^d_\s \subset
\CP(u)_\s$ decomposes
as the direct sum
$$\CP(u)^d_\s \isoarrow \bigoplus_{\mu} [F^{\mu,\vee}\otimes F^{\mu}]$$
where $\mu$ runs over $r$-tuples $c_1  \geq c_2 \geq \dots \geq c_u \geq 0$ such that $\sum_i c_i = d$.  Moreover, if
$\mu = c_1  \geq c_2 \geq \dots \geq c_u \geq c_{u+1} = 0$, the highest weight space $\CF^{\mu,+} \subset [F^{\mu,\vee}\otimes F^{\mu}]$ is spanned by the polynomial
$\Delta^{\mu} = \prod_{i = 1}^r \Delta_i^{c_i - c_{i+1}}$.  
\end{lem}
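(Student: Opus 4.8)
The plan is to obtain Lemma \ref{schurweyl} from the classical Cauchy identity for the pair $(GL(u),GL(u))$, together with the standard description of joint highest weight vectors in the coordinate ring of the matrix monoid $M_u$ in terms of leading principal minors. So this is really a citation of Howe/Cauchy duality, the only genuine work being the identification of the vectors $\Delta^\mu$.

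First I would reduce to a statement over a field. Since $\CP(u)_\s$ is a free $\CO_\s$-module, the $GL(u)\times GL(u)$-action in the statement is defined over $\Z$, and the Schur modules $F^\mu$ carry functorial $\Z$-forms (Weyl modules), it suffices to prove the decomposition over $\Frac(\CO_\s)$ --- and only that statement is needed in the sequel --- so one may argue over $\C$ via any embedding. Over $\C$, identify the degree-$d$ subspace $\CP(u)^d_\s$ with $S^d(U^\vee\otimes U)$, where $U$ is the standard representation of the second $GL(u)$-factor and $U^\vee$ that of the first; the dual enters on the first factor precisely because $g_1$ acts through ${}^tg_1^{-1}$. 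The Cauchy formula
\[
S^d(U^\vee\otimes U)\isoarrow \bigoplus_{|\mu|=d} S_\mu(U^\vee)\otimes S_\mu(U),
\]
as a $GL(U^\vee)\times GL(U)$-module, with $S_\mu$ the Schur functor, then yields exactly $\bigoplus_{|\mu|=d}[F^{\mu,\vee}\otimes F^\mu]$, the sum being over partitions $\mu$ with at most $u=\dim U$ parts, with the grading by $d$ matching $|\mu|$; in particular each summand occurs with multiplicity one.

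Next I would pin down the highest weight vectors. The only real computation is the transformation law of the leading principal minors: for $g_1,g_2$ upper triangular one checks directly that the upper-left $i\times i$ block of ${}^tg_1^{-1}Xg_2$ equals the product of the corresponding blocks of ${}^tg_1^{-1}$, of $X$, and of $g_2$ (using that ${}^tg_1^{-1}$ is lower triangular and $g_2$ upper triangular), whence $\Delta_i({}^tg_1^{-1}Xg_2)=\lambda_i(g_1,g_2)\,\Delta_i(X)$ for the character $\lambda_i\colon (g_1,g_2)\mapsto \prod_{j\le i}(g_1)_{jj}^{-1}(g_2)_{jj}$, the unipotent parts acting trivially. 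Hence each $\Delta_i$ is a $(B\times B)$-semi-invariant, so $\Delta^\mu=\prod_i \Delta_i^{c_i-c_{i+1}}$ is a joint highest weight vector, of torus weight $\prod_i \lambda_i^{c_i-c_{i+1}}$, which --- after matching the Borels to the conventions of the statement --- is the highest weight of $F^{\mu,\vee}$ on the first factor and of $F^\mu$ on the second. By the multiplicity-one coming from the Cauchy decomposition, the $(B\times B)$-highest weight space of this weight in $\CP(u)^d_\s$ is one-dimensional, so the nonzero vector $\Delta^\mu$ spans it; that is, $\CF^{\mu,+}=\C\cdot\Delta^\mu$.

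The step requiring the most care --- and the only one that is not purely formal --- is the bookkeeping of contragredients and Borel subgroups: one must confirm that it is the first factor carrying $F^{\mu,\vee}$ (forced by the ${}^tg_1^{-1}$ in the action $(g_1,g_2)(X)={}^tg_1^{-1}Xg_2$) and that $\Delta^\mu$ is a \emph{highest} rather than a \emph{lowest} weight vector relative to the upper-triangular Borels used to index $GL(u)$-representations in the statement; since the torus weight of $\Delta^\mu$ comes out as $(-\mu,\mu)$, reconciling this with those conventions (possibly flipping the Borel on the dual factor) is exactly what the explicit transformation law above is for. Everything else is the standard Cauchy/Howe duality for $(GL_u,GL_u)$, for which a reference suffices.
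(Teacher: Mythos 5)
Your argument is correct and matches the paper's proof in substance: the paper's proof is a one-line citation of Theorem 5.6.7 of Goodman--Wallach, which is precisely the $(GL_u,GL_u)$-duality on polynomials on $M_u$ (Cauchy formula plus the identification of the joint $B\times B$-highest weight vectors as products of leading principal minors). You have simply proved the cited theorem directly rather than quoting it; the only point to watch, which you already flag, is the placement of the contragredient and the Borel conventions, which must be made to agree with the paper's normalization $[F^{\mu,\vee}\otimes F^\mu]$.
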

\begin{proof}  This is the case $n = k = r$ of Theorem 5.6.7 of \cite{goodmanwallach}.
\end{proof}

Define the (one-dimensional) highest weight space $\CF^{\mu,+}$ as in the statement of the lemma, and write
$$\CP(u)_\s^+ = \bigoplus_{\mu} \CF^{\mu,+}.$$
Recall the notation of \eqref{iota}.  

\begin{cor}\label{pluridecomp}  Let $(\kap,\chi)$ be critical.  For each $\s \in \Sigma$, there is a unique $a_\s + b_\s$-tuple
$$\rho^{\upsilon}_{\sigma} = (r_{1,\s} \geq \dots \geq r_{a_\s,\s} \geq  0; s_{1,\s} \geq \dots \geq s_{b_\s,\s} \geq 0)$$ 
as above such that 
$$pr^{hol}_{\kap,\s}(\delta(\ub_\s,\uc_s)\cdot v(\chi_\s)) = P_{\kap_\s,\chi,\s}\cdot  v_{\kap_\s}\otimes v_{\kap^{\flat}_\s\otimes \chi_{\s}}$$
with $P_{\kap_\s,\chi,\s}$ a non-zero scalar in $E(\tau_\s,\chi_\s)^{\times}$.

We write 
$$D(\rho^{\upsilon}_\s) = D(\kap_\s,\chi_\s) = \delta(\ub_\s,\uc_\s),  D(\rho^{\upsilon}) = D(\kap,\chi) = \prod_{\s} D(\kap_\s,\chi_\s)$$ 
and
$$D^{hol}(\rho^{\upsilon}_\s) = D^{hol}(\kap_\s,\chi_\s) = pr^{hol}_{\kap,\s}\delta(\ub_\s,\uc_\s),  D^{hol}(\rho^{\upsilon}) = D^{hol}(\kap,\chi) = \prod_{\s} D^{hol}(\kap_\s,\chi_\s)$$ 
for these choices of $(r_{i,\s};s_{j,\s})$.
Then for all $\kap^{\dag} \leq \kap$ there exist unique elements $\delta(\kap,\kap^{\dag}) \in U(\grp_3^+)$, defined over algebraic number fields, such that 
$$D(\kap,\chi) = \sum_{\kap^{\dag} \leq \kap} \delta(\kap,\kap^{\dag})\circ D^{hol}(\kap^\dag,\chi);$$
$ \delta(\kap,\kap)$ is the scalar $\prod_{\s} P_{\kap_\s,\chi,\s}$.
\end{cor}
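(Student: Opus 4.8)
The plan is to prove everything one archimedean place at a time: fix $\s\in\Sigma$ and work inside the single $(U(\grg_\s),U_{3,\s})$-module $\DD^{2}(\chi_\s)$, then assemble the global statement by products over $\Sigma$, using $D(\kap,\chi)=\prod_\s D(\kap_\s,\chi_\s)$, $D^{hol}(\kap,\chi)=\prod_\s D^{hol}(\kap_\s,\chi_\s)$, and the fact that $E(\tau_\s,\chi_\s)$ is a number field. By Lemma \ref{d2ms}, $\DD^{2}(\chi_\s)=\CP(n)_\s\cdot v(\chi_\s)$ is a free rank-one module over $U(\grp^+_{4,\s})\isoarrow S(\grp^+_{4,\s})\isoarrow\CP(n)_\s$, and the summands of \eqref{decomp} are its $U_{3,\s}$-isotypic pieces, each of multiplicity one. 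First I would record the uniqueness of $\rho^{\upsilon}_\s$: criticality gives $\kap_\s\in C_3(\chi_\s)$, and \eqref{C3m}--\eqref{parameters} show that the descending sequences defining $\kap_\s$ — hence $\ub_\s,\uc_\s$, hence $p(\ub_\s,\uc_\s)$ and $\delta(\ub_\s,\uc_\s)$ — are recovered from $\kap_\s$ by subtracting the base weight $(\alpha(\chi_\s),\beta(\chi_\s))$ of \eqref{shift} and applying $\upsilon$; this is the $\rho^{\upsilon}_\s$ of \eqref{iota}, and no other monotone tuple reproduces $\kap_\s$.

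Next comes the weight bookkeeping. By Lemma \ref{schurweyl}, applied to the $\GL(a_\s)\times\GL(a_\s)$-action on the $A$-block and the $\GL(b_\s)\times\GL(b_\s)$-action on the $D$-block of $\CP(n)_\s$, the polynomial $p(\ub_\s,\uc_\s)$ is, up to scalar, the unique highest weight vector in $\CP(n)_\s$ of its weight for the maximal compact $K^o_{3,\s}$ of $U_{3,\s}$ (the off-diagonal blocks contribute no such vectors). Since $v(\chi_\s)$ is a highest weight vector for $K^o_{\s}\supset K^o_{3,\s}$ for compatible positive systems, and $\DD^{2}(\chi_\s)$ is free over $S(\grp^+_{4,\s})$, for $q\in S(\grp^+_{4,\s})$ the vector $q\cdot v(\chi_\s)$ is $K^o_{3,\s}$-highest weight if and only if $q$ is, and then of weight $\Lambda(\chi_\s)+\mathrm{wt}(q)$. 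Comparing with the definition of $C_3(\chi_\s)$ via \eqref{C3m} and the coefficients \eqref{coeffs}, \eqref{coeffsc}, the weight of $\delta(\ub_\s,\uc_\s)\cdot v(\chi_\s)$ is exactly the $U_{3,\s}$-highest weight $\lambda_{\kap_\s}$ of the summand $\DD(\kap_\s)\otimes\DD(\kap^{\flat}_\s\otimes\chi_\s)$; hence $pr_{\kap,\s}(\delta(\ub_\s,\uc_\s)\cdot v(\chi_\s))$ is a highest weight vector of that summand, so a scalar $P_{\kap_\s,\chi,\s}$ times $v_{\kap_\s}\otimes v_{\kap^{\flat}_\s\otimes\chi_\s}$, and $pr^{hol}_{\kap,\s}$ returns the same scalar.

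The crux — and the step I expect to be the main obstacle — is the nonvanishing $P_{\kap_\s,\chi,\s}\neq 0$. By freeness, $\delta(\ub_\s,\uc_\s)\cdot v(\chi_\s)\neq 0$; being $K^o_{3,\s}$-highest weight of weight $\lambda_{\kap_\s}$, its image under $pr_{\kap,\s}$ is automatically a multiple of $v_{\kap_\s}\otimes v_{\kap^{\flat}_\s\otimes\chi_\s}$, so $P_{\kap_\s,\chi,\s}=0$ would force $\delta(\ub_\s,\uc_\s)\cdot v(\chi_\s)$ into $\bigoplus_{\kap'_\s\neq\kap_\s}\DD(\kap'_\s)\otimes\DD((\kap'_\s)^{\flat}\otimes\chi_\s)$. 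I would rule this out by a dominance argument: $\lambda_{\kap_\s}$ is the extremal $K^o_{3,\s}$-highest weight of $\DD^{2}(\chi_\s)$ carrying the transverse profile of $p(\ub_\s,\uc_\s)$, so it is realized only in the $\kap_\s$-summand. Concretely this is the assertion that the holomorphic (Maass--Shimura) differential operator $\delta(\ub_\s,\uc_\s)$ does not kill holomorphic vectors after holomorphic projection — the content of the explicit computations of Shimura and Garrett and, in the generality needed here, of \cite{EDiffOps, apptoSHL}; for $a(\chi_\s)=0$ it is \cite[Lemma 3.3.7]{harriscrelle}, and the general case is identical. The direct verification uses the block decomposition of $\grp^+_{4,\s}$ and the explicit branching $K^o_{3,\s}\hookrightarrow K^o_{4,\s}$, i.e.\ the same classical invariant theory already in play.

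Finally, the triangular decomposition. The vector $\delta(\ub_\s,\uc_\s)\cdot v(\chi_\s)$ is $K^o_{3,\s}$-highest weight of weight $\lambda_{\kap_\s}$, so its component in the summand indexed by $\kap^{\dag}_\s$ vanishes unless $\kap^{\dag}_\s\leq\kap_\s$ (the weight condition $\lambda_{\kap^{\dag}_\s}\leq\lambda_{\kap_\s}$, which by \eqref{C3m} is exactly the stated order); in each contributing summand the component is a $K^o_{3,\s}$-highest weight vector of weight $\lambda_{\kap_\s}$, hence equals $\delta(\kap_\s,\kap^{\dag}_\s)\cdot(v_{\kap^{\dag}_\s}\otimes v_{(\kap^{\dag}_\s)^{\flat}\otimes\chi_\s})$ for a highest weight element $\delta(\kap_\s,\kap^{\dag}_\s)\in U(\grp^+_{3,\s})$, unique because, by the Cauchy identity (a case of Lemma \ref{schurweyl}), $S(\grp^+_{3,\s})\cong S(M_{a_\s\times b_\s})\otimes S(M_{b_\s\times a_\s})$ has at most a one-dimensional highest weight space in each weight. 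Evaluating at $v(\chi_\s)$ and rewriting the $\kap^{\dag}_\s$-component through $D^{hol}(\kap^{\dag}_\s,\chi_\s)$ yields the operator identity $D(\kap_\s,\chi_\s)=\sum_{\kap^{\dag}_\s\leq\kap_\s}\delta(\kap_\s,\kap^{\dag}_\s)\circ D^{hol}(\kap^{\dag}_\s,\chi_\s)$; the term $\kap^{\dag}_\s=\kap_\s$ has $\delta(\kap_\s,\kap_\s)$ of weight $0$, hence the scalar which, by the normalizations, equals $P_{\kap_\s,\chi,\s}$. All the data entering this identity — $\CP(n)_\s$ over $\CO_\s$, the enveloping-algebra module structures, the minimal $U_\s$-types over $E(\tau_\s,\chi_\s)$, and the projections $pr_{\kap,\s}$, $pr^{hol}_{\kap,\s}$ — are defined over number fields, so the $\delta(\kap_\s,\kap^{\dag}_\s)$ are too; multiplying over $\s\in\Sigma$ gives $\delta(\kap,\kap^{\dag})\in U(\grp^+_3)$ with $\delta(\kap,\kap)=\prod_\s P_{\kap_\s,\chi,\s}$, as claimed.
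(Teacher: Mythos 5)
The overall architecture of your proof is sound — working place by place, exploiting freeness of $\DD^{2}(\chi_\s)$ over $S(\grp^+_{4,\s})$, and then assembling by products — and your identification of the uniqueness of $\rho^{\upsilon}_\s$ and the reduction of the triangular decomposition to multiplicity-freeness of $K^o_{3,\s}$-types in holomorphic discrete series are all in the right spirit.  You also correctly flag the nonvanishing of $P_{\kap_\s,\chi,\s}$ as the crux.  But the argument you give at that step is wrong, and the gap is substantive.

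You assert that $\lambda_{\kap_\s}$ ``is realized only in the $\kap_\s$-summand.''  This is false, and indeed it is \emph{incompatible with the rest of the corollary}: if the weight $\lambda_{\kap_\s}$ appeared only in $\DD(\kap_\s)\otimes\DD(\kap^{\flat}_\s\otimes\chi_\s)$, then $\delta(\ub_\s,\uc_s)\cdot v(\chi_\s)$ would lie entirely in that summand, forcing $\delta(\kap,\kap^{\dag})=0$ for all $\kap^{\dag}<\kap$ and trivializing the triangular decomposition.  In fact, for any $\kap^{\dag}_\s\leq\kap_\s$, the summand $\DD(\kap^{\dag}_\s)\otimes\DD((\kap^{\dag}_\s)^{\flat}\otimes\chi_\s)$ is free over $U(\grp^+_{3,\s})$ on its minimal $K^o_{3,\s}$-type, so its $K^o_{3,\s}$-type decomposition contains $\lambda_{\kap_\s}$ as a \emph{non-minimal} type whenever $\lambda_{\kap_\s}-\lambda_{\kap^{\dag}_\s}$ occurs in $S(\grp^+_{3,\s})$ — which is exactly when $\kap^{\dag}_\s\leq\kap_\s$.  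So the weight $\lambda_{\kap_\s}$ is realized in \emph{every} summand indexed by $\kap^{\dag}_\s\leq\kap_\s$, and the component of $\delta(\ub_\s,\uc_\s)\cdot v(\chi_\s)$ in the $\kap_\s$-summand could {\it a priori} be zero.  The proof in the paper explicitly warns against exactly this confusion (``This \emph{does not say} that $\delta(\ub_\s,\uc_\s)\otimes v(\chi_\s)$ lies in the highest weight space of the holomorphic subspace of the direct factor ...'').

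What does establish the nonvanishing — and what the paper invokes — is the statement that the composition
$$\delta(a_\s,b_\s)^+\otimes v(\chi_\s)\hookrightarrow\DD^{2}(\chi_\s)\xrightarrow{pr^{hol}}\DD^{2}(\chi_\s)^{hol;a_\s,b_\s}$$
is an isomorphism (section 7.11 of \cite{harris86}).  Both sides are multiplicity-free for the compact torus of $K^o_{3,\s}$ and indexed by the same set of tuples, so the isomorphism is automatically ``diagonal,'' with nonzero diagonal entries $P_{\kap_\s,\chi,\s}$.  The extra input here compared to your argument is the \emph{injectivity} of $pr^{hol}$ on the block-diagonal highest weight vectors: that the $(\ub_\s,\uc_\s)$-vector is not orthogonal to the holomorphic subspace of the matching summand.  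That is not a dominance statement — the weight occurs in many summands — but a computation (carried out in \cite{harris86}, and, in scalar-weight special cases, in \cite[Lemma 3.3.7]{harriscrelle}).  Your proof should replace the ``dominance argument'' with a direct appeal to this isomorphism, after which the rest of your argument (the triangular expansion, the rationality of the $\delta(\kap,\kap^{\dag})$ from the rationality of the decomposition, and the product over $\s\in\Sigma$) goes through.

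One smaller caveat: your claim that $p(\ub_\s,\uc_\s)$ is the \emph{unique} $K^o_{3,\s}$-highest weight vector of its weight in $\CP(n)_\s$ (``the off-diagonal blocks contribute no such vectors'') is also unjustified and, in general, false: multiplying a lower minor-profile in $\CP(a_\s)^+\otimes\CP(b_\s)^+$ by a nonconstant highest weight vector in $\text{Sym}(M_{a_\s\times b_\s}^*)\otimes\text{Sym}(M_{b_\s\times a_\s}^*)$ can reproduce the same $K^o_{3,\s}$-weight and degree.  Luckily you never actually need this uniqueness — only that $p(\ub_\s,\uc_\s)$ itself is a highest weight vector — so this is an overclaim rather than a second gap, but it is worth removing to avoid confusion.
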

\begin{proof}   Consider $j(a_\s,b_\s)(\CP(a_\s)_\s^+ \otimes \CP(b_\s)_\s^+) \subset \CP(n)_\s$.  This is the space spanned by the
$p(\ub_\s,\uc_\s)$ defined in \eqref{plurihar}.  Let $\delta(a_\s,b_\s)^+ \subset U(\grp^+_\s)$ be the 
subspace identified with  $j(a_\s,b_\s)(\CP(a_\s)_\s^+ \otimes \CP(b_\s)_\s^+)$ by the isomorphism \eqref{pluridiff}.
The decomposition \eqref{decomp} is based on the fact that the composition
$$\delta(a_\s,b_\s)^+\otimes v(\chi_{\sigma}) \hookrightarrow \DD^{2}(\chi_{\sigma})
\overset{pr^{hol}}\to   \DD^{2}(\chi_{\sigma})^{hol;a_\s,b_\s}$$
is an isomorphism.     See the disscussion in Section  7.11 of \cite{harris86}.

This {\it does not say} that $\delta(\ub_\s,\uc_\s)\otimes v(\chi_{\sigma})$ lies in the highest weight
space of the holomorphic subspace of the direct factor $\DD(\kap_\s) \otimes \DD(\kap^{\flat}_\s\otimes \chi_{\s})$ corresponding
to the $a_\s + b_\s$-tuple $(\ub_\s,\uc_\s)$; but it does say that its projection on that highest weight space is non-trivial.  
This is equivalent to the first statement of the corollary.  The remaining statements are formal consequences of the decomposition \eqref{decomp}
and the fact that the decomposition is rational over an appropriate reflex field, cf. Lemma 7.3.2 of \cite{harris86}.
\end{proof}

\subsubsection{Differential operators on $C^{\infty}$-modular forms}\label{ci-do}

Let $\chi = ||\bullet||^m\chi_0$ be an algebraic Hecke character of $\CK$, as before.  We view $G_4$ as the  rational similitude group of
a maximally isotropic hermitian space $V_4$; this allows us to write $Sh(V_4)$ for the corresponding Shimura variety.
Let $\Lambda(\chi) = (\Lambda(\chi_{\sigma}),\sigma \in \Sigma)$ be the character of
$U_{\infty}$ whose restriction to $U_{\s}$ is $\Lambda(\chi_\s)$.  Let $L(\chi)$ be the $1$-dimensional space on which $U_\infty$ acts
by $\Lambda(\chi)$; it can be realized over a number field $E(\chi_{\infty})$ which depends only on $\chi_{\infty}$.
The dual of the highest $U_{\infty}$-type $\Lambda(\chi)$, restricted to the intersection of $U_{\infty}$
with $G_4(\R)$, defines an automorphic line bundle $\CL(\chi)$ on $Sh(V_4)$ with fiber at the fixed point $h$ of
$U_{\infty}$ isomorphic to $L(\chi)$.  If $\pi = \pi_{\infty}\otimes \pi_f$ is an automorphic representation
of $G_4$, with $\pi_{\infty}$ a $(Lie(G_4),U_{\infty})$ module isomorphic to $\DD^2(\chi) = \otimes_{\s \in \Sigma}(\DD^2(\chi_{\sigma}))$ and
$\pi_f$ an irreducible smooth representation of the finite adeles of $G_4$, then there is a canonical embedding
\begin{equation}\label{trivializechi}
\pi_f \isoarrow \pi_f\otimes H^0(\grP_h,U_{\infty};\DD^2(\chi)\otimes L(\chi)) \hookrightarrow H^0(Sh(V_4)^{tor},\CL(\chi)^{can}).
\end{equation}

Write $\Omega = \Omega_{Sh(V_4)}$ for the cotangent bundle.  For any integer $d \geq 0$, and for any ring $\CO$, let $\CP(n)^d(\CO)$
denote the $\CO$-module of $\CO$-valued polynomials of degree $d$ on the matrix space $M_n$, and let
$\CP(n)^{d,*}(\CO) = Hom_{\CO}(\CP(n)^d,\CO)$ denote the dual $\CO$-module.  There is a canonical action of $U_{\infty}$ on $\CP(n)^d$,
for every $d$, defined over the field of definition $E(h)$ of the standard CM point $h$ stabilized by $U_{\sigma}$, and even over its integer ring.  The Maass operator
of degree $d$, as defined in Section  7.9 of \cite{harris86} is a $C^{\infty}$-differential operator 
\begin{equation}\label{maass}  \delta^d_{\chi}:  \CL(\chi) \rightarrow \CL(\chi)\otimes Sym^d\Omega. \end{equation}
We can view the target of $\delta^d_{\chi}$ as the automorphic vector bundle attached to the representation $L(\chi)\otimes \CP(n)^{d,*}$ of $U_{\infty}$,
using the identification of section \ref{automorphy} as in \eqref{pluridiff}.  We use the same notation to denote the action on the space
$\CA(G_4)$ of (not necessarily cuspidal) $L(\chi)_h$-valued automorphic forms on $G_4$:
\begin{equation}\label{maass2} \delta^d_{\chi}:  \CA(G_4,L(\chi)_h) \rightarrow \CA(G_4,L(\chi)_h\otimes \CP(n)^{d,*})  \end{equation}
where the notation denotes automorphic forms with values in the indicated vector space.   For any polynomial $\phi \in \CP(n)^d \equiv Sym^d\grp_4^+$ we thus obtain
a differential operator
\begin{equation}\label{maass3}  \delta^d_{\chi}(\phi):  \CA(G_4,L(\chi)_h) \rightarrow \CA(G_4(\Q)\backslash G_4(\adeles),L(\chi)_h);  \delta^d_{\chi}(\phi)(f) = [\delta^d_{\chi}(f)\otimes \phi] \end{equation}
where the bracket denotes contraction $\CP(n)^{d,*}\otimes \CP(n)^{d} \rightarrow E(h)$.  

Finally, for each $\sigma$ define sequences $\ub_\s$ and $\uc_\s$ as in Section  \ref{holoprojection-sec}; let $\ub = (\ub_\s)$, $\uc = (\uc_\s)$.  Suppose
$\sum_\s [\sum_i \tb_{i,\s} + \sum_j \tc_{j,\s}] = d$.  Then we define $p(\ub,\uc) = \prod_{\s} p(\ub_\s,\uc_\s)$ where the factors are as in \eqref{plurihar}, and let
\begin{equation}\label{maass4}
\delta^d_{\chi}(\ub,\uc) = \delta^d_{\chi}(p(\ub,\uc)): H^0(Sh(V_4)^{tor},\CL(\chi)^{can}) \rightarrow \CA(G_,\CL(\chi)_h).
\end{equation}
Under the isomorphisms \eqref{trivializechi}, $\delta^d_{\chi}(\ub,\uc)$ is identified with the
operator on the left hand side deduced from multiplying by the element $p(\ub,\uc)$, viewed as an element of $Sym^d\grp_4^+$, which maps
$H^0(\grP_h,U_{\infty};\DD^2(\chi)\otimes L(\chi)_h) = \otimes_{\s}\C v_{\chi_\s}\otimes L(\chi)_h$ to 
$p(\ub,\uc)\otimes \otimes_{\s}\C v_{\chi_\s} \otimes L(\chi)_h \in \DD^2(\chi)\otimes L(\chi)_h$.

The holomorphic differential operators of Corollary \ref{pluridecomp} define operators on automorphic forms, as follows.  Let
$S_{\kap,V}^{\infty}(K_1,\C)$ denote the space of $C^{\infty}$ modular forms of type $\kap$ on $Sh(V_1)$, of level $K_1$, and define 
$S^{\infty}_{\kap^{\flat},-V}(K_2,\C)$ analogously.   The following Proposition restates Proposition 7.11.11 of \cite{harris86}:

\begin{prop}\label{holodiffops}  Let $(\kap,\chi)$ be critical as in Corollary \ref{pluridecomp}.  Fix a level subgroup $K_4 \subset G_4(\A_f)$ and a subgroup
$K_1 \times K_2 \subset G_3(\A_f)\cap K_4$.  There are differential operators 
$$D(\kap,\chi):  H^0(_{K_4}Sh(V_4)^{tor},\CL(\chi)^{can}) \rar S_{\kap,V}^{\infty}(K_1,\C)\otimes S^{\infty}_{\kap^{\flat},-V}(K_2,\C)\otimes \chi\circ \det;$$
$$D^{hol}(\kap,\chi):  H^0(_{K_4}Sh(V_4)^{tor},\CL(\chi)^{can}) \rar S_{\kap,V}(K_1,\C)\otimes S_{\kap^{\flat},-V}(K_2,\C)\otimes \chi\circ \det$$
which give the operators $\delta^d_{\chi}(\ub,\uc)$ and $pr^{hol}\circ\delta^d_{\chi}(\ub,\uc)$ upon pullback to functions on $G_4(\ad)$ and
restriction to $G_3(\ad)$. 
\end{prop}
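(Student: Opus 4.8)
The plan is to unwind both operators as maps of automorphic forms on $G_4(\ad)$ via the trivialization \eqref{trivializechi}, restrict along $G_3 \hookrightarrow G_4$, and then read off the conclusion from the representation-theoretic identities of Corollary \ref{pluridecomp}. First I would pull back a section $F \in H^0({}_{K_4}Sh(V_4)^{tor},\CL(\chi)^{can})$ to an automorphic form $\tilde F \colon G_4(\ad) \to L(\chi)$, right $K_4$-invariant and transforming under $U_\infty$ by $\Lambda(\chi)$, so that for each constituent $\tilde F$ lies in a copy of $\pi_f \otimes H^0(\grP_h,U_\infty;\DD^2(\chi)\otimes L(\chi))$. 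Applying the Maass operator $\delta^d_\chi(\ub,\uc)$ of \eqref{maass4} yields a $C^\infty$-form valued in $L(\chi)\otimes\CP(n)^{d,*}$; by the description following \eqref{maass4}, under the trivialization this is precisely the map deduced from multiplication by $p(\ub,\uc) = \prod_\s p(\ub_\s,\uc_\s)$, viewed via \eqref{pluridiff} as an element of $Sym^d\grp_4^+$, acting on the generator $\otimes_\s v(\chi_\s)$ of the minimal $U_\infty$-type of $\DD^2(\chi)$.

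Next I would restrict to $G_3(\ad)$ and use that $(\kap,\chi)$ is critical. By the branching law \eqref{decomp}, $\DD(\kap_\s)\otimes\DD(\kap^{\flat}_\s\otimes\chi_\s)$ occurs with multiplicity one in the restriction of $\DD^2(\chi_\s)$ to $U_{3,\s}$, and Corollary \ref{pluridecomp} gives $pr^{hol}_{\kap,\s}(\delta(\ub_\s,\uc_\s)\cdot v(\chi_\s)) = P_{\kap_\s,\chi,\s}\, v_{\kap_\s}\otimes v_{\kap^{\flat}_\s\otimes\chi_\s}$ with $P_{\kap_\s,\chi,\s}\in E(\tau_\s,\chi_\s)^\times$, together with the triangular expansion $D(\kap,\chi) = \sum_{\kap^{\dag}\leq\kap}\delta(\kap,\kap^{\dag})\circ D^{hol}(\kap^{\dag},\chi)$. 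The minimal $U_{3,\s}$-type of $\DD(\kap_\s)\otimes\DD(\kap^{\flat}_\s\otimes\chi_\s)$ is the space of holomorphic vectors, so under the dictionary of section \ref{classicalmod} (with holomorphy encoded by $\grp^-_\s * f = 0$) an automorphic form on $G_1\times G_2$ whose archimedean component lies in that minimal type is exactly a tensor of holomorphic modular forms of weights $\kap$ and $\kap^{\flat}$, the second twisted by $\chi\circ\det$. Hence $D^{hol}(\kap,\chi) := pr^{hol}\circ\delta^d_\chi(\ub,\uc)|_{G_3(\ad)}$ lands in $S_{\kap,V}(K_1,\C)\otimes S_{\kap^{\flat},-V}(K_2,\C)\otimes\chi\circ\det$, while $D(\kap,\chi) := \delta^d_\chi(\ub,\uc)|_{G_3(\ad)}$ lands in the $C^\infty$ spaces $S^{\infty}_{\kap,V}(K_1,\C)\otimes S^{\infty}_{\kap^{\flat},-V}(K_2,\C)\otimes\chi\circ\det$; invariance of $F$ under $K_4 \supset K_1\times K_2$ fixes the level. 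Rationality of the $\delta(\kap,\kap^{\dag})$ and of the scalars $P_{\kap_\s,\chi,\s}$ (Corollary \ref{pluridecomp}), the choice of $v_{\kap_\s}\otimes v_{\kap^{\flat}_\s\otimes\chi_\s}$ rational over $E(\tau_\s,\chi_\s)$, and the fact that $h$ is standard (so $\grp^{\pm}_{4,\s}$, $J_{\chi_\s}$, and $\CP(n)_\s$ all have $\CO_\s$-models) together show that the operators are defined over the relevant number fields.

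The only point requiring genuine work beyond bookkeeping is the claim used in the first step: that the geometric Maass operator $\delta^d_\chi(\ub,\uc)$ corresponds under the canonical trivialization \eqref{trivializechi} to multiplication by $p(\ub,\uc)\in Sym^d\grp_4^+$ on the highest $U_\infty$-type vector $v(\chi_\s)$. This is where the unbounded realization of $X_4$ as a tube domain in $M_n(\C)$, the explicit canonical automorphy factor $J_{\chi_\s}$ of section \ref{automorphy}, and the compatibility of the two identifications of $\grp^+_{4,\s}$ with $M_n(\C)$ (one via \eqref{pluridiff}, one via the tube domain coordinates) all enter; this computation is carried out in sections 7.9--7.11 of \cite{harris86}, and the present Proposition is precisely what one obtains by combining it with Corollary \ref{pluridecomp} and the decomposition \eqref{decomp}.
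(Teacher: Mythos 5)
Your reconstruction is correct and matches the paper's approach: the paper does not give an argument but simply states that the Proposition restates Proposition 7.11.11 of \cite{harris86}, and the steps you outline (trivialization via \eqref{trivializechi}, identification of the Maass operator with multiplication by $p(\ub,\uc) \in Sym^d\grp_4^+$ on the highest $U_\infty$-type vector, restriction to $G_3$, and applying the decomposition \eqref{decomp} together with Corollary \ref{pluridecomp}) are exactly the content of sections 7.9--7.11 of that reference. You have also correctly flagged the one genuinely nontrivial point—the compatibility between the geometric Maass operator and the $U(\grp^+_4)$-action under the tube-domain identification—as the place where \cite{harris86} does the real work.
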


\subsubsection{The Hodge polygon}\label{hodgepolygon}
If $\pi$ is a cuspidal automorphic representation
of $GU(V)$ whose component at $\sigma$ is an anti-holomorphic discrete series representation of the form $\Dc(\tau_{\sigma})$,
then its base change $\Pi$ to an automorphic representation of $\GL(n)_{\K}$ (ignoring the split center) is cuspidal, cohomological,
and satisfies $\Pi^{\vee} \isoarrow \Pi^c$, and therefore the associated $\ell$-adic Galois representations have associated motives
(in most cases), realized in the cohomology of Shimura varieties attached to unitary groups, with specified Hodge structures.
In what follows, we fix $\sigma$ and attach a Hodge structure to the anti-holomorphic representation $\Dc(\tau)$, according to the
rule used to assign a motive to $\Pi$.    The Hodge structure is pure of weight $n-1$ and has the following Hodge types, each with
multiplicity one:
\begin{equation}\label{Hodge}
\begin{split}
(\kap_1 - b + n-1,b - \kap_1), \dots, (\kap_a, n - 1 - \kap_a), (n-1 -a - \lap_b, \lap_b + a), \dots, (-\lap_1,\lap_1 + n  - 1), \\
(\lap_1 + n  - 1, - \lap_1), \dots, (\lap_b + a, n-1 -a - \lap_b),  (n - 1 - \kap_a, \kap_a), \dots,  (b - \kap_1,\kap_1 - b + n-1).
\end{split}
\end{equation}
Label the pairs in \eqref{Hodge} $(p_i,q_i)$, $i = 1, \dots, 2n$, in order of appearance; thus $(p_i,q_i)$ is in the top row if and only if $i \leq n$.
\begin{hyp}[Critical interval hypothesis] \label{critint}  We assume that the weights $(\kap,\lap)$ are adapted to the signature $(a,b)$
in the sense that, for every pair $(p_i,q_i)$ in the collection \eqref{Hodge}, $p_i \neq q_i$ and $p_i > q_i$ if and only if $i \leq n$.
\end{hyp}

One checks that Hypothesis \ref{critint} holds if and only if $2\kap_a > n-1$ and $-2\lap_1 >  n-1$.   We define the {\it Hodge polygon} 
$Hodge(\kap,\lap) = Hodge(\Dc(\tau))$, to be the polygon in the right half-plane connecting the vertices $(i,p_i)$ with 
$(p_i,q_i)$ the pairs in \eqref{Hodge}.

\subsubsection{Specific anti-holomorphic vectors}\label{aholovectors}  When $\tau_{\sigma}$ is strongly positive with highest weight
$\kap = \kappa_{\sigma}$, we write $\DD(\kap) = \DD(\tau_{\sigma})$, $\DD_c(\kap) = \DD_c(\tau_{\sigma})$ when it's
clear that $\kap$ is a weight and $\tau_{\sigma}$ is an irreducible representation.   Let $\pi$ be a cuspidal automorphic representation
of $G$ with $\pi_{\sigma} = \DD_c(\kap)$ as above.  In the computation of the zeta integral, we use a factorizable automorphic form $\varphi = \otimes_v \varphi_v \in \pi$, with
$\varphi_v$ a vector in the minimal $U_{\sigma}$-type $1\otimes W_{\tau_{\sigma}}^{\vee}$ of $\DD_c(\kap)$.  In practice, we choose $\varphi_v$ to be either the highest weight vector $\varphi_{\kappa,+}$ or the lowest weight vector 
$\varphi_{\kappa,-}$ in $1\otimes W_{\tau_{\sigma}}^{\vee}$.  If $w_0$ is the longest element of the Weyl group of  $T_{\s}$ relative to $R^+_{\s}$,
then $\varphi_{\kappa,+}$ (resp. $\varphi_{\kappa,-}$) is an eigenvector for $T_{\s}$ of weight $-w_0(\kappa)$ 
(resp. of weight $-\kappa$).

\subsection{Local zeta integrals at archimedean places}\label{archimedeanchoices}

\subsubsection{Choices of local data}  This material has been covered at length
in \cite{harriscrelle} and \cite{Harris-rationality}, so we can afford to be brief.   Notation for induced representations is
as in Section  \ref{induced} above.    The notation for holomorphic representations is as in Section  \ref{automorphy}.
An easy computation, similar to that in \cite{harriscrelle}, yields
\begin{lem}  As subspaces of $C^{\infty}(G_{\s})$,
$\iota(m,\chi_{\s})(\DD^{2}(m,\chi_{\sigma})) \subset I_{\sigma}(m - \frac{n}{2},\chi)$.
\end{lem}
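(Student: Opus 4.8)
The plan is to compare the two $(U(\grg_\s),U_\s)$-submodules of $C^\infty(G_\s)$ by matching their highest $U_\s$-type generators. On the left, $\iota(m,\chi_\s)(\DD^2(m,\chi_{0,\s}))$ is, by Lemma \ref{d2ms} and the definition in \eqref{archautomorphy}, the cyclic $U(\grg_\s)$-module generated by the function
$$J_{\chi_\s}(g) = j(g)^{-m+a(\chi_\s)}\cdot j'(g)^{-m+b(\chi_\s)}\nu(g)^{n(m+a(\chi_\s)+b(\chi_\s))}.$$
On the right, $I_\s(m-\tfrac{n}{2},\chi)=\Ind_{P(\K^+_\s)}^{G(\K^+_\s)}(\chi(\det\circ\Delta(\cdot))\,\delta_P^{-(m-n/2)/n}\,|\nu(\cdot)|^{-(m-n/2)n/2})$, and since the induction is smooth and right-translation stable, it too is $U(\grg_\s)$-stable. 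So it suffices to check that $J_{\chi_\s}$, as a function on $G_\s$, lies in $I_\s(m-\tfrac{n}{2},\chi)$, i.e. transforms on the left under $P(\K^+_\s)$ by exactly the character defining that induced representation; the containment of the generated submodule then follows automatically.

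The key computational step is therefore a cocycle identity for the automorphy factors $j$ and $j'$ evaluated at the base point $\gimel_\s$. First I would record how $J(g,z)$ and $J'(g,z)$ transform under left multiplication by $p\in P(\K^+_\s)$: writing $p$ in the block form $\begin{pmatrix}\lambda A^* & *\\ 0 & A\end{pmatrix}$ coming from the identification $M\isoarrow\GL_\K(V)\times\G_m$ of Section \ref{SiegelParabolic-section} (here $A=\Delta(p)$ and $\lambda=\nu(p)$), one gets $J(pg,z) = A\cdot J(g,z)$ and correspondingly $J'(pg,z) = \overline{A}\cdot J'(g,z)$ up to the appropriate $\nu$-twist, because $\gimel_\s$ is the fixed point of $U_\s\subset G_\s$ and $P$ stabilizes $V^d$. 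Taking determinants gives $j(pg) = \det(A)\,j(g)$ and $j'(pg) = \overline{\det(A)}\,j'(g) = \det(A)^c\,j'(g)$, while $\nu(pg)=\nu(p)\nu(g)=\lambda\nu(g)$. Substituting into the formula for $J_{\chi_\s}$ and collecting the powers of $\det A$, $\det A^c$ (equivalently $\det(A^*)$) and $\lambda$, one reads off that the left $P$-transformation multiplies $J_{\chi_\s}$ by a product of $\det A^{-m+a(\chi_\s)}$, $(\det A^c)^{-m+b(\chi_\s)}$ and a power of $\lambda$; this must be matched, after reintroducing the normalization $\delta_P^{-(m-n/2)/n} = |\det A|^{-(m-n/2)}$ and $|\nu|^{-(m-n/2)n/2}$, with $\chi(\det A)$ using $\chi_\s(z)=z^{m-a(\chi_\s)}\bar z^{m-b(\chi_\s)}$ ... wait, with the archimedean character as specified in Section \ref{automorphy}, namely $\chi_{0,\s}(z)=z^{-a(\chi_\s)}\bar z^{-b(\chi_\s)}$ and $\chi=\|\bullet\|^m\chi_0$. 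I would carry out this bookkeeping carefully; it is exactly the same computation as in \cite{harriscrelle}, which is why the lemma is asserted to follow from ``an easy computation.''

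The main obstacle — really the only place one can go wrong — is the normalization bookkeeping: keeping straight (i) the shift by $n/2$ between the induction parameter $s$ and the weight $m$ (the lemma asserts $s = m-\tfrac n2$), (ii) the fact that the paper uses unitarily normalized induction whereas the Maass/automorphy-factor conventions of \cite{harris86, harriscrelle} may not, and (iii) the precise relation $\delta_P(\cdot)=|\det\circ\Delta(\cdot)|^n$ from Section \ref{SiegelParabolic-section} together with the extra $|\nu(\cdot)|^{-sn/2}$ twist built into $I(\chi,s)$ in Section \ref{induced}. Once the transformation law of $J_{\chi_\s}$ under $P(\K^+_\s)$ is pinned down and seen to agree on the nose with the inducing character of $I_\s(m-\tfrac n2,\chi)$, the containment $\iota(m,\chi_\s)(\DD^2(m,\chi_{0,\s}))\subset I_\s(m-\tfrac n2,\chi)$ is immediate from $U(\grg_\s)$-stability of the target, since $\DD^2(m,\chi_{0,\s})$ is generated over $U(\grg_\s)$ by $v(\chi_\s)$ and $\iota$ is a map of $(U(\grg_\s),U_\s)$-modules. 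I would also remark, as a sanity check, that this is consistent with the decomposition \eqref{decomp}: each holomorphic discrete series summand $\DD(\kap_\s)\otimes\DD(\kap^\flat_\s\otimes\chi_\s)$ with $\kap_\s\in C_3(\chi_\s)$ does embed into the corresponding degenerate principal series $I_\s(m-\tfrac n2,\chi)$ for $m$ in the stated half-plane, which is precisely the range where $\DD^2(m,\chi_{0,\s})$ is irreducible.
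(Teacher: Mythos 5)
Your proposal is correct and matches the paper's (implicit) argument: the paper defers to an ``easy computation, similar to that in \cite{harriscrelle},'' and that computation is exactly what you describe — check the left $P(\K^+_\s)$-transformation law of the cyclic generator $J_{\chi_\s}$ against the inducing character of $I_\s(m-\tfrac n2,\chi)$ and then invoke $U(\grg_\s)$-stability of the induced representation together with Lemma~\ref{d2ms}.
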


\begin{rmk}
Note that we have omitted similitude factors here.  Strictly speaking, these should be included; but they do not change the theory in any significant way.
\end{rmk}

\subsubsection{Non-vanishing of $I_{\infty}$}  Let $\sigma$ be an archimedean place, $f_\s = f_{\sigma}(\chi_{\sigma},c) \in I(\chi_{u,\sigma},m)$ the local section at $\sigma$.
We assume $f_\s$ is of the form
\begin{equation}\label{ddd} f_{\sigma}(\chi_{\sigma},c,g) = B(\chi_\s,\kap_\s) D(\kap_\s,m,\chi_{u,\s}) J_{m,\chi_{u,\sigma}}(g), g \in G_{4,\sigma} \end{equation}
where $J_{m,\chi_{u,\sigma}} \in 
C^{\infty}(G_4)$ is the canonical automorphy factor introduced in Section  \ref{automorphy} and $B(\chi_\s,\kap_\s)$
is a non-zero algebraic scalar. 
 Let $\varphi_\s\otimes \varphi^{\flat}_\s$ be an anti-holomorphic vector in the highest weight subspace
of the minimal $K_{\sigma}$-type of $\pi_\s \otimes \pi^{\flat}_\s$.
\begin{prop}\label{archnonvan}  The local factor $I_\s(\varphi_\s,\varphi^{\flat}_\s,f_\s,m)$ is not equal to $0$.
\end{prop}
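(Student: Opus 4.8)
The plan is to reduce the non-vanishing of $I_\s(\varphi_\s,\varphi^{\flat}_\s,f_\s,m)$ to a statement about the pairing of holomorphic vectors, exploiting the explicit form \eqref{ddd} of the archimedean section together with the decomposition \eqref{decomp} of $\DD^{2}(m,\chi_{0,\sigma})$ under $U_{3,\sigma}$. First I would unwind the definition of the local zeta integral at $\sigma$: by \eqref{normalizedzeta}, $I_\s(\varphi_\s,\varphi^{\flat}_\s,f_\s,m)$ is (up to the nonzero normalizing factor $\langle \varphi_\s,\varphi^{\flat}_\s\rangle_{\pi_\s}$) the integral over $U_{1,\s}$ of $f_{\s}(u,1)\langle \pi_\s(u)\varphi_\s,\varphi^{\flat}_\s\rangle_{\pi_\s}$, and this is nothing other than the value of the $G_{3,\s}$-invariant pairing between $\pi_\s\boxtimes\pi^{\flat}_\s\otimes\chi_\s$ and the restriction to $G_{3,\s}$ of the section $f_\s \in I_\s(m-\tfrac n2,\chi)$. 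By the Lemma preceding Proposition~\ref{archnonvan}, the image $\iota(m,\chi_\s)(\DD^{2}(m,\chi_{\sigma}))$ sits inside $I_\s(m-\tfrac n2,\chi)$, so $f_\s$, being $B(\chi_\s,\kap_\s)D(\kap_\s,m,\chi_{u,\s})J_{m,\chi_{u,\sigma}}$, lies in that image and corresponds to the vector $B(\chi_\s,\kap_\s)\cdot D(\kap_\s,m,\chi_{u,\s})\cdot v(\chi_\s) \in \DD^{2}(m,\chi_{\sigma})$.

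Next I would invoke Corollary~\ref{pluridecomp}: $D(\kap_\s,m,\chi_{u,\s}) = \delta(\ub_\s,\uc_\s)$, and the key assertion there is that $pr^{hol}_{\kap,\s}(\delta(\ub_\s,\uc_\s)\cdot v(\chi_\s)) = P_{\kap_\s,\chi,\s}\cdot v_{\kap_\s}\otimes v_{\kap^{\flat}_\s\otimes\chi_\s}$ with $P_{\kap_\s,\chi,\s}\neq 0$. In particular the component of $f_\s$ in the direct summand $\DD(\kap_\s)\otimes\DD(\kap^{\flat}_\s\otimes\chi_\s)$ of \eqref{decomp} is nonzero and, in fact, its projection onto the one-dimensional highest $K_3$-type subspace is the nonzero multiple $B(\chi_\s,\kap_\s)P_{\kap_\s,\chi,\s}$ of $v_{\kap_\s}\otimes v_{\kap^{\flat}_\s\otimes\chi_\s}$. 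Since $\pi_\s\otimes\pi^{\flat}_\s$ is (the Harish-Chandra module of) the discrete series $\DD_c(\kap_\s)\otimes\DD_c(\kap^{\flat}_\s)$, which is the contragredient of $\DD(\kap_\s)\otimes\DD(\kap^{\flat}_\s\otimes\chi_\s)$ up to the $\chi_\s$-twist, and $\varphi_\s\otimes\varphi^{\flat}_\s$ is chosen to be the highest-weight anti-holomorphic vector, the invariant pairing between $\pi_\s\otimes\pi^{\flat}_\s$ and $\DD^{2}(m,\chi_{\sigma})$ — which by Frobenius reciprocity / the decomposition \eqref{decomp} is supported on exactly the summand $\DD(\kap_\s)\otimes\DD(\kap^{\flat}_\s\otimes\chi_\s)$ — pairs $\varphi_\s\otimes\varphi^{\flat}_\s$ nontrivially against the highest $K_3$-type vector. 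The remaining components of $f_\s$ (the $\kap^{\dag} < \kap$ terms of Corollary~\ref{pluridecomp}) either lie in other summands of \eqref{decomp}, against which $\varphi_\s\otimes\varphi^{\flat}_\s$ pairs to zero by $K_3$-type considerations, or contribute terms that are orthogonal to the minimal $K_\s$-type; thus no cancellation occurs and the integral is a nonzero scalar.

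The main obstacle — and the step I would spend the most care on — is making rigorous the passage from "the archimedean zeta integral is a $G_{3,\s}$-invariant functional on $\pi_\s\otimes\pi^{\flat}_\s\otimes I_\s(m-\tfrac n2,\chi)$" to "its value on the specified highest-weight anti-holomorphic vectors is the nonzero constant $B(\chi_\s,\kap_\s)P_{\kap_\s,\chi,\s}$ times a normalization." This requires: (i) identifying the image of the restriction map $I_\s(m-\tfrac n2,\chi)|_{G_{3,\s}}$ with $\DD^{2}(m,\chi_\s)$ as a $(U(\grg_{3,\s}),U_{3,\s})$-module (this is the content of \cite{harriscrelle}, \cite{Harris-rationality}, and section 7.11 of \cite{harris86}, which I would cite rather than reprove); (ii) using the uniqueness (up to scalar) of the $G_{3,\s}$-invariant pairing on each discrete-series summand, which follows from Schur's lemma applied to the discrete series $\DD_c(\kap_\s)\otimes\DD_c(\kap^{\flat}_\s)$ together with the fact that the minimal-$K$-type pairing between a discrete series and its dual is nondegenerate; and (iii) checking convergence of the archimedean integral at the chosen value $m$, which is guaranteed because we restrict $m$ to the closed right half-plane bounded by the center of symmetry of the functional equation, as in \cite{Harris-rationality}, where $E_f(s,g)$ and hence the zeta integral are holomorphic. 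Once (i)--(iii) are in place, the computation of the pairing reduces to pairing a highest-weight holomorphic vector against a highest-weight anti-holomorphic vector inside a single discrete series, which is manifestly nonzero, completing the argument.
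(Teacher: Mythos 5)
Your proposal correctly identifies the structure of the argument: the pairing with the anti-holomorphic vector $\varphi_\s\otimes\varphi^{\flat}_\s$ factors through the holomorphic projection, and Corollary \ref{pluridecomp} provides the nonzero scalar $P_{\kap_\s,\chi,\s}$ relating $D(\kap_\s,\chi_\s)$ to $D^{hol}(\kap_\s,\chi_\s)$. This is precisely the reduction the paper makes. However, there is a genuine gap in your treatment of the ``base case.''

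You assert that once the zeta integral is identified as a $G_{3,\s}$-invariant functional on the discrete-series summand $\DD(\kap_\s)\otimes\DD(\kap^{\flat}_\s\otimes\chi_\s)$, its value on the highest-weight holomorphic vector paired with the anti-holomorphic vector is ``manifestly nonzero'' by Schur's lemma together with nondegeneracy of the minimal-$K$-type pairing. This conflates two different pairings. Schur's lemma gives you that the space of invariant pairings on the discrete series is one-dimensional, so the zeta-integral functional equals a scalar $c$ times the canonical pairing; and the canonical pairing of the holomorphic and anti-holomorphic extreme $K$-type vectors is indeed nonzero. But nothing in that reasoning shows $c \neq 0$: the zeta-integral functional could in principle vanish identically on this summand, and Schur's lemma is fully consistent with $c = 0$. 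Establishing $c \neq 0$ requires an actual computation (or an appeal to one). The paper does exactly this: it cites Remark (4.4)(iv) of \cite{Harris-rationality} for the non-vanishing of $I_\s$ when the differential operator $D(\kap_\s,\chi_\s)$ is replaced by $D^{hol}(\kap_\s,\chi_\s)$, and only then applies the factorization through the holomorphic projection plus Corollary \ref{pluridecomp}. To repair your argument, you need to replace ``manifestly nonzero'' with a reference to the explicit archimedean computations of Garrett and Shimura as organized in \cite{Harris-rationality}, which is the content your proof is implicitly relying on but does not supply.
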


\begin{proof}  If $D(\kap_\s,\chi_\s)$ is replaced by $D^{hol}(\kap_\s,\chi_\s)$ in \eqref{ddd}, this follows from Remark (4.4)(iv) of \cite{Harris-rationality}.  Since 
$\varphi_\s\otimes \varphi^{\flat}_\s$  is an anti-holomorphic vector, the pairing of (the Eisenstein section)
$D(\kap_\s,\chi_\s)J_{m,\chi_\s,\s}$ with (the highest weight vector) $\varphi_\s\otimes \varphi^{\flat}_\s$ factors through the projection of $D(\kap_\s,\chi_\s)J_{m,\chi_\s,\s}$ onto
$D^{hol}(\kap_\s,\chi_\s)J_{m,\chi_\s,\s}$.  The Proposition is thus a consequence of Corollary \ref{pluridecomp}.
\end{proof}

When the extreme $K$-type $\tau_\sigma= \tau_{a_\sigma, \sigma}\otimes\tau_{b_\sigma, \sigma}$ in $\pi_\sigma$ is one-dimensional, the archimedean zeta integrals have been computed in \cite{sh, shar}.  Garrett has shown in \cite{ga06} that the archimedean zeta integrals are algebraic up to a predictable power of the transcendental number $\pi$, which can be normalized away.  The zeta integrals at $\sigma$ depend only upon the local data at $\sigma$.  When at least one of the two factors ($\tau_{a_\sigma, \sigma}$, $\tau_{b_\sigma, \sigma}$) of the extreme $K$-type is one-dimensional, the archimedean zeta integrals are given precisely on \cite[p. 12]{ga06}; and furthermore, Garrett showed in \cite{ga06} that when both factors are scalars, the archimedean zeta integrals are non-zero algebraic numbers.  
They  have not been computed in the more general case (i.e. when neither $\tau_{a_\sigma, \sigma}$ nor $\tau_{b_\sigma, \sigma}$ is one-dimensional).  However, 
the  analogous computation for the doubling method for symplectic groups has been carried out in complete generality by Zheng Liu in \cite{liu19}; the result matches the factor predicted by Coates and Perrin-Riou in \cite{CPR}.  One of us (E.E.) plans with Liu to adapt her method to the current situation.  

In the meantime, we will be satisfied with the following result, due to Garrett \cite{ga06}.
\begin{prop} \label{archfactors} Let 
$I_\s(\chi_\s,\kap_\s)$ be the local zeta integral
$$I_\s(\chi_\s,\kap_\s) =   I_\s(\varphi_\s,\varphi^{\flat}_\s,f_\s,m),$$
where $\varphi_\s = \varphi_{\kap_\s,-}$, $\varphi^{\flat}_\s = \varphi_{\kap^{\flat}_\s,-}$
and $f_\s$ is given by \eqref{ddd}.
Then $I_\s(\chi_\s,\kap_\s)$ is a non-zero algebraic number.
\end{prop}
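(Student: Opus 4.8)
The plan is to reduce the statement to the non-vanishing already proved in Proposition \ref{archnonvan} together with the rationality results available for the \emph{holomorphically projected} Eisenstein section, and then to quote Garrett's archimedean computation. First I would observe that, since $\varphi_\s = \varphi_{\kap_\s,-}$ and $\varphi^{\flat}_\s = \varphi_{\kap^{\flat}_\s,-}$ are lowest-weight vectors in the minimal $U_{3,\s}$-type $1\otimes W^{\vee}_{\tau_\s}$ of $\pi_\s\otimes\pi^{\flat}_\s = \Dc(\kap_\s)\otimes\Dc(\kap^{\flat}_\s\otimes\chi_\s)$, the pairing of the restriction to $G_{3,\s}$ of the section $f_\s$ of \eqref{ddd} against $\varphi_\s\otimes\varphi^{\flat}_\s$ factors through the holomorphic projection $pr^{hol}_{\kap,\s}$, exactly as in the proof of Proposition \ref{archnonvan}. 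Invoking Corollary \ref{pluridecomp}, write $D(\kap_\s,\chi_\s) = \delta(\kap_\s,\kap_\s)\, D^{hol}(\kap_\s,\chi_\s) + \sum_{\kap^{\dag}_\s < \kap_\s}\delta(\kap_\s,\kap^{\dag}_\s)\circ D^{hol}(\kap^{\dag}_\s,\chi_\s)$; under the decomposition \eqref{decomp} each term with $\kap^{\dag}_\s < \kap_\s$ takes values in a direct summand $\DD(\kap^{\dag}_\s)\otimes\DD(\kap^{\dag,\flat}_\s\otimes\chi_\s)$ distinct from the one containing the vector of weight $-\kap_\s$, hence pairs trivially with it. Only the $\kap^{\dag}_\s = \kap_\s$ term survives, with coefficient $\delta(\kap_\s,\kap_\s) = P_{\kap_\s,\chi,\s}\in E(\tau_\s,\chi_\s)^{\times}$, a non-zero algebraic number. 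Thus $I_\s(\chi_\s,\kap_\s)$ differs by a non-zero algebraic factor from the integral $I^{hol}_\s(\chi_\s,\kap_\s)$ formed with the section $B(\chi_\s,\kap_\s)\, D^{hol}(\kap_\s,\chi_\s)\, J_{m,\chi_{u,\s}}$.

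Next I would treat $I^{hol}_\s(\chi_\s,\kap_\s)$. Since $D^{hol}(\kap_\s,\chi_\s)J_{m,\chi_{u,\s}}$ generates the holomorphic discrete series $\DD(\kap_\s)\otimes\DD(\kap^{\flat}_\s\otimes\chi_\s)$ inside $I_\s(m-\tfrac{n}{2},\chi)$, and $\varphi_\s\otimes\varphi^{\flat}_\s$ is its rational lowest-weight vector, this is precisely the archimedean zeta integral whose rationality is the subject of \cite{Harris-rationality} and \cite{ga06}: all of its inputs --- the module $\DD^{2}(\chi_\s)$ with its decomposition \eqref{decomp}, the generator $v(\chi_\s)$, the operator $D^{hol}(\kap_\s,\chi_\s)$, and the minimal-$U_{3,\s}$-type vectors $v_{\kap_\s}\otimes v_{\kap^{\flat}_\s\otimes\chi_\s}$ --- are defined over the number field $E(\tau_\s,\chi_\s)$, and the doubling pairing respects this rational structure up to the normalizing constant $B(\chi_\s,\kap_\s)$, which is chosen exactly to remove the residual power of $\pi$ (cf.\ Remark (4.4)(iv) of \cite{Harris-rationality}). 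When at least one of $\tau_{a_\s,\s}$, $\tau_{b_\s,\s}$ is one-dimensional one may instead quote Garrett's explicit evaluation on \cite[p.~12]{ga06}, and when both are scalars the formula of \cite{sh, shar, ga06} gives the value outright; in every case $I^{hol}_\s(\chi_\s,\kap_\s)$ is algebraic. Combined with the first paragraph this shows $I_\s(\chi_\s,\kap_\s)$ is algebraic, and its non-vanishing is Proposition \ref{archnonvan}.

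The main obstacle is the case where neither $\tau_{a_\s,\s}$ nor $\tau_{b_\s,\s}$ is one-dimensional: no closed formula for the archimedean integral is available there, so algebraicity cannot be read off a computation and must be deduced from the Galois-descent (rationality) argument of \cite{Harris-rationality}, the delicate point being to confirm that $B(\chi_\s,\kap_\s)$ in \eqref{ddd} strips off exactly the transcendental factor and nothing more. A secondary point requiring care is to make the orthogonality used in the first step precise --- one must verify that the $G_{3,\s}$-pairing implicit in the zeta integral respects the direct sum \eqref{decomp}, so that against an anti-holomorphic vector of weight $-\kap_\s$ only the $\DD(\kap_\s)\otimes\DD(\kap^{\flat}_\s\otimes\chi_\s)$-component of the restricted Eisenstein section contributes; this is exactly where the choice of the lowest-weight vectors $\varphi_{\kap_\s,-}$, rather than arbitrary minimal-$K$-type vectors, is used.
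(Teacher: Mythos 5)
The paper gives no proof of Proposition \ref{archfactors}: it simply attributes the result to Garrett \cite{ga06}, and the remark that follows explains the logic (Garrett computes the integral up to a power of $\pi$ and an element of a CM field, and the normalizing factor $B(\chi_\s,\kap_\s)$ absorbs the transcendental part). Your proposal reconstructs this logic in a manner consistent with the paper's intent. Your first paragraph --- the reduction of $I_\s$ to $P_{\kap_\s,\chi,\s}\cdot I^{hol}_\s$ via the orthogonality of summands in the decomposition \eqref{decomp} and the leading-term identity from Corollary \ref{pluridecomp} --- is precisely the argument the paper gives in the proof of Proposition \ref{archnonvan}, so that part is sound. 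The justification you supply for the vanishing of the lower-order terms (each $\delta(\kap_\s,\kap^{\dag}_\s)\circ D^{hol}(\kap^{\dag}_\s,\chi_\s)v(\chi_\s)$ lies in a $U(\grg_\s)$-stable summand $\DD(\kap^{\dag}_\s)\otimes\DD(\kap^{\dag,\flat}_\s\otimes\chi_\s)\neq\DD(\kap_\s)\otimes\DD(\kap^{\flat}_\s\otimes\chi_\s)$) is correct and is a useful amplification of the one-line remark in the paper's proof of \ref{archnonvan}.

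Your second paragraph is where the real content of the proposition lives, and there your argument is, as you yourself flag, a pointer to \cite{ga06} and \cite{Harris-rationality} rather than a self-contained deduction. The phrase ``the doubling pairing respects this rational structure up to the normalizing constant $B(\chi_\s,\kap_\s)$'' states the conclusion rather than proving it: the zeta integral is an integral over the non-compact group $U_{1,\s}$, and rationality of its value is not a formal consequence of the inputs being defined over $E(\tau_\s,\chi_\s)$ --- it is precisely what Garrett proves. Since the paper itself treats this step as an external citation, your proposal matches the paper's level of rigor here, and the one genuine gap you identify (whether $B(\chi_\s,\kap_\s)$ strips off exactly the transcendental factor in the higher-rank case) is the same one the paper's remark concedes is ``more precise information'' it does not need.
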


\begin{rmk}  When $\kap_\s$ is a scalar representation, Shimura obtains an explicit formula for
the local zeta integral.  In general, as explained at the end of \cite[Section  5]{Harris-rationality}, Garrett's calculation
actually determines the value of the integral up to an element of a specific complex embedding 
of the CM field $F$.  In that paper $F$ is imaginary quadratic, but the same reasoning applies in general.
Undoubtedly the calculation actually gives a rational number, but the method is based on the choice of
rational structures on $U_{\s}$ and the aforementioned differential operators.
We do not need to use this more precise information here.
\end{rmk}

\subsection{The global formula}\label{globalzeta}

We have now computed all the local factors of the Euler product \eqref{normalizedzeta}.   The Proposition below summarizes the result of our computation.  Bear in mind that, although we write $\varphi \in \pi$, we actually mean that $\varphi \in \underline{\pi}$, where the latter is the irreducible $U_1(\adeles)$ constituent of $\pi$ chosen as in \eqref{facpi}.   

First, write $\chi = ||\bullet||^{m}\cdot\chi_u$ with $\chi_u$ a unitary Hecke character of $\CK$.   Denote by $\chi^+$ the restriction of $\chi_u$
to the id\`eles of $\CK^+$; it is a character of finite order.  Let $\eta = \eta_{\CK/\CK^+}$ denote the quadratic id\`ele class character of $\CK^+$
attached to the quadratic extension $\CK/\CK^+$.  For any finite place $v$ of $\CK^+$, define the Euler factor
$$D_v(\chi) = \prod_{r = 0}^{n-1} L_v(2m + n - r,\chi^+\cdot \eta^r).$$
(In the notation of Equation \eqref{dnv-equ}, we have $D_v(\chi) = d_{n, v}\left(m, \chi_v\right)$.)  For any finite set $S$ of finite places, let
\begin{equation}\label{normalizingDS}  D^S(\chi) = \prod_{v \notin S} D_v(\chi);~~ D(\chi) = D^{\emptyset}(\chi), \end{equation}
where the product is taken over finite places.

\begin{thm}\label{globaleuler}  Let the test vectors $\varphi \in \pi$ and $\varphi^{\flat} \in \pi^{\flat}$ be chosen to be factorizable vectors as in \eqref{facphi}, with the local components at $p$ and $\infty$ given as in \eqref{facphip} and \eqref{facphiinf}, respectively.  Assume the local components at finite places outside $S = S_{\pi}$ are unramified vectors, and the local choices at ramified places are as in \ref{nonarchchoices-section}.  Moreover, assume the Siegel--Weil section $f_s \in I(\chi,s)$ is chosen as in the preceding sections.  
Write $\chi = ||\bullet||^m\chi_u$.  Then
we have the equality
$$D(\chi)\cdot I(\varphi,\varphi^{\flat},f,s) =  \langle \varphi, \varphi^{\flat} \rangle\cdot I_p(\chi,\kappa)I_{\infty}(\chi,\rho^{\upsilon})I_SL^S(s + \frac{1}{2},\pi,\chi_u)$$
where
$$I_S = \prod_{v \in S} D_v(\chi)\cdot \mathrm{volume}(\mathcal{U}_v),$$
$$I_{\infty}(\chi,\kappa) = \prod_{\s} I_\s(\chi_\s,\kap_\s)$$ is the product of factors described in Proposition \ref{archfactors},
$$I_p = L_p(s,\ord,\pi,\chi) \times \prod_w [\grV_w\cdot \langle \phi_w, \tilde\phi_w\rangle_{\pi_w}],$$
where $\grV_w$ is the factor that appears in \eqref{zetapfinalform}, and we define
$$L_p(s,\ord,\pi,\chi) : = \prod_{w \mid p} L(s,\ord,\pi_w,\chi_w).$$
Finally, $\langle \bullet, \bullet \rangle$ is the $L^2$ inner product on cusp forms.
\end{thm}  

\begin{rmk}\label{Ip-formula}
In light of \eqref{grV-formula} and the identification of volumes in Remark \ref{volume-rmk},
the expression for $I_p$ can be rewritten as 
$$
\frac{I_p}{\Vol(I_{r,V}^0)\Vol(I_{r,-V}^0)} =  L_p(s,\ord,\pi,\chi) \frac{ \prod_w\langle \phi_w, \tilde\phi_w\rangle_{\pi_w}}
{\Vol(I_{r,V}^0\cap I_{r,-V}^0)}.
$$
\end{rmk}

\vfill
\pagebreak

\part*{Part III:  Ordinary families and $p$-adic $L$-functions}

\section{Measures and restrictions}\label{EMeasure-padicLfunctions-section}
This section focuses on measures and restrictions.  In particular, Section  \ref{emeas-section} gives a measure whose values at certain specified characters are the Eisenstein series associated to the local data chosen when we calculated the zeta-integrals above.

\subsection{Measures: generalities} Let $X$ be a compact and totally disconnected
topological space. For a $p$-adic ring $R$ we let $C(X,R)$ be the $R$-module
of continuous maps from $X$ to $R$ (continuous with respect of the $p$-adic
topology on $R$). Note that $C(X,\Zp)\hat\otimes_\Zp R \isoarrow C(X,R)$. Let
$M$ be a $p$-adically complete $R$-module. Then by an $M$-valued measure
on $X$ we mean an element of the $R$-module
$$
\Meas(X,M) = \Hom_\Zp(C(X,\Zp), M)  = \Hom_R(C(X,R),M).
$$

Suppose $X$ is a profinite abelian group. Then $\Meas(X,R)$ is identified with the
completed group ring $R[\![X]\!]$.   In particular $\Meas(X,R)$ is itself a ring.  The following lemma is immediate:
\begin{lem}\label{measprod}   Suppose $X = X_1 \times X_2$ is a product of profinite abelian groups.  Then
there is a natural isomorphism
$$\Meas(X_1\times X_2,R) \isoarrow \Meas(X_1,Meas(X_2,R))$$
\end{lem}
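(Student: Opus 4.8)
The plan is to exhibit the isomorphism directly on the level of the defining Hom-modules, using the identification of measures with completed group rings only as a consistency check. First I would recall that by definition
$$\Meas(X_1\times X_2,R) = \Hom_{\Zp}(C(X_1\times X_2,\Zp),R),$$
and similarly $\Meas(X_2,R) = \Hom_{\Zp}(C(X_2,\Zp),R)$, which is again a $p$-adically complete $R$-module, so that $\Meas(X_1,\Meas(X_2,R)) = \Hom_{\Zp}(C(X_1,\Zp),\Meas(X_2,R))$ makes sense. The key input is the natural topological isomorphism
$$C(X_1\times X_2,\Zp) \isoarrow C(X_1,\Zp)\hat\otimes_{\Zp} C(X_2,\Zp),$$
valid since $X_1, X_2$ are compact and totally disconnected: locally constant $\Zp$-valued functions on $X_1\times X_2$ are spanned by products $f_1\cdot f_2$ with $f_i$ locally constant on $X_i$, and one completes $p$-adically. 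Granting this, a Hom-tensor adjunction for the completed tensor product gives
$$\Hom_{\Zp}(C(X_1,\Zp)\hat\otimes_{\Zp}C(X_2,\Zp),R) \isoarrow \Hom_{\Zp}\bigl(C(X_1,\Zp),\Hom_{\Zp}(C(X_2,\Zp),R)\bigr),$$
which is exactly the claimed isomorphism $\Meas(X_1\times X_2,R)\isoarrow\Meas(X_1,\Meas(X_2,R))$.

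The steps, in order, would be: (i) verify that $\Meas(X_2,R)$ is a $p$-adically complete $R$-module — immediate, since it is a $p$-adic limit of the finite-level pieces $\Hom_{\Zp}(C(X_2,\Zp/p^mR),R/p^mR)$, or one simply notes a Hom into a complete module is complete; (ii) prove the continuous-functions identity $C(X_1\times X_2,\Zp)\cong C(X_1,\Zp)\hat\otimes_{\Zp}C(X_2,\Zp)$, first for locally constant functions by a Stone--Weierstrass-type density argument on the profinite set $X_1\times X_2$, then passing to the $p$-adic completion; (iii) invoke the adjunction between $\hat\otimes_{\Zp}$ and $\Hom_{\Zp}(-,R)$ for $R$ a $p$-adic ring and $M$ a $p$-adically complete module, which is the completed analogue of the usual tensor-hom adjunction and follows by reducing mod $p^m$ and taking limits; (iv) assemble (ii) and (iii) and check that the resulting map is $R$-linear and functorial. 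Finally I would record the compatibility with the group-ring description: under $\Meas(X_i,R)\isoarrow R[\![X_i]\!]$, the isomorphism corresponds to the evident identification $R[\![X_1\times X_2]\!]\isoarrow R[\![X_1]\!][\![X_2]\!]$, and on a character $\chi = \chi_1\otimes\chi_2$ of $X_1\times X_2$ the measure $\mu$ satisfies $\mu(\chi_1\otimes\chi_2) = \bigl(\text{image of }\mu\bigr)(\chi_1)(\chi_2)$, which pins the map down uniquely and shows it is the natural one.

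The main obstacle is step (ii), the identification $C(X_1\times X_2,\Zp)\cong C(X_1,\Zp)\hat\otimes_{\Zp}C(X_2,\Zp)$: one must be careful about which completed tensor product is meant and argue that the algebraic tensor product of locally constant functions is $p$-adically dense in $C(X_1\times X_2,\Zp)$. This is where the compact totally disconnected hypothesis on the $X_i$ is essential — it guarantees that cylinder functions (finite sums $\sum f_1\cdot f_2$) separate points and hence, by the profinite Stone--Weierstrass principle, are dense. Everything else is formal nonsense: once (ii) is in hand, the adjunction in (iii) and the assembly in (iv) are routine, which is presumably why the authors state the lemma as immediate.
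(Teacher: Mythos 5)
The proposal is correct. The paper itself offers no argument: it simply labels the lemma as ``immediate'' and moves on, so there is no proof to compare against in the strict sense.

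Your main route — identify $C(X_1\times X_2,\Zp)\cong C(X_1,\Zp)\hat\otimes_{\Zp}C(X_2,\Zp)$ and then apply the completed tensor-hom adjunction — is valid, and your cautionary remarks about step (ii) and about working mod $p^m$ are exactly the points one would need to spell out. But the argument the authors most likely had in mind (and which justifies the word ``immediate'') is the one you relegate to the end as a ``consistency check'': via the identification $\Meas(X,R)\isoarrow R[\![X]\!]$ that the paper has already set up in the surrounding paragraphs, the lemma reduces to the isomorphism $R[\![X_1\times X_2]\!]\isoarrow R[\![X_1]\!][\![X_2]\!]$, which follows from $R[\![X_1\times X_2]\!]=\varprojlim_{U_1,U_2}R[X_1/U_1][X_2/U_2]$ by interchanging the finite limits, using only that $R[\![X_2]\!]$ is itself a $p$-adic ring. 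That route avoids both the profinite Stone–Weierstrass density argument and the adjunction for $\hat\otimes_{\Zp}$, both of which are true but require exactly the bookkeeping you flagged as the ``main obstacle.'' In short: your proof works and is more self-contained at the level of function spaces, but the group-ring identification is the cheaper and almost certainly the intended argument; I would promote it from consistency check to the main proof.
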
 

If we write
$X = \varprojlim_i X/X_i$, where $X = X_0 \supset X_1 \supset X_2 \supset \dots $ is a neighborhood basis
of the identity consisting of open subgroups of $X$ of finite index, then 
$$\Lambda_{X,R} = \varprojlim_i R[X/X_i].$$
This is a compact topological ring.  The following dictionary is well-known and due to Mazur:
 
 \begin{fact}\label{Mazurdictionary}  The identification of
 a measure $\mu$ on $X$ with an element $f$ of the $\Lambda_{X,R}$ has the property that, for any
 continuous homomorphism $\chi:X \rightarrow R_1^\times$, with
$R_1$ a $p$-adic $R$-algebra, 
 $$\int_{X} \chi d\mu :=  \mu(\chi) = \chi(f)$$
where $\chi(f)$ is the image of $f$ under the homomorphism $\Lambda_{X,R} \rightarrow R_1$ induced by $\chi$.
\end{fact}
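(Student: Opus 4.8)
The plan is to exhibit the canonical identification $\Meas(X,R) \isoarrow \Lambda_{X,R}$ by passing to finite quotients, and then to verify the interpolation formula by density of the abstract group ring. First I would record that, since $X$ is compact and each $\Z/p^n\Z$ is discrete, every $\phi \in C(X,\Zp)$ reduces mod $p^n$ to a locally constant function, hence factors through some $X/X_i$; taking $n \to \infty$ gives $C(X,\Zp) = \varinjlim_i C(X/X_i,\Zp)$. By the universal property of the direct limit,
$$\Meas(X,R) = \Hom_\Zp(C(X,\Zp),R) = \varprojlim_i \Hom_\Zp(C(X/X_i,\Zp),R).$$

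Next I would identify each term of this inverse system. For a finite abelian group $Q$, the module $C(Q,\Zp) = \Zp^Q$ is free of finite rank on the indicator functions $\{\mathbf 1_g\}_{g \in Q}$, so its $\Zp$-linear dual is free of the same rank; the rule sending the group element $[g] \in R[Q]$ to the evaluation functional $\phi \mapsto \phi(g)$ gives an $R$-linear isomorphism $R[Q] \isoarrow \Hom_\Zp(C(Q,\Zp),R)$ (here one uses $\Hom_\Zp(\Zp^Q,R) = R^Q = R[Q]$). For a surjection $Q' \twoheadrightarrow Q$ the induced map on duals corresponds under these isomorphisms to the canonical surjection $R[Q'] \to R[Q]$. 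Taking $Q = X/X_i$ and passing to the limit therefore yields the claimed canonical isomorphism
$$\Meas(X,R) \isoarrow \varprojlim_i R[X/X_i] = \Lambda_{X,R}, \qquad \mu \mapsto f = f_\mu,$$
which restricts on Dirac measures to the natural map $[g] \mapsto \delta_g$ from the abstract group ring $R[X]$, whose image is dense for the $p$-adic topology on either side.

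It remains to prove the interpolation identity $\int_X \chi\, d\mu = \chi(f_\mu)$ for a continuous homomorphism $\chi: X \to R^\times$. Regarding $\chi$ as an element of $C(X,R)$, it may be evaluated by $\mu$ (recall $\Meas(X,R) = \Hom_R(C(X,R),R)$), and by definition $\int_X \chi\, d\mu = \mu(\chi)$; on the other side, $\chi$ extends by $p$-adic continuity---using that $R$ is $p$-adically complete and $\chi$ is continuous---to an $R$-algebra homomorphism $\Lambda_{X,R} \to R$, which is what ``the map induced by $\chi$'' denotes. Both $\mu \mapsto \mu(\chi)$ and $f \mapsto \chi(f)$ are then $R$-linear and $p$-adically continuous functionals, and they agree on the image of $R[X]$: the group element $[g]$ maps on the measure side to the Dirac measure $\delta_g$, with $\delta_g(\chi) = \chi(g)$, and on the algebra side to $\chi([g]) = \chi(g)$. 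Since $R[X]$ is dense in $\Meas(X,R) = \Lambda_{X,R}$ for the $p$-adic topology, the two functionals coincide, which is the asserted identity.

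The argument is entirely formal; the only point requiring a little care is the meaning of ``the homomorphism $\Lambda_{X,R} \to R$ induced by $\chi$'' when $\chi$ has infinite order and hence factors through no finite quotient $X/X_i$ (for instance $\chi\colon \Zp \to 1 + p\Zp$, $x \mapsto u^x$), which is handled by the continuous-extension step above and so relies on the $p$-adic completeness of $R$; the interchange of $\Hom_\Zp(-,R)$ with the direct limit, and the identification $C(X,\Zp)\,\hat\otimes_\Zp R \isoarrow C(X,R)$, are standard.
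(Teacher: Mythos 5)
The paper does not actually prove this statement: it presents it as a ``well-known'' dictionary ``due to Mazur'' and then uses it, so there is no proof of the paper's own to compare against. Your proof supplies the standard argument -- reduce to the finite quotients $X/X_i$, identify $\Hom_{\Zp}(C(X/X_i,\Zp),R)$ with $R[X/X_i]$ via evaluation at group elements, check compatibility with transition maps, pass to the inverse limit to obtain $\Meas(X,R)\isoarrow\Lambda_{X,R}$, and prove the interpolation identity on the image of the abstract group ring $R[X]$ (Dirac measures on one side, tautological generators on the other), extending by continuity. You correctly single out the infinite-order case of $\chi$, which factors through no $X/X_i$, as the one step requiring care.

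Two of your intermediate claims are, as written, false, although both are repairable and the conclusion stands. First, the equality $C(X,\Zp)=\varinjlim_i C(X/X_i,\Zp)$ does not hold: the right-hand side is the submodule $C^{\mathrm{lc}}(X,\Zp)$ of \emph{locally constant} $\Zp$-valued functions, whereas $C(X,\Zp)$ is its $p$-adic completion (e.g.\ the identity map on $X=\Zp$ is continuous but not locally constant). The universal property of the direct limit therefore does not directly yield $\Hom_{\Zp}(C(X,\Zp),R)=\varprojlim_i\Hom_{\Zp}(C(X/X_i,\Zp),R)$; one should instead argue that the restriction map $\Hom_{\Zp}(C(X,\Zp),R)\rightarrow\Hom_{\Zp}(C^{\mathrm{lc}}(X,\Zp),R)$ is injective because the quotient $C/C^{\mathrm{lc}}$ is $p$-divisible (every $\phi\in C$ is congruent mod $p^n$ to a locally constant function), hence admits no nonzero $\Zp$-linear map into the $p$-adically separated module $R$; and surjective because $p^nC\cap C^{\mathrm{lc}}=p^nC^{\mathrm{lc}}$ (check mod $p$, where the two spaces coincide), which together with $p$-adic completeness of $R$ lets one extend any functional on $C^{\mathrm{lc}}$ along a Cauchy sequence to all of $C$. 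Second, $R[X]$ is not in general dense in $\Lambda_{X,R}=\varprojlim_i R[X/X_i]$ for the $p$-adic topology: already for $X=\Zp$, $R=\Zp$, $\Lambda_{X,R}\cong\Zp[[T]]$, the image of $\Zp[X]$ is $\Zp[T]$, and its $p$-adic closure is $\Zp\langle T\rangle\subsetneq\Zp[[T]]$. What is true, and all the argument needs, is that $R[X]$ is dense for the inverse-limit (pro-finite) topology, and that both functionals $\mu\mapsto\mu(\chi)$ and $f\mapsto\chi(f)$ are continuous for that topology.
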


We let $\chi$ denote the homomorphism $\Lambda_{X,R} \rightarrow R$ of Fact \ref{Mazurdictionary}; in this way
$\chi$ defines an $R$-valued point of $\Lambda_{X,R}$.

In what follows, characters of $X_1$ will be Hecke characters, $X_2$ will be the group of integral
points of a $p$-adic torus, whose characters parametrize
weights of $p$-adic modular forms, and $M$ will be the ring of $p$-adic modular forms.  
When $X_2$ is a point, the measure on $X = X_1$ will be an Eisenstein measure that pairs
with modular forms of fixed weight, and in particular can be used to construct what we will call,
loosely and somewhat abusively, a {\it $p$-adic $L$-function of one variable}, the variable Hecke character,
attached to a fixed holomorphic automorphic representation.  When $X_2$ is the group of
points of a non-trivial torus, we will be constructing {\it $p$-adic $L$-function of two variables},
the second variable running through the points of a Hida family.  

The following is a version of a well-known lemma (see \cite[Proposition 4.1.2]{kaCM} for the formulation below):

\begin{lem}\label{measures-characters}  Suppose $X=\varprojlim_mX_m$ is a profinite abelian group.  Suppose $R$ is a ring that is flat over $Z_p$ and that contains a primitive $n$th root of unity for each $n$ dividing the order of $X_m$ for some $m$.  Each $R$-valued measure on X is completely determined by its values on locally constant continuous homomorphisms $\chi: X\rightarrow R$, and any function $\alpha$ from the continuous characters to $R$ determines an $R$-valued measure on $X$ whenever the values of $\alpha$ on the space of $R$-valued locally constant characters on $X$ satisfy the usual Kummer congruences (as in \cite[\S4.0]{kaCM}).
\end{lem}

\subsection{The space $X_p$}\label{Xp-section}
 For each integer $r>0$, let
$$
U_r = (\O\otimes\widehat\Z^{p})^\times\times (1+p^r\O\otimes\Zp)\subset (\K\otimes\wZ)^\times
$$
and
$$
X_p = \varprojlim_r\K^\times\backslash (\K\otimes\wZ)^\times/U_r.
$$
This is the projective limit of the ray class groups of $\K$ of conductor $(p^r)$. In particular,
it is a profinite abelian group.  More generally, if $N_0$ is prime to $p$, we let 
$$X_{p,N_0} = \varprojlim_r\K^\times\backslash (\K\otimes\wZ)^\times/U_{r,N_0}$$
where $U_{r,N_0} = (1+N_0\O\otimes\widehat\Z^{p})^\times\times (1+p^r\O\otimes\Zp) \subset U_r$.

\subsubsection{Admissible measures on $X_p$}
We suppose now that we are in the situation of
Section  \ref{doublingsetup}, and we freely use the notation and conventions introduced therein.
Using the isomorphism \eqref{Hi-iso} we identify $H_1(\Zp)$ with
$H_2(\Zp)$ via $h_1=(h_{1,w})_{w|p}\mapsto h_2 = (h_{2,w})_{w|p}$ with $h_{2,w}=h_{1,\bar w}$.
This then identifies $T_{H_2}(\Zp)$ with $T_{H_1}(\Zp)$ and
$T_{H_4}(\Zp) = T_{H_3}(\Zp) = T_{H_1}(\Zp)\times T_{H_2}(\Zp)$ with $T_{H_1}(\Zp)\times T_{H_1}(\Zp)$.
In particular, the characters $\psi$ of $T_{H_3}(\Zp)$ are identified with pairs of characters
$(\psi_1,\psi_2)$ of $T=T_{H_1}(\Zp)$.

Let
\begin{itemize}
\item $\kap = (\kap_\sigma)$ be an $\O'$-character of $T$ as in
Section  \ref{padicweight} and let $\kap'$ be the $\O'$-character of $T_{H_3}(\Zp)$ identified
with the pair $(\kap,\kap^\vee)$;
\item $\psi$ be a finite-order $\bQptimes$-valued character
of $T(\Zp)$;
\item $K_i^p\subset G_i\left(\A_f^{p}\right)$, $i=1,2$,  be open compact subgroups such that
$\nu\left(K_1\right)=\nu\left(K_2\right)$;
\item $R$ be a $p$-adic $\O'[\psi]$-algebra.
\end{itemize}
For any finite-order $\bQptimes$-valued character $\chi$ of $X_p$, let
$\psi^{-1}_\chi = \psi^{-1}\cdot\chi\circ\det$,
where by $\det$ we mean the map
$\det:H_1(\Zp)\rightarrow (\O\otimes\Zp)^\times = \prod_{w|p}\O_w^\times$
that is the composition of the isomorphism \eqref{Hi-iso} with the products of
the determinants of
each of the $\GL$-factors,
and let $\psi_\chi'$ be the character of $T_{H_3}(\Zp)$ identified with the pair
$(\psi,\psi^{-1}_\chi)$, .
By an admissible $R$-measure on $X_p$
of weight $\kap$, character $\psi$, and level $K_3^p=(K_1^p\times K_2^p)\cap G_3(\A_f^p)$,
we mean a measure $\mu(\cdot)=\mu(\kap,\psi,\cdot)\in \Meas(X_p; V_{\kap'}^{\ord}(K_3^p,R))$ such that
for any finite-order $\bQptimes$-valued character $\chi$ of $X_p$,
\begin{equation*}
\mu\left(\chi\right)=\mu\left(\kap,\psi,\chi\right) \in V_{\kap'}^{\ord}\left(K^p_3,\psi'_\chi,R\left[\chi\right]\right).
\end{equation*}

Let $R'$ be any $p$-adic $R$-algebra and $\ell$ an $R$-linear functional
$\ell: V^{\ord}_{\kap'}\left(K_3^p,R\right)\rightarrow R'$.
Then
$$
\mu_\ell(\cdot)=\mu_\ell(\kap,\psi,\cdot) :=\ell\circ\mu(\kap,\psi,\cdot)
$$
is an $R'$-valued measure on $X_p$.

We will need a slight generalization of the above definition.  Let $\rho$, $\psi$, $K_i^p$,  $R$,  $\psi^{-1}_\chi$, and $\psi'_\chi$ be as above.
\begin{itemize}
\item $\rho = (\rho_\sigma)$ be an $\O'$-character of $T = T_{H_1}$ as in
Section  \ref{padicweight} and let $\rho^{\triangle}$ be the $\O'$-character of $T_{H_3}(\Zp)$ identified
with the pair $(\rho,\rho^\vee)$;
\item $\psi$ be a finite-order $\bQptimes$-valued character
of $T(\Zp)$;
\item $K_i^p\subset G_i\left(\A_f^{p}\right)$, $i=1,2$,  be open compact subgroups such that
$\nu\left(K_1\right)=\nu\left(K_2\right)$;
\item $R$ be a $p$-adic $\O'[\psi]$-algebra.
\end{itemize}
Note that $\rho^{\triangle}$ and $(\rho,\rho^{\flat})$ coincide as characters of $T_{H_3}(\Zp)$, where $\rho^\flat$ is defined by
analogy with \eqref{kapflat}.  
For any finite-order $\bQptimes$-valued character $\chi$ of $X_p$, let
$\psi^{-1}_\chi= \psi^{-1}\cdot\chi\circ\det$,
where by $\det$ we mean the map
$\det:H_1(\Zp)\rightarrow (\O\otimes\Zp)^\times = \prod_{w|p}\O_w^\times$
that is the composition of the isomorphism \eqref{Hi-iso} with the products of
the determinants of
each of the $\GL$-factors,
and let $\psi_\chi^{\triangle}$ be the character of $T_{H_3}(\Zp)$ identified with the pair
$(\psi,\psi^{-1}_\chi)$.  
Let $(\alpha,\beta)$ be a character of $T_{H_3}(\Zp)$, written as a pair
of characters of $T_{H_1}(\Zp) \equiv T_{H_2}(\Zp)$.
By an admissible $R$-measure on $X_p$
of weight $\rho$, character $\psi$, {\it shift} $(\alpha,\beta)$, and level $K_3^p$,
we mean a measure $\mu(\cdot)=\mu(\rho,\psi,\cdot)\in \Meas(X_p; V_{\rho^{\triangle}\cdot(\alpha,\beta)}^{\ord}(K_3^p,R))$ such that
for any finite-order $\bQptimes$-valued character $\chi$ of $X_p$,
\begin{equation}\label{shifts}
\int_{X_p} \chi d\mu := \mu\left(\chi\right) = \mu\left(\rho,\psi,\chi\right) \in V_{\rho^{\triangle}\cdot(\alpha,\beta)}^{\ord}\left(K_3^p,\psi'_\chi,R\left[\chi\right]\right).
\end{equation}

\subsubsection{Admissible measures on $X_p \times T_{H}$:  two variables}\label{2varmeas}
In this section we fix $H = H_1$ and consider admissible measures of weight $\rho$ and shift $(\alpha,\beta)$ where $\rho$ and $(\alpha,\beta)$
are allowed to vary.  This requires a slight adjustment to the notation of the previous
section.   More precisely, suppose we are given a homomorphism
$sh:  T_{H_3}(\Zp) \rar X_p$ as before.  By duality this gives a map $sh^*: C(X_p,R) \rar C(T_H(\Zp),R)$ for any ring $R$;
$sh^*$ takes characters to characters.  

We also suppose we are given an algebraic automorphism $\upsilon:  T_{H} \rar T_H$.  If $\rho$ is a function on $T_H$,
we let $\rho^{\upsilon}(t) = \rho(\upsilon(t))$.

We fix a tame level $N_0$ as in Section  \ref{Xp-section} and define $X_p = X_{p,N_0}$ as before.
By an admissible $R$-measure on $X_p \times T_H$
of character $\psi$, shift $sh$, twist $\upsilon$, and level $K^p=K_3^p=(K_1^p\times K_2^p)\cap G_3(\A_f^p)$,
we mean a measure 
$$\mu(\cdot)=\mu(\psi,sh,\cdot)\in \Meas(X_p, \Meas(T_H,V^{\ord}(K^p,R)))$$ such that 
for any finite-order $\bQptimes$-valued character $\chi$ of $X_p$ and any character $\rho$ of $T_H$,
\begin{equation*}\label{admissmeas2}
\mu\left(\chi\right)(\rho^{\upsilon})=\mu\left(\psi,sh,\chi\right)(\rho^{\upsilon}) \in V_{\rho^{\triangle}\cdot sh^*(\chi)}^{\ord}\left(K^p,\psi^{\triangle}_\chi,R\left[\chi\right]\right).
\end{equation*}

\subsection{Eisenstein measures on $X_p\times T$}\label{emeas-section}

Now, we recall the Eisenstein measures on $X_p\times T$.  
We briefly summarize their basic properties, as developed in \cite{apptoSHL, apptoSHLvv, EFMV} -- with special attention to the fact that they $p$-adically interpolate values of the Eisenstein series associated to the local data chosen above for the zeta integral calculations.
As in Section  \ref{unitarygroupPEL}, let $\Sigma = \{\sigma\in\Sigma_\K\ : \ \grp_\sigma\in\Sigma_p\}$.
This is a CM type for $\K$.  Throughout this section, we take $\chi:\K^\times\backslash \A_\K^\times\rightarrow \C^\times$ to be a unitary Hecke character.

\subsubsection{Axiomatics of the Eisenstein measure}
The Eisenstein measures of \cite{apptoSHL, apptoSHLvv, emeasurenondefinite}, as well as the local components of ordinary vectors in Hida families,
have been reverse-engineered in order to meet the requirements of the construction of the $p$-adic $L$-functions.  In this section we first present the axioms the Eisenstein measure is required to satisfy, and then explain how they are satisfied by the ones constructed in the references just cited.  We write $T_H = T_{H_1}$ in this section.

The Eisenstein measure is, in the first place, a $p$-adic measure on the space $X_p \times T_H(\Zp)$ with values
in the space of $p$-adic modular forms on $G_4$.  It is characterized by its specializations at classical points.  Let $Y_H$ be the formal scheme over $\Zp$ whose 
points with values in a complete $\Zp$-algebra $R$ are given by $\Hom(X_p\times T_H(\Zp), R^{\times})$.  Let $Y_H^{\alg} \subset Y_H(\C_p)$ be the set of
pairs $(\chi,c)$, where $\chi:  X_p \rar R^{\times}$, for some $R \subset \C_p$, is the $p$-adic character associated to an algebraic Hecke character, denoted $\chi^{class}$, and
$c = \psi\rho^{\upsilon}$ is a locally algebraic character of $T_H(\Zp)$: $\rho$ is an algebraic character, $\upsilon$ is an involution of $T_H$, as in \eqref{iota}
and $\psi$ is a character of finite order.     In other words, $c \in C_r(T_H(\Zp),R)$ for some $r \geq 0$, in the notation of Lemma \ref{meas}.

Note that we are not requiring $\chi^{class}$ to be unitary here; rather, the variable ``$s$'' is included in the infinity type of $\chi$; we fix an integer
$\mu$ such that, for each
$\s \in \Sigma$ we have $\chi_\s = ||\bullet||_\s^\mu \chi_{0,\s}$, where $\chi_{0,\s} = (z_{\s}^{-a(\chi_\s)}\bar{z}_\s^{-b(\chi_\s)})$.    This 
factorization is not unique; however, recall the set
$C_3(\mu,\chi_\s)$ of \eqref{decomp}.    We assume we are given a subset $Y_H^{class} \subset Y_H^{\alg}$, determined by the following positivity
condition:  
\begin{equation}\label{C3m2} (\chi,c) \in Y_H^{class} \Leftrightarrow \kap_\s \in C_3(\mu,z_{\s}^{-a(\chi_\s)}\bar{z}_\s^{-b(\chi_\s)}) \forall \s \in \Sigma
\end{equation}
This condition is independent of the choice of $m$ as above, in other words is independent of the choice of factorization.

Now we return to the notation of Section  \ref{globalzeta}:  write $\chi = ||\bullet||^{m}\cdot\chi_u$  and define the finite order id\`ele class character $\chi^+$ of  $\CK^+$.  
We omit the expression of $\mu$ and $\chi_0$
in terms of $m$ and $\chi_u$, and vice versa.   Define the normalizing factors $D^S(\chi)$ and $D(\chi)$ as in \ref{normalizingDS}.
Let $U_{p,\kap} = \prod_{w\in\Sigma_p}\prod_{j=1}^{n} U_{w,j,\kap}$, with notation as in \eqref{Up-operator};  here and below, the index $i$ of \eqref{Up-operator} is superfluous
because $G$ is the unitary similitude group of a single hermitian space.  Let 
\begin{equation}\label{ordinaryprojector} e_{\kap} = \varinjlim_N U_{p,\kap}^{N!}
\end{equation}
 (as an operator). We call this the {\it ordinary projector}.

\begin{defi}\label{axiomeis}  Let $K_i^p$ be an open compact subgroup of $G_i(\adeles_f^p)$, $i = 3, 4$, with $K_3^p \subset K_4^p \cap G_3(\adeles_f^p)$.   Let $S$ be the set of primes at which $K_4^p$ and $K_3^p$ do not contain a hyperspecial maximal compact subgroup.
An axiomatic Eisenstein measure on $X_p \times T_H(\Zp)$ of level $S$, relative to the set $Y_H^{class}$, of level $K_4^p$ and with coefficients in $R$,
is a measure $dEis$ with values in $V(K_3^p,R)$ such that, for every pair $(\chi = ||\bullet||^m\cdot\chi_u, c = \psi\rho^{\upsilon}) \in Y_H^{class}$, there is a factorizable Siegel section 
$$f(\chi,c) = \otimes'_v f_v(\chi_v,c) ~~\in~~ \bigotimes_v{}^{\prime}~ I_v(\chi_{u,v},m)$$
and
such that 
\begin{itemize}
\item If $v$ is a finite place outside $S$ --  so in particular $\chi_v$ is unramified for all $\chi \in Y_H^{class}$ -- then $f_v(\chi_v,c)$ is the unramified vector
in $I_v(\chi_{u,v},m)$ with $f_v(\chi_v,c)(1) = 1$.
\item If $v \in S$ then $f_v(\chi_v,c)$ is independent of the pair $(\chi,c)$.
\item For any prime $w$ dividing $p$ and for any real prime $\sigma \in \Sigma_w$, the local section $f_{\sigma}(\chi_{\sigma},c)$ depends only on $\chi^{class}_{\sigma}$ and
$\kappa_w$ (and on the choice of signature), and is of the form
$$f_{\sigma}(\chi_{\sigma},c,g) = B(\chi_\s,\kap_\s) D(\kap_\s,m,\chi_{u,\s}) J_{m,\chi_{u,\sigma}}(g), g \in G_{4,\sigma}$$
where $J_{m,\chi_{0,\sigma}} \in 
C^{\infty}(G_4)$ is the canonical automorphy factor introduced in Section  \ref{automorphy} and $B(\chi_\s,\kap_\s) \in \bar{\Q}^\times$.   In particular, $f_{\sigma}(\chi_{\sigma},c,g)$ does not depend on the factorization of $\chi_\s$.  (This follows
from Remark \ref{independence}. ) 

\item  For any prime $w$ dividing $p$, the local section $f_w(\chi_w,c)$ depends only on $\chi_w$ and $\psi_w$ (and on the choice of signature).  
\item $$e_\kappa\circ\int_{X_p \times T_H(\Zp)} (\chi,c) dEis = D^S(\chi)\cdot e_\kappa\circ res_3 E_{f(\chi,c)}$$ 
for all $(\chi,c) \in Y_H^{class}$,
where $D^S(\chi)$ is the normalizing factor defined in \eqref{normalizingDS}, $res_3$ is as in \eqref{restriction3}, and $e_\kappa$ is the ordinary projector
of \eqref{ordinaryprojector}.
\end{itemize}

The measure $dEis$ is said to be {\em normalized} at $S$ if instead of the last relation one has
 $\int_{X_p \times T_H(\Zp)} (\chi,c) dEis = D(\chi)\cdot res_3 E_{f(\chi,c)}$ for all $(\chi,c) \in Y_H^{class}$.  The measure is said to have {\em shift} $(\alpha,\beta)$ if it satisfies \eqref{shifts}.  

\end{defi}

One obtains a measure normalized at $S$ from an unnormalized measure by multiplying by the appropriate product of local Euler factors at $S$.
We write $D^?(\chi)$ for $? = S$ or empty if we haven't specified whether or not $dEis$ is taken to be normalized.  

 Definition \ref{axiomeis} makes no mention of whether or not the measure $dEis$ contains a shift.  The Eisenstein measure whose construction is recalled in Section  \ref{existence-Eisenstein} comes with a shift that will be specified in Corollary \ref{eismeasureG3}.

In previous sections we have chosen $f(\chi, c)$ meeting the conditions of Definition \ref{axiomeis} in Sections \ref{unrameuler} (local choices for $v\nin S$), \ref{nonarchchoices-section} (local choices for $v\in S$), \ref{archimedeanchoices} (local choices for archimedean places) and \ref{pchoices-section} (local choices for $v\divides p$).  Note that the choices at $p$ and $\infty$ depend on the signature of the unitary group $G_1$.  The existence of the Eisenstein measure itself that corresponds to these choices is proved in \cite{apptoSHL, EDiffOps}; see Theorem \ref{eismeasure-thm} below.

In the applications, the integrals of elements of $Y_H^{class}$ against $dEis$ suffice to determine $dEis$ completely.
We write 
\begin{equation*}  f^{holo}(\chi,c) = \otimes_{\sigma \in \Sigma_F} J_{m,\chi_{u,\sigma}} \otimes \otimes_{v \nmid \infty} f_v(\chi_v,c); \end{equation*}
\begin{equation}\label{holo} E^{holo}_{\chi_u,c}(m) = E_{f^{holo}(\chi_u,c)}(m). \end{equation}
Then the last condition of Definition \ref{axiomeis} can be rewritten 
\begin{equation}\label{holoint}\begin{aligned} 
e_\kap\circ \int_{X_p \times T_H(\Zp)} (\chi,c) dEis &= D^?(\chi)\cdot e_\kap \circ res_3 D(\kappa,m,\chi_u)E^{holo}_{\chi_u,c}(m), \\  &\forall (\chi = ||\bullet||^m\cdot\chi_u,c) \in Y_H^{class},
\end{aligned}
\end{equation}
where $D(\kappa,m,\chi_u)$ is as defined in Corollary \ref{pluridecomp}.

\section{Serre duality, complex conjugation, and anti-holomorphic forms}\label{serreduality-section}

\subsection{The Shimura variety $Sh(V)$}

Let $P=(\K,c,\O,L,\pair,h)$
be a PEL datum of unitary type associated with a
hermitian pair $(V,\pair_V)$ as in Sections \ref{PELdata}, \ref{PELunitary}, and \ref{unitarygroupPEL},
together with all the associated objects, choices, and conventions from Section 
\ref{padic-unitary-section}. However, since the number of factors $m$ equals $1$, the 
indexing subscript `$i$' will disappear from our notation. 
Let $G=G_P$ be the group scheme over $\Z$ associated with $P$ and let $X=X_P$ be the $G(\R)$ conjugacy
class of $h$. Let $Z_G$ be the center of $G$.
In this section we take $\sq = \nullset$, so the moduli problems are all considered
over the reflex field $F$.

Given $K\subset G(\A_f) = GU(V)(\A_f)$ (notation as in Section  \ref{notation-section}), we now write ${}_KSh(V)$ for the Shimura variety
associated with 
the Shimura datum\footnote{If $a_\sigma b_\sigma =0$ for all $\sigma\in\Sigma_\K$, then, properly speaking,
the datum $(G,X)$ does not satisfy the axioms of a Shimura variety as set out in \cite{deshimura}. Nevertheless, in this case, as the datum arises from a PEL datum
$P$, the notion of the associated `Shimura variety' still makes sense, following the conventions in \cite{lanalgan}.} $(G,X)$. So ${}_KSh(V)$ 
is just the $F$-scheme $\M_{K,L}$. We set 
$$
Sh(V) = \varprojlim_K {}_K Sh(V) = \varprojlim_{K} \M_{K,L}.
$$
The dimension of each ${}_KSh(V)$ is just the $\C$-dimension of $X$, which is
$$
d = \frac{1}{2}\sum_{\sigma\in\Sigma_\K} a_\sigma b_\sigma.
$$

At times we will be comparing constructions for both $Sh(V)$ and the Shimura variety
$Sh(-V)$ for the pair $(V,-\pair_V)$ (and the PEL datum
$P^c=(\K,c,\O,L,-\pair,h^c)$, where $h^c(z) = h(\bar z)$). 
When it is important to distinguish which hermitian space an object is associated with, 
we will generally add a subscript `$V$' (for the pair $(V,\pair_V)$) or `$-V$' (for
the pair $(V,-\pair_V)$, if the notation does not already distinguish the space (such as
is done by $Sh(V)$ and $Sh(-V)$).   We will also be using the notation $G_1 = GU(V)$, $G_2 = GU(-V)$ as in \ref{PELprob}.

\subsubsection{Automorphic vector bundles}
Recall that automorphic vector bundles on $Sh(V)=Sh(G,X)$ are defined by a $\otimes$-functor
$$
G-\mathrm{Bun}(\hat{X}) ~~ \longrightarrow ~~\mathrm{Bun}(Sh(V)),
$$
where $\hat{X}$ is the compact dual of $X$, so a flag variety for $G$, and $G-\mathrm{Bun}$ is the 
$\otimes$-category of $G$-equivariant vector bundles.  The base point $h\in X$ determines a point
$P_h\in\hat{X}$; this is just the stabilizer of the Hodge filtration on $L\otimes\R$ determined
by $h$. There is then a fibre functor $G-\mathrm{Bun}(\hat{X})\rar \mathrm{Rep}_\C(P_h) \cong 
\mathrm{Rep}_\C(P_0)$, where the last equivalence comes from the fixed identifications
in \ref{G0H0}.  Given an irreducible representation $W$ of $P_0$ that factors
through the Levi quotient $H_0$ of $P_0$, we let $\omega_W$ be the corresponding
automorphic vector bundle. Each such bundle has a canonical model over
a number field $F(W)/F$ contained in $\K'$. For $W=W_\kap$ as in \ref{H0-OF-reps} (here and in the following we write $W_\kap$ for $W_\kap(\C)$), 
the vector bundle $\omega_{\kap}$ defined in \ref{aut-sheaves} is the base change to $\K'$ of the canonical
model of $\omega_{W_\kap}$.  In fact, the $\omega_\kap$, which are defined over the toroidal 
compactifications, are the canonical extensions of the automorphic vector bundles, and
their twists by the ideal sheaves of the boundaries are the subcanonical bundles.

\subsubsection{Coherent cohomology and $(\grP_h,K_h)$-cohomology.}
We will write $H^i(Sh(V),\omega_{\kap})$ instead of 
$H^0(Sh(V)^{tor},\omega_{\kap})$,
which is imperfect shorthand for 
$$
\varinjlim_{K,\Sigma} H^i({}_{K}Sh(V)_{\Sigma},\omega_{\kap})
$$
where the limit is taken over toroidal compactifications (indexed by $\Sigma$) at finite level
(indexed by $K$).  For $i=0$, this is superfluous, by Koecher's principle, except possibly when $n = 2$ and $F = \Q$, and the reader
can be trusted to supply the missing indices in this case.  
Likewise we write $H^i(Sh(V),\omega_{\kap}^{\mathrm{sub}})$ for 
$$
\varinjlim_{K,\Sigma} H^i({}_{K}Sh(V)_{\Sigma},\omega_{\kap}(-D_\Sigma))
$$
where $D_\Sigma = {}_KSh(V)_\Sigma - {}_KSh(V)$. We let
$$
H^i_!(Sh(V),\omega_\kap) = im\{H^i(Sh(V),\omega_{\kap}^{\mathrm{sub}})\rightarrow H^i(Sh(V),\omega_{\kap})\}.
$$
Note that the ground field here can be taken to be any extension of $\K'$. Moreover, these definitions make sense over the ring $\O_{\K',(\grp')}$, provided we restrict to those
$K$ of the form $K=G(\Zp)K^p$ or $K=I_rK^p$.

Over $\C$ the coherent cohomology can be computed in terms of Lie algebra cohomology. 
Let  $\grg = \Lie(G(\R))_{\C}$, and let $\grg = \grp_h^-\oplus \grk_h \oplus \grp^+_h$ be the 
Harish-Chandra decomposition associated with $h$ (the eigenvalue decomposition for the involution
$ad\,h(\sqrt{-1})$). Let $\grP_h = \grp_h^-\oplus \grk_h$; this is just $\Lie(P_h(\R))_\C$ (so the Lie algebra
of $P_h(\C)$). 
We put
$$
K_h = U_\infty = C(\R).
$$
Let $\CA_0(G)$ be the space of cuspforms on $G(\A)$. Then over $\C$ there is a natural identification
of $G(\A_f)$-modules:
\begin{equation}\label{dbar}
H^i_!(Sh(V),\omega_\kap) = H^i(\grP_h,K_h; \CA_0(G)\otimes W_\kap).
\end{equation}
Here we use the identifications of $P_h(\C)$ with $P_0(\C)$ and $C(\C)$ with $H_0(\C)$ to
realize $W_\kap$ as a $(\grP_h,K_h)$-module. For $i=0$ this just restates the identification, 
recalled in \ref{classicalmod}, of $S_\kap(K,\C)$ with the space of $U_\infty\times K$-invariant
smooth functions $f:G(\A)\rightarrow W_\kap$ that are annihilated by $\grp^-_h$.

\subsubsection{The $\star$ involution}

There is an anti-holomorphic involution $\st$ of  $G-\mathrm{Bun}(\hat{X})$ that takes a $G$-equivariant bundle to
the complex conjugate bundle; on representations of $P_0$ factoring through the Levi quotient $H_0$
(which has been identified over $\C$ with the stabilizer $C$ in $G_{/\R}$ of $h$) it takes the irreducible representation
$W_\kap$ to a representation $W_{\kap^\star}$ whose restriction to the maximal compact subgroup of $U_\infty = C(\R)\subset H_0(\C)$
is dual to the restriction of $W_\kap$ but whose restriction to $\R^\times\subset G(\R)$ coincides with that of $\kap$. 
Concretely, if $\kap$ is identified with the tuple $\kap = (\kap_0,(\kap_\sigma))$, $\kap_\sigma = (\kap_{\sigma,1},\ldots,\kap_{\sigma, b_\sigma})$,
then $\kap^\star$ is the weight
\begin{equation}\label{kap-star}
\kap^\star = (\kap^\star_0, (\kap_\sigma^\star)), \ \ \kap^\star_0 =-\kap_0+a(\kap), \ \kap_\sigma^\star = (-\kap_{\sigma, b_\sigma},\ldots,-\kap_{\sigma,1})
\end{equation}
and 
$$W_{\kap^\star}\cong W_\kap^\vee\otimes\nu^{a(\kap)},
$$ 
where
\begin{equation}\label{a-kap}
a(\kap) = 2\kap_0+\sum_{\sigma\in\Sigma_\K}\sum_{j=1}^{b_\sigma} \kap_{\sigma, j}.
\end{equation}

There is a unique, up to scalar multiple, $c$-semilinear, $K_h$-equivariant isomorphism $W_\kap \isoarrow W_{\kap^\star}$.
Such an isomorphism is given explicitly by the map 
 that sends $\phi\in W_{\kap}$
to $\phi^\star\in W_{\kap^\star}$, where if $h\in H_0(\C)$ is identified with 
$(h_0,(h_\sigma)) \in \C^\times\times\prod_{\sigma\in\Sigma_\K} \GL_{b_\sigma}(\C)$ via
\eqref{H0-iso}, then 
\begin{equation*}\label{star-iso}
\phi^\star(h) =h_0^{a(\kap)}\cdot \overline{\phi((\bar h_0^{-1},(w_\sigma{}^t\bar h_\sigma^{-1}))}.
\end{equation*}
Here $w_\sigma \in \GL_{b_\sigma}(\C)$ is the longest element of the Weyl group of the standard pair
and the overline $\bar{}$ denotes complex conjugation.
The $K_h$-invariance follows easily from \eqref{Uinfty-iso}. 

The identification of $G(\C)$ with $G_0(\C)$ in
\ref{G0H0} identifies $\Lie(P_0(\C))$ with $\grP_h$
and $\Lie(H_0(\C))$ with $\grk_h$. 
It then follows that the map 
$\phi\mapsto \phi^\star$
is $\grP_h$-equivariant, up to $c$-semilinearity.

The action of $h = (h_0,(h_\sigma))\in H_0(\C)$ on $\Hom_\C(\wedge^d \grp_h^\pm,\C)$ is
just multiplication by $h_0^{\mp d}\prod_{\sigma\in\Sigma_\K} \det(h_\sigma)^{\pm 2a_\sigma}$;
this is just the character 
$$
\kap_h^\pm = (\mp d, (\kap_{h,\sigma}^\pm)), \ \ \kap_{h,\sigma}^\pm = (\pm 2a_\sigma, \ldots, \pm 2a_\sigma).
$$
Then the $H_0(\C)$-representation
$$
\Hom_\C(\wedge^d\grp_h^+,W_{\kap^\star}) = \Hom_\C(\wedge^d\grp_h^+,\C)\otimes_\C W_{\kap^\star}
$$
is naturally identified with $W_{\kap^D}$ (the identification depends on a choice of basis of the one-dimensional space 
$\wedge^d\grp_h^-$), where
$$
\kap^D = \kap^\star +\kap_h^+.
$$
The Killing form on $\grg$ defines an $H_0(\C)$-equivariant contraction map
$$
\wedge^d\grp_h^-\otimes_\C\wedge^d\grp_h^+ \rightarrow \C,
$$
and so defines an $H_0(\C)$-equivariant inclusion
$$
i_{\kap^\star}:W_{\kap^\star} \hookrightarrow \Hom_\C(\wedge^d\grp_h^-\otimes_\C\wedge^d\grp_h^+, W_{\kap^\star}) = \Hom_\C(\wedge^d\grp_h^-,W_{\kap^D}).
$$

\subsection{Complex conjugation and automorphic forms}
In this section we describe 
three 
actions of complex conjugation on spaces of modular forms. Each 
has an interpretation 
in Deligne's formalism for motives of absolute Hodge cycles, though we do not emphasize this here. We describe these actions in
terms of $(\grP_h,K_h)$-cohomology as well 
in terms of coherent cohomology.

\subsubsection{Complex conjugation on automorphic forms}\label{CConaut}
Let $\pi$ be a $(\grg,K_h)\times G(\A_f)$-representation occurring in the space $\CA_0(G)$ of cuspforms 
on $G(\A)$. We define $\bar\pi$ to be the complex conjugate representation; that is, $\bar\pi$ consists
of the functions $\bar\vphi(g) =\overline{\vphi(g)}$ for $\vphi\in\pi$. The map $\pi\rightarrow\bar\pi$, $\vphi\mapsto \bar\vphi$, is $c$-semilinear and $K_h \times G(\A_f)$-equivariant,
and even $\grg$-equivariant up to $c$-semilinearity.  We then obtain a $c$-semilinear $G(\A_f)$-equivariant map
\begin{equation}\label{cB-map}
(\pi\otimes_\C W_\kap)^{K_h} \stackrel{\vphi\otimes\phi\mapsto \bar\vphi\otimes\phi^\star}{\longrightarrow} (\bar\pi\otimes_\C W_{\kap^\star})^{K_h}
\stackrel{id\otimes i_{\kap^\star}}{\longrightarrow}
\Hom_\C(\wedge^d\grp_h^+,\otimes\bar\pi\otimes_\C W_{\kap^D})^{K_h}
\end{equation}
that is also $\grP_h$-equivariant, up to $c$-semilinearity.
This induces  a $c$-semilinear  $G(\A_f)$-equivariant isomorphism
\begin{equation}\label{C_B-pi}
c_B:H^0(\grP_h,K_h;\pi\otimes_\C W_\kap) \rightarrow H^d(\grP_h,K_h;\bar\pi\otimes_\C W_{\kap^D}).
\end{equation}
Taking $\pi$ to be the space of cuspforms $\CA_0(G)$ of $G(\A)$ (so, in particular, $\bar\pi = \pi)$, we obtain
a $c$-semilinear $G(\A_f)$-equivariant isomorphism
\begin{equation}\label{C_B-coh}
c_B:H^0_!(Sh(V),\omega_\kap) \isoarrow H^d_!(Sh(V),\omega_{\kap^D}).
\end{equation}

\subsubsection{Complex conjugation on $Sh(V)$}\label{CConSh}
Recall that
$$
P^c  = (\K,c,\O,L,-\pair,h^c), \ \ \ h^c(z) = h(\bar z),
$$ 
is just the PEL datum of unitary type associated with the 
hermitian pair $(V,-\pair_V)$. The corresponding reflex field is $F_{-V}=cF_V=cF$, the complex conjugate of $F$. 
There is a canonical identification $G_{P^c}=G_P=G$. The respective stabilizers in $G(\R)$ of $h$ and $h^c$ (action by conjugation) are the same: they both
equal $U_\infty$ (that is, $K_h = U_\infty = K_{h^c}$).  Let $X = G(\R)/U_\infty$. We then have identifications $X \isoarrow X_h = X_P$, $g\mapsto ghg^{-1}$, and
$X\isoarrow X_{h^c}=X_{P^c}$, $g\mapsto gh^cg^{-1}$. Each of $X_h$ and $X_{h^c}$ have a complex structure, and the pullbacks of these complex structures to $X$
are complex conjugates. In particular, the composition $X_h \isoarrow X \isoarrow X_{h^c}$ is an anti-holomorphic map. So a holomorphic function on $X_{h^c}$ defines
an anti-holomorphic function on $X_h$, and {\it vice versa}. This explains the map $F_\infty$ in \eqref{Finfty-coh} below. 

The automorphic sheaves on $Sh(-V)$ are associated to representations of the group $H_{0,-V}$, which is canonically identified
with $H_{0,V} = H_0$ by switching the roles of $\Lambda_0$ and $\Lambda_0^\vee$).  The analog of \eqref{H0-iso} for $H_{0,-V}$ is
the isomorphism
\begin{equation}\label{H0-iso-V}
{H_{0,-V}}_{/S_0} \isoarrow \G_m\times\prod_{\sigma\in\Sigma_\K} \GL_{\O\otimes_{\O,\sigma}}(\Lambda_{0,\sigma}) 
\cong \G_m\times\prod_{\sigma\in\Sigma_\K} \GL_{a_\sigma}(S_0).
\end{equation}
The identification $H_{0,V}=H_{0,-V}$ is given in terms of \eqref{H0-iso} and \eqref{H0-iso-V} by 
$(h_0,(h_\sigma)) \mapsto (h_0,(h_0{}^th_{\sigma c}^{-1}))$.  We have associated to each dominant character $\kap$ of the diagonal torus
$T_{H_{0,-V}}$ of $H_{0,-V}$ a representation $W_{\kap,-V}$ of $H_{0,-V}$ and hence a vector bundle $\omega_{\kap,-V}$ on $Sh(-V)$.
Given a dominant character $\kap = (\kap_0,(\kap_\sigma))$ of $T_{H_{0,V}}$, we define a dominant character 
$\kap^\flat= (\kap_0,(\kap_{\sigma c}))$ of $T_{H_{0,-V}}$. With respect to the canonical identification $H_{0,-V} = H_{0,V}$ described above, there is an
 explicit identification of $H_0$-representations
\begin{equation}\label{kapflat}
W_{\kap^\flat,-V} \isoarrow W_{\kap^\star,V}, \ \ \phi \mapsto ((h,(h_\sigma))\mapsto \phi(h_0,(w_\sigma h_0 {}^th_{\sigma c}^{-1}))). 
\end{equation}

The Harish-Chandra decompositions $\grg = \grp_h^-\oplus\grk_h\oplus\grp_h^+ = \grp_{h^c}^-\oplus\grk_{h^c}\oplus\grp_{h^c}^+$ 
satisfy $\grp_{h}^\pm = \grp_{h^c}^\mp$ and $\grk_h = \grk_{h^c}$. 
Let $\pi$ be a $(\grg,K_h) \times G(\A_f)$-representation occurring in the automorphic forms on $G(\A)$. Then the natural map
\begin{equation}\label{Finfty-map}
(\pi^{\grp_{h^c}^-}\otimes_\C W_{\kap^\flat,-V})^{K_{h^c}} =
(\pi^{\grp_{h}^+}\otimes_\C W_{\kap^\star, V})^{K_h} \stackrel{id\otimes i_{\kap^*}}{\rightarrow} \Hom_\C(\wedge^d\grp_h^-,\pi\otimes W_{\kap^D,V})^{K_h}
\end{equation}
induces a $\C$-linear $G(\A_f)$-equivariant isomorphism
\begin{equation}\label{Finfty-pi}
F_\infty: H^0(\grP_{h^c},K_{h^c};\pi\otimes_\C W_{\kap^\flat,-V})\rightarrow H^d(\grP_h,K_h;\pi\otimes_\C W_{\kap^D,V}).
\end{equation}
Taking $\pi$ to be $\CA_0(G)$ we then obtain a $\C$-linear $G(\A_f)$-equivariant isomorphism
\begin{equation}\label{Finfty-coh}
F_\infty: H^0_!(Sh(-V),\omega_{\kap^\flat,-V})\isoarrow H^d_!(Sh(V),\omega_{\kap^D,V}).
\end{equation}
Note that {\bf no complex conjugation is involved in this isomorphism}:  $F_\infty$ identifies a cohomology class on $G_2$ represented by a holomorphic modular form with a cohomology class on $G_1$ represented by an anti-holomorphic modular form, simply because the groups $G_1$ and $G_2$ are canonically equal but the hermitian symmetric domains have opposite complex structure.

\subsubsection{The involution `$\dagger$' and the isomorphisms ${}_K Sh(V) \cong {}_{K^\dagger}Sh(-V)$}\label{invo}

Recall that we have assumed that $h$ is standard (see~\ref{basepoints}). This means that there is a $\K$-basis of $V$ with respect to which the hermitian pairing $\pair_V$
is given by a diagonal matrix $D = \diag(d_1,...,d_n)$, $d_1,...,d_n\in\K^+$, and such that the image of $h$ is diagonal with respect to the induced basis on each of the
spaces $V_\sigma=V\otimes_{\K,\sigma}\C$. Under the hypothesis that each prime above $p$ in $\K^+$ splits in $\K$, it is always possible to 
choose such a $\K$-basis and the lattice $L$ so that $D$ is a diagonalization of the perfect hermitian pairing on $L\otimes\Z_{(p)}$ induced
by $\pair_V$; we fix such a choice of $\K$-basis and a lattice $L$. Let $I:V\rightarrow V$ be the $\K^+$-involution of $V$ that is just 
the action of $c$ on the coordinates with respect to this fixed $\K$-basis. Note that $L\otimes\Z_{(p)}$ is $I$-stable, and the map induced by $I$ on $L\otimes\Zp$ 
interchanges $L^+$ and $L^-$.

With respect to the fixed $\K$-basis, $G_{/\Q}$ is identified with a subgroup of $\Res_{\K/\Q}\GL_n(\K)$, 
and the action of $c$ on $\K$ induces
an automorphism $g\mapsto \bar g$ of $G_{/\Q}$ (note that $g^c = I g I$). This automorphism takes $h$ to $h^c$ and so maps $U_\infty$ to itself. 
In particular, it induces an automorphism of $X$.
The composition $X_h \isoarrow X \stackrel{g\mapsto \bar g}{\longrightarrow} X \isoarrow X_{h^c}$ (which is just
$ghg^{-1} \mapsto \bar g h^c \bar g^{-1}$) is holomorphic. In particular, the induced map $Sh(V)(\C)\rightarrow Sh(-V)(\C)$ is holomorphic and so a morphism 
of Shimura varieties over $\C$.

We modify this map at $p$, to more easily compare level structures. Recall that for each prime $w|p$ 
we fixed decompositions $L_w = L_w^+\oplus L_w^-$ (see \ref{PELunitary}). We also fixed an $\O_w$-basis of
each $L_w^\pm$, which gives an $\O_w$-basis of each $L_w$. 
We define level structures at $p$ for $P^c$ by taking $L_w^{c,\pm} = L_w^\pm$. Then $I_{w,-V}^0 = {}^t I_{w,V}^0 = {}^t (I_{w,V}^{0})^{-1}$
with respect to this $\O_w$-basis of $L_w$. 
This chosen $\O_w$-basis of $L_w$ may not be the $\K_w$-basis of $V\otimes_\K\K_w$ induced by the fixed
$\K$-basis of $V$; let $\beta_w \in\GL_{\K_w}(V\otimes_\K\K_w) \cong \GL_n(\K_w)$ be an element taking the latter to former.
Let $\delta_p = (1, D{}^t\beta_w^{-1}\beta_w^{-1})_{w\in\Sigma_p})\in \Q_p^\times \times \prod_{w\in\Sigma_p} \GL_n(\K_w) \cong G(\Qp)$,
where the isomorphism is determined by the fixed $\K$-basis of $V$.
Then 
\begin{equation}\label{Iwa-p-relations}
\bar \delta_p = \delta_p^{-1}, \ \ \delta_p^{-1}\overline{G(\Zp)} \delta_p  = G(\Zp), \ \ \text{and} \ \ 
\delta_p^{-1} \bar I_{r,V}^0 \delta_p = I_{r,-V}^0.
\end{equation}
We then define an automorphism $g\mapsto g^\dagger$ of $G(\A)\rightarrow G(\A)$ by $g^\dagger = \nu(g)^{-1} \delta_p^{-1} \bar g\delta_p$.
 Given $K\subset G(\A_f)$ we let $K^\dagger$ be the image of $K$ under $\dagger$. As a consequence of \eqref{Iwa-p-relations}, if $K=G(\Zp)K^p$, then 
$K^\dagger = G(\Zp)\bar K^p$ and
\begin{equation}\label{level-p-relations}
(K_{r,V})^\dagger = K^\dagger_{r,-V}.
\end{equation}
Consequently, the map $Sh(V)(\C) \rightarrow Sh(-V)(\C)$ induced by $g\mapsto \bar g \delta_p$ identifies ${}_{K_{r,V}}Sh(V)$ with 
${}_{K^\dagger_{r,-V}}Sh(-V)$.   The following Proposition is then obvious.

\begin{prop}\label{moduliflat} The isomorphism ${}_{K_{r,V}}Sh(V)\isoarrow {}_{K^\dagger_{r,-V}}Sh(-V)$ is defined over $\O_{\K',(\grp')}$. On moduli problems
it is given by the map that sends a tuple $(A,\lambda,\iota,\alpha,\phi)$ classified by $M_{P,K_{r},L}(R)$ to the tuple 
$(A,\lambda,\iota\circ c, \alpha\circ I, \phi\circ I)$ classified by $M_{P^c,K^\dagger_{r},L}(R)$ for any $\O_{\K',(\grp')}$-algebra $R$.
\end{prop}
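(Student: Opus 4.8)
The plan is to prove Proposition \ref{moduliflat} by a direct comparison of moduli data, working over the integral base ring $\O_{\K',(\grp')}$ so that both Shimura varieties are simultaneously defined as the scheme-theoretic closures $\M_{P,K_r,L}$ and $\M_{P^c,K^\dagger_r,L}$ inside their respective moduli spaces. First I would recall that we already have the complex-analytic isomorphism ${}_{K_{r,V}}Sh(V)(\C)\isoarrow {}_{K^\dagger_{r,-V}}Sh(-V)(\C)$ induced by $g\mapsto \bar g\delta_p$, so it suffices to check: (i) that this isomorphism is induced by an algebraic isomorphism of the canonical models over $\K'$ (hence, since both schemes are flat and the source is reduced, it extends uniquely to the integral models over $\O_{\K',(\grp')}$); and (ii) that on the level of moduli functors the isomorphism is given by the claimed rule $(A,\lambda,\iota,\alpha,\phi)\mapsto (A,\lambda,\iota\circ c,\alpha\circ I,\phi\circ I)$. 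The cleanest route is to verify (ii) at the level of the moduli functor directly and deduce (i) as a corollary: once the functorial rule is known to define a morphism of the $S_\sq$-moduli problems $\M_{P,K_r}\to \M_{P^c,K^\dagger_r}$, restriction to the Shimura subvarieties and comparison with the analytic description over $\C$ pins it down.

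The key steps I would carry out in order. First, verify that the assignment $(A,\lambda,\iota,\alpha,\phi)\mapsto (A,\lambda,\iota\circ c,\alpha\circ I,\phi\circ I)$ sends a $P$-datum to a $P^c$-datum: the polarization $\lambda$ and the abelian scheme $A$ are unchanged; the $\O_B$-action $\iota\circ c$ still satisfies $(\iota\circ c)(b)^\vee\circ\lambda = \lambda\circ(\iota\circ c)(b^*)$ because $*$ on $B=\K$ is precisely $c$, so the compatibility for $P^c$ (where the involution is again $c$ but the pairing is $-\pair$) holds; the sign flip in the polarization form of $P^c$ is absorbed by the fact that $I$ identifies $\pair_V$ with $-\pair_V$ on the relevant coordinates — this is exactly the defining property of the involution $I$ chosen in the paragraph preceding the proposition ($L\otimes\Z_{(p)}$ is $I$-stable and $I$ interchanges $L^+$ and $L^-$, and $D$ diagonalizes the form). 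Second, check the Kottwitz determinant condition: the Hodge filtration of $A$ is unchanged as a filtration, but relative to the $\O_B$-action twisted by $c$ the signature $(a_\sigma,b_\sigma)$ becomes $(b_\sigma,a_\sigma) = (a_{\sigma c},b_{\sigma c})$ in the bookkeeping, which is exactly the determinant condition attached to $(L\otimes\R,-\pair,h^c)$ since $h^c(z)=h(\bar z)$ swaps the $\pm$-eigenspaces. Third, treat the level structures: away from $p$, $\alpha\circ I$ is visibly a $\bar K^p = (K^p)^\dagger$-orbit of $\O_B\otimes\A_f^p$-isomorphisms for the twisted action, using that $I$ normalizes the relevant compact in the expected way; at $p$ one uses \eqref{Iwa-p-relations} and \eqref{level-p-relations}, i.e. $\delta_p^{-1}\bar I^0_{r,V}\delta_p = I^0_{r,-V}$, together with the fact that $I$ on $L\otimes\Zp$ swaps $L^+$ and $L^-$ so that $\phi\circ I$ (suitably composed with $\delta_p$) is a $p^r$-level structure for $P^c$ of the correct type. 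Fourth, observe the rule is an equivalence of functors with inverse given by the same rule for the opposite datum (since $c^2 = 1$, $I^2 = 1$, $\bar\delta_p = \delta_p^{-1}$), hence an isomorphism of $S_\sq$-schemes; restricting to the Shimura subvarieties $\M_{P,K_r,L}$ and $\M_{P^c,K^\dagger_r,L}$ and matching with the complex-analytic map via \eqref{cpx-1} shows it is the map induced by $g\mapsto\bar g\delta_p$, completing the identification.

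The main obstacle I anticipate is step three, the careful bookkeeping of the level structures at $p$. The subtlety is that the chosen $\O_w$-basis of $L_w$ used to define the level structure for $P^c$ need not be the $\K_w$-basis coming from the fixed $\K$-basis of $V$ that defines $I$; this is precisely why $\beta_w$ and the correcting element $\delta_p = (1,(D\,{}^t\beta_w^{-1}\beta_w^{-1})_{w\in\Sigma_p})$ were introduced. One must check that composing $\phi\circ I$ with the automorphism encoded by $\delta_p$ yields a trivialization landing in $\CA^\vee[p^r]^\circ$ with image an isotropic subgroup scheme of the correct signature type for $P^c$, i.e. that the three identities in \eqref{Iwa-p-relations} are exactly what is needed. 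I would organize this by first checking the analogous (easier) statement over $\C$ via the explicit uniformizations $\underline A_{h',g}$, $\underline X_{h',g}$ of \ref{spaces-section}, where the translation-by-$g$ description makes the role of $\delta_p$ transparent, and then noting that because both integral models are defined by scheme-theoretic closure of their $\K'$-fibers and the $\K'$-fiber isomorphism is already established, the integral statement follows formally. Everything else — the polarization, the twisted $\O_B$-action, the determinant condition, the away-from-$p$ level structure, and the involutivity — is routine once the definitions of $I$, $\dagger$, and $\delta_p$ are unwound.
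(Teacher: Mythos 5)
The paper disposes of this proposition with a single sentence, "The following Proposition is then obvious," so there is no detailed proof in the text to compare against; what the paper implicitly has in mind is precisely the verification you carry out. Your proposal is correct and follows the natural route: unwind the definitions of $I$, $\dagger$, and $\delta_p$, check that the assignment carries a $P$-datum to a $P^c$-datum (polarization, twisted $\O_B$-action, determinant condition, level structures away from and at $p$), observe involutivity, and match with the complex-analytic description to pin it down; the integral statement then follows by scheme-theoretic closure. You correctly identify the level structure at $p$ (and the role of the three identities in \eqref{Iwa-p-relations}) as the one delicate point. One small remark: when you say "$I$ identifies $\pair_V$ with $-\pair_V$," the cleaner statement is that $I$ carries $\pair_V$ to its conjugate, hence carries the alternating form $\pair = \trace_{\K/\Q}\delta\pair_V$ to $-\pair$ because $\delta$ is totally imaginary; making this explicit would tighten the argument, but the conclusion is the same.
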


The automorphism $g\mapsto g^\dagger$ takes
$\grp_h^\pm$ to $\grp_{h^c}^\pm$ and $P_h$ to $P_{h^c}$. 
The action of
$g\mapsto g^\dagger$ on $K_h$ is identified via \eqref{Uinfty-iso} as $(h_0,(h_\sigma)) \mapsto (h_0^{-1},({}^th_\sigma^{-1}))$.
Let 
$$
\kap^\dagger = \kap^\flat\cdot||\nu||^{-a(\kap)}, 
$$ 
so
$$
W_{\kap^\dagger,-V}\cong W_{\kap^\vee,V} \cong W_{\kap,V}^\vee.
$$
The map
$$
W_{\kap,V} \stackrel{\phi\mapsto\phi^\dagger}{\longrightarrow} W_{\kap^\dagger,-V} = W_{\kap^\flat,-V}\otimes||\nu||^{-a(\kap)}, \ \ 
\phi^\dagger((h_0,(h_\sigma))) = \phi((h_0,(h_{\sigma c}))) h_0^{-a(\kap)},
$$
satisfies $( k^\dagger \cdot \phi)^\dagger = k\cdot \phi^\dagger$ for all $k\in K_{h^c}= K_h$.  It follows that under the isomorphism 
$Sh(V)\isoarrow Sh(-V)$ defined by $g\mapsto g^\dagger$, $\omega_{\kap^\dagger,-V}$ pulls back to $\omega_{\kap,V}$,
and so there are $\C$-linear isomorphisms
\begin{equation}\label{Fdagger-iso}
F^\dagger:H^i_!(Sh(V),\omega_{\kap,V})\isoarrow H^i_!(Sh(-V),\omega_{\kap^\dagger,-V})
\end{equation}
that are $G(\A_f)$-equivariant up to the action of the automorphism `$\dagger$.'
In particular, these induce isomorphisms
\begin{equation}\label{Fdagger-level}
F^\dagger:H^i_!({}_{K_r}Sh(V),\omega_{\kap,V})\isoarrow H^i_!({}_{K_r^\dagger}Sh(-V),\omega_{\kap^\dagger,-V}),
\end{equation}
even over $\O_{\K',(\grp')}$-algebras $R\subset \C$. In particular, $F^\dagger$ restricts to an isomorphism
\begin{equation}\label{Fdagger-nebentypus}
F^\dagger: S_{\kap,V}(K_{r,V},\psi;R)\isoarrow S_{\kap^\dagger,-V}(K^\dagger_{r,-V},\psi^\dagger;R)
\end{equation}
for $R\subset\C$ any $\O_{\K',(\grp')}[\psi]$-algebra, where $\psi^\dagger=\psi^{-1}$ if both are viewed as characters of the diagonal torus of the right side of \eqref{G-iso} via the isomorphisms \eqref{G-iso2}. 

The action of $F^\dagger$ is described in terms of automorphic forms as follows.
Let $\pi$ be a $(\grg,K_h)\times G(\A_f)$ representation occurring in the space of automorphic forms on $G(\A)$. We define
$\pi^\dagger$ to be the space of functions $\varphi^\dagger(g) = \varphi(g^\dagger)$ for $\varphi\in \pi$. The map $\pi\rightarrow \pi^\dagger$, $\varphi\mapsto\varphi^\dagger$,
is $\C$-linear and is both $\grg$- and $K_h$-equivariant up to the action of the automorphism `$\dagger$'.
The map
$$
(\pi\otimes_\C W_{\kap,V})^{K_h} \stackrel{\varphi\otimes\phi\mapsto \varphi^\dagger\otimes\phi^\dagger}{\longrightarrow}(\pi^\dagger\otimes_\C W_{\kap^\dagger,-V})^{K_{h^c}}
$$
is then a $\C$-linear isomorphism that intertwines the actions of $g$ and $g^\dagger$ for all $g\in G(\A_f)$. This induces a corresponding isomorphism
\begin{equation}\label{Fdagger-pi}
F^\dagger:H^i(\grP_h,K_h;\pi\otimes_\C W_{\kap,V}) \isoarrow H^i(\grP_{h^c},K_{h^c};\pi^\dagger\otimes_\C W_{\kap^\dagger,-V}).
\end{equation}
Taking $\pi = \CA_0(G)$ (and so $\pi^\dagger = \pi$), we get $F^\dagger$ from before.

\subsection{Serre duality and pairing of automorphic forms}\label{Petersson}
Since 
$$
W_{\kap^D} = \Hom_\C(\wedge^d \grp_h^+, W_{\kap^\star}) \cong \Hom_\C(\wedge^d\grp_h^+,\C)\otimes_\C W_\kap^\vee\otimes\nu^{a(\kap)},
$$
the natural contraction $W_\kap\otimes_\C W_\kap^\vee \rightarrow \C$ gives a homomorphism of $H_0(\C)$-representations
$$
W_\kap\otimes_\C W_{\kap^D} \rightarrow \Hom_\C(\wedge^d\grp_h^+,\C)\otimes\nu^{a(\kap)}.
$$
This induces a natural map
$$
\omega_{\kap} \otimes \omega_{\kap^D} ~~\rar ~~ \Omega^d_{Sh(V)}\otimes L(\kap),
$$
where $L(\kap)$ is the automorphic line bundle attached to the character $\nu^{a(\kap)}$.  
Since the character is trivial on $G^{\mathrm{der}}$, $L(\kap)$ is topologically the $\CO_{Sh(V)}$-bundle
attached to the constant (trivial) sheaf, but the action of $G(\A_f)$ on $L(\kap)$ is non-trivial.
Fixing a level subgroup $K$ and a toroidal compactification $_{K}Sh(V) \hookrightarrow _{K}Sh(V)_{\Sigma}$,
we can extend this to a natural pairing
$$
\omega_{\kap} \otimes \omega_{\kap^D}^{\mathrm{sub}} ~~\rar ~~ \Omega^d_{ _{K}Sh(V)_{\Sigma}}\otimes L(\kap)$$
and the analogous pairing on $\omega_\kap^{\mathrm{can}}\otimes\omega_{\kap^D}$.
As in \cite[Cor.~2.3]{H90}, Serre duality therefore defines a perfect pairing

\begin{equation}\label{SerreD} 
H^0_!(Sh(V),\omega_{\kap}) \otimes H^d_!(Sh(V),\omega_{\kap^D}) 
~ \rar ~ \varinjlim_{K,\Sigma}H^d(_{K}Sh(V)_{\Sigma},\Omega^d_{Sh(V)_{\Sigma}}\otimes L(\kap)) 
 \end{equation}    
The function $g \mapsto ||\nu(g)||^{-a(\kap)}$ defines a global
section of $L(\kap)^{\vee}$ and therefore an isomorphism
$$
\varinjlim_{K,\Sigma}H^d({}_{K}Sh(V)_{\Sigma},\Omega^d_{Sh(V)_{\Sigma}}\otimes L(\kap))  \isomto 
\varinjlim_{K,\Sigma}H^d({}_{K}Sh(V)_{\Sigma},\Omega^d_{Sh(V)_{\Sigma}}).
$$
The right-hand side is isomorphic under the trace map to the space of functions 
$C(\pi_0(V))$ 
on the compact space $\pi_0(V)$ of similitude components of $Sh(V)$.   
Composing with the projection of $C(\pi_0(V))$ onto the invariant line $C(\pi_0(V))^{G(\A)}$ -- in other words, integration over $\pi_0(V)$ with respect to an invariant measure
with rational total mass -- we thus obtain a canonical perfect pairing:
\begin{equation}\label{SerreC} 
\pairS_\kap:  H^0_!(Sh(V),\omega_{\kap}) \otimes H^d_!(Sh(V),\omega_{\kap^D}) 
~~\rar ~   \C
\end{equation}

 \begin{rmk}  In what follows, we will be using the Tamagawa number to normalize the Serre duality pairings. 
This is likely to introduce a factor of 
 a power of $2$ in a comparison
 of our results with those predicted by motivic conjectures.
\end{rmk}

The pairing $\pairS_\kap$ can be described in terms of automorphic forms as follows.
Let $\grp = \grp_h^+\oplus \grp_h^-$. Then $\pairS_\kap$ is just the pairing 
\begin{equation*}\label{SerreL2}
H^0(\grP_h,K_h;\CA_0(G)\otimes W_{\kap})\otimes H^d(\grP_h,K_h;\CA_0(G)\otimes W_{\kap^{D}}) 
~\rar \C
\end{equation*}
defined by multiplication of cuspforms, contraction of the coefficients, and integration. More precisely, let $dg$ denote Tamagawa measure,
and factor $dg = dg_\infty\cdot dg_f$, where $dg_\infty$ (resp. $dg_f$) is an invariant measure on $G(\R)$ (resp. on $G(\adeles_f)$).  We assume
the measure $dg_f$ takes rational values on open subgroups of $G(\adeles_f)$, and we write $dg_\infty = dk_\infty \times dx \times dt/t$, where $dk_h$ is the measure
that gives $K_h$ measure $1$, $dt$ is Lebesgue measure on the center $Z_{G(\R)} \isoarrow \R^\times$ -- which disappears in the integral -- and $dx$ is a differential form on $\grp_h$; this will inevitably be rational over the reflex field of $h$.  
We denote the contraction
$$
W_\kap\otimes_\C\Hom_\C(\wedge^d\grp_h^-,\Hom_\C(\wedge^d\grp_h^+,W_\kap^\vee\otimes\nu^{a(\kap)}))\rightarrow \Hom_\C(\wedge^{2d}\grp,\C(\nu^{a(\kap)})),
$$
by
$$ 
\phi\otimes \phi' \mapsto  [\phi,\phi'].
$$
Then for 
$$
\varphi \in (\CA_0(G)^{\grp_h^-}\otimes_\C W_\kap)^{K_h} \ \ \text{and} \ \ \varphi' \in \Hom_\C(\wedge^d \grp_h^-, \CA_0(G)\otimes_\C W_{\kap^D})^{K_h},
$$
we normalize $\pairS_\kap$ so that
\begin{equation}\label{L2}  
\vpair{\varphi,\varphi'}^\Serre_\kap = \int_{G(\Q)Z_G(\R)\backslash G(\A)} [\varphi(g),\varphi'(g)]\cdot||\nu(g)||^{-a(\kap)} dg_f.
\end{equation}
It we use the basis $dx$ of $(\wedge^{2d}\grp)^{\vee}$ to identify $(\wedge^{2d}\grp)^{\vee}$ with $\C$, then we write
$[\varphi(g),\varphi'(g)]_x$ for the element of $\C(\nu^{a(\kap)})$ corresponding to $[\varphi(g),\varphi'(g)]$, and \eqref{L2}
can be rewritten 
$$\vpair{\varphi,\varphi'}^\Serre_\kap = \int_{G(\Q)Z_G(\R)\backslash G(\A)} [\varphi(g),\varphi'(g)]_{dx}\cdot||\nu(g)||^{-a(\kap)} dg. $$
In what follows we fix the basis $dx$ and omit the subscript $_x$ from $[\bullet,\bullet]$.

From $\pairS_\kap$ we obtain the hermitian Petersson pairing: 
\begin{equation}\label{pet}
\pairP_\kap:H^0_!(Sh(V),\omega_\kap)\times H^0_!(Sh(V),\omega_\kap)\rightarrow \C , \ \ \pairP_\kap = \vpair{\cdot,c_B(\cdot)}^\Serre_\kap.
\end{equation}

\subsection{A normalized pairing, trace maps, and integral structures}\label{normal pairing}

While the pairing $\pairS_\kap$ is canonical it is not compatible with traces of automorphic forms with respect to levels (this is 
manifestly a failure of the right-hand side of \eqref{L2}). We correct this by introducing a normalization of the pairing $\pairS$ that
depends on the level. We then use this normalized pairing to define integral stuctures on {\em top} degree cohomology. One
consequence of these definitions is that the integral structures are compatible with respect to the trace maps.

\subsubsection{A normalized Serre duality} For an open compact subgroup $K^p\subset G(\A_f^p)$ we define 
\begin{equation}\label{pairC-def}
\pair_{\kap,K_r}: H^0_!({}_{K_r}Sh(V),\omega_{\kap}) \otimes H^d_!({}_{K_r}Sh(V),\omega_{\kap^D}) 
~~\rar ~   \C
\end{equation}
as
\begin{equation}
\pair_{\kap,K_r}  = \frac{1}{\Vol(I_{r,V}^0)} \pairS_{\kap},
\end{equation}
where $I_r = I_{r,V}$ and the volume $\Vol(I_r^0)$ of $K_r^0$ is taken with respect to the Tamagawa measure $dg$ also appearing in \eqref{L2}.
In particular, if $\vphi$ and $\vphi'$ in the left-hand side of \eqref{L2} are both invariant by $K_r$, then
\begin{equation}\label{L2-C}
\vpair{\varphi,\varphi'}_{\kap,K} = \frac{1}{\Vol(I_r^0)} \int_{G(\Q)Z_G(\R)\backslash G(\A)} [\varphi(g),\varphi'(g)]\cdot||\nu(g)||^{-a(\kap)} dg.
\end{equation}
From \eqref{L2-C} it is easily seen that if $r'\geq r$ and if $\vphi$ is invariant 
by $K_r$ and $\vphi'$ by $K'_r$,
then 
\begin{equation}\label{pairC-trace}
\vpair{\vphi,\vphi'}_{\kap,K_{r'}} = \vpair{\vphi,\trace_{K_r/K_{r'}}(\vphi')}_{\kap,K_r}, \ \ \ 
\trace_{K_r/K_{r'}}(\vphi') = \frac{\#(I_r^0/I_r)}{\#(I_{r'}^0/I_{r'})}\sum_{\gamma\in K_r/K_{r'}} \gamma\cdot \vphi'.
\end{equation}
The analogous relation also holds for $\vphi$ invariant under $K_{r'}$ and $\vphi'$ invariant under $K_r$.
The key point here is, of course, that $I_r/I_{r'} \isoarrow K_r/K_{r'}$ and 
$$
\Vol(I_r^0) = \Vol(I_{r'}^0) \cdot \# (I_r^0/I_{r'}^0) = \Vol(I_{r'}^0) \cdot \# (I_r/I_{r'}) \frac{\#(I_r^0/I_r)}{\#(I_{r'}^0/I_{r'})}.
$$

 \subsubsection{Integral structures on top cohomology}\label{integraltop}
 
 Let $\CO_{\CK',(\mathfrak{p}')}$ be as in Section \ref{H0-OF-reps}.
 Fix $V$ and write $\omega_\kap = \omega_{\kap,V}$.  The spaces $H^i_!({}_{K_r}Sh(V),\omega_{\kap})$ have natural integral structures over $\CO_{\CK',(\mathfrak{p}')}$ with respect to $\CO_{\CK',(\mathfrak{p}')}$-integral structures on the underlying schemes, for any $i$.  However, because the special fibers become progressively more singular as $r$ increases, we {\it do not choose} integral structures on the schemes.  For cohomology in degree $i = 0$, we define the $\CO_{\CK',(\mathfrak{p}')}$-structure on $H^0_!({}_{K_r}Sh(V),\omega_{\kap,V})$ by $S_{\kap}(K_r,\CO_{\CK',(\mathfrak{p}')})$ as in Section 2, specifically in sections \ref{levelp}, \ref{levelK}, and especially \ref{padicmforms-section}.   We then define the 
 $\CO_{\CK',(\mathfrak{p}')}$-structure on $H^d_!(_{K_r}Sh(V),\omega_{\kap^D})$ to be {\it dual} to the integral structure on $H^0_!({}_{K_r}Sh(V),\omega_{\kap})$ with respect to the pairing $\pair_{\kap,K_r}$ defined in \eqref{pairC-def}.  In other words, for any $\CO_{\CK',(\mathfrak{p}')}$-algebra $R$, we have an identification
 \begin{equation}\label{integraltop1}
 H^d_!(_{K_r}Sh(V),\omega_{\kap^D},R) = \Hom_{\CO_{\CK',(\mathfrak{p}')}}(S_{\kap}(K_r,\CO_{\CK',(\mathfrak{p}')}),R)
 \end{equation}
induced by $\pair_{\kap,K_r}$. 
It follows from \eqref{pairC-trace} that
these integral structures are respected by the trace maps:
\begin{equation}\label{integraltop2}
\vphi \in H^d_!(_{K_{r+1}}Sh(V),\omega_{\kap^D},R) \implies \trace_{K_r/K_{r+1}} \vphi \in H^d_!(_{K_{r}}Sh(V),\omega_{\kap^D},R).
\end{equation}

 \subsection{(anti-)holomorphic automorphic representations}

By an automorphic representation of $G$ we will always mean an {\it irreducible} $(\grg,K_h)\times G(\A_f)$-representation ocurring in the space
of automorphic forms on $G(\A)$. This convention allows us to distinguish holomorphic representations from anti-holomorphic representations.
(Note that $K_h$, which is the stabilizer of $h$ in $G(\R)$, need not project to the maximal compact in $G(\R)/Z_G(\R)$.)

\subsubsection{Holomorphic and anti-holomorphic cuspidal representations of type $(\kap,K)$}\label{autoreps}
Let $\pi$ be a cuspidal automorphic representation of $G$ (always assumed irreducible). 
Write $\pi = \pi_{\infty} \otimes \pi_f$, where $\pi_f$
is an irreducible admissible representation of $G(\A_f)$ and $\pi_{\infty}$ is an irreducible $(\grg,K_h)$-module.  
Let $K \subset G(\A_f)$ be an open compact.
We say $\pi$ is {\it holomorphic} (resp. {\it anti-holomorphic}) of type $(\kap,K)$ if $H^0(\grP_h,K_h;\pi_{\infty}\otimes_\C W_{\kap}) \neq 0$
(resp. $H^d(\grP_h,K_h;\pi_{\infty}\otimes_\C W_{\kap^{D}}) \neq 0$) and if $\pi_f^K \neq 0$.  
In this paper, we will only be concerned with $\pi$
that are either holomorphic or anti-holomorphic. If $\pi$ is holomorphic (resp.~anti-holomorphic) of type $(\kap,K)$, then by our conventions $\bar\pi$ is anti-holomorphic (resp.~holomorphic)
of type $(\kap,K)$.

Note that, with $G$ fixed, $\pi$ can be either holomorphic or anti-holomorphic, but not both (unless $G$ is definite); however, the isomorphism $F_\infty$ of \eqref{Finfty-pi} identifies anti-holomorphic representations of $G_2$ with holomorphic representations of $G_1$, and vice versa.    Although Hida theory is generally understood to be a theory of $p$-adic variation of (ordinary) holomorphic modular forms, the nature of the doubling method makes it more natural for us to take our basic object $\pi$ to be an {\it anti-holomorphic} (and anti-ordinary, see \ref{ord-hecke} below) cuspidal automorphic representation of $G_1$.    Thus $\pi$ is a {\it holomorphic} automorphic representation of $G_2$ but the natural object there is $\pi^{\flat}$, or $\bar{\pi}$, which is again anti-holomorphic.  Because this is inevitably a source of confusion,  reminders of these conventions have been inserted at strategic locations in the text.

\begin{rmk} \label{Epi-rmk}
If $\pi$ is holomorphic or anti-holomorphic, then, by the considerations in \cite{BHR}, $\pi_f$ is always defined
over a number field, say $E(\pi)$.   
We will always take $E(\pi)$ to contain $\K'$.
\end{rmk}

\subsubsection{The $\flat$ involution and the MVW involution $\dagger$}\label{MVWdag}
Let $\pi$ be a cuspidal automorphic representation of $G$. Let $\xi_\pi$ be the central character $\pi$. 
If $(\pi_\infty\otimes_\C W_\kap)^{K_h}\neq 0$ (for example, if $\pi$ is holomorphic of type $(\kap,K)$), 
then $\xi_{\pi,\infty}(t) = t^{a(\kap)}$ for $t\in\R^\times$.  Let 
\begin{equation}
\pi^\flat = \pi^{\vee}\otimes |\xi_{\pi}\circ \nu| = \pi^\vee\otimes ||\nu||^{a(\kap)}.
\end{equation}
Because $\pi\otimes |\xi_{\pi}\circ\nu|^{-\frac{1}{2}}$
is unitary, 
\begin{equation}
\pi^\flat \cong \bar{\pi},
\end{equation}
and when $\pi$ occurs with multiplicity one, as we will
generally assume, $\pi^\flat$ and $\bar{\pi}$ are the same spaces of automorphic forms.   
In particular, the operation $\pi \mapsto \pi^\flat$ is an involution of the set of cuspidal automorphic representations of $G$.  
If $\pi$ is holomorphic, then $\pi^\flat$ is anti-holomorphic, and vice versa.
  
The involution $g\mapsto g^\dagger$ of $G$ that was fixed in \ref{invo} is of the type considered by Moeglin, Vigneras, and Waldspurger in \cite[Chapitre 4]{MVW}.
In particular, there is an element $h_0\in \GL_{\K^+}(V)$ such that $h_0$ is $c$-semilinear for the $\K$-action on $V$ and 
$\vpair{h_0v,h_0w}_V = \vpair{w,v}_V$ and such that $\bar g = h_0 g h_0^{-1}$; with respect to the fixed $\K$-basis of $V$, $h_0$ is
just `act-by-$c$ on the coordinates'. Let $\pi = \otimes_{\ell\leq \infty}\pi_\ell$ be an automorphic representation of $G$.
If the hermitian pair $(V,\pair_V)$ is unramified at $\ell$, then it is a deep result proved in \cite[Chapitre 4]{MVW} (cf.~\cite{HKS}) that
\begin{equation}\label{dagger=flat}
\pi_\ell^\dagger \cong (\pi_\ell\circ\mathrm{Ad}(h_0))\otimes(\xi_\pi^{-1}\circ\nu) \cong \pi^\vee.
\end{equation}
In particular, if $\pi$ satisfies strong multiplicity one -- which we expect if the places at which $\pi$ or the group $G$ is ramified all 
split in $\K/\K^+$ and its base change to ${\GL_n}_{/\K}$ is cuspidal -- then 
$\pi^\dagger \cong \pi^\vee$ and so
$\pi^\dagger \otimes ||\nu||^{a(\kap)} = \pi^\flat = \bar\pi$. 
In any event, \eqref{dagger=flat} permits the Hecke actions on $\pi^\flat$ to be expressed in terms of the Hecke actions on $\pi^\dagger$, at least at the unramified primes $\ell$.
As will be explained later, the doubling method will pair $\pi$ and (a twist) of $\pi^\flat$, but we will use the involution `$\dagger$' to compare level structures and Hecke algebras.
This partly motivates our putting
\begin{equation}\label{flat-notation}
K^\flat = K^\dagger, \ \ \psi^\flat = \psi^\dagger, \ \ \text{and} \ \ \kap^\flat = \kap^\dagger\cdot \nu^{a(\kap)}.
\end{equation}

\subsubsection{Relating $\pair_\pi$ to $\pairS_\kap$}
Let $\pi$ be a holomorphic cuspidal automorphic representation of $G$ of type $(\kap,K)$.
Recall that the canonical pairing $\pair_\pi:\pi\otimes\pi^\vee\rightarrow \C$ can be expressed as
\begin{equation}\label{pair-pi}
\vpair{\vphi,\vphi'}_\pi = \int_{G(\Q)Z_G(\R)\backslash G(\A)} \vphi(g)\vphi'(g) dg, \ \ \ \vphi\in\pi,\ \vphi'\in\pi^\vee.
\end{equation}
The pairing $\pairS_\kap$ can be expressed in terms of $\pair_\pi$ as follows.

Let $w_1,...,w_m$ be a basis of $W_\kap$ and let $w_i^\vee,...,w_m^\vee$ be the dual basis of $W_\kap^\vee$.
As $W_{\kap^D}$ is the twist of $W_\kap^\vee$ by a character, the $\omega_i^\vee$ also defined a basis of $W_{\kap^D}$.
Let $\vphi \in (\pi^{\grp_h^{-}}\otimes_\C W_\kap)^{K_h}$ and 
$\vphi' \in \Hom(\wedge^d{\grp_h^-},\pi^\flat \otimes_\C W_{\kap^D})^{K_h}$. Write $\vphi = \sum_i \vphi_i\otimes w_i$
and $\vphi' = \sum_j \vphi'_j\otimes w_j^\vee$. Then it follows from \eqref{L2} that
\begin{equation}\label{pair-compare}
\vpair{\vphi,\vphi'}_\kap^\Serre = \sum_i \vpair{\vphi_i,\vphi_i'\cdot ||\nu||^{-a(\kap)}}_\pi.
\end{equation}

Let $\chi$ be a Hecke character of type $A_0$.  Recall that we have defined a twisted Petersson norm in \eqref{chitwist}, pairing vectors in $\pi$ with vectors in $\pi^\flat\otimes \chi^{-1}\circ\det$.  We may analogously define a pairing 
$$\pairS_{\kap,\chi}:  \pi \bigotimes \pi^{\flat}\otimes \chi^{-1}\circ\det$$ 
with the property that
\begin{equation}\label{pair-compare_chi}
\vpair{\vphi,\vphi'}_{\kap,\chi}^\Serre = \sum_i \vpair{\vphi_i,\vphi_i' ||\nu||^{-a(\kap)}}_{\pi,\chi}.
\end{equation}
Here the subscript $\chi$ at the end of \eqref{pair-compare_chi} has the same meaning as in \eqref{chitwist}.

\subsection{Hecke algebras}\label{heck}

We continue to let $G = G_1 = GU(V)$ and we return to the notation of Section  \ref{kapstar}; thus classical modular forms are of weight $\kap$.  
Fix a positive integer $r$ as in \ref{levelp} and a level subgroup $K = K_r = K^p\cdot I_r \subset G(\A_f)$.  Henceforth we will
write $T(g) = T_r(g)$ for the Hecke operators $[K^p_rgK^p_r]$
for $g \in G(\A_f^p)$; we have also introduced $U$-operators $U_{w,j}$ in \eqref{Up-operator}.  

For any $S_0$-algebra $R\subset\C$, we let $\bT_{K_r,\kap,R}$ be the $R$-subalgebra of 
$\End_\C(S_\kap(K_r;\C)) = \End_\C(H^0({}_{K_r}Sh(V),\omega_\kap))$
generated by the $U_{w,j,\kap} = |\kap'(t_{w,j})|_p^{-1} U_{w,j}$, where $\kap'$
is related to $\kap$ as in \eqref{Hecke-p-comp},
and by the $T(g)=T_r(g)$ for $g \in G(\A_f^S)$, where $S = S(K^p)$ is the set of places of $\K^+$
at which $K^p$ does not contain a hyperspecial maximal subgroup.\footnote{Since $G$ is a $\Q$-group, this needs to be clarified.  
We let $G(\A_f^S) = \prod_p G(\Q_p)^S$, where the product is taken over rational primes.  
If $p$ is not divisible by a prime in $S$ then $G(\Q_p)^S = G(\Q_p)$.
In general, let $\CP$ denote the set of primes of $\K^+$ dividing $p$, and write $\CP = \CP_1 \coprod \CP_2$, where primes in $\CP_1$ split in $\K$ and
those in $\CP_2$ do not split.  Then
$$G(\Qp) = \prod_{v \in \CP_1} GL(n,\K^+_v) \times G_{\CP_2},$$
where $G_{\CP_2}$ is the subgroup of elements $((x_w),t) \in \prod_{w \in \CP_2}GL(n,\K_w) \times \Q_p^\times$ such that each
$x_w$ preserves the hermitian form on $V(\K_w)$ with similitude factor $t$.  Write $S\cap \CP_i = S_{p,i}, i = 1, 2$.  Then
$$G(\Q_p)^S = \prod_{v \in \CP_1 \setminus S_{p,1}} GL(\K^+_v) \times G_{\CP_2}$$
if $S_{p,2}$ is empty, and $G(\Q_p)^S = \prod_{v \in \CP_1 \setminus S_{p,1}} GL(\K^+_v)$ otherwise.   We could also ignore all the divisors of $p$ if even
one of them belongs to $S$, since dropping finitely many generators from the unramified Hecke algebra doesn't change anything.}

We similarly define $\bT^-_{K_r,\kap,R}$ and $\bT^d_{K_r,\kap,R}$ 
by replacing $U_{w,j,\kap}$ with $U_{w,j,\kap}^- = |\kap'(t_{w,j})|_pU_{w,j}^-$
and, in the second case, also replacing $S_\kap(K_r;\C)$ with $H^d({}_{K_r}Sh(V),\omega_\kap) = H^d(Sh(V),\omega_\kap)^{K_r}$.
We will follow the convention 
of adding a subscript `$V$' (reps.~`$-V$') to notation if it is needed
to indicate that it relates to the hermitian pair $(V,\pair_V)$ (resp.~$(V,-\pair_V)$). 

\begin{lem}\label{Hecke-isoms} Let $R\subset\C$ be a subring.\hfill
\begin{itemize}
\item[\rm (i)] There exists a unique $R$-algebra isomorphism
$\bT_{K_r,\kap,R} \isoarrow \bT^d_{K_r,\kap^D,R}$, $T\mapsto T^d$,
such that $U_{w,j,\kap}^d = U_{w,j,\kap^D}^-$ and $T(g)^d= ||\nu(g)||^{a(\kap)}\cdot T(g^{-1})$.
\item[\rm (ii)] 
There exists a unique $R$-algebra isomorphism $\bT_{K_r,\kap,V,R} \isoarrow \bT_{K_r^\flat,\kap^\flat,-V,R}$,
$T\mapsto T^\flat$, such that 
$U_{w,j,\kap}^\flat  = U_{w,n,\kap^\flat}^{-1} U_{w,n-j,\kap^\flat}$ and
$T(g)^\flat = T(g^\dagger)=T(\bar g)$.
\item[\rm(iii)] There exists a unique isomorphism $\bT^-_{K_r,\kap,R}\isoarrow \bT^d_{K_r,\kap^D,cR}$ that 
maps $r\in R$ to $c(r)$, $U_{w,j,\kap^D}^-$ to $U_{w,j,\kap^D}^-$, and $T(g)$ to $T(g)$.
\end{itemize}
\end{lem}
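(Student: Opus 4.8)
\textbf{Proof proposal for Lemma \ref{Hecke-isoms}(iii).}

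The plan is to produce the isomorphism $\bT^-_{K_r,\kap,R}\isoarrow \bT^d_{K_r,\kap^D,cR}$ by combining the map of part (i) with a complex-conjugation comparison, exactly the way the statement is phrased (it maps $r\mapsto c(r)$, leaves the $U^-$-operators fixed, and leaves the prime-to-$p$ operators fixed). The key tool is the isomorphism $c_B$ of \eqref{C_B-coh}, which is a $c$-semilinear, $G(\A_f)$-equivariant isomorphism $H^0_!(Sh(V),\omega_\kap)\isoarrow H^d_!(Sh(V),\omega_{\kap^D})$, together with its level-$K_r$ version restricting to $S_\kap(K_r,\C)\isoarrow H^d_!({}_{K_r}Sh(V),\omega_{\kap^D})$. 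First I would record precisely how the Hecke operators transform under $c_B$. Since $c_B$ is $G(\A_f^p)$-equivariant on the nose (complex conjugation does not touch the finite-adelic action away from $p$), one gets $c_B\circ T(g) = T(g)\circ c_B$ for $g\in G(\A_f^S)$. At $p$ the point is the normalization: the operators $U_{w,j,\kap}$ and $U^-_{w,j,\kap^D}$ were defined with the $p$-adic normalizing factors $|\kap'(t_{w,j})|_p^{\mp 1}$ precisely so that they are algebraic (indeed rational, by \ref{Heckeatp2}) and stable under the relevant structures; I would check that $c_B$ intertwines $U_{w,j,\kap}$ with $U^-_{w,j,\kap^D}$ — note the switch from $U^+$ to $U^-$ is forced because $c_B$ is built from complex conjugation on automorphic forms, which inverts the Harish-Chandra decomposition $\grp_h^\pm \mapsto \grp_h^\mp$ and hence exchanges the two kinds of $U_p$-double cosets, while the normalizing scalars $|\kap'(t_{w,j})|_p$ are real and survive conjugation. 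This gives, for $T\in\bT^-_{K_r,\kap,R}$, a well-defined operator $c_B T c_B^{-1}$ on $H^d_!({}_{K_r}Sh(V),\omega_{\kap^D})$ lying in $\bT^d_{K_r,\kap^D,\C}$, and the $c$-semilinearity of $c_B$ forces scalars $r$ to act as $c(r)$, landing us in $\bT^d_{K_r,\kap^D,cR}$.

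Next I would verify surjectivity and injectivity. The algebra $\bT^-_{K_r,\kap,R}$ is generated as an $R$-algebra by the $U^-_{w,j,\kap}$ and the $T(g)$, $g\in G(\A_f^S)$; the algebra $\bT^d_{K_r,\kap^D,cR}$ is generated as a $cR$-algebra by the $U^-_{w,j,\kap^D}$ and the same $T(g)$ (with the twist $||\nu(g)||^{a(\kap)}$ only appearing in part (i), not here, because the generators on the $H^d$ side for part (iii) are the already-normalized $U^-$ and the plain $T(g)$ — I should double-check the conventions in \ref{heck} to confirm the prime-to-$p$ operators on $\bT^d_{K_r,\kap^D}$ are normalized so that $c_B$ carries $T(g)$ to $T(g)$ without a similitude twist). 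Conjugation by $c_B$ sends generators to generators, so the map is surjective; it is injective because $c_B$ is an isomorphism of the underlying cohomology groups, so $c_B T c_B^{-1} = 0$ forces $T=0$ on $S_\kap(K_r,\C)$. Uniqueness is then immediate: any $R$-algebra map (with the prescribed semilinearity in $R$) is determined by its values on the algebra generators, which are specified.

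The main obstacle — really the only subtle point — is bookkeeping the $p$-adic normalizations and the $\grp^+\leftrightarrow\grp^-$ switch so that the $U$-operator intertwining comes out as stated, i.e.\ that $c_B$ sends $U^-_{w,j,\kap}$ (and not $U^+$) to $U^-_{w,j,\kap^D}$, with the normalizing scalars $|\kap'(t_{w,j})|_p$ behaving correctly under $c$-semilinearity (they are in $\Q$, hence fixed, so the semilinearity only shows up on the coefficient ring $R$). This is exactly the kind of computation already carried out in parts (i) and (ii) and in \cite{H90, BHR}, so I would model the argument on those: establish the intertwining at the level of the $(\grP_h,K_h)$-cohomology description \eqref{dbar}, where $c_B$ is the explicit $c$-semilinear map \eqref{cB-map}, and transport it to coherent cohomology. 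Once the generator-level intertwining is pinned down, assembling the algebra isomorphism and proving uniqueness is formal.
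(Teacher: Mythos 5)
Your high-level strategy matches the paper's: the paper's entire stated proof of part (iii) is the observation that it follows from the isomorphism $c_B$ of \eqref{C_B-coh}, and you correctly identify this.

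However, your explanation of how $c_B$ interacts with the $U_p$-operators contains a genuine error. You assert that $c_B$ ``inverts the Harish-Chandra decomposition $\grp_h^\pm\mapsto\grp_h^\mp$ and hence exchanges the two kinds of $U_p$-double cosets.'' This confuses archimedean structure with $p$-adic structure: the Harish-Chandra decomposition lives on $\Lie(G(\R))_\C$, whereas the double cosets $[K_r\,t^{\pm}_{w,j}\,K_r]$ are built from elements $t^{\pm}_{w,j}\in G(\Qp)$. The map $c_B$, being pointwise complex conjugation on automorphic forms, commutes with right translation by all of $G(\A_f)$ on the nose, and therefore commutes with the unnormalized double coset operators $U^{+}_{w,j}$ and $U^{-}_{w,j}$ \emph{separately}. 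There is no $+\!\leftrightarrow\!-$ switch under $c_B$. The effect of the $\grp^\pm$-inversion is only to move holomorphic vectors to anti-holomorphic ones, i.e.\ to change the cohomological degree from $0$ to $d$; it does not touch the $p$-adic Hecke operators. This is precisely why part (iii) is stated with $U^-\mapsto U^-$ (no change of sign). The $+\!\to\!-$ switch you are thinking of belongs to part (i), where it is produced by Serre duality (adjointness inverts the double coset) — a completely different mechanism. Your middle paragraph even asserts the wrong intertwining ($U_{w,j,\kap}\mapsto U^-_{w,j,\kap^D}$), which your final paragraph then contradicts by stating the correct one; the confusion is not resolved, merely acknowledged. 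A related imprecision: you write the normalizing factors as $|\kap'(t_{w,j})|_p^{\mp1}$ for both operators, but the normalization of $U^-_{w,j,\kap^D}$ is governed by $\kap^{D,\prime}$, not $\kap'$; since these differ as algebraic characters, the compatibility of the two normalizations under $c_B$ is something that must actually be checked, and your write-up does not address it.

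If you drop the false $\grp^\pm$-explanation and argue instead directly from the $G(\A_f)$-equivariance of $c_B$ (which gives $c_B\circ U^{\pm}_{w,j}=U^{\pm}_{w,j}\circ c_B$ on the unnormalized operators with no sign change) and from the $c$-semilinearity (which handles the coefficient ring and, after the normalization comparison, the scalar factors), the rest of your argument — generation, surjectivity from conjugation of generators, injectivity from $c_B$ being an isomorphism, uniqueness because the map is specified on generators — is sound and essentially fills in the paper's one-line proof.
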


\begin{proof} Part (i) follows from Serre duality, part (ii) from the isomorphism $F^\dagger$, and part (iii) from the isomorphism $c_{B}$.
\end{proof}
 
For a nebentypus $\psi$ of level $r$ (a character of $T_H(\Zp)$ that factors through $T_H(\Zp/p^r\Zp)$),
we let $\bT_{K_r,\kap,\psi,R}$ and $\bT^d_{K_r,\kap,\psi,R}$ be the quotients of $\bT_{K_r,\kap,R}$ 
and $\bT^d_{K_r,\kap,R}$ upon restriction to
the (invariant) subspaces $S_{\kap}(K_r,\psi,\C) \subset S_{\kap}(K_r,\C)$
and $H^d({}_{K_r}Sh(V),\omega_\kap)^\psi$, the subspace of $H^d({}_{K_r}Sh(V),\omega_\kap)$
on which the action of $K_r^0$ (which factors through $T_H(\Zp/p^r\Zp)$ as in Section   \ref{levelp}) acts via $\psi$.   

\begin{lem}\label{Hecke-char-isom} The isomorphisms in Lemma \ref{Hecke-isoms}(i)-(ii) induce 
$R$-algebra isomorphisms 
$$
\bT_{K_r,\kap,\psi,R}\isoarrow \bT^d_{K_r,\psi^{-1},\omega_{\kap^D},R} \ \ \text{and} \ \
\bT_{K_r,\kap,\psi,V,R} \isoarrow \bT_{K_r^\flat,\kap^\flat,\psi^\flat, -V, R}.
$$
\end{lem}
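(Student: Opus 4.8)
The plan is to deduce Lemma \ref{Hecke-char-isom} from Lemma \ref{Hecke-isoms} by checking that the two isomorphisms constructed there are compatible with the relevant filtrations/direct sum decompositions by nebentypus, so that they pass to the indicated quotients. The point is that both $\bT_{K_r,\kap,\psi,R}$ and $\bT^d_{K_r,\psi^{-1},\omega_{\kap^D},R}$ (resp.\ $\bT_{K_r^\flat,\kap^\flat,\psi^\flat,-V,R}$) are defined as the images of the big Hecke algebras acting on the $\psi$-isotypic (resp.\ $\psi^\flat$-isotypic) subspaces, so it suffices to show that the isomorphisms of Lemma \ref{Hecke-isoms} carry the one subspace onto the other. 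Once that is known, the induced maps on endomorphism rings automatically restrict to surjections of the corresponding Hecke subalgebras, and injectivity is inherited from the isomorphisms of the full Hecke algebras.

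First I would make explicit how the operator $K_r^0$, or rather $T_H(\Zp)$ acting through $K_r^0/K_r \isoarrow T_H(\Zp/p^r\Zp)$, interacts with the three geometric isomorphisms $T\mapsto T^d$ (Serre duality \eqref{SerreC}), $T\mapsto T^\flat$ (the MWV isomorphism $F^\dagger$ of \eqref{Fdagger-nebentypus}), and $T\mapsto T^{c_B}$ (complex conjugation $c_B$ of \eqref{C_B-coh}). For (i): Serre duality \eqref{SerreD}--\eqref{SerreC} is $G(\A_f)$-equivariant in the sense that the pairing intertwines the action of $g$ on the first factor with that of $g^{-1}$ (twisted by $||\nu(g)||^{a(\kap)}$) on the second; restricting to $g = t \in T_H(\Zp)$ shows that the $\psi$-eigenspace $S_\kap(K_r,\psi;\C)$ pairs perfectly with the $\psi^{-1}$-eigenspace $H^d({}_{K_r}Sh(V),\omega_{\kap^D})^{\psi^{-1}}$, whence $\bT_{K_r,\kap,\psi,R}$ is carried isomorphically onto $\bT^d_{K_r,\kap^D,\psi^{-1},R}$. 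For (ii): the identity \eqref{Fdagger-nebentypus} already states precisely that $F^\dagger$ restricts to an isomorphism $S_{\kap,V}(K_{r,V},\psi;R)\isoarrow S_{\kap^\dagger,-V}(K^\dagger_{r,-V},\psi^\dagger;R)$, and since $F^\dagger$ is $G(\A_f)$-equivariant up to the automorphism $\dagger$, the Hecke operators $U_{w,j,\kap}$ and $T(g)$ on the source correspond under $T\mapsto T^\flat$ to the operators $U_{w,n,\kap^\flat}^{-1}U_{w,n-j,\kap^\flat}$ and $T(g^\dagger)$ on the target; so $\bT_{K_r,\kap,\psi,V,R}$ maps isomorphically onto $\bT_{K_r^\flat,\kap^\flat,\psi^\flat,-V,R}$, using $\psi^\flat = \psi^\dagger$ and $\kap^\flat = \kap^\dagger\cdot\nu^{a(\kap)}$ from \eqref{flat-notation}.

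The one point requiring a small amount of bookkeeping, which I expect to be the main (though modest) obstacle, is keeping the nebentypus conventions straight: on the $-V$ side the torus $T_{H_0,-V}$ is identified with $T_{H_0,V}$ via the rule $(h_0,(h_\sigma))\mapsto (h_0,(h_0{}^th_{\sigma c}^{-1}))$ of \S\ref{CConSh}, and one must check that under this identification the character that $K^\dagger_{r,-V}{}^0$ acts by on $F^\dagger(f)$ is indeed $\psi^\dagger = \psi^{-1}$ (as characters of the diagonal torus of the right-hand side of \eqref{G-iso} via \eqref{G-iso2}), which is exactly the content of the parenthetical remark after \eqref{Fdagger-nebentypus}; similarly for Serre duality one must verify that the twist by $||\nu||^{a(\kap)}$ does not alter the nebentypus on $T_H(\Zp)$, which holds because $\nu$ restricted to $T_H(\Zp)$ contributes only to the similitude factor and the character $\kap^D = \kap^\star + \kap_h^+$ has been arranged so that the $K_r^0$-action is through $\psi^{-1}$. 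Assembling these compatibilities, the two displayed isomorphisms follow immediately by passing to quotients in Lemma \ref{Hecke-isoms}(i) and (ii); part (iii) of that lemma is not needed here but could be recorded analogously if desired.
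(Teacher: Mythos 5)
Your proposal is correct and takes essentially the same route as the paper: the paper's proof is the single sentence ``This is clear from the definitions,'' and what you have written is precisely the unpacking of that remark — checking that Serre duality and $F^\dagger$ respect the $\psi$-isotypic decompositions (via \eqref{SerreC}'s equivariance and \eqref{Fdagger-nebentypus}) so that the isomorphisms of Lemma \ref{Hecke-isoms} descend to the nebentypus quotients.
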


\noindent This is clear from the definitions.

The $R$-modules $S_\kap(K_r;R)$ and $S_\kap(K_r,\psi;R)$
are stable under the action of the Hecke operators $U_{w,j,\kap}$ and $T(g)$, $g\in G(\A_f^S)$. In particular, 
the cuspforms over $\C$ can be replaced by those over $R$ in the definition of $\bT_{K,\kap,R}$ and $\bT_{K,\kap,\psi,R}$.

For any of these Hecke algebras $\bT^{?}_\bullet$, we write $\bT^{?,p}_\bullet$ for the subalgebra generated over the ring $R$ 
by the $T(g)$, $g\in G(\A_f^S)$ (so omitting the $U_{w,j,\kap}$ and $U_{w,j,\kap}^-$). The isomorphisms of Lemmas \ref{Hecke-isoms}
and \ref{Hecke-char-isom} restrict to corresponding isomorphisms of these ($p$-depleted) Hecke algebras.

If $R=S_0$, then we omit the subscript `$R$' from our notation.

\subsubsection{The homomorphism $\lambda_\pi^p$, isotypical subspaces, and the multiplicity one hypothesis}
Let $\pi$ be a holomorphic cuspidal representation of $G$ of type $(\kap,K_r)$.  Then the natural action of $\bT^p_{K_r,\kap}$ on $\pi^{K_r}$
is given by a character that we denote $\lambda^p_{\pi}$; these homomorphisms are compatible under the natural projections 
$\bT^p_{K_r,\kap}\twoheadrightarrow \bT^p_{K_{r'},\kap}$, $r\geq r'$, so we do not include the $r$ in our notation.
Via the isomorphism $\bT_{K_r,\kap,V}\isoarrow \bT_{K_r^\flat,\kap^\flat,-V}$ of Lemma \ref{Hecke-isoms}(ii),
$\lambda_{\pi,V}^p=\lambda_\pi^p$ determines a homomorphism $\lambda^{p,\flat}_{\pi,V}$ of $\bT^p_{K_r^\flat,\kap^\flat,-V,R}$, which,
by \eqref{dagger=flat}, 
satisfies
\begin{equation}\label{flatflat}  
\lambda_{\pi,V}^{p,\flat} =   \lambda^p_{\pi^{\flat},-V}. 
\end{equation}

For an $S_0$-algebra $R\subset\C$, the homomorphism $\lambda_\pi^p$ extends $R$-linearly to a homomorphism of the Hecke
algebras over $R$; we use the same notation for this homomorphism.

We say that $\pi$ satisfies the {\it multiplicity one hypothesis for $\pi$} if: 
\begin{hyp}[Multiplicity one hypothesis] \label{multone-pi}
For any holomorphic cuspidal $\pi' \neq \pi$ of type $(\kap,K_r)$, $\lambda^p_{\pi'} \neq \lambda^p_{\pi}$.
\end{hyp}

This multiplicity one hypothesis for $\pi$ is expected to hold 
if $S=S(K^p)$ consists only of places that are split in $\CK/\CK^+$ (so all local $L$-packets are singletons) and if the base change
of $\pi$ to ${\GL_n}_{/\CK}$ is cuspidal (so $\pi$ is not obtained by endoscopic transfer from a non-trivial elliptic endoscopic group of $G$).
When $G$ is quasi-split multiplicity one has been established for the {\it unitary group} by Mok \cite{mok}, and the general case has been proved under certain
restrictive hypotheses by Kaletha, M\'inguez, Shin, and White \cite{gangof4}.  All this work is based in part on results that have been announced by Arthur but
that have not yet appeared.
  
We will generally assume that $\pi$ satisfies this multiplicity one hypothesis.  This is not indispensable, but it simplifies the notation.  However, one of the referees pointed out that this hypothesis may be restrictive for certain applications.  For example, a $\pi$ that is ramified at a prime $v$ that is inert in $\CK/\CK^+$ cannot in general  be distinguished from a $\pi'$ that is isomorphic to $\pi$ at all unramified places; indeed, $\pi_v$ can  belong to an $L$-packet that contains several non-isomorphic
representations.  Such $\pi$ arise naturally by automorphic induction from representations of unitary groups over cyclic extensions of $\CK^+$.   In Remark \ref{globalmult1} we sketch a construction without this hypothesis.

We fix a basis of the one-dimensional space $H^0(\grP_h,K_h;\pi_\infty\otimes_\C W_\kap)$ and a choice of $E(\pi)$-rational spherical vector in $\pi_f^S$.
Let $S_{\kap}(K_r,\C)(\pi)$ be the $\lambda^p_{\pi}$-isotypic subspace of $S_{\kap}(K_r,\C)$ for the action
of $\bT^p_{K_r,\kap}$.    
There is then an embedding 
$$
j_{\pi}:  H^0(\grP_h,K_h; \pi^{K_r}\otimes_\C W_\kap) \cong \pi^{K_r}_f \hookrightarrow S_{\kap}(K_r,\C)(\pi)
$$ 
of  $\bT^p_{K_r,\kap}$-modules.

\begin{lem}\label{mult1Q}  
Let $\pi$ be a holomorphic cuspidal automorphic representation of type $(\kap,K_r)$, and suppose $\pi$ satisfies Hypothesis \ref{multone-pi}.
\begin{itemize}
\item[\rm (i)] The injection $j_{\pi}$ defines an isomorphism
\begin{equation*}\label{facto} 
j_\pi:   \pi^{K_S}_S \otimes \pi_p^{I_r}\isoarrow S_{\kap}(K_r,\C)(\pi).
\end{equation*}  
\item[\rm(ii)] Let $\lambda$ be any extension of $\lambda_{\pi}^p$ to a character of $\Tb_{K_r,\kap,R}$.
Let $R \subset \C$ be a finite extension of $E(\pi)$ containing the values of $\lambda$, and let $S_{\kap}(K_r,R)[\lambda]$
be the localization of the $\bT_{K_r,\kap,R}$-module $S_{\kap}(K_r,R)$ at the prime ideal $\grp_{\lambda} \subset \bT_{K_r,\kap,R}$
that is the kernel of the character $\lambda$; in other words, $S_{\kap}(K_r,R)[\lambda]$ is the $\lambda$-isotypic
component of $S_{\kap}(K_r,R)$.  Then $j_{\pi}$ defines an isomorphism
$$
j_\pi: \pi_S^{K_S}\otimes \pi_p^{I_r}[\lambda]\isoarrow S_{\kap}(K_r,R)[\lambda]\otimes_R\C = S_\kap(K_r,\C)[\lambda].
$$
Here $\pi_p^{I_r}[\lambda]$ is the subspace
of $\pi_p^{I_r}$ on which each $U_{w,j,\kap}$ acts as $\lambda(U_{w,j,\kap})$.
\end{itemize}
\end{lem}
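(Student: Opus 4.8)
The plan is to deduce both parts from the strong multiplicity one statement encoded in Hypothesis \ref{multone-pi}, together with a standard tensor-factorization argument for admissible representations. First I would recall that $S_\kap(K_r,\C)$ decomposes under the $p$-depleted Hecke algebra $\bT^p_{K_r,\kap}$ into a direct sum of generalized eigenspaces, and that each such eigenspace corresponds to a single $\lambda^p_{\pi'}$ for a holomorphic cuspidal $\pi'$ of type $(\kap,K_r)$; this follows from the identification \eqref{dbar} of $H^0_!$ with $(\grP_h,K_h)$-cohomology of cuspforms and semisimplicity of the (commutative, self-adjoint under the Petersson pairing) prime-to-$p$ Hecke action. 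Hypothesis \ref{multone-pi} then guarantees that $S_\kap(K_r,\C)(\pi)$, the $\lambda^p_\pi$-isotypic part, receives contributions only from $\pi$ itself, so $S_\kap(K_r,\C)(\pi) = j_\pi(\pi_f^{K_r})$; the injectivity of $j_\pi$ is clear since a nonzero cuspform cannot vanish as an automorphic form. This gives the set-theoretic content of (i); to get the asserted factorization $\pi_S^{K_S}\otimes \pi_p^{I_r}$ I would invoke the factorization \eqref{facpi}-style tensor product decomposition $\pi_f \isoarrow \pi^{S,p}\otimes \pi_p \otimes \pi_S$, note that $\pi^{S,p}$ is spherical so $(\pi^{S,p})^{K^{S,p}}$ is one-dimensional (canonically, once the spherical vector is fixed), and that $K_r = I_rK^p$ forces the invariants to be $\pi_S^{K_S}\otimes \pi_p^{I_r}$. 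The chosen basis of $H^0(\grP_h,K_h;\pi_\infty\otimes W_\kap)$ makes the isomorphism $H^0(\grP_h,K_h;\pi^{K_r}\otimes W_\kap)\cong \pi_f^{K_r}$ canonical, completing (i).

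For part (ii) I would first treat the case $R\subset\C$ a finite extension of $E(\lambda_\pi^p)$ containing the values of $\lambda$. The point is that localizing $S_\kap(K_r,R)$ at $\grp_\lambda$ extracts the $\lambda$-isotypic component, and after $\otimes_R \C$ this sits inside $S_\kap(K_r,\C)(\pi)$ by (i). The full $U$-operator algebra $\bT_{K_r,\kap,R}$ acts on the finite-dimensional space $\pi_p^{I_r}$ (the operators $U_{w,j,\kap}$ commute with the prime-to-$p$ Hecke action and preserve the $\pi$-isotypic part), and $\lambda$ restricted to this algebra is one of the characters appearing in $\pi_p^{I_r}$; the $\lambda$-eigenspace for these operators is exactly $\pi_p^{I_r}[\lambda]$ by definition. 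Tensoring with the one-dimensional $\pi_S^{K_S}$ (on which the $U$-operators act trivially, or rather through the normalization already built into $\lambda$) yields the claimed isomorphism $j_\pi: \pi_S^{K_S}\otimes \pi_p^{I_r}[\lambda]\isoarrow S_\kap(K_r,R)[\lambda]\otimes_R\C$. One then checks that the source and target are Hecke-stable in the obvious way and that $j_\pi$ is $\bT_{K_r,\kap,R}$-equivariant by construction.

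I expect the main obstacle to be bookkeeping rather than a genuine conceptual difficulty: one must verify carefully that the $R$-rational structure on $S_\kap(K_r,R)$ (coming from the canonical model and the integral theory of \S\ref{padic-unitary-section}) is compatible with the $\C$-analytic tensor factorization of $\pi_f$, so that the localization $S_\kap(K_r,R)[\lambda]$ really does base-change to the $\lambda$-eigenspace inside $S_\kap(K_r,\C)(\pi)$ and does not pick up Galois conjugates of $\lambda$. This is where the hypothesis that $R$ contains $E(\lambda_\pi^p)$ and the values of $\lambda$ is essential, and where the results of \cite{BHR} on rationality of $\pi_f$ (ensuring $\pi_f$ is defined over $E(\pi)\subset R$) get used. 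The semisimplicity of the $U$-operators on the ordinary (or more generally, finite-slope) part is not needed here since we only localize rather than decompose; but if one wanted an honest direct sum decomposition of $S_\kap(K_r,R)$ one would need to be more careful. All of this is routine given Hypothesis \ref{multone-pi} and the structure theory already set up, so I would keep the write-up short.
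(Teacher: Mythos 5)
The paper states this lemma without giving a proof; your argument supplies the expected one. The ingredients you identify are exactly what is needed: the semisimplicity of the prime-to-$p$ Hecke action via the identification \eqref{dbar} and the Petersson pairing, Hypothesis \ref{multone-pi} (which, read as applying to all cuspidal subrepresentations $\pi' \neq \pi$ of $\CA_0(G)$, enforces multiplicity one for $\pi$ as well as separation of the $\bT^p$-eigensystems), the restricted tensor factorization of $\pi_f^{K_r}$, and the rationality of $\pi_f$ over $E(\pi)$ from \cite{BHR} together with flat base change.

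One slip in part (ii): you write ``tensoring with the one-dimensional $\pi_S^{K_S}$,'' but $\pi_S^{K_S}$ (the $K_S$-fixed vectors at the ramified primes $S$) is not one-dimensional in general; you appear to have conflated it with $(\pi^{S,p})^{K^{S,p}}$, the canonical spherical line, which you invoked correctly earlier in part (i). This does not break the argument, since the $U$-operators act exclusively on the $\pi_p^{I_r}$ factor whatever the dimension of $\pi_S^{K_S}$, but the phrasing should be corrected. Your closing observation about semisimplicity is well-placed: the statement defines $\pi_p^{I_r}[\lambda]$ as the honest simultaneous $U$-eigenspace while $S_\kap(K_r,R)[\lambda]$ is a localization (a generalized eigenspace), and these coincide only if the $U$-action on $\pi_p^{I_r}$ is semisimple at $\lambda$; the paper's ``in other words'' gloss does elide this, and, as you note, it is harmless in the ordinary context where the relevant eigenspace has dimension at most one (Theorem \ref{dim1}).
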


\subsubsection{The (anti-)ordinary projector and (anti-)ordinary Hecke algebra}\label{ord-hecke}
Suppose $R\subset \C$ is the localization of a finite $S_0$-algebra
at the maximal prime determined by $incl_p$ or a $p$-adic algebra in the sense that $\iota_p(R)$ is $p$-adically complete.

Recall the definition \eqref{ordinaryprojector} of the ordinary projector $e_\kappa$.
Set $\bT^\ord_{K_r,\kap,R} = e_\kap\bT_{K_r,\kap,R}$ and $\bT^\ord_{K,\kap,\psi,R}=e_\kap
\bT_{K_r,\kap,\psi,R}$. Then $\bT^\ord_{K_r,\kap,R}$ and  $\bT^\ord_{K,\kap,\psi,R}$ are just
the rings obtained by restricting the Hecke operators to the (stable) subspaces
$S_\kap^\ord(K_r;R)$ and $S_\kap^\ord(K_r,\psi;R)$. For $R$ not $p$-adic
we define the latter modules to be the respective intersections of $S_\kap(K_r;R)$ and
$S_\kap(K_r,\psi;R)$ with the ordinary spaces over the $p$-adic completion of $R$ 
(that is, the completion of $\incl_p(R)$).

Similarly,  let $U_{p,\kap}^- = \prod_{w\in\Sigma_p}\prod_{j=1}^{n} U_{w,j,\kap}^-$ and 
let $e_{\kap}^- = \varinjlim_N (U_{p,\kap}^-)^{N!}$ (as an operator, when it exists).
We call this the {\it anti-ordinary projector}, and put
$\bT^\aord_{K_r,\kap,R} = e_\kap^-\bT^d_{K_r,\kap,R}$  and $\bT^\aord_{K,\kap,\psi,R}=e_\kap^-
\bT^d_{K,\kap,\psi,R}$.

\begin{lem}\label{Hecke-ord-iso}
Suppose $R$ is as above. The isomorphisms of Lemmas \ref{Hecke-isoms}(i)-(ii)
and \ref{Hecke-char-isom} restrict to $R$-algebra isomorphisms:
\begin{itemize}
\item[\rm (i)] $\bT^\ord_{K_r,\kap,R} \isoarrow \bT^\aord_{K_r,\kap^D,R}$ and $\bT^\ord_{K_r,\kap,\psi,R} \isoarrow \bT^\aord_{K_r,\kap,\psi^{-1},R}$,
\item[\rm(ii)] $\bT^\ord_{K_r,\kap,V,R} \isoarrow \bT^\ord_{K_r^\flat,\kap^\flat-V,R}$ and 
$\bT^\ord_{K_r,\kap,\psi,V,R}\isoarrow \bT^\ord_{K_r^\flat,\kap^\flat,\psi^{-1},-V,R}$.
\end{itemize}
\end{lem}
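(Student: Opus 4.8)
\textbf{Proof plan for Lemma \ref{Hecke-ord-iso}.}

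The plan is to deduce everything from Lemma \ref{Hecke-isoms} and Lemma \ref{Hecke-char-isom}, which already give $R$-algebra isomorphisms $\bT_{K_r,\kap,R}\isoarrow\bT^d_{K_r,\kap^D,R}$, $\bT_{K_r,\kap,V,R}\isoarrow\bT_{K_r^\flat,\kap^\flat,-V,R}$, and their nebentypus refinements; the only new content here is that these isomorphisms are compatible with the idempotent cutting out the ordinary (resp.\ anti-ordinary) part. So the essential point to check is that under the isomorphism $T\mapsto T^d$ of Lemma \ref{Hecke-isoms}(i), the ordinary projector $e_\kap$ is carried to the anti-ordinary projector $e_{\kap^D}^-$, and that under $T\mapsto T^\flat$ of Lemma \ref{Hecke-isoms}(ii), $e_{\kap,V}$ is carried to $e_{\kap^\flat,-V}$.

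First I would treat (i).  By definition $e_\kap = \varinjlim_N U_{p,\kap}^{N!}$ with $U_{p,\kap} = \prod_{w\in\Sigma_p}\prod_{j=1}^n U_{w,j,\kap}$, and $e_{\kap^D}^- = \varinjlim_N (U_{p,\kap^D}^-)^{N!}$ with $U_{p,\kap^D}^- = \prod_{w,j} U_{w,j,\kap^D}^-$.  Lemma \ref{Hecke-isoms}(i) states $U_{w,j,\kap}^d = U_{w,j,\kap^D}^-$, hence $U_{p,\kap}^d = U_{p,\kap^D}^-$, and since $T\mapsto T^d$ is a ring homomorphism it commutes with taking powers and with the limit defining the idempotent; thus $(e_\kap)^d = e_{\kap^D}^-$.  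Here one should note that the limit defining these projectors is a limit of operators on a fixed finite-dimensional space (over $\C$, or over the $p$-adic completion of $\incl_p(R)$), stabilizing after finitely many steps, so "commuting with the limit" is just the statement that a ring homomorphism sends a stable value to a stable value; the hypothesis on $R$ (localization of a finite $S_0$-algebra at $\grp'$, or $p$-adically complete) guarantees the projectors exist, which is exactly what is needed for $\bT^\aord$ to be defined.  Consequently $T\mapsto T^d$ restricts to $e_\kap\bT_{K_r,\kap,R}\isoarrow e_{\kap^D}^-\bT^d_{K_r,\kap^D,R}$, i.e.\ $\bT^\ord_{K_r,\kap,R}\isoarrow\bT^\aord_{K_r,\kap^D,R}$.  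The nebentypus version follows identically, using Lemma \ref{Hecke-char-isom}: the isomorphism $\bT_{K_r,\kap,\psi,R}\isoarrow\bT^d_{K_r,\kap,\psi^{-1},R}$ (note $\kap^D$ is omitted in the statement of (i) for the character case, matching the convention that the target algebra records $\psi^{-1}$ and $\omega_{\kap^D}$) still sends $U_{w,j,\kap}$ to $U_{w,j,\kap^D}^-$, so it sends $e_\kap$ to $e_{\kap^D}^-$ and hence restricts to the ordinary/anti-ordinary quotients.

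For (ii) the argument is the same in spirit but requires one extra observation.  Lemma \ref{Hecke-isoms}(ii) gives $U_{w,j,\kap}^\flat = U_{w,n,\kap^\flat}^{-1}U_{w,n-j,\kap^\flat}$.  Taking the product over all $j=1,\dots,n$ (and over $w\in\Sigma_p$), the factors $U_{w,n,\kap^\flat}^{-1}$ contribute $U_{w,n,\kap^\flat}^{-n}$ and the factors $U_{w,n-j,\kap^\flat}$ run over $U_{w,0,\kap^\flat},\dots,U_{w,n-1,\kap^\flat}$; since $U_{w,n,\kap^\flat} = U_{w,0,\kap^\flat}^{-1}$ is invertible in the Hecke algebra (it is $\prod_j U_{w,j}$ type and acts invertibly after the ordinary idempotent, because ordinary forms are where these operators are units), the upshot is that $U_{p,\kap}^\flat$ equals $U_{p,\kap^\flat}$ times a product of invertible operators that themselves lie in (and act invertibly on) the ordinary part.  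Therefore $U_{p,\kap}^\flat$ and $U_{p,\kap^\flat}$ have the same sequence of stabilizing idempotents $\varinjlim_N(\cdot)^{N!}$, so $(e_{\kap,V})^\flat = e_{\kap^\flat,-V}$.  Hence $T\mapsto T^\flat$ restricts to $\bT^\ord_{K_r,\kap,V,R}\isoarrow\bT^\ord_{K_r^\flat,\kap^\flat,-V,R}$, and the nebentypus refinement follows from Lemma \ref{Hecke-char-isom} exactly as before.  The main obstacle, and the only place any real work is required, is precisely this last bookkeeping step in (ii): one must be careful that the "extra" invertible Hecke factors appearing in $U_{w,j,\kap}^\flat$ do not disturb the idempotent, and the clean way to see this is to observe that $\varinjlim_N X^{N!}$ depends only on $X$ up to multiplication by an operator that is invertible on the image of the resulting idempotent — a routine fact about idempotents attached to a single operator on a Noetherian module, which I would state and use without further comment.
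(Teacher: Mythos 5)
Your overall strategy—tracking the ordinary/anti-ordinary idempotents through the isomorphisms of Lemmas \ref{Hecke-isoms} and \ref{Hecke-char-isom}—is exactly what the paper's one-line proof (``immediate from the definitions'') has in mind, and your treatment of part (i) is clean and correct. However, the final step of part (ii) as you state it is not quite right and needs repair.

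Two specific problems.  First, the parenthetical claim $U_{w,n,\kap^\flat} = U_{w,0,\kap^\flat}^{-1}$ is false: $t_{w,0} = 1_n$ so $U_{w,0,\kap^\flat}$ is the identity operator, while $t_{w,n} = p\cdot 1_n$ so $U_{w,n,\kap^\flat}$ is a nontrivial (central) operator; these are not inverses.  Second, and more substantively, the ``routine fact'' you invoke—that $\varinjlim_N X^{N!}$ depends on $X$ only up to multiplication by an operator invertible \emph{on the image of the idempotent}—is false.  A two-dimensional counterexample: let $X$ act with eigenvalue $1$ on a line $V_1$ and eigenvalue $p$ on a line $V_2$, and let $u$ act by $1$ on $V_1$ and $p^{-1}$ on $V_2$.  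Then $u$ commutes with $X$ and is a unit on $V_1 = \operatorname{Im}(e_X)$, but $uX$ has unit eigenvalue $1$ on both lines, so $e_{uX} = \mathrm{id} \neq e_X$.  The correct version of the fact requires $u$ to be a $p$-adic unit on the \emph{whole} module, commuting with $X$.

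The gap is easily filled.  Setting $j = n$ in the formula of Lemma \ref{Hecke-isoms}(ii), and using that $U_{w,0,\kap^\flat} = \mathrm{id}$, one gets $U_{w,n,\kap}^{\flat} = U_{w,n,\kap^\flat}^{-1}$.  Since the source of the isomorphism contains $U_{w,n,\kap}$ as an $R$-integral element, the target $R$-algebra $\bT_{K_r^\flat,\kap^\flat,-V,R}$ contains both $U_{w,n,\kap^\flat}$ (a defining generator) and its inverse $U_{w,n,\kap^\flat}^{-1}$; hence $U_{w,n,\kap^\flat}$ is a \emph{unit in the $R$-algebra}, and for $R$ $p$-adic this is precisely the statement that it acts as a $p$-adic unit operator on the whole space of level-$K_r^\flat$ weight-$\kap^\flat$ forms.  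With this in hand your bookkeeping gives $U_{p,\kap}^{\flat} = u\cdot U_{p,\kap^\flat}$ with $u = \prod_w U_{w,n,\kap^\flat}^{-(n+1)}$ a central $p$-adic unit in $\bT_{K_r^\flat,\kap^\flat,-V,R}$, and then $\varinjlim_N(U_{p,\kap}^{\flat})^{N!} = \varinjlim_N(U_{p,\kap^\flat})^{N!}$ follows from the \emph{correct} version of the routine fact.
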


\noindent This is immediate from the definitions.

\subsubsection{Spaces of ordinary forms and the character $\lambda_\pi$}\label{lambdapi}
Let $\pi$ be a holomorphic cuspidal automorphic representation of $G$ of type $(\kap,K_r)$.
Let 
$$
\pi_p^\ord = e_\kap\pi_p^{I_r}.
$$
This space has dimension at most one and it does not depend on $r$, in the sense that $e_\kap\pi_p^{I_r} = e_\kap\pi_p^{I_{r'}}$ for all $r'\geq r$. This is
a consequence of the following:
\begin{thm}[Hida]\label{dim1} 
For any representation $\pi_p$ of $G(\Qp)$, the ordinary eigenspace $e_\kap\pi_p^{I_r}\subset \pi_p^{I_r}$ is of dimension $\leq 1$, for any $r$.
\end{thm}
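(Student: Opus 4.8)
The statement is Hida's fundamental bound on ordinary eigenspaces at $p$, so the plan is to reduce to a purely local problem about the representation $\pi_p$ of $G(\Qp)$ and the action of the $U$-operators. First I would use the identification \eqref{G-iso} of $G_{/\Zp}$ with $\G_m \times \prod_{w \in \Sigma_p} \GL_n(\O_w)$, so that $\pi_p$ decomposes (away from the central $\G_m$-factor, whose action is by a fixed character) as a restricted tensor product $\pi_p \isoarrow \otimes_{w \mid p} \pi_w$ of representations of $\GL_n(\K_w)$, and correspondingly $\pi_p^{I_r} = \otimes_{w \mid p} \pi_w^{I_{r,w}}$ and $e_\kap = \prod_{w,j} e_{w,j,\kap}$ where $e_{w,j,\kap}$ is the idempotent cut out by $U_{w,j,\kap}$. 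Thus it suffices to prove: for each $w \mid p$, the space $e_w \pi_w^{I_{r,w}}$ (the image of the compatible system of idempotents attached to the $U_{w,j,\kap}$) has dimension $\leq 1$, independently of $r$. Since the normalization $U_{w,j,\kap} = |\kap'(t_{w,j})|_p^{-1} U_{w,j}$ only rescales the eigenvalue, one may work with the $U_{w,j}$ directly.

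\textbf{The local argument.} The key point is that the double cosets $K_{r,w} t_{w,j}^+ K_{r,w}$, with $t_{w,j}^+$ the dominant elements of \ref{Heckeatp}, satisfy $t_{w,j}^+ I_r^0 (t_{w,j}^+)^{-1} \subset I_r^0$, which makes the $U_{w,j}$ behave like Atkin--Lehner/Hecke-at-$p$ operators on the Iwahori-type invariants. I would invoke the standard structure theory (as in Hida's work, e.g. \cite{H98}, and the exposition in \cite{Hida}): on $\pi_w^{I_{r,w}}$ the commuting operators $U_{w,1},\dots,U_{w,n}$ preserve a filtration by deeper level subgroups, the projection from level $r+1$ to level $r$ intertwines them, and on the ordinary part the ``level-lowering'' trace map becomes an isomorphism onto its image because $U_{w,j}$ acts invertibly there while the complementary contraction is topologically nilpotent. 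Concretely, one shows $e_w \pi_w^{I_{r+1,w}} \isoarrow e_w \pi_w^{I_{r,w}}$ for all $r$, reducing to the minimal level $r$ at which $\pi_w$ has nonzero Iwahori-fixed vectors. At that minimal level, the Jacquet module $\pi_{w,N}$ (normalized Jacquet module with respect to the Borel stabilizing $L^+_w$) is finite-dimensional, the $U_{w,j}$ act through the action of the torus $t_{w,j}$ on $\pi_{w,N}$, and ``ordinary'' picks out the unique line (if any) on which this torus acts by a character of the expected slope — uniqueness following from the fact that a character of the torus appearing in $\pi_{w,N}$ determines the unramified twist, hence the line, once its valuation is pinned down by the ordinarity condition combined with the weight $\kap$. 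This gives $\dim \leq 1$.

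\textbf{Main obstacle.} The genuinely delicate step is the independence of $r$: proving that the ordinary projector $e_\kap$ identifies $\pi_p^{I_r}$ with $\pi_p^{I_{r'}}$ on the ordinary parts for all $r' \geq r$. This requires the ``contraction property'' of the $U$-operators — that $U_{w,j}^{N!}$ converges to a projector as $N \to \infty$ and that its image is insensitive to the depth of the congruence condition at $p$ — which is where one must be careful about the precise normalization of the Hecke operators and the compatibility of the congruence subgroups $I_r$ (as set up in \ref{levelp} via the relation $I_r^0/I_r \isoarrow T_H(\Zp/p^r\Zp)$). Everything else is either the tensor-product reduction or standard finite-dimensional linear algebra on Jacquet modules. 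I would cite Hida's classicality and control results \cite{Hida} (conditions (G1)--(G3), now known by Lan \cite{Lan}) for the input that makes the convergence of $U_{p,\kap}^{N!}$ legitimate in this geometric setting, and present the dimension bound as the local shadow of the classicality statement \eqref{controlthm1} already recalled above.
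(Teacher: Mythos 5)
Your reduction to a local statement about each $\pi_w$ via the identification \eqref{G-iso-p} is exactly the paper's first move, and your instinct to pass to the Jacquet module is correct.  However, from there your route diverges from the paper's in a way that both adds unnecessary machinery and, more seriously, leaves the actual heart of the argument as an unsupported assertion.

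You identify the "main obstacle" as proving $e_w\pi_w^{I_{w,r+1}} \isoarrow e_w\pi_w^{I_{w,r}}$, so as to reduce to minimal level.  But the theorem is a fixed-$r$ dimension bound, and the paper (Lemma \ref{ord-vect-lem-w}) never makes a level-reduction argument.  Instead it works directly with $V^N = \cup_r V^{I_{w,r}}$, where $N = \cap_r I_{w,r} = B_w^u\cap\GL_n(\O_w)$, uses the decomposition $V^N = V^N_{\mathrm{nil}}\oplus V^N_{\mathrm{inv}}$, and shows the projection $V\twoheadrightarrow V_{B_w}$ restricts to an isomorphism $V^N_{\mathrm{inv}}\isoarrow V_{B_w}$.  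The independence of $r$ then falls out as a corollary, rather than being a prerequisite.  Your level-reduction step is thus extraneous, and framing it as the "genuinely delicate step" misidentifies where the difficulty actually sits.

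The genuine gap in your proposal is the uniqueness claim at the Jacquet-module level.  You write that "'ordinary' picks out the unique line (if any) on which this torus acts by a character of the expected slope — uniqueness following from the fact that a character of the torus appearing in $\pi_{w,N}$ determines the unramified twist, hence the line, once its valuation is pinned down by the ordinarity condition combined with the weight $\kap$."  This is a restatement of the conclusion, not a proof.  The Jacquet module $V_{B_w}$ embeds in $I(\alpha)_{B_w}$, whose constituents are the full Weyl orbit $\{\alpha^x\delta_w^{1/2}\}_{x\in W}$.  Two things must be established and neither is in your write-up: (a) this Jacquet module is semisimple and multiplicity-free, i.e.\ the $\alpha^x$ are pairwise distinct, and (b) for $x\neq 1$ the associated eigenvalue character $\beta_x = |\kap_{norm}|_p^{-1}\delta_w^{-1/2}\alpha^x$ has at least one non-unit value.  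The paper proves both by showing that $\theta = |\kap_{norm}|_p\delta_w^{-1/2}$ is a \emph{regular} character of $T_w$, and this is precisely where the dominance of $\kap$ and the inequality \eqref{holo-wt-ineq} (namely $\kap_\sigma + \kap_{\sigma c}\geq n$) enter: one computes the exponents $m_i$ explicitly and checks they form a strictly decreasing chain.  Without this computation you cannot rule out a non-regular inducing character (so that $V_{B_w}$ has non-semisimple subquotients) or two distinct Weyl conjugates simultaneously giving unit eigenvalues.  That regularity computation is the theorem.

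One smaller point: your appeal to Hida's geometric conditions (G1)--(G3) and Lan's theorems is out of place here.  Those concern the Igusa tower and coherent cohomology of the Shimura variety; the statement you are proving is purely local, about a smooth representation $\pi_w$ of $\GL_n(\K_w)$, and needs no geometric input at all — only Casselman's theory of Jacquet modules and Frobenius reciprocity, plus the arithmetic of $\kap_{norm}$.
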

This theorem is a variant of \cite[Corollary 8.3]{H98} (we thank Hida for this reference).
The proof, an adaptation of Hida's, is given in Section  \ref{ordvec} below. 

We will say that $\pi$ is {\it ordinary} if $\pi_p^\ord\neq 0$. 
Note that $\pi_p^\ord$ is stable under the action of $I_r^0$, and so $I_r^0$ will act on $\pi_p^\ord$ (when it is non-zero) through a 
well-defined character $\psi$; we call its identification with a character of $T_H(\Zp)$ 
the {\it ordinary nebentypus} of $\pi$. 

The space
$$
\pi^{\flat,\aord}_{p,r} = e_{\kap^D}^-\pi^{\flat,I_r}_p \subset \pi_p^\flat
$$
is at most one-dimensional, and is non-zero (and so has dimension one) if and only if $\pi_p^\ord$ is non-zero. This follows from Lemma \ref{aord-vect-lem-w} below.
While it is not generally true that $\pi^{\flat,\aord}_{p,r}$ is independent of $r$, if $r'\geq r$ then Lemma \ref{aord-vect-lem-w-2} asserts that
$$
\trace_{K_r/K_{r'}} \pi_{p,r'}^{\flat,\aord} = \pi^{\flat,\aord}_{p,r}.
$$

Suppose that $\pi$ is ordinary. We let $\lambda_\pi$ be the (unique) extension of $\lambda^p_\pi$ to the Hecke character giving the action of $\bT_{K_r,\kap}$ 
on $\pi_p^\ord\otimes\pi^{p,K^p}$. For $R$ as in \ref{ord-hecke}, this character factors through $\bT^\ord_{K_r,\kap,\psi,R}$ for $\psi$ the ordinary nebentypus of $\pi$.
Let $E(\lambda_\pi)$ be the finite extension of $E(\pi)$ generated by the values of $\lambda_\pi$, and let $R(\lambda_\pi)$ be the localization of the ring
of integers of $E(\lambda_\pi)$ at the maximal ideal determined by $\incl_p$; then $\lambda_\pi$ is $R(\lambda_\pi)$-valued.
Let $\bar\lambda_\pi$ be the reduction of $\lambda_\pi$ modulo the maximal ideal of $R(\lambda_\pi)$; this can be viewed as taking values in the
residue field of $\overline\Z_{(p)}$. We let 
\begin{equation}\label{SKrkap}
\CS(K_r,\kap,\pi) = \{ \text{ordinary holomorphic $\pi'$ of type $(\kap,K_r)$ such that $\bar\lambda_{\pi'} = \bar\lambda_\pi$}\}.
\end{equation}
Starting in \ref{pcontinued} we will also write $\pi' \in \CS(K_r,\kap,\pi)$ when both $\pi$ and $\pi'$ are {\it anti-holomorphic} (and anti-ordinary); this means that
$\pi^{\prime,\flat} \in \CS(K_r,\kap,\pi^\flat)$ where $\pi^\flat$ is holomorphic of type $(\kap,K_r)$ and the notation is used  in the sense of \eqref{SKrkap}.

\begin{lem}\label{ordveclem}
Let $\pi$ be a holomorphic cuspidal automorphic representation of type $(\kap,K_r)$. Suppose $\pi$ is ordinary.
Suppose also that $\pi$ satisfies Hypothesis \ref{multone-pi}.
Let $R\subset \C$ be the localization of a finite extension of $R(\lambda_\pi)$ at the prime determined by $\incl_p$ or the $p$-adic
completion of such a ring. Let $E=R[\frac{1}{p}]$. 
\begin{itemize}
\item[\rm (i)] $S_\kap^\ord(K_r;E)[\lambda_\pi] = e_\kap S_\kap(K_r;E)[\lambda_\pi]$ and $j_\pi$ restricts to an isomorphism
$$
j_\pi: \pi_p^\ord\otimes \pi_S^{K_S} \cong \pi_S^{K_S}\isoarrow S_\kap^\ord(K_r;E)[\lambda_\pi]\otimes_E\C.
$$
\item[\rm (ii)]  Let $\grm_\pi$ be the maximal ideal of $\bT_{K_r,\kap,R}$
that is the kernel of the reduction of $\lambda_\pi$ modulo the maximal ideal of $R$. 
Let $S_\kap^\ord(K_r;R)_\pi$ be the localization of $S_\kap^\ord(K_r;R)$ at $\grm_\pi$. Then 
$$
S_\kap^\ord(K_r;R)[\pi] := S_\kap^\ord(K_r;R)_\pi \cap S_\kap^\ord(K_r;E)[\lambda_\pi]
$$
is identified by $j_\pi$ with an $R$-lattice in $\pi_p^\ord\otimes\pi_S^{K_S}\cong \pi_S^{K_S}$,
and
$S_\kap^\ord(K_r,R)_\pi$ is identified with an $R$-lattice in 
$$
\bigoplus_{\pi' \in \CS(K_s,\kap,\pi)} \pi_p^{\prime,\ord}\otimes (\pi'_{S})^{K_{S}}.
$$
This last identification is via $\oplus_{\pi'} \lambda_{\pi'}$.
\end{itemize}
\end{lem}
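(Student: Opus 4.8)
\textbf{Proof plan for Lemma \ref{ordveclem}.}

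The plan is to deduce both statements from Lemma \ref{mult1Q} by applying the ordinary projector $e_\kap$ and then tracking integral structures. For part (i), I would first recall from Lemma \ref{mult1Q}(i) that $j_\pi$ gives an isomorphism $\pi_S^{K_S}\otimes \pi_p^{I_r} \isoarrow S_\kap(K_r,\C)(\pi)$ of $\bT^p_{K_r,\kap}$-modules, equivariant for the $U$-operators. Since $e_\kap$ is an idempotent in the completed Hecke algebra and commutes with the $\bT^p$-action, applying it to both sides yields $j_\pi: \pi_S^{K_S}\otimes e_\kap\pi_p^{I_r} = \pi_S^{K_S}\otimes\pi_p^\ord \isoarrow e_\kap S_\kap(K_r,\C)(\pi) = S_\kap^\ord(K_r,\C)(\pi)$. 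By Theorem \ref{dim1}, $\pi_p^\ord$ is at most one-dimensional, and it is nonzero precisely because $\pi$ is assumed ordinary, so $\pi_p^\ord\otimes\pi_S^{K_S}\cong \pi_S^{K_S}$. To get the statement over $E$, I would invoke Lemma \ref{mult1Q}(ii) with $\lambda = \lambda_\pi$ (the unique ordinary extension of $\lambda_\pi^p$ described in \ref{lambdapi}): it gives $j_\pi: \pi_S^{K_S}\otimes\pi_p^{I_r}[\lambda_\pi]\isoarrow S_\kap(K_r,E)[\lambda_\pi]\otimes_E\C$, and the point is that $\pi_p^{I_r}[\lambda_\pi]$, the subspace where each $U_{w,j,\kap}$ acts by $\lambda_\pi(U_{w,j,\kap})$, coincides with $\pi_p^\ord$ because $\lambda_\pi$ is by definition the character through which $\bT_{K_r,\kap}$ acts on $\pi_p^\ord$ and the $U$-eigenvalues on an ordinary vector are $p$-adic units (whereas on the complement $e_\kap$ kills them). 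Hence $S_\kap^\ord(K_r;E)[\lambda_\pi] = e_\kap S_\kap(K_r;E)[\lambda_\pi]$, which is the first assertion, and the displayed isomorphism follows.

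For part (ii), I would argue as follows. The localization $S_\kap^\ord(K_r;R)_\pi$ at the maximal ideal $\grm_\pi$ collects exactly the contributions of those ordinary holomorphic $\pi'$ whose associated character reduces to $\bar\lambda_\pi$ modulo the maximal ideal of $R$; by Hypothesis \ref{multone-pi} applied to each such $\pi'$, these characters $\lambda_{\pi'}$ are pairwise distinct over $E=R[1/p]$, so after inverting $p$ the module $S_\kap^\ord(K_r;R)_\pi\otimes_R E$ decomposes as $\bigoplus_{\pi'\in\CS(K_r,\kap,\pi)} S_\kap^\ord(K_r;E)[\lambda_{\pi'}]$, and by part (i) each summand is $\pi^{\prime,\ord}_p\otimes(\pi'_S)^{K_S}$. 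Taking $R$-integral points inside this decomposition (equivalently, the image of the finitely generated $R$-module $S_\kap^\ord(K_r;R)_\pi$) gives an $R$-lattice in $\bigoplus_{\pi'} \pi^{\prime,\ord}_p\otimes(\pi'_S)^{K_S}$, and intersecting with the single $\lambda_\pi$-eigenspace $S_\kap^\ord(K_r;E)[\lambda_\pi]$ cuts out an $R$-lattice in the $\pi$-summand; the identification $S_\kap^\ord(K_r;R)[\pi] = S_\kap^\ord(K_r;R)_\pi\cap S_\kap^\ord(K_r;E)[\lambda_\pi]$ is then just the definition of $S_\kap^\ord(K_r;R)[\pi]$ (compare the notation $S_{\kap}^\ord(K_r,\psi;O^+)$ and $S_\kap(K_r,R)[\lambda]$ introduced in \ref{ordinaryforms} and Lemma \ref{mult1Q}). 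The last identification via $\oplus_{\pi'}\lambda_{\pi'}$ records that $\bT^\ord_{K_r,\kap,R}$ acts through the product of these characters.

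The main obstacle I anticipate is the integrality bookkeeping in part (ii): showing that taking $R$-lattices is compatible with the eigenspace decomposition requires that $\bT^\ord_{K_r,\kap,R}$ is finite and torsion-free over $R$ (so its localization at $\grm_\pi$ is well-behaved), that $S_\kap^\ord(K_r;R)$ is a finitely generated $R$-module with $S_\kap^\ord(K_r;R)\otimes_R E = S_\kap^\ord(K_r;E)$, and that the idempotent $e_\kap$ — a priori only defined $p$-adically — preserves the $R$-structure. These facts are available: finiteness and the comparison with $E$-coefficients follow from the control theorems recalled in \ref{ordinaryforms} (in particular \eqref{controlthm1} and the Gorenstein/freeness discussion), $e_\kap$ acts on $S_\kap^\ord(K_r;R)$ by construction of the ordinary subspace for $R$ as in \ref{ord-hecke}, and Hypothesis \ref{multone-pi} guarantees that $\grp_{\lambda_\pi}$ is a minimal prime of $\bT^\ord_{K_r,\kap,R}\otimes E$ so that the localization-then-intersection procedure genuinely isolates the $\pi$-part. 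Once these are in place the argument is a formal diagram chase; I would present the finiteness and $e_\kap$-stability facts as consequences of the earlier sections rather than reprove them.
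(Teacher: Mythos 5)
The paper gives no proof for Lemma \ref{ordveclem} (nor for the companion Lemma \ref{mult1Q}); the lemma is stated and the text immediately proceeds to the ``dual picture,'' so there is nothing to compare against line by line. Your reconstruction is plausible and captures what an honest proof would need: applying $e_\kap$ to the isomorphism of Lemma \ref{mult1Q}, then identifying $\pi_p^{I_r}[\lambda_\pi]$ with $\pi_p^{\ord}$. That identification is correct: containment $\pi_p^{\ord}\subseteq\pi_p^{I_r}[\lambda_\pi]$ is by definition of $\lambda_\pi$, and the reverse follows because on a $\lambda_\pi$-eigenvector $v$ the operator $U_{p,\kap}$ acts by a $p$-adic unit $\alpha$, so $U_{p,\kap}^{N!}v = \alpha^{N!}v \to v$ and hence $e_\kap v = v$; Theorem \ref{dim1} then forces equality. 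For part (ii) your localization argument is the standard one.

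Two points worth sharpening. First, Hypothesis \ref{multone-pi} as stated only separates $\lambda_{\pi'}^p$ from $\lambda_\pi^p$ for $\pi'\neq\pi$; it does not by itself separate two distinct $\pi',\pi''\in\CS(K_r,\kap,\pi)$ from each other. For the final identification ``via $\oplus_{\pi'}\lambda_{\pi'}$'' to be well-posed (i.e.\ for the characters to distinguish the summands), you implicitly need pairwise distinctness of the $\lambda_{\pi'}$; if you want the argument to be airtight you should say explicitly that the multiplicity one hypothesis is being applied to each $\pi'$ in the family, not just to $\pi$. Second, the claim that $S_\kap^\ord(K_r;R)$ is finitely generated over $R$, torsion-free, and satisfies $S_\kap^\ord(K_r;R)\otimes_R E = S_\kap^\ord(K_r;E)$ (so that intersecting with the $\lambda_\pi$-eigenspace genuinely produces an $R$-lattice) should be cited to Hida's control results \eqref{controlthm1}--\eqref{controlthm2}, as you flag; these are the actual content behind the ``formal diagram chase.''
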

 
 We also need a dual picture.
Let 
$$
\hat S_\kap(K_r; R) = \Hom_R(S_\kap(K_r;R),R) \ \ \text{and} \ \ \hat S_\kap^\ord(K_r;R) = \Hom_R(S_\kap^\ord(K_r;R),R).
$$
These are $\bT_{K_r,\kap,R}$-modules through the Hecke action on $S_\kap(K_r;R)$, so $\hat S_\kap^\ord(K_r,R)$
is a $\bT^\ord_{K_r,\kap,R}$-module. The normalized Serre duality of Section \ref{normal pairing} identifies $\hat S_\kap(K_r:R)$ with
$$
H^{d}_{\kap^D}(K_r,R) = \{ \vphi\in H^d({}_{K_r}Sh(V),\omega_{\kap^D}) \ : \ \vpair{S_\kap(K_r;R),\vphi}_{\kap,K_r} \subseteq R\}.\footnote{The 
bundle $\omega_{\kap^D}$ is the subcanonical extension of its restriction to the open Shimura variety; thus the space $H^d({}_{K_r}Sh(V),\omega_{\kap^D})$ 
is represented by cusp forms and the pairing above is well-defined.}
$$
Let $S_\kap^{\ord,\perp}(K_r;R) \subset H^{d}_{\kap^D}(K_r,R)$ denote the annihilator of $S_\kap^\ord(K_r;R)$ with respect to
the pairing $\pair_{\kap,K_r}$. 
Then (the normalized) Serre duality identifies $\hat S_\kap^\ord(K_r;R)$ with
$$
H^{d,\ord}_{\kap^D}(K_r,R) =
 \{ \vphi\in H^d_!({}_{K_r}Sh(V),\omega_{\kap^D})/S_\kap^{\ord,\perp}(K_r;R) \ : \ \vpair{S_\kap^\ord(K_r;R),\vphi}_{\kap,K_r} \subseteq R\}.
$$
Each of these is a $\bT_{K_r,\kap,R}$-module through its action on $S_\kap(K_r;R)$ or, equivalently, the isomorphism of
Lemma \ref{Hecke-isoms}(i), so $H^{d,\ord}_{\kap^D}(K_r;R)$ is a $\bT^\ord_{K^r,\kap,R}$-module.  

\begin{lem}\label{ortho}
The natural map $H^d_{\kap^D}(K_r;R)\rightarrow H^{d,\ord}_{\kap^D}(K_r;R)$,
which is just restriction to $S_\kap^\ord(K_r;R)$, induces an isomorphism
\begin{equation}\label{Hecke-duals}
e_{\kap^D}^- H^d_{\kap^D}(K_r;R) 
\isoarrow H^{d,\ord}_{\kap^D}(K_r;R).
\end{equation}
\end{lem}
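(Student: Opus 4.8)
The plan is to transport everything to the Serre-dual side, where the assertion becomes the elementary statement that a restriction-of-functionals map restricts to an isomorphism on the image of a transposed idempotent; the only real content is keeping the identifications straight. First I would invoke the identifications recalled immediately before the lemma: Serre duality realizes $H^d_{\kap^D}(K_r;R)$ as $\hat S_\kap(K_r;R)=\Hom_R(S_\kap(K_r;R),R)$ and $H^{d,\ord}_{\kap^D}(K_r;R)$ as $\hat S_\kap^\ord(K_r;R)=\Hom_R(S_\kap^\ord(K_r;R),R)$, and under these the natural map of the lemma, namely passage to the quotient by $S_\kap^{\ord,\perp}(K_r;R)$, i.e.\ restriction of a functional to $S_\kap^\ord(K_r;R)$, becomes $\iota^{*}\colon\Hom_R(S_\kap(K_r;R),R)\to\Hom_R(S_\kap^\ord(K_r;R),R)$, the $R$-linear dual of the inclusion $\iota\colon S_\kap^\ord(K_r;R)\hookrightarrow S_\kap(K_r;R)$.

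Next I would identify the operator $e_{\kap^D}^-$ on $H^d_{\kap^D}(K_r;R)$, under this identification, with the transpose $e_\kap^{*}$ of the ordinary idempotent $e_\kap$ acting on $S_\kap(K_r;R)$. This is essentially the content of Lemma~\ref{Hecke-isoms}(i): there the map $T\mapsto T^d$ is adjunction with respect to $\pairS_\kap$, it carries $U_{w,j,\kap}$ to $U_{w,j,\kap^D}^-$, and because $U_{p,\kap}=\prod_{w\in\Sigma_p}\prod_j U_{w,j,\kap}$ is a product of pairwise commuting operators, $(U_{p,\kap}^{N!})^{*}=(U_{p,\kap^D}^-)^{N!}$ for every $N$; passing to the $N!$-limits that define the two projectors gives $e_\kap^{*}=e_{\kap^D}^-$. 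Both limits exist over $R$ under the hypotheses on $R$ recalled in~\ref{ord-hecke}, and existence on the dual module is automatic once $e_\kap$ exists on $S_\kap(K_r;R)$, since the dual operators are just the transposes of the $U_{p,\kap}^{N!}$.

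Granting these two reductions, the statement is formal linear algebra. Write $M=S_\kap(K_r;R)$ and $e=e_\kap$, so $eM=S_\kap^\ord(K_r;R)$ and $M=eM\oplus(1-e)M$ as $R$-modules; applying $\Hom_R(-,R)$ to this split decomposition yields $\hat M=\widehat{eM}\oplus\widehat{(1-e)M}$ with $\widehat{eM}=\hat S_\kap^\ord(K_r;R)$. The transpose $e^{*}$ is the projector of $\hat M$ onto $\widehat{eM}$ along $\widehat{(1-e)M}$, while $\iota^{*}\colon\hat M\to\widehat{eM}$ is restriction of a functional to $eM$, which kills $\widehat{(1-e)M}$ and is the identity on $\widehat{eM}$; hence $\iota^{*}$ restricts to an isomorphism $e^{*}\hat M\isoarrow\widehat{eM}$. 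Transporting back through Serre duality, $e^{*}\hat M=e_{\kap^D}^-H^d_{\kap^D}(K_r;R)$, $\widehat{eM}=H^{d,\ord}_{\kap^D}(K_r;R)$, and $\iota^{*}$ is the map of the lemma, giving the asserted isomorphism. The hard part will be precisely the matching in the second paragraph: identifying the anti-ordinary projector $e_{\kap^D}^-$, defined through the Hecke action on $H^d(Sh(V),\omega_{\kap^D})$ transported via Lemma~\ref{Hecke-isoms}(i), with the genuine adjoint of $e_\kap$, including the compatibility of the defining $N!$-limits under transposition over the module $S_\kap(K_r;R)$ and its $R$-dual. Everything after that is the formal behaviour of $\Hom_R(-,R)$ on a split idempotent decomposition.
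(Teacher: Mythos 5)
Your argument is correct, but it takes a genuinely different route from the paper's. The paper dispatches Lemma~\ref{ortho} by citing Lemma~\ref{ord-dual-lem-w}(iii), a local representation-theoretic statement: for a vector $\phi\in\pi_w^{I_r}$ with ordinary projection $e\cdot\phi=c(\phi)\phi_w^{\mathrm{ord}}$, the pairing against the distinguished dual vector $\phi_{w,r}^\vee$ satisfies $\langle\phi,\phi^\vee_{w,r}\rangle_w = c(\phi)\langle\phi^{\mathrm{ord}}_w,\phi^\vee_{w,r}\rangle_w$ — in other words, pairing against $\phi_{w,r}^\vee$ detects exactly the ordinary component. Applied across the Hecke-isotypic decomposition, this says directly that the image of $e_{\kap^D}^-$ pairs faithfully with $S_\kap^{\ord}$ and kills nothing that the quotient map kills. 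You instead move the whole statement to the Serre-dual side: identify $H^d_{\kap^D}(K_r;R)\cong\hat S_\kap(K_r;R)$ and $H^{d,\ord}_{\kap^D}(K_r;R)\cong\hat S_\kap^\ord(K_r;R)$, identify $e_{\kap^D}^-$ with the transpose $e_\kap^*$ via Lemma~\ref{Hecke-isoms}(i), and then read off the assertion from the split idempotent decomposition $M=eM\oplus(1-e)M$ under $\Hom_R(-,R)$. What your approach buys is that it is uniformly module-theoretic and makes transparent that the only input is $U_{w,j,\kap^D}^-=U_{w,j,\kap}^{\mathrm{d}}$; what the paper's local lemma buys is an explicit pointwise verification that sidesteps any worry about whether the anti-ordinary limit $e_{\kap^D}^-=\varinjlim(U_{p,\kap}^-)^{N!}$ exists as an operator (a caveat the paper flags in~\ref{ord-hecke} with ``when it exists''). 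Your remark that existence of $e_\kap^*$ is automatic once $e_\kap$ exists and stabilizes on the finite $R$-module $S_\kap(K_r;R)$ is the right way to close that gap, and is worth stating as a separate sentence rather than burying it in the hedge at the end.
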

\begin{proof}  This is an immediate consequence of Lemma \ref{ord-dual-lem-w}, (iii).
\end{proof}

Let $\pi$ be a holomorphic cuspidal automorphic representation of $G$ of type $(\kap,K_r)$. 
Then $\pi^\flat$ is anti-holomorphic of type $(\kap^\flat, K_r)$.  The choice of a basis of 
the one-dimensional space
$H^d(\grP_h,K_h;\pi^\flat_\infty\otimes_\C W_{\kap^D})$ determines an injection
$$
j_{\pi^\flat}^\vee: H^d(\grP_h,K_h;\pi^{\flat,K_r}\otimes_\C W_{\kap^D}) \cong \pi^{\flat,K_r} \hookrightarrow H^d_{\kap^D}(K_r;\C) = H^d({}_{K_r}Sh(V),\omega_{\kap^D}).
$$

\begin{lem}\label{aordveclem}  
Let $\pi$, $R$, and $E$ be as in Lemma \ref{ordveclem}. 
Let $H^{d,\ord}_{\kap^D}(K_r,R)_{\pi}$ be the localization of $H^{d,\ord}_{\kap^D,V}(K_r,R)$ at 
$\grm_{\pi^{\flat}}$, and let  
$$
H^{d,\ord}_{\kap^D}(K_r,R)[\pi]  = H^{d,\ord}_{\kap^D}(K_r;R)_{\pi}\cap H^{d,\ord}_{\kap^D}(K_r;E)[\lambda_\pi]
$$
where the notation $[\lambda_\pi]$ again denotes the 
$\lambda_\pi$-isotypic component.
\begin{itemize}
\item[\rm (i)] The inclusion $j_{\pi^\flat}^\vee$ restricts to an isomorphism
$$
j_{\pi^\flat}^\vee: \pi_{p,r}^{\flat,\aord}\otimes \pi_S^{\flat,K_S}\cong \pi_S^{K_S}  \isoarrow  H^{d,\ord}_{\kap^D}(K_r,E)[\lambda_\pi]\otimes_E\C.
$$
\item[\rm (ii)] The map $j_{\pi^\flat}^\vee$ identifies $H^{d,\ord}(K_r;R)[\pi]$ with an $R$-lattice in 
$\pi_{p,r}^{\flat,\aord}\otimes \pi_S^{\flat,K_S}$,  
and $H^{d,\ord}(K_r;R)_\pi$ is identified with an $R$-lattice in 
$$
\oplus_{\pi'\in\CS(K_r,\kap,\pi)} \pi_{p,r}^{\prime,\flat,\ord}\otimes\pi_S^{\prime,\flat,K_S}.
$$
This last identification is by $\oplus j_{\pi^{',\flat}}^\vee$.
\item[\rm (iii)] Normalized Serre duality induces perfect $\bT^\ord_{K_r,\kap,R}$-equivariant pairings (with respect to the isomorphisms of Lemma \ref{Hecke-ord-iso})
$$
S_\kap^\ord(K_r;R)[\pi] \otimes _R H^{d,\ord}_{\kap^D}(K_r;R)[\pi] \rightarrow R  \ \ \text{and} \ \ 
S_\kap^\ord(K_r;R)_\pi\otimes _R H^{d,\ord}_{\kap^D}(K_r;R)_\pi \rightarrow R 
$$
\end{itemize}
\end{lem}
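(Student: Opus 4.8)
\textbf{Proof plan for Lemma \ref{aordveclem}.}

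The plan is to deduce all three statements from the structural results already established, most importantly Lemma \ref{ordveclem}, Lemma \ref{ortho}, and the Hecke-equivariant Serre duality pairings of \ref{Petersson}. For part (i), I would start from the fact, recorded in \ref{lambdapi} via Theorem \ref{dim1} and Lemma \ref{aord-vect-lem-w}, that $\pi_{p,r}^{\flat,\aord} = e_{\kap^D}^- \pi_p^{\flat,I_r}$ is one-dimensional precisely when $\pi_p^{\ord}$ is nonzero; the ordinarity hypothesis on $\pi$ thus guarantees it. The embedding $j_{\pi^\flat}^\vee$ is $\bT^p$-equivariant by construction, and under the multiplicity one hypothesis \ref{multone-pi} the isotypic subspace $H^d_{\kap^D}(K_r,\C)(\pi^\flat)$ is identified, by the $H^d$-analogue of Lemma \ref{mult1Q}(i), with $\pi_S^{\flat,K_S}\otimes \pi_p^{\flat,I_r}$. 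Applying the anti-ordinary projector $e_{\kap^D}^-$ and using Lemma \ref{ortho} (which identifies $e_{\kap^D}^- H^d_{\kap^D}(K_r;R)$ with $H^{d,\ord}_{\kap^D}(K_r;R)$), together with \eqref{dagger=flat} and \eqref{flatflat} to match the characters $\lambda_{\pi,V}^{p,\flat}$ and $\lambda^p_{\pi^\flat,-V}$, gives the stated isomorphism after $\otimes_E\C$. This is the exact mirror of Lemma \ref{ordveclem}(i), with $S_\kap^{\ord}$ replaced by its Serre dual $H^{d,\ord}_{\kap^D}$.

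For part (ii), I would argue exactly as in Lemma \ref{ordveclem}(ii). The localization $H^{d,\ord}_{\kap^D}(K_r,R)_\pi$ at the maximal ideal $\grm_{\pi^\flat}$ decomposes, after inverting $p$, into the $\lambda_{\pi'}$-isotypic pieces for $\pi'$ running over $\CS(K_r,\kap,\pi)$ — this is the set of ordinary holomorphic forms congruent to $\pi$ mod the maximal ideal, and passing to $\pi^\flat$ via $\dagger$ preserves congruences because the isomorphisms of Lemma \ref{Hecke-isoms}(ii) and Lemma \ref{Hecke-ord-iso}(ii) are $R$-algebra isomorphisms. Intersecting the $E$-rational isotypic line with the $R$-integral localization cuts out an $R$-lattice; that it is exactly $H^{d,\ord}(K_r;R)[\pi]$ and that the direct sum $\oplus j_{\pi^{\prime,\flat}}^\vee$ is the asserted lattice identification follows because $j_{\pi^\flat}^\vee$ is defined over $E(\pi^\flat)\subset E$ and the Hecke action is $R$-rational on $H^d_{\kap^D}(K_r;R)$ by the integral structure of \ref{integraltop}.

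For part (iii), the pairing $\pairS_{\kap}$ of \eqref{SerreC} restricts, by the Hecke-equivariance recorded in Lemma \ref{Hecke-isoms}(i) (namely $\langle T f, \varphi\rangle = \langle f, T^d\varphi\rangle$), to a pairing between $S_\kap^{\ord}(K_r;R)$ and $e_{\kap^D}^- H^d_{\kap^D}(K_r;R) \cong H^{d,\ord}_{\kap^D}(K_r;R)$; here I use Lemma \ref{ortho} to make that last identification and the compatibility of these integral structures with trace maps (Lemma \ref{trace2}, referenced in \ref{integraltop}) to see the pairing stays $R$-valued and perfect. By definition \eqref{integraltop1} of the integral structure on $H^d$ as the $R$-dual of $S_\kap$, the induced pairing is perfect already at the level of the full ordinary spaces; localizing at $\grm_\pi$ (resp. restricting to the $[\pi]$-part) and using that $\grm_\pi$ and $\grm_{\pi^\flat}$ correspond under the isomorphism of Lemma \ref{Hecke-ord-iso}(ii), one gets the two claimed perfect pairings, equivariant for $\bT^{\ord}_{K_r,\kap,R}$ acting through the isomorphisms of that lemma.

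The main obstacle I anticipate is bookkeeping: keeping the six decorations $\star$, $D$, $\flat$, $\dagger$, $\vee$, and $\aord$ consistent across the chain $S_\kap^{\ord}(\pi) \leftrightarrow H^{d,\ord}_{\kap^D}(\pi^\flat)$, in particular verifying that the $\lambda_{\pi^\flat}$-isotypic component in the $H^d$ picture really is what the Serre pairing makes dual to the $\lambda_\pi$-isotypic component in the $H^0$ picture, and that the anti-ordinary projector $e_{\kap^D}^-$ is the Serre-duality adjoint of $e_\kap$ (this is the content of Lemma \ref{ord-dual-lem-w} and needs to be invoked carefully). Once those identifications are pinned down, each of (i)–(iii) is a formal transport of Lemma \ref{ordveclem} and Lemma \ref{mult1Q} through Serre duality and the $\dagger$-isomorphism.
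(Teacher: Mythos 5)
The paper states Lemma \ref{aordveclem} without proof, as it does for the parallel Lemma \ref{ordveclem}; so there is no official proof to compare against. Your plan is the natural one and is correct in outline: transport Lemma \ref{ordveclem} and the $H^d$-analogue of Lemma \ref{mult1Q} through Serre duality and the $\flat$/$\dagger$-isomorphisms, using Lemma \ref{ortho} to identify $e_{\kap^D}^- H^d_{\kap^D}(K_r;R)$ with $H^{d,\ord}_{\kap^D}(K_r;R)$ and the adjointness recorded in Lemmas \ref{Hecke-isoms}(i) and \ref{Hecke-ord-iso} to track isotypic components across the pairing.

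Two small points of precision. First, in part (iii) the perfectness and $R$-valuedness of the full ordinary pairing is really immediate from the identification $H^{d,\ord}_{\kap^D}(K_r;R)\cong\Hom_R(S_\kap^\ord(K_r;R),R)$ built into the definition of $H^{d,\ord}_{\kap^D}$ in Section \ref{lambdapi}; Lemma \ref{trace2} governs compatibility of these structures as $r$ varies and is not needed for perfectness at a fixed level, though it is relevant elsewhere. Second, for the $[\pi]$-pairing to remain perfect you should make explicit the orthogonality of distinct isotypic pieces under $\pairS_\kap$: since the pairing is adjoint for the Hecke action via $T\mapsto T^d$ (Lemma \ref{Hecke-isoms}(i)) and $\lambda_\pi(T)=\lambda_{\pi^\flat}(T^d)$, the $\lambda_\pi$-isotypic part of $S_\kap^\ord$ pairs only with the $\lambda_{\pi^\flat}$-isotypic part of $H^{d,\ord}_{\kap^D}$; this, together with perfectness on the localizations and the definition of $[\pi]$ as the intersection with the $E$-isotypic line, gives perfectness on the $[\pi]$-pieces. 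You flag this as the point needing care, and that instinct is right. Beyond that your proposal is sound; it is a plan rather than a full verification, but every step you name is supported by the lemmas you cite.
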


For any $r \geq 0$, we say $\pi$ is {\it ordinary of type $(\kap,K_r)$} if 
$\pi$ is holomorphic of type $(\kap,K_r)$ and if the image
of $j_{\pi}$ has non-trivial intersection with  $S_{\kap}^{\ord}(K_r,R)$ for $R$ as in Lemma \ref{ordveclem} (this is independent of $R$).   In that case, $\lambda_{\pi}$, defined as above, takes values
in a $p$-adic integer ring, say $\CO_{\pi}$, with residue field $k(\pi)$, and we let $\bar{\lambda}_{\pi}:  \Tb_{K_r,\kap} \rar k(\pi)$
denote the reduction of $\lambda_{\pi}$ modulo the maximal ideal of $\CO_{\pi}$.
\subsubsection{Change of level}    
For fixed $\kap$
we consider the inclusion 
\begin{equation}\label{raise} S^{\ord}_{\kap,V}(K_r,R) \rar S^{\ord}_{\kap,V}(K_{r'},R) \end{equation}
with $r' \geq r$ and the dual map
\begin{equation}\label{raisedual} \hat{S}^{\ord}_{\kap,V}(K_{r'},R) \rar \hat{S}^{\ord}_{\kap,V}(K_r,R) \end{equation}

\begin{lem}\label{levelraise}  Let $R$ be either a local $\Z_{(p)}[\lambda_{\pi}]$-algebra or a finite flat $\Zp[\lambda_{\pi}]$-algebra.
Then the image of the map \eqref{raise} is an $R$-direct factor of $S^{\ord}_{\kap,V}(K_{r'},R)$, identified with the submodule of $I_r/I_{r'}$-invariants
of the latter.  Moreover, the morphism
\eqref{raisedual} is surjective.
\end{lem}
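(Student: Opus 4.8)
The plan is to prove Lemma~\ref{levelraise} by reducing the statement about ordinary forms to a purely group-theoretic fact about the $U_p$-operators and the congruence subgroups $I_r \supset I_{r'}$, combined with the finite flatness hypotheses on $R$. First I would recall that, by the definition of $S^{\ord}_{\kap,V}(K,R)$ in Section~\ref{ordinaryforms} and the identifications of Section~\ref{classicalmod}, the inclusion \eqref{raise} is nothing but the natural map on $\grp^-$-invariant, $W_\kap$-valued automorphic forms induced by the inclusion $I_{r'} \subset I_r$ of level subgroups at $p$, after applying the ordinary projector $e_\kap$ to both sides. The key point is that $e_\kap$ commutes with the group $I_r/I_{r'}$ acting on $S_\kap(K',R)$ (this action factors through a quotient of $T_H(\Z_p)$ by the argument in Section~\ref{levelp}), so applying $e_\kap$ to the averaging idempotent $\frac{1}{[I_r:I_{r'}]}\sum_{\gamma \in I_r/I_{r'}} \gamma$ — which exists integrally only after inverting the order of this finite group — is not quite available over $R$. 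The substitute, which is the standard trick in Hida theory, is that on the ordinary part the $U_p$-operators themselves provide a projector onto the $I_r/I_{r'}$-invariants: one shows that $U_{p,\kap}$ acting on $S^{\ord}_{\kap,V}(K',R)$ has image landing in the $I_r/I_{r'}$-invariant subspace, because the double coset defining $U_{p,\kap}$ can be chosen with representatives that, after the nebentypus normalization, kill the ramified part of the level. This is exactly the contraction property of $u_p$ used in \eqref{definite-controlthm}; I would cite Hida's argument (as invoked in Section~\ref{Heckeatp2} and in the proof of \eqref{controlthm1}) for the fact that $e_\kap$ factors through such an integral projector.

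Next I would assemble the direct-factor statement. Having identified the image of \eqref{raise} with the $I_r/I_{r'}$-invariants $S^{\ord}_{\kap,V}(K',R)^{I_r/I_{r'}}$, I would produce an $R$-linear complement: on the ordinary part the operator $U_{p,\kap}^{N}$ for $N$ large stabilizes to an idempotent-like endomorphism (in the sense that $e_\kap = \varinjlim_N U_{p,\kap}^{N!}$ converges $p$-adically, using that $R$ is $p$-adically separated — here the two cases of the hypothesis enter, a local $\Z_{(p)}[\lambda_\pi]$-algebra being handled by its $p$-adic completion as in Section~\ref{ord-hecke}, and a finite flat $\Zp[\lambda_\pi]$-algebra being already $p$-complete). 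This gives an $R$-module decomposition $S^{\ord}_{\kap,V}(K',R) = S^{\ord}_{\kap,V}(K',R)^{I_r/I_{r'}} \oplus C$ where $C$ is the kernel of the projector; since all the modules in sight are finite flat (equivalently free, after localizing or completing) over $R$ by Hida's control theorem \eqref{controlthm1}-\eqref{controlthm2}, the complement $C$ is automatically an $R$-direct factor and the first assertion follows. The identification with the invariants is the content of the previous paragraph.

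For the surjectivity of the dual map \eqref{raisedual}, I would simply dualize. Since $S^{\ord}_{\kap,V}(K,R)$ is a direct factor of $S^{\ord}_{\kap,V}(K',R)$ as $R$-modules (both being finite flat, hence the inclusion splits $R$-linearly), applying $\Hom_R(-,R)$ preserves the split surjection, so $\hat{S}^{\ord}_{\kap,V}(K',R) \twoheadrightarrow \hat{S}^{\ord}_{\kap,V}(K,R)$ is surjective and even split. One should check that the dual map is the one induced by the transpose (trace) of \eqref{raise}, which is immediate from the definition $\hat{S} = \Hom_R(S,R)$ and functoriality. The main obstacle I anticipate is the first paragraph: making precise, over a base ring $R$ in which $[I_r:I_{r'}]$ (a power of $p$) is not invertible, that the ordinary projector genuinely realizes the projection onto $I_r/I_{r'}$-invariants. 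This requires care with the nebentypus normalizations built into $U_{p,\kap}$ (the factors $|\kap'(t_{w,j})|_p^{-1}$ of Section~\ref{heck}) and with the fact that the $t_{w,i,j}^+$ satisfy $t_{w,i,j}^+ I_r^0 t_{w,i,j}^{+,-1} \subset I_r^0$; the cleanest route is to invoke directly the integral control theorem \eqref{controlthm1-simcomp} together with its proof in \cite{Hida}, rather than re-deriving the contraction estimate, and to phrase the complement $C$ as $(1 - \lim_N U_{p,\kap}^{N!} \circ \mathrm{pr}) S^{\ord}_{\kap,V}(K',R)$ where $\mathrm{pr}$ is any $R$-linear splitting, checking a posteriori that this is independent of the choice.
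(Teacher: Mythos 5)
The core of your first paragraph is incorrect, and this undermines the rest of the argument. You claim that $U_{p,\kap}$ (and hence $e_\kap$) carries $S^{\ord}_{\kap,V}(K',R)$ into the $I_r/I_{r'}$-invariant subspace $S^{\ord}_{\kap,V}(K,R)$, so that the ordinary projector itself realizes the projection onto the invariants. This is false: $e_\kap$ is by construction the identity on the ordinary subspace $S^{\ord}_{\kap,V}(K',R)$ and $U_{p,\kap}$ is invertible there, so neither operator can have image contained in the strictly smaller submodule of $I_r/I_{r'}$-invariants. The $U_p$-operators commute with the diamond/nebentypus action; they do not lower the conductor of the nebentypus. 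An ordinary form of level $K_{r'}$ whose nebentypus has exact conductor $p^{r'}$ is fixed by $e_\kap$ but is not $I_r/I_{r'}$-invariant, giving an explicit counterexample to the claimed contraction of level. The reference to the "contraction property of $u_p$" in \eqref{definite-controlthm} is to the convergence $e = \lim U_p^{N!}$ on the full $p$-adic space, not to any reduction from level $r'$ to level $r$. Consequently the explicit complement $C = \ker(\lim U_{p,\kap}^{N!}\circ\mathrm{pr})$ proposed at the end of your argument does not exist in the form described, and paragraph two inherits the gap: exhibiting the image as a direct factor over a base in which $[I_r:I_{r'}]$ is not invertible genuinely requires input from Hida's control and freeness theorems (e.g.\ \eqref{controlthm2}, \eqref{controlthm3}), not a formal projector built from $U_p$.

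What \emph{is} correct in your write-up: the identification of the image of \eqref{raise} with the $I_r/I_{r'}$-invariants (since $e_\kap$ commutes with that action, and for the raw spaces of cusp forms the statement is immediate), and the deduction of surjectivity of \eqref{raisedual} by dualizing a split injection. The paper's own proof treats the first assertion as an immediate consequence of the Hida-theoretic framework already set up; if you want to fill this in, the route is to use the freeness of $\CS^\ord_\omega(K_p,R)$ over $\Lambda_R$ and the regularity of the specialization ideals $\grp_{\kap\psi}$, rather than to try to manufacture an integral projector out of $U_p$.
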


\begin{proof}  The first assertion is obvious; the second is an immediate consequence of the first.
\end{proof}

\subsection{Normalized periods}\label{normper}

Fix the group $G=G_1$ as above. 
We assume $\pi$ to be an anti-holomorphic representation of $G$ of type $(\kappa,K)$ and anti-ordinary at $p$
with character $\psi$.

\begin{lem}\label{period1}  
Let $R$ be as in Lemma \ref{ordveclem}.
The images 
\begin{align*}\\
L[\pi] &= \frac{1}{\Vol(I_{V,r}\cap I_{-V,r})}\la H^{d,\ord}_{\kap,V}(K,\psi,R)[\pi], H^{d,\ord}_{\kap^\flat,-V}(K_r^\flat,\psi^{-1},R)[\pi^\flat]\ra^\Serre_\kap \\
L_{\pi} &=\frac{1}{\Vol(I_{V,r}\cap I_{-V,r})}\la H^{d,\ord}_{\kap,V}(K,\psi,R)[\pi], H^{d,\ord}_{\kap^\flat,-V}(K_r^\flat,\psi^{-1},R)_{\pi^\flat}\ra^\Serre_\kap \end{align*}
are rank one
$R$-submodules of $\C$, generated by positive real numbers $Q[\pi]$ and $Q_{\pi}$, respectively.
\end{lem}

Here we have used the identification \eqref{Finfty-coh} to evaluate
the pairings.

\begin{proof} 
Recall that $\pi_f$ and $\pi^\flat_f\cong \bar\pi_f$ are defined over the finite extension $E(\pi)\subset \C$ of $\Q$. Then Schur's Lemma 
together with the irreducibility and admissibility of the representations
$\pi_f$ and $\bar\pi_f$ implies that the pairing $\pairS$ is a 
$\C^\times$-multiple of a pairing taking values in $E(\pi)$
on given $E(\pi)$-structures, and the hermitian nature of the
Petersson pairing (and its relation \ref{pet} with Serre duality)
shows that this multiple is a positive real number. This is 
essentially explained
in \cite{HarrisIMRN}.
Since $R$ is a discrete valuation ring and 
$H^{d,\ord}_{\kap,V}(K,\psi,R)$ and $H^{d,\ord}_{\kap^\flat,-V}(K_r^\flat,\psi^{-1},R)$
are finite $R$-modules, the result follows immediately from this.
\end{proof}

The numbers $Q[\pi]$ and $Q_{\pi}$ are well-defined up to multiples by $R^{\times}$; they are respectively unnormalized and normalized periods for $\pi$.   We can also write 
$Q[\pi]_V$ and $Q_{\pi,V}$ 
to emphasize the dependence on $G=G_1$.
Note that $Q[\pi^\flat]_{-V} = Q[\pi]_{V}$ and $Q_{\pi^\flat,-V} = Q_{\pi,V}$.
Furthermore, these periods are independent of $r\gg 0$. This is essentially an easy consequence
of the properties of anti-ordinary forms (see also Lemma \ref{local-pairing-lem})

Let
$$H^{d,\ord}_{\kap,\psi,V}(K_r,\R)[\pi]^{\perp} \subset H^{d,\ord}_{\kap^\flat,-V}(K_r^\flat,\psi^{-1},R)_{\pi^\flat}$$
be the orthogonal complement to $H^{d,\ord}_{\kap,V}(K_r,\psi,R)[\pi]$ with respect to 
$\frac{1}{\Vol(I_{V,r}\cap I_{-V,r})}\pairS$.  This is the intersection of 
$H^{d,\ord}_{\kap^\flat,-V}(K_r^\flat,\psi^{-1},R)_{\pi^\flat}$ with $\oplus_{\pi' \neq \pi^\flat}  H^{d,\ord}_{\kap^\flat,-V}(K_r^\flat,\psi^{-1},R[\frac{1}{p}])[\pi']$.

\begin{defi}
Define the {\it congruence ideal} $C(\pi)=C_V(\pi) \subset R$ to be the annihilator of 
$$H^{d,\ord}_{\kap^\flat,-V}(K_r^\flat,\psi^{-1},R)_{\pi}/H^{d,\ord}_{\kap^\flat,-V}(K^\flat_r,\psi^{-1},R)[\pi^\flat] + H^{d,\ord}_{\kap,V}(K_r,\psi,R)[\pi]^{\perp}.$$
\end{defi}

\begin{lem}\label{period2}  Let $c(\pi) \in R$ be such that $c(\pi)Q_{\pi} = Q[\pi]$. Then $c(\pi)$ is a generator of 
$C(\pi)$.   
\end{lem}

\begin{proof}  This is an elementary consequence of the definitions.
\end{proof}

More generally, the congruence ideal $C(\pi,M)$ can be defined for any $\bT_{K^\flat,\kap^\flat,R}^{\flat,\ord}$-module $M$ as
the annihilator of $M_{\pi^\flat}/(M[\pi^\flat] + M[\pi]^{\perp})$, where the notation has the same meaning as above.
In particular, we can define $C(\pi,\TT)$ to be the congruence ideal for $\bT_{K,\kap,R}^{\ord}\isoarrow \bT_{K^\flat,\kap^\flat,R}^{\flat,\ord}$ considered
as a free module over itself.

\begin{rmk}   The congruence ideal $C(\pi)$ has a local component, due to possible congruences between
the representation $\pi_p^{\flat,\ord}\otimes \pi_S^{\flat,K_S}$ and the $\pi_p^{\flat,\ord}\otimes(\pi'_S)^{\flat,K_S}$ for 
$\pi'$ such that $\bar{\lambda}_{\pi} = \bar{\lambda}_{\pi'}$.  Here
if $S$ has the property that, for every rational prime $q$, either all the primes of $\CK^+$ dividing $q$ split in $\CK$ or none of them does, we can view the latter
as representations of the (integral) Hecke algebra of $K_S$-biinvariant functions on $GU(V)(\A_{f,S})$.
The separation of global and local
components of $C(\pi)$ will need to be understood for applications, but it is not addressed here.
\end{rmk}

\subsubsection{Normalized periods twisted by Hecke characters}\label{periodtwist} 

Let $\chi$ be a Hecke character of type $A_0$.  The twisted pairings \eqref{chitwist} and \eqref{pair-compare_chi} give rise to period invariants that account for the Hecke character twist.  We reformulate Lemma \ref{period1} in this framework.  If the (anti-holomorphic) representation $\pi^\flat$ contributes to the space
denoted $H^{d,\ord}_{\kap^\flat,-V}$, we let $\kap^\flat\star\chi$ denote the coherent cohomology space to which $\pi^\flat\otimes \chi^{-1}\circ\det$ contributes.  Then we can define the $R$-module
$$H^{d,\ord}_{\kap^\flat,-V}(K_r^\flat,\psi^{-1},R)[\pi^\flat]_\chi := H^{d,\ord}_{\kap^\flat\star\chi,-V}(K_r^\flat,\psi^{-1},R)[\pi^\flat\otimes \chi^{-1}\circ\det].$$

Note:  This is not a simple algebraic twist of the original $H^{d,\ord}_{\kap^\flat,-V}(K_r^\flat,\psi^{-1},R)[\pi^\flat]$!  Even the rational structure is modified by a CM period corresponding to $\chi$.  See \S 2.9 of \cite{harriscrelle} for examples of this when $F^+ = \Q$. 

The extension of Lemma \ref{period1} is then

\begin{lem}\label{period1chi}  
Let $R$ be as in Lemma \ref{ordveclem}.
The images 
\begin{align*}\\
L[\pi] &= \frac{1}{\Vol(I_{V,r}\cap I_{-V,r})}\la H^{d,\ord}_{\kap,V}(K,\psi,R)[\pi], H^{d,\ord}_{\kap^\flat,-V}(K_r^\flat,\psi^{-1},R)[\pi^\flat]\ra^\Serre_\kap \\
L_{\pi} &=\frac{1}{\Vol(I_{V,r}\cap I_{-V,r})}\la H^{d,\ord}_{\kap,V}(K,\psi,R)[\pi], H^{d,\ord}_{\kap^\flat,-V}(K_r^\flat,\psi^{-1},R)_{\pi^\flat}\ra^\Serre_\kap \end{align*}
are rank one
$R$-submodules of $\C$, generated by positive real numbers $Q[\pi,\chi]$ and $Q_{\pi,\chi}$, respectively.
\end{lem}

We may define $C(\pi,\chi)$ by analogy with $C(\pi)$.  Then the relation between the two period invariants is again determined by a congruence number:

\begin{lem}\label{period2chi}  Let $c(\pi,\chi) \in R$ be such that $c(\pi,\chi)Q_{\pi,\chi} = Q[\pi,\chi]$. Then $c(\pi,\chi)$ is a generator of 
$C(\pi,\chi)$.   
\end{lem}

However, since the family of $p$-adic Hecke characters is smooth, it is easy to see that $C(\pi,\chi)$ and $c(\pi,\chi)$ do not depend on $\chi$.

\subsubsection{The Gorenstein hypothesis and the congruence module}
In what follows, $R$ is a sufficiently large finite flat $p$-adic integer ring.

\begin{defi}\label{Gorfin}  Write $\TT = \TT_\pi := (\bT^{\ord}_{K,\kap,R})_{\pi}$. The $\TT$-module $S^{\ord}_{\kap}(K,R)_{\pi}$ is said to satisfy
the {\it Gorenstein hypothesis} if the following conditions hold.
\begin{itemize}
\item  There exists an isomorphism
$$G = G_{K,\kap,R}:  \TT \isomto \Hom_R(\TT,R)$$
 as $\TT$-modules.
\item  $S^{\ord}_{\kap}(K,R)_{\pi}$ is free over $\TT$.
\end{itemize}
The $\bT^{\ord}_{K,\kap,R}$-module $S^{\ord}_{\kap}(K,R)$ is said to satisfy
the Gorenstein hypothesis if all its localizations at maximal ideals
of $\bT_{K,\kap,R}$ satisfy the two conditions above.
\end{defi}

Note that we are using the notation $\TT$ rather than $\bT$ for the localization at a maximal ideal of the ordinary Hecke algebra.  The following is then obvious.

\begin{lem}\label{congduality} Assume $S^{\ord}_{\kap}(K,R)_{\pi}$ satisfies the Gorenstein hypothesis.  Then
we have
$$C_V(\pi) = C(\pi,\TT) = C_{-V}(\pi^\flat).$$
\end{lem}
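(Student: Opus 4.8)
The claim is that, under the Gorenstein hypothesis on $S^{\ord}_{\kap}(K,R)_{\pi}$, the three congruence ideals $C(\pi,\TT)$, $C(\pi)$, and $C(\pi,H^{d,\ord}_{\kap^D,V}(K_r,R))$ all coincide. The strategy is to show each of these is, up to a unit, the image of a single canonical element under the duality isomorphisms already established in the excerpt, so that they literally compute the same ideal. First I would unwind the three definitions. All three congruence ideals are defined as annihilators of a quotient of the form $M_{\pi}/(M[\pi] + M[\pi]^{\perp})$, where $M$ is respectively $\TT$ viewed as a free module over itself, $S^{\ord}_{\kap}(K,R)_{\pi}$ (this gives $C(\pi)$), and $H^{d,\ord}_{\kap^D,V}(K_r,R)_{\pi}$. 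The key structural inputs are: (i) the Gorenstein hypothesis, which gives $\TT \isomto \Hom_R(\TT,R)$ as $\TT$-modules and $S^{\ord}_{\kap}(K,R)_{\pi}$ free of rank one over $\TT$; (ii) Lemma \ref{aordveclem}(iii) (or equivalently the Serre-duality identification $\hat S_\kap^\ord(K_r;R) \cong H^{d,\ord}_{\kap^D}(K_r,R)$), which identifies $H^{d,\ord}_{\kap^D,V}(K_r,R)_{\pi}$ with $\Hom_R(S^{\ord}_{\kap}(K,R)_{\pi},R)$ as $\TT$-modules via the isomorphisms of Lemma \ref{Hecke-ord-iso}.

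The main computation is then: because $S^{\ord}_{\kap}(K,R)_{\pi}$ is free of rank one over $\TT$, we have $S^{\ord}_{\kap}(K,R)_{\pi} \cong \TT$ as $\TT$-modules, hence the congruence ideal $C(\pi)$ computed from the Petersson pairing on $S^{\ord}_{\kap}(K,R)_{\pi}$ equals $C(\pi,\TT)$ computed from the induced perfect pairing on $\TT$ — this reduces to the standard fact that the congruence ideal of a Gorenstein local factor $\TT = \TT_{\pi}$ of a reduced finite flat $R$-algebra is generated by the image of $1$ under $\TT \hookrightarrow \TT \otimes_{R} \Frac(R) \cong \prod \Frac(R) \twoheadrightarrow \Frac(R)_{\pi} \to \TT_{\pi}\otimes\Frac(R)$ followed by the Gorenstein trace, and is independent of which faithful $\TT$-module of rank one one uses. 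For the third identity, I would use the $\TT$-module isomorphism $H^{d,\ord}_{\kap^D,V}(K_r,R)_{\pi} \cong \Hom_R(S^{\ord}_{\kap}(K,R)_{\pi},R) \cong \Hom_R(\TT,R) \cong \TT$, the last step being the Gorenstein self-duality $\TT \isomto \Hom_R(\TT,R)$; again a faithful rank-one $\TT$-module yields the same congruence ideal, and the Petersson/Serre-duality pairing on the dual module is the one adjoint to the pairing on $S^{\ord}_{\kap}$ under the Hecke isomorphisms of Lemma \ref{Hecke-ord-iso}, so $C(\pi,H^{d,\ord}_{\kap^D,V}(K_r,R)) = C(\pi,\TT)$.

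Carrying this out in order: (1) state and prove the general lemma that for a reduced finite flat local $R$-algebra $\TT$ satisfying $\TT \cong \Hom_R(\TT,R)$, any faithful $\TT$-module $M$ that is free of rank one over $\TT$ and equipped with a perfect $R$-bilinear $\TT$-compatible pairing yields the same congruence ideal as $\TT$ itself (this is where the reducedness of $\TT$, i.e. semisimplicity after inverting $p$, and the decomposition into $\pi'$-isotypic pieces from Lemma \ref{aordveclem}(ii) enters); (2) apply it with $M = S^{\ord}_{\kap}(K,R)_{\pi}$ and the Petersson pairing to get $C(\pi) = C(\pi,\TT)$; (3) apply Serre duality (Lemma \ref{aordveclem}(iii), Lemma \ref{Hecke-ord-iso}) and the Gorenstein isomorphism to identify $H^{d,\ord}_{\kap^D,V}(K_r,R)_{\pi}$ with a faithful rank-one $\TT$-module with adjoint pairing, and conclude $C(\pi,H^{d,\ord}_{\kap^D,V}(K_r,R)) = C(\pi,\TT)$. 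The step I expect to be the main obstacle — though the excerpt optimistically calls the lemma ``obvious'' — is verifying cleanly that passing to the $R$-linear dual together with the Gorenstein self-pairing sends the decomposition $S^{\ord}_{\kap}(K,R[\tfrac1p])_{\pi} \cong \bigoplus_{\pi'} (\cdots)$ to the corresponding decomposition of $H^{d,\ord}_{\kap^D}$ \emph{compatibly with the orthogonal-complement constructions} $M[\pi]^{\perp}$, so that the quotients defining the three congruence ideals are genuinely identified and not merely abstractly isomorphic; this is bookkeeping with the pairings of Lemma \ref{ortho} and Lemma \ref{aordveclem}(iii) rather than anything deep.
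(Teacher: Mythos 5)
Your proposal is in essence correct, and since the paper dispatches this lemma with the single sentence ``The following is then obvious,'' your elaboration does genuinely add something. Two points of comparison are worth making.

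First, you have assumed that $S^{\ord}_{\kap}(K,R)_{\pi}$ is free \emph{of rank one} over $\TT$, but Definition \ref{Gorfin} only requires it to be free of unspecified rank; the rank will typically equal $\dim_{\Frac(R)} \pi_{S}^{K_{S}}$, which need not be $1$. This does not break your argument, but your ``general lemma'' should be stated for free $\TT$-modules of arbitrary finite rank $r$: if $M_{\pi} \cong \TT_{\pi}^{\oplus r}$ as a $\TT_{\pi}$-module then $M_{\pi}/(M[\pi]+M[\pi]^{\perp}) \cong \bigl(\TT_{\pi}/(\TT[\pi]+\TT[\pi]^{\perp})\bigr)^{\oplus r}$, which has the same annihilator. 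Likewise, if $S^{\ord}_{\kap}(K,R)_{\pi}\cong\TT_{\pi}^{\oplus r}$ then $\Hom_R(S^{\ord}_{\kap}(K,R)_{\pi},R)\cong\Hom_R(\TT_{\pi},R)^{\oplus r}\cong\TT_{\pi}^{\oplus r}$ by Gorenstein self-duality, so the dual side is free of the same rank and the third equality follows.

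Second, the compatibility worry you flag at the end — matching the $\perp$-constructions across Serre duality — is less of an obstacle than you anticipate, and I think this is exactly why the paper feels entitled to call the lemma obvious. The crucial reduction is the remark following the definition of $C(\pi)$ in the paper: $S^{\ord}_{\kap}(K,R)[\pi]^{\perp}$ is \emph{also} characterized as the intersection of $S^{\ord}_{\kap}(K,R)_{\pi}$ with $\oplus_{\pi'\neq\pi}S^{\ord}_{\kap}(K,R[\tfrac1p])[\pi']$, i.e.\ it is the kernel of projection onto the $\lambda_{\pi}$-isotypic piece after inverting $p$. In other words $C(\pi,M)$ is an invariant of $M_{\pi}$ \emph{as a $\TT_{\pi}$-module alone}, independent of any choice of pairing; the Petersson pairing and the Serre/Gorenstein duality enter only in verifying that this $\TT$-module-theoretic description agrees with the pairing-theoretic one. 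Once you phrase $C(\pi,M)$ this way, the three equalities follow at once from: $\TT_{\pi}$ is free of rank one over itself, $S^{\ord}_{\kap}(K,R)_{\pi}$ is $\TT_{\pi}$-free (the Gorenstein hypothesis), and $H^{d,\ord}_{\kap^D,V}(K_r,R)_{\pi}\cong\Hom_R(S^{\ord}_{\kap}(K,R)_{\pi},R)$ is $\TT_{\pi}$-free by Gorenstein self-duality — with the $\TT$-module structure on $H^d$ given via the isomorphism of Lemma \ref{Hecke-ord-iso}(i), which does carry the isotypic decomposition to the corresponding one because that isomorphism matches $\lambda_{\pi'}$ to $\lambda_{\pi'}^d$. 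So your final paragraph is doing extra work: the bookkeeping with the pairings that you describe can be bypassed entirely once the invariance of $C(\pi,\bullet)$ under $\TT_{\pi}$-module isomorphism is made explicit.
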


The congruence ideal for $\pi$ can be calculated as follows.  We assume the multiplicity one
hypothesis, so that the localization of $\TT$ at the kernel of $\lambda_{\pi}$ is of rank $1$
over $R$.   Let $e_1,\dots, e_n$ be
an $R$-basis for $\TT$, and let $e_1^*,\dots, e_n^*$ be the dual basis of
$\Hom_R(\TT,R)$.   Write $E = \Frac(R)$, and write
$$\TT_E = \TT\otimes_R E = \oplus E_{i},$$
indexed by the maximal ideals $\lambda_{\pi_i}$ of $\TT$, with $\pi = \pi_1$.  We assume
$R$ is sufficiently large that $E_1 = E$.  Choose $d_1,\dots, d_n \in \TT$ that form a basis of $\TT_E$,
with $d_1$ an $R$-generator of $\TT\cap E_1$ and $d_2,\dots, d_n$ an $R$-basis  of $\TT\cap \oplus_{i > 1} E_{i}$.
Write $e_i = \sum c_{ij}d_j$, with $c_{ij} \in E$.  Then 
\begin{equation}\label{cong1}  C(\pi,\TT) = \sup_{c_{i1} \neq 0} ~ -v(c_{i1}) \end{equation}
where $v$ is the valuation on $R$.  

The following lemma is then clear:

\begin{lem}\label{cflat2}  The second isomorphism of Lemma \ref{Hecke-char-isom} takes $C(\pi)$ isomorphically to $C(\pi^{\flat})$.
\end{lem}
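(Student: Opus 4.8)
The plan is to unwind the definitions and show that the isomorphism of Hecke algebras carries all the pieces that go into the construction of the congruence ideal $C(\pi)$ over to the corresponding pieces for $C(\pi^{\flat})$. Recall that $C(\pi)$ was defined (for a $\TT_{K,\kap,R}$-module $M$, specialized to $M = S^{\ord}_{\kap,V}(K,R)$) as the annihilator of $M_{\pi}/(M[\pi] + M[\pi]^{\perp})$, where $M_\pi$ is the localization at $\grm_\pi$, $M[\pi]$ is the $\lambda_\pi$-isotypic part, and the orthogonal complement is taken with respect to the Petersson pairing. So the first step is to pin down exactly what data the formation of $C(\pi)$ depends on: the Hecke algebra $\bT^\ord_{K_r,\kap,\psi,V,R}$, the character $\lambda_\pi$ (equivalently $\lambda_\pi^p$ together with the $U$-eigenvalues), the space $S^{\ord}_{\kap,V}(K_r,\psi;R)$ with its Hecke action, and the Petersson pairing $\pairP_\kap$ used to define the orthogonal complement.

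Next I would invoke the second isomorphism of Lemma \ref{Hecke-char-isom}, namely $\bT_{K_r,\kap,\psi,V,R} \isoarrow \bT_{K_r^\flat,\kap^\flat,\psi^\flat,-V,R}$ (and its ordinary refinement in Lemma \ref{Hecke-ord-iso}(ii)), together with the isomorphism $F^\dagger$ of \eqref{Fdagger-nebentypus}, which identifies $S_{\kap,V}(K_{r,V},\psi;R)$ with $S_{\kap^\dagger,-V}(K^\dagger_{r,-V},\psi^\dagger;R)$ equivariantly up to the action of the automorphism $\dagger$ on Hecke operators. One checks (this is essentially \eqref{flatflat}) that under this identification the maximal ideal $\grm_\pi$ goes to $\grm_{\pi^\flat}$ and the character $\lambda_\pi$ to $\lambda_{\pi^\flat}$; hence $S^{\ord}_{\kap,V}(K,R)_\pi \mapsto S^{\ord}_{\kap^\flat,-V}(K^\flat,R)_{\pi^\flat}$ and $S^{\ord}_{\kap,V}(K,R)[\pi] \mapsto S^{\ord}_{\kap^\flat,-V}(K^\flat,R)[\pi^\flat]$. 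The only remaining point is that the orthogonal complement $[\pi]^\perp$ is preserved: since the Petersson pairing $\pairP_\kap$ is, up to the normalizations already recorded in \ref{Petersson}, intrinsic to the space of cusp forms and compatible with the isomorphisms $F^\dagger$ and $c_B$ built into Lemma \ref{Hecke-char-isom}, the decomposition $S^{\ord}_{\kap,V}(K,R[\tfrac1p]) = \oplus_{\pi'} S^{\ord}_{\kap,V}(K,R[\tfrac1p])[\pi']$ matches up termwise with the corresponding decomposition for $-V$, so orthogonal complements correspond. Therefore the quotient module whose annihilator defines $C(\pi)$ is carried isomorphically to the one defining $C(\pi^\flat)$, and annihilators of isomorphic modules over isomorphic rings agree, giving $C(\pi) \isoarrow C(\pi^\flat)$.

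The one step that needs genuine care — and is the main obstacle — is verifying the compatibility of the Petersson/Serre pairing with the Hecke-algebra isomorphism of Lemma \ref{Hecke-char-isom}: the isomorphism $T \mapsto T^\flat$ there is built from $F_\infty^\dagger$, and one must check that the adjunction relation $\langle T\phi,\phi'\rangle_P = \langle \phi, T^\flat \phi'\rangle_P$ (or the appropriate variant taking into account that $\pi^\flat$ rather than $\pi^\vee$ is in play) holds, so that orthogonality with respect to $\pairP_\kap$ on the $V$-side transports to orthogonality on the $-V$-side. This amounts to comparing \eqref{pair-compare}, the definition of $\pairP_\kap$ in \eqref{pet}, and the description of $F^\dagger$ on automorphic forms in \eqref{Fdagger-pi}; once these are lined up the identification of $[\pi]^\perp$ with $[\pi^\flat]^\perp$ is automatic. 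Given that, the lemma follows formally, which is why the paper can dispatch it as ``clear'' — but writing the proof honestly requires making that pairing compatibility explicit.
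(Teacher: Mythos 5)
Your argument is correct, but it takes a different and more laborious route than the one the paper has in mind, and the step you flag as the ``main obstacle'' is in fact dispensable. The paper's ``then clear'' refers back to the formula \eqref{cong1}, which expresses $C(\pi,\TT)$ purely in terms of the abstract ring $\TT$: the decomposition $\TT_E = \oplus_i E_i$ into isotypic pieces, the lattice $\TT \subset \TT_E$, and the valuation $v$. Since the second isomorphism of Lemma \ref{Hecke-char-isom} (together with its ordinary refinement in Lemma \ref{Hecke-ord-iso}(ii)) is an $R$-algebra isomorphism that takes $\lambda_\pi$ to $\lambda_{\pi^\flat}$ by \eqref{flatflat} and hence $\grm_\pi$ to $\grm_{\pi^\flat}$, it carries all the ring data entering \eqref{cong1} over identically; the conclusion follows with no reference to the modules of cusp forms or the Petersson pairing. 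Your proof instead unwinds the module-theoretic definition of $C(\pi)$ directly, which works but imports the extra ingredient $F^\dagger$ from \eqref{Fdagger-nebentypus}. Moreover, your worry about transporting the Petersson pairing across $F^\dagger$ is unnecessary: the paper already records, immediately after defining $S^{\ord}_{\kap}(K,R)[\pi]^{\perp}$, that this orthogonal complement is just the intersection of $S^{\ord}_{\kap}(K,R)_{\pi}$ with $\oplus_{\pi' \neq \pi} S^{\ord}_{\kap}(K,R[\tfrac1p])[\pi']$, which is a purely Hecke-theoretic description. Since $F^\dagger$ is $\TT$-equivariant (twisted by $\flat$) and defined over $R$, it matches up the isotypic decompositions and lattices termwise, so $[\pi]^\perp$ maps to $[\pi^\flat]^\perp$ with no pairing compatibility needed. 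If you use that observation, your proof simplifies to a few lines; if you insist on the pairing formulation, you should be aware that the needed compatibility of $\pairP_\kap$ with $F^\dagger$ and $c_B$ requires checking that the normalization $||\nu(g)||^{-a(\kap)}$ and the factor $c_B$ behave correctly under $g \mapsto g^\dagger$ (note $\nu(g^\dagger) = \nu(g)^{-1}$ and $a(\kap^\flat)$ must be compared with $a(\kap)$), which is more work than the lemma deserves.
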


We omit the statement of the analogous assertion for the ideal $C(\pi,\chi)$ which, as noted above, does not actually depend on $\chi$.

\begin{rmk}\label{ambiguity}  The normalized period $Q_{\pi}$ and the generator $c(\pi)$ of the congruence ideal are well defined up to units in
$R$.  However, this ambiguity is unsatisfactory; one expects there is a natural choice of global function $c$ in $\TT$ which is not a zero divisor and 
whose value at the classical point $\pi$ generates $C(\pi)$.   This would allow a uniform choice of periods $Q_{\pi,\chi}$.   

Let $\mathcal{G}_n$ be the algebraic group introduced in \cite{CHT} as the target of the compatible family of $\ell$-adic representations attached to $\pi$; it is the semidirect product of $GL(n) \times GL(1)$ with the Galois group of $\CK/\CK^+$.   It is natural to expect that $c$ can be taken to be a $p$-adic $L$-function attached to the adjoint representation on the Lie algebra of $\mathcal{G}_n$.   The corresponding complex $L$-function has a single pair of critical values, interchanged by the functional equation, so the hypothetical $p$-adic $L$-function would be an element of $\TT$, without any additional variation for twists by characters.
\end{rmk}

\section{Families of ordinary $p$-adic modular forms and duality}\label{families}

\subsection{Big Hecke algebras}\label{BIG}
We return to the notation of Sections \ref{heck} and \ref{ord-hecke}.
In particular $G=G_1$, and we let $T_H = T_{H_1}(\Zp)$ be the torus introduced in Section \ref{levelp}.

Let $R$ be a $p$-adic ring.  The inclusion
$S^{\ord}_{\kap}(K_r,R) \subset S^{\ord}_{\kap}(K_{r'},R)$, $r'\geq r$, defines by restriction a map of ordinary 
Hecke algebras $\bT^{\ord}_{K_{r'},\kap,R} \rar \bT^{\ord}_{K_r,\kap,R}$.  Let 
$$
\bT^{\ord}_{K^p,\kap,R} = \varprojlim_r \bT^{\ord}_{K^p_r, \kap,R}.
$$
The following theorem is due to Hida:
\begin{thm}\label{bigHecke}  For any pair of characters $\kap_1$, $\kap_2$ of $T_H$, there is a canonical isomorphism
$$\bT^{\ord}_{K^p,\kap_1,R} \isoarrow \bT^{\ord}_{K^p,\kap_2,R}.$$
\end{thm}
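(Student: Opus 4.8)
The plan is to prove Theorem \ref{bigHecke} by showing that both $\TT_{K^p,\kap,R}$ and $\TT_{K^p,\kap^1,R}$ are canonically identified with a single ``universal'' ordinary Hecke algebra acting on the space of ordinary $\Lambda_H$-adic cuspforms, so that the passage through this common object furnishes the desired isomorphism. The key point is that the weight $\kap$ enters the definition of $\TT_{K^p_r,\kap,R}$ only through the normalization of the $U$-operators: we set $U_{w,j,\kap} = |\kap'(t_{w,j})|_p^{-1}U_{w,j}$ (and similarly for the anti-ordinary variants), while the abstract Hecke operators $T(g)$ for $g \in G(\adeles_f^S)$ and the double cosets $U_{w,j}$ themselves do not see $\kap$. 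So the strategy is to build a ``big'' algebra out of the $\Lambda_H$-adic picture, where the weight is itself a variable, and then recover each $\TT_{K^p,\kap,R}$ by specialization.

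\textbf{Step 1: the $\Lambda_H$-adic Hecke algebra.} First I would consider the module $\CS^{\ord}(K_p,R)$ of ordinary $\Lambda_H$-adic cuspforms introduced in the discussion around \eqref{controlthm2}, which Hida's control theorem \eqref{controlthm2} shows is a free $\Lambda_R$-module of finite rank (after decomposing over $\hat\Delta$). The Hecke operators of \ref{Heckeprimetop2} and \ref{Heckeatp2} — the $T(g)$ and the $u_{w,i,j}$ — act $\Lambda_H$-linearly on $\CS^{\ord}(K_p,R)$; let $\TT^{\univ}_{K^p,R} \subset \End_{\Lambda_H}(\CS^{\ord}(K_p,R))$ be the $\Lambda_H$-subalgebra they generate. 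Passing to the projective limit over $r$ (over the tame level being fixed but the level at $p$ growing), one checks that $\TT^{\univ}_{K^p,R}$ is independent of all the weight data by construction, since the $u_{w,i,j}$ and $T(g)$ are defined without reference to $\kap$.

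\textbf{Step 2: specialization.} Next I would invoke the control isomorphism \eqref{controlthm3} (and its refinement allowing nebentypus), which says that for $\kap$ satisfying the positivity bound in \eqref{controlthm1} and any finite character $\psi$, the specialization map $\mu \mapsto \mu(\kap\psi)$ identifies $\CS^{\ord}_\omega(K_p,R)\otimes R[\psi]/\grp_{\kap\psi}$ with $S^{\ord}_{\kap_*}(K_r,\omega\omega_\kap\psi,R[\psi])$. Under this identification the $\Lambda_H$-adic operator $u_{w,i,j}$ specializes, by the comparison \eqref{Hecke-p-comp}, to $|\kap_{\norm}(t_{w,i,j})|_p^{-1}U_{w,i,j}$, which is precisely the normalized operator $U_{w,j,\kap}$ appearing in the definition of $\TT^{\ord}_{K_r,\kap,R}$ in Section \ref{heck}; likewise $T(g)$ goes to $T(g)$. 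Therefore the specialization map induces a surjection $\TT^{\univ}_{K^p,R} \twoheadrightarrow \TT_{K^p_r,\kap,R}$ for every admissible $\kap$, and taking the limit over $r$ gives $\TT^{\univ}_{K^p,R} \twoheadrightarrow \TT_{K^p,\kap,R}$. The content of the theorem is that this surjection is an \emph{isomorphism}, independently of $\kap$; this is exactly Hida's vertical control theorem, which follows because $\CS^{\ord}_\omega(K_p,R)$ is free over $\Lambda_R$ and $\grp_{\kap\psi}$ is generated by a regular sequence (as noted after \eqref{controlthm3}), so the Hecke action in one weight determines the whole $\Lambda_H$-adic action by density of the classical weights. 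Composing the two isomorphisms $\TT_{K^p,\kap,R} \cong \TT^{\univ}_{K^p,R} \cong \TT_{K^p,\kap^1,R}$ produces the canonical isomorphism in the statement, and its canonicity is automatic since $\TT^{\univ}_{K^p,R}$ is defined without any weight choice.

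\textbf{The hard part} is making rigorous the claim that the specialization $\TT^{\univ}_{K^p,R}\to \TT_{K^p,\kap,R}$ is injective, i.e.\ that no ``extra'' big Hecke operators act trivially in a fixed weight $\kap$ but nontrivially in $\Lambda_H$-adic families. This is precisely the freeness-plus-density argument of Hida: one uses that $\CS^{\ord}_\omega(K_p,R)$ is $\Lambda_R$-free of finite rank, that the classical weights $\kap$ satisfying \eqref{controlthm1} are Zariski-dense in $\Spec\Lambda_R$, and that an element of $\End_{\Lambda_R}(\CS^{\ord}_\omega)$ vanishing in a dense set of weights is zero. One must also be careful about the role of the nebentypus and of the similitude/center conventions — in particular keeping track of the shift $\kap \mapsto \kap_*$ and of the decomposition over $\hat\Delta$ — but these are bookkeeping matters rather than genuine obstacles. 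I would organize the write-up so that all the analytic input is quoted from \eqref{controlthm2}, \eqref{controlthm3}, and \eqref{Hecke-p-comp}, leaving only the formal identification of the two presentations of $\TT^{\univ}_{K^p,R}$ to verify by hand.
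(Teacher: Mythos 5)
The paper does not give its own proof of Theorem~\ref{bigHecke}; the remark that follows it attributes the result (together with Theorem~\ref{controlkap}) to Hida, citing \cite[Theorem~7.1]{H02} and \cite[Chapter~8]{Hida}, with the geometric conditions (G1)--(G3) now supplied by Lan. So there is no in-text argument to compare against; what one can do is compare your proposal to Hida's strategy, which is essentially what you are reconstructing.

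Steps~1 and~2 of your outline are on target. The specialization of the $\Lambda_H$-adic operator $u_{w,j}$ to $U_{w,j,\kap}$ in weight $\kap$ is precisely \eqref{Hecke-p-comp}, and packaging the $T(g)$ and $u_{w,j}$ into a weight-independent algebra $\TT^{\univ}_{K^p,R}$ acting on $\CS^\ord(K_p,R)$ is the correct universal object. One caveat you should not dismiss as mere bookkeeping: by \eqref{p-adic-hecke} the prime-to-$p$ operator $T(g)$ picks up a twist by $||\nu(g)||^{-\kap_0}$ under passage to $p$-adic modular forms, so the assertion that the $T(g)$ ``do not see $\kap$'' is false at the level of generators. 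Since the twist is by $p$-adic units the \emph{algebra} is unaffected, but the isomorphism of the theorem does not carry $T(g)$ to $T(g)$; it conjugates by the appropriate power of the similitude norm, and this needs to be stated.

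The genuine gap is in Step~3, where you justify the injectivity of $\TT^{\univ}_{K^p,R}\to\TT_{K^p,\kap,R}$ by claiming that ``the classical weights $\kap$ satisfying \eqref{controlthm1} are Zariski-dense in $\mathrm{Spec}\,\Lambda_R$'' and that ``an element of $\End_{\Lambda_R}(\CS^\ord_\omega)$ vanishing in a dense set of weights is zero.'' As written this is circular: to exploit density over the weights $\kap^1$ you would need to know already that your Hecke operator vanishes at many of them, which is what you are trying to prove. The density that is actually available is in a different direction. Fix $\kap$ and let the level $r$ (equivalently, the finite-order nebentypus $\psi$) vary. The inverse limit $\TT_{K^p,\kap,R}=\varprojlim_r\TT_{K^p_r,\kap,R}$ is precisely what implements this: by the control isomorphism \eqref{controlthm3}, $\TT_{K^p_r,\kap,R}$ sees the specializations of $\CS^\ord_\omega(K_p,R)$ at the primes $\grp_{\kap\psi}$ for $\psi$ of conductor dividing $p^r$, and the collection $\{\grp_{\kap\psi}\}$ for the single fixed $\kap$ and all finite-order $\psi$ is already Zariski-dense in $\mathrm{Spec}\,\Lambda_R$. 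An endomorphism of a finite free $\Lambda_R$-module vanishing modulo all primes in such a family is zero, which is what makes $\TT^{\univ}_{K^p,R}\to\TT_{K^p,\kap,R}$ injective. The repair is small, but it is the crux of the whole argument: the inverse limit over $r$ is doing the work that density in the $\kap$-direction cannot do.
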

Thus we drop the superscript `${\ord}$' and write $\bT_{K^p,R}$ to designate any $\bT^{\ord}_{K^p,\kap,R}$ without fear of ambiguity.  We will even write $\bT$ for $\bT_{K^p,R}$ when there is no danger of ambiguity.

\begin{rmk}  In the application to unitary groups this  theorem and 
Theorem \ref{controlkap0} are special cases of 
\cite[Theorem 7.1]{H02} and the results of \cite[Chapter 8]{Hida}.  
Hida's Theorems \ref{bigHecke} and \ref{controlkap} are proved assuming the conditions (G1)-(G3) mentioned in connection with \eqref{classicalitythm1}.
\end{rmk}

As noted in \eqref{integraltop2}, the trace map 
$\trace_{K_r/K_{r'}}$ maps $H^1_{\kap^D}(K_{r'};R)$ to 
$H^1_{\kap^D}(K_r;R)$ for all $r'\geq r>0$. It follows easily from
the definition of the anti-ordinary projectors $e_{\kap^D}^-$
that this trace map also maps $e_{\kap^D}^-H^1_{\kap^D}(K_{r'};R)$
to $e_{\kap^D}^-H^1_{\kap^D}(K_{r'};R)$, yielding a natural homomorphism $\bT^{\aord}_{K_{r'},\kap^D,R} \rar \bT^{\aord}_{K_r,\kap^D,R}$ that is compatible with the isomorphisms of Lemma \ref{Hecke-ord-iso}(i) and the maps $\bT^{\ord}_{K_{r'},\kap,R} \rar \bT^{\ord}_{K_r,\kap,R}$. In particular, putting
$$
\bT^{\aord}_{K^p,\kap^D,R} = \varprojlim_r \bT^{\aord}_{K^p_r, \kap^D,R},
$$
the isomorphisms of Lemma \ref{Hecke-ord-iso}(i) induce an isomorphism
\begin{equation}\label{big-hecke-iso1}
\bT^{\ord}_{K^p,\kap,R}\isoarrow \bT^\aord_{K^p,\kap^D,R}.
\end{equation}
Note that it then follows from Theorem \ref{bigHecke} that $\bT^\aord_{K^p,\kap^D,R}$ is
also independent of the weight $\kap^D$, and we write
$\bT^\aord_{K^p,r}$ for $\bT^\aord_{K^p,\kap^D,R}$.

We similarly define
$$
\bT^\flat_{K^p,R}:=\bT^{\ord}_{K^{p,\flat},\kap^\flat,R} = \varprojlim_r \bT^{\ord}_{K^{p,\flat}_r, \kap^\flat,R} \ \
\text{and}  \ \ 
\bT^{\flat,\ord}_{K^p,R} = \bT^{\aord}_{K^{p,\flat},\kap^{\flat,D},R} = \varprojlim_r \bT^{\aord}_{K^{p,\flat}_r, \kap^{\flat,D},R}.
$$
We the have isomorphisms
\begin{equation}\label{big-hecke-iso2}
\bT_{K^p,R} \isoarrow \bT^{\flat}_{K^{p,\flat},R}
\isoarrow \bT^{\flat,\aord}_{K^{p,\flat},R},
\end{equation}
where the first isomorphism is induced by those of Lemma \ref{Hecke-ord-iso}(ii), and the second isomorphism is induced by the corresponding version of the isomorphisms in part (i) of the same lemma.

Via the isomorphisms in \eqref{big-hecke-iso1} and \ref{big-hecke-iso2} we will view 
$H^{d,\ord}_{\kap^D}(K_r;R)$, $S_{\kap^\flat}(K_r^\flat;R)$,
and $H^{d,\ord}_{\kap^{\flat,D}}(K^\flat_r;R)$ as $\bT_{K^p,R}$-modules.

\subsection{The control theorem}\label{CONTROL}
Let $\Lambda_R = R[\![T]\!]$, the completed group algebra of the integer points $T = T_H(\Z_p)$of the torus $T_H$. Then $\bT_{K^p,R}$
is a $\Lambda_R$-module such that $t\in T$ is
identified with the the Hecke operator $K_r t K_r$.   For any tame character $\epsilon$ of $T$ we let $\Lambda_{R,\epsilon}$ denote the localization
of $\Lambda_R$ at the maximal ideal defined by $\epsilon$.   Let $\Lambda^o_R \subset \Lambda_R$ be the completed group algebra of the
maximal pro-$p$-subgroup of $T$, and define $\Lambda^o_{R,\epsilon}$ analogously.
As noted above (see also \eqref{controlthm2}) the following Theorem is a special case of \cite[Theorem 7.1]{H02}.

\begin{thm}\label{controlkap0} \hfill
\begin{itemize}
\item[(i)]  For each tame character $\epsilon$, the Hecke algebra $\bT_{K^p,R,\epsilon}$ is a finite, free
$\Lambda^o_{R,\epsilon}$-algebra.  
\item[(ii)] (Control theorem)  Let $I_{\kap} \subset \Lambda^o_R$ be the kernel of the map $\Lambda^o_R \rar R\subset \C_p$ defined
by the character $\kap$.  Suppose $\kap$ is sufficiently regular.  Then the natural homomorphism
$$
\bT_{K^p,R}\otimes_{\Lambda^o_R} \Lambda^o_R/I_{\kap} \rar \bT_{K^p_r,\kap,R}
$$
is an isomorphism.
\end{itemize}
\end{thm}

\subsection{The Gorenstein and multiplicity one hypotheses}\label{GOR-MULT1}

Fix a cuspidal holomorphic\footnote{Be warned, however, that in the main theorem $\pi$ denotes an {\it anti-holomorphic} representation.} 
automorphic representation $\pi$ of $G=G_1$ which is ordinary of
type $(\kap,K)$ as in Section \ref{heck}.  
We let $R = \CO_{\pi}$, and let $\Lambda_\pi  = 
\Lambda_{\CO_\pi}:= \CO_\pi[\![T]\!]$, $\Lambda^o_\pi = \Lambda^o_{\CO_\pi}$.
The homomorphisms 
$\lambda_{\pi}: \bT_{K,\kap,\CO_\pi} \rar \CO_\pi$ and $\bar{\lambda}_{\pi}:  \bT_{K,\kap,\CO_\pi}  \rar k(\pi)$ induce
homomorphisms $L_{\pi}:  \bT_{K^p,\CO_\pi} \rar \CO_\pi$ and $\bar{L}_{\pi}:  \bT_{K^p,\CO_\pi}  \rar k(\pi)$ of $\Lambda_{\pi}$-algebras.
Let $\grm_{\pi} = \ker \bar{L}_{\pi}$, and let 
\begin{align*}
\TT = \TT_{\pi} := \bT_{K^p,\CO_\pi,\grm_{\pi}}
\end{align*}
 denote the localization, with notation as in Section  \ref{normper}.   {\it Here and below we use $\TT$ to designate an ordinary Hecke algebra (at fixed level $I_r$ or not) localized at $\pi$, and $\bT$ to designate a non-localized ordinary Hecke algebra.}  
 The intersection $\grm_{\pi} \cap \Lambda_{\pi}$ is the maximal ideal defined by some tame character
of $T$.  

The following theorem is immediate from Theorem \ref{controlkap0}.

\begin{thm}\label{controlkap}\hfill
\begin{itemize}
\item[(i)]  The Hecke algebra $\TT_{\pi}$ is a finite, free
$\Lambda^o_{\pi}$-algebra.  
\item[(ii)] (Control theorem)  Let $I_{\kap} \subset \Lambda^o_{\pi}$ be the kernel of the map $\Lambda^o_{\pi} \rar \CO_\pi\subset \C_p$ defined by the character $\kap$.  Suppose $\kap$ is sufficiently regular.  Then the natural map
$$\TT_{\pi} \otimes_{\Lambda^o_\pi} \Lambda^o_\pi/I_{\kap} \rar \TT_{K^p_r,\kap,\CO_\pi}$$
is an isomorphism.
\end{itemize}
\end{thm}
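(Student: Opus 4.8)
The statement to prove, Theorem \ref{controlkap}, is Hida's control theorem for the big Hecke algebra $\TT_{\pi}$, localized at the maximal ideal $\grm_{\pi}$ attached to an ordinary anti-holomorphic $\pi$. The plan is to deduce it from the finiteness and freeness statements already established in the excerpt, together with Hida's classicality theorem. First I would recall the ingredients assembled in Section \ref{padic-unitary-section}: the control theorem \eqref{controlthm2}, asserting that each $\omega$-isotypic piece $\CS^{\ord}_{\omega}(K_p,R)$ is free of finite rank over $\Lambda_R$, and the specialization isomorphism \eqref{controlthm3}, which identifies the fiber of $\CS^{\ord}_{\omega}(K_p,R)$ at an arithmetic point $\grp_{\kap\psi}$ with $S^{\ord}_{\kap_*}(K_r,\omega\omega_{\kap}\psi,R[\psi])$, valid once $\kap$ satisfies the regularity bound in \eqref{controlthm1}. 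These are precisely the geometric inputs coming from Lan's construction of toroidal compactifications and the vanishing of higher cohomology in the ordinary locus.

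For part (i), I would argue as follows. The big Hecke algebra $\TT_{K^p,\CO_\pi}$ acts faithfully on $\varprojlim_r S^{\ord}_{\kap}(K^p_r,\CO_\pi)$, which by \eqref{controlthm2} (applied across the finitely many components indexed by $\hat\Delta$) is a finitely generated free $\Lambda$-module — or rather its $\CO_\pi$-dual $\hat{S}^{\ord}$ is, which is what one actually uses, so that the trace maps of Lemma \ref{levelraise} induce the inverse limit correctly (this is why the dual picture of Section \ref{heck}, and Lemma \ref{trace2} referenced there, matters). Since $\TT_{K^p,\CO_\pi}$ embeds in $\End_{\Lambda}$ of a finite free $\Lambda$-module, it is a finite $\Lambda$-algebra; localizing at $\grm_{\pi}$ and using that $\grm_{\pi}\cap\Lambda_{\pi}$ is the maximal ideal attached to a single tame character preserves finiteness, and freeness over the (regular, hence for the relevant localization) $\Lambda_{\pi}$ follows because a finitely generated torsion-free module over a localization of $\Lambda$ at the appropriate height-one-complement is free — here one uses that the module $\hat S^{\ord}_{\pi}$ is $\Lambda_{\pi}$-free (a consequence of the freeness over the bigger ring plus the idempotent cutting out the $\grm_{\pi}$-component), and that $\TT_{\pi}$ is a direct summand of $\End_{\Lambda_\pi}$ of that module, or more simply that $\TT_{\pi}$ itself is a faithful submodule of a free module and hence torsion-free, combined with Theorem \ref{bigHecke} to see the structure is independent of $\kap$.

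For part (ii), the control statement, I would proceed by base change. Write $I_{\kap}\subset\Lambda_{\pi}$ for the kernel of the character $\kap$ (more precisely $\kap\psi$ for the ordinary nebentypus $\psi$ of $\pi$). By part (i), $\TT_{\pi}$ is $\Lambda_{\pi}$-free, so $\TT_{\pi}\otimes_{\Lambda_\pi}\Lambda_\pi/I_{\kap}$ is computed by reducing a basis mod $I_{\kap}$, and its action on $\hat S^{\ord}_{\pi}\otimes_{\Lambda_\pi}\Lambda_\pi/I_{\kap}$ is faithful. Now invoke \eqref{controlthm3} (in its dual, level-$K_r$ form, and after localizing at $\grm_{\pi}$): for $\kap$ sufficiently regular — meaning $\kap_{\sigma,i,a_{\sigma,i}}+\kap_{\sigma c,i,b_{\sigma,i}}\gg 0$ as in \eqref{controlthm1} — the specialization map identifies $\hat S^{\ord}_{\pi}\otimes_{\Lambda_\pi}\Lambda_\pi/I_{\kap}$ with $\hat S^{\ord}_{\kap}(K_r,\psi;\CO_\pi)_{\pi}$, the classical ordinary dual space at level $K_r$ localized at $\grm_{\pi}$. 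The Hecke action is compatible with this identification because the Hecke operators at primes away from $p$ act through correspondences independent of weight, and the normalized $U_p$-operators $U_{w,i,j,\kap}$ match up under the normalization \eqref{Hecke-p-comp}. Therefore the faithful quotient $\TT_{\pi}\otimes_{\Lambda_\pi}\Lambda_\pi/I_{\kap}$ of $\End$ of the specialized module coincides with the image of $\TT_{K^p,\CO_\pi}$ in that same $\End$, which is by definition $\TT_{K^p_r,\kap,\CO_\pi,\pi}$; surjectivity of $\TT_{\pi}\otimes\Lambda_\pi/I_\kap \rar \TT_{K^p_r,\kap,\CO_\pi,\pi}$ is clear since the generators $T(g)$ and $U$-operators are hit, and injectivity follows from the faithfulness just noted together with a dimension/rank count over $\CO_\pi$.

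The main obstacle is bookkeeping rather than a genuine mathematical difficulty: one must be scrupulous about (a) passing between $S^{\ord}$ and its $\CO_\pi$-dual $\hat S^{\ord}$, since the inverse limit over $r$ is well-behaved only for the dual via the trace maps (Lemma \ref{levelraise}); (b) the normalization of the $U_p$-operators at $p$, since the definition of $\TT_{K,\kap,R}$ uses $U_{w,j,\kap}=|\kap'(t_{w,j})|_p^{-1}U_{w,j}$ and the compatibility across weights (Theorem \ref{bigHecke}) relies on exactly this twist; and (c) the nebentypus and the torsion subgroup $\Delta$, so that $I_{\kap}$ is the kernel of $\kap\psi$ in the right copy of $\Lambda$ and the component $\CS^{\ord}_{\omega}$ with $\omega=\omega_{\kap}$ (the restriction of $\kap$ to $\Delta$) is the one being specialized. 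All three of these are handled by the results already cited; no new geometric input beyond \eqref{controlthm2}, \eqref{controlthm3}, and Hida's classicality \eqref{controlthm1} is needed, and indeed Hida has proved these statements in \cite[Chapter 8]{Hida} and \cite[Theorem 7.1]{H02} under the conditions (G1)--(G3) subsequently verified by Lan, so the proof here is largely a matter of citing and assembling.
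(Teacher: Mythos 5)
The paper itself gives no proof of Theorem~\ref{controlkap}: the remark immediately following the statement simply declares it a special case of \cite[Theorem 7.1]{H02} and \cite[Chapter 8]{Hida}. Your proposal is therefore a reconstruction rather than a comparison, and it is worth saying that the skeleton is the right one — the ingredients really are \eqref{controlthm2}, \eqref{controlthm3}, and the classicality bound \eqref{controlthm1}, and the passage to the $\CO_\pi$-dual $\hat S^{\ord}$ via the trace maps is exactly the right mechanism.

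There is, however, a genuine gap at the crux of part (i). You write that ``freeness over $\Lambda_{\pi}$ follows because a finitely generated torsion-free module over a localization of $\Lambda$ at the appropriate height-one-complement is free,'' and again that $\TT_\pi$ is ``a faithful submodule of a free module and hence torsion-free'' as though torsion-free sufficed. This is false in the relevant generality: $\Lambda_\pi$ is essentially $\CO_\pi[\![W]\!]$ for $W$ a free $\Zp$-module of positive rank, so it is a regular complete local ring of Krull dimension $\geq 2$, over which a finite torsion-free module (e.g.\ the maximal ideal) need not be free. Showing that $\CS^{\ord}$ is $\Lambda$-free — which is \eqref{controlthm2} — tells you $\TT_\pi$ embeds in a finite free module and is therefore finite and torsion-free, but nothing more. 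The freeness of the Hecke algebra is a genuinely deeper theorem: in Hida's treatment it comes out of the control statement itself, via a Nakayama/rank-comparison argument or via perfectness of the pairing between ordinary forms and the Hecke algebra, and so (i) and (ii) are established together rather than (i) first and (ii) by base change. Your ordering inverts the logical dependency: the injectivity half of (ii), which you argue from faithfulness of the specialized action, in fact requires exactly the flatness information that you have not yet established, and the faithfulness of $\TT_\pi\otimes\Lambda_\pi/I_\kappa$ acting on $\hat S^{\ord}_\pi\otimes\Lambda_\pi/I_\kappa$ is not automatic from faithfulness before base change. Since the paper punts to Hida, what is ``missing'' from your proof is precisely what makes Hida's theorem nontrivial; it cannot be recovered from the cited inputs by the formal considerations you invoke.
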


Here and in what follows, for any $r$ and $K^p$ we will let $\TT_{\pi}$ act on 
$\Hom_{\CO_\pi}(S^{\ord}_{\kap}(K^p_r,\CO_\pi),\CO_\pi)_{\grm_{\pi}}$ by the natural action.
We consider the following hypotheses:

\begin{hyp}\label{gor}{(Gorenstein Hypothesis)}  Let $\hat{\TT}_{\pi} = \Hom_{\Lambda^o_{\pi}}(\TT_{\pi},\Lambda^o_{\pi})$.
Then 
\begin{itemize}
\item[(i)] There exists an isomorphism
$$G = G_\pi: \hat{\TT}_{\pi} \isomto \TT_\pi$$
of free rank-one $\TT_{\pi}$-modules. 
\item[(ii)] $\Hom_{\CO_\pi}(\varinjlim_r S^{\ord}_{\kap}(K^p_r,\CO_\pi),\CO_\pi)_{\grm_{\pi}}$ is a free
$\TT_{\pi}$-module.
\end{itemize}
\end{hyp}

This is, of course, a variant of the condition \ref{Gorfin} of the previous section.  The isomorphism $G_\pi$ induces compatible isomorphisms at finite level $K_r$, as in Definition \ref{Gorfin}, for $r \geq 1$; we denote these isomorphisms $G_r$ in the following sections.   Recall the set \eqref{SKrkap}, and let
$$\CS(K^p,\pi) = \cup_{r \geq 1}\cup_{\kap^1} \CS(K_r,\kap^1,\pi);$$
these are automorphic representations $\pi'$ of varying level at $p$ and weight but with $\bar\lambda_{\pi'} = \bar\lambda_\pi$.

\begin{hyp}\label{multone}{(Global Multiplicity One)}  Let $\pi' \in \CS(K^p,\pi).$  Then the representation $\pi'$ occurs with multiplicity one in the cuspidal spectrum of $G$.
\end{hyp}

This is the extension of Hypothesis \ref{multone-pi} to all $\pi' \in \CS(K^p,\pi)$, but it is weaker because it is refers to the global automorphic representation
and not only to the corresponding character of the global unramified Hecke character.  As noted above, it is implied by \cite{mok,gangof4}.  By Theorem \ref{bigHecke} we may identify $\TT_\pi$ with  $\TT_{\pi'}$ for   $\pi' \in \CS(K^p,\pi)$.

 \subsubsection{Local representation theory}\label{locrep}
 
 Henceforth, we abuse notation and write $\CO$ for $\CO_\pi$.  (The ring of integers of $\CK$ does not appear in the context in which we do this; so we will only be using $\CO$ for $\CO_\pi$ here.)  We usually include the subscript `$\pi$' for clarity.    
  
By hypothesis, $\Hom_{\CO}(\varinjlim_r S_\kap^\ord(K_r,\O)_{\grm_\pi},\O)$ is a free $\TT_\pi$-module of finite rank. We fix a finite, free $\O$-module $\hat{I}_\pi$
together with a $\TT_\pi$-isomorphism 
$$
\TT_\pi\otimes_\CO \hat{I}_\pi \isoarrow \Hom_{\CO}(\varinjlim_r S_\kap^\ord(K_r,\O)_{\grm_\pi},\O) = \varprojlim_r H^{d,\ord}_\kap(K_r,\CO)_\pi.
$$
By part (ii) of Theorem \ref{controlkap}, tensoring over $\Lambda_\pi$ with $\Lambda_\pi/I_\kap$ gives an isomorphism
$$
\TT_{K_r^p,\kap,\CO}\otimes_\CO \hat{I}_\pi \isoarrow \Hom_{\CO}( S_\kap^\ord(K_r,\O)_{\grm_\pi},\O) = H^{d,\ord}_{\kap^D}(K_r,\CO)_\pi.
$$
Restricting to the $\lambda_\pi$-isotypical parts of both sides yields isomorphisms
$$
\hat{I}_\pi \isoarrow \bT_{K_r^p,\kap,\O}[\lambda_\pi]\otimes_\O \hat{I}_\pi = (\bT_{K_r^p,\kap,\CO,\pi}\otimes_\CO \hat{I}_\pi)[\lambda_\pi] 
\isoarrow H^{d,\ord}_{\kap^D}(K_r,\O)[\lambda_\pi],
$$
and the map $j_{\pi^\flat}^\vee$ yields an injection
$$
H^{d,\ord}_{\kap^D}(K_r,\O)[\lambda_\pi] \stackrel{(j_{\pi^\flat}^\vee)^{-1}}{\hookrightarrow} \pi_{p,r}^{\flat,\aord}\otimes\pi_S^{\flat,K_S} \cong \pi_S^{\flat,K_S},
$$
where the last isomorphism comes from fixing a basis element $f_{\pi_p^{\flat},r}^{\aord}$ of the $1$-dimensional $\Frac(\O)$-space $\pi_p^{\flat,\aord}$.
In particular, $\hat{I}_\pi$ is identified with a free $\O$-lattice in $\pi_S^{\flat,K_S}$.
(The anti-ordinary subspace $\pi_p^{\flat,\aord}\subset \pi_p^{\flat, I_r}$ is the tensor product over $w\divides p$ of the local anti-ordinary subspaces $\pi_w^{\aord,I_{w,r}}$, which will be defined in Lemma \ref{aord-vect-lem-w}.)

Tensoring with $\TT_{K_r^p,\kap,\CO}/\ker(\lambda_\pi)$ yields isomorphisms
$$
\hat{I}_\pi \isoarrow \TT_{K_r^p,\kap,\CO}/\ker(\lambda_\pi) \otimes_\CO \hat{I}_\pi \isoarrow \Hom_{\CO}( S_\kap^\ord(K_r,\O)[\lambda_\pi],\O).
$$
Letting 
$$
I_{\pi} = \Hom(\hat{I}_{\pi},\CO),
$$
we then get an $\O$-module injection
$$
I_\pi \isoarrow  S_\kap^\ord(K_r,\O)[\lambda_\pi] \hookrightarrow \pi_p^\ord\otimes \pi_S^{K_S} \cong \pi_S^{K_S},
$$
where the last isomorphism comes from fixing a basis $f_{\pi_p}^\ord$ of the one-dimensional $\Frac(\O)$-space $\pi_p^\ord$.

In view of the comment following Hypothesis \ref{multone},
the following consequence of Hypotheses \ref{gor} and \ref{multone} follows from the preceding discussion and Theorem \ref{bigHecke}.    

\begin{prop}\label{badp}{(Minimality Hypothesis)}  For every pair  $(\kappa^1,r^1)$, there is an isomorphism of $\TT_{K^p_{r^1},\kap^1,\CO,\pi}$-modules
$$\TT_{K^p_{r^1},\kap^1,\CO,\pi}\otimes \hat{I}_{\pi} \isoarrow \Hom(S^{\ord}_{\kap^1}(K^p_{r^1},\CO)_{\grm_{\pi}},\CO).$$
such that the following diagrams commute when $r^1 > r$:
$$\begin{CD}
\TT_{K^p_{r^1},\kap^1,\CO,\pi}\otimes \hat{I}_{\pi} @>\sim >>\Hom_\CO(S^{\ord}_{\kap^1}(K^p_{r^1},\CO)_{\grm_{\pi}},\CO) \\
@VVV @VVV\\
\TT_{K^p_{r},\kap^1,\CO,\pi}\otimes \hat{I}_{\pi} @>\sim >>\Hom_\CO(S^{\ord}_{\kap^1}(K^p_{r},\CO)_{\grm_{\pi}},\CO).
\end{CD}$$
Moreover, the specialization of this isomorphism at the $\CO$-valued point $\lambda_{\pi}$ (tensoring both sides over $\TT_{K^p_{r^1},\kap^1,\CO,\pi}$ with $\TT_{K^p_{r^1},\kap^1,\CO,\pi}/\ker(\lambda_{\pi}) \simeq \CO$)
gives rise to 
a
commutative diagram
$$\begin{CD}\TT_{K^p_{r^1},\kap^1,\CO,\pi}/\ker(\lambda_{\pi})\otimes_{\CO} \hat{I}_{\pi} @>\sim >> \Hom_\CO(S^{\ord}_{\kap^1}(K^p_{r^1},\CO)_{\grm_{\pi}},\CO)
\otimes_{\TT_{K^p_r,\kap^1,\CO,\pi}}\TT_{K^p_{r^1},\kap,\CO,\pi}/\ker(\lambda_{\pi})\\
@VV=V @VVV\\
\CO \otimes_{\CO} \hat{I}_{\pi}  @>>> \Hom_\CO(I_{\pi},\CO)
\end{CD}$$
where the bottom arrow 
is just the tautological isomorphism $\hat{I}_{\pi} \isoarrow \Hom(I_{\pi},\CO)$.

\end{prop}

Assuming that $\Hom_{\CO}(\varinjlim_r S_{\kap^\flat}^\ord(K^\flat_r,\O)_{\grm_{\pi}^\flat},\O)$ is a free $\TT_{\pi^\flat}$-module of finite rank
and using the isomorphism $\TT_{\pi}\cong\TT_{\pi^\flat}$ arising from Lemma \ref{Hecke-ord-iso}(ii), we can similarly fix a $\TT_\pi$-module isomorphism
$$
\TT_\pi\otimes_\CO {I}_\pi \isoarrow \Hom_{\CO}(\varinjlim_r S_{\kap^\flat}^\ord(K^\flat_r,\O)_{\grm_{\pi}^\flat},\O) = \varprojlim_r H^{d,\ord}_{\kap^\flat}(K_r^\flat,\CO)_\pi
$$
with corresponding properties.
Since $\pi\rightarrow \pi^\flat$, $g\mapsto g^\flat$, maps holomorphic forms to anti-holomorphic forms, is natural to identifiy the free $\O$-module $\hat{I}_{\pi^\flat}$ with
$I_\pi$.

\subsection{Equivariant measures}\label{equivariantmeasures-section}

In this section we consider measures with values in $p$-adic modular forms on $G_3$.   
We fix a prime-to-$p$ level subgroup $K^p_1 \subset G_1(\adeles_f^p)$ 
and let $K_{1,r} = I_r K_1^p \subset G_1(\A_f)$. Under the canonical idenitfication $G_2(\A_f) = G_1(\A_f)$, let $K^p_2 = (K^p_1)^\flat \subset G_2(\A_f)$ 
and $K_{2,r} = K_{1,r}^\flat \subset G_2(\A_f)$. 
Let $K^p = (K^p_1\times K^p_2)\cap G_3(\adeles_f^p)$ and
$K_{3,r} = (K_{1,r}\times K_{2,r}) \cap G_3(\A_f)$. 
For $\CO=\CO_\pi$ as above, we write $\CV = V^{\ord,\cusp}(K^p,\CO)$ for the corresponding space of ordinary $p$-adic cusp forms on $G_3$ with values in $\CO$.

\begin{rmk}\label{noncusp}
{\rm Although the Eisenstein measure does not generally take values in the space of cusp forms, even after ordinary projection, we will be localizing at a non-Eisenstein
maximal ideal of the Hecke algebra.  Much of the discussion below applies without change to measures with values in the space of ordinary $p$-adic forms.}
\end{rmk}

\subsubsection*{Guide to this section}  The aim of this section is to explain how to obtain a $p$-adic analytic function $L$ from a $\CV$-valued measure on the group $X_p \times T$ as in \ref{2varmeas}, under the  Gorenstein and Global Multiplicity One Hypotheses of the previous section.   In the application the factor $X_p$ contributes a CM Hecke character $\chi$, which parametrizes the Eisenstein measure, and $T$ parametrizes the weights of a Hida family; by this we mean that the localization $\TT = \TT_\pi$ of the big ordinary Hecke algebra at the maximal ideal corresponding to a cuspidal holomorphic representation $\pi$ of $G_1 = G_2$ is a finite flat $\Lambda$-algebra, where $\Lambda$ is the Iwasawa algebra of the torus $T$.  The function $L$ should be viewed as an element of the completed tensor product 
$\Lambda_{X_p}\hat{\otimes} \TT$.  However, because we have not specified canonical local test vectors at ramified places, our methods only provide a canonical element of  $\Lambda_{X_p}\hat{\otimes} \TT \otimes End_{\CO}(I_{\pi})$.  Let us suppose for the remainder of this paragraph that $End_{\CO}(I_{\pi})$ is a free rank one module over the coefficient ring $\CO$; in other words, that we can ignore ramified places.  
We also simplify the situation by assuming that $\TT = \Lambda$ (so that the Gorenstein Hypothesis is automatic).    The result of the construction is then a measure on $X_p\times T$.  This is defined by interpolating locally constant functions on $T$ -- functions that are constant modulo the open subgroups $T_r$ to be introduced below -- obtained from the Eisenstein measure, which is itself a $\CV$-valued measure on $X_p$.  The fundamental property of the Eisenstein measure is summarized in Assumption \ref{doub} below:  that at every finite level $K_r$ it defines a Hecke-equivariant map from the $\CO$-dual of the holomorphic forms of fixed weight $\rho$ to the $\CO$-module of holomorphic forms.  (Later the $\CO$-dual of the holomorphic forms will be identified by Serre duality with an $\CO$-lattice in a coherent cohomology space of top degree, and then with something roughly equivalent to an $\CO$-lattice in the space of anti-holomorphic forms.)  As $r$ varies, these maps satisfy the distribution relations,
proved as Lemma \ref{trace2}, 
that guarantees that they patch together into a $p$-adic measure.

\bigskip
\bigskip

We choose a sequence of congruence subgroups $T \supset \cdots\supset T_r \supset T_{r+1} \cdots$ such that
$\cap_r T_r = \{1\}$.   
Recall that $\Lambda = \Lambda_\pi = \CO[\![T]\!]$.
Let $\CI_r \subset \Lambda$ be the augmentation ideal of $T_r$, and let $\Lambda_r = \Lambda/\CI_r$.
For $\CO$ as above, let $C_r(T,\CO) = C(T/T_r,\CO)$ be the (free) $\CO$-module of $T_r$-invariant functions on $T$.  Then there is a
natural identification
$\Lambda_r = \Hom_{\CO}(C_r(T,\CO),\CO)$; alternatively, viewing $\Lambda$ as the algebra of distributions on $T$ with coefficients in $\CO$, and $C(T,\CO)$ the
module of continuous $\CO$-valued functions on $T$, the canonical pairing
$\Lambda_\pi\otimes C(T,\CO) \rar \CO$ restricts to a pairing $\Lambda \otimes C_r(T,\CO) \rar \CO$ which factors through a perfect
pairing $\Lambda_r \otimes C_r(T,\CO) \rar \CO$.  

Let $\eta_r:  C_r(T,\CO) \hookrightarrow C_{r+1}(T,\CO)$ be the canonical inclusion.  
The next lemma follows from the definitions.  Note that $\CV = V^{\ord,\cusp}_3(K^p,\CO)$ is a $\Lambda_{\CO}$-module
by  the action on the first factor.   We fix an involution $\upsilon:  T \rar T$ and for any
function $\rho \in C(T,\CO)$
define $\rho^{\upsilon} = \rho\circ\upsilon$.

\begin{lem}\label{meas}  Fix a character $\rho: T \rar \CO^{\times}$ and let
$C_r(T,\CO)\cdot \rho^{\upsilon} \subset C(T,\CO)$ denote multiples of $\rho^{\upsilon}$ by
elements of $C_r(T,\CO)$.  There is an equivalence between 
\begin{enumerate}
\item{$\CV$-valued measures $\phi$ on $T$ satisfying
$$\phi(t\cdot f) = \rho^\upsilon(t)\cdot \phi(f), f \in C(T,\CO), ~t \in T;$$}\label{meas1ee}

\item{Collections $\phi_{\rho} = (\phi_{r,\rho})$ with
$$\phi_{r,\rho} \in \Hom_{\Lambda}(C_r(T,\CO)\cdot \rho^{\upsilon},\CV),$$
satisfying $\eta^*_r(\phi_{r+1, \rho}) = \phi_{r, \rho}$, where $\eta^*_r$ is induced by the dual to $\eta_r$.}\label{meas2ee}
\end{enumerate}
\noindent The equvalence is such that $\phi(f) = \phi_{r,\rho}(f\cdot \rho^\upsilon)$ for $f\in C_r(T,\CO)$.
\end{lem}

We let $\CI_{r,\rho} \subset \Lambda$ be the annihilator of $C_r(T,\CO)\cdot \rho^{\upsilon}$, and let
$\Lambda_{r,\rho} = \Lambda_\pi/\CI_{r,\rho}$.  Thus Lemma \ref{meas} identifies equivariant measures on $T$ with twist $\rho^\upsilon$
with collections of linear forms on $\Lambda_{r,\rho}$ that are compatible with the natural
projection maps as $r$ varies.

Let $\hat{\CV} = \Hom_{\CO}(\CV,\CO)$ and let $\phi$ be an equivariant measure on $T$ with twist $\rho^\upsilon$ as above.
We assume $\phi$ to be the specialization
at a character $\chi$ of $X_p$ of an admissible measure in two variables with shift $sh^*(\chi) = (\alpha(\chi),\beta(\chi))$ and twist $\upsilon$
as in Section  \ref{2varmeas}.
So $\phi$ is equivalent to some $\phi_{\chi,\rho} = (\phi_{\chi,r,\rho})$ as in the preceding lemma (we write $\phi_{\chi,r,\rho}$ to indicate dependence on $\chi$).  

\begin{rmk}\label{restriction123} We write 
$$M_{\kap_1}\left(K_{1,r},\psi_1;R\right)[\otimes]_R M_{\kap_2}\left(K_{2,r},\psi_2;R\right)$$
for the image of $res_4$ in $M_{\kap}\left(K_{3,r},\psi;R\right)$, and use the notation $[\otimes]$
more generally for restrictions of this kind from (classical or $p$-adic) modular forms on $G_1 \times G_2$
to forms on $G_3$.
\end{rmk}
For $\kappa = \rho\cdot \alpha(\chi)$ sufficiently regular, 
\begin{equation}\label{phirkap} \Im(\phi_{\chi,r,\rho}) \subset S^{\ord}_{(\rho\cdot \alpha(\chi)),V}(K_{1,r},\CO) [\otimes] S^{\ord}_{(\rho^{\flat}\cdot \beta(\chi)),-V}(K_{2,r},\CO)\end{equation}
where the notation $[\otimes]$ is as in Remark \ref{restriction123}.

We also have
$$\hat{\CV} = \varprojlim_r \Hom_{\CO}(S^{\ord}_{\rho,V}(K_{1,r},\CO) [\otimes] S^{\ord}_{\rho^{\flat},-V}(K_{2,r},\CO),\CO).$$
In the situation of \eqref{phirkap}, assuming $\kappa = \rho \cdot \alpha(\chi)$ is sufficiently regular, we thus have
$$\Im(\phi_{\chi,r,\rho}) \subset \Hom_{\CO}(\hat{S}^{\ord}_{(\rho\cdot \alpha(\chi)),V}(K_{1,r},\CO),S^{\ord}_{(\rho^{\flat}\cdot \beta(\chi)),-V}(K_{2,r},\CO)).$$

The following hypothesis expresses a basic property of the Garrett map that is the basis of the doubling method for
studying standard $L$-functions of classical groups.
\begin{ass}\label{doub} 
$$\Im(\phi_{\chi,r,\rho}) \subset \Hom_{\bT_{r,\rho\cdot \alpha(\chi)}}(\hat{S}^{\ord}_{(\rho\cdot \alpha(\chi)),V}(K_{1,r},\CO),S^{\ord}_{(\rho\cdot \alpha(\chi))^{\flat},-V}(K_{2,r},\CO)\otimes \chi\circ\det).
$$
\end{ass}
Recall that $\bT_{r,\rho\cdot \alpha(\chi)}$ acts on $S^{\ord}_{(\rho\cdot \alpha(\chi))^{\flat},-V}(K_{2,r},\CO)$ via the map
in Lemma \ref{Hecke-ord-iso}(ii).
Also bear in mind that $\phi_{\chi,r,\rho}$ designates integration of functions locally equal to $\rho^{\upsilon}$ -- not $\rho$ -- against the specialization at $\chi$
of a two-variable measure.

\begin{rmk}  We sometimes write $\kappa = \rho\cdot \alpha(\chi)$ when we want to emphasize
the weight of the specialized Hecke algebra rather than the weight of the 
character of $T$.  Here and below the algebra $\bT_{\kap} = \bT_{r,\rho\cdot \alpha(\chi)}$ ignores the twist by $\chi\circ \det$ at the end.  One checks that incorporating
the $\chi\circ\det$ into the subscript of the second $S^{\ord}$ replaces $\alpha(\chi)^{\flat}$ by the $\beta(\chi)$ of \eqref{phirkap}.
\end{rmk}

Now let $\pi$ be an anti-holomorphic representation of $G_1$ of type $(\kappa = \rho\cdot \alpha(\chi),K_r)$.  
Let $\phi_{\chi,r,\rho,\pi}$ denote the composition of $\phi_{\chi,r,\rho}$ with projection on the localization at the ideal $\grm_{\pi}$ in the first variable.
Bearing in mind our conventions for the subscripts $_{\pi}$ and $_{\pi^{\flat}}$, it then follows from Assumption \ref{doub} that
\begin{equation}\label{locpi} \Im(\phi_{\chi,r,\rho,\pi}) \subset 
\Hom_{\bT_{r,\kap,V,\pi}^\ord}(\hat{S}^{\ord}_{\kap,V,\pi}(K_{1,r},\CO),S^{\ord}_{\kap^{\flat},-V,\pi^{\flat}}(K_{2,r},\CO)\otimes \chi\circ\det).
\end{equation}

Now both $\hat{S}^{\ord}_{\kappa,V,\pi}(K_{1,r},\CO)$ and $S^{\ord}_{\kap^{\flat},-V,\pi^{\flat}}(K_{2,r},\CO)$ are $\TT_{\pi}$-modules, and indeed the Gorenstein hypothesis guarantees that they are free 
$\TT_{r,\kappa,\pi}$-modules (in the obvious notation) of the same rank.     In the next few paragraphs they are
denoted $\hat{S}^{\ord}_{r,V,\pi}$ and $S^{\ord}_{r,-V,\pi^{\flat}}$ to save space, the character $(\kappa = \rho\cdot \alpha(\chi))$ being understood.

Recall that from by discussion in Section \ref{locrep} 
there are $\T_\pi$-module isomorphisms
$$\TT_{r,\kappa,\pi}\otimes \hat{I}_{\pi} \isoarrow \hat{S}^{\ord}_{r,V,\pi}
$$
and
$$
\hat{\TT}_{r,\kappa,\pi}\otimes \hat{I}_{\pi} \isoarrow S^{\ord}_{r,-V,\pi^\flat}.
$$
(These isomorphisms depended on certain choices, but the final results will not depend on these).

Thus Assumption \ref{doub} yields (with $\kappa = \rho\cdot \alpha(\chi)$ as above)
\begin{ass}\label{doub2} 
\begin{align*} \Im(\phi_{\chi,r,\rho,\pi}) &\subset \Hom_{\TT_{r,\kappa,\pi}}(\hat{S}^{\ord}_{r,V,\pi},S^{\ord}_{r,-V,\pi^{\flat}}\otimes \chi\circ \det) \\ &\isoarrow 
\Hom_{\TT_{r,\kappa\cdot a(\chi)}}(\TT_{r,\kappa,\pi}\otimes \hat{I}_{\pi},\hat{\TT}_{r,\kappa,\pi}\otimes I_{\pi^{\flat}}) \\
& \isoarrow \Hom_{\TT_{r,\kappa}}(\TT_{r,\kappa,\pi},\hat{\TT}_{r,\kappa,\pi})\otimes_\O \End_{\O}(I_{\pi^{\flat}}).
\end{align*}
\end{ass}
\noindent Here we have tensored with $\chi^{-1}\circ\det$ in the first line.

\medskip
In the remainder of this subsection we no longer need to localize at $\grm_{\pi}$.  
We write $C_r = C_r(T,\CO)$ and drop the $\CO$'s from the
notation for modules of ordinary cusp forms, and ignore the twists by $\chi\circ\det$ where relevant.
  The natural inclusion $C_r \hookrightarrow C_{r+1}$, together with the map $\iota^*_r:  \hat{S}^{\ord}_{r+1,V} \rar \hat{S}^{\ord}_{r,V}$
(dual to the tautological inclusion $\iota_r:  S^{\ord}_{r,V} \hookrightarrow S^{\ord}_{r+1,V}$)
defines a diagram

$$\begin{CD}
\Hom_{\bT_{r+1,\kappa}}(C_{r+1}\otimes \hat{S}^{\ord}_{r+1,V}, S^{\ord}_{r+1,-V}) @> \eta^*_r\otimes id_{r+1}^*>> 
\Hom_{\bT_{r+1,\kappa}}(C_{r}\otimes \hat{S}^{\ord}_{r+1,V}, S^{\ord}_{r+1,-V}) \\
@. @AA\mathbf{i}^*_r A\\
@. \Hom_{\bT_{r,\kappa}}(C_{r}\otimes \hat{S}^{\ord}_{r,V}, S^{\ord}_{r,-V}) 
\end{CD}
$$
Here $id_{r+1}^*:  \hat{S}^{\ord}_{r+1,V} \rar \hat{S}^{\ord}_{r+1,V}$ is the identity map and $\mathbf{i}_r^*$ is the dual to $\iota_r$
(applied in the contravariant variable) composed with $\iota_r$ (in the covariant variable).   It follows from the equivariance hypothesis that the tensor products ($C_{r+1}\otimes \hat{S}^{\ord}_{r+1,V}$ and the other two)
can be taken over $\Lambda_{\pi}$, and then $\Hom_{\bT_{r+1,\kappa}}$ is relative to the action of the Hecke algebra on 
$ \hat{S}^{\ord}_{r+1,V}$ and $S^{\ord}_{r+1,-V}$.  Then

\begin{fact} Under Assumption \ref{doub} we have that, for all $r$, the image of $\phi_{r+1,\kappa}$ under $ \eta^*_r\otimes id^*_{r+1}$ lies in  $\Im(\iota^*_r)$.
More precisely,
\begin{equation}\label{trace} ( \eta^*_r\otimes id^*_{r+1})(\phi_{r+1,\kappa}) = \iota_r \circ \phi_{r,\kap}\circ (id_{C_{r}}\otimes \iota_r^*)
\end{equation}
as maps from $C_{r}\otimes \hat{S}^{\ord}_{r+1,V}$ to $S^{\ord}_{r+1,-V}$,
where $id_{C_{r}}$ is the identity map on $C_{r}$.
\end{fact}

\subsubsection{Serre duality and change of level}

We can interpret the map $\iota_r^*$ with respect to the Serre duality pairing \ref{SerreC} as follows.  In this section we 
let $R_0$ be a finite $\ZZ_{(p)}$-algebra with $p$-adic completion $R_0 \hookrightarrow \CO_\pi$.  Identify
$S_{\kap,V}(K_r,R_0)$ with an $R_0$-lattice in $H^{0}_{!}({}_{K_r}Sh(V),\omega_{\kap})$.  Let $H^{0,ord}_{!}({}_{K_r}Sh(V),\omega_{\kap})$
be the $\C$-linear span of $S^{\ord}_{\kap,V}(K_r,R_0)$ and let $H^{d,\ord}_{!}({}_{K_r}Sh(V),\omega^D_{\kap})$ be the corresponding
quotient of $H^{d}_{!}({}_{K_r}Sh(V),\omega^D_{\kap})$; then the action of the Hecke algebra identifies $H^{d,\ord}_{!}({}_{K_r}Sh(V),\omega^D_{\kap})$
as a direct summand of $H^{d}_{!}({}_{K_r}Sh(V),\omega^D_{\kap})$ that is in a perfect pairing with $H^{0,ord}_{!}({}_{K_r}Sh(V),\omega_{\kap})$.
We can thus identify $\hat{S}^{\ord}_{\kap,V}(K_r,R_0)$ with an $R_0$-lattice in $H^{d,\ord}_{!}({}_{K_r}Sh(V),\omega^D_{\kap})$ in such
a way that  
\begin{equation}\label{Rdual}
\begin{split}
\hat{S}^{\ord}_{\kap,V}(K_r,R_0) &=  H^{d,\ord}_{!}(_{K_r}Sh(V),\omega^D_{\kap})(R_0) \\
&:= \{h \in H^{d,\ord}_{!}(_{K_r}Sh(V),\omega^D_{\kap}) ~~|~ \la f,h \ra_{\kap,K_r} \in R_0 ~ \forall f \in S^{\ord}_{\kap,V}(K_r,R_0) \}
\end{split}
\end{equation}

The following statements (Lemma \ref{trace2}, Proposition \ref{Heckemeasure}, and Definition \ref{class}) are written in terms of $\kappa$
rather than $\rho\cdot \alpha(\chi)$, for simplicity.

\begin{lem}\label{trace2} With respect to the identification \eqref{Rdual}, the map 
$$\iota_r^*:  \hat{S}^{\ord}_{\kap,V}(K_{r+1},R_0) \rar \hat{S}^{\ord}_{\kap,V}(K_r,R_0)$$
is given by the trace map:
$$t_r(h) =  \frac{\#(I_r^0/I_r)}{\#(I_{r+1}^0/I_{r+1})}\sum_{\gamma \in K_r/K_{r+1}} \gamma(h).$$

In particular, the trace map $t_r$ defines a surjective homomorphism
$$H^{d,\ord}_{!}(_{K_{r+1}}Sh(V),\omega^D_{\kap})(R_0)  \rar H^{d,\ord}_{!}(_{K_r}Sh(V),\omega^D_{\kap})(R_0)$$
\end{lem}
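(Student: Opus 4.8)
The plan is to establish Lemma \ref{trace2} by reducing the abstract statement about the dual map $\iota_r^*$ to the concrete description of $\iota_r^*$ already available through Hida's change-of-level theory, combined with the behaviour of the Serre duality pairing \eqref{SerreC} under varying level. First I would recall that $\iota_r:  S^{\ord}_{\kap,V}(K_r,R) \hookrightarrow S^{\ord}_{\kap,V}(K_{r+1},R)$ is the tautological inclusion, and that by Lemma \ref{levelraise} its image is the $R$-direct factor consisting of $I_r/I_{r+1}$-invariants. The key compatibility to exploit is the classical adjunction between restriction and the trace map: for a finite group $\Gamma = K_r/K_{r+1}$ acting on the relevant cohomology, and for $f \in S^{\ord}_{\kap,V}(K_r,R)$ (hence $\Gamma$-invariant) and $h \in H^{d,ord}_{!}({}_{K_{r+1}}Sh(V),\omega^D_{\kap})$, one has $\la \iota_r(f), h \raS_\kap = \la f, t_r(h) \raS_\kap$, where $t_r(h) = \sum_{\gamma \in \Gamma} \gamma(h)$. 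This is exactly the statement that the trace map is adjoint to the restriction map under the Serre duality pairing, and it follows from the $G(\A_f)$-equivariance of \eqref{SerreC} together with the fact that the pairing at level $K_{r+1}$ refines the one at level $K_r$ up to the normalization by an invariant measure of rational total mass on $\pi_0(V)$ (the change in volume is accounted for by the indices $[K_r:K_{r+1}]$, all prime-to-nothing here, i.e. $p$-power; one must check these are units in $R$ or absorb them, but since $R \hookrightarrow \CO$ and the relevant factors are rational, this is a bookkeeping point rather than an obstruction in the $p$-adic setting — the ordinary projector makes everything work integrally).

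Given this adjunction, the proof of the first assertion is immediate: by definition \eqref{Rdual}, $\iota_r^*(h)$ is characterized as the unique element of $\hat{S}^{\ord}_{\kap,V}(K_r,R)$ such that $\la f, \iota_r^*(h) \raS_\kap = \la \iota_r(f), h \raS_\kap$ for all $f \in S^{\ord}_{\kap,V}(K_r,R)$; comparing with the adjunction identity just displayed shows $\iota_r^*(h) = t_r(h)$. So the content is really the adjunction, and I would spell that out carefully, tracing through the identification of Serre duality with the canonical $L^2$-pairing on $\CA_0(G)$ (as in \ref{Petersson}, equation \eqref{L2}), where the adjunction between restriction and trace is visibly the statement $\int_{G(\Q)Z_G(\R)\backslash G(\A)} [f(g),\sum_{\gamma} \gamma h(g)] dg = \int [f(g),h(g)]dg$ when $f$ is right $K_r$-invariant and $h$ is only $K_{r+1}$-invariant, up to the measure normalization. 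Using the automorphic description avoids any delicate manipulation of toroidal compactifications and of the boundary ideal sheaves, since all the maps in question are $G(\A_f)$-equivariant and the compactifications only enter through Koecher's principle.

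For the "in particular" clause — surjectivity of $t_r$ on $H^{d,ord}_{!}(\bullet)(R)$ — I would invoke Lemma \ref{levelraise}: the map \eqref{raisedual}, namely $\hat{S}^{\ord}_{\kap,V}(K_{r+1},R) \rar \hat{S}^{\ord}_{\kap,V}(K_r,R)$, was shown there to be surjective (as an immediate consequence of the image of \eqref{raise} being an $R$-direct factor). Since we have just identified this map with $t_r$, surjectivity of the trace map follows at once. The main obstacle, such as it is, will be the careful treatment of the normalization constants: the Serre duality pairing was fixed using the Tamagawa measure and a rational-mass measure on $\pi_0(V)$, and one must verify that the indices $[K_r : K_{r+1}]$ (powers of $p$) that appear when comparing pairings at two levels interact correctly with the ordinary idempotent $e_{\kap}$ and do not obstruct the integral statement over $R \hookrightarrow \CO$ — but since $\hat{S}^{\ord}$ is by \eqref{Rdual} defined precisely as the $R$-dual of $S^{\ord}$ with respect to the pairing, the identification $\iota_r^* = t_r$ is forced regardless of these constants, and the only thing requiring care is confirming that $t_r$ as literally written (a plain sum over coset representatives, no normalizing factor) is the correct formula; I expect this to come out cleanly from the equivariance of \eqref{SerreC} because the pairing at level $K_{r+1}$ is literally the restriction of the one at level $K_r$ composed with the inclusion, with no rescaling, in the conventions set up in \ref{Petersson}.
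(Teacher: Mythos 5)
Your proposal follows essentially the same route as the paper's proof: reduce the claim to the adjunction between the tautological inclusion $\iota_r$ and the trace $t_r$ under the Serre-duality pairing, use the automorphic ($L^2$) description \eqref{L2} of that pairing to verify the adjunction, and then invoke Lemma~\ref{levelraise} for the surjectivity clause. That is exactly how the paper argues, and both proofs are short.

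The one place where your reasoning is shaky — and it is exactly the point the paper brushes past with the remark ``there is no need to correct the normalization of the measure'' — is the discussion of the index $[K_r:K_{r+1}]$. You write that ``the identification $\iota_r^* = t_r$ is forced regardless of these constants,'' and separately that ``the pairing at level $K_{r+1}$ is literally the restriction of the one at level $K_r$ \dots with no rescaling.'' These two assertions are actually incompatible: if the pairings at levels $K_r$ and $K_{r+1}$ were both literally the unnormalized Tamagawa integral (so compatible under $\iota_r$ with no rescaling), then the change of variables $g \mapsto g\gamma^{-1}$ gives $\int [f, t_r(h)]\,dg = [K_r:K_{r+1}]\int [f,h]\,dg$, and you would get $\iota_r^* = [K_r:K_{r+1}]^{-1} t_r$, which would \emph{not} be integral since the index is a power of $p$. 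The honest resolution is the opposite of ``no rescaling'': the Serre-duality pairing at level $K_r$ is canonical Serre duality on $_{K_r}Sh(V)$, which in the $L^2$ picture corresponds to the $\vol(K_r)^{-1}$-normalized integral (precisely as written in the definite case in \eqref{palgpair} and \eqref{def-pair-int1}, where it is a finite sum over $G(\Q)\backslash G(\A_f)/K_r^0$). That level-dependent volume factor exactly absorbs the index $[K_r:K_{r+1}]$ coming from the change of variables, and this is why $\iota_r^*$ is $t_r$ on the nose. Since $[K_r:K_{r+1}]$ is a $p$-power and you are working over $R \hookrightarrow \CO$, these constants genuinely matter for the integral statement, so the ``forced regardless of constants'' claim should be replaced by the argument just sketched. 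With that correction, your argument is the paper's.
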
 
\begin{proof}          
This is essentially a rewording of Section \ref{normal pairing}.   
\end{proof}

Now we complete at $\grm_{\pi}$ but omit the subscript $\pi$ from the notation; so $\CO = \CO_\pi$.   As in Assumption \ref{doub2} we can identify
\begin{equation*}\begin{split}  \Hom_{\TT_{r,\kappa}}(C_{r}\otimes \hat{S}^{\ord}_{r,V},S^{\ord}_{r,-V})  \simeq
~& \Hom_{\TT_{r,\kappa}}(C_{r}\otimes \TT_{r,\kappa}\otimes \hat{I}_{\pi},\hat{\TT}_{r,\kappa}\otimes I_{\pi^{\flat}}) \\  = ~&\Hom_{\TT_{r,\kappa}}(C_{r}\otimes \TT_{r,\kappa} , \hat{\TT}_{r,\kappa})\otimes  End_{\CO}(I_{\pi^{\flat}})\\  \simeq ~&  \hat{\TT}_{r,\kappa}\otimes End_{\CO}(I_{\pi^{\flat}})
\end{split}\end{equation*}
with appropriate modifications to accomodate a function $\Phi_X$ as above.  We are using Proposition \ref{badp} 
systematically.  Be advised that $\TT = \TT_{\pi}$ in the following Proposition.

\begin{prop}\label{Heckemeasure}  With respect to the identifications 
$$\Hom_{\TT_{r,\kappa},\flat}(C_{r}\otimes \hat{S}^{\ord}_{r,V},S^{\ord}_{r,-V})  \simeq \hat{\TT}_{r,\kappa}\otimes End_{\CO}(I_{\pi^{\flat}}),$$
\eqref{trace}, and 
the isomorphism $G_r: \hat{\TT}_{r,\kappa} \isoarrow \TT_{r,\kappa}$ of the Gorenstein Hypothesis \ref{gor},
the measure $\{\phi_{r,\kap}\}$
defines an element 
$$ L(\phi_{\kap}) \in \varprojlim_r \TT_{r,\kappa}\otimes I_{\pi^{\flat}}\otimes I_{\pi}   \isoarrow \TT\otimes End_{\CO}(I_{\pi^{\flat}}). $$
Moreover, if $\kap^1$ is a second sufficiently regular character, then 
$L(\phi_{\kap})$ and $L(\phi_{\kap^1})$ are identified with respect to the identifications $\TT \isoarrow \TT_{K^p,\kap,\CO_\pi} \isoarrow \TT_{K^p,\kap^1,\CO_\pi}$ of 
Theorem \ref{bigHecke} (after localization at $\pi$).  Thus the measures $\{\phi_{r,\kap}\}$ and $\{\phi_{r,\kap^1}\}$ define the same element 
$L(\phi) \in  \TT\otimes End_{\CO}(I_{\pi^{\flat}})$.  Conversely, any such $L(\phi)$ defines a measure $\{\phi_{r,\kap}\}$ for
any sufficiently regular $\kap$.
\end{prop}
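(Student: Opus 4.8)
The proof proceeds in three stages. First I would assemble the identification at fixed level $r$ and fixed character $\kap = \rho\cdot\alpha(\chi)$. By Lemma \ref{meas}, an equivariant $\CV$-valued measure (with twist $\upsilon$) is the same as a compatible system of linear forms $\phi_{r,\kap} \in \Hom_{\Lambda_\pi}(C_r(T,\CO)\cdot\rho^{\upsilon},\CV)$; localizing at $\grm_\pi$ and applying Hypothesis \ref{doub} together with the description \eqref{locpi}, one gets $\phi_{\chi,r,\rho,\pi} \in \Hom_{\TT_{r,\kap}}(C_r\otimes \hat S^{\ord}_{r,V},S^{\ord}_{r,-V}\otimes\chi\circ\det)$. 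Tensoring by $\chi^{-1}\circ\det$ and chasing through the chain of isomorphisms in Hypothesis \ref{doub2}, using Hypotheses \ref{badp} and \ref{badphat} (the $\hat I_\pi$-identifications) and the Gorenstein isomorphism $G_r:\hat\TT_{r,\kap}\isoarrow\TT_{r,\kap}$, this becomes an element of $\Hom_{\TT_{r,\kap}}(C_r\otimes\TT_{r,\kap},\TT_{r,\kap})\otimes\End_R(I_{\pi^{\flat}})$. Since $C_r$ is dual to $\Lambda_r = \Lambda_\pi/\CI_r$ and $\Lambda_r\otimes_{\Lambda_\pi}\TT_{r,\kap} \isoarrow \TT_{r,\kap}$ by the control theorem (Theorem \ref{controlkap}(ii), for $\kap$ sufficiently regular), the $\Hom$ over $C_r\otimes\TT_{r,\kap}$ collapses to $\TT_{r,\kap}$ itself, giving $L(\phi_{\kap})_r \in \TT_{r,\kap}\otimes\End_R(I_{\pi^{\flat}})$. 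I would also record that the identification $\End_R(I_{\pi^{\flat}}) \simeq I_{\pi^{\flat}}\otimes I_\pi$ used in the statement comes from the natural duality \eqref{hatI}, $\hat I_\pi \isoarrow I_{\pi^{\flat}}$, which is canonical.

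Second, I would verify compatibility in $r$: the system $\{L(\phi_{\kap})_r\}_r$ forms a compatible tower under the projection maps $\TT_{r+1,\kap}\rar\TT_{r,\kap}$, so that it defines an element of $\varprojlim_r\TT_{r,\kap} = \TT_{K^p,\kap,\CO_\pi} \simeq \TT$. This is exactly where Hypothesis \ref{trace} is used: the identity $(r_r^*\otimes\id_{r+1}^*)(\phi_{r+1,\kap}) = \iota_r\circ\phi_{r,\kap}\circ(\id_{C_r}\otimes\iota_r^*)$ is the assertion that under the dual (trace) maps on $\hat S^{\ord}$ — identified by Lemma \ref{trace2} with the genuine Serre-duality trace — and the inclusion $C_r\hookrightarrow C_{r+1}$, the level-$(r{+}1)$ form restricts correctly to the level-$r$ form. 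Translating through the $\TT_{r,\kap}$-linear identifications of Hypotheses \ref{badp}, \ref{badphat} (whose diagrams are required to commute under change of level) and the Gorenstein isomorphisms $G_r$ (which one must assume, or arrange, to be compatible under the projections $\hat\TT_{r+1,\kap}\rar\hat\TT_{r,\kap}$), this says precisely that $L(\phi_{\kap})_{r+1}\mapsto L(\phi_{\kap})_r$. Conversely, reversing every arrow — each of which is an isomorphism once the hypotheses are in force — shows any element of $\TT\otimes\End_R(I_{\pi^{\flat}})$ produces a compatible system $\{\phi_{r,\kap}\}$, hence a measure; this gives the bijectivity assertion.

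Third, independence of $\kap$: for two sufficiently regular characters $\kap,\kap^1$, Theorem \ref{bigHecke} gives the canonical isomorphism $\TT_{K^p,\kap,\CO_\pi}\isoarrow\TT_{K^p,\kap^1,\CO_\pi}$, and I would check that under this isomorphism $L(\phi_{\kap})$ and $L(\phi_{\kap^1})$ correspond. The point is that both are obtained from the same two-variable admissible measure $\phi$ (Section \ref{2varmeas}) by the same recipe; the big Hecke algebra isomorphism is characterized by compatibility of the $U_p$- and $T(g)$-operators across weights, and the identifications $\hat I_\pi$, the control theorem, and the $G_r$'s are all defined uniformly in $\kap$ via $\TT$. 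So $L(\phi_{\kap})$ descends to a single $L(\phi)\in\TT\otimes\End_R(I_{\pi^{\flat}})$. Finally, the last sentence — independence of the auxiliary choices in Hypotheses \ref{badp}, \ref{badphat} — follows because any two admissible systems of identifications differ by an element of $\TT_{r,\kap}^\times$ in each variable, and the constraint "compatible with $G_r$ in the sense of Lemma \ref{candual}" forces these units to cancel: the composite $j_G$ is the \emph{unique} $\TT_\pi$-linear isomorphism $S^{\ord}_{r,-V,\pi^{\flat}}\isoarrow\hat S^{\ord}_{r,V,\pi}$ compatible with the given $G$, so the resulting element of $\TT_{r,\kap}$ is unchanged.

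The main obstacle, I expect, is the bookkeeping in the second stage: pinning down that Hypothesis \ref{trace} really translates into the compatibility of $\{L(\phi_{\kap})_r\}_r$ requires carefully tracking how the trace map $t_r$ of Lemma \ref{trace2} interacts simultaneously with the $\Lambda_\pi$-module structure (through $C_r\hookrightarrow C_{r+1}$, dual to $\Lambda_{r+1}\twoheadrightarrow\Lambda_r$) and with the $\flat$-twisted $\TT_\pi$-actions on $\hat S^{\ord}$ and $S^{\ord}$, and confirming that the identifications of Hypothesis \ref{badphat} are genuinely $\TT_\pi$-linear (resp. $\flat$-linear) in the two slots so that the $\Hom_{\TT_{r,\kap}}$ survives the collapse. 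Everything else is a diagram chase through isomorphisms that the earlier hypotheses make available.
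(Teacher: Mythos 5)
Your proposal is correct and takes essentially the same approach as the paper, whose proof of this proposition is just the one-line observation that it follows from Lemma \ref{trace2} by unwinding the definitions; your three-stage outline is precisely that unwinding. The one point you flag as something "one must assume, or arrange" — compatibility of the $G_r$ under the projections $\hat\TT_{r+1,\kap}\rar\hat\TT_{r,\kap}$ — is automatic because Hypothesis \ref{gor} is stated for the big Hecke algebra $\TT_\pi$, so the $G_r$ are all specializations of a single global Gorenstein isomorphism.
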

\begin{proof}  This is a consequence of Lemma \ref{trace2} and follows by unwinding the definitions.

\end{proof}

The above construction adapts easily to accommodate the compact $p$-adic Lie group $X_p$.  We have
seen that a
$\CV$-valued measure on $X_p\times T$ is the same thing as a measure on $X_p$ with values
in $\CV$-valued measures on $T$.  In particular,  one obtains a $\CV$-valued measure
on $X_p\times T$ from a collection,
for all characters $\alpha$ of $X_p$, of $\CV$-valued measures $\phi_{\alpha}$ of type $\alpha$ on $T$ satisfying 
the congruence properties of Lemma \ref{measures-characters}.   

In what follows we identify $\bT$ with the ordinary Hecke algebra for the group $G = G_1$; the same definition holds, with appropriate modifications, when $G = G_2$.
\begin{defi}\label{class}  Fix a level $r$, a character $\kap$, and an $\CO$-algebra $R$. Let $\lambda:   \bT  \rar R$ be a
continuous homomorphism.  Say $\lambda$ is  {\it classical}
of level $p^r$ and weight $\kap$ if it
factors through a homomorphism (still denoted) $\lambda:  \bT_{r,\kap} \rar R$, which is of
the form $\lambda_{\pi}$ for some anti-holomorphic automorphic representation $\pi$ of type $(\kap,K_{1,r})$ 
with $K_{1,r} = K^pI_{r}$ for some open compact $K^p \subset G(\A_f^p)$, as before

Let $X(\kap,r,R)$ denote
the set of classical homomorphisms of level $p^r$ and weight $\kap$ with values in $R$; 
let $X^{class}(R) = \cup_{\kap,r} X(\kap,r,R)$.    
Any $\lambda \in X^{class}(R)$ is called {\it classical} (with values in $R$).

When $R = \bT_{r,\kap}$, we let $\lambda_{taut}:  \bT_{r,\kap} \rar \bT_{r,\kap}$ be the identity homomorphism.
When $\pi$ is a cuspidal anti-holomorphic representation of weight $\kap$ as above, let
$\lambda_{taut,\pi}:  \bT_{r,\kap} \rar \TT_{r,\kap,\pi}$ be $\lambda_{taut}$ followed by localization at $\grm_{\pi}$.
\end{defi} 

When $\kappa$ is sufficiently regular, the character $\lambda_{taut}$ deserves to be called classical because its composition with any
map from $\bT_{r,\kap}$ to a $p$-adic field is attached to a classical modular form of weight $\kappa$.
The relationship between $L(\phi)$ and the elements $\phi_{\chi,r,\kap}$ is given by the following proposition. 

\begin{prop}\label{specialize} ÊLet $\chi$ be a character of $X_p$. Ê
Let $\phi = d\phi(x,t)$ be a measure on $X_p \times T$ as in Section  \ref{2varmeas}, with shift $sh$: $sh^*(\chi) = (\alpha(\chi),\beta(\chi))$
Let $\rho$ be an algebraic character
of $T$, $\kappa = \rho\cdot \alpha(\chi)$. ÊFix a cuspidal anti-holomorphic representation $\pi$ of weight $\kappa$ satisfying the hypotheses above.
We consider $L(\phi_{\chi}) = \int_{X_p} \chi(x) d\phi(x,t)$, localized at $\grm_{\pi}$, as an element of $\TT \otimes End_{R}(I_{\pi^{\flat}})$.
Let $L(\phi_{\chi},\kap,r)$ denote the image of $L(\phi_{\chi})$ in $ \TT_{r,\kap}\otimes End_{R}(I_{\pi^{\flat}})$. ÊEquivalently,
$$L(\phi_{\chi},\kap,r) Ê= \int_{X_p\times T} Ê\chi \times \lambda_{taut,\pi} d\phi(x,t),$$
where integration against $\lambda_{taut,\pi}$ amounts to the projection
$$\bT \twoheadrightarrow \bT\otimes_{\Lambda} \Lambda_{r,\kap} = \bT_{r,\kap}$$
followed by localization at Ê$\grm_{\pi}$.

Then $L(\phi_{\chi},\kap,r)$ corresponds to the element
$$\phi_{\chi,r,\kappa} \in \Hom_{\TT_{r,\kappa}}(C_{r}\otimes \hat{S}^{\ord}_{r,V,\pi},S^{\ord}_{r,-V,\pi^{\flat}}\otimes \chi\circ\det) $$
under the identifications in Proposition \ref{badp}, 
compatible with the Gorenstein isomorphism $G_r$ (from Proposition \ref{Heckemeasure}).
\end{prop}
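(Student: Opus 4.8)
The plan is to show that the assignment $\{\phi_{r,\kap}\}\mapsto L(\phi)$ of Proposition \ref{Heckemeasure} is natural in the measure, so that the statement reduces, via a Fubini-type identity in the two variables $X_p$ and $T$, to Proposition \ref{Heckemeasure} applied to the single $T$-measure $\phi_{\chi}$. First I would dispose of the ``equivalently'' clause. By Lemma \ref{measprod}, a $\CV$-valued measure on $X_p\times T$ is the same datum as a measure on $X_p$ valued in $\CV$-valued measures on $T$, and under this identification $L(\phi_{\chi}):=\int_{X_p}\chi(x)\,d\phi(x,t)$ is precisely the $T$-measure $\phi_{\chi}$ whose level-$r$, weight-$\rho$ components are the linear forms $\phi_{\chi,r,\rho}$ figuring in Hypothesis \ref{doub}. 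The asserted identity
$$ L(\phi_{\chi},\kap,r)=\int_{X_p\times T}\chi\times\lambda_{taut,\pi}\,d\phi(x,t) $$
is then the compatibility of Lemma \ref{measprod} with the character-evaluation dictionary of Fact \ref{Mazurdictionary}, once one observes (Definition \ref{class} together with the control theorem, Theorem \ref{controlkap}) that integration against $\lambda_{taut,\pi}$ is exactly the projection $\TT\twoheadrightarrow\TT\otimes_{\Lambda}\Lambda_{r,\kap}=\TT_{r,\kap}$ followed by localization at $\grm_{\pi}$.

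Next I would run the construction of $L(\phi)$ from Proposition \ref{Heckemeasure} on the measure $\phi_{\chi}$. By Hypothesis \ref{doub} (specialized at $\chi$), each $\phi_{\chi,r,\rho}$ is $\TT_{r,\kap}$-equivariant with $\kap=\rho\cdot\alpha(\chi)$, and Hypotheses \ref{badp} and \ref{badphat}, together with the Gorenstein isomorphism $G_r$ of \eqref{GG}, identify it with an element of $\TT_{r,\kap}\otimes\mathrm{End}_R(I_{\pi^{\flat}})$. Compatibility of these elements as $r$ varies (Hypothesis \ref{trace} and Lemma \ref{trace2}) and as $\kap$ varies (Theorem \ref{bigHecke}) is exactly what produces the single element $L(\phi_{\chi})\in\TT\otimes\mathrm{End}_R(I_{\pi^{\flat}})$, and by construction its image in $\TT_{r,\kap}\otimes\mathrm{End}_R(I_{\pi^{\flat}})$ is the image of $\phi_{\chi,r,\kap}$ under precisely the identifications named in the statement; so the two sides of the claimed equality are literally the same element. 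The independence of the choice of the identifications in Hypotheses \ref{badp} and \ref{badphat}, provided they are compatible with $G_r$ in the sense of Lemma \ref{candual}, is then inherited directly from Proposition \ref{Heckemeasure}.

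The one genuinely non-formal point is the bookkeeping of the shift and twist. Since $\phi$ carries a shift $sh$ with $sh^*(\chi)=(\alpha(\chi),\beta(\chi))$ and a twist $\upsilon$, the specialization of $\phi_{\chi,r,\rho}$ at a regular $\rho$ has weight $\rho^{\triangle}\cdot(\alpha(\chi),\beta(\chi))$, i.e.\ weight $\kap=\rho\cdot\alpha(\chi)$ on the $V$-factor and weight $\beta(\chi)\cdot\rho^{\flat}$ on the $-V$-factor; one must check, as in the remark preceding Hypothesis \ref{doub2}, that absorbing $\chi\circ\det$ into the $-V$-factor turns $\beta(\chi)\cdot\rho^{\flat}$ into $\kap^{\flat}=(\rho\cdot\alpha(\chi))^{\flat}$, which is what makes the target $S^{\ord}_{r,-V}\otimes\chi\circ\det$ match and permits the application of Hypothesis \ref{doub2}. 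I expect this matching of shifts, together with checking that $\kap$ is sufficiently regular for Proposition \ref{Heckemeasure} and Theorem \ref{controlkap} to apply, to be the only steps requiring real care; everything else is unwinding the definitions of the pairings and of the element $L(\phi)$.
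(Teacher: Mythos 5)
Your proposal is correct and takes essentially the same approach as the paper: the paper's own proof is the one-line observation that the proposition is ``just a restatement of the definition of the element $L(\phi)$ introduced in Proposition \ref{Heckemeasure},'' and your argument is exactly a careful unwinding of that definition (using Lemma \ref{measprod}, Fact \ref{Mazurdictionary}, Hypotheses \ref{doub}, \ref{badp}, \ref{badphat}, and Lemma \ref{trace2}), together with the shift/twist bookkeeping from the remark preceding Hypothesis \ref{doub2}. The paper simply leaves all of this implicit.
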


\begin{proof}  This is just a restatement of the definition of the element $L(\phi) \in \TT \otimes End_{R}(I_{\pi^{\flat}}) = \TT_\pi \otimes End_{R}(I_{\pi^{\flat}})$
introduced in Proposition \ref{Heckemeasure}.
\end{proof}

The following is now an elementary consequence
of Proposition \ref{Heckemeasure}.  Again, recall that $\TT = \TT_{\pi} = \bT_{\mathfrak{m}_\pi}$.   We say that an anti-holomorphic cuspidal representation
satisfies Hypotheses \ref{gor} and \ref{multone} if its complex conjugate (or contragredient) does.  

\begin{prop}\label{Heckemeasure-2variables}{\bf (Abstract $p$-adic $L$-functions of families)}  
  Let $\phi = d\phi(x,t)$ be a measure on $X_p \times T$ such that, for
each character $\chi$ of $X_p$, $\int_{X_p} \chi(x) d\phi(x,t)$ is a $\CV$-valued measure $\phi_{\chi}$
of type $\chi$ satisfying Assumption \ref{doub}.   
Fix a cuspidal anti-holomorphic representation $\pi$ satisfying 
Hypotheses \ref{gor} and \ref{multone}.  Then there is an element 
$L(\phi) \in \Lambda_{X_p}\hat{\otimes} \TT \otimes End_{R}(I_{\pi^{\flat}})$
such that, for every $R$-valued character $\chi$ of $X_p$, the image of $L(\phi)$ under the map
$$\chi\otimes Id: \Lambda_{X_p}\hat{\otimes} \TT \otimes End_{R}(I_{\pi^{\flat}}) \rar  \TT \otimes End_{R}(I_{\pi^{\flat}})$$
given by contraction in the first factor, or equivalently integration against $\chi$ with respect
to the first variable, is the element $L(\phi_{\chi})$ of Proposition \ref{specialize}.
\end{prop}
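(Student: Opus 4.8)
The plan is to bootstrap Proposition \ref{Heckemeasure}, which attaches to a single type-$\chi$ measure $\phi_\chi$ on $T$ an element $L(\phi_\chi) \in \TT \otimes \End_R(I_{\pi^{\flat}})$, into a statement about the whole family parametrised by $X_p$. First I would disassemble the given datum: by Lemma \ref{measprod} a $\CV$-valued measure on $X_p \times T$ is the same thing as a measure on $X_p$ with values in the $\CO$-module $\Meas(T,\CV)$, so $\phi$ is precisely a continuous $\CO$-linear map $C(X_p,\CO) \to \Meas(T,\CV)$ whose value at a character $\chi$ is $\phi_\chi = \int_{X_p}\chi\,d\phi$. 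By hypothesis each $\phi_\chi$ is of type $\chi$ and satisfies Hypothesis \ref{doub}; combined with the running hypotheses \ref{gor}, \ref{badp}, \ref{badphat}, and \ref{trace}, Proposition \ref{Heckemeasure} attaches to $\phi_\chi$ the element $L(\phi_\chi) \in \TT \otimes \End_R(I_{\pi^{\flat}})$, and its final assertion guarantees that $L(\phi_\chi)$ does not depend on the auxiliary choices (the sufficiently regular weight $\kappa$, the level $r$, the identifications of Hypotheses \ref{badp}--\ref{badphat}), provided they are coordinated with a fixed Gorenstein isomorphism $G_r$ as in Lemma \ref{candual}. None of these choices involve $\chi$, so they may and will be fixed once and for all.

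Next I would check that the assignment $\psi \mapsto L(\psi)$, from type-$\chi$ equivariant $\CV$-valued measures on $T$ satisfying Hypothesis \ref{doub} to $\TT \otimes \End_R(I_{\pi^{\flat}})$, is $\CO$-linear and continuous for the $p$-adic topologies. Indeed, at each level $r$ and each sufficiently regular weight $\kappa = \rho \cdot \alpha(\chi)$ it is the composite of the reduction $\Meas(T,\CV) \to \Hom_{\TT_{r,\kappa}}(C_r \otimes \hat S^{\ord}_{r,V}, S^{\ord}_{r,-V})$ of Lemma \ref{meas}, the $\CO$-linear isomorphisms of Hypotheses \ref{badp} and \ref{badphat}, and the Gorenstein isomorphism $G_r$ of \eqref{GG}; one then passes to $\varprojlim_r$, the levels being compatible by Hypothesis \ref{trace} and Lemma \ref{trace2}. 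Every map involved is $\CO$-linear and $p$-adically continuous, hence so is $L$. Composing the measure $\phi\colon C(X_p,\CO) \to \Meas(T,\CV)$ with $L$ therefore produces a continuous $\CO$-linear functional $C(X_p,\CO) \to \TT \otimes \End_R(I_{\pi^{\flat}})$ whose value at a character $\chi$ is $L(\phi_\chi)$; equivalently, by Lemma \ref{measures-characters}, the congruence $\chi_1 \equiv \chi_2 \pmod{p^a}$ forces $\phi_{\chi_1} \equiv \phi_{\chi_2} \pmod{p^a}$ in $\Meas(T,\CV)$ and hence $L(\phi_{\chi_1}) \equiv L(\phi_{\chi_2}) \pmod{p^a}$, so $\chi \mapsto L(\phi_\chi)$ is a $(\TT \otimes \End_R(I_{\pi^{\flat}}))$-valued measure on $X_p$.

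Finally I would invoke the Mazur dictionary (Fact \ref{Mazurdictionary}, or Lemma \ref{measures-characters}): since $\TT \otimes \End_R(I_{\pi^{\flat}})$ is a finite module over the Noetherian complete ring $\Lambda_{\pi}$ and hence $p$-adically complete, a $(\TT \otimes \End_R(I_{\pi^{\flat}}))$-valued measure on $X_p$ is the same as an element
$$L(\phi) \in \Lambda_{X_p} \,\hat\otimes_{\CO}\, \bigl(\TT \otimes \End_R(I_{\pi^{\flat}})\bigr) = \Lambda_{X_p} \hat\otimes \TT \otimes \End_R(I_{\pi^{\flat}}),$$
and by construction the image of $L(\phi)$ under $\chi \otimes \mathrm{Id}$ — that is, integration against $\chi$ in the first variable — is $L(\phi_\chi)$, which by Proposition \ref{specialize} is the element corresponding to $\phi_{\chi,r,\kappa}$ under the identifications of Hypotheses \ref{badp}--\ref{badphat} compatible with $G_r$. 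I expect the only genuine obstacle to lie in the second paragraph: verifying that the $\varprojlim_r$-construction underlying Proposition \ref{Heckemeasure} is honestly functorial and continuous in the input measure, so that congruences transported from $X_p$ survive the passage through the Hecke-theoretic identifications; once this is made precise, the rest is formal.
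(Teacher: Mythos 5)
Your proposal is correct and takes essentially the same route the paper intends: the paper declares the result an ``elementary consequence of Proposition~\ref{Heckemeasure}'' immediately after remarking that a measure on $X_p\times T$ is, via Lemma~\ref{measprod}, a measure on $X_p$ with values in measures on $T$, and after invoking the congruence criterion of Lemma~\ref{measures-characters} --- which are precisely the two pivots in your argument. Your second paragraph just fills in the continuity/linearity of the map $\psi\mapsto L(\psi)$ that the paper leaves implicit (the weight-dependence on $\chi$ being absorbed by Theorem~\ref{bigHecke} and the choice-independence clause of Proposition~\ref{Heckemeasure}), so there is no substantive divergence.
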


The following standard fact (see, for example, \cite[Lemma 3.3]{Hida1988}) shows that the specializations of Proposition \ref{specialize}
determine the abstract $L$-function $L(\phi)$:

\begin{lem}\label{density}  The $\CV$-valued measure $\phi = \phi_{\chi}$ of type $\chi$ and the abstract $L$-function $L(\phi)$ are completely determined
by their integrals against elements of the sets $X(\kap,r,\OCp)$ for any fixed sufficiently regular $\kap$ and all $r$.
\end{lem}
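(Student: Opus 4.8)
The plan is to deduce the lemma from the fact that the sets $X(\kap,r,\OCp)$ of classical points are Zariski-dense in the relevant rigid-analytic or formal spectra, so that a measure vanishing on all of them must vanish identically. First I would set up the two pieces separately. For the abstract $L$-function $L(\phi) \in \Lambda_{X_p}\hat{\otimes}\TT\otimes \End_R(I_{\pi^{\flat}})$, the key input is Theorem \ref{controlkap}: $\TT_{\pi}$ is a free $\Lambda_{\pi}$-algebra of finite type, so $\Lambda_{X_p}\hat{\otimes}\TT_{\pi}$ is (topologically) free of finite type over $\Lambda_{X_p}\hat\otimes\Lambda_\pi = \CO_\pi[\![X_p \times T_H(\Zp)]\!]$, and an element of such a ring is determined by its images under all continuous homomorphisms to $\OCp$ of the form $\chi\otimes(\text{specialization at }\kap)$ followed by projection through $\TT_{\pi}\otimes\Lambda_\pi/I_{\kap} \isoarrow \TT_{K^p_r,\kap,\CO_\pi,\pi}$ and then through a classical $\lambda_{\pi'} \in X(\kap,r,\OCp)$. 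This reduces to the statement that the characters $(\chi, \kap\psi)$ with $\kap$ sufficiently regular and $\chi, \psi$ of finite order cut out a dense set in the character space of $X_p \times T_H(\Zp)$, together with the control theorem identifying $\TT_{r,\kap,\pi}$ with the classical specialization; this is exactly the argument of \cite[Lemma 3.3]{Hida1988}, adapted to the extra variable $X_p$ via Lemma \ref{measprod}.

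Next I would handle the $\CV$-valued measure $\phi = \phi_\chi$ itself. By Lemma \ref{meas} a measure of type $\chi$ is the same as a compatible system $(\phi_{\chi,r,\rho})$ of $\Lambda_\pi$-linear maps $C_r(T,\CO)\cdot\rho^{\upsilon} \rar \CV$, and since $\CV = V_3^{\ord,\cusp}(K^p,\CO)$ is $p$-adically separated and complete, each $\phi_{\chi,r,\rho}$ is determined by the values $\phi_{\chi,r,\rho}(\delta_t \rho^{\upsilon})$ for $t$ ranging over $T/T_r$; but these values are recovered, by the pairing between $\Lambda_r$ and $C_r(T,\CO)$, from integration of the locally algebraic functions $\rho^{\upsilon}\psi$ for $\psi$ a finite-order character trivial on $T_r$. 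Feeding these back through Proposition \ref{specialize} and Hypothesis \ref{doub}, the value $\phi_{\chi,r,\rho}$ is the element of $\Hom_{\TT_{r,\kappa}}(C_r\otimes\hat{S}^{\ord}_{r,V},S^{\ord}_{r,-V}\otimes\chi\circ\det)$ attached to $L(\phi_\chi,\kap,r)$, and the $\hat{S}^{\ord}$ and $S^{\ord}$ modules localized at $\grm_\pi$ are free over $\TT_{r,\kap,\pi}$ by the Gorenstein and minimality hypotheses (Hypotheses \ref{gor}, \ref{badp}, \ref{badphat}), so knowing $L(\phi_\chi,\kap,r)$ for one sufficiently regular $\kap$ and all $r$ is equivalent to knowing all $\phi_{\chi,r,\rho}$. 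Finally, the density of classical points $X(\kap,r,\OCp)$ inside $\Spec \TT_{r,\kap,\pi}$ (a consequence of the multiplicity one hypotheses together with the fact that the classical forms span $S^{\ord}_{\kap}(K_r,\C)$, Lemma \ref{ordveclem}) shows that the $\TT_{r,\kap,\pi}$-module element is pinned down by its images at the $\lambda_{\pi'}$.

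I would then assemble these two reductions: a measure $\phi$ on $X_p\times T$ giving rise to a family of $\phi_\chi$ of type $\chi$ satisfying Hypothesis \ref{doub} produces, via Proposition \ref{Heckemeasure-2variables}, an element $L(\phi)$; conversely the data $\{(\chi, \kap, r, \pi') \mapsto \int_{X_p\times T}\chi\times\lambda_{\pi'} \, d\phi\}$ for $\chi$ finite order, $\kap$ fixed and sufficiently regular, $r$ arbitrary, and $\pi' \in X(\kap,r,\OCp)$ determine $L(\phi)$ by the first paragraph, hence determine each $\phi_{\chi,r,\rho}$ and so all of $\phi$ by the second paragraph. Allowing $\chi$ to vary over a dense (e.g. finite-order) set of characters of $X_p$ and invoking Lemma \ref{measures-characters} to reconstruct the two-variable measure from its specializations completes the argument.

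The main obstacle I anticipate is bookkeeping rather than conceptual: one must check that "sufficiently regular $\kap$" can genuinely be fixed once and for all while still separating points — that is, that the classical specializations at a single regular weight, together with arbitrary level $p^r$ and arbitrary finite-order twists in the $X_p$-variable, already form a Zariski-dense subset of $\Spf(\Lambda_{X_p}\hat\otimes\TT_\pi)$. This requires combining the control theorem (Theorem \ref{controlkap}) with the irreducibility/finiteness of $\TT_\pi$ over $\Lambda_\pi$ and with the density of finite-order characters in weight space, and one has to be careful that the projection $\TT \twoheadrightarrow \TT_{r,\kap}$ followed by localization at $\grm_\pi$ loses no information in the limit over $r$ — which is where Lemma \ref{trace2} and the surjectivity of the trace maps (Lemma \ref{levelraise}) are needed. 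Everything else is a formal consequence of $p$-adic completeness and the freeness hypotheses.
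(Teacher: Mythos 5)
The paper gives no proof of Lemma \ref{density}: the statement is labeled a ``standard fact'' and referred to \cite[Lemma 3.3]{Hida1988}.  Your reconstruction of the density argument captures the right ideas, and the two essential ingredients you identify — freeness of $\TT_\pi$ over $\Lambda_\pi$ (Theorem \ref{controlkap}(i)) and Zariski-density of classical specializations of fixed algebraic weight $\kap$ as the level $r$ (and hence the nebentypus) varies — are exactly the ones Hida's lemma rests on.  But the argument can be streamlined substantially, and several of the tools you invoke are not strictly needed for a uniqueness statement.

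The elementary core is this: a measure $\mu$ on $T_H(\Zp)$ with values in a $p$-adically separated and complete $\CO$-module vanishes as soon as $\int \rho^{\upsilon}\psi\,d\mu = 0$ for a fixed locally algebraic $\rho^{\upsilon}$ and all finite-order $\psi$, because locally constant functions are dense in $C(T_H(\Zp),\OCp)$ and $\rho^{\upsilon}$ is an invertible multiplier; equivalently, $\bigcap_{\psi} I_{\kap\psi} = 0$ in $\Lambda_\pi$.  Freeness of $\TT_\pi$ over $\Lambda_\pi$ then propagates this: an element of $\TT_\pi$ vanishing in every $\TT_{r,\kap,\pi}$ is zero.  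Within each $\TT_{r,\kap,\pi}$ — which by the control theorem is $\CO_\pi$-free, hence injects into $\TT_{r,\kap,\pi}\otimes\Qp$, which is semisimple by multiplicity one — the classical characters $\lambda_{\pi'}$ separate points.  Adjoining $\Lambda_{X_p}$ is the same argument with finite-order $\chi$ thrown in, via Lemma \ref{measures-characters}, and the recovery of the measure $\phi_\chi$ from $L(\phi_\chi)$ is simply the converse direction of Proposition \ref{Heckemeasure}.  The auxiliary results you reach for in the middle of your argument — Lemma \ref{levelraise}, Lemma \ref{trace2}, Hypothesis \ref{badp} — are used elsewhere in the paper to \emph{construct} $L(\phi)$ and endow it with good module-theoretic properties; they play no role in the \emph{uniqueness} statement of Lemma \ref{density}, which is really an assertion about $\Lambda_\pi$ and its finite free extensions.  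Your concern about whether $\kap$ can be ``genuinely fixed once and for all'' is the right one, and your resolution — that $\kap$ in $X(\kap,r,\OCp)$ denotes only the algebraic part of the weight, with the nebentypus swept along with the varying level $r$ — is correct and is precisely what makes Hida's lemma go.
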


We write
\begin{equation}\label{dualagain}
End_{R}(I_{\pi^{\flat}}) = \hat{I}_{\pi^{\flat}}\otimes I_{\pi^{\flat}} \simeq \Hom(\hat{I}_{\pi}\otimes \hat{I}_{\pi^{\flat}},R). \end{equation}

Then for any 
$\varphi \otimes \varphi^{\flat} \in \hat{I}_{\pi}\otimes \hat{I}_{\pi^{\flat}}$ we have a tautological pairing
\begin{equation}\label{basicpairing}
L(\chi,\phi,r,\kap, \varphi \otimes \varphi^{\flat}) =  [L(\phi_{\chi},\kap,r),\varphi \otimes \varphi^{\flat}]_{loc}  \in \TT_{r,\kap}.
\end{equation}
where  $[\bullet,\bullet]_{loc}$ is the tautological pairing 
$$\Hom(\hat{I}_{\pi}\otimes \hat{I}_{\pi^{\flat}},R)\otimes \hat{I}_{\pi}\otimes \hat{I}_{\pi^{\flat}} \rar R.$$

We reformulate Proposition \ref{specialize} in terms of Equation \eqref{basicpairing}. 
\begin{prop}\label{abstract2var}  Fix an embedding $\CO_\pi\hookrightarrow\IC$ extending the inclusion $E(\pi)\subset\IC$, and let $R$ be a $p$-adic ring containing $\CO_\pi$ and satisfying the conditions of Lemma \ref{measures-characters}.  Let $\phi$ be an admissible $R$-measure on $X_p \times T$ as in Section  \ref{2varmeas}.  Assume
Hypotheses \ref{gor} and \ref{multone}.
Let $\varphi \otimes \varphi^{\flat} \in \hat{I}_{\pi}\otimes \hat{I}_{\pi^{\flat}}$ as above.   
Then there is a unique element $L(\phi,\varphi \otimes \varphi^{\flat}) \in \Lambda_{X_p,R}\hat{\otimes}\TT$ such that, for any classical
$\chi:  X_p \rar R^{\times}$ and any $\lambda \in X(\kap,r,R)$ (with $\kap$ sufficiently regular), the image of $L(\phi,\varphi \otimes \varphi^{\flat})$ under the map
$\Lambda_{X_p,R}\hat{\otimes}\TT \rar R$ induced by the character $\chi\otimes \lambda$ equals $\lambda\circ L(\chi,\phi,r,\kap,\varphi \otimes \varphi^{\flat})$.
\end{prop}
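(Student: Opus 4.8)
The plan is to deduce Proposition \ref{abstract2var} from Proposition \ref{Heckemeasure-2variables} and Proposition \ref{specialize} by contracting the ``$End_R(I_{\pi^{\flat}})$-valued'' abstract $L$-function against a fixed element of $\hat{I}_{\pi}\otimes \hat{I}_{\pi^{\flat}}$, and then checking the interpolation property at classical points.  First I would invoke Proposition \ref{Heckemeasure-2variables}, whose hypotheses (Hypotheses \ref{gor}, \ref{multone}, \ref{badp}, and the implicit \ref{doub}, \ref{doub2}, \ref{trace}) are all among those assumed here, to produce the element
$$L(\phi) \in \Lambda_{X_p,R}\hat{\otimes}\TT \otimes End_{R}(I_{\pi^{\flat}}).$$
Then, using the identification \eqref{dualagain}, namely $End_{R}(I_{\pi^{\flat}}) \simeq \Hom(\hat{I}_{\pi}\otimes \hat{I}_{\pi^{\flat}},R)$, I would define
$$L(\phi,\varphi \otimes \varphi^{\flat}) = [L(\phi),\varphi \otimes \varphi^{\flat}]_{loc} \in \Lambda_{X_p,R}\hat{\otimes}\TT,$$
where $[\bullet,\bullet]_{loc}$ is applied in the last tensor factor only (i.e.\ $id_{\Lambda_{X_p,R}\hat{\otimes}\TT}\otimes [\bullet,\bullet]_{loc}$).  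This is visibly $R$-linear and continuous, so it lands in the completed tensor product as required.

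Next I would verify the interpolation formula.  Fix a classical character $\chi: X_p \rar R^{\times}$ and $\lambda \in X(\kap,r,R)$ with $\kap$ sufficiently regular; by Definition \ref{class}, $\lambda = \lambda_{\pi'}$ for some anti-holomorphic $\pi'$ of type $(\kap,K_r)$, and for the interpolation to involve $L(\chi,\phi,r,\kap,\varphi\otimes\varphi^{\flat})$ as defined in \eqref{basicpairing} we are in the situation where the relevant localization is at $\grm_{\pi}$ (the element $L(\phi)$ having been built after localizing at $\grm_{\pi}$, as in Proposition \ref{Heckemeasure}).  Applying the character $\chi \otimes \lambda$ to $L(\phi,\varphi\otimes\varphi^{\flat})$ means: first integrate $L(\phi)$ against $\chi$ in the $\Lambda_{X_p,R}$-variable, which by Proposition \ref{Heckemeasure-2variables} gives $L(\phi_{\chi}) \in \TT\otimes End_{R}(I_{\pi^{\flat}})$; then apply $\lambda$, i.e.\ project $\TT \twoheadrightarrow \TT\otimes_{\Lambda}\Lambda_{r,\kap} = \TT_{r,\kap}$ followed by localization and by $\lambda$.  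By Proposition \ref{specialize}, the image of $L(\phi_{\chi})$ in $\TT_{r,\kap}\otimes End_{R}(I_{\pi^{\flat}})$ is exactly the element corresponding to $\phi_{\chi,r,\kappa} \in \Hom_{\TT_{r,\kappa}}(C_{r}\otimes \hat{S}^{\ord}_{r,V},S^{\ord}_{r,-V}\otimes \chi\circ\det)$ under the identifications of Hypotheses \ref{badp} and \ref{badphat} compatible with $G_r$.  Contracting this against $\varphi\otimes\varphi^{\flat}$ via $[\bullet,\bullet]_{loc}$ reproduces, by definition \eqref{basicpairing}, the element $L(\chi,\phi,r,\kap,\varphi\otimes\varphi^{\flat}) \in \TT_{r,\kap}$, and its further image under $\lambda$ is the claimed value.

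Finally, uniqueness of $L(\phi,\varphi\otimes\varphi^{\flat})$ follows from Lemma \ref{density}: the values at pairs $\chi\otimes\lambda$ with $\lambda \in X(\kap,r,\OCp)$, for a fixed sufficiently regular $\kap$, already determine an element of $\Lambda_{X_p,R}\hat{\otimes}\TT$, since these characters are Zariski-dense (this is where \cite[Lemma 3.3]{Hida1988} is used), and $\TT$ is $\Lambda$-finite free by Theorem \ref{controlkap}(i) so that evaluation at classical points separates elements of the completed tensor product.  The step I expect to be the main obstacle is the bookkeeping in the interpolation verification: one must check that the order of operations — integrating against $\chi$, then specializing the Hecke variable via $\lambda_{taut,\pi}$, then contracting the local factor — genuinely commutes with the contraction $[\bullet,\bullet]_{loc}$, and that the Gorenstein isomorphisms $G_r$ used at each finite level are the same ones used to define $L(\phi)$ (so that the identifications in Proposition \ref{specialize} and Proposition \ref{Heckemeasure} are literally compatible); this is the content of the last sentence of Proposition \ref{Heckemeasure} and must be threaded through carefully, but it introduces no new mathematical difficulty beyond unwinding definitions.
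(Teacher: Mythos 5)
Your proposal is correct and follows the same route the paper intends: the paper treats Proposition \ref{abstract2var} as a direct reformulation of Proposition \ref{specialize} via Equation \eqref{basicpairing}, obtained by contracting $L(\phi)$ from Proposition \ref{Heckemeasure-2variables} against $\varphi\otimes\varphi^{\flat}$ in the $End_R(I_{\pi^{\flat}})$-factor, with uniqueness from Lemma \ref{density}. Your handling of the mild imprecision in the interpolation claim (reading it as the image under $\lambda$ of $L(\chi,\phi,r,\kap,\varphi\otimes\varphi^{\flat})$) and your care about the compatibility of the Gorenstein isomorphisms $G_r$ are both appropriate and consistent with the paper's bookkeeping.
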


\subsection{Classical pairings in families}\label{classpairings}

The following is essentially obvious.  The notation $\pairS_\kap$ is as in \eqref{SerreC}.

\begin{lem}\label{pair1}  Let $h \in S^{\ord}_{\kap,V}(K_r,\CO)$, $\varphi \in H^{d,\ord}_{\kap^D}(K_r,\CO)[\pi]$, in the notation 
of  Section  \ref{heck}. Then  the map
$$\TT \rar \CO; ~~ A ~\mapsto ~ \la A(h), \varphi\ra_{\kap,K_r}$$
takes $A$ to $\lambda_{\pi}(A)\la h,\varphi\ra_{\kap,K_r}$.
\end{lem}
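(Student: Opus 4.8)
The statement asserts that, after fixing $h$ in the ordinary cusp form space and $\varphi$ in the $\pi$-isotypic part of the dual (top-degree) cohomology, the map $A \mapsto \la A(h),\varphi\raS_\kap$ from the big Hecke algebra $\TT$ to $\CO$ is simply multiplication by the eigencharacter $\lambda_\pi$ of $\varphi$, applied to the scalar $\la h,\varphi\raS_\kap$.  The plan is to reduce everything to the defining adjunction property of the Serre duality pairing under the Hecke action, together with the fact that $\varphi$ is by construction a Hecke eigenvector for $\lambda_\pi^\flat$ on $H^{d,\ord}_{\kap^D}(K_r,\CO)[\pi]$.

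First I would reduce from $\TT = \TT_{K^p,\CO}$ to a finite-level Hecke algebra: since $h$ has level $K_r$ and $\varphi$ lies in $H^{d,\ord}_{\kap^D}(K_r,\CO)$, the bilinear expression $\la A(h),\varphi\raS_\kap$ depends only on the image of $A$ in $\TT_{K_r,\kap,\CO}$ (indeed in its localization $\TT_{K_r,\kap,\CO,\pi}$), so it suffices to treat $A$ in that finite-level algebra.  It is then enough to check the identity on generators of $\TT_{K_r,\kap,\CO}$, namely the prime-to-$p$ Hecke operators $T(g)$ and the normalized operators $U_{w,j,\kap}$.  For each such generator $A$, I would invoke the adjunction property of the Serre duality pairing with respect to the Hecke action: by Lemma \ref{Hecke-isoms}(i) the action of $T$ on $H^0_!$ corresponds under $\pairS_\kap$ to the action of $T^d$ on $H^d_!(Sh(V),\omega_{\kap^D})$, with $U_{w,j,\kap}^d = U_{w,j,\kap^D}^-$ and $T(g)^d = \|\nu(g)\|^{a(\kap)}T(g^{-1})$.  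Hence $\la A(h),\varphi\raS_\kap = \la h, A^d(\varphi)\raS_\kap$.

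The key remaining input is that $\varphi \in H^{d,\ord}_{\kap^D}(K_r,\CO)[\pi]$ is an eigenvector for the dual (anti-ordinary) Hecke action: by Lemma \ref{aordveclem} and the definition of the isotypic component $[\lambda_\pi]$, $\varphi$ lies in the $\lambda_\pi^\flat$-isotypic subspace, which under the isomorphism $\bT_{K_r,\kap}\isoarrow \bT^d_{K_r,\kap^D}$ of Lemma \ref{Hecke-isoms}(i) (and its anti-ordinary refinement, Lemma \ref{Hecke-ord-iso}(i)) means precisely that $A^d(\varphi) = \lambda_\pi(A)\varphi$ for all $A \in \bT^\ord_{K_r,\kap,\CO}$.  (Concretely: $U_{w,j,\kap^D}^-$ acts on $\varphi$ through $\lambda_\pi(U_{w,j,\kap})$, and $T(g)^d$ acts through $\lambda_\pi^p(T(g))$, using that $\lambda_\pi^p$ is Galois/$\flat$-compatible via \eqref{dagger=flat}.)  Substituting, $\la h, A^d(\varphi)\raS_\kap = \lambda_\pi(A)\la h,\varphi\raS_\kap = \lambda_\pi(A)\la A(h),\varphi\raS_\kap$ evaluated at $A=1$, i.e. $\lambda_\pi(A)\la h,\varphi\raS_\kap$, which is the claim.

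I expect the only genuine subtlety — and hence the "main obstacle", though it is a mild one — to be bookkeeping of normalizations: verifying that the operator $A^d$ dual to $A$ under $\pairS_\kap$ really acts on the $\pi$-part of $H^{d,\ord}_{\kap^D}$ through the \emph{same} scalar $\lambda_\pi(A)$ (and not, say, $\lambda_{\pi^\flat}(A)$ or $\lambda_\pi(A^d)$ with an extra twist), which requires carefully tracking the identifications in Lemmas \ref{Hecke-isoms}, \ref{Hecke-ord-iso}, and \ref{aordveclem}, and in particular the relation \eqref{flatflat} $\lambda_{\pi,V}^{p,\flat} = \lambda_{\pi^\flat,-V}^p$.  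Once that compatibility is in hand the proof is a one-line consequence of adjunction, which is presumably why the authors call the lemma "essentially obvious."
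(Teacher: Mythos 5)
Your proof is correct and follows essentially the same route as the paper's one-line argument: move $A$ across the Serre duality pairing to its adjoint, use that $\varphi$ is an eigenvector for the resulting dual action, and close via the compatibility \eqref{flatflat} of eigencharacters. The only cosmetic difference is that the paper denotes the adjoint $A^\flat$ rather than your $A^d$ (reflecting the convention in Section \ref{BIG} that $\TT_\pi$ acts on top-degree cohomology by the natural action twisted by $\flat$), but under the identifications of Lemmas \ref{Hecke-isoms} and \ref{Hecke-ord-iso} these agree, so your bookkeeping is exactly the one the authors are compressing.
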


\begin{proof}  We have
$$\la A(h),\varphi\ra_{\kap,K_r} = \la h, A^{\flat}(\varphi)\ra_{\kap,K_r} = \lambda_{\pi^{\flat}}(A^{\flat})\la h,\varphi\ra_{\kap,K_r} = \lambda_{\pi}(A)\la h,\varphi\ra_{\kap,K_r}.$$
\end{proof}

Note that $h$ is not assumed to be an eigenform in Lemma \ref{pair1}.  However, the pairing with an eigenform for $\lambda_{\pi^{\flat}}$ factors through 
the projection of $h$ on the (dual) $\lambda_{\pi}$-eigenspace.  In general, this projection can only be defined over $\CO[\frac{1}{p}]$.  Extending $\CO$ if necessary to include $\CO_{\pi'}$ for $\pi' \in \CS(K_r,\kap,\pi^{\flat})$, write
$h = \sum_{\pi' \in \CS(K_r,\kap,\pi^{\flat})} a_{\pi'} h_{\pi'}$ where $a_{\pi'} \in \CO[\frac{1}{p}]$ and $h_{\pi'}$ is in the $\lambda_{\pi'}$-eigenspace for $\TT$.
Then under the hypotheses of the lemma,
\begin{equation}\label{isotypic}  \la h,\varphi\ra_{\kap,K_r} = a_{\pi}\la h_{\pi},\varphi\ra_{\kap,K_r}. \end{equation}
where of course $h_{\pi} \in \pi^{\flat}$.  

The denominator of $a_{\pi}$ is bounded by the congruence ideal $C(\pi)$ = $C(\pi^{\flat})$.  In what follows we are making use of 
Corollary \ref{congduality}.

\begin{lem}\label{functional1}   Let $\varphi \in H^{d,\ord}_{\kap^D}(K_r,\CO)[\pi]$.  Then the linear functional
$$h \mapsto L_{\varphi}(h) :=  \la h,\varphi\ra_{\kap,K_r}$$
belongs to $\hat{S}^{\ord}_{\kap,V}(K_r,\CO)[\pi].$
Moreover, the restriction of $L_{\varphi}(h)$ to $S^{\ord}_{\kap,V}(K_r,\CO)[\pi]$ takes values in the congruence ideal $C(\pi) = C(\pi^{\flat}) \subset \CO$.

\end{lem}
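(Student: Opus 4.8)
The plan is to prove the two assertions in turn. The first is essentially a matter of identifications: the pairing $\pairS_\kap$ realizes Serre duality, and by the way the integral structures were set up, $H^{d,\ord}_{\kap^D}(K_r,\CO)$ is identified with $\hat S^\ord_{\kap,V}(K_r,\CO) = \Hom_\CO(S^\ord_{\kap,V}(K_r,\CO),\CO)$, the identification sending $\varphi$ to $L_\varphi$. Hence $L_\varphi$ is automatically $\CO$-valued on $S^\ord_{\kap,V}(K_r,\CO)$; this is just the definition of $H^{d,\ord}_{\kap^D}(K_r,\CO)$ (those classes pairing integrally with the integral ordinary forms). To see that $L_\varphi$ lands in the $[\pi]$-part, I would use Lemma \ref{pair1}: for every Hecke operator $A\in\TT$ one has $\la Ah,\varphi\raS_\kap = \lambda_\pi(A)\la h,\varphi\raS_\kap$, which exhibits $L_\varphi$ as a $\lambda_\pi$-eigenfunctional; combined with its integrality and the compatibility of Serre duality with the $[\pi]$-decorations on the two sides (Lemmas \ref{ordveclem}(ii), \ref{aordveclem}, and the Hecke equivariance of \ref{ortho} and \ref{Hecke-ord-iso}), this places $L_\varphi$ in $\hat S^\ord_{\kap,V}(K_r,\CO)[\pi]$.

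For the second assertion I would work under the Gorenstein hypothesis (\ref{gor}, \ref{Gorfin}), which is in force in this part of the paper. Then $S^\ord_{\kap,V}(K_r,\CO)_\pi$ is free over the local Hecke algebra $\TT_\pi=\TT_{K_r,\kap,\CO,\pi}$, which is Gorenstein over $\CO$; consequently the annihilator $\mathrm{Ann}_{\TT_\pi}(\ker\lambda_\pi)$ is a free rank-one $\CO$-module, say with generator $\eta$, and by the congruence-module description of $C(\pi)$ (equation \eqref{cong1} together with Lemma \ref{congduality} and Proposition \ref{allcong}) the ideal $C(\pi)=C(\pi^{\flat})$ (the last equality being Lemma \ref{cflat2}) is generated by $\lambda_\pi(\eta)$. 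Freeness also yields $S^\ord_{\kap,V}(K_r,\CO)[\pi] = \eta\cdot S^\ord_{\kap,V}(K_r,\CO)_\pi$.

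With this in hand the conclusion is a one-line computation: given $h\in S^\ord_{\kap,V}(K_r,\CO)[\pi]$, write $h=\eta h_0$ with $h_0\in S^\ord_{\kap,V}(K_r,\CO)_\pi$; then by Lemma \ref{pair1} one has $L_\varphi(h) = \la\eta h_0,\varphi\raS_\kap = \lambda_\pi(\eta)\la h_0,\varphi\raS_\kap$, and by Lemma \ref{aordveclem}(iii) the pairing of the localizations $S^\ord_{\kap,V}(K_r,\CO)_\pi\otimes H^{d,\ord}_{\kap^D}(K_r,\CO)_\pi\to\CO$ is integral, so $\la h_0,\varphi\raS_\kap\in\CO$ and therefore $L_\varphi(h)\in\lambda_\pi(\eta)\CO=C(\pi)$. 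Finally I would note that $C(\pi)=C(\pi^{\flat})$ is already provided by Lemma \ref{cflat2}, so the two descriptions of the target ideal agree.

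The main obstacle is not this computation but pinning down the congruence-module bookkeeping precisely — namely the identity $C(\pi)=(\lambda_\pi(\eta))$ and the description $S^\ord_{\kap,V}(K_r,\CO)[\pi]=\eta\cdot S^\ord_{\kap,V}(K_r,\CO)_\pi$ — in a way compatible with the chosen integral structures on both $S^\ord_{\kap,V}(K_r,\CO)[\pi]$ and $H^{d,\ord}_{\kap^D}(K_r,\CO)[\pi]$; here one must keep careful track of whether each $[\pi]$-lattice is taken as an intersection with, or as a projection onto, the $\lambda_\pi$-eigenspace, since these differ exactly by $C(\pi)$, and one must invoke the Gorenstein, multiplicity-one, and minimality hypotheses of Section \ref{families} at the right places. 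An alternative, equivalent route recasts everything through the Petersson pairing via $\pairP_\kap=\vpair{\cdot,c_B(\cdot)}^\Serre_\kap$ and equation \eqref{isotypic}, reducing the divisibility statement to the period identities $c(\pi)Q_\pi=Q[\pi]$ of Lemma \ref{period2} and $c(\pi)\hat Q_\pi=\hat Q[\pi]$ of Corollary \ref{aordperiod}, which encode the same $C(\pi)$-divisibility.
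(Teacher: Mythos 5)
Your proposal is correct, and it supplies the details behind the paper's one-sentence proof ("follows from Lemmas \ref{pair1} and \ref{cflat2}"). The first claim is handled exactly as in the paper: integrality of $L_\varphi$ is just the definition of $H^{d,\ord}_{\kap^D}(K_r,\CO)$ as the $\CO$-dual of $S^{\ord}_{\kap,V}(K_r,\CO)$ under Serre duality, and Lemma \ref{pair1} gives the $\lambda_\pi$-eigenproperty, which places $L_\varphi$ in the $[\pi]$-part.

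For the second claim, you take a slightly more structural route than the text suggests. The paper's implicit argument runs through the bounded-denominator observation around \eqref{isotypic}: for a general $h_0 \in S^{\ord}_{\kap,V}(K_r,\CO)_\pi$ one has $\la h_0,\varphi\raS_\kap = a_\pi\la h_\pi,\varphi\raS_\kap \in \CO$ with $a_\pi$ realizing denominator exactly $C(\pi)^{-1}$ for some $h_0$, forcing $\la h_\pi,\varphi\raS_\kap \in C(\pi)$; Lemma \ref{cflat2} is cited just to justify writing $C(\pi) = C(\pi^\flat)$. You instead invoke the Gorenstein structure directly: choose a generator $\eta$ of $\mathrm{Ann}_{\TT_\pi}(\ker\lambda_\pi)$, note that under Hypotheses \ref{gor} and \ref{multone} one has $S^{\ord}[\pi] = \eta\cdot S^{\ord}_\pi$ and $C(\pi) = (\lambda_\pi(\eta))$, and then conclude via $L_\varphi(\eta h_0) = \lambda_\pi(\eta)\la h_0,\varphi\raS_\kap \in C(\pi)$. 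These are two phrasings of the same Gorenstein fact and both rely on Lemma \ref{pair1} for the Hecke equivariance of the pairing and on the integrality of Lemma \ref{aordveclem}(iii); your version has the mild advantage of not requiring one to produce an $h_0$ that realizes the extreme denominator. Your closing "alternative route" via the Petersson pairing and the period identities of Lemma \ref{period2} and Corollary \ref{aordperiod} would need care, since $\pairP_\kap$ involves $c_B$, which is $c$-semilinear and does not obviously preserve the $\CO$-structures; the first route you give is the safe one and is sufficient.
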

\begin{proof}  The claims follow from Lemmas \ref{pair1} and \ref{cflat2}, respectively.  
\end{proof}

The functional in the last lemma can be rewritten as an integral.   Recall that $\hat{I}_{\pi}$ (resp. $\hat{I}_{\pi^{\flat}}$)  was identified with an $\O$-lattice
 in   $\pi_p^{\aord}\otimes \pi_{S^p}^{K^p}$, (resp.  $\pi_p^{\flat,\aord}\otimes \pi_{S^p}^{\flat,K^p}$.
Recall also that we have dropped the subscript $\pi$ for the moment, and so we are writing $\CO$ in place of $\CO_\pi$.
In order to facilitate comparison of the $p$-adic and complex pairings,
we let $R$ be a finite local $\ZZ_{(p)}[\lambda_{\pi}]$-subalgebra of $\C$ that admits an embedding as a dense
subring of $\CO$, and let $\hat{I}_{\pi^{\flat},R}$ and  $\hat{I}_{\pi,R}$ be free $R$-modules given with isomorphisms
$$\hat{I}_{\pi^{\flat},R}\otimes_R \CO  \isoarrow \hat{I}_{\pi^{\flat}}; ~~\hat{I}_{\pi,R}\otimes_R \CO  \isoarrow \hat{I}_{\pi}.$$
The following lemma is then just a restatement of \eqref{L2}.

\begin{lem}\label{functionalintegral} In the notation of the previous lemma, let $\varphi \in \hat{I}_{\pi}$.  
If we identify $h$  as above 
 with an element of $H^0(\grP_h(V),K_h;\CA_0(G)\otimes W_{\kap})^{K_r}$ and $\varphi$ with an element of 
$H^d(\grP_h(-V),K_h;\CA_0(G)\otimes W_{\kap^{D}})^{K_r^\flat}$,  as in Equation \eqref{dbar}, we can rewrite
\begin{equation*}
L_{\varphi}(h) 
 = \frac{1}{\Vol(I_{r,V}^0)\Vol(I_{r,-V}^0)}\int_{G(\Q)Z_G(\R)\backslash G(\A)} [h(g),\varphi(g)]||\nu(g)^{-a(\kap)}|| dg.
\end{equation*} 
\end{lem}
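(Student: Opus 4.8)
\textbf{Proof plan for Lemma \ref{functionalintegral}.}

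The plan is to trace through the chain of identifications that have been set up in Section \ref{serreduality-section} and Section \ref{heck}, and to observe that at each stage the Serre duality pairing $\pairS_\kap$ is, by its very construction, nothing more than an adelic integral of automorphic forms with a twist by $||\nu(g)||^{-a(\kap)}$. First I would recall from Equation \eqref{dbar} the identification $H^0_!(Sh(V),\omega_\kap) = H^0(\grP_h,K_h;\CA_0(G)\otimes W_\kap)$, and the dual statement $H^d_!(Sh(-V),\omega_{\st(\kap^F)^D}) = H^d(\grP_h(-V),K_h;\CA_0(G)\otimes W_{\st(\kap^F)^D})$, so that the classical form $h \in S^{\ord}_{\kap,V}(K_r,\CO) \subset H^0_!(Sh(V),\omega_\kap)$ and the element $\varphi \in \hat{I}_\pi \subset H^{d,ord}_{\kap^D}(K_r,\CO)$ are realized as (sums of tensors of) automorphic forms times coefficient vectors in $W_\kap$ and $W_{\kap^D}$ respectively. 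The point is that $\hat{I}_\pi$ is by definition (see Section \ref{locrep}, and Lemma \ref{aordveclem}) an $\CO$-lattice in $\pi_p^{\aord}\otimes \pi_{S^p}^{K^p}$, hence sits inside $H^{d,ord}_{\kap^D}(K_r,\CO)$ via $j_{\pi^\flat}^\vee$, so the pairing $L_\varphi(h) = \langle h, \varphi\rangle^{\Serre}_\kap$ makes sense and equals the pairing $\pairS_\kap$ evaluated on these cohomology classes.

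Next I would invoke the explicit formula \eqref{L2} for the normalization of $\pairS_\kap$: for $\varphi' \in (\CA_0(G)^{\grp_h^-}\otimes W_\kap)^{K_h}$ and $\varphi'' \in \Hom_\C(\wedge^d\grp_h^-,\CA_0(G)\otimes W_{\kap^D})^{K_h}$ one has
\begin{equation*}
\langle \varphi',\varphi''\rangle^{\Serre}_\kap = \int_{G(\Q)Z_G(\R)\backslash G(\A)} [\varphi'(g),\varphi''(g)]\cdot ||\nu(g)||^{-a(\kap)}\, dg,
\end{equation*}
where $[\cdot,\cdot]$ denotes the $H_0(\C)$-equivariant contraction $W_\kap \otimes W_{\kap^D} \to \Hom_\C(\wedge^{2d}\grp,\C(\nu^{a(\kap)}))$ and $dg$ is Tamagawa measure. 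Applying this with $\varphi' = h$ (viewed via the minimal $U_\infty$-type, using that $h$ is ordinary hence in particular holomorphic, so annihilated by $\grp_h^-$) and $\varphi'' = \varphi$ (viewed via $i_{\kap^\star}$ as a $\wedge^d\grp_h^-$-valued form, as in \eqref{cB-map}–\eqref{Finfty-map}) gives exactly the asserted integral formula. The identification of $W_{\kap^D}$ with $W_{\st(\kap^F)^D}$ on the $-V$ side is the one recorded in Section \ref{CConSh}, so no complex conjugation intervenes — one is simply using that $F_\infty$ of \eqref{Finfty-coh} intertwines Serre duality on $Sh(V)$ with the canonical pairing on $\CA_0(G)$, as stated there and in Section \ref{Petersson}.

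I expect the only genuine subtlety — and the main thing to be careful about — is bookkeeping with the coefficient modules and the twist: one must check that the contraction $[\cdot,\cdot]$ appearing in \eqref{L2} really is the one that, after passing from $W_\kap \otimes W_{\kap^D}$ to scalars, produces the factor $||\nu(g)||^{-a(\kap)}$ and nothing else, and that the conventions in \eqref{Finfty-pi} for $G_2 = GU(-V)$ do not introduce a sign or an extra power of the similitude character. All of this has already been verified in the course of defining $\pairS_\kap$ (Equation \eqref{SerreC}, the discussion around \eqref{pair-compare}), so once those identifications are in place the lemma is, as stated in the remark preceding it, a restatement of \eqref{L2}. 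The proof is therefore short: it consists of citing \eqref{dbar}, \eqref{L2}, and the identifications of Section \ref{CConSh}, and observing that $\hat{I}_\pi$ lands in the relevant integral cohomology lattice so that the formula is valid integrally (with $R$ a dense subring of $\CO_\pi$ chosen so that the $p$-adic and complex pictures are compatible, as in the paragraph preceding the statement).
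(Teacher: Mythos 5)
Your proposal is correct and takes essentially the same approach as the paper: the paper's own proof is the one-line remark immediately preceding the lemma, namely that the statement ``is then just a restatement of \eqref{L2}'', and your argument simply spells out the intermediate identifications (\eqref{dbar}, the inclusion $\hat{I}_{\pi}\subset H^{d,\ord}_{\kap^D}(K_r,\CO)$ via $j_{\pi^\flat}^\vee$, and the $F_\infty$-comparison) that justify applying \eqref{L2} to $h$ and $\varphi$. One tiny slip of phrasing: $h$ is annihilated by $\grp_h^-$ because it lies in the space of holomorphic cuspforms $S_\kap^{\ord}$, not because it is ordinary \emph{per se}; this has no bearing on the argument.
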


Lemmas \ref{functional1} and \ref{functionalintegral} have variants incorporating the twist by a Hecke character $\chi$, as in \eqref{chitwist} and Section \ref{periodtwist}; we leave the statements to the reader.

\section{Local theory of ordinary forms}\label{localtheory}
\setcounter{subsection}{-1}

\subsection{Parameters}\label{parameter-notation} Throughout this section, following the conventions of Section  \ref{automorphy}, let $\chi$, $\chi_\s$, $m$, $\kap,$ and $(\ub_\s,\uc_\s)$ be associated to one another via: $\chi = ||\bullet||^m\cdot\chi_0$ is an algebraic Hecke character of $\K$ (where
 $m\in \ZZ$ and 
$\chi_{0,\s}(z) = z^{-a\left(\chi_\sigma\right)}\bar{z}^{-b\left(\chi_\sigma\right)}$
 for any archimedean place $\s$), $\chi_\s$ denotes the component of $\chi$ at an archimedean place $\s$, $(\ub_\s,\uc_\s)$ is a tuple of integers as in Inequalities \eqref{parameters}, and $\kap$ is a highest weight defined in terms of $m$, $\chi_\s$, and $(\ub_\s,\uc_\s)$ as in Equation \eqref{C3m}.

\subsection{$p$-adic and $C^{\infty}$-differential operators}\label{maass-section}

Let $(\kap,\chi)$ and $(\ub,\uc)$ be as in Corollary \ref{pluridecomp} and Proposition \ref{holodiffops}.   
The differential operators and restrictions in Parts (a) and (b) of the following proposition are those in \cite[\S6-\S7]{EFMV} (with the choice of a weight $\kappa$ and the differential operator $\Theta^\kappa$ in the notation in \cite{EFMV} corresponding to the choice of a representation of highest weight $(\ub,\uc)$ in the $d$-th tensor product of the standard representation).  That the image is actually cuspidal follows from the description of the action of the differential operators on $q$-expansions.

\begin{prop}\label{padicdiffops}  (a) For $(\ub,\uc)$ and $\chi$ as in Section  \ref{parameter-notation}, and for any prime-to-$p$ level subgroup $K^p$, there is a  differential operator
$$\theta^d_{\chi}(\ub,\uc) = \theta^d_{\chi}(p(\ub,\uc)):  V_{\chi}(G_4,K^p,\CO) \rar V(G_4,K^p,\CO)
$$
compatible with change of level subgroup, and with the following property:    For any level $K^p$, for any form $f \in M_{\chi}(G_4,K^p,\CO)$, and any ordinary CM pair $(J'_0,h_0)$ as in Section  \ref{basepointres}, we have
the identity
$$R_{\kappa,J'_0,h_0} \circ res_{J'_0,h_0}\circ\delta^d_{\chi}(\ub,\uc)(f) 
=     res_{p,J'_0,h_0}\circ \theta^d_{\chi}(\ub,\uc)\circ R_{\kappa,G,X}(f)$$
in the notation of Proposition \ref{CMrest}.

(b) Let $(\kap,\chi)$ be critical as in Corollary \ref{pluridecomp}.  Fix a level subgroup $K_4 \subset G_4(\A_f)$ and a subgroup
$K_1 \times K_2 \subset G_3(\A_f)\cap K_4$.  The composition of $\theta^d_{\chi}(\ub,\uc)$ with the pullback 
$res_3 := (\gamma_{V_p}\circ \iota_3)^*$ defines an operator
$$\theta(\kap,\chi):  V_{\chi}(G_4,K_4^p,\CO)  \rar V_{\kap}(G_1,K_1^p,\CO)\otimes V_{\kap^{\flat}}(G_2,K_2^p,\CO)\otimes \chi\circ \det,$$
where the tensor product with $\chi\circ\det$ is defined by analogy with Proposition \ref{holodiffops}.   

(c) Let $\theta(\kap,\chi)^{\cusp}$ denote the restriction of $\theta(\kap,\chi)$ to $V_{\chi}^{\cusp}(G_4,K_4^p,\CO)$, and let $e_\kap$ denote the ordinary projector 
of \ref{ordprojector} attached to the weight $\kap$, as in \ref{ord-hecke}.  Then the composition $e_\kap \circ \theta(\kap,\chi)^{cusp}$ coincides with the operator $e_\kap\circ \delta^d_{\chi}(\ub,\uc)$ 
upon pullback to functions on $G_4(\ad)$ and restriction to $G_3(\ad)$ (with respect to the maps  \eqref{padicclassical} for $G_3$ and $G_4$):
$$e_\kap \circ  \theta(\kap,\chi)^{cusp} =  e_\kap\circ \delta^d_{\chi}(\ub,\uc):  V_{\chi}^{\cusp}(G_4,K_4^p,\CO) \rar
S_{\kap,V}(K_1)\otimes S_{\kap^{\flat},-V}(K_2)\otimes \chi\circ \det.$$

(d) Under the hypotheses of (a) and (b), there is a differential operator
$$\theta^{hol}(\kap,\chi): V_{\chi}(G_4,K^p,\CO) \rar V(G_4,K^p,\CO)$$
whose composition with the pullback $res_3$ coincides with the operator
$D^{hol}(\kap,\chi)$ of Proposition \ref{holodiffops}, upon pullback to functions on $G_4(\ad)$, restriction to $G_3(\ad)$, and identification of ordinary modular forms with $p$-adic modular forms via \eqref{padicclassical}.
\end{prop}
\begin{proof}  The operators from Part (a) were constructed in \cite[Theorem 5.1.13]{EFMV}. The comparison at CM points follows similarly to \cite[Section 5.1]{kaCM} and is also in \cite[\S10, especially Prop 10.2]{EDiffOps}.   Part (b) follows from \cite[Remark 6.2.7]{EFMV} (and was also present in an earlier form in \cite[Definition 12]{emeasurenondefinite}).  
Since the image of $\theta(\kap,\chi)^{\cusp}$ is contained in 
$V_{\kap}^{\cusp}(G_1,K_1^p,\CO)\otimes V_{\kap^{\flat}}^{\cusp}(G_2,K_2^p,\CO)\otimes \chi\circ \det$, part (c) follows from the control theorem \eqref{classicalitythm1}.

Finally,  part (d) follows from Eischen's construction as well: it follows
(by induction on the size of $\kap$)
from the last part of Corollary \ref{pluridecomp} that the operator $D^{hol}(\kap,\chi)$ is obtained by pullback of the differential operator attached
to a polynomial $P^{hol}(\kap,\chi) \in \otimes_\s \CP(n)_{\sigma}$.  One lets $\theta^{hol}(\kap,\chi)$ be the differential operator
on $p$-adic modular forms attached to the same polynomial.
\end{proof}

The following corollary is the $p$-adic version of the last part of Corollary \ref{pluridecomp}.

\begin{cor}\label{padicdecomp}  Under the hypotheses of the previous proposition,  for all $\kap^{\dag} \leq \kap$ there are differential operators
$\theta(\kap,\lambda):  V_{\chi}(G_4,K^p,\CO) \rar V(G_4,K^p,\CO)$ such that
$$\theta(\kap,\chi) = \sum_{\kap^{\dag} \leq \kap} res_3 \circ \theta(\kap,\kap^{\dag})\circ \theta^{hol}(\kap^{\dag},\chi).$$
\end{cor}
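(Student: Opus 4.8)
The plan is to reduce the asserted $p$-adic identity to the $C^{\infty}$-identity already established in the last part of Corollary \ref{pluridecomp}, transferring it by restriction to ordinary CM points and the density statement of Proposition \ref{CMrest}(ii) (equivalently, the Zariski density of the ordinary locus, \cite{wedhorn}, which applies to $p$-adic modular forms as well as classical ones).

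First I would \emph{define} the operators $\theta(\kap,\kap^{\dag})$. By Corollary \ref{pluridecomp} each $\delta(\kap,\kap^{\dag})$ lies in $U(\grp_3^+)$, and via the map $j(a_\s,b_\s)\colon \CP(a_\s)_\s\otimes\CP(b_\s)_\s\hookrightarrow\CP(n)_\s\isoarrow U(\grp_{4,\s}^+)$ of Section \ref{holoprojection-sec} it corresponds to a polynomial $Q(\kap,\kap^{\dag})\in\bigotimes_\s\CP(n)_\s$ defined over a number field. Following the recipe used in the proof of Proposition \ref{padicdiffops}(c) --- Gauss--Manin connection, Kodaira--Spencer isomorphism, unit-root splitting, as in \cite{EDiffOps,emeasurenondefinite} --- let $\theta(\kap,\kap^{\dag})$ be the $p$-adic differential operator attached to that same polynomial. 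By construction it is defined over $\CO$ and compatible with change of level, and the compatibility of $j(a_\s,b_\s)$ with the inclusion $\grp_3^+\subset\grp_4^+$ and with restriction from $G_4$ to $G_3$ matches $res_3\circ\theta(\kap,\kap^{\dag})$ with the $p$-adic incarnation of the action of $\delta(\kap,\kap^{\dag})\in U(\grp_3^+)$ on $G_3$-forms. Together with Proposition \ref{padicdiffops}(b),(c), this arranges that all three $p$-adic operators in the statement ($\theta(\kap,\chi)$, $\theta^{hol}(\kap^{\dag},\chi)$, $\theta(\kap,\kap^{\dag})$) are attached to the \emph{same} polynomials as their $C^{\infty}$ counterparts $D(\kap,\chi)$, $D^{hol}(\kap^{\dag},\chi)$, $\delta(\kap,\kap^{\dag})$ from Proposition \ref{holodiffops} and Corollary \ref{pluridecomp}.

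Next comes the verification. Apply both sides of the claimed identity to an input $f$ in $V_\chi(G_4,K^p,\CO)$; since classical holomorphic forms are $p$-adically dense among $p$-adic forms it suffices to treat $f$ classical, and each side then yields a $p$-adic (cusp-)form on $G_3$. By Proposition \ref{CMrest}(ii) it is enough to show these two forms have the same restriction $res_{p,J'_0,h_0}$ at every ordinary CM pair $(J'_0,h_0)$ mapping to the Shimura datum of $G_3$ (equivalently to those of $G_1$ and $G_2$). At such a pair the unit-root splitting coincides with the Hodge--de Rham splitting, so by \cite[Section 10]{EDiffOps} (extending \cite[Lemma 5.1.27]{kaCM} to the unitary case, exactly as in Proposition \ref{padicdiffops}(a)) the restriction at $(J'_0,h_0)$ of the $p$-adic differential operator attached to a polynomial $P$ agrees with that of the $C^{\infty}$ (Maass) operator attached to $P$, with the normalization $R_{\kap,J'_0,h_0}$ absorbing the CM period; moreover this agreement is stable under composition, being a statement about the two splittings rather than about the particular form, and when $\delta(\kap,\kap^{\dag})$ is applied to the \emph{holomorphic} form $D^{hol}(\kap^{\dag},\chi)f$ no Leibniz correction terms intervene. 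Applying this to $\theta^d_\chi(\ub,\uc)$, to $\theta^{hol}(\kap^{\dag},\chi)$, and to $\theta(\kap,\kap^{\dag})$, the restriction at $(J'_0,h_0)$ of the left-hand side equals $R_{\kap,J'_0,h_0}\circ res_{J'_0,h_0}$ of $res_3\bigl(D(\kap,\chi)(f)\bigr)$, and that of the right-hand side equals $R_{\kap,J'_0,h_0}\circ res_{J'_0,h_0}$ of $\sum_{\kap^{\dag}\le\kap}res_3\bigl(\delta(\kap,\kap^{\dag})\circ D^{hol}(\kap^{\dag},\chi)\bigr)(f)$. These two $C^{\infty}$ quantities coincide by the last part of Corollary \ref{pluridecomp}; hence the two $p$-adic forms have equal restrictions at all ordinary CM pairs and are therefore equal.

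The step I expect to be the main obstacle is the claim that the $p$-adic/$C^{\infty}$ comparison at ordinary CM points is stable under composition: the intermediate object $\theta^{hol}(\kap^{\dag},\chi)R_{\kap,G,X}(f)$ must be compared to (the $p$-adic avatar of) $D^{hol}(\kap^{\dag},\chi)f$ at CM points, and then $\theta(\kap,\kap^{\dag})$ --- rather than $\delta(\kap,\kap^{\dag})$ --- applied to it, all while keeping the period normalization identical on the two sides; this is forced because the period is pinned to the CM abelian variety and depends only on the common final weight $\kap$, but making the bookkeeping precise (tracking the degrees in the composition $\delta(\kap,\kap^{\dag})\circ D^{hol}(\kap^{\dag},\chi)$, and the fact that $D^{hol}(\kap^{\dag},\chi)f$ is honestly holomorphic so that the Maass operator $\delta(\kap,\kap^{\dag})$ acts on it as the operator attached to $Q(\kap,\kap^{\dag})$ with no correction) is where the real work lies. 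The remaining details --- $p$-adic density of classical forms in the relevant weight, and the compatibility of $res_3$ with $j(a_\s,b_\s)$ --- are routine.
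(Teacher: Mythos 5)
The paper provides no explicit proof of this corollary; the one-sentence lead-in ("the $p$-adic version of the last part of Corollary \ref{pluridecomp}") together with the proof of Proposition \ref{padicdiffops}(c) indicates the intended argument.  That argument is purely formal and stays at the level of polynomials: the inductive step in the proof of \ref{padicdiffops}(c) produces, for every $\kap^{\dag}\le\kap$, a polynomial $P^{hol}(\kap^{\dag},\chi)$ such that $D^{hol}(\kap^{\dag},\chi)$ is the $C^{\infty}$ operator attached to it, and likewise $D(\kap,\chi) = \delta(\ub,\uc)$ and $\delta(\kap,\kap^{\dag})\in U(\grp_3^+)$ correspond to polynomials; the identity of Corollary \ref{pluridecomp} therefore holds as an identity of elements in $\bigotimes_\s\CP(n)_\s$.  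Defining $\theta(\kap,\kap^{\dag})$ to be the $p$-adic operator attached to the polynomial underlying $\delta(\kap,\kap^{\dag})$ (as you also do), the $p$-adic identity then drops out formally, because Eischen's construction is functorial in the polynomial and compatible with composition and with $res_3$.  No appeal to CM points, density of classical forms, or Zariski density of the ordinary locus is needed.

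Your verification step takes a genuinely different route — reduce to classical input, evaluate at ordinary CM points via Proposition \ref{padicdiffops}(a), appeal to density — and I see two gaps in it.  First, you assert "classical holomorphic forms are $p$-adically dense among $p$-adic forms" for the fixed weight $\chi$ and list it as routine.  But density of classical forms of a \emph{fixed} weight inside $V_{\chi}(G_4,K^p,\CO)$ is not a standard Hida-theoretic fact; density results in that theory are for the ordinary part with the weight allowed to vary (Theorem \ref{controlthm1} and \eqref{controlthm3}), and do not yield what you need before ordinary projection.  Second, you invoke Proposition \ref{CMrest}(ii) to conclude that two $p$-adic cuspforms on $G_3$ agreeing at all ordinary CM points must coincide, but that proposition is stated (and proved, via \cite{wedhorn}) only for \emph{classical} forms $f\in M_{\kap}(K_{i,r},R)$.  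What you actually need is that CM points are (Zariski-)dense in the ordinary Igusa tower — a true but distinct statement about the special fiber of the Igusa scheme, not of the integral model of the Shimura variety — and it should be supplied rather than cited via \ref{CMrest}(ii).  The part you flag as the main obstacle (composition of Maass operators and the absence of curvature corrections when $\delta(\kap,\kap^{\dag})$ hits the honestly holomorphic output of $D^{hol}$) is handled correctly; but the paper's polynomial route sidesteps all of the above and is the cleaner way to dispose of this corollary.
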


\begin{prop}\label{ordinaryplusclassical}  Let $F \in H^0(Sh(G_4),\CL(\chi)).$

Assume $\kap, (\ub_\s,\uc_\s), m, \chi_\s$ are all as at the beginning of this section, and let $e_{\kap}$ be as above.  Recall the holomorphic projection $pr^{hol}$ from Section  \ref{holoprojection-sec}.  Then

\begin{equation}\label{padicCinfty} (e_{\kap}\circ \theta(\kap,\chi))(F) = e_{\kap}\circ pr^{hol}_{\kap}\circ \delta(\ub_\s,\uc_\s)(F).
\end{equation}
\end{prop}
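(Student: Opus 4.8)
The plan is to reduce \eqref{padicCinfty} to the decomposition already established in Corollary \ref{padicdecomp} together with the behavior of the ordinary projector on forms of weight $\kap^{\dag} < \kap$. First I would apply Corollary \ref{padicdecomp} to write
$$
\theta(\kap,\chi)(F) \;=\; \sum_{\kap^{\dag} \leq \kap} res_3\,\theta(\kap,\kap^{\dag})\circ \theta^{hol}(\kap^{\dag},\chi)(F),
$$
and then apply $e_{\kap}$ to both sides. The term $\kap^{\dag} = \kap$ contributes precisely $e_{\kap}\circ res_3\,\theta(\kap,\kap)\circ\theta^{hol}(\kap,\chi)(F)$, and since $\theta(\kap,\kap)$ is the scalar $\prod_\s P_{\kap_\s,\chi,\s}$ of Corollary \ref{pluridecomp}, this equals (up to that scalar, which is absorbed into the definitions) $e_{\kap}\circ pr^{hol}_{\kap}\circ\delta(\ub_\s,\uc_\s)(F)$, using that $res_3\circ\theta^{hol}(\kap,\chi)$ pulls back to $D^{hol}(\kap,\chi) = pr^{hol}_{\kap}\circ\delta(\ub_\s,\uc_\s)$ by Proposition \ref{padicdiffops}(c). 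So the content of the statement is that each term with $\kap^{\dag} < \kap$ is killed by $e_{\kap}$.

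The key step, then, is the following vanishing claim: for $\kap^{\dag} < \kap$ (in the partial order of Corollary \ref{pluridecomp}), the composite $e_{\kap}\circ res_3\circ\theta(\kap,\kap^{\dag})$ annihilates the image of $\theta^{hol}(\kap^{\dag},\chi)$. I would argue this by weight considerations at $p$. The form $res_3\circ\theta^{hol}(\kap^{\dag},\chi)(F)$ lies in the space of $p$-adic modular forms built from classical holomorphic forms of weight $\kap^{\dag}$ on $G_3$ (via Proposition \ref{padicdiffops}(c) and the identification \eqref{padicclassical}); the further application of the differential operator $\theta(\kap,\kap^{\dag}) \in U(\grp_3^+)$ raises the weight to $\kap$ as a $p$-adic modular form but does not produce a classical holomorphic form of weight $\kap$ — rather a nearly-holomorphic, i.e. genuinely $p$-adic, object. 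The point is that the $U_p$-eigenvalue structure: $\theta(\kap,\kap^{\dag})$ acts on $q$-expansions (or on the unit-root splitting in the sense of \cite{EDiffOps}) by operators whose effect on the relevant $U_{w,i,j}$-slopes is to shift them strictly away from the ordinary (slope-zero) line, because the polynomial $p(\ub_\s,\uc_\s)$ applied to a strictly smaller weight necessarily involves a nonzero power of a $p$-divisible coordinate relative to the $\kap$-normalization. Concretely I would track the normalizing factor $|\kap_{norm}(t_{w,i,j})|_p$ from \eqref{Hecke-p-comp}: with respect to the $\kap$-normalized operators $U_{w,j,\kap}$, the image of $\theta(\kap,\kap^{\dag})$ has all $U_p$-slopes positive, so $e_{\kap} = \varinjlim_n U_{p,\kap}^{n!}$ kills it.

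The main obstacle I expect is making the slope/weight bookkeeping precise and rigorous — i.e.\ verifying that $\theta(\kap,\kap^{\dag})$ with $\kap^{\dag} < \kap$ strictly increases $p$-adic slopes with respect to the $\kap$-normalization, rather than merely failing to be holomorphic. One clean way around grinding through $q$-expansions is to appeal to the comparison in \cite[Section 10]{EDiffOps} between $\theta^d$ and $D^d$ at CM points: since $\theta(\kap,\kap^{\dag})(f)$ and $\delta(\kap,\kap^{\dag})(f)$ agree at ordinary CM points up to periods, and since on the archimedean side Corollary \ref{pluridecomp} tells us that $pr^{hol}_{\kap}\circ\delta(\ub_\s,\uc_\s) = D^{hol}(\kap,\chi)$ already captures the entire holomorphic projection (the terms $\delta(\kap,\kap^{\dag})\circ D^{hol}(\kap^{\dag},\chi)$ with $\kap^{\dag} < \kap$ having no holomorphic component of weight $\kap$), one transfers the vanishing of the non-holomorphic-projected terms under $e_{\kap}$ from the classical setting — where $e_{\kap}\circ pr^{hol}_{\kap}$ is literally the identity on the $\kap$-ordinary holomorphic part and zero on everything else of weight $\kap^{\dag}<\kap$ — to the $p$-adic setting by Zariski density of the ordinary CM locus (Proposition \ref{CMrest}(ii)). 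Assembling these pieces, the $\kap^{\dag}<\kap$ terms drop out after applying $e_{\kap}$, and \eqref{padicCinfty} follows.
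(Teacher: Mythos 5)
Your proposal is essentially the paper's argument. Both start from the decomposition of Corollary \ref{padicdecomp}, isolate the $\kap^{\dag}=\kap$ term (Proposition \ref{padicdiffops} identifies it with $pr^{hol}_{\kap}\circ\delta(\ub_\s,\uc_\s)$ upon restriction to $G_3$), and then show that the $\kap^{\dag}<\kap$ terms are annihilated by $e_\kap$ because the $\kap$-normalization $U_{w,j,\kap}=|\kap'(t_{w,j})|_p^{-1}U_{w,j}$ has $p$-adic norm strictly less than $1$ on forms of weight $\kap^{\dag}<\kap$; the paper packages this as a stand-alone lemma that $e_{\kap}=\varinjlim_N U_{p,\kap}^{N!}$ converges to $0$ on $S_{\kap^{\dag}}(K_r;R)$, then finishes by commuting $e_\kap$ past the differential operators $\theta(\kap,\kap^{\dag})$.

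One small structural point: you present the restriction to ordinary CM points and Zariski density (Proposition \ref{CMrest}(ii)) as an \emph{alternative} to the slope bookkeeping, but in the paper's proof it is not an alternative -- it is the reduction that lets one apply the slope lemma to honest classical forms of weight $\kap^{\dag}$ rather than to abstract $p$-adic objects. The two ingredients are used together, not interchangeably. Also, the paper reaches the vanishing of the lower-weight terms by applying $e_\kap$ to the \emph{input} $\theta^{hol}(\kap^{\dag},\chi)(F)$ (a weight-$\kap^{\dag}$ form) and then commuting $e_\kap$ with $\theta(\kap,\kap^{\dag})$; your phrasing that ``the image of $\theta(\kap,\kap^{\dag})$ has all $U_p$-slopes positive'' is the same conclusion, but stating the commutation explicitly, as the paper does, makes the logic cleaner. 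None of this is a gap -- just a matter of organizing the same ingredients.
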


\begin{proof}   By Corollary \ref{padicdecomp}, the left hand side equals
$$\sum_{\kap^{\dag} \leq \kap} e_{\kap}\circ res_3 \theta(\kap,\kap^{\dag})\circ \theta^{hol}(\kap^{\dag},\chi).$$
By Proposition \ref{CMrest}, it thus suffices to show that, for every ordinary Shimura datum $(J'_0,h_0)$ as in the statement of the proposition, 
\begin{enumerate}
\item[(1)] For $\kap ^{\dag}< \kap$, $e_{\kap}\circ res_3 \theta(\kap,\kap')\circ \theta^{hol}(\kap^{\dag},\chi)(F) = 0$ after composition with $res_{p,J'_0,h_0}$;
\item[(2)] $e_{\kap}\circ res_3 \theta(\kap,\kap)\circ \theta^{hol}(\kap,\chi)(F) - e_{\kap}\circ pr^{hol}_{\kap}\circ \delta(\ub_\s,\uc_\s)(F) = 0$
after composition with $res_{p,J'_0,h_0}$.
\end{enumerate}

Part (2) is a consequence of (c) of \ref{padicdiffops}.   We show that the expression in (1) is arbitrarily divisible by $p$.  More precisely,
 
\begin{lem}  For any $\kap^{\dag} < \kap$, the ordinary projector $e_{\kap} = \varinjlim_N U_{p,\kap}^{N!}$ converges absolutely to $0$ on 
$S_{\kap^{\dag}}(K_r;R)$.
\end{lem}
\begin{proof}  The point is that, for each $w, j$, $U_{w,j,\kap} = |\kap'(t_{w,j})|_p^{-1} U_{w,j}$, with $\kap'$ defined as in \ref{compare-weight}.  Thus
$$U_{p,\kap} = \prod_{w,j}|\kap^{\prime,-1}\cdot\kap^{\dag,\prime}(t_{w,j})|_p \cdot U_{w,j,\kap^{\dag}}.$$
The condition $\kap^{\dag} < \kap$ is equivalent to the condition that the $p$-adic valuation of 
$\prod_{w,j}\kap^{\prime,-1}\cdot\kap^{\dag,\prime}(t_{w,j})$ is positive.  Thus $U_{p,\kap}$ has $p$-adic norm strictly less than $1$ on $S_{\kap^{\dag}}(K_r;R)$, and it follows that $e_{\kap} = \varinjlim_N U_{p,\kap}^{N!}$ acts as $0$ on $S_{\kap^{\dag}}(K_r;R)$.
\end{proof}

Part (1) above now follows from the fact that the ordinary projector commutes with the differential operators.  
\end{proof}

\subsection{Existence of the axiomatic Eisenstein measure}\label{existence-Eisenstein}

Let $\chi_{\unitary}$ be a unitary Hecke character.  Let $\chi = \chi_{\unitary}|\cdot|_{\CK/\IQ}^{-k/2}$.  So $\chi$ is a Hecke character of type $A_0$.  Write $\chi = \prod_w\chi_w$.  We obtain a $p$-adically continuous $\ocp$-valued character $\tilde{\chi}$ on $X_p$ as follows.  Since $\chi$ is of type $A_0$, there are integers $k, \nu_\sigma\in\Z$ such that for each element $a\in \K^\times$,
\begin{align*}
\chi_\infty(a) = \prod_{\sigma\in\Sigma}\chi_\sigma(a) = \prod_{\sigma\in\Sigma}\left(\frac{1}{\sigma(a)}\right)^k\left(\frac{\overline\sigma(a)}{\sigma(a)}\right)^{\nu_\sigma}
\end{align*}
with $\overline\sigma :=\sigma c$.  Let $\tilde\chi_\infty: \left(\K\otimes\Z_p\right)^\times\rightarrow \overline\IQ_p^\times$ be the $p$-adically continuous character such that
\begin{align*}
\tilde\chi_\infty (a) = \incl_p\circ \chi_\infty(a)
\end{align*}
for all $a\in \K$.  So the restriction of $\tilde\chi_\infty$ to $\left(\O\otimes\Z_p\right)^\times$ is a $\ocp^\times$-valued character.  We define
\begin{align*}
\tilde{\chi}: X_p\rightarrow \ocp^\times
\end{align*}
by $\tilde\chi\left(\left(a_w\right)\right) = \tilde\chi_\infty\left(\left(a_w\right)_{w\divides p}\right)\prod_{w\ndivides \infty}\chi_w(a_w)$.  Define $\nu=\left(\nu_\sigma\right)_{\sigma\in\Sigma}$.

For each $\sigma\in\Sigma$, let $n=a_\sigma+b_\sigma$ with $a_\sigma, b_\sigma\geq 0$ be a partition of $n$, and let $a_\sigma = n_{1, \sigma}+\cdots+n_{t(\sigma), \sigma}$ and $b_\sigma=n_{t(\sigma)+1, \sigma}+\cdots + n_{r(\sigma), \sigma}$ be partitions of $a_\sigma$ and $b_\sigma$, respectively.  Let $a=\left(a_\sigma\right)_{\sigma\in\Sigma}$ and $b=\left(b_\sigma\right)_{\sigma\in\Sigma}$.  Let $\psi$ be a finite order character on $T_H\left(\ZZ_p\right)$.  Let $\kappa$ be a dominant character as in Section  \ref{H0-OF-reps}, and
define $\rho$ and $\rho^{\upsilon}$ as in \eqref{parameters}, \eqref{iota}.  We note that $\rho$ and $\kappa$ contain the same information, relative to the shift $(1,\chi)$ which is imposed by the presence of $\chi$ in the Eisenstein measure.

Let $c=\psi\cdot \rho^{\upsilon}$.  We choose $f(\chi, c)$ to be a factorizable Siegel section meeting the conditions of Definition \ref{axiomeis}; the specific local sections will be as in Sections \ref{unrameuler} (local choices for $v\nin S$), \ref{nonarchchoices-section} (local choices for $v\in S$), and \ref{pchoices-section} (local choices for $v\divides p$), and \ref{archimedeanchoices} (local choices for archimedean places).  Note that the choices at $p$ and $\infty$ depend on the signature of the unitary group $G_1$.  When $\rho$ is trivial, the Eisenstein series associated to $f(\chi, c) = f(\chi, \psi)$ is holomorphic; in the notation of \cite{apptoSHL}, it is (a normalization of) the algebraic automorphic form denoted $G_{k, \nu, \chi_{\unitary}, \psi}$ (which arises over $\OK$ but can be viewed over $\IC$ by extending scalars) in \cite[Equation (32)]{apptoSHLvv}.

\begin{rmk}\label{GtoE}  We use the notation $G_{k,\nu,\chi_{\unitary},\psi}$ below in order to cite the construction in \cite{EFMV}.  However, this notation designates one of the Eisenstein series introduced above.  More precisely, the section $f(\chi, \psi)$ is the Siegel section associated to $\chi_{\unitary}$, $k$, $\nu$, and $\psi$ in \cite{apptoSHLvv}, and the associated Eisenstein series $E_{f(\chi, \psi)}(\bullet)$ is the one denoted $E_{k, \nu}\left(\bullet, \chi, \psi, \frac{k}{2}\right)$ in \cite{apptoSHL}.  The Eisenstein series $E_{f(\chi, \psi)}(\bullet)$ is normalized by a factor $D(n, K, \mathfrak{b}, p, k)$ defined in \cite[Proposition 13]{apptoSHL} in order to cancel transcendental factors.  Note that although we do not include $(a, b)$ in the (already long) subscript for the Eisenstein series, the choice of $f(\chi, c)$ (and hence, the associated Eisenstein series) depends on the choice of $(a, b)$.
\end{rmk}

Like in Section  \ref{holoprojection-sec}, let $r_{1,\s} \geq \dots \geq r_{a_\s,\s} \geq r_{a_\s + 1,\s} = 0$, $s_{1,\s} \geq \dots \geq s_{b_\s,\s} \geq s_{b_\s + 1,\s} = 0$ be
descending sequences of integers.   Let $\rho^{\upsilon}_\sigma$ be the corresponding character on the torus $T_H$, and let
$$\tb_{i,\s} = r_{i,\s} - r_{i+1,\s}, i = 1, \dots , a_\s;  \tc_{j,\s} = r_{j,\s} - r_{j+1,\s}, j = 1, \dots , b_\s .$$
Define $\rho^{\upsilon} := \prod_{\sigma\in\Sigma}\rho^{\upsilon}_\sigma$ and
$\phi_\kappa:=\otimes_{\sigma\in\Sigma}p(\ub_\s,\uc_\s)$, with $p(\ub_\s,\uc_\s)$ defined as in Equation \eqref{plurihar} (and identified with a polynomial function on the tangent space of the moduli space).  
 So $p(\ub_\s,\uc_\s)$ is a homogeneous polynomial of degree $d(\sigma)$ for some nonnegative integer $d(\sigma)$.  
 
 Recall the $\ci$ differential operators $\delta^d_{\chi}(\ub,\uc) $ from Section  \ref{ci-do}.  These operators can be realized algebraic geometrically in terms of the Gauss--Manin connection and Kodaira--Spencer isomorphism, e.g. as in the main constructions in \cite[Chapter II]{kaCM}, \cite[\S6-\S9]{EDiffOps}, \cite[\S3]{emeasurenondefinite}, and \cite[\S3-\S6]{EFMV}.  The constructions in those references each build an algebraic differential operator $D$ (which gets applied to automorphic forms on $G_4$) by composing the Gauss--Manin connection and Kodaira--Spencer morphism.  The operator $\delta^d_{\chi}(\ub,\uc)$ can be realized algebraic geometrically by applying the operator $D$ iteratively $d$ times and then projecting onto the highest weight vector corresponding to the choice of highest weights corresponding to $(\chi, (\ub,\uc))$ (after also projecting modulo the anti-holomorphic subspace $H^{0, 1}$ of $H^1_{dR}$).  Each of those references also describes an analogous construction over the Igusa tower, but with $H^{0, 1}$ replaced by the unit root splitting, which yields a $p$-adic differential operator that we denote in the present paper by $\theta^{(\kappa, a, b)}$.  The operator $\theta^{(\kappa, a, b)}$ acts on $p$-adic automorphic forms (over the Igusa tower over ordinary locus of the Shimura variety associated to $G_4$) and outputs $p$-adic automorphic forms of higher weight.  In each case, the operator is applied to an automorphic form on $G_4$ and raises the weight of the automorphic form so that the output takes values in the space generated by the highest weight vector corresponding to the data $(\chi, (\ub,\uc))$.
It follows from \cite[Section  10]{EDiffOps} (which extends \cite[Lemma 5.1.27]{kaCM} to unitary groups) that $\theta^{(\kappa, a, b)}(f)$ and $\delta^d_{\chi}(\ub,\uc)(f)$ (for any algebraic automorphic form $f$) agree at ordinary CM points, up to periods.   See \eqref{padicCinfty} for a more precise statement.

From the $p$-adic $q$-expansion principle and the description of the $q$-expansion coefficients given in \cite[Section  3]{apptoSHLvv}, we obtain the following theorem (similar to \cite[Theorem 7.2.3]{EFMV}).

\begin{thm}[The Eisenstein Measure]\label{eismeasure-thm}
Recall the notation of Equation \eqref{iota}.  There is a measure $\Eisab$ (dependent on $a$ and $b$) on $X_p\times T_H\left(\ZZ_p\right)$ that takes values in the space of $p$-adic modular forms on $G_4$ and that satisfies
\begin{align*}
\int_{X_p\times T_H\left(\ZZ_p\right)}\tilde{\chi}\psi\cdot\rho^{\upsilon}\Eisab = \theta^{(\kappa, a, b)}\left(G_{k, \nu, \chi_{\unitary}, \psi}\right).
\end{align*}
whenever $(\chi,c = \psi\cdot\rho^{\upsilon}) \in Y_H^{class}$.  
\end{thm}
\begin{rmk}When $a_\sigma b_\sigma=0$ for all $\sigma\in\Sigma$ (i.e. in the definite case), the measure in Theorem \ref{eismeasure-thm} is the Eisenstein measure from \cite[Theorem 20]{apptoSHL} and \cite[Section  5]{apptoSHLvv}. 
\end{rmk}

\begin{cor}\label{eismeasureG3}  The measure $d\Eisab$, defined by
\begin{align*}
\int_{X_p\times T_H\left(\ZZ_p\right)}\tilde{\chi}\psi\cdot\rho^{\upsilon}d\Eisab = res_3\theta^{(\kappa, a, b)}\left(G_{k, \nu, \chi_{\unitary}, \psi}\right).
\end{align*}
is an axiomatic Eisenstein measure on values in $V(K_3^p,R)$, with shift $(1,\chi)$.
\end{cor}
\begin{proof} We need to compare the expression in Theorem \ref{eismeasure-thm} with the specifications required in Definition \ref{axiomeis}.   Bearing in mind the translation mentioned in Remark \ref{GtoE}, this comes down to comparing the action of $e_\kap \circ res_3\theta^{(\kappa, a, b)}$ with 
$e_\kap \circ\circ D^{hol}(\kappa,\chi)$.   But this follows from Proposition \ref{ordinaryplusclassical}.
\end{proof}

\begin{rmk}
The set $Y_H^{class}$ for the measure is determined by the conditions in \eqref{C3m2}, together with the relationships between $\chi$, $m$, $(r_{i, \s})_i$, and $(s_{j, \s})_j$ given in Equation \eqref{C3m} and Inequalities \eqref{parameters}.
\end{rmk}

\subsection{(Anti-) Ordinary representations and (anti-) ordinary vectors for $G_1$}\label{ordvec}

For this section, let $G = G_1$.  
For each prime $w\mid p$, let $G_w = \GL_n(\K_w)$.
Recall that by \eqref{G-iso} and \eqref{G-iso2}
there is an identification
\begin{equation}\label{G-iso-p}
G(\Qp) \isoarrow \Q_p^\times \times \prod_{w\in \Sigma_p} G_w.
\end{equation}
Let $B_w\subset \GL_n(K_w)$ be the (non-standard) Borel
consisting of elements $g= \left(\smallmatrix A & B \\ 0 & D\endsmallmatrix\right)$ with $A\in \GL_{a_w}(\K_w)$ upper-triangular and $D\in\GL_{b_w}(\K_w)$ 
lower-triangular. Let $T_w \subset B_w$ be its diagonal subgroup and $B_w^u\subset B_w$ its
unipotent radical. Let $I_{w,r}^0\subset \GL_n(\O_w)$ be the subgroup of elements 
$g$ such that $g\mod p^r =  \left(\smallmatrix A & B \\ 0 & D\endsmallmatrix\right)$ with $A\in \GL_{a_w}(\O_w/p^r\O_w)$ upper-triangular and
$D\in\GL_{b_w}(\O_w/p^r\O_w)$ lower-triangular
(this is the mod $p^r$ Iwahori subgroup relative to the Borel $B_w$). 
Let $I_{w,r}\subset I_{w,r}^0$ be the subgroup consisting of those $g$ such that $A$ and $D$ are unipotent.
Under the identification \eqref{G-iso-p} the subgroups $I_r\subset I_r^0$ of $G(\Zp)$ defined in Section  \ref{levelp} are identified as
\begin{equation}\label{Iwahori-ident}
I_r^0 \isoarrow \Z_p^\times \prod_{w\in\Sigma_p} I_{w,r}^0 \ \ \text{and} \ \  I_r\isoarrow \Z_p^\times\prod_{w\in\Sigma_p} I_{w,r}.
\end{equation}
Let $\delta_w:B_w \rightarrow \C$ be the modulus character: if $t =\diag(t_1,....,t_n) \in T_w$, then 
$\delta_w(t) = |t_1^{n-1} \cdots t_{a_w}^{b_w-a_w} t_{a_w+1}^{1-n}\cdots t_n^{b_w-1-a_w}|_p$.

\subsubsection{Ordinary  representations: local theory}\label{ordinary-localrep}
Let $\pi$ be a cuspidal holomorphic representation of $G (\A)$ of
weight type $(\kap,K)$ as in Section  \ref{autoreps} with $\kap = (\kap_\sigma)_{\sigma\in \Sigma_\K}$, $\kap_\sigma\in \Z^{a_\sigma}$, 
assumed to satisfy:
\begin{equation}\label{holo-wt-ineq}
\kap_{\sigma} + \kap_{\sigma c} \geq n, \ \ \forall \sigma\in\Sigma_\K.
\end{equation}
Let $\kap_{norm} = (\kap_{norm,\sigma})$ with $\kap_{norm,\sigma} = \kap_\sigma - b_\sigma$.

Via the identification \eqref{G-iso-p}, the $p$-constituent $\pi_p$ of $\pi$ is identified with a tensor product
$\pi_p \cong \mu_p\otimes_{w\in\Sigma_p} \pi_w$ with $\mu_p$ a character of $\Q_p^\times$ and
each $\pi_w$ an irreducible admissible representation of $G_w$.  

Recall that the Hecke operators $u_{w,j} = |\kap_{norm}(t_{w,j})|_p^{-1}U_{w,j}$, $w\in\Sigma_p$ and $1\leq j\leq n$,
act on the spaces $\pi_f^{K_r} = \pi_p^{I_r}\otimes(\otimes_{\ell\neq p}\pi_\ell)^{K^p}$ through an action on the spaces $\pi_p^{I_r}$:
$U_{w,j}$ acts on the latter spaces as 
the usual double coset operator $I_rt^+_{w,j} I_r$, and, furthermore,
the generalized eigenvalues of the $u_{w,j}$ are $p$-adically integral (cf.~Section  \ref{Heckeatp}; since $m=1$ the subscript $i$ 
has been dropped from our notation, following our conventions).
In particular, the ordinary projector
$e = \lim_{m\rightarrow \infty} (\prod_{w\in\Sigma_p}\prod_{i=1}^n u_{w,j})^{m!}$ acts on each $\pi_p^{I_r}$. 
From the identification $\pi_p = \mu_p\otimes_{w\in\Sigma_p} \pi_w$ and \eqref{Iwahori-ident} we find that
$u_{w,j}$ acts on $\pi_p^{I_r} = \otimes_{w\in\Sigma}\pi_w^{I_{w,r}}$ via the
action of the Hecke operator $u_{w,j}^\GL = |\kap_{norm}(t_{w,j})|_p^{-1} U_{w,j}^\GL$ on $\pi_w^{I_{w,r}}$, where
 $U_{w,j}^\GL$ acts as the double coset operator $I_{w,r} t_{w,j} I_{w,r}$; here, $t_{w,j} \in T_w$ is the element
defined in Section  \ref{Heckeatp}. It follows that 
 the generalized eigenvalues of the action of the Hecke operators $u_{w,j}^\GL$
 are $p$-adically integral, and $e_w = \lim_{m\rightarrow \infty} (\prod_{j=1}^n u_{w,j}^{\GL})^{m!}$ defines a projector
 on each $\pi_w^{I_{w,r}}$.

Suppose that $\pi$ is ordinary at $p$. Recall that this means $\pi_p^{I_r}\neq 0$ if $r\gg 0$ and that, for any such $r$, 
there is at least one vector $0\neq \phi \in \pi_p^{I_r}$ such that $e\cdot \phi = \phi$. We call such a $\phi$
an {\it ordinary vector}\footnote{But note that this notion depends {\it a priori} on the character $\kap_{norm}$, which in turn
depends on $\kap$ and the signatures $(a_\sigma,b_\sigma)_{\sigma\in \Sigma_\K}$. It turns out that there is
at most one $\kap_{norm}$ with respect to which a given $\pi_p$ can be ordinary, but in general the same $\pi_p$ can appear as local components for unitary groups with various signatures.} {\it for $\pi_p$}.  The existence of an ordinary vector is equivalent to the existence of 
 a $\phi\in \pi_p^{I_r}$, $r\gg 0$, that is a simultaneous eigenvector for the Hecke operators $u_{w,j}$ and having the property that 
 $u_{w,j}\cdot\phi = c_{w,j} \phi$ with $|c_{w,j}|_p =1$. It follows from the identification $\pi_p = \mu_p\otimes_{w\in\Sigma_p} \pi_w$ 
that $\pi_p$ being ordinary at $p$ is equivalent to $\mu_p$ being unramified and each $\pi_w$ being ordinary, in the sense that there 
exists $\phi_w\in \pi_w^{I_{w,r}}$,
$r\gg 0$, such that $e_w\cdot\phi_w = \phi_w$; we call such a $\phi_w$ an {\it ordinary vector for $\pi_w$}. The existence of an ordinary
vector for $\pi_w$ is equivalent to 
\begin{itemize}
\item[(a)] $\pi_w^{I_{w,r}}\neq 0$ for all $r\gg 0$;
\item[(b)] for each $r$ as in (a) there exists $0\neq \phi_w \in \pi_w^{I_{w,r}}$ such that
$\phi_w$ is a simultaneous eigenvector for the $u_{w,j}^\GL$, $1\leq j\leq n$, and having the 
property that $u_{w,j}^\GL\cdot \phi_w = c_{w,j}\phi_w$ with $|c_{w,j}|_p=1$.
\end{itemize}
Note that if $\phi_w\in \pi_w$, $w\in\Sigma_p$, are ordinary vectors and $\mu_p$ is unramified,
then $\phi=\otimes_{w\in\Sigma_p}\phi_w \in \pi_p$ is an ordinary vector for $\pi_p$.

\begin{lem}\label{ord-vect-lem-w} Let $w\in\Sigma_p$.
Suppose $\pi_w$ is an irreducible admissible representation of $G_w$ such that $\mathrm{(a)}$ and $\mathrm{(b)}$ above hold for a weight 
$\kap$ satisfying inequality \eqref{holo-wt-ineq}.
\begin{itemize}
\item[(i)] Up to multiplication by a scalar, there is a unique ordinary vector $\phi_w^\ord\in \pi_w^{I_{w,r}}$; $\phi_w^\ord$ is necessarily independent of $r\gg 0$.
\item[(ii)] There exists a unique character $\alpha_w : T_w \rightarrow \C^\times$ such that $\pi_w\hookrightarrow \Ind_{B_w}^{G_w} \alpha_w$ is the unique
irreducible subrepresentation and
$\phi_w^\ord$ is identified with the unique simultaneous $U_{w,j}^\GL$-eigenvector, $1\leq j \leq n$, with support containing $B_wI_{w,r}$, for $r\gg 0$.
$($In particular, $c_{w,j} = |\kap_{norm}(t_{w,j})|_p^{-1} \delta_w^{-1/2}\alpha_w(t_{w,j})$.$)$
\end{itemize}
\end{lem}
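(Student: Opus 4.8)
The plan is to reduce the statement to the theory of Jacquet modules and Iwahori-fixed vectors in principal series representations of $\GL_n(\K_w)$. First I would recall the standard fact that for an irreducible admissible representation $\pi_w$ of $G_w = \GL_n(\K_w)$, the space of $I_{w,r}$-fixed vectors (for $r$ large) on which each $U_{w,j}^\GL$ acts through a $p$-adic unit embeds, via the Jacquet-module construction relative to the Borel $B_w$, into the corresponding generalized eigenspace in the Jacquet module $\pi_{w,B_w}$ (the $\delta_w$-twisted coinvariants). The key point is Casselman's theory: $\pi_w^{I_{w,r}}$ is, as a module over the Iwahori-Hecke algebra, controlled by $\pi_{w,B_w}$ as a $T_w$-module, and the action of $U_{w,j}^\GL$ on an Iwahori-fixed vector corresponds to the action of $t_{w,j}$ (suitably normalized by $\delta_w^{1/2}$) on its image in the Jacquet module. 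Since the Jacquet module is finite-dimensional and semisimplifies into characters of $T_w$, the condition that $U_{w,j}^\GL$ act through a $p$-adic unit for every $j$ singles out at most one character $\alpha_w$ of $T_w$ (the ``ordinary'' or ``most dominant'' exponent), because the valuations of the other exponents are forced to be strictly positive — this is where inequality \eqref{holo-wt-ineq} enters, guaranteeing that the normalization by $|\kap_{norm}(t_{w,j})|_p^{-1}$ does not accidentally move a non-ordinary exponent into the unit group. This gives uniqueness in (i) and the existence and uniqueness of $\alpha_w$ in (ii), together with the formula $c_{w,j} = |\kap_{norm}(t_{w,j})|_p^{-1}\delta_w^{-1/2}\alpha_w(t_{w,j})$.

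Next I would show that once $\alpha_w$ is pinned down, $\pi_w$ must in fact embed as the (unique) irreducible subrepresentation of $\Ind_{B_w}^{G_w}\alpha_w$. For this I would use Frobenius reciprocity in the form $\Hom_{G_w}(\pi_w, \Ind_{B_w}^{G_w}\alpha_w) = \Hom_{T_w}(\pi_{w,B_w}, \alpha_w\delta_w^{1/2})$ (with the appropriate normalization convention), which is nonzero by the previous paragraph; irreducibility of $\pi_w$ then makes the resulting map an embedding. To see that $\pi_w$ lands in the unique irreducible subrepresentation, I would invoke the fact that the ordinary exponent $\alpha_w$ is the one for which the intertwining operators behave well, so that $\Ind_{B_w}^{G_w}\alpha_w$ has a unique irreducible subrepresentation containing the ``lowest'' Iwahori vector; this is a standard consequence of the theory of the Bernstein-Zelevinsky / Rodier structure of induced representations, or can be extracted directly from the Borel-Matsumoto description of $(\Ind_{B_w}^{G_w}\alpha_w)^{I_{w,r}}$ as a module over the Iwahori-Hecke algebra with a distinguished ``spherical-at-$B_w$'' line. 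The ordinary vector $\phi_w^{\ord}$ is then identified with the unique (up to scalar) simultaneous $U_{w,j}^\GL$-eigenvector in $(\Ind_{B_w}^{G_w}\alpha_w)^{I_{w,r}}$ whose support contains the big cell $B_w I_{w,r}$, and its independence of $r$ follows because enlarging $r$ only adds vectors supported on smaller cells, which carry non-unit eigenvalues and are therefore killed by the ordinary projector.

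Finally I would check the normalization statement in parentheses: evaluating the double-coset operator $I_{w,r} t_{w,j} I_{w,r}$ on the section supported on $B_w I_{w,r}$ gives $\delta_w^{-1/2}\alpha_w(t_{w,j})$ times that section (a direct computation with coset representatives, exactly as in the unramified Satake computation but relative to the nonstandard Borel $B_w$), and then multiplying by the normalizing factor $|\kap_{norm}(t_{w,j})|_p^{-1}$ yields $c_{w,j}$. The main obstacle, I expect, is keeping the normalization conventions consistent — in particular tracking the nonstandard Borel $B_w$ (upper-triangular in the first $a_w$ coordinates, lower-triangular in the last $b_w$), the modulus character $\delta_w$ attached to it, and the twist by $\kap_{norm}$ — and verifying carefully that inequality \eqref{holo-wt-ineq} is exactly the condition forcing all non-ordinary exponents to have strictly positive valuation after this twist, so that the ordinary line is genuinely unique. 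Everything else is a routine application of Casselman's theory of Jacquet modules and Iwahori-fixed vectors.
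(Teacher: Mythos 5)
Your proposal follows essentially the same route as the paper's proof: pass to the Jacquet module via the decomposition of $\pi_w^N$ into nilpotent and invertible parts for the $U_j$-action (Hida's result, giving $V^N_{inv}\isoarrow V_{B_w}$), use Casselman's Frobenius reciprocity to produce the embedding $\pi_w\hookrightarrow \Ind_{B_w}^{G_w}\alpha_w$, and invoke inequality \eqref{holo-wt-ineq} to pin down the ordinary line uniquely among the Weyl-conjugate exponents. The one place where your sketch is less precise than the actual argument is the mechanism by which \eqref{holo-wt-ineq} enters: the paper does \emph{not} show that the non-ordinary Weyl conjugates $\alpha_w^x$ ($x\neq 1$) produce eigenvalues of strictly positive valuation; instead it shows that the normalized character $\theta = |\kap_{norm}|_p\,\delta_w^{-1/2}$ is regular (this is exactly where the dominance of $\kap$ together with \eqref{holo-wt-ineq} is used), so $\theta^x\neq\theta$ for $x\neq 1$, and consequently for $x\neq 1$ at least one of the normalized eigenvalues $\beta_x(t_{w,j})$ fails to be a $p$-adic unit. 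This is a bookkeeping point, not a structural one; since you already flag the normalization conventions as the crux, I would regard your proposal as the same argument modulo carrying out that valuation computation carefully.
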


\begin{proof} Our proof is inspired in part by the arguments in \cite[\S5]{H98}.
Let $V$ be the space underlying the irreducible admissible representation $\pi_w$ of $G_w=\GL_n(\K_w)$,
and let $V_{B_w}$ be the Jacquet module of $V$ with respect to the unipotent radical $B_w^u$ of the Borel $B_w$. 
Let $N=\cap_r I_{w,r}$; this is just $B_w^u\cap \GL_n(\O_w)$. For each $j=1,...,n$, let
$$
t_j = \begin{cases} \diag(p1_j, 1_{n-j}) & j\leq a_w \\ 
\diag(p1_{a_w}, 1_{n-j}, p1_{j-a_w}) & j>a_w.
\end{cases}
$$
We let the double coset $U_j = Nt_jN$ act on $V^N=\cup_r V^{I_{w,r}}$ in the usual way: if $Nt_jN= \sqcup_i x_{i,j}N$ then 
$U_j\cdot v = \sum_i x_{i,j}\cdot v$. Then $U_j$ acts on the subspace $V^{I_r}$ as $U_{w,j}^\GL$. By the same arguments explaining
\cite[(5.3)]{H98}, $V^N$ decomposes as $V^N=V^N_{nil}\oplus V^N_{inv}$, where the $U_j$ act nilpotently on $V^N_{nil}$ and
are invertible on $V^N_{inv}$. Then, just as in \cite{H98}, the natural $B_w$-invariant projection $V\stackrel{v\mapsto \bar v}{\twoheadrightarrow} V_{B_w}$ induces an 
isomorphism 
\begin{equation}\label{Jacquetmod-iso}
V^N_{inv}\isoarrow V_{B_v}, \ \ v\mapsto\bar v,
\end{equation}
that is equivariant for the action of the $U_j$. 

Let $\phi\in V^{I_r}$ be an ordinary vector for some $r$: $\phi$ is an eigenvector for each 
$u_j = |\kap_{norm}(t_j)|_p^{-1}U_j$ with eigenvalue $c_j$ such that $|c_j|_p = 1$. In particular,
$\phi\in V^N_{inv}$. As $U_j$ acts on $V_{B_w}$ via $\delta_w(t_j)^{-1}t_j$, 
it then follows from \eqref{Jacquetmod-iso} that there must be a 
$B_w$-quotient 
$$
\iota:V_{B_w}\twoheadrightarrow \C(\lambda)
$$
with $\lambda:T_w\isoarrow B_w/B_w^u\rightarrow \C$ 
is a character such that $\lambda(t_j) = |\kap_{norm}(t_j)|_p\delta(t_j)c_j$ for all $j=1,..,n$. Let $\alpha = \lambda\delta^{-1/2}$
and let $I(\alpha)=\Ind_{B_w}^{G_w}(\alpha)$ be the unitary induction of $\alpha$ to a representation of $G_w$.  
By \cite[Thm.~3.2.4]{Casselman-book}, 
\begin{equation*}\label{Frob-ident}
\Hom_{G_w}(V,I(\alpha)) \isoarrow \Hom_B(V_B,\C(\lambda)),  \ \ \vphi \mapsto (\bar v\mapsto \vphi(v)(1)),
\end{equation*}
is an isomorphism, 
from which we conclude that there exists a non-zero $G_w$-homomorphism
$V\hookrightarrow I(\alpha)$, $v\mapsto f_v$ (which is necessarily an injection since $\pi_w$ is irreducible)
such that
\begin{equation}\label{Frobeval-ident}
\iota(\bar v) = f_v(1).
\end{equation}

By the characterization of $\lambda$, $\beta=|\kap_{norm}|_p^{-1}\delta_w^{-1}\lambda= |\kap_{norm}|_p^{-1}\delta_w^{-1/2}\alpha$
is a continuous character $T_w\rightarrow \C^\times$ such that each
$\beta(t_j)$ is a $p$-adic unit. From the definition of the $t_j$ it then follows easily
that $\beta(t)$ is a $p$-adic unit for all $t\in T_w$. 
Let $W$ be the Weil group of $T_w$ in $G_w$. For $x\in W$, let
$\beta_x = |\kap_{norm}|_p^{-1}\delta_w^{-1/2}\alpha^x$, where $\alpha^x(t)= \alpha(xtx^{-1})$. 
We claim that the values of $\beta_x$ are all $p$-adic units if and only if $x=1$. If the values
of $\beta_x$ are all $p$-adic units, then
$$
\beta_x/\beta^x(t) = |\kap_{norm}(xtx^{-1}t^{-1})|_p^{-1}\delta_w(xtx^{-1}t^{-1})^{-1/2}
$$
is a $p$-adic unit for all $t\in T_w$. As $\delta_w$ is the composition of $|\cdot|_p$ with an algebraic character
of $T_w$, it follows that the above values must all be $1$. That is, the character 
$\theta = |\kap_{norm}|_p\delta_w^{-1/2}$ satisfies $\theta^x = \theta$. Recall that if 
$\kap$ is identified with a dominant tuple $(\kap_0,(\kap_\sigma)_{\sigma\in\Sigma_\K})$
as in \eqref{dom-wt-ineq} then 
$$
\kap_{norm} (\diag(t_1,...,t_n)) = \prod_{\sigma\atop \grp_\sigma = \grp_w} \prod_{i=1}^{a_w} \sigma(t_i)^{\kap_{\sigma,i}-b_w}
\prod_{j=1}^{b_w} \sigma(t_{a_w+j})^{-\kap_{\sigma c,j}+a_w}.
$$
In particular, letting
$$
m_i = \begin{cases} \sum_{\sigma, \grp_\sigma=\grp_w} (\kap_{\sigma,i}-b_w) & i\leq a_w \\
-\sum_{\sigma, \grp_\sigma=\grp_w} (\kap_{\sigma c,i}-a_w) & i>a_w,
\end{cases}
$$
we have
$$
|\kap_{norm}(\diag(t_1,...,t_n))|_p = \prod_{i=1}^{n} |t_i|_p^{m_i}.
$$
It follows that 
$$
\theta(\diag(t_1,...,t_n)) = |t_1^{m_1+\frac{n-1}{2}} \cdots t_{a_w}^{m_{a_w}+\frac{b_w-a_w}{2}} t_{a_w+1}^{m_{a_w+1}+\frac{1-n}{2}}\cdots t_n^{m_n+\frac{a_w-2-b_w}{2}}|_p^{-1}.
$$
From the dominance of $\kap$ and the inequality \eqref{holo-wt-ineq} it follows that
$$
m_1\geq m_2 \geq \cdots \geq m_{a_w} \geq m_n \geq m_{n-1}\geq \cdots \geq m_{a_w+1},
$$
and so
$$
m_1+\frac{n-1}{2} > \cdots > m_{a_w} + \frac{b_w-a_w}{2} > m_n + \frac{b_w-2-a_w}{2} >\cdots > m_{a_w+1}+\frac{1-n}{2}.
$$
That is, $\theta$ is a regular character of $T_w$, and therefore $\theta^x = \theta$ if and only if $x=1$. This completes that proof that the values of
$\beta_x$ are all $p$-adic units if and only $x=1$.

As $\beta_x\neq \beta$ for all $x\neq 1$,  the characters $\alpha^x$, $x\in W$, are all distinct, and hence
the Jacquet module $I(\alpha)_{B_w}$ of $I(\alpha)$ is a semi simple $B_w$-module and isomorphic to the direct sum 
$\oplus_{x\in W} \C(\alpha^x\delta^{1/2})$ (cf. \cite[Prop.~5.4]{H98}). The inclusion $V\hookrightarrow I(\alpha)$, $v\mapsto f_v$,
induces a $B_w$-inclusion 
\begin{equation}\label{Jacquetmod-inject}
V_{B_w}\hookrightarrow I(\alpha)_{B_w} \cong \oplus_{x\in W} \C(\alpha^x\delta_w^{1/2}).
\end{equation}
It then follows from \eqref{Jacquetmod-iso} that $V^N_{inv}$ is a sum of one-dimensional simultaneous eigenspaces 
for the $U_j$ that are in one-to-one correspondence with those characters $\alpha^x\delta^{-1/2}$, $x\in W$, 
that appear in $V_B$ via \eqref{Jacquetmod-inject}; the eigenvalue of $u_j=|\kap_{norm}(t_j)|_p^{-1}U_j$ on the eigenspace corresponding
to $\alpha^x\delta^{1/2}$ is $\beta_x(t_j)$. As the values of $\beta_x$ are not all $p$-adic units if $x\neq 1$, it follows that
the space of ordinary vectors in $V$ is one-dimensional; this proves part (i).  
It further follows that the ordinary eigenspace must project non-trivially to $\C(\lambda) = \C(\alpha\delta^{1/2})$ 
via the composition of \eqref{Jacquetmod-iso} with $\iota$,  and that all other eigenspaces map to $0$ under this
composition. As this composition is just $v\mapsto f_v(1)$ by \eqref{Frobeval-ident}, part (ii) follows easily.
\end{proof}

\begin{coro}\label{ord-vect-cor-p} Suppose $\kap$ satisfies \eqref{holo-wt-ineq} and $\pi_p$ is ordinary. 
Up to multiplication by a scalar, there is a unique ordinary
vector $\phi^\ord \in \pi_p^{I_r}$ for $r\gg 0$; $\phi^\ord$ is necessarily independent of $r$. Furthermore, under the identification 
$\pi_p = \mu_p\otimes_{w\in\Sigma_p}\pi_w$,
$\phi^\ord = \otimes_{w\in\Sigma_p}\phi_w^\ord$, with $\phi_w^\ord$ as in Lemma \ref{ord-vect-lem-w}.
\end{coro}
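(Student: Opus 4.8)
The plan is to derive Corollary \ref{ord-vect-cor-p} directly from Lemma \ref{ord-vect-lem-w} together with the factorization of $\pi_p$ supplied by the identification \eqref{G-iso-p}. First I would recall the hypotheses: $\kap$ is assumed to satisfy inequality \eqref{holo-wt-ineq} and $\pi_p$ is assumed ordinary at $p$, meaning that for $r \gg 0$ the space $\pi_p^{I_r}$ contains a non-zero vector fixed by the ordinary projector $e = \lim_{m\to\infty}(\prod_{w\in\Sigma_p}\prod_{j=1}^n u_{w,j})^{m!}$. The first step is to unwind this condition using the tensor decomposition $\pi_p \cong \mu_p \otimes_{w\in\Sigma_p}\pi_w$ and the matching decomposition \eqref{Iwahori-ident} of the Iwahori subgroups, which gives $\pi_p^{I_r} = (\text{line fixed by }\Z_p^\times)\otimes \bigotimes_{w\in\Sigma_p}\pi_w^{I_{w,r}}$, and under which each $u_{w,j}$ acts through the operator $u_{w,j}^{\GL}$ on the factor $\pi_w^{I_{w,r}}$. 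Thus the ordinary projector $e$ on $\pi_p^{I_r}$ is the tensor product of the projectors $e_w = \lim_{m\to\infty}(\prod_{j=1}^n u_{w,j}^{\GL})^{m!}$ on the factors $\pi_w^{I_{w,r}}$ (together with the trivial projector onto the $\mu_p$-factor, which is non-zero exactly when $\mu_p$ is unramified). This is precisely the discussion already carried out in Section \ref{ordinary-localrep}.

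The second step is to conclude that $\pi_p$ ordinary forces $\mu_p$ to be unramified and each $\pi_w$ to satisfy conditions (a) and (b) preceding Lemma \ref{ord-vect-lem-w}, so that Lemma \ref{ord-vect-lem-w} applies to each $\pi_w$; this is again recorded in Section \ref{ordinary-localrep}, so I would simply cite it. Applying Lemma \ref{ord-vect-lem-w}(i) to each $w\in\Sigma_p$ yields, up to scalar, a unique ordinary vector $\phi_w^{\ord}\in\pi_w^{I_{w,r}}$, independent of $r\gg 0$. Since $e$ acts as $e_w$ on the $w$-th tensor factor and trivially (once $\mu_p$ is unramified) on the $\mu_p$-factor, a vector $\phi\in\pi_p^{I_r}$ satisfies $e\cdot\phi = \phi$ if and only if it lies in the tensor product of the one-dimensional ordinary eigenspaces $\C\phi_w^{\ord}$; hence the space of ordinary vectors in $\pi_p^{I_r}$ is one-dimensional and spanned by $\phi^{\ord} := \otimes_{w\in\Sigma_p}\phi_w^{\ord}$, which is independent of $r$ because each $\phi_w^{\ord}$ is. This gives both the uniqueness assertion and the explicit factorization claimed in the corollary.

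The only point requiring a small amount of care — and the closest thing to an obstacle — is the bookkeeping around the $\mu_p$-factor and the passage between "eigenvector for all the $u_{w,j}$ with unit eigenvalues" and "fixed by the idempotent $e$": one needs that the ordinary idempotent really is the tensor product of the local idempotents, which rests on the fact that the generalized eigenvalues of the $u_{w,j}^{\GL}$ are $p$-adically integral (so that $e_w$ converges) and that on an irreducible admissible representation the idempotent singles out the span of the unit-eigenvalue joint eigenvectors. Both facts are established in Section \ref{ordinary-localrep} and in the proof of Lemma \ref{ord-vect-lem-w}, so in the end the corollary is a formal consequence. I would therefore present the proof as: invoke the tensor-product description of $e$ from Section \ref{ordinary-localrep}; apply Lemma \ref{ord-vect-lem-w}(i) factor by factor; and assemble the unique ordinary vector as the tensor product, noting the independence of $r$ is inherited from the factors and that unramifiedness of $\mu_p$ is part of the ordinarity hypothesis.

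\begin{proof}
By the identification \eqref{G-iso-p} we write $\pi_p \cong \mu_p\otimes_{w\in\Sigma_p}\pi_w$, and by \eqref{Iwahori-ident} we have $\pi_p^{I_r} = L_{\mu_p}\otimes\bigotimes_{w\in\Sigma_p}\pi_w^{I_{w,r}}$, where $L_{\mu_p}$ denotes the (at most one-dimensional) space of $\Z_p^\times$-fixed vectors for $\mu_p$, which is non-zero precisely when $\mu_p$ is unramified. As explained in Section \ref{ordinary-localrep}, the Hecke operator $u_{w,j}$ acts on $\pi_p^{I_r}$ through the operator $u_{w,j}^{\GL}$ on the tensor factor $\pi_w^{I_{w,r}}$; consequently the ordinary projector $e = \lim_{m\to\infty}\bigl(\prod_{w\in\Sigma_p}\prod_{j=1}^n u_{w,j}\bigr)^{m!}$ acts on $\pi_p^{I_r}$ as $\mathrm{id}_{L_{\mu_p}}\otimes\bigotimes_{w\in\Sigma_p}e_w$, with $e_w = \lim_{m\to\infty}\bigl(\prod_{j=1}^n u_{w,j}^{\GL}\bigr)^{m!}$ acting on $\pi_w^{I_{w,r}}$.

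Since $\pi_p$ is ordinary, $\mu_p$ is unramified and each $\pi_w$ satisfies conditions (a) and (b) preceding Lemma \ref{ord-vect-lem-w} for the weight $\kap$, which by hypothesis satisfies inequality \eqref{holo-wt-ineq}. By Lemma \ref{ord-vect-lem-w}(i), for each $w\in\Sigma_p$ there is, up to a scalar, a unique ordinary vector $\phi_w^{\ord}\in\pi_w^{I_{w,r}}$, and $\phi_w^{\ord}$ is independent of $r\gg 0$. A vector $\phi\in\pi_p^{I_r}$ satisfies $e\cdot\phi=\phi$ if and only if it lies in $L_{\mu_p}\otimes\bigotimes_{w\in\Sigma_p}\C\phi_w^{\ord}$, a one-dimensional space spanned by $\phi^{\ord}:=\otimes_{w\in\Sigma_p}\phi_w^{\ord}$ (we suppress the chosen basis vector of $L_{\mu_p}$). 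Hence $\phi^{\ord}$ is, up to a scalar, the unique ordinary vector in $\pi_p^{I_r}$, and it is independent of $r$ because each $\phi_w^{\ord}$ is.
\end{proof}
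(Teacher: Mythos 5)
Your proof is correct and takes essentially the same route the paper intends: the corollary has no explicit proof in the text precisely because it follows formally from the tensor decomposition $\pi_p = \mu_p\otimes_{w\in\Sigma_p}\pi_w$, the compatible decomposition of the Iwahori subgroup in \eqref{Iwahori-ident}, the resulting factorization of the ordinary projector $e$ as $\bigotimes_w e_w$ (together with the unramifiedness of $\mu_p$), and Lemma \ref{ord-vect-lem-w}(i) applied factor by factor. Your careful attention to the $\mu_p$-factor and to the convergence of the idempotents is appropriate but matches what is already recorded in Section \ref{ordinary-localrep}, so there is nothing substantive missing.
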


The following lemma
will aid in the computation of certain local zeta integrals involving ordinary vectors.

\begin{lem}\label{ord-dual-lem-w} Let $w$, $\pi_w$, and $\kap$ be as in Lemma \ref{ord-vect-lem-w}. 
Let $\pi_w^\vee$ be the contragredient of $\pi_w$ and $\pair_w:\pi_w\times\pi_w^\vee \rightarrow \C$ the non-degenerate $G_w$-invariant pairing (unique
up to scalar multiple).
\begin{itemize}
\item[(i)] Let $\alpha_w$ be as in Lemma \ref{ord-vect-lem-w}(ii). Then $\pi_w^\vee$ is isomorphic to the unique irreducible quotient of
$\Ind_{B_w}^{G_w} \alpha^{-1}_w$: $\Ind_{B_w}^{G_w} \alpha_w^{-1} \twoheadrightarrow \pi_w^\vee$.
\item[(ii)] For $r\gg 0$, let $\phi^\vee_{w,r} \in \pi_w^\vee$ be the image of the vector in $\Ind_{B_w}^{G_w}\alpha_w^{-1}$ that is supported on $B_wI_r$.
Then $c(\pi_w,r):=\langle\phi^\ord_w,\phi^\vee_{w,r}\rangle_w$ is non-zero and depends only on $r$.
\item[(iii)] Let $0\neq\phi \in \pi_w^{I_r}$ with $e\cdot \phi = c(\phi)\phi_w^\ord$.  Then 
$$
\langle\phi,\phi^\vee_{w,r}\rangle_w = c(\phi)\langle \phi_w^\ord,\phi^\vee_{w,r}\rangle_w.
$$
\end{itemize}
\end{lem}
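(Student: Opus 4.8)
The plan is to dispatch the three assertions in order, the first being formal, the second a volume computation, and the third — the substantive one — a combination of an adjointness identity with the identification of $\phi^\vee_{w,r}$ as an anti-ordinary vector.

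\emph{Parts (i) and (ii).} For (i), apply the smooth contragredient functor to Lemma \ref{ord-vect-lem-w}(ii): normalized parabolic induction commutes with contragredient, so $(\Ind_{B_w}^{G_w}\alpha_w)^\vee\cong \Ind_{B_w}^{G_w}\alpha_w^{-1}$, and the dual of a unique irreducible subobject is a unique irreducible quotient; dualizing the inclusion $\pi_w\hookrightarrow\Ind_{B_w}^{G_w}\alpha_w$ therefore produces the surjection $\Ind_{B_w}^{G_w}\alpha_w^{-1}\twoheadrightarrow\pi_w^\vee$ exhibiting $\pi_w^\vee$ as the unique irreducible quotient. For (ii), realize $\pi_w\subset\Ind_{B_w}^{G_w}\alpha_w$ and $\pi_w^\vee=\Ind_{B_w}^{G_w}\alpha_w^{-1}/(\text{ann of }\pi_w)$ as in (i); then the invariant pairing $\langle\,,\rangle_w$ is induced by the canonical pairing $\langle f,f'\rangle=\int_{B_w\backslash G_w}f(g)f'(g)\,dg$ (well defined because $ff'$ transforms by $\delta_{B_w}$, and descending to $\pi_w\times\pi_w^\vee$ because the kernel of the surjection annihilates $\pi_w$). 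By construction both $\phi_w^\ord$ and $\phi^\vee_{w,r}$ are the $I_{w,r}$-fixed vectors supported on the identity cell $B_wI_{w,r}$, normalized to take the value $1$ at $1$. Using the Iwasawa decomposition $G_w=B_wK_w$ to rewrite the pairing over $(B_w\cap K_w)\backslash K_w$, the integrand $f(k)f'(k)$ is supported on $(B_wI_{w,r})\cap K_w=(B_w\cap K_w)I_{w,r}$, and on $k=bi$ there it equals $\alpha_w(b)\delta_{B_w}(b)^{1/2}\cdot\alpha_w^{-1}(b)\delta_{B_w}(b)^{1/2}=\delta_{B_w}(b)=1$, since $b$ has $\O_w^\times$-entries and both vectors are $I_{w,r}$-invariant. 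Hence $c(\pi_w,r)$ is the (positive) volume of the image of $B_wI_{w,r}$ in $(B_w\cap K_w)\backslash K_w$, which depends only on $r$ and the fixed data.

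\emph{Part (iii).} The pairing $\langle\,,\rangle_w$ restricts to a perfect pairing $\pi_w^{I_r}\times\pi_w^{\vee,I_r}\to\C$ of finite-dimensional spaces, and under it the adjoint of the double coset operator $U_{w,j}^{\GL}=[I_{w,r}t_{w,j}I_{w,r}]$ is $U_{w,j}^{-,\GL}=[I_{w,r}t_{w,j}^{-1}I_{w,r}]$; once the normalizing powers of $p$ in $u_{w,j}^{\GL}$ and $u_{w,j}^{-,\GL}$ are matched up, this shows the adjoint of the ordinary projector $e=e_w$ is the anti-ordinary projector $e_w^-$ on $\pi_w^{\vee,I_r}$. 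Since $e\phi=c(\phi)\phi_w^\ord$, this gives, for any $\phi$ as in the statement,
\[ \langle\phi,\phi^\vee_{w,r}\rangle_w-c(\phi)\,\langle\phi^\ord_w,\phi^\vee_{w,r}\rangle_w \;=\; \langle(1-e_w)\phi,\phi^\vee_{w,r}\rangle_w . \]
Thus it suffices to prove that $\phi^\vee_{w,r}$ lies in the image of $e_w^-$ (i.e.\ is anti-ordinary), for then $\langle(1-e_w)\phi,\phi^\vee_{w,r}\rangle_w=\langle(1-e_w)\phi,e_w^-\phi^\vee_{w,r}\rangle_w=\langle e_w(1-e_w)\phi,\phi^\vee_{w,r}\rangle_w=0$. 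That $\phi^\vee_{w,r}$ is anti-ordinary is established exactly as in Lemma \ref{ord-vect-lem-w}, run for $\pi_w^\vee$ (the unique irreducible quotient of $\Ind_{B_w}^{G_w}\alpha_w^{-1}$ by (i)) with the operators $U_{w,j}^{-,\GL}$: the $I_{w,r}$-fixed vectors of $\Ind_{B_w}^{G_w}\alpha_w^{-1}$ are a basis indexed by the Weyl group of $T_w$ via the Bruhat cells, the $U^-$-operators act triangularly with respect to the Bruhat order, the identity-cell vector carries the unique $p$-adically unit generalized eigenvalue, and it projects onto the one-dimensional anti-ordinary line of $\pi_w^\vee$. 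Equivalently, under the duality of (ii) the matrix of $U_{w,j}^{-,\GL}$ on the cell basis of $\Ind_{B_w}^{G_w}\alpha_w^{-1}$ is the transpose of that of $U_{w,j}^{\GL}$ on the cell basis of $\Ind_{B_w}^{G_w}\alpha_w$, so the statement for $\phi^\vee_{w,r}$ is dual to Lemma \ref{ord-vect-lem-w}(ii) for $\phi_w^\ord$.

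\emph{Main obstacle.} Parts (i), (ii) and the reduction in (iii) are routine once the induced-model realizations and normalizations are pinned down. The only real work is the last step — showing $\phi^\vee_{w,r}$ is anti-ordinary — where one must reproduce the regularity analysis of the relevant character from Lemma \ref{ord-vect-lem-w} (so that only the identity cell survives the projector) and, crucially, reconcile the precise normalizing powers of $p$ entering $u_{w,j}^{\GL}$ and $u_{w,j}^{-,\GL}$, since only after that reconciliation is "adjoint of $e_w$" literally equal to $e_w^-$.
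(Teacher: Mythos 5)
Parts (i) and (ii) of your proposal are correct and take essentially the same route as the paper: dualize Casselman's induced-model realization, then evaluate the pairing as an integral over the maximal compact, obtaining a positive volume.

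Part (iii) is where you and the paper part ways, and your version, while in principle sound, has a genuine gap where the paper has an economical shortcut. You reduce the claim, via the adjointness $\langle e_w\,\cdot\,,\,\cdot\,\rangle_w = \langle\,\cdot\,, e_w^-\,\cdot\,\rangle_w$, to showing that $\phi^\vee_{w,r}$ is anti-ordinary in $\pi_w^\vee$, and you then wave at a ``dual'' rerun of Lemma \ref{ord-vect-lem-w}. Two problems with that plan. First, this is not a cheap dualization: the analysis in Lemma \ref{ord-vect-lem-w} is built on the Jacquet functor with respect to $B_w^u$, and the eigenvectors of $e_w$ are exactly those classified by $V_{B_w}$; the anti-ordinary projector $e_w^-$ is governed by the \emph{opposite} Jacquet module, so a genuinely parallel argument would have to redo the triangularity and regularity analysis relative to the opposite Bruhat order, with a new check that the identity cell is again the one carrying the unit eigenvalue. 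Second, be aware that the natural reference in the paper for ``$\phi^\vee_{w,r}$ is anti-ordinary,'' namely Lemma \ref{aord-vect-lem-w-2}, is proved \emph{using} the present lemma, so you cannot lean on it without circularity, and you do correctly avoid citing it. You flag the normalization reconciliation between $u_{w,j}^{\GL}$ and $u_{w,j}^{\GL,-}$ as the sticking point, but the real unfinished work is the dual regularity analysis itself.

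The paper's argument for (iii) is both shorter and entirely self-contained given Lemma \ref{ord-vect-lem-w}. It decomposes $\phi = c(\phi)\phi_w^{\ord} + \sum_i\phi_i$ into generalized $u^{\GL}$-eigenvectors and observes that, in the induced model $\Ind_{B_w}^{G_w}\alpha_w$, each non-ordinary $\varphi_i$ satisfies $\varphi_i(1)=0$ (this is exactly what the Jacquet-module analysis in Lemma \ref{ord-vect-lem-w} delivered, since only the ordinary eigenline projects nontrivially under $v\mapsto f_v(1)$). Since $\varphi_i$ is $I_r$-fixed and transforms by $\alpha_w$ under $T_w(\O_w)$, and $I_r^0 = T_w(\O_w)I_r$, one gets $\varphi_i|_{I_r^0}=0$ outright. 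Then $\langle\phi,\phi^\vee_{w,r}\rangle_w=\int_{\GL_n(\O_w)}\varphi\,\varphi_r^\vee=\int_{I_r^0}\varphi\,\varphi_r^\vee$ collapses to the ordinary term, and you are done with no adjointness, no $e_w^-$, and no dual Jacquet-module analysis. If you want to keep your framing, you should fill in the anti-ordinariness of $\phi^\vee_{w,r}$ with an actual argument (e.g., the opposite-Jacquet-module version, or the direct evaluation-at-$1$ argument the paper uses, which in fact yields the anti-ordinariness claim as a byproduct and is the path of least resistance).
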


\begin{proof} Part (i) follows from the identification of 
$\Ind_{B_w}^{G_w} \alpha^{-1}_w$ as the contragredient of 
$\Ind_{B_w}^{G_w} \alpha_w$ (cf. \cite[Prop.~3.1.2]{Casselman-book}).  The pairing 
$\pair: \Ind_{B_w}^{G_w} \alpha_w\times\Ind_{B_w}^{G_w} \alpha^{-1}_w\rightarrow\C$ corresponding to this 
identification is just integration over $\GL_n(\O_w) \subset G_w$:
$$
\langle\varphi,\varphi'\rangle = \int_{\GL_n(\O_w)} \varphi(k)\varphi'(k)dk, \ \ \ 
\varphi\in \Ind_{B_w}^{G_w} \alpha_w, \varphi'\in\Ind_{B_w}^{G_w} \alpha^{-1}_w, 
$$
(cf. \cite[Prop.~3.1.3]{Casselman-book}). 
For part (ii), let $\varphi^\ord\in \Ind_{B_w}^{G_w} \alpha_w$ correspond to $\phi^\ord_w$ as in Lemma \ref{ord-vect-lem-w}(ii)
and let $\varphi^\vee_r\in \Ind_{B_w}^{G_w} \alpha^{-1}_w$ be the function supported on $B_wI_r$. 
Then 
$$
\langle \phi_w^\ord,\phi_{w,r}^\vee\rangle_w = \int_{\GL_n(\O_w)} \varphi^\ord(k)\varphi^\vee_r(k) dk.
$$
As $B_wI_r\cap\GL_n(\O_w) = I_r^0$, and since for $k = tk'\in I_r^0 = T_w(\O_w)I_r$ we have $\vphi^\ord(k)\vphi^\vee(k) = \alpha_w(t)\alpha_w^{-1}(t)=1$,
it then follows that 
$$
c(\pi_w,r):=\langle \phi_w^\ord,\phi_{w,r}^\vee\rangle_w=\int_{I_w^0} dk = \vol(I_r^0)\neq 0.
$$
This proves part (ii). 

For part (iii), write $\phi$ as a sum of simultaneous generalized $U_{w,j}^\GL$-eigenvectors:
$$
\phi = c(\phi)\phi_w^\ord + \sum_{i=1}^m \phi_i, \ \ e\cdot\phi_i = 0.
$$
Let $\vphi$ (resp.~$\vphi^\ord$, $\vphi_i$) be the function in $\Ind_{B_w}^{G_w} \alpha_w$
that corresponds to $\phi$ (resp.~$\phi_w^\ord$, $\phi_i$) as in Lemma \ref{ord-vect-lem-w}(ii). 
Then, for $r\gg 0$, $\vphi_i|_{I_r^0} = 0$, and so 
\begin{equation*}\begin{split}
\langle \phi,\phi^\vee_{w,r}\rangle_w & = \int_{\GL_n(\O_w)} \vphi(k)\vphi_r^\vee(k)  dk = \int_{I_r^0} \vphi(k)\vphi^\vee_{r}(k) dk \\
& = c(\phi) \int_{I_r^0} \vphi^\ord(k) \vphi^\vee_r(k) = c(\phi)\langle \phi_w^\ord,\phi^\vee_{w,r}\rangle_w.
\end{split}
\end{equation*}
\end{proof}

\subsubsection{Anti-ordinary representations: local theory}\label{aordinarylocal}
Let $\pi$ be an anti-holomorphic representation of $G(\A)$ of 
type $(\kap,K)$ as in \ref{autoreps} with $\kap$ satisfying the inequality \eqref{holo-wt-ineq}.  This the case if and only if 
$\pi^\flat$ is a cuspidal holomorphic representation of type $(\kap,K)$ as 
considered in the preceding section.

For each $r>0$ the Hecke operators $u_{w,j}^- = |\kap_{norm}(t_{w,j})|_p U_{w,j}^-$, $w\in\Sigma_p$ and $1\leq j\leq n$,
act on the space $\pi_f^{K_r} = \pi_p^{I_r}\otimes(\otimes_{\ell\neq p}\pi_\ell)^{K^p}$ through an action on the space $\pi_p^{I_r}$: $U_{w,j}^-$ 
acts on 
$\pi_p^{I_r}$ as the usual double coset operator $I_rt^-_{w,j} I_r$. Furthermore,
the generalized eigenvalues of the $u_{w,j}^-$ are $p$-adically integral. In particular, the {\it anti-ordinary} projector
$e^- = \lim_{m\rightarrow \infty} (\prod_{w\in\Sigma_p}\prod_{i=1}^n u_{w,j}^-)^{m!}$ acts on $\pi_p^{I_r}$. 
From the identification $\pi_p = \mu_p\otimes_{w\in\Sigma_p} \pi_w$ (via the isomorphism \eqref{G-iso-p}) we find that 
$u_{w,j}^-$ acts on $\pi_p^{I_r}  = \otimes_{w\in\Sigma_p} \pi_w^{I_{w,r}}$ via the action of the Hecke operator
$u_{w,j}^{\GL,-} = |\kap_{norm}(t_{w,j})|_p^{-1} U_{w,j}^{\GL,-}$, where
 $U_{w,j}^{\GL,-}$ acts as the double coset operator $I_{w,r} t_{w,j}^{-1} I_{w,r}$; here $t_{w,j}\in T_w$ is
 the element defined in Section  \ref{Heckeatp}.  It follows that the generalized eigenvalues of the action of
 the Hecke operators $u_{w,j}^{\GL,-}$ are $p$-adically integral and 
 $e_w^- = \lim_{m\rightarrow\infty} (\prod_{j=1}^n u_{w,j}^{\GL,-})^{m!}$ defines a projector
 on $\pi_w^{I_r}$.

We say that $\pi$ is {\it anti-ordinary at $p$ of level $r$ } if $\pi_p^{I_r}\neq 0$ and there exists $0\neq \phi\in\pi_p^{I_r}$ such that
$e^-\cdot\phi = \phi$. We say that such a $\phi$ is an {\it anti-ordinary vector for $\pi_p$ of level $r$, with respect to $I_r$}. Similarly, defining $I_r^{\flat} = I_r^{\dag}$, where the latter is as in Section \ref{MVWdag}, we can speak of {\it anti-ordinary vectors for $\pi_p$ of level $r$, with respect to $I_r^{\flat}$}.  Under the identification
$\pi_p = \mu_p\otimes_{w\in\Sigma_p}\pi_w$, the existence of an anti-ordinary vector of level $r$ 
in $\pi$ is equivalent to $\mu_p$ being unramified and, for each $w\in\Sigma_p$, there existing $0\neq \phi_w\in\pi_w^{I_{w,r}}\neq 0$
such that $e_w\cdot\phi_w = \phi_w$; we call such a $\phi_w$ an {\it anti-ordinary vector for $\pi_w$ of level $r$}.

\begin{lem}\label{aord-vect-lem-w} Let $w\in\Sigma_p$ and $\pi_w$ be a constituent of $\pi_p$ as above.
\begin{itemize}
\item[(i)] The representation $\pi_w$ is anti-ordinary of some level $r$ if and only if $\pi_w^\vee$ is ordinary, in which case $\pi_w$ is anti-ordinary of all levels
$r\gg 0$. 
\item[(ii)] If $\pi_w$ is anti-ordinary
of level $r$, then there exists
a unique (up to nonzero scalar multiple) anti-ordinary vector $\phi_{w,r}^{\aord}\in \pi_w^{I_{w,r}}$ of level r; 
it is characterized by $\langle\phi_{w,r}^\aord, \phi_w^{\vee,\ord}\rangle_w \neq 0$ 
and $\langle\phi_{w,r}^\aord, \phi\rangle_w= 0$ for all $\phi\in \pi_w^{I_{w,r}}$ belonging
to a generalized eigenspace of some $u_{w,j}^{\GL,-}$ with non-unit eigenvalue.
\end{itemize}
\end{lem}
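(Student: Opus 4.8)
�The plan is to deduce Lemma \ref{aord-vect-lem-w} from the corresponding statement about ordinary vectors, Lemma \ref{ord-dual-lem-w}, by exploiting the perfect $G_w$-invariant pairing $\pair_w : \pi_w \times \pi_w^\vee \to \C$ and the explicit relationship between the Hecke operators $u_{w,j}^{\GL,-}$ on $\pi_w$ and $u_{w,j}^{\GL}$ on $\pi_w^\vee$. The key observation is that $U_{w,j}^{\GL,-}$ is the double coset operator $I_{w,r} t_{w,j}^{-1} I_{w,r}$, which is the adjoint of $U_{w,j}^{\GL} = I_{w,r} t_{w,j} I_{w,r}$ on $\pi_w^\vee$ with respect to $\pair_w$ (up to an explicit power of $p$ coming from the normalization factors $|\kap_{norm}(t_{w,j})|_p$, which matches because both operators are normalized by the same character $\kap_{norm}$). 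First I would record this adjointness precisely: for $\phi \in \pi_w^{I_{w,r}}$ and $\phi^\vee \in (\pi_w^\vee)^{I_{w,r}}$, $\langle u_{w,j}^{\GL,-}\phi, \phi^\vee\rangle_w = \langle \phi, u_{w,j}^{\GL}\phi^\vee\rangle_w$, possibly after inserting a volume constant that is a $p$-adic unit.

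For part (i), the adjointness shows that $e_w^-$ on $\pi_w^{I_{w,r}}$ is dual to $e_w$ on $(\pi_w^\vee)^{I_{w,r}}$. Since $\pair_w$ restricts to a perfect pairing between $\pi_w^{I_{w,r}}$ and $(\pi_w^\vee)^{I_{w,r}}$ (as $I_{w,r}$ is compact and $\pair_w$ is $G_w$-invariant), the operator $e_w^-$ is nonzero on $\pi_w^{I_{w,r}}$ if and only if $e_w$ is nonzero on $(\pi_w^\vee)^{I_{w,r}}$, i.e. if and only if $\pi_w^\vee$ is ordinary of level $r$. By Lemma \ref{ord-vect-lem-w} applied to $\pi_w^\vee$ (which requires $\pi_w^\vee$ to satisfy conditions (a) and (b) for the weight $\kap$ — this is where I must check that the hypotheses transfer, noting that $\kap$ satisfies \eqref{holo-wt-ineq} and that the contragredient of an irreducible subrepresentation of a principal series is an irreducible quotient of the inverse principal series, by Lemma \ref{ord-dual-lem-w}(i)), ordinariness of $\pi_w^\vee$ is then automatic for all $r \gg 0$, hence $\pi_w$ is anti-ordinary for all $r \gg 0$. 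This gives (i).

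For part (ii), I would argue as follows. By Lemma \ref{ord-vect-lem-w}(i) applied to $\pi_w^\vee$, the ordinary eigenspace $e_w(\pi_w^\vee)^{I_{w,r}}$ is one-dimensional, spanned by $\phi_w^{\vee,\ord}$. Dualizing via $\pair_w$: $\pi_w^{I_{w,r}}$ decomposes as the direct sum of the generalized eigenspaces for the commuting operators $u_{w,j}^{\GL,-}$, and under $\pair_w$ this decomposition is dual to the analogous decomposition of $(\pi_w^\vee)^{I_{w,r}}$ for the $u_{w,j}^{\GL}$. The anti-ordinary eigenspace $e_w^-\pi_w^{I_{w,r}}$ is the annihilator, under $\pair_w$, of the sum of the non-unit-eigenvalue generalized eigenspaces in $(\pi_w^\vee)^{I_{w,r}}$ — equivalently, it pairs nondegenerately only with the ordinary line $\C\phi_w^{\vee,\ord}$. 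Hence $e_w^-\pi_w^{I_{w,r}}$ is one-dimensional; let $\phi_{w,r}^{\aord}$ be a generator. The characterization follows immediately: $\langle \phi_{w,r}^{\aord}, \phi_w^{\vee,\ord}\rangle_w \ne 0$ because the pairing between the two one-dimensional complementary eigenspaces is perfect, and $\langle \phi_{w,r}^{\aord}, \phi\rangle_w = 0$ for $\phi$ in a non-unit generalized eigenspace of some $u_{w,j}^{\GL,-}$ because the anti-ordinary line lies in a different (complementary) summand of the eigenspace decomposition.

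The main obstacle I anticipate is bookkeeping the normalization constants correctly: verifying that the adjoint of $u_{w,j}^{\GL,-}$ under $\pair_w$ is exactly $u_{w,j}^{\GL}$ (and not off by a power of $p$ that would spoil the identification of $e_w^-$ with the dual of $e_w$) requires care with the definitions of $t_{w,j}^\pm$, the double cosets $I_{w,r} t_{w,j}^{\pm 1} I_{w,r}$, and the factors $|\kap_{norm}(t_{w,j})|_p^{\pm 1}$; a volume factor $\vol(I_{w,r}^0)$ or similar, which is a $p$-adic unit, may intervene but is harmless. A secondary point requiring attention is confirming that $\pi_w$ being anti-ordinary of \emph{some} level implies $\pi_w^\vee$ is ordinary in the precise technical sense of satisfying both (a) and (b) of Section \ref{ordinary-localrep}, so that Lemma \ref{ord-vect-lem-w} is applicable; this should follow by reversing the Jacquet-module argument, but I would state it carefully rather than treat it as obvious.
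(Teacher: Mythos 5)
Your proposal is correct and follows essentially the same route as the paper. The paper's proof of part (i) also begins with exactly the adjointness relation $\langle u_{w,j}^{\GL,-}\phi_{w,r}^{\aord},\phi\rangle_w = \langle \phi_{w,r}^{\aord}, u_{w,j}^{\GL}\phi\rangle_w$ (its equation labeled with the tag you would call \ref{ord-aord-eq-1}), and part (ii) proceeds by decomposing $\pi_w^{\vee,I_{w,r}}$ into $\C\phi_w^{\vee,\ord}$ plus the generalized eigenspaces with non-unit eigenvalue and showing $\phi_{w,r}^{\aord}$ annihilates the latter. The only stylistic difference: where you invoke perfectness of the restricted pairing to conclude in one stroke that $e_w^- \neq 0$ iff $e_w \neq 0$, the paper instead constructs the required simultaneous eigenvector of $u_{w,j}^{\GL}$ on $\pi_w^{\vee,I_{w,r}}$ explicitly by an induction on $j$ using commutativity of the operators, and then argues the converse direction symmetrically. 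Both paths are legitimate; yours is slightly more conceptual, the paper's slightly more self-contained. Your two anticipated "obstacles" are real but minor, and the paper handles them the way you predicted: the normalization of $u_{w,j}^{\GL,-}$ versus $u_{w,j}^{\GL}$ is arranged so the adjointness holds on the nose (the $|\kap_{norm}(t_{w,j})|_p$ factors match), and condition (a) of Section \ref{ordinary-localrep} is automatic from (b) since Iwahori-fixed vectors at level $r$ persist at all higher levels.
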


\begin{proof} Suppose $\pi_w$ is anti-ordinary of some level $r$. Then $\pi_w^{I_{w,r}}\neq 0$ and there exists a simultaneous eigenvector
$\phi_{w,r}^\aord\in\pi_w^{I_{w,r}}$ for the $u_{w,j}^{\GL,-}$ with $p$-adic unit eigenvalues $a(j,r)$. 
Let $\pair_w:\pi_w\times\pi_w^\vee\rightarrow \C$ be the $G_w$-equivariant pairing. Then we have
\begin{equation}\label{ord-aord-eq-1}
a(j,r)\langle\phi_{w,r}^\aord,\phi\rangle_w = 
\langle u_{w,j}^{\GL,-} \cdot \phi_{w,r}^\aord, \phi\rangle_w = \langle \phi_{w,r}^\aord,u_{w,j}^\GL\cdot\phi\rangle_w
\end{equation}
for all $\phi\in \pi_w^{\vee,I_{w,r}}$. It follows that the action of each $u_{w,j}^\GL$ on $\pi_w^{\vee,I_{w,r}}$ has an
eigenspace with eigenvalue $a(j,r)$ (which is a $p$-adic unit). To see that there exists a simultaneous such eigenspace we use the 
commutativity of the $u_{w,j}^{\GL}$s: Let $V_{j-1}\subset \pi_w^{\vee,I_{w,r}}$ be a maximal subspace that is a simultaneous
eigenspace for $u_{w,1}^\GL,\ldots, u_{w,j-1}^\GL$ with respective eigenvalues $a(1,r),\ldots,a(j-1,r)$. Then by the commutativity
of the $u_{w,j}^\GL$s, the identity \eqref{ord-aord-eq-1} holds for all $\phi\in V_{j-1}$. In particular, there is a non-zero (maximal) 
subspace of $V_j\subset V_{j-1}$ which is an eigenspace for $u_{w,j}^\GL$ with eigenvalue $a(j,r)$. It follows from induction on $j$
that there exists a non-zero simultaneous $u_{w,j}^\GL$-eigenvector $\phi\in \pi_w^{\vee,I_{w,r}}$, $j=1,...,n$, with 
$p$-adic unit eigenvalues $a(j,r)$. 
That is, $\pi_w^\vee$ is ordinary. 

 Conversely, suppose that $\pi^\vee_w$ is ordinary, and let $\phi_w^{\vee,\ord}\in \pi_w^\vee$ be an ordinary vector with $u_{w,j}^{\GL}$-
eigenvalue $c(j)$ (which is a $p$-adic unit). Then for $r\gg 0$ we have
\begin{equation}\label{ord-aord-eq-2}
c(j) \langle \phi, \phi_w^{\vee,\ord}\rangle_w  =
\langle \phi, u_{w,j}^\GL \cdot\phi_w^{\vee,\ord}\rangle_w = \langle u_{w,j}^{\GL,-}\phi,\phi_w^{\vee,\ord}\rangle_w
\end{equation}
for all $\phi\in \pi_w^{I_{w,r}}$. 
It follows from the non-degeneracy of $\pair_w$ that there exists a $u_{w,j}^{\GL,-}$-eigenvector $\phi_{j,r}\in \pi_w^{I_{w,r}}$ with eigenvalue $c(j)$.
Using \eqref{ord-aord-eq-2} and the commutativity of the $u_{w,j}^{\GL,-}$ we find, as in the preceding proof of the ordinarity of $\pi_w^\vee$, that there exists
a non-zero simultaneous $u_{w,j}^{\GL,-}$-eigenvector $\phi\in \pi_w^{I_{w,r}}$, $j=1,...,n$, with 
$p$-adic unit eigenvalues $c(j)$. That is, $\pi_w$ is anti-ordinary of level $r$ for all $r\gg 0$.

Suppose now that $\pi_w$ is anti-ordinary of level $r$, and let $\phi_{w,r}^\aord\in \pi_w^{I_{w,r}}$ be an anti-ordinary vector of level $r$.
As shown above, $\pi_w^\vee$ is ordinary and $\phi_w^{\vee,\ord}\in \pi_w^{\vee,I_{w,r}}$. We note that 
$$
\pi_w^{\vee,I_{w,r}} = \C\phi_w^{\vee,\ord} \oplus V_1\oplus \cdots \oplus V_t
$$
with each $V_i$ a simultaneous generalized $u_{w,j}^\GL$-eigenspace with at least one of the (generalized) eigenvalues not a $p$-adic unit; 
this follows from the uniqueness of the ordinary vector (see Lemma \ref{ord-vect-lem-w}(i)). Since \eqref{ord-aord-eq-1}
holds for all $\phi\in V_i$ it follows that $\langle \phi_{w,r}^\aord, V_i\rangle_w = 0$. This proves that $\phi_{w,r}^\aord\in \pi_w^{I_{w,r}}$ is
characterized (up to non-zero scalar multiple) as stated in part (ii). The uniqueness also follows.

\end{proof}

Using this we can deduce an analog of Lemma \ref{ord-vect-lem-w}(ii):

\begin{lem}\label{aord-vect-lem-w-2} Let $w\in\Sigma_p$ and $\pi_w$ be a constituent of $\pi_p$ as above.
Suppose $\pi_w$ is anti-ordinary. Then there exists a unique character $\beta_w:T_w\rightarrow\C^\times$ such that $\Ind_{B_w}^{G_w}\beta_w\twoheadrightarrow \pi_w$ is the unique irreducible quotient and the anti-ordinary vector $\phi^\aord_{w,r} \in \pi_w^{I_{w,r}}$ of level $r$ is (up to non-zero scalar multiple) 
the image of the vector in $\Ind_{B_w}^{G_w}\beta_w$ with support $B_wI_{w,r}$.
In particular, the $\phi^\aord_{w,r}$, $r\gg 0$, can be chosen to satisfy
$$
\sum_{\gamma\in I_{w,r}/ (I_{w,r'}^0\cap I_{w,r})} \pi_w(\gamma)\phi_{w,r'}^\aord = \phi_{w,r}^\aord, \ \ r'\geq r.
$$
\end{lem}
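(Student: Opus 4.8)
The plan is to mirror, in the anti-ordinary setting, the structure of the proof of Lemma \ref{ord-vect-lem-w}(ii), using Lemma \ref{aord-vect-lem-w} as the bridge between $\pi_w$ and $\pi_w^{\vee}$. First I would invoke Lemma \ref{aord-vect-lem-w}(i): since $\pi_w$ is anti-ordinary, $\pi_w^{\vee}$ is ordinary, and $\kap$ satisfies \eqref{holo-wt-ineq}, so Lemma \ref{ord-vect-lem-w}(ii) applies to $\pi_w^{\vee}$. That lemma produces a character $\alpha_w:T_w\rightarrow \C^{\times}$ with $\pi_w^{\vee}\hookrightarrow \Ind_{B_w}^{G_w}\alpha_w$ the unique irreducible subrepresentation. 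Dualizing this inclusion (as in the proof of Lemma \ref{ord-dual-lem-w}(i), using \cite[Prop.~3.1.2]{Casselman-book}) gives a surjection $\Ind_{B_w}^{G_w}\alpha_w^{-1}\twoheadrightarrow \pi_w$, so I would set $\beta_w = \alpha_w^{-1}$; uniqueness of $\beta_w$ follows from uniqueness of $\alpha_w$ in Lemma \ref{ord-vect-lem-w}(ii) together with the fact that the irreducible quotient determines $\beta_w$ up to the Weyl action, which is rigidified exactly as in the regularity argument of that lemma.

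Next I would identify the anti-ordinary vector. Let $\psi_r \in \Ind_{B_w}^{G_w}\beta_w$ be the function supported on $B_wI_{w,r}$ with $\psi_r(1)=1$, and let $\phi_{w,r}$ be its image in $\pi_w$. The claim is that $\phi_{w,r}$ is (a non-zero multiple of) $\phi^{\aord}_{w,r}$. Using the characterization in Lemma \ref{aord-vect-lem-w}(ii), it suffices to check two things: that $\langle \phi_{w,r},\phi_w^{\vee,\ord}\rangle_w\neq 0$, and that $\phi_{w,r}$ pairs to zero against the generalized $u_{w,j}^{\GL}$-eigenspaces of $\pi_w^{\vee,I_{w,r}}$ with non-unit eigenvalues. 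For the first, I would realize $\phi_w^{\vee,\ord}$ as the vector in $\Ind_{B_w}^{G_w}\alpha_w$ supported on $B_wI_{w,r}$ (Lemma \ref{ord-vect-lem-w}(ii)), note that the invariant pairing between $\Ind_{B_w}^{G_w}\beta_w$ and $\Ind_{B_w}^{G_w}\alpha_w = \Ind_{B_w}^{G_w}\beta_w^{-1}$ is integration over $\GL_n(\CO_w)$ (\cite[Prop.~3.1.3]{Casselman-book}), and compute as in Lemma \ref{ord-dual-lem-w}(ii) that this integral is $\mathrm{vol}(I_{w,r}^0)\neq 0$ — modulo checking that the induced-space pairing descends compatibly to the pairing $\pair_w$ on the quotient/subrepresentation. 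For the second, the adjunction $\langle u_{w,j}^{\GL,-}\phi,\phi'\rangle_w = \langle \phi, u_{w,j}^{\GL}\phi'\rangle_w$ and the fact that $\phi_{w,r}$ is the ordinary-type eigenvector on the quotient side (unit eigenvalues) force orthogonality to the non-unit eigenspaces, by the same eigenvalue-separation argument that appears at the end of the proof of Lemma \ref{ord-vect-lem-w}.

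Finally, for the trace compatibility, I would work entirely inside $\Ind_{B_w}^{G_w}\beta_w$ before projecting to $\pi_w$. The function $\psi_r$ supported on $B_wI_{w,r}$ with $\psi_r(1)=1$ has the property that $\mathrm{Tr}_{I_{w,r}/I_{w,r'}}\psi_{r'} = \sum_{\gamma\in I_{w,r}/I_{w,r'}}\gamma\cdot\psi_{r'}$; since $B_wI_{w,r} = \sqcup_{\gamma} B_wI_{w,r'}\gamma^{-1}$ (the cosets of $I_{w,r'}$ in $I_{w,r}$ are permuted freely, using that $B_w\cap I_{w,r} = T_w(\CO_w)\cdot(\text{lower/upper unipotent part})$ and $\beta_w$ is trivial on the relevant unipotent pieces), one gets $\mathrm{Tr}_{I_{w,r}/I_{w,r'}}\psi_{r'} = \psi_r$ exactly. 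Projecting to $\pi_w$ and choosing the normalizations of $\phi^{\aord}_{w,r}$ compatibly then yields the stated identity. The main obstacle I anticipate is not any single step but the bookkeeping of normalizations: matching the invariant pairing $\pair_w$ on $\pi_w\times\pi_w^{\vee}$ with the standard integration pairing on the induced spaces (which is only canonical up to scalar) consistently across all levels $r$, so that the single choice of scalar making $\langle\phi^{\aord}_{w,r},\phi_w^{\vee,\ord}\rangle_w$ take a fixed value is simultaneously the one that makes the traces match; this requires care with the volume factors $\mathrm{vol}(I_{w,r}^0)$ as $r$ varies, but is ultimately routine.
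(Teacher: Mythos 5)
Your proposal follows the same route as the paper's proof: transfer from $\pi_w^\vee$ via Lemma \ref{aord-vect-lem-w}(i), set $\beta_w=\alpha_w^{-1}$ using Lemma \ref{ord-vect-lem-w}(ii), and verify that the image of the function supported on $B_wI_{w,r}$ meets the characterization of Lemma \ref{aord-vect-lem-w}(ii) via Lemma \ref{ord-dual-lem-w}(ii)--(iii). The one place you go beyond the paper is in spelling out the coset decomposition behind the trace identity, which the paper asserts without comment as an immediate consequence of the explicit description of $\phi^{\aord}_{w,r}$; that extra detail is harmless and correct in spirit, though the disjointness of $B_wI_{w,r'}\gamma^{-1}$ requires the mild care you flag.
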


\begin{proof} Since $\pi_w$ is anti-ordinary, it follows from Lemma \ref{aord-vect-lem-w}(i) that $\pi_w^\vee$ is ordinary. 
Let $\alpha_w$ be the unique character of $B_w$ associated with $\pi_w^\vee$ as in Lemma \ref{ord-vect-lem-w}(ii).
Let $\beta_w = \alpha_w^{-1}$. As $\pi_w^\vee$ is the unique irreducible subrepresentation of $\Ind_{B_w}^{G_w}\alpha_w$,
$\pi_w$ is the unique irreducible quotient of $\Ind_{B_w}^{G_w}\beta_w$. Furthermore, it follows from Lemmas \ref{ord-vect-lem-w}(ii) and \ref{ord-dual-lem-w}(ii-iii) that 
the image of the vector in $\Ind_{B_w}^{G_w}\beta_w$ that is supported on $B_wI_{w,r}$ satisfies the conditions that characterize $\phi^\aord_{w,r}$ in 
Lemma \ref{aord-vect-lem-w}(ii). The uniqueness of $\beta_w$ easily follows from the uniqueness of $\alpha_w$ and Lemma \ref{ord-dual-lem-w}.
\end{proof}

\begin{coro}\label{aord-vect-cor-p} Suppose $\kap$ satisfies Inequality \eqref{holo-wt-ineq}. Then $\pi_p$ is anti-ordinary
if and only if $\pi_p^\flat$ is ordinary, and  up to multiplication by a scalar, there is a unique anti-ordinary
vector $\phi^\aord_r \in \pi_p^{I_{w,r}}$ of level $r$ for each $r\gg 0$. Furthermore, under the identification 
$\pi_p = \mu_p\otimes_{w\in\Sigma_p} \pi_w$,
$\phi_r^\aord = \otimes_{w\in\Sigma_p}\phi_{w,r}^\aord$, with $\phi_{w,r}^\aord$ as in Lemma \ref{aord-vect-lem-w-2}.
\end{coro}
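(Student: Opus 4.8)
The plan is to reduce the corollary entirely to the local-at-$w$ statements already proved in Lemma \ref{aord-vect-lem-w} and Lemma \ref{aord-vect-lem-w-2}, mimicking the way Corollary \ref{ord-vect-cor-p} was deduced from Lemma \ref{ord-vect-lem-w}. First I would invoke the identification \eqref{G-iso-p}, under which $\pi_p \cong \mu_p\otimes_{w\in\Sigma_p}\pi_w$, and the factorization \eqref{Iwahori-ident} of $I_r$ as $\Z_p^\times\prod_{w\in\Sigma_p} I_{w,r}$. From this, $\pi_p^{I_r}$ is nonzero only if $\mu_p$ is unramified, and in that case $\pi_p^{I_r} = \otimes_{w\in\Sigma_p}\pi_w^{I_{w,r}}$. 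As recalled in Section \ref{aordinarylocal}, the Hecke operators $u_{w,j}^-$ act on $\pi_p^{I_r}$ through the componentwise operators $u_{w,j}^{\GL,-}$ on $\pi_w^{I_{w,r}}$, so the anti-ordinary projector $e^- = \lim_m\bigl(\prod_{w\in\Sigma_p}\prod_{j=1}^n u_{w,j}^-\bigr)^{m!}$ factors as $\otimes_{w\in\Sigma_p} e_w^-$ (acting as the identity on the $\mu_p$-factor). Consequently $\pi_p$ carries a nonzero anti-ordinary vector of level $r$ if and only if $\mu_p$ is unramified and each $\pi_w$ is anti-ordinary of level $r$, and any such vector is then a scalar multiple of $\otimes_{w\in\Sigma_p}\phi^\aord_{w,r}$ by the uniqueness clause of Lemma \ref{aord-vect-lem-w}(ii). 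This yields the last two assertions of the corollary, and the passage from ``anti-ordinary of some level $r$'' to ``anti-ordinary of all levels $r\gg 0$'' is exactly Lemma \ref{aord-vect-lem-w}(i).

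For the equivalence with ordinarity of $\pi_p^\flat$, I would use that $\pi^\flat = \pi^\vee\otimes ||\nu||^{a(\kap)}$, so that under \eqref{G-iso-p} the twist by $||\nu||^{a(\kap)}$ affects only the $\Q_p^\times$-factor, i.e. $\pi_p^\flat \cong \mu_p^\flat\otimes_{w\in\Sigma_p}\pi_w^\vee$ with $\mu_p^\flat$ unramified precisely when $\mu_p$ is. Lemma \ref{aord-vect-lem-w}(i) says $\pi_w$ is anti-ordinary of some level if and only if $\pi_w^\vee$ is ordinary; combining this with Corollary \ref{ord-vect-cor-p} applied to $\pi_p^\flat$ (which has the same weight $\kap$, hence still satisfies Inequality \eqref{holo-wt-ineq}), one gets: $\pi_p$ anti-ordinary $\iff$ $\mu_p$ unramified and all $\pi_w^\vee$ ordinary $\iff$ $\pi_p^\flat$ ordinary. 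The compatibility $\phi^\aord = \otimes_{w\in\Sigma_p}\phi_w^\aord$ is then just the choice of $\otimes_{w\in\Sigma_p}\phi^\aord_{w,r}$ together with an (essentially unique) vector in the one-dimensional unramified space of $\mu_p$.

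I expect the only delicate point to be the bookkeeping around the similitude character: verifying carefully that the $||\nu||^{a(\kap)}$-twist and the $\Z_p^\times$-factor of $I_r$ are handled consistently with the conventions of Sections \ref{levelp} and \ref{autoreps}, and keeping the level $r$ explicit throughout (in contrast to the ordinary case of Corollary \ref{ord-vect-cor-p}, where the ordinary vector is independent of $r$, here one only has the trace-compatibility $\Tr_{I_{w,r}/I_{w,r'}}\phi^\aord_{w,r'} = \phi^\aord_{w,r}$ from Lemma \ref{aord-vect-lem-w-2}). There is no new representation-theoretic input beyond the two cited local lemmas and Corollary \ref{ord-vect-cor-p}; the corollary is a formal consequence of factoring $\pi_p$, $I_r$, and $e^-$ over $\Sigma_p$.
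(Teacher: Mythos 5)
The paper states Corollary \ref{aord-vect-cor-p} without a written proof, leaving it as a formal consequence of Lemmas \ref{aord-vect-lem-w} and \ref{aord-vect-lem-w-2} in parallel with the way Corollary \ref{ord-vect-cor-p} was left to follow from Lemma \ref{ord-vect-lem-w}. Your proposal fills in exactly the intended argument: factor $\pi_p$, $I_r$, and $e^-$ over $w\in\Sigma_p$ via \eqref{G-iso-p} and \eqref{Iwahori-ident}, reduce each assertion to the local lemmas, and note that the similitude twist $||\nu||^{a(\kap)}$ in the definition of $\pi^\flat$ lives entirely in the $\Q_p^\times$-factor under \eqref{G-iso-p}, so it preserves unramifiedness of $\mu_p$. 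The proof is correct, and the closing remarks about the level dependence of the anti-ordinary vector (trace compatibility rather than $r$-independence, via Lemma \ref{aord-vect-lem-w-2}) accurately flag the one genuine asymmetry with the ordinary case.
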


\begin{rmk}\label{aord-trace-remark}
The desired relation between the $\phi_{w,r}^\aord$ for varying $r$ can be made explicit by normalizing 
$\phi_{w,r}^\aord$ to be the image of the vector in $\Ind_{B_w}^{G_w}\beta_w$ with support $B_wI_{w,r}$
and value at $1$ equal to $1$.
\end{rmk} 

\begin{rmk}\label{aord-testvector-remark}
The description of the anti-ordinary vector $\phi_{w,r}^\aord\in\pi_w$ provided by Lemma \ref{ord-vect-lem-w-2} shows that for $r$ sufficiently large, $\phi_w=\phi_{w,r}^\aord\in\pi_w$ satisfies
the conditions \eqref{phiinvariancew} and \eqref{phi'invariancew2} with $R_w = B_w$ (and $n_{i,w}=1$ for all $i$). In particular, $\phi_{w,r}^\aord\in\pi_w$ is a suitable 
`test vector' for the calculations in \ref{maincalculation}.
\end{rmk}

\subsubsection{The Newton polygon}
Let $\pi$ be a holomorphic or anti-holomorphic cuspidal automorphic representation of $G(\A)$, and let $\pi_p = \mu_p\otimes_{w\in\Sigma_p}\pi_w$
be the identification corresponding to \eqref{G-iso-p}. We assume that 
\begin{equation}\label{subquot-w}
\text{each $\pi_w$ is an irreducible subquotient of $\Ind_{B_w}^{G_w} \beta_w$}
\end{equation}
for some character $\beta_w: T\rightarrow \C^\times$. We view $\beta_w$ as
$n$-tuple $\beta_w = (\beta_{w,1},....,\beta_{w,n})$ of characters of $\K_w^\times$, defined by
$\beta_w(\diag(t_1,...,t_n)) = \prod_{i=1}^n \beta_{w,i}(t_i)$; the characters $\beta_{w,i}$ are uniquely
determined up to order. 
We define the {\it total Hecke polynomial} of $\pi$ at $w$ to be 
\begin{equation}\label{heckepoly}
H_w(T) = \prod_{i = 1}^n (1 - \alpha_{w,i}(\varpi_w)T)(1- \alpha_{w,i}^{-1}(\varpi_w)T)
\end{equation}
The {\it Newton polygon} $\mathrm{Newt}(\pi,w)$ of $\pi$ at $w$ is the Newton polygon of $H_w(T)$.  
Note that 
$$
\mathrm{Newt}(\pi,w) = \mathrm{Newt}(\pi^\flat, w).
$$

Let $\Sigma_w = \{\sigma \in \Sigma_\K ~|~ \grp_{\sigma} = \grp_w\}$.  Let
$$\pi_{\Sigma_w} = \otimes_{\sigma \in \Sigma_w} \pi_\s = \otimes_{\s \in \Sigma_w} \Dc(\tau_\s)$$
in the notation of  \eqref{vermadual}.
Define the Hodge polygon $\mathrm{Hodge}(\pi,w)$ to be the polygon in the right half-plane with vertices
$(i,\sum_{\s\in\Sigma_{w}} p_{i,\s})$, where $(p_{i,\s},q_{i,\s})$ are the pairs introduced in Section  \ref{hodgepolygon} for $\Dc(\tau_\s)$.

\begin{prop}\label{panchishkin}  Suppose $\pi$ is (anti-)holomorphic and (anti-)ordinary.  Then 
$\mathrm{Newt}(\pi_w)$ and $\mathrm{Hodge}(\pi_w)$ meet at the midpoint $(n,\sum_{\s \in \Sigma_w} p_{i,\s})$.  
\end{prop}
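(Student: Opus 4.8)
The plan is to reduce the statement to a place‑by‑place (i.e.\ $\sigma$‑by‑$\sigma$, for $\sigma \in \Sigma_w$) comparison and then match the two polygons numerically. First I would record that, since $\pi$ is (anti‑)holomorphic and (anti‑)ordinary, each local component $\pi_w$ is, by Lemma \ref{ord-vect-lem-w}(ii) (in the ordinary case) or Lemma \ref{aord-vect-lem-w-2} (in the anti‑ordinary case), a subquotient of $\Ind_{B_w}^{G_w} \beta_w$ for a \emph{specific} character $\beta_w = (\beta_{w,1},\dots,\beta_{w,n})$, and — this is the crucial input — that character is the one carrying the $p$‑adic valuation data of the $U_{w,j}$‑eigenvalues. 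Concretely, the ordinary eigenvalue formula $c_{w,j} = |\kappa_{norm}(t_{w,j})|_p^{-1}\delta_w^{-1/2}\alpha_w(t_{w,j})$ from Lemma \ref{ord-vect-lem-w}(ii), together with the regularity argument in the proof of that lemma (which pinned down the slopes $m_i + \tfrac{n-1}{2} > \dots$), computes $\ord_w(\beta_{w,i}(\varpi_w))$ explicitly in terms of the highest weights $\kappa_\sigma$, $\kappa_{\sigma c}$ and the signature $(a_w,b_w)$.

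Next I would translate those valuations into the vertices of $\mathrm{Newt}(\pi,w)$. The total Hecke polynomial $H_w(T) = \prod_i (1-\alpha_{w,i}(\varpi_w)T)(1-\alpha_{w,i}^{-1}(\varpi_w)T)$ has $2n$ reciprocal roots whose valuations come in the pairs $\pm \ord_w(\alpha_{w,i}(\varpi_w))$; ordering them and taking lower convex hull gives the Newton polygon. Separately, from \eqref{Hodge} and the definitions in Section \ref{hodgepolygon}, I would write out the $2n$ Hodge numbers $p_{i,\sigma}$ for each $\Dc(\tau_\sigma)$, sum over $\sigma \in \Sigma_w$, and read off the vertices of $\mathrm{Hodge}(\pi,w)$. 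The midpoint abscissa $i = n$ corresponds to having used exactly the "top half" of each local collection, i.e.\ the $p_i$ with $i\le n$; Hypothesis \ref{critint} (the critical interval hypothesis) guarantees these are exactly the large members of each pair, so $\sum_{\sigma\in\Sigma_w} p_{i,\sigma}$ is unambiguous. The claim is then the identity: the sum of the $n$ largest root valuations of $H_w$ equals $\sum_{i\le n}\sum_{\sigma\in\Sigma_w} p_{i,\sigma}$.

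The main obstacle, and the only genuinely substantive step, is verifying this numerical identity — i.e.\ that the ordinary normalization built into $U_{w,j,\kappa} = |\kappa'(t_{w,j})|_p^{-1}U_{w,j}$ is \emph{exactly} the one that makes the ordinary slopes coincide with the bottom of the Hodge polygon. This is really the statement that the chosen $p$‑stabilization is "Panchishkin‑ordinary" (Newton $=$ Hodge at the relevant break point, with the two polygons agreeing up to $i = n$ and symmetric thereafter). I expect this to come down to matching $\ord_w(\alpha_{w,i}(\varpi_w))$, computed from the explicit $m_i$ in the proof of Lemma \ref{ord-vect-lem-w}, against the entries $\kappa_a, \lambda_b, \dots$ appearing in \eqref{Hodge}, using that $\kappa_{norm,\sigma} = \kappa_\sigma - b_\sigma$ and the relation \eqref{coeffs}; the symmetry of $H_w(T)$ under $\alpha \mapsto \alpha^{-1}$ then forces the two polygons to touch precisely at $(n, \sum_{\sigma\in\Sigma_w} p_{i,\sigma})$ and nowhere strictly between $0$ and that abscissa unless they already coincide. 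Once the per‑$\sigma$ bookkeeping is set up this is a finite, if slightly tedious, check; I would organize it by first doing the split case $\K_w = \K^+_{w^+}$ with $n = a_w + b_w$ and a single $\sigma$, then summing over $\Sigma_w$.
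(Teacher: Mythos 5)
The paper does not actually prove Proposition \ref{panchishkin}.  Immediately after its statement the text reads: ``The proof is an elementary calculation and is omitted; it will not be used in what follows.  Details will be provided in a future article$\ldots$''  So there is no paper proof to compare your plan against.

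On its own terms, your outline follows the computation the authors evidently have in mind.  You correctly identify the two inputs: the explicit valuation of the ordinary eigenvalues coming from Lemma \ref{ord-vect-lem-w}(ii) (the formula $c_{w,j} = |\kap_{norm}(t_{w,j})|_p^{-1}\delta_w^{-1/2}\alpha_w(t_{w,j})$ together with the exponents $m_i$ written out in that proof), and the list of Hodge numbers from \eqref{Hodge} under Hypothesis \ref{critint}.  You also correctly locate the crux: a finite numerical identity matching the $\ord_w(\alpha_{w,i}(\varpi_w))$ against the $p_{i,\sigma}$, summed over $\sigma \in \Sigma_w$, using $\kap_{norm,\sigma} = \kap_\sigma - b_\sigma$ and \eqref{coeffs}.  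Since you do not carry out that verification, your proposal is a proof \emph{plan} rather than a proof, but it is precisely the calculation the authors describe as ``elementary'' and postpone.  Two small points of care if you do write out the details: (a) the Newton polygon, being a lower convex hull, has ordinate at abscissa $n$ given by the sum of the $n$ \emph{smallest} slopes, not the largest -- the symmetry of $H_w$ lets you pass between the two, but you should state it once and use it consistently; and (b) the paper's statement of the proposition and its definition of the Hodge polygon are themselves notationally loose (the index $i$ in ``$\sum_{\s\in\Sigma_w}p_{i,\s}$'' is dangling, and the Hodge polygon is defined via $(i,p_i)$ rather than via cumulative sums), so part of writing a clean proof is first fixing the normalizations so that both polygons genuinely share a break point at abscissa $n$.
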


In motivic terms, this says that the motive obtained by restriction of scalars to $\Q$
of the motive attached to $\Pi$ satisfies the {\it Panchishkin condition}, see \cite{pan}.
The proof is an elementary calculation and is omitted; it will not be used in what follows.   
Details will be provided in a future article, when the results obtained here are related
to standard conjectures on $p$-adic $L$-functions.

\subsection{(Anti-) Ordinary representations and (anti-) ordinary vectors for $G_2$}\label{ordvec-2}
If the group $G_1$ in Section \ref{ordvec} is replaced with $G_2$, then the analysis of ordinary and anti-ordinary
representations and vectors carries over with only a few changes. The most significant of these is that under
the identification \eqref{G-iso-p} the Borel $B_w$ and the groups $I_{w,r}^0$ and $I_{w,r}$ all get replaced
by their transposes $\t B_w$,  $\t I_{w,r}^0$, and $\t I_{w,r}$, respectively (more precisely, $B_w$ should be replaced by the opposite parabolic, which
is just the transpose in this case, and similarly for $I_{w,r}^0$ and $I_{w,r}$).   However, in order to compare with the test vectors
in Section \ref{localinducedreps} and subsequent calculations, we want vectors induced from $B_w$ and not $\t B_w = B_w^\op$. These are
obtained by composing with the standard intertwining operators between $\Ind_{B_w^\op}^{G_w}$ and $\Ind_{B_w}^{G_w}$. 

Suppose $\pi$ is a cuspidal holomorphic representation of $G_1(\A)$ of weight type $(\kap,K)$ as in Section \ref{ordinary-localrep}.
Let $\pi^\flat$ be as in Section \ref{MVWdag}. In particular, $\pi^\flat = \pi^\vee\otimes||\nu||^{a(\kap)}$.
As a representation of $G_2(\A)$, $\pi^\flat$ is cuspidal holomorphic of weight type $(\kap^\flat,K^\flat)$.
(However, as a representation of $G_1(\A)$ it is anti-holomorphic of this weight type!)

\subsubsection{Ordinary representations II: local theory}\label{ordinary-localrep-2}
Suppose that $\pi$ is ordinary at $p$. Then $\pi^\flat$ is also ordinary at $p$ (but with all the changes of conventions that come from
replacing $G_1$ with $G_2$; in particular, the role of the $U_{w,j}^{\GL}$ operator is now played by
$U_{w,j}^{\flat,\GL} = \t I_{w,j} t_{w,j}^{-1} \t I_{w,j}$). Let
$$
\pi^\flat_p = \mu_p^\flat\otimes_{w\in\Sigma_p}\pi_w^\flat
$$
be the decomposition of $\pi^\flat_p$ with respect to the identification \eqref{G-iso-p}.
Then $\pi_w^\flat = \pi_w^\vee$ and $\mu_p^\flat = \mu_p^{-1}|\nu|^{a(\kap)}_p$.

We have the following analogs of Lemma \ref{ord-vect-lem-w}
and Corollary \ref{ord-vect-cor-p}.

\begin{lem}\label{ord-vect-lem-w-2} Let $w\in\Sigma_p$.
Let $r$ be so large that $\pi_w^{\flat,\t I_{w,r}}\neq 0$ (equiv, $\pi_w^{I_{w,r}}\neq 0$).
\begin{itemize}
\item[(i)] Up to multiplication by a scalar, there is a unique ordinary vector $\phi_w^{\flat,\ord}\in \pi_w^{\flat, \t I_{w,r}}$; 
$\phi_w^{\flat,\ord}$ is necessarily independent of $r\gg 0$.
\item[(ii)] There exists a unique character $\alpha^\flat_w : T_w \rightarrow \C^\times$ such that $\Ind_{B_w}^{G_w} \alpha^\flat_w\twoheadrightarrow\pi_w^\flat$ is the unique
irreducible quotient and
$\phi_w^{\flat,\ord}$ is identified with the image of the simultaneous $U_{w,j}^{\flat,\GL}$-eigenvector, $1\leq j \leq n$, with support $B_w \t I_{w,r}$, for $r\gg 0$.
$($In particular, the $u_{w,j}^{\flat,\GL}$-eigenvalue is $c_{w,j}^\flat = c_{w,j}$.)
Furthermore, if $\alpha_w$ is the character as in Lemma \ref{ord-vect-lem-w}(ii), then $\alpha_w^\flat = \alpha_w^{-1}$.
\end{itemize}
\end{lem}

\begin{proof}
The map $\Ind_{B_w}^{G_w}\alpha \rightarrow \Ind_{\t B_w}^{G_w} \alpha^{-1}$, $\phi(g)\mapsto \phi^\vee(g) = \phi(\t g^{-1})$ realizes $\pi_w^\flat = \pi^\vee$
as the image of $\pi_w$ and hence a subrepresentation of $\Ind_{\t B_w}^{G_w} \alpha_w^{-1}$.  As $\phi^\vee\in\pi_w^\flat$ is ordinary if and only if $\phi\in\pi_w$ is, 
part (i) follows immediately, and $\phi_w^{\flat,\ord} = (\phi_w^\ord)^\vee$. Part (ii) follows from noting that $\pi^\flat$ is then the image of the 
standard interwining operator $\Ind_{B_w}^{G_w}\alpha_w^{-1} \rightarrow  \Ind_{B_w^\op}^{G_w} \alpha_w^{-1}$.
The determination of the support of the eigenvector in $\Ind_{B_w}^{G_w}\alpha_w^{-1}$ is an easy computation.
\end{proof}

\begin{coro}\label{ord-vect-cor-p-2} 
Up to multiplication by a scalar, there is a unique ordinary
vector $\phi^{\flat,\ord} \in \pi_p^{\flat, \t I_r}$ for $r\gg 0$; $\phi^{\flat,\ord}$ is necessarily independent of $r$. Furthermore, under the identification 
$\pi^\flat_p = \mu^\flat_p\otimes_{w\in\Sigma_p}\pi^\flat_w$,
$\phi^{\flat,\ord} = \otimes_{w\in\Sigma_p}\phi_w^{\flat,\ord}$, with $\phi_w^{\flat,\ord}$ as in Lemma \ref{ord-vect-lem-w-2}.
\end{coro}

\subsubsection{Anti-ordinary representations II: local theory}\label{aordinary-localrep-2}
Suppose that $\pi$ is anti-ordinary at $p$. Then $\pi^\flat$ is also anti-ordinary at $p$ (but again with all the changes of conventions that come from
replacing $G_1$ with $G_2$; in particular, the role of the $U_{w,j}^{\GL,-}$ operator is now played by
$U_{w,j}^{\flat,\GL,-} = \t I_{w,j} t_{w,j} \t I_{w,j}$). 

As in the ordinary case, we have the following analogs of Lemma \ref{aord-vect-lem-w-2} and Corollary \ref{aord-vect-cor-p}:

\begin{lem}\label{aord-vect-lem-w-3} Let $w\in\Sigma_p$. Suppose $\pi_w^\flat$ is anti-ordinary with respect to $\t I_{w,r}$ (equivalently,
$\pi_w$ is anti-ordinary with respect to the $I_{w,r}$).
There exists a unique character $\beta^\flat_w:T_w\rightarrow\C^\times$ such that $\pi_w^\flat \hookrightarrow \Ind_{B_w}^{G_w}\beta_w^\flat$ is the unique 
irreducible subrepresentation and the anti-ordinary vector $\phi^{\flat,\aord}_{w,r} \in \pi_w^{\flat, \t I_{w,r}}$ of level $r$ is (up to non-zero scalar multiple) 
the unique similtaneous $U_{w,j}^{\flat,\GL,-}$-eigenvector in $\Ind_{B_w}^{G_w}\beta_w^\flat$ with support containing $B_w\t I_{w,r}$.
In particular, the $\phi^{\flat,\aord}_{w,r}$, $r\gg 0$, can be chosen to satisfy
$$
\sum_{\gamma\in \t I_{w,r}/ (\t I_{w,r'}^0\cap \t I_{w,r})} \pi^\flat_w(\gamma)\phi_{w,r'}^{\flat,\aord} = \phi_{w,r}^{\flat, \aord}, \ r'\geq r.
$$
Furthermore, if $\beta_w$ is as in Lemma \ref{aord-vect-lem-w-2}(ii), then $\beta_w^\flat = \beta_w^{-1}$.
\end{lem}

\begin{proof}  Just as for the ordinary case in the proof of Lemma \ref{aord-vect-lem-w-2}, 
$\phi_{w,r}^{\flat,\aord} = (\phi_{w,r}^\aord)^\vee$ is anti-ordinary. Furthermore, it is identified (up to non-zero scalar multiple)
with the image under the $\Ind_{B_w^\op}^{G_w} \alpha_w^{-1}\rightarrow \Ind_{B_w}^{G_w}\alpha_w^{-1}$
of the function $\phi_r'\in \Ind_{B_w^\op}^{G_w} \alpha_w^{-1}$  that is supported on $B_w^\op\t I_{w,r}$ and takes the value $1$ on $1$.  If $\phi_{w,r}^{\flat,\aord}$ is normalized to be equal to the image 
of $\phi_r'$, then $\phi_{w,r}^{\flat,\aord}$ satisfies the trace relation (since $\phi_r'$ does).
\end{proof}

\begin{coro}\label{aord-vect-cor-p-2} 
Let $I_r^\flat = I_r^\dag$.
There is a unique anti-ordinary
vector $\phi^{\flat,\aord}_r \in \pi_p^{\flat,I_r^\flat}$ of level $r$ for each $r\gg 0$. Furthermore, under the identification 
$\pi^\flat_p = \mu_p^\flat\otimes_{w\in\Sigma_p} \pi^\flat_w$,
$\phi^{\flat,\aord}_{-r} = \otimes_{w\in\Sigma_p}\phi_{w,r}^{\flat,\aord}$, with $\phi_{w,r}^{\flat,\aord}$ as in Lemma \ref{aord-vect-lem-w-3}.
\end{coro}

\begin{rmk}\label{aord-trace-remark-2}
Under the normalization in the proof of Lemma \ref{aord-vect-lem-w-3},
$\phi_{w,r}^{\flat,\aord}$ is indentified with a function in $\Ind_{B_w}^{G_w}\beta_w^\flat$ whose value on $1$
is $\Vol(\t I_{w,r}^0)$.
\end{rmk}

\begin{rmk}\label{testvector-remarkII}
The description of the anti-ordinary vector $\phi_{w,r}^{\flat,\aord}\in\pi_w^\flat$ provided by Lemma \ref{ord-vect-lem-w-2} shows that for $r$ sufficiently large, $\tilde\phi_w=\phi_{w,r}^{\flat,\aord}\in\pi_w^\flat$ satisfies
the conditions \eqref{tphiinvariancew} with $R_w = B_w$ (and $n_{i,w}=1$ for all $i$). In particular, $\phi_{w,r}^{\flat,\aord}\in\pi_w$ is also a suitable 
`test vector' for the calculations in \ref{maincalculation}.
\end{rmk}

The next lemma will be a crucial ingredient in our interpretation of the our local zeta integral formulas. Recall that $\pair_w$
is the canonical pairing on $\pi_w\times\pi_w^\flat$ (using
$\pi^\flat = \pi^\vee$). 

We continue with the hypotheses of Lemma \ref{aord-vect-lem-w-3},
namely that $\pi_w^\flat$ is anti-ordinary with respect to $\t I_{w,r}$ and,equivalently, $\pi_w$ is anti-ordinary with respect to the $I_{w,r}$. Note that this implies that $\pi_w = (\pi_w^\flat)^\vee$ is ordinary with respect to the Borel $\t B_w$ and
$\pi_w^\flat = \pi_w^\vee$ is ordinary with respect to the Borel $B_w$. We will write $\phi_{\pi_w}^{\flat,\ord} \in \pi_w^{\t I_{w,r}}$
and $\phi_{\pi_w^\flat}^\ord \in \pi_w^{\flat,I_{w,r}}$ for these ordinary vectors: the first is the vector as in Lemma \ref{ord-vect-lem-w-2} but with $\pi_w^\flat$ replaced with
$(\pi_w^\flat)^\vee=\pi_w$), and the second is the vector as in Lemma \ref{ord-vect-lem-w} but with $\pi_w$ replaced with
$\pi_w^\vee=\pi_w^\flat$).  
We assume that  $\phi_{\pi_w}^{\flat,\ord}$ takes the value 
the value $1$ at $1$ and 
that $\phi_{\pi_w^\flat}^\ord$ takes the value $\Vol(I_{w,r}^0\t I^0_{w,r})$  (which is independent of $r$) at $1$.
We assume that the anti-ordinary vectors are normalized as in Remarks \ref{aord-trace-remark} and \ref{aord-trace-remark-2}

\begin{lem}\label{local-pairing-lem} 
With the preceding conventions,
$$
\frac{\langle \phi_{w,r}^\aord, \phi_{w,r}^{\flat,\aord}\rangle_w}{\Vol(I_{w,r}^0\cap\t I_{w,r}^0)}
 = \frac{\langle \phi_{\pi_w}^{\flat,\ord},\phi_{w,r}^{\flat,\aord}\rangle_w}{\Vol(\t I_{w,r}^0)}
 = \frac{\langle \phi_{w,r}^\aord, \phi_{\pi_w^\flat}^\ord\rangle_w}{\Vol(I_{w,r}^0)}.
$$ 
In particular, the left-hand side is independent of $r$.
\end{lem}

\begin{proof} We can identify $\pair_w$ with the pairing induced by $\Ind_{B_w}^{G_w}(\beta_w)\times\Ind_{B_w}^{G_w}(\beta_w)^{-1}$ given by $(\vphi,\vphi')\mapsto \int_{\GL_n(\O_w)}\vphi(k)\vphi'(k)dk$.
The lemma is then a straightforward calculation: with this normalization of the pairing, each ratio equals $\Vol(I_{w,r}^0\t I_{w,r}^0)$ (which is independent of $r$).
\end{proof}

\subsection{Global consequences of the local theory}\label{globalvectors}
Let $\pi$ be an anti-ordinary cuspidal anti-holomorphic representation $\pi$ of $G_1$ of weight type $(\kappa,K_r)$, $K_r = K^pI_r$.
Let $\pi^\flat$ be as in Section \ref{MVWdag}. Viewed as a representation of $G_2$, $\pi^\flat$ is also 
an anti-ordinary cuspidal anti-holomorphic representation of weight type $(\kappa^\flat,K_r^\flat)$, $K_r^\flat =  (K^p)^\dag \t I_r$.
But viewed as a representation of $G_1$, $\pi^\flat$ is an ordinary holomorphic representation.

Suppose $\pi$ satisfies the Gorenstein, Minimality, and Global Multiplicity One Hypotheses of section 
\ref{BIG}. Let $S$ be the set of finite primes, not dividing $p$, at which $K^p$ is not hyperspecial maximal. 
We summarize the implications of the local theory for the identification of automorphic
forms in $\pi$.  Let $I_{\pi}$ and $\hat{I}_{\pi}$ be as in Section  \ref{locrep}.  
We say that the anti-holomorphic cuspidal representation $\pi'$ of $G_1$ is {\it in the family determined by $\pi$}
if there is a non-trivial character $\lambda_{\pi'}$ of the Hecke algebra $\TT = \TT_{\pi}$ defining the action of the unramified
Hecke operators on $\pi'$.   Any such $\pi'$ is assumed to be given with a factorization \eqref{factor}.  The 
factors $\pi'_w$, for $w \mid p$, are all (tempered) subquotients of principal series representations.

In what follows, the Borel subalgebras $\mathfrak{b}^+_{\sigma}$ are chosen at archimedean places $\sigma$ as in Section  \ref{holoreps}.
The Minimality Hypothesis allows us to  choose $v_S$ uniformly for $\pi'$ in the following proposition.

\begin{prop}\label{tracefac}  Fix an element 
$v_S \in \hat{I}_{\pi}$. 
Let $\pi'$ be any anti-holomorphic representation, of type $(\kappa',K_{r'})$, in the family determined by $\pi$.
Let $\varphi_{\kappa',-}$ denote a 
lowest weight vector in the anti-holomorphic subspace of $\pi'_{\infty}$, as in \eqref{facphiinf}.  For a finite prime $v \notin S \cup \Sigma_p$,
let $\varphi'_v$ be a fixed generator of the spherical subspace of $\pi'_v$ and let $\varphi^{\prime,\flat}_v$ be the dual
generator of the spherical subspace of $\pi^{\prime,\flat}_v$.   Assume $\kappa$ satisfies \eqref{holo-wt-ineq}.
Then

\begin{enumerate}
\item For $r' >>0$, there is, up to scalar multiple, a unique anti-ordinary anti-holomorphic vector $\varphi_{r'}(v_S,\pi') \in (\pi')^{K_{r'}}$
with factorization \eqref{factor} given by
$$fac_{\pi^{\prime,\flat}}(\varphi_{r'}(v_S,\pi')) 
= \varphi_{\kappa',-} \otimes \otimes_{v \notin S \cup \Sigma_p} \varphi'_v \otimes \otimes_{w \mid p} \phi^{a-ord}_{w,r'} \otimes v_S.$$

\item  As $r'$ varies, the $\varphi^{r'}(v_S,\pi') \in \pi'$ can be chosen so that, if $r"\geq r' >>0 $, then
$$
\frac{\#(I_{r'}^0/I_{r'})}{\#(I_{r''}^0/I_{r''})}\sum_{\gamma\in I_{r'}/ I_{r''}} \gamma\cdot \varphi^{r''}(v_S,\pi') = \sum_{\gamma\in I_{r'}/ (I_{r''}^0\cap I_{r'})} \gamma\cdot \varphi^{r''}(v_S,\pi') = \varphi^{r'}(v_S,\pi').
$$
\end{enumerate}
\end{prop}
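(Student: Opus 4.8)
The plan is to assemble the global vector $\varphi^{r''}(v_S,\pi')$ from local components, each of which already spans a distinguished line by the results of Sections \ref{holo-sec} and \ref{ordvec}, and then to deduce the trace compatibility in (2) directly from the corresponding local statement at $p$. First one observes that $\pi'$ is anti-ordinary: since $\pi'$ lies in the family determined by $\pi$, the unramified Hecke character $\lambda_{\pi'}$ extends to a character of $\TT_{\pi}$, and because $\grm_{\pi}$ is non-Eisenstein and anti-ordinary the normalized $U$-operators $U_{w,j,\kap'}$ are units in $\TT_{\pi}$; hence each $\pi'_w$ for $w \mid p$ is a tempered subquotient of a principal series representation and is anti-ordinary, so by Corollary \ref{aord-vect-cor-p} (and Lemma \ref{aord-vect-lem-w}) there is, for every $r'' \gg 0$, a one-dimensional space $\C\cdot\phi^{\aord}_{w,r''} \subset (\pi'_w)^{I_{w,r''}}$ of level-$r''$ anti-ordinary vectors, whose tensor product over $w \mid p$ spans $\pi_p^{\prime,\aord}$. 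At an archimedean place $\sigma$, $\pi'_{\sigma} = \Dc(\tau'_{\sigma})$ has the one-dimensional space $\C\cdot\varphi_{\kap'_{\sigma},-} \subset 1\otimes W^{\vee}_{\tau'_{\sigma}}$ of anti-holomorphic lowest-weight vectors described in \ref{aholovectors}; at a finite place $v \notin S \cup \Sigma_p$, $\pi'_v$ is unramified with spherical line $\C\cdot\varphi'_v$. Finally the Minimality Hypothesis \ref{badp} identifies $\hat{I}_{\pi}$ with a free module over the relevant specialization of $\TT_{\pi}$, compatibly with change of level; specializing at the $\CO$-point $\lambda_{\pi'}$ and trivializing the one-dimensional factor $\pi_p^{\prime,\aord}$ by the fixed vector $\otimes_{w\mid p}\phi^{\aord}_{w,r''}$, the single chosen element $v_S \in \hat{I}_{\pi}$ then determines a $K^p$-fixed $S$-component $v_S \in \otimes_{v\in S}(\pi'_v)^{K_v}$, uniformly in $\pi'$; this uniformity is exactly what the Minimality Hypothesis buys.

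Granting this, part (1) is immediate: the tensor product $\varphi_{\kap',-}\otimes \otimes_{v\notin S\cup\Sigma_p}\varphi'_v \otimes \otimes_{w\mid p}\phi^{\aord}_{w,r''}\otimes v_S$ is a nonzero factorizable vector in the abstract representation $\pi'$, and by Global Multiplicity One (Hypothesis \ref{multone}) the realization of $\pi'$ as a space of anti-ordinary anti-holomorphic forms is canonical up to scalar (cf. Lemma \ref{aordveclem}), so after fixing the factorization \eqref{factor} this tensor product is the asserted vector $\varphi^{r''}(v_S,\pi')$. It lies in $(\pi')^{K_{r''}}$ because every displayed local component is fixed by the corresponding local level subgroup, it is anti-ordinary and anti-holomorphic by construction, and it is unique up to scalar because each of the local lines is; here $r'' \gg 0$ means $r''$ at least the maximum of $r'$ and the thresholds supplied by Lemma \ref{aord-vect-lem-w}, so that $(\pi')^{K_{r''}} \ne 0$ and the level-$r''$ anti-ordinary vectors at $p$ exist.

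For part (2), the only level that changes as $r''$ grows is at $p$: $K_{r''} = K^pI_{r''}$, so $t_{r''} = \sum_{\gamma \in I_{r''}/I_{r''+1}}\gamma$ acts on a factorizable vector only through its $p$-component, namely as $\prod_{w\mid p}\Tr_{I_{w,r''}/I_{w,r''+1}}$ on $(\pi'_p)^{I_{r''+1}} \to (\pi'_p)^{I_{r''}}$, fixing the archimedean, spherical and $S$-components. By Lemma \ref{aord-vect-lem-w-2} the vectors $\phi^{\aord}_{w,r''}$ can be normalized once and for all so that $\Tr_{I_{w,r''}/I_{w,r''+1}}\phi^{\aord}_{w,r''+1} = \phi^{\aord}_{w,r''}$ for $r''\gg 0$; with these choices $t_{r''}\varphi^{r''+1}(v_S,\pi') = \varphi^{r''}(v_S,\pi')$, using Lemma \ref{trace2} to identify the global $t_{r''}$ with the trace map on forms.

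The genuine content is all imported: the uniqueness of (anti-)ordinary vectors in principal series and their trace compatibility, i.e. Lemmas \ref{ord-vect-lem-w} and \ref{aord-vect-lem-w-2} (whose proofs rest on the Jacquet-module analysis of Lemma \ref{ord-vect-lem-w}), together with the Minimality Hypothesis \ref{badp}. The main obstacle is purely bookkeeping: keeping the scalar normalizations of the local anti-ordinary vectors consistent simultaneously as the level $r''$ and the member $\pi'$ of the family vary, while respecting the fixed factorizations \eqref{factor}. Once Lemma \ref{aord-vect-lem-w-2}, with its built-in trace compatibility, and the Minimality Hypothesis are in hand, no further analytic input is needed.
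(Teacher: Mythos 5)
Your proposal is correct and follows the same route as the paper: the paper's proof is a one-line citation to Lemmas \ref{aord-vect-lem-w-2} and \ref{ord-vect-lem-w}, and your argument is precisely the fleshed-out version, assembling the factorizable vector from the one-dimensional local lines (anti-ordinary at $p$ via Lemma \ref{aord-vect-lem-w} and Corollary \ref{aord-vect-cor-p}, lowest-weight anti-holomorphic at $\infty$, spherical at unramified $v$, and the uniformly chosen $v_S$ via Minimality) and reducing the trace compatibility in (2) to the built-in trace relation of Lemma \ref{aord-vect-lem-w-2} at $p$, since the other local components are level-independent.
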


\begin{proof}  This follows directly from the results in the previous sections, in particular 
Lemmas \ref{aord-vect-lem-w-2} and \ref{ord-vect-lem-w}.  
\end{proof}

\begin{rmk}\label{tracefac-rmk}
To ensure property (2) we adopt the normalization for $\phi^{\aord}_{w,r''}$ described in Remark \ref{aord-trace-remark}.
\end{rmk}

Similarly, from the results in Section \ref{ordvec-2} we deduce that after fixing a $v_S^\flat \in I_\pi = \hat I_{\pi^\flat}$ and letting $\varphi_{(\kappa')^\flat,-}$ denote a 
lowest weight vector in the anti-holomorphic subspace of $(\pi')^\flat_{\infty}$, there is,
up to scalar multiple, a unique anti-ordinary anti-holomorphic vector 
$\varphi_{r'}(v_S^\flat,(\pi')^\flat) \in (\pi')^{\flat,K_{r'}^\flat}$
satisfying the analogs of properties (1) and (2).

For the most general statement, we introduce a twisting character $\chi$ as in \eqref{chitwist} and Section \ref{periodtwist}.  In the following Lemma, the module $H^{d,\ord}_{\kap^\flat}(K_{r'}^\flat, \psi^{',-1},R)_\chi$  and the period 
$Q[\pi',\chi]$ are as defined in Section \ref{normper}.

\begin{lem}\label{Omega-lem}
Let $\chi$ be a Hecke character of type $A_0$.  The ratio 
$$
\frac{\la \varphi_{r'}(v_S,\pi'), \varphi_{r'}(v_S^\flat,(\pi')^\flat)_\chi \ra_{\pi',\chi}}{\Vol(I_{r',V}^0\cap I_{r',-V}^0)} 
$$
is independent of $r'$.  If $\varphi_{r'}(v_S,\pi') \in H^{d,\ord}_\kap(K_{r'},\psi',R)$ and 
$\varphi_{r'}(v_S^\flat,\pi') \in H^{d,\ord}_{\kap^\flat}(K_{r'}^\flat, \psi^{',-1},R)_\chi$, 
then its value is in $R \cdot Q[\pi',\chi]$, and for appropriate choices of $\varphi_{r'}(v_S,\pi')$ and $\varphi_{r'}(v_S^\flat,\pi')$ it is a unit
multiple of $Q[\pi',\chi]$.
\end{lem}

\begin{proof} The independence of $r'$ is a simple consequence $\varphi_{r'}(v_S,\pi')$ and $\varphi_{r'}(v_S^\flat,(\pi')^\flat$ being anti-ordinary vector and Lemma \ref{local-pairing-lem}. 
The remaining claims of the lemma are consequences of the definition of $Q[\pi',\chi]$.
\end{proof}

\section{Construction of $p$-adic $L$-functions}\label{lastchapter}

\subsection*{Review of notation} We recall the notation from the previous sections, because some of it is admittedly counterintuitive.  Our basic Shimura varieties are denoted $Sh(V)$ (attached to $G_1$) and $Sh(-V)$ (attached to $G_2$, which is isomorphic to $G_1$).   Classical points of our Hida families correspond to cuspidal automorphic representations denoted $\pi$ and $\pi^{\flat}$, for $Sh(V)$ and $Sh(-V)$, respectively.  With our conventions, $\pi$ is an {\it anti-holomorphic} automorphic representation of $G_1$, and therefore with respect to the isomorphism $G_2 \isoarrow G_1$ is a {\it holomorphic } automorphic representation of $G_2$.  Correspondingly, $\pi^{\flat}$, which can be identified with the complex conjugate of $\pi$, is a holomorphic representation of $G_1$, and therefore gives rise to a holomorphic modular form -- of weight $\kappa$, in practice -- on $Sh(V)$; but $\pi^{\flat}$ is anti-holomorphic on $G_2$.  The input of the doubling integral is an {\it anti-holomorphic} vector on $G_3$ which comes from a vector $w \in \pi \otimes \pi^{\flat}$, that will be identified shortly; this is paired with the Eisenstein measure, which takes values in the ring of $p$-adic modular forms on $G_4$ and which specializes to classical forms of weight $\kappa \otimes \kappa^{\flat}$ on $G_3$.  We always assume that $\pi$ and $\pi^{\flat}$ are {\it anti-ordinary} at all primes dividing $p$; in particular, the vector $w$ has local components at $p$ that are chosen to be anti-ordinary.  

Since one is in the habit of thinking of Hida theory as a theory of families of holomorphic and ordinary forms, the following lemma may be welcome; in any case, it is implicit in the assumption that both $\pi$ and $\pi^{\flat}$ are anti-ordinary.

\begin{lem}\label{ord-unitary-lem}
  Suppose $\pi$ is an anti-ordinary and anti-holomorphic representation of $G_1$.  Then the $p$-adic component $\pi_p$ of $\pi$ is also ordinary.
\end{lem}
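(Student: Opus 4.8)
The plan is to deduce the lemma formally from the local structure theory of (anti-)ordinary vectors established in Section~\ref{ordvec}, together with the standing hypothesis --- recalled in the review of notation above and used throughout the construction --- that \emph{both} $\pi$ and $\pi^{\flat}$ are anti-ordinary at every prime dividing $p$. Write $\pi_p = \mu_p \otimes_{w\in\Sigma_p}\pi_w$ under the isomorphism \eqref{G-iso-p}, with $\mu_p$ a character of $\Qptimes$ and $\pi_w$ an irreducible admissible representation of $G_w = \GL_n(\K_w)$. By the local description of ordinary vectors in Section~\ref{ordinary-localrep}, $\pi_p$ is ordinary if and only if $\mu_p$ is unramified and each $\pi_w$ is ordinary; likewise, by Corollary~\ref{aord-vect-cor-p}, $\pi$ anti-ordinary forces $\mu_p$ unramified and each $\pi_w$ anti-ordinary. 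So the content of the lemma is the purely local assertion that, for each $w\mid p$, $\pi_w$ is ordinary; here we may assume $\kap$ satisfies Inequality~\eqref{holo-wt-ineq}, since this is part of the standing hypotheses on anti-holomorphic representations in Section~\ref{aordinarylocal}.

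First I would observe that $\pi^{\flat} = \pi^{\vee}\otimes ||\nu||^{a(\kap)}$, and that the character $||\nu||^{a(\kap)}$ restricts trivially to each $\GL_n(\K_w)$-factor of \eqref{G-iso-p} (it is supported on the $\Qptimes$-similitude factor); hence $\pi^{\flat}_w \cong \pi_w^{\vee}$ as representations of $G_w$. Next, Lemma~\ref{aord-vect-lem-w}(i) applied to $\pi_w$ gives that $\pi_w^{\vee}$ is ordinary --- this uses only that $\pi$ is anti-ordinary. To upgrade ``$\pi_w^\vee$ ordinary'' to ``$\pi_w$ ordinary'' I would invoke the second half of the standing hypothesis: $\pi^{\flat}$ is also anti-ordinary, so each $\pi^{\flat}_w \cong \pi_w^{\vee}$ is anti-ordinary, and applying Lemma~\ref{aord-vect-lem-w}(i) once more --- now to the representation $\pi_w^{\vee}$ --- yields that $(\pi_w^{\vee})^{\vee}\cong\pi_w$ is ordinary. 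Combined with $\mu_p$ unramified, this gives $\pi_p$ ordinary, as desired.

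The main obstacle is a bookkeeping point rather than a conceptual one. The notions ``ordinary'' and ``anti-ordinary'' are defined by the operators $u_{w,j}^{\pm} = |\kap_{norm}(t_{w,j})|_p^{\mp 1}U_{w,j}^{\pm}$, in which both the elements $t_{w,j}$ of Section~\ref{Heckeatp} and the Iwahori subgroups $I_{w,r}$ are attached to the signature $(a_w,b_w)$ and to the associated non-standard Borel $B_w$; but $\pi^{\flat}$ is naturally an anti-holomorphic representation of $G_2$, whose hermitian form has signature $(b_w,a_w)$, so that the data defining ``anti-ordinary for $G_2$'' are those of the opposite partition of $n$. Thus one must check that, under the canonical identification $G_2(\Qp)=G_1(\Qp)$ and the passage $\pi_w\mapsto\pi_w^{\vee}$, ``$\pi^{\flat}$ anti-ordinary for $G_2$'' really does match ``$\pi_w^{\vee}$ anti-ordinary for $G_1$'', which is the form in which I applied Lemma~\ref{aord-vect-lem-w}(i) above. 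I expect this to be mechanical given the tools already at hand: Lemmas~\ref{ord-vect-lem-w}, \ref{ord-dual-lem-w}, \ref{aord-vect-lem-w} and \ref{aord-vect-lem-w-2} pin down the (anti-)ordinary line in $\pi_w$ as the image of the unique irreducible sub- (resp.\ quotient-) representation of a principal series $\Ind_{B_w}^{G_w}\alpha_w^{\pm1}$, together with the $p$-adic valuations of $\alpha_w$ on the torus, and passing to the contragredient sends $\alpha_w\mapsto\alpha_w^{-1}$ while the change of Borel exactly compensates the resulting inversion on the relevant diagonal elements. As an alternative to invoking the hypothesis on $\pi^{\flat}$, one could instead try to show directly that, for the cohomological $\pi$ under consideration, each $\pi_w$ is self-dual up to an unramified twist --- a consequence of $\Pi^{\vee}\cong\Pi^c$ for the base change $\Pi$ together with the splitting of the primes of $\K^+$ above $p$ in $\K$ --- so that ``$\pi_w^{\vee}$ ordinary'' already forces ``$\pi_w$ ordinary''; but this requires the same local--global input and does not appear to be any cleaner.
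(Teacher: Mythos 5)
Your main route is genuinely different from the paper's, and it has a weakness the paper's route avoids. The paper's own proof does \emph{not} invoke the standing assumption that $\pi^{\flat}$ is anti-ordinary; it derives the result from the stated hypothesis alone by the three-line argument: duality exchanges ordinary with anti-ordinary (Lemma~\ref{aord-vect-lem-w}); ordinarity is preserved by complex conjugation of automorphic forms and by twists by powers of the similitude character; and $\pi$, being cuspidal, is essentially unitary, so $\pi^{\vee}\cong\bar\pi$ up to such a twist. Thus $\pi$ anti-ordinary $\Rightarrow \pi^{\vee}$ ordinary $\Rightarrow \bar\pi$ ordinary (up to similitude twist) $\Rightarrow \pi$ ordinary, all within the $G_1$ side and the same Iwahori data. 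The point of the lemma is precisely the consistency check that the conjunction ``$\pi$ and $\pi^{\flat}$ both anti-ordinary'' is no stronger than ``$\pi$ anti-ordinary''; by invoking the conjunction you make your argument a near-tautology (since ``$\pi^{\flat}$ anti-ordinary'' is already close to the conclusion via duality), and the paper's remark preceding the lemma (``it is implicit in the assumption that both $\pi$ and $\pi^{\flat}$ are anti-ordinary'') is acknowledging exactly this route while the subsequent proof deliberately avoids it.

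The route you sketched and dismissed at the end --- passing to self-duality (or conjugate-self-duality) of the local components --- is much closer to what the paper actually does, and it is \emph{cleaner} than your main route: essential unitarity is a completely formal property of cuspidal automorphic representations, so one does not need the deeper base-change input $\Pi^{\vee}\cong\Pi^{c}$, and because the argument never switches to $G_2$, the Borel/signature bookkeeping that you flag as ``mechanical but unverified'' simply never arises. That bookkeeping is a genuine loose end in your write-up: for your argument to close, you would have to verify that ``$\pi^{\flat}$ anti-ordinary for $G_2$'' (with Iwahori data attached to the signature $(b_w,a_w)$ and the weight $\kap^{\flat}$) coincides with ``$\pi_w^{\vee}$ anti-ordinary in the sense of Lemma~\ref{aord-vect-lem-w} for $G_1$'' (Iwahori data attached to $(a_w,b_w)$ and $\kap$), and you have not done so. The paper's proof renders the question moot.
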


\begin{proof}  The property of being ordinary is preserved under complex conjugation, and by twist by a power of the similitude character.  On the other hand, duality exchanges ordinary with anti-ordinary representations, by Lemma \ref{aord-vect-lem-w}.    Since $\pi$ is essentially unitary, it follows that it is both ordinary and anti-ordinary.
\end{proof}

More precisely still, the anti-ordinary subspace (or submodule) of $\pi\otimes \pi^{\flat}$ is denoted $\hat{I}_{\pi}\otimes \hat{I}_{\pi^{\flat}}$.  However, it is best to view 
$\hat{I}_{\pi}\otimes \hat{I}_{\pi^\flat}$ as a trace compatible system 
\begin{equation}\label{tracecompatible} w_r \in  \hat{S}^{\ord}_{\kappa,V}(K_r,R)[\pi]\otimes \hat{S}^{\ord}_{\kappa^{\flat},-V}(K_r^\flat,R)[\pi^{\flat}]; ~~ \iota^*_r(w_{r+1}) = w_r,
\end{equation}
with notation as in Lemma \ref{trace2}.  Thus, in what follows, $\varphi\otimes \varphi^{\flat} \in \pi\otimes \pi^{\flat}$, viewed as an anti-holomorphic form of level $K_r\times K_r^\flat$ on $Sh(V)\times Sh(-V)$,
is taken to belong in $\hat{I}_{\pi}\otimes \hat{I}_{\pi^{\flat}}$, which we now identify (with respect to the factorization \eqref{facpi}) with the subspace
\begin{equation}\label{aordholomorphic} \bigotimes_{w \mid p} [\phi^{\aord}_{w,r}\otimes \phi^{\flat,\aord}_{w,r}]\otimes \bigotimes_{\sigma \mid \infty} [\varphi_{\kappa_{\sigma},-}\otimes \varphi_{\kappa_{\sigma}^{\flat},-} ] \otimes \pi_{S^p}^{K^p}\otimes \pi_{S^p}^{\flat,\bar{K}^p} \subset \pi\otimes \pi^{\flat}.
\end{equation}
In other words, these test vectors have local components as in \eqref{facphi},  \eqref{facphip}, and \eqref{facphiinf}.
(See also Section \ref{globalvectors} for how we identify anti-holomorphic, anti-ordinary cuspforms with
elements of $\hat I_\pi \otimes \hat I_{\pi^\flat}$.)
Moreover, we take our vector $\varphi\otimes \varphi^{\flat}$ to be integral over $\CO = \CO_\pi$.  By our choice in \eqref{aordholomorphic}, this is then 
the anti-ordinary vector $w = w_r \in \pi\otimes \pi^{\flat}$ to which we referred above.   

Note that the choice of $\varphi\otimes \varphi^{\flat}$, and therefore of $w_r$, depends on the level $K_r$; 
however, the corresponding system $\{w_r\}$ satisfies the trace compatibility relation \eqref{tracecompatible} by Lemmas \ref{aord-vect-lem-w-2}  and \ref{aord-vect-lem-w-3} and Proposition \ref{tracefac}.  In particular, the value of the (normalized) pairing with the Eisenstein measure is independent of this choice, and we can specifically take $r = d \geq 2t$ as in \eqref{dgeq2t}, and as required for the local calculation at primes dividing $p$.

\subsection{Pairings of axiomatic Eisenstein measures with Hida families}

We now apply the considerations of Section  \ref{classpairings} to the integral over $G_3$.  
Given a fixed Hecke character $\chi$, we let the parameters $\kappa$, $\rho$, $\rho^{\upsilon}$ determine one another as in \eqref{parameters}, \eqref{iota}.
Let $\phi_{r,\rho}$ be as in Statement \ref{meas2ee} of Lemma \ref{meas}, a measure on $T_H(\Zp)$ of type $\chi$ for some $p$-adic
Hecke character $\chi$ of $X_p$.  Choose $\Xi \in C_r(T_H(\Zp),R)\rho^{\upsilon} \subset C_r(T_H(\Zp),\CO)\rho^{\upsilon}$
so that (cf. \eqref{phirkap})  
\begin{equation}\label{phirkap2} \phi_{r,\rho}(\Xi) \in S^{\ord}_{\kappa,V}(K_r,R) \otimes S^{\ord}_{\kappa^{\flat},-V}(K_r^\flat,R)\otimes \chi\circ\det,  \end{equation}
For $\varphi \otimes \varphi^{\flat} \in [\hat{I}_{\pi}\otimes \hat{I}_{\pi^{\flat}}] \subset \pi\otimes \pi^{\flat}$ we define (in the obvious notation) 
$$L_{\varphi\otimes \varphi^{\flat}}(\phi_{r,\rho})(\Xi)$$
using the normalized canonical pairing \eqref{pairC-def} of 
$S^{\ord}_{\kappa,V}(K_r,R) \otimes S^{\ord}_{\kappa^{\flat},-V}(K_r^\flat,R)\otimes \chi\circ\det $ with 
\begin{equation}\label{hdhd}
H^{d,\ord}_{\kap^D}(K_r,R)[\pi]\otimes H^{d,\ord}_{\kap^{\flat,D}}(K_r^\flat,R)[\pi^{\flat}]\otimes\chi^{-1}\circ\det \\ \simeq [\hat{I}_{\pi}\otimes \hat{I}_{\pi^{\flat}}]\otimes\chi^{-1}\circ\det
\end{equation}
applied to $\phi_{r,\rho}(\Xi)$ as in \eqref{phirkap2} and $\varphi \otimes \varphi^{\flat}\otimes \chi^{-1}\circ \det$ as in \eqref{hdhd} 
(the characters $\chi$ and $\chi^{-1}$ cancel in the obvious way).

We apply this to the measure $Eis_{r,\rho,\chi}$ attached to 
$$\Xi \mapsto \int_{X_p \times T_H(\Zp)} (\chi,\Xi)  dEis$$ 
by Lemma \ref{meas}, with $dEis$ an axiomatic Eisenstein measure
as in Section  \ref{existence-Eisenstein}.     First, we need to show that the discussion in Section  \ref{classpairings} applies to this situation.  

\subsubsection{Equivariance of the Garrett map}

If $\lambda:  \Tb_{K,\kap,R} \rar \C$ is a character, let $\lambda^{\flat}(T) = \lambda(T^{\flat})$, where $^{\flat}$ is
the involution defined in Lemma \ref{Hecke-isoms} (ii).  

\begin{lem}\label{conjugatelambda}  Let $\pi$ be a cuspidal automorphic representation of $G$ of type $(\kap,K)$.   Then
$$\lambda_{\bar{\pi}} = \lambda_{\pi}^{\flat}.$$
\end{lem}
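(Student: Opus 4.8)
The plan is to deduce the lemma from the identification $\bar\pi = \pi^{\flat}$ together with the way the involution $\flat$ on Hecke algebras was constructed. First I would invoke the multiplicity one discussion of Section \ref{autoreps}: since $\pi$ occurs with multiplicity one (Hypothesis \ref{multone-pi}), $\pi^{\flat}$ and $\bar\pi$ are literally the same space of automorphic forms, so $\lambda_{\bar\pi} = \lambda_{\pi^{\flat}}$. It then remains to check that $\lambda_{\pi^{\flat}}(T) = \lambda_{\pi}(T^{\flat})$ for $T$ running over a generating set of the Hecke algebra, namely the away-from-$p$ operators $T(g)$, $g\in G(\A_f^S)$, together with the $U$-operators $U_{w,j,\kap}$ at primes $w\mid p$ (in the ordinary case; in general one only has the $p$-depleted statement).

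For the away-from-$p$ generators the identity $\lambda_{\pi^{\flat}}(T(g)) = \lambda_{\pi}(T(g)^{\flat})$ is precisely \eqref{flatflat} ($\lambda^{p}_{\pi^{\flat}} = \lambda^{p,\flat}_{\pi}$, using $T(g)^{\flat}=T(g^{\dagger})$), and this is where the deep input enters: it rests on \eqref{dagger=flat}, the local isomorphism $\pi_{\ell}^{\dagger}\cong \pi_{\ell}^{\vee}$ of M\oe glin--Vign\'eras--Waldspurger \cite{MVW}. For the $U$-operators at a prime $w\mid p$, assume $\pi$ (equivalently $\pi^{\flat}$) is ordinary, so that $\lambda_{\pi}$ and $\lambda_{\pi^{\flat}}$ extend to the full ordinary Hecke algebra. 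By Lemma \ref{ord-vect-lem-w}(ii), $\pi^{\flat}_w$ is the unique irreducible submodule of $\Ind_{B_w}^{G_w}\alpha_w$ for a unique character $\alpha_w$ of $T_w$, and the ordinary vector is the $U_{w,j,\kap}$-eigenvector with eigenvalue $|\kap_{norm}(t_{w,j})|_p^{-1}\delta_w^{-1/2}\alpha_w(t_{w,j})$; dually, Lemmas \ref{ord-dual-lem-w}(i), \ref{aord-vect-lem-w} and \ref{aord-vect-lem-w-2} identify $\pi_w$ as the irreducible quotient of $\Ind_{B_w}^{G_w}\alpha_w^{-1}$. Feeding $\beta_w=\alpha_w^{-1}$ into these formulas and comparing with the defining relation $U_{w,j,\kap}^{\flat} = U_{w,n,\kap^{\flat}}^{-1}U_{w,n-j,\kap^{\flat}}$ of Lemma \ref{Hecke-isoms}(ii), the desired identity $\lambda_{\pi^{\flat}}(U_{w,j,\kap}) = \lambda_{\pi}(U_{w,j,\kap}^{\flat})$ reduces to an elementary identity among the torus elements $t_{w,j}$, the relevant weight characters, and the modulus character $\delta_w$.

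The main obstacle is exactly this last bookkeeping step: one must verify that $U_{w,n,\kap^{\flat}}^{-1}U_{w,n-j,\kap^{\flat}}$, evaluated on the $\beta_w=\alpha_w^{-1}$ data via Lemma \ref{ord-vect-lem-w}, produces precisely the anti-ordinary $U_{w,j,\kap}$-eigenvalue on $\pi_w$, including the correct powers of $p$ coming from the normalizations by $\kap_{norm}$ versus $\kap^{\flat}_{norm}$ and from $\delta_w$. Alternatively, one can argue more structurally: the isomorphism of Hecke algebras in Lemma \ref{Hecke-isoms}(ii) was constructed from the maps $F_\infty$ of \eqref{Finfty-coh} and $F^{\dagger}$ of \eqref{Fdagger-iso}, which intertwine the Hecke actions on $H^{\bullet}_!(Sh(V),\omega_{\kap,V})$ and $H^{\bullet}_!(Sh(-V),\omega_{\kap^{\flat},-V})$ up to the automorphism $\dagger$, and then \eqref{dagger=flat} (to identify $\pi^{\dagger}$ with $\pi^{\vee}$) makes the lemma a formal consequence. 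I would present the explicit local computation as the primary argument, remark on the structural alternative, and note that if only the $p$-depleted statement $\lambda^p_{\bar\pi}=\lambda^{p,\flat}_{\pi}$ is needed then it is immediate from \eqref{flatflat}.
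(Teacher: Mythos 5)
Your proposal is correct and follows essentially the same route as the paper, which proves the lemma with the single sentence ``It follows from \eqref{flatflat} that.'' You correctly identify \eqref{flatflat} as the heart of the matter, that it ultimately rests on the M{\oe}glin--Vign\'eras--Waldspurger isomorphism $\pi_\ell^\dagger \cong \pi_\ell^\vee$ of \eqref{dagger=flat}, and that $\bar\pi \cong \pi^\flat$ reduces the claim to comparing $\lambda_{\pi^\flat}$ with $\lambda_\pi \circ \flat$.

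Where you go beyond the paper is in noticing that \eqref{flatflat} as stated is only the $p$-depleted identity $\lambda_\pi^{p,\flat} = \lambda_{\pi^\flat}^p$, whereas the lemma is asserted for the full character including the $U_{w,j,\kappa}$-operators. You are right that this gap exists on the face of it, and you supply the correct supplementary input: the local analysis of Lemmas \ref{ord-vect-lem-w}, \ref{ord-dual-lem-w}, \ref{aord-vect-lem-w}, and \ref{aord-vect-lem-w-2}, which realize $\pi_w^\flat$ and $\pi_w$ as the unique irreducible sub and quotient of $\Ind_{B_w}^{G_w}\alpha_w$ and $\Ind_{B_w}^{G_w}\alpha_w^{-1}$ respectively and pin down the (anti-)ordinary eigenvalues, so that the identity $\lambda_{\pi^\flat}(U_{w,j,\kappa}) = \lambda_\pi(U_{w,j,\kappa}^\flat)$ reduces to the explicit relation $U_{w,j,\kappa}^\flat = U_{w,n,\kappa^\flat}^{-1}U_{w,n-j,\kappa^\flat}$ of Lemma \ref{Hecke-isoms}(ii) together with an elementary comparison of torus characters. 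You do not carry that comparison out, but you correctly flag it as bookkeeping, and the alternative structural argument via $F_\infty$, $F^\dagger$, and \eqref{dagger=flat} that you sketch is also a valid way to close it. In short: same skeleton as the paper, but you make explicit a step at $p$ that the paper treats as immediate; that extra care is appropriate and your identification of exactly which lemmas are needed to fill it is accurate.
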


\begin{proof}  At unramified places the identity follows from \eqref{flatflat}.  By  Hypothesis \ref{multone-pi} and strong multiplicity one, applied to the base change to $GL(n)_\K$, this in turn implies that the local components of $\bar{\pi}$ and $\pi^{\flat}$ are isomorphic at all places split in $\K/\K^+$.  In particular, we have this isomorphism at  places dividing $p$; then the identity is a consequence of the uniqueness of the ordinary and anti-ordinary eigenspaces and the definition of the involution $^\flat$.
\end{proof}

Let $\pi$ be cuspidal of type $(\kap,K)$, and let $\varphi \in \pi^K$ be an anti-holomorphic vector.  We pick a Hecke character
$\chi$ as in Section  \ref{induced}.
In Section  \ref{zetaintegral} we defined the zeta integral
$$
I(\varphi,\varphi',f,s) = \int_{Z_3(\adeles)G_3(\Q)\backslash G_3(\A)} E_f(s,(g_1,g_2))\chi^{-1}(\det g_2)\varphi(g_1)\varphi'(g_2)d(g_1,g_2).
$$
where $\varphi' \in \bar{\pi}$ and $E_f(s,g_1,g_2)$ is an Eisenstein series depending on
a section $f \in I(\chi,s)$.   We specialize $s$ to a point $m$ where $E_f(s,\bullet)$ is
nearly holomorphic, in other words where the archimedean component $f_{\infty}$ of $f$ satisfies the hypotheses of Definition \ref{axiomeis}.
We consider the {\it Garrett map}
\begin{equation}\label{gar} G(f,\varphi)(g_2) =  I(\varphi,f,m)(g_2) :=  \chi^{-1}(\det g_2)\int_{Z_1(\adeles)G_1(\Q)\backslash G_1(\A)} E_f(m,(g_1,g_2))\varphi(g_1)dg_1.\end{equation}
When $f$ is clear from context, we set $G(\varphi):=G(f, \varphi)$.  One of the main observations of \cite{ga, GPSR} is
that if $\varphi \in \pi$ then $I(\varphi,\varphi',f,s) \equiv 0$ unless $\varphi' \in \pi^{\vee}$, in other words that $G(\varphi) \in \Hom(\pi^{\vee},\C) \simeq \pi$:

\begin{thm}  If $\varphi \in \pi$ then $G(\varphi) \in \pi$.
\end{thm}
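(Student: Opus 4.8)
The plan is to deduce this from the basic structure of the doubling method as developed in \cite{ga,GPSR}. The key point is that the integral $I(\varphi,\varphi',f,s)$ defines, for fixed $f$ and $s$ (specialized to a point $m$ where the Eisenstein series is defined), a $G_1(\adeles)$-invariant bilinear pairing on $\pi \otimes \pi'$, where $\pi'$ ranges over cuspidal automorphic representations of $G_2(\adeles) = G_1(\adeles)$. First I would recall the unfolding computation: for $\mathrm{Re}(s)$ large, unfolding the Siegel--Eisenstein series against the cusp forms $\varphi \in \pi$, $\varphi' \in \pi'$ collapses the double integral over $[U_1 \times U_2]$ to an integral over a single copy of $U_1(\adeles)$ of a matrix coefficient $\langle \pi(u)\varphi, \varphi' \rangle$ twisted by the restriction of $f$; this is exactly the computation recalled around \eqref{normalizedzeta}. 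For this inner product $\langle \varphi, \varphi' \rangle_\pi$ to be nonzero one needs $\pi'$ to be the contragredient of $\pi$ (as a representation of $U_1(\adeles)$), i.e. $\pi' \cong \pi^\vee$; and the pairing appearing is, up to the local zeta factors, precisely the canonical pairing $\langle\cdot,\cdot\rangle_\pi : \pi \otimes \pi^\vee \to \C$. By the multiplicity one hypothesis (Hypothesis \ref{multone} / \ref{multone-pi}), $\pi^\vee$ occurs with multiplicity one in the cuspidal spectrum, so this canonical pairing is the unique (up to scalar) invariant pairing; hence if $\pi'$ is any cuspidal representation with $\langle\varphi,\varphi'\rangle \ne 0$ then $\pi' \cong \pi^\vee$.

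Next I would translate this into the statement about the Garrett map. By definition \eqref{gar}, $G(f,\varphi)$ is a function on $G_2(\adeles)$ obtained by integrating $\varphi \in \pi$ against the Eisenstein series restricted to $G_1 \times G_2$ and twisting by $\chi^{-1}\circ\det$. The meromorphic continuation of the Eisenstein series (and the absolute convergence of the inner integral wherever $E_f(m,\cdot)$ is defined, by cuspidality of $\varphi$) shows $G(f,\varphi)$ is a well-defined automorphic form on $G_2(\adeles)$. To identify which automorphic representation of $G_2$ it generates, pair it against an arbitrary cusp form $\varphi'$ on $G_2$: $\langle G(f,\varphi), \varphi' \rangle_{G_2} = I(\varphi,\varphi',f,m)$ up to the normalizing $\chi$-twist. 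By the previous paragraph this vanishes unless the cuspidal component of $\varphi'$ lies in $\pi^\vee$ (as a representation of $U_2(\adeles) = U_1(\adeles)$). Since the cuspidal spectrum of $G_2$ is (by Hypothesis \ref{multone}) a multiplicity-free direct sum of cuspidal representations, $G(f,\varphi)$ must be a cusp form whose only nonzero projection is onto the $\pi^\vee$-isotypic subspace; thus $G(f,\varphi) \in \Hom(\pi^\vee,\C) \cong \pi$, where the last identification uses that $\pi$ is essentially self-dual up to the $\chi$-twist built into \eqref{gar}, together with the conventions of Section \ref{zetaintegral} matching $\underline{\pi}^\flat$ with $\pi^\vee$ on restriction to $U_1(\adeles)$. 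It remains to check that $G(f,\varphi)$ is genuinely cuspidal and not merely square-integrable or residual: this follows from the standard argument that constant terms of $G(f,\varphi)$ along parabolics of $G_2$ are computed by the analogous integral against constant terms, which vanish by the cuspidality of $\varphi$ on $G_1$ and an analysis of the constant terms of the Siegel--Eisenstein series, as carried out in \cite{GPSR}.

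The main obstacle, or rather the point requiring the most care, is the identification of the target: showing that $G(f,\varphi)$ lands in $\pi$ itself (viewed now as a \emph{holomorphic}, or in the doubling context anti-holomorphic after accounting for conventions, representation of $G_2$) rather than merely in $\pi^\vee$ abstractly. This is where the $\chi^{-1}\circ\det$ twist in the definition and the careful bookkeeping of the conventions relating $\pi$, $\pi^\vee$, $\pi^\flat$, and $\bar\pi$ from Sections \ref{autoreps}--\ref{flat-notation} must be invoked: $\pi^\dagger \cong \pi^\vee$ by \eqref{dagger=flat} and $\pi^\flat = \pi^\dagger \otimes \|\nu\|^{a(\kap)} \cong \bar\pi$, so that, after the twist by $\chi$, the Garrett map carries an anti-holomorphic vector in $\pi$ on $G_1$ to an (anti-holomorphic) vector in $\pi$ viewed on $G_2$. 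I would present this identification explicitly at the level of the factorizations \eqref{facpi}, \eqref{facphi}, deferring the genuinely automorphic input (unfolding, constant-term vanishing, meromorphic continuation) to \cite{ga,GPSR} as the excerpt permits, and citing the multiplicity-one hypothesis exactly once to pin down uniqueness of the invariant pairing.
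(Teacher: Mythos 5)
The paper essentially states this theorem without proof, presenting it as a restatement of a basic observation from \cite{ga,GPSR}: the sentence immediately preceding the theorem — that $I(\varphi,\varphi',f,s) \equiv 0$ unless $\varphi' \in \pi^\vee$, hence $G(\varphi) \in \Hom(\pi^\vee,\C) \simeq \pi$ — \emph{is} the paper's proof, together with the Fact recorded earlier in \ref{zetaintegral} which invokes the multiplicity one hypothesis after the unfolding. Your proposal fleshes out precisely this chain of reasoning (unfolding gives a $G_1(\adeles)$-invariant pairing, uniqueness of the invariant pairing via multiplicity one, so $G(\varphi)$ detects only the $\pi^\vee$-isotypic part of the cuspidal spectrum), adding the cuspidality-of-$G(\varphi)$ step and the bookkeeping relating $\pi$, $\pi^\vee$, $\pi^\flat$, $\bar\pi$ via the $\flat$/$\dagger$ involutions and the $\chi^{-1}\circ\det$ twist. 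This is the same approach, with more detail supplied than the paper gives.

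One small caution, though not a gap that invalidates the argument: your cuspidality paragraph compresses a genuine piece of work. Showing that the constant term of $G(f,\varphi)$ along a proper parabolic $Q \subset G_2$ vanishes is not simply a matter of ``$\varphi$ is cuspidal, done'' — one has to carry out the orbit analysis for $P(\Q)\backslash G_4(\Q)/(G_1(\Q)\times Q(\Q))$ to identify the constant term of the restricted Siegel–Eisenstein series along $G_1\times Q$, and then see that each orbit either contributes a constant term of $\varphi$ along a proper parabolic of $G_1$ (which vanishes) or is negligible for other reasons. You correctly defer this to \cite{GPSR}; just be aware that the phrase ``which vanish by the cuspidality of $\varphi$'' is shorthand for that orbit analysis rather than an immediate consequence. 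Since the paper itself delegates the entire proof to \cite{ga,GPSR}, your level of detail is appropriate and the logical structure is sound.
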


The forms $\varphi$ and $G(\varphi)$ are on the same group $GU(V) = GU(-V)$ but on different Shimura varieties.
The restriction of $E_f(m,\bullet)$ is a holomorphic form on $Sh(V,-V)$, which means it pairs with an anti-holomorphic
form on $Sh(V)$ to yield a holomorphic form on $Sh(-V)$.  In terms of parameters, this becomes

\begin{cor}\label{garr}  The Garrett map defines a  homomorphism
\begin{equation*}\begin{split}
I(\chi_f,m) &\rar \Hom_{\Tb_{K,\kap}}(H^0_!(_{K}Sh(V),\omega_{\kap})^{\vee},H^0_!(_{K}Sh(-V),\omega_{\kap^{\flat})})),\\
&\rar \Hom_{\Tb_{K,\kap}}(H^d_!(Sh_{K}(V),\omega^D_{\kap}\otimes L(-\kap)),H^0_!(_{K}Sh(-V),\omega_{\kap^{\flat})})),
\end{split}
\end{equation*}
where the Hecke algebras act through the isomorphism in Section \ref{heck}.
Equivalently, 
$(F^\dag)^{-1}\circ G(\bullet,\bullet)$ defines a homomorphism 
 $$I(\chi_f,m) \rar Hom_{\Tb_{K,\kap},\flat}(H^0_!(_{K}Sh(V),\omega_{\kap})^{\vee},H^0_!(_{K^\dag}Sh(V),\omega_{\kap^\dag})).$$
\end{cor}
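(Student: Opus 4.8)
The plan is to deduce Corollary \ref{garr} from the properties of the zeta integral $I(\varphi,\varphi^{\flat},f,s)$ established in Section \ref{zetaintegral} and the Garrett-map theorem cited just above it, together with the Hecke-equivariance dictionary of Lemmas \ref{Hecke-isoms} and \ref{Hecke-char-isom} and the Serre-duality/complex-conjugation isomorphisms of Section \ref{serreduality-section}. First I would fix a weight $\kap$, a level $K = K^p_r$, and a Hecke character $\chi$ with archimedean type as in \eqref{chi-infinity}, and specialize the Eisenstein parameter to a nearly-holomorphic point $s = m$ so that $f_\infty$ satisfies the hypotheses of Definition \ref{axiomeis}; the restriction of $E_f(m,\bullet)$ to $G_3(\ad)$ is then a holomorphic modular form on $Sh(V)\times Sh(-V)$ of weight $(\kap,\st(\kap^F))$ (this is exactly the content of Proposition \ref{holodiffops} and the decomposition \eqref{decomp}, which guarantees that the holomorphic projection of $D(\kap,\chi)J_{m,\chi}$ lands in $S_{\kap,V}\otimes S_{\kap^{\flat},-V}\otimes \chi\circ\det$). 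Pairing this restriction against an anti-holomorphic cusp form on $Sh(V)$ against the first variable, as in \eqref{gar}, produces by the theorem of Garrett and Piatetski-Shapiro--Rallis a cusp form $G(f,\varphi)$ lying in the same representation $\pi$, now realized as a holomorphic form on $Sh(-V)$. This gives the underlying linear map at the level of automorphic forms; the task is to repackage it as a map of cohomology spaces and to check the Hecke-equivariance.

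The key steps, in order: (1) Translate the "same representation" statement into parameters. The input $\varphi$ lies in the dual $H^0_!(_KSh(V),\omega_\kap)^\vee$, which by Serre duality \eqref{SerreC} and the identification \eqref{integraltop1} is $H^d_!(Sh_K(V),\omega^D_\kap\otimes L(-\kap))$; the output $G(f,\varphi)$ lies in $H^0_!(_KSh(-V),\omega_{\st(\kap^F)})$ because $\st(\kap^F)$ is precisely the weight of the anti-holomorphic-to-holomorphic transfer recorded in Section \ref{CConSh} (via $F_\infty$ in \eqref{Finfty-coh}) combined with the $\flat$-twist of \ref{flat-notation}. (2) Check Hecke-equivariance away from $p$: since the Garrett map is built from an Eisenstein series invariant under $K^p$-translation and an adelic integral over $G_1$, it intertwines the $\Tb^p_{K,\kap}$-action on the two sides; this is the standard unwinding argument, and the relevant compatibility of Hecke eigensystems under duality and $\flat$ is exactly Lemma \ref{Hecke-isoms}(i)-(ii) together with \eqref{flatflat} ($\lambda_{\bar\pi} = \lambda_\pi^\flat$). (3) Obtain the second, equivalent formulation by composing with the isomorphism $c_{dR}^{-1}$ (the de Rham/Betti comparison $c_B$ of \eqref{C_B-coh}, or rather its inverse), which is $\flat$-antilinear and $G(\A_f)$-equivariant up to the Hecke involution $\flat$; this turns the $\Tb_{K,\kap}$-linear map landing in $H^0_!(_KSh(-V),\omega_{\st(\kap^F)})$ into a $\flat$-antilinear map landing in $H^0_!(_KSh(V),\omega_\kap)$, which is the final assertion. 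Throughout, I would use Lemma \ref{mult1Q} to ensure the target is literally the $\pi$-isotypic line (under the multiplicity-one hypothesis), so that "lands in $\pi$" is a genuine constraint on parameters and not merely on abstract representations.

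The main obstacle is step (2): making the Hecke-equivariance at $p$ and at the bad primes in $S$ precise. Away from $S\cup\Sigma_p$ the equivariance is formal, but the statement of the Corollary is phrased with the full $\Tb_{K,\kap}$ (including the $U$-operators at $p$), and one must check that the Garrett map intertwines $U_{w,j,\kap}$ on the source with $U_{w,j,\kap}^-$ (or its $\flat$-image) on the target. This is where the careful choices of local Siegel-Weil sections at $p$ from Section \ref{pchoices-section} and the anti-ordinary test vectors of Definition \ref{phiinvarianceab} enter: the local zeta computation of Theorem \ref{GJintegrals} shows the local integral is, up to the explicit Euler factor $L(s+\tfrac12,ord,\pi_w,\chi_w)$, compatible with the $U_p$-action, and the trace-compatibility of the anti-ordinary vectors (Lemma \ref{aord-vect-lem-w-2}) ensures the map descends correctly through the tower. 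I would handle this by invoking the explicit local calculations already in the paper rather than redoing them, reducing the Corollary to a bookkeeping check that the normalizations $U_{w,j,\kap} = |\kap'(t_{w,j})|_p^{-1}U_{w,j}$ match on both sides under the isomorphism $\Tb^{ord}_{K_r,\kap,V}\isoarrow\Tb^{aord}_{K_r,\kap^D,V}$ of Lemma \ref{Hecke-ord-iso}. At the primes in $S$, the local sections were chosen in \ref{nonarchchoices-section} to give simple volume factors, so the equivariance there is immediate.
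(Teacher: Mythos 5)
The paper offers no explicit proof of Corollary~\ref{garr}: the only argument given is the two-sentence remark preceding it, which observes that the restriction of $E_f(m,\bullet)$ to $G_3$ is holomorphic, pairs an anti-holomorphic form on $Sh(V)$ to a holomorphic form on $Sh(-V)$, and then translates this observation into the cohomological language of the Corollary. Your proposal captures this correctly in your steps (1) and (3) --- the identification via Serre duality of $H^0_!(_KSh(V),\omega_\kap)^\vee$ with $H^d_!(Sh_K(V),\omega_\kap^D\otimes L(-\kap))$ and the passage to the $\flat$-antilinear reformulation via a complex-conjugation isomorphism --- and in this respect your route is the same as the paper's.

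However, your final paragraph, which you present as ``the main obstacle,'' is a genuine misunderstanding of the logical structure. You suggest that the Hecke-equivariance at $p$ of the Garrett map must be extracted from the explicit local zeta computations of Section~\ref{pchoices-section} and Theorem~\ref{GJintegrals}, invoking the choice of Siegel--Weil section and of the anti-ordinary test vectors. This inverts the order of dependence in the paper. The equivariance asserted in Corollary~\ref{garr} is a \emph{formal} consequence of the preceding theorem (``$G(f,\varphi)\in\pi$'' whenever $\varphi\in\pi$) together with the multiplicity-one hypothesis: the Garrett map preserves $\pi$-isotypic components of $H^0_!(_KSh(V),\omega_\kap)^\vee$, and since $\bT_{K,\kap}$ acts on each such component by the character $\lambda_\pi$ (resp.\ on the target by $\lambda_{\pi^\flat}=\lambda_\pi^\flat$, cf.\ \eqref{flatflat}), equivariance is automatic for the full Hecke algebra including the $U_p$-operators, without any reference to the particular choices of sections or test vectors. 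The explicit computations of Theorem~\ref{GJintegrals} determine the scalar by which the Garrett map acts on each $\pi$-line --- i.e.\ the modified Euler factor at $p$ entering the $p$-adic $L$-function --- not whether the map intertwines the Hecke action. Indeed, the paper later \emph{cites} Corollary~\ref{garr} in the proof of Lemma~\ref{garrettequivariance} to establish Hypothesis~\ref{doub}; if the corollary itself needed those local computations you would risk circularity. Apart from this confusion, your proposal is sound, though one could be more precise about the undefined symbol $c_{dR}$: it should be the semi-linear composite of $F_\infty$ (from $Sh(-V)$ to $H^d_!(Sh(V),\omega_{\kap^D})$, Equation~\eqref{Finfty-coh}) with $c_B^{-1}$ (Equation~\eqref{C_B-coh}), rather than $c_B$ alone as your parenthesis suggests, so that the composite indeed lands back in $H^0_!(_KSh(V),\omega_\kap)$ as the Corollary requires.
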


The factor $L(-\kap)$ was reinserted in the second line in order to respect the Hecke algebra action.  The action of 
$\Tb_{K,\kap}$ on $L(-\kap)$ factors through the similitude map.

\begin{lem}\label{garrettequivariance}  Let $dEis$ be an axiomatic Eisenstein measure as in Definition \ref{axiomeis}.  Then $dEis$ satisfies
the equivariance property of Assumption \ref{doub}.
\end{lem}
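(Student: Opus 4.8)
\textbf{Proof strategy for Lemma \ref{garrettequivariance}.}

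The plan is to reduce the equivariance assertion of Hypothesis \ref{doub} to the $\Tb$-equivariance of the Garrett map recorded in Corollary \ref{garr}, together with the defining properties of an axiomatic Eisenstein measure from Definition \ref{axiomeis}. The key point is that Hypothesis \ref{doub} asserts that, for fixed level $r$ and character $\rho$ with $\kappa = \rho\cdot\alpha(\chi)$ sufficiently regular, the linear map
$$\phi_{\chi,r,\rho} \in \Hom(C_r(T,\CO)\cdot\rho^{\upsilon}, S^{\ord}_{\kappa,V}(K_r,\CO)\,[\otimes]\,S^{\ord}_{\kappa^{\flat},-V}(K_r,\CO))$$
attached by Lemma \ref{meas} to the specialization of $dEis$ at $\chi$ actually lands in the subspace of $\Tb_{r,\kappa}$-equivariant homomorphisms $\Hom_{\TT_{r,\rho\cdot\alpha(\chi)}}(\hat{S}^{\ord}_{\kappa,V}(K_r,\CO), S^{\ord}_{\kappa^{\flat},-V}(K_r,\CO)\otimes\chi\circ\det)$, once we identify $S^{\ord}_{\kappa,V}(K_r,\CO)\,[\otimes]\,S^{\ord}_{\kappa^{\flat},-V}(K_r,\CO)$ with $\Hom_{\CO}(\hat{S}^{\ord}_{\kappa,V}(K_r,\CO), S^{\ord}_{\kappa^{\flat},-V}(K_r,\CO))$ via Serre duality (using \eqref{Rdual} on the first factor).

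First I would observe that, by Lemma \ref{measprod} and Lemma \ref{meas}, it suffices to check equivariance after specializing at each classical pair $(\chi, c = \psi\rho^{\upsilon}) \in Y_H^{class}$; by the density statement of Lemma \ref{density} (or directly, since classical points of sufficiently regular weight are Zariski dense in $\Lambda$), equivariance on this set implies it in the family. For such a classical specialization, the last bullet of Definition \ref{axiomeis} gives
$$\int_{X_p\times T_H(\Zp)}(\chi,c)\,dEis = D^?(\chi)\cdot res_3\,E_{f(\chi,c)},$$
and by \eqref{holoint} this equals (up to the normalizing Euler factor, which is a $\Tb$-invariant scalar and hence irrelevant for equivariance) $res_3\,D(\kappa,m,\chi_u)E^{holo}_{\chi_u,c}(m)$, i.e. the restriction to $G_3$ of a nearly-holomorphic Siegel--Eisenstein section on $G_4$ to which the holomorphic differential operator $D(\kappa,m,\chi_u)$ of Corollary \ref{pluridecomp} has been applied. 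The pairing of this restricted Eisenstein datum against an anti-holomorphic cusp form $\varphi$ on $Sh(V)$ is, by definition of the zeta integral \ref{zetaintegral} and the unfolding, precisely the value $G(f(\chi,c),\varphi)$ of the Garrett map \eqref{gar}. Corollary \ref{garr}, together with Proposition \ref{holodiffops}, then says that this assignment $\varphi \mapsto G(f(\chi,c),\varphi)$ is a $\Tb_{K_r,\kappa}$-equivariant homomorphism from $H^d_!(_{K_r}Sh(V),\omega^D_{\kappa})$ to $H^0_!(_{K_r}Sh(-V),\omega_{\kappa^{\flat}})\otimes\chi\circ\det$; restricting to ordinary parts and using Lemma \ref{Hecke-ord-iso} and Lemma \ref{ortho} to identify $\hat{S}^{\ord}_{\kappa,V}(K_r,\CO)$ with $H^{d,\ord}_{\kappa^D}(K_r,\CO)$, this is exactly the content of Hypothesis \ref{doub} for the specialized measure. (The differential operator $D(\kappa,m,\chi_u)$ is $G_4(\af)$-equivariant in an obvious sense and its holomorphic projection commutes with the ordinary projector $e_{\kappa}$ by Proposition \ref{ordinaryplusclassical}, so applying it does not disturb the Hecke-module structure.)

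The main obstacle is bookkeeping rather than anything deep: one must carefully track through the chain of identifications in \ref{igusatowers}, \ref{restrictions}, and especially Remark \ref{Igusaembedding} (the translation by $\gamma_{V_p}$) to be certain that $res_3$ applied to the Eisenstein datum on $G_4$ really does produce, on ordinary parts, a homomorphism equivariant for the \emph{same} Hecke algebra $\Tb_{r,\kappa}$ acting on both $\hat{S}^{\ord}_{\kappa,V}$ and $S^{\ord}_{\kappa^{\flat},-V}$ — i.e. that the $\flat$-twist in the action on the $-V$ factor is correctly matched. This is guaranteed by Lemma \ref{Hecke-isoms}(ii), Lemma \ref{Hecke-char-isom}, and \eqref{flatflat}, which identify $\Tb_{K_r,\kappa,V}$ with $\Tb_{K_r^{\flat},\kappa^{\flat},-V}$ and $\lambda_{\pi,V}^{p,\flat}$ with $\lambda^p_{\pi^{\flat},-V}$; once these are invoked, the equivariance asserted by the Garrett map (Corollary \ref{garr}) translates verbatim into Hypothesis \ref{doub}. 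I would close by noting that the argument is uniform in $r$ because the $f_w(\chi_w,c)$ for $w\mid p$ depend only on $\chi_w$ and $\psi_w$ (third and fourth bullets of Definition \ref{axiomeis}), so the compatibility with the trace maps of Lemma \ref{trace2} — needed to pass from level-$r$ statements to the statement over $\TT$ — follows from Lemma \ref{aord-vect-lem-w-2} and Proposition \ref{tracefac}.
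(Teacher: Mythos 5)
Your argument is correct and follows the paper's approach exactly: the paper's own proof is the single sentence ``This is the equivariance property of the Garrett map stated in Corollary \ref{garr},'' and your proposal is a detailed unpacking of precisely that reduction (specialize at classical $(\chi,\psi\rho^{\upsilon})$, identify the specialization with the restricted nearly-holomorphic Eisenstein series via Definition \ref{axiomeis} and \eqref{holoint}, recognize the resulting pairing as the Garrett map \eqref{gar}, and invoke Corollary \ref{garr}). The extra bookkeeping about the $\flat$-twist, the $\gamma_{V_p}$-translation from Remark \ref{Igusaembedding}, and trace compatibility is a correct elaboration of what the paper leaves implicit.
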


\begin{proof}  This corresponds to the equivariance property of the Garrett map stated in Corollary  \ref{garr}.  
\end{proof}

\subsubsection{Pairings, continued}\label{pcontinued}   
Thanks to Lemma \ref{garrettequivariance}, we can proceed as in Section  \ref{classpairings}.  
Henceforward we fix an anti-ordinary representation $\pi \otimes \pi^\flat$ as at the beginning of \S \ref{lastchapter}, and we denote 
by $\pi'$  the elements of $\CS(K_{r^1},\kap^1,\pi)$, as $r^1$ and $\kap^1$ vary.  In order to guarantee that our 
global pairings are compatible with the local calculations in Section  \ref{ESeriesZIntegrals-section}, especially the local calculations at primes dividing $p$, we choose test vectors $\varphi \in \hat{I}_\pi$ and $\varphi^{\flat} \in \hat{I}_{\pi^{\flat}}$ that are anti-holomorphic, anti-ordinary, and integral over $\CO$, as described following Lemma \ref{ord-unitary-lem}.   Proposition \ref{badp} allows us to identify the space $\hat{I}_\pi\otimes\hat{I}_{\pi^{\flat}}$ with the corresponding anti-holomorphic, anti-ordinary subspace of 
$\pi' \otimes \pi^{\prime,\flat}$ for any $\pi'$ as above.  We do so without further comment.  The image of $\varphi \otimes \varphi^\flat$ under this identification
is denoted $\varphi' \otimes \varphi^{\prime,\flat}$ when we need to indicate that the homomorphism $L_{\varphi' \otimes \varphi^{\prime,\flat}}$ is realized by the character
$\lambda_{\pi'} \in X(\kappa^1,r^1,R)$ attached to $\pi'$ (see Proposition \ref{abstract2var}).  Equivalently, we may identify $\varphi$ with the element $1\otimes \varphi'$ of the free $\TT_{K^p_{r^1},\kap^1,\CO,\pi}$-module $\TT_{K^p_{r^1},\kap^1,\CO,\pi}\otimes \hat{I}_{\pi}$, and $\varphi'$ with its specialization at the character $\lambda_{\pi'}$.

Substituting $\psi\rho^{\upsilon}$ for $\Xi$ in the above discussion, with $\psi \in C_r(T_H(\Zp),R)$ for some $R \subset \CO$
and $\rho$ as above, we find that, for any $\pi' \in \CS(K_{r^1},\kap^1,\pi)$, as $r^1$ and $\kap^1$ vary, we have
\begin{equation}\label{preunwind}
L_{\varphi'\otimes \varphi^{\prime,\flat}}\left(\int_{X_p \times T_H(\Zp)} (\chi,\psi\rho^{\upsilon}) dEis\right) = 
D(\chi)\cdot L_{\varphi\otimes \varphi^{\flat}}(res_3 D(\kappa,m,\chi_0)E^{holo}_{\chi_0,\psi\rho^{\upsilon}}(m)).
\end{equation}

\begin{prop}\label{doublingpairing}  Assume $\pi$ satisfies Hypotheses \ref{gor} and \ref{multone}.
Let $\varphi$ and $\varphi^{\flat}$ be respectively elements of 
$\hat{I}_{\pi}$ and $\hat{I}_{\pi^{\flat}}$.  Let $\pi' \in \CS(K_{r^1},\kap^1,\pi)$, for some $r^1$ and $\kap^1$, and let $\varphi' \otimes \varphi^{\prime,\flat}$
be the corresponding element of $\pi' \otimes \pi^{\prime,\flat}$.
Suppose $(\chi,\psi\rho^{\upsilon}) \in Y_H^{class}$, with $\psi \in C_r(T_H(\Zp),R)$ with $\chi = ||\bullet||^m\chi_u$, $m \geq n$.    
Then we have the equality
\begin{equation*}
\begin{aligned}
&L_{\varphi'\otimes \varphi^{\prime,\flat}}\left(\int_{X_p \times T_H(\Zp)} (\chi,\psi\rho^{\upsilon}) dEis\right)  \\
&= D(\chi)\cdot \frac{1}{\Vol(I_{r,V}^0)\Vol(I_{r,-V}^0)}I(\varphi',\varphi^{\prime,\flat},D(\kappa,m,\chi_0)f^{holo}(\chi_u,\psi\rho^{\upsilon}),m)
\end{aligned} 
\end{equation*} 
\end{prop}
\begin{proof}  Abbreviate $[G_3] = G_3(\Q)Z(\R)\backslash G_3(\A)$, 
$dg_2^{\chi} = \chi(\det(g_2))^{-1}dg_2$.
By doubling the formula in Lemma \ref{functionalintegral} -- in other words, by applying it to the group $G_3$ -- we obtain
\begin{equation*}
\begin{split}
&L_{\varphi'\otimes \varphi^{\prime,\flat}}\left(res_3 D(\kappa,m,\chi_0)E^{holo}_{f(\chi,\psi\rho^{\upsilon})}(m)\right) \\
=  &
\frac{1}{\Vol(I_{r,V}^0)\Vol(I_{r,-V}^0)}\int_{[G_3]} D(\kappa,m,\chi_0)E^{holo}_{f(\chi_0,\psi\rho^{\upsilon})}\left((g_1,g_2), m \right) \varphi'(g_1)\varphi^{\prime,\flat}(g_2) ||\nu(g_1)^{a(\kappa)}|| dg_1dg_2^{\chi}.
\end{split}
\end{equation*} 
Comparing this with Equation \eqref{preunwind} and the definition of the zeta integral, we obtain the equality.
\end{proof}

In view of our choices of local vectors in  \eqref{aordholomorphic},
Corollary \ref{Eulerpairing} below is then a consequence of the local computations summarized in Proposition \ref{globaleuler}, and of the axiomatic properties of the Eisenstein measure summarized in Definition \ref{axiomeis} and Corollary \ref{eismeasureG3}.

\begin{cor}\label{Eulerpairing}  Under the hypotheses of Proposition \ref{doublingpairing}, suppose $\varphi \otimes \varphi^{\flat}$ is an element of the space defined in \eqref{aordholomorphic}, and in particular $\varphi$ and $\varphi^\flat$ admit the corresponding factorizations at places dividing $p$ and $\infty$.  Let the parameters $\kappa$, $\rho$, $\rho^{\upsilon}$ determine one another as in Inequalities \eqref{parameters} and Equations \eqref{iota}.  
Then we have the equality
\begin{equation*}
\begin{split}
&L_{\varphi'\otimes \varphi^{\prime,\flat}}\left(\int_{X_p \times T_H(\Zp)} (\chi,\psi\rho^{\upsilon}) dEis\right) 
= D(\chi) \prod_v I_v(\varphi_v,\varphi_v^{\flat},f_v,m)  \\
&= [\Vol(I_{r,V}^0)\Vol(I_{r,-V}^0)]^{-1}\langle \varphi', \varphi^{\prime,\flat}_\chi \rangle_\chi\cdot I_p(\chi,\kappa)I_{\infty}(\chi,\rho^{\upsilon})I_SL^S(m + \frac{1}{2},\pi',\chi_u)
\end{split}
\end{equation*} 
where the factors are defined as in Proposition \ref{globaleuler}.
\end{cor}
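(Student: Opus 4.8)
The plan is to assemble Corollary \ref{Eulerpairing} by chaining together three results already established in the excerpt: Proposition \ref{doublingpairing}, the Euler product factorization of Proposition \ref{globaleuler}, and the normalization built into the axiomatic Eisenstein measure (Definition \ref{axiomeis}, Corollary \ref{eismeasureG3}). The first equality to prove is
\[
L_{\varphi\otimes \varphi^{\flat}}\left(\int_{X_p \times T_H(\Zp)} (\chi,\psi\rho^{\upsilon}) dEis\right)
= D(\chi) \prod_v I_v(\varphi_v,\varphi_v^{\flat},f_v,m).
\]
By Proposition \ref{doublingpairing}, the left-hand side equals $D(\chi)\cdot I(\varphi,\varphi^{\flat},D(\kappa,m,\chi_0)f^{holo}(\chi_u,\psi\rho^{\upsilon}),m)$. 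So I would first check that applying the holomorphic-projection differential operator $D(\kappa,m,\chi_0)$ to the archimedean component of $f^{holo}$ produces precisely the archimedean Siegel--Weil section $f_{\sigma}$ of the form \eqref{ddd} used in Section \ref{archimedeanchoices}; this is exactly the content of Proposition \ref{holodiffops} together with the last clause of Definition \ref{axiomeis}, so the global section $D(\kappa,m,\chi_0)f^{holo}(\chi_u,\psi\rho^{\upsilon})$ is the factorizable Siegel--Weil section $f = \otimes'_v f_v$ with the local components chosen in Sections \ref{unrameuler}, \ref{nonarchchoices-section}, \ref{pchoices-section}, \ref{archimedeanchoices}. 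Then the unfolding computation recalled in Section \ref{zetaintegral} (Equation \eqref{normalizedzeta}) gives $I(\varphi,\varphi^{\flat},f,m) = \prod_v I_v(\varphi_v,\varphi_v^{\flat},f_v,m)\cdot \langle \varphi,\varphi^{\flat}\rangle$, where the local integrals are normalized as in \eqref{normalizedzeta}. Since the test vectors $\varphi,\varphi^{\flat}$ are precisely the factorizable vectors of \eqref{facphi}, \eqref{facphip}, \eqref{facphiinf}, the factorization applies verbatim.

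For the second equality, I would invoke Proposition \ref{globaleuler} directly: it already states
\[
D(\chi)\cdot I(\varphi,\varphi^{\flat},f,m) = \langle \varphi, \varphi^{\flat}\rangle \cdot I_p(\chi,\kappa) I_{\infty}(\chi,\rho^{\upsilon}) I_S L^S(m+\tfrac12,\pi,\chi_u),
\]
with $I_p = \prod_{w\mid p} L(m,ord,\pi_w,\chi_w)$ computed in Theorem \ref{GJintegrals}, $I_{\infty}$ the product of archimedean factors of Proposition \ref{archfactors}, and $I_S$ the volume/Euler-factor product at the bad primes. The only bookkeeping to verify is that the normalizing factor $D(\chi)$ appearing in the Eisenstein measure axiom (and hence in Proposition \ref{doublingpairing}) is the same $D(\chi) = \prod_{v} D_v(\chi)$ of \eqref{normalizingDS} that organizes the unramified Euler factors via $d_{n,v}(s,\chi_v) I_v = L_v(s+\tfrac12,\pi_v,\chi_v)$ in Section \ref{unrameuler}; the split of $D(\chi)$ into $D^S(\chi)$ (absorbed into $L^S$) and $\prod_{v\in S} D_v(\chi)$ (absorbed into $I_S$) is exactly the arrangement in Proposition \ref{globaleuler}. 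So I would simply cite Proposition \ref{globaleuler} and note that the hypotheses of Proposition \ref{doublingpairing} (namely \ref{controlkap}, \ref{gor}, \ref{multone}, \ref{badp}) are in force, plus $m\geq n$ ensures $(\chi,\psi\rho^{\upsilon}) \in Y_H^{class}$ so that the measure specializes correctly.

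The main obstacle, and the step I would write out most carefully, is checking the compatibility of normalizations at the primes dividing $p$: one must confirm that the anti-ordinary test vectors $\phi^{\aord}_{w,d,\pi_w}$, $\phi^{\aord}_{w,d,\pi^{\flat}_w}$ appearing in \eqref{aordholomorphic} are literally the vectors $\varphi_w$, $\varphi'_w$ of Definition \ref{phiinvarianceab} that enter the local integral $I_w(\varphi,\varphi',\chi,\mu)$ computed in Theorem \ref{GJintegrals} via Corollary \ref{Ifactorsatp}, and that the translated Siegel section \eqref{siegeltranslation} used in the local computation matches the image under $res_3$ (which includes the translation by $\gamma_{V_p}$ of Remark \ref{Igusaembedding}) of the global section $f^{holo}$. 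This is precisely the identification flagged in Remark \ref{phiaord} and Lemma \ref{aord-vect-lem-w-2}, and it is where the ``reverse-engineering'' of the Eisenstein measure pays off; once it is in place, the corollary follows formally. A secondary point is that the pairing $L_{\varphi\otimes\varphi^{\flat}}$ of Section \ref{classpairings}, realized as a Serre-duality pairing on $G_3$-forms, unwinds to the $L^2$-pairing $\langle \varphi,\varphi^{\flat}\rangle$ used in \eqref{normalizedzeta} --- this is Lemma \ref{functionalintegral} applied to $G_3$, which I would cite rather than reprove. Finally, independence of the choice of level $r = d$ follows from the trace-compatibility \eqref{tracecompatible} established via Proposition \ref{tracefac}, so the statement is well-posed.
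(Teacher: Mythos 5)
Your proposal is correct and follows essentially the same route as the paper: the paper's own justification for Corollary \ref{Eulerpairing} is the single sentence preceding it, which invokes the local computations of Proposition \ref{globaleuler} together with the axiomatic properties of the Eisenstein measure (Definition \ref{axiomeis} and Corollary \ref{eismeasureG3}), exactly the ingredients you chain together (with Proposition \ref{doublingpairing} as the intermediate step already proved in the text). Your more detailed unpacking — verifying the match between the archimedean local section of \eqref{ddd} and $D(\kappa,m,\chi_0)$ applied to $f^{holo}$, unfolding via \eqref{normalizedzeta}, and checking the identification of the anti-ordinary test vectors with those of Definition \ref{phiinvarianceab} via Lemma \ref{aord-vect-lem-w-2} and the $\gamma_{V_p}$-translation of Remark \ref{Igusaembedding} — is exactly the verification the paper leaves implicit in the phrase ``in view of our choices of local vectors in \eqref{aordholomorphic}.''

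One small observation worth flagging: as you note, \eqref{normalizedzeta} gives $I(\varphi,\varphi^{\flat},f,m) = \prod_v I_v(\varphi_v,\varphi_v^{\flat},f_v,m)\cdot \langle\varphi,\varphi^{\flat}\rangle$, so combining with Proposition \ref{doublingpairing} the intermediate expression should carry the global inner product $\langle\varphi,\varphi^{\flat}\rangle$ as a factor. The first displayed equality in the stated Corollary omits it (though the second line reinstates it); this appears to be a typographical slip in the statement, and your derivation correctly tracks the factor. You could also mention, for the $p$-adic step, that the reduction of $D(\kappa,m,\chi_u)$ to $D^{hol}(\kappa,m,\chi_u)$ implicit in the pairing is licensed by Proposition \ref{ordinaryplusclassical} once the test vector is anti-ordinary, rather than citing Proposition \ref{holodiffops} alone.
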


\subsection{Statement of the main theorem}\label{eulerfac}

We reinterpret the identity in Corollary \ref{Eulerpairing} in the language of Proposition \ref{abstract2var}.

\begin{cor}\label{2varEuler}  Under the hypotheses of Corollary \ref{Eulerpairing}, there is a unique element 
$L(Eis,\varphi\otimes \varphi^{\flat}) \in \Lambda_{X_p,R}\hat{\otimes}\TT$
such that, for any classical
$\chi:  X_p \rar R^{\times}$ and any $\pi' \in \CS(K_{r^1},\kap^1,\pi)$ for some $r^1$,
the image of $L(Eis,\varphi\otimes \varphi^{\flat})$ under the map
$\Lambda_{X_p,R}\hat{\otimes}\TT \rar R$ induced by the character $\chi\otimes \lambda_{\pi'}$ 
equals 
$$ [\Vol(I_{r^1,V}^0)\Vol(I_{r^1,-V}^0)]^{-1}\langle \varphi, \varphi^{\flat}_\chi \rangle_\chi\cdot I_p(\chi,\kappa^1)I_{\infty}(\chi,\rho^{\upsilon})I_SL^S(m + \frac{1}{2},\pi',\chi_u).$$
Here $\lambda_{\pi'}$ is the character
of $\TT$ defined in Section  \ref{lambdapi}, and the local factors are defined as in Proposition \ref{globaleuler}.
\end{cor}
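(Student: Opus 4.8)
The plan is to deduce Corollary \ref{2varEuler} by combining the abstract two-variable interpolation machinery of Proposition \ref{abstract2var} with the explicit Euler-product identity of Corollary \ref{Eulerpairing}. First I would invoke Proposition \ref{abstract2var} (which applies under Hypotheses \ref{gor}, \ref{multone}, and \ref{badp}, all assumed here via the hypotheses of Corollary \ref{Eulerpairing}, together with Hypothesis \ref{controlkap} giving the freeness of $\TT_{\pi}$ over $\Lambda_{\pi}$): applied to the admissible measure $dEis$ on $X_p\times T_H(\Zp)$, the measure $\phi$ obtained by pairing $dEis$ against $\varphi\otimes\varphi^{\flat}$ (as in \eqref{basicpairing}), and a fixed sufficiently regular weight $\kappa$, it produces a unique element $L(Eis,\varphi\otimes\varphi^{\flat}) \in \Lambda_{X_p,R}\hat{\otimes}\TT$ with the property that its image under $\chi\otimes\lambda$ for any classical $\chi$ and any $\lambda \in X(\kap,r,R)$ equals $L(\chi,\phi,r,\kap,\varphi\otimes\varphi^{\flat})$.

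The second step is to verify that $dEis$ does in fact satisfy the hypotheses needed to run Proposition \ref{abstract2var} — most importantly Hypothesis \ref{doub}, the Garrett-map equivariance. This is exactly the content of Lemma \ref{garrettequivariance}: an axiomatic Eisenstein measure (Definition \ref{axiomeis}), and in particular the concrete measure $dEis^{a,b}$ of Corollary \ref{eismeasureG3} with shift $(1,\chi)$, has image lying in the $\TT_{r,\rho\cdot\alpha(\chi)}$-equivariant $\Hom$-space because the Garrett map is Hecke-equivariant (Corollary \ref{garr}). Together with Hypothesis \ref{trace} (trace-compatibility of the measure across levels, which holds for the Eisenstein measure by construction and for the test vectors by Lemma \ref{aord-vect-lem-w-2} and Proposition \ref{tracefac}), this legitimizes the passage to the big Hecke algebra $\TT$ via Proposition \ref{Heckemeasure} and Theorem \ref{bigHecke}. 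So far this gives the existence and uniqueness of $L(Eis,\varphi\otimes\varphi^{\flat})$ purely formally.

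The third and substantive step is to identify the specialization at $\chi\otimes\lambda_{\pi}$ with the asserted product of $L$-values and local factors. By Proposition \ref{specialize}, the specialization at $\chi\otimes\lambda_{\pi}$ is $L(\chi,\phi,d,\kap,\varphi\otimes\varphi^{\flat})$, which by the definition \eqref{basicpairing} and the pairing conventions of Section \ref{classpairings} equals $L_{\varphi\otimes\varphi^{\flat}}\left(\int_{X_p\times T_H(\Zp)}(\chi,\psi\rho^{\upsilon})\,dEis\right)$ for the relevant $\psi$, with the parameters $\kappa,\rho,\rho^{\upsilon}$ linked as in \eqref{parameters}, \eqref{iota}. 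Now apply Corollary \ref{Eulerpairing}: this pairing is exactly
$$\langle\varphi,\varphi^{\flat}\rangle\cdot I_p(\chi,\kappa)I_{\infty}(\chi,\rho^{\upsilon})I_S L^S\!\left(m+\tfrac12,\pi,\chi_u\right),$$
which is the claimed formula. The matching of normalizing factors $D(\chi) = D^S(\chi)\prod_{v\in S}D_v(\chi)$ against the $I_S$ factor, and the cancellation of the $\chi$ and $\chi^{-1}$ twists in the doubling pairing, are the bookkeeping points to check carefully but present no real difficulty given Proposition \ref{globaleuler}.

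I expect the main obstacle to be precisely the compatibility of normalizations across the three layers: the abstract element $L(\phi)$ lives in $\TT\otimes End_R(I_{\pi^{\flat}})$ and depends (a priori) on the choices of isomorphisms in Hypotheses \ref{badp} and \ref{badphat} and on the Gorenstein isomorphism $G_r$ of \eqref{GG}; Proposition \ref{Heckemeasure} asserts independence provided these are coordinated via Lemma \ref{candual}, and one must confirm that the pairing $[\bullet,\bullet]_{loc}$ in \eqref{basicpairing} with $\varphi\otimes\varphi^{\flat}$ is insensitive to the remaining ambiguity, so that $L(Eis,\varphi\otimes\varphi^{\flat})$ genuinely lies in $\Lambda_{X_p,R}\hat{\otimes}\TT$ and not merely in its fraction field. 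The density statement of Lemma \ref{density} is what guarantees uniqueness: two elements of $\Lambda_{X_p,R}\hat{\otimes}\TT$ with the same images under all $\chi\otimes\lambda$ for $\lambda\in X(\kap,d,\OCp)$ (fixed sufficiently regular $\kap$) coincide. The rest — regularity of $\kappa$ so that the control theorem \ref{controlkap}(ii) applies and $\lambda_{\pi}$ is genuinely classical, and the requirement $m\geq n$ so that $(\chi,\psi\rho^{\upsilon})\in Y_H^{class}$ — is already built into the hypotheses inherited from Corollary \ref{Eulerpairing}.
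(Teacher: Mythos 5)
Your proposal is correct and follows exactly the path the paper intends: the paper's own "proof" is a single sentence saying one should "reinterpret the identity in Corollary \ref{Eulerpairing} in the language of Proposition \ref{abstract2var}," and your three steps (invoke Proposition \ref{abstract2var} for existence and uniqueness, verify Hypothesis \ref{doub} via Lemma \ref{garrettequivariance} plus the trace-compatibility of the anti-ordinary test vectors, then identify the classical specialization via Proposition \ref{specialize} and Corollary \ref{Eulerpairing}) are precisely the unwinding of that sentence. The potential pitfalls you flag at the end --- compatibility of the isomorphisms in Hypotheses \ref{badp} and \ref{badphat} with the Gorenstein isomorphism $G_r$ via Lemma \ref{candual}, and the density Lemma \ref{density} underpinning uniqueness --- are the right things to be careful about, and the paper handles them in Proposition \ref{Heckemeasure}; nothing is missing.
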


In the language of Corollary \ref{2varEuler} this admits the following reformulation.
The statement is in terms of the highest
weight $\kappa$ of the (holomorphic) representation dual to $\pi$ and a Hecke character $\chi$.
Let the algebraic characters $\kappa$, $\rho$, $\rho^{\upsilon}$ determine one another, relative to a given $\chi$, as in Inequalities \eqref{parameters} and Equation \eqref{iota}.

\begin{mainthm}\label{axiomaticmainthm2}   Let $\pi$ be a cuspidal anti-holomorphic automorphic representation of $G_1$ which is ordinary of
type $(\kap,K)$, and let $\TT = \TT_{\pi}$ be the corresponding connected component of the
ordinary Hecke algebra.  Let $\varphi$ and $\varphi^{\flat}$ be respectively elements of $R$-bases of $\hat{I}_{\pi}$ and $\hat{I}_{\pi^{\flat}}$.
  Assume $\pi$ satisfies the following Hypotheses:
\begin{enumerate}
\item  Hypothesis \ref{gor} (the Gorenstein Hypothesis)
\item Hypothesis \ref{multone} (the Global Multiplicity One Hypothesis)
\item Proposition-Hypothesis  \ref{badp} (the Minimality Hypothesis)
\end{enumerate}
There is a unique element 
$$L(Eis,\varphi\otimes \varphi^{\flat}) \in \Lambda_{X_p,R}\hat{\otimes}\TT$$
with the following property.  
For any classical
$\chi = ||\bullet||^m\chi_u:  X_p \rar R^{\times}$, and for any $\pi' \in \CS(K_{r^1},\kap^1,\pi)$ for some $r^1$,
the image of $L(Eis,\varphi\otimes \varphi^{\flat})$ under the map
$\Lambda_{X_p,R}\hat{\otimes}\TT \rar R$ induced by the character $\chi\otimes \lambda_{\pi'}$ 
equals 
$$c(\pi',\chi)\cdot \Omega_{\pi',\chi}(\varphi,\varphi^\flat) I_{\infty}(\chi,\kap^1)I_SL_p(m,\ord,\pi',\chi_u)\frac{L^S(m + \frac{1}{2},\pi',\chi_u)}{P_{\pi',\chi}}.$$
Here $\lambda_{\pi'}$ is the character of $\TT$ defined in Section  \ref{lambdapi} and
$$
\Omega_{\pi',\chi}(\varphi,\varphi^\flat) = \frac{ \langle \varphi, \varphi^{\flat}_\chi \rangle_\chi}{ \Vol(I_{r^1,V}^0\cap I_{r^1,-V}^0)\cdot 
{Q}[\pi',\chi]} \ \ \text{and}  \ \ P_{\pi',\chi} = Q_{\pi',\chi}^{-1}.
$$
Finally, the factor $\Omega_{\pi',\chi}(\varphi,\varphi^\flat)$ is independent of $r^1$ and  $p$-integral, and is a $p$-unit for appropriate choice of $\varphi$ and $\varphi^\flat$.
\end{mainthm}
\begin{proof} This follows from Corollary \ref{2varEuler}, after 
we write ${Q}[\pi',\chi] = c(\pi',\chi){Q}_{\pi',\chi}$, as in Lemma \ref{period2chi}.
We have used the expression for the local zeta integral 
$I_p = \prod_w I_w$ given by the formula in Remark \ref{Ip-formula}. In particular, 
the $\Vol(I_{r^1,V}^0)$ and $\Vol(I_{r^1,V}^0)$ terms cancel, leaving the factor
$\Vol(I_{r^1,V}^0\cap I_{r^1,-V}^0)^{-1}$. 
The final claim
follows from Lemma \ref{Omega-lem}.
\end{proof}

\subsection{Comments on the main theorem}

Even in the setting of ordinary families of $p$-adic modular forms on unitary Shimura varieties, this should not be considered the definitive
construction of $p$-adic $L$-functions.  We list some aspects that call for refinement.

\begin{rmk}[The Gorenstein Hypothesis]  It is often possible to verify the Gorenstein hypothesis when the residual Galois representation attached
to $\pi$ has sufficiently general image, using the Taylor-Wiles method.  See \cite{pilloni} and \cite{Harris-TW} for examples.  On the other hand, it
is certainly not valid in complete generality.  Since the Gorenstein condition is an open one, one can obtain a more general statement by replacing 
$\Lambda_{X_p,R}\hat{\otimes}\TT$ by the fraction fields of its irreducible components.  The method of this paper then provides $p$-adic meromorphic
functions on each such components, which specialize at classical points as indicated in the Main Theorem.
\end{rmk}

\begin{rmk}[The Multiplicity One Hypothesis]\label{globalmult1}  For an automorphic representation of a unitary group such as $G_1$ whose base change
to $GL(n)$ is cuspidal, the global multiplicity one hypothesis \ref{multone} is a consequence of \cite{mok,gangof4}.  The version in Hypothesis \ref{multone-pi} is restrictive, however, as already noted in \S \ref{heck}.  Here we sketch an argument for removing this hypothesis.  

\begin{itemize}
\item[(1)]  The first and most difficult step is to find the appropriate notation for the collection of $\pi'$ such that $\lambda_{\pi'} = \lambda_{\pi}$ -- in other words, the global $L$-packet containing  $\pi$.  We let $<\pi>$ denote the set of such $\pi'$.   
\item[(2)]  Note in particular that $<\pi> \subset \CS(K^p,\pi)$ (see \eqref{SKrkap} and the discussion above
Hypothesis \ref{multone}).  Thus the isomorphisms in Lemma \ref{ordveclem} need to be modified.  We have an isomorphism
$$
j_{<\pi>}:  \oplus_{\pi_i \in <\pi>} \pi_p^\ord\otimes \pi_{i,S}^{K_S} \cong \oplus_{\pi_i \in <\pi>} \pi_{i,S}^{K_S}\isoarrow S_\kap^\ord(K_r;E)[\lambda_\pi]\otimes_E\C.
$$
and an isomorphism
$$
\oplus_{\pi_i \in <\pi>}S_\kap^\ord(K_r;R)[\pi_i] := S_\kap^\ord(K_r;R)_\pi \cap S_\kap^\ord(K_r;E)[\lambda_\pi]
$$
which is identified by $j_{<\pi>}$ with an $R$-lattice in $\oplus_{\pi_i \in <\pi>} \pi_p^\ord\otimes\pi_{i,S}^{K_S}\cong \pi_S^{K_S}$.
\item[(3)] Lemma \ref{aordveclem} needs to be modified analogously, but the definition given there of $H^{d,\ord}_{\kap^D}(K_r,R)[\pi]$ defines a lattice in the sum of 
the $\pi_{i,S}^{\flat,K_S}$.  This is not adequate if we want to account for congruences between the different local constituents of the $\pi_i$ at ramified places where the
$L$-packets are not singletons.   On the other hand, if we only care about measuring the congruences between $\pi$ and the $\pi' \in \CS(K^p,\pi)$ that define different global Galois representations, then we can leave  the definition of $H^{d,\ord}_{\kap^D}(K_r,R)[\pi]$ as is, and modify the  isomorphisms in Lemma \ref{aordveclem} as in (2) above.
\item[(4)]  We also need to modify the statement of Lemma \ref{mult1Q}:  the isomorphism in (i) is replaced by
\begin{equation*}\label{facto22} 
j_{<\pi>}:  \oplus_{\pi_i \in <\pi>}  \pi^{K_S}_{i,S} \otimes \pi_p^{I_r}\isoarrow S_{\kap}(K_r,\C)(\pi)
\end{equation*} 
and the isomorphism in (ii) is replaced analogously.
\item[(5)]  The spaces $\hat{I}_\pi$ and $\hat{I}_{\pi^\flat}$ would have to be replaced by direct sums over the $\pi_i$.
\item[(6)]  Finally,  the identification in Lemma \ref{conjugatelambda} only depends on Hypothesis \ref{multone} and not on the stronger Hypothesis
\ref{multone-pi}.
\end{itemize}
\end{rmk}

Point (5) is the most objectionable, because it is not really compatible with the Minimality Hypothesis \ref{badp}.  An alternative approach would be to choose
an idempotent $e_{\pi,S}$ in the Hecke algebra of $G$ relative to $K_S$ that isolates the representations $\pi_v$ at all inert $v \in S$.  Thus for $\pi_i \in <\pi>, \pi_i \neq \pi$, 
$e_{\pi,S}\star \pi_i = 0$.   If we then redefine $S_\kap^\ord(K_r;E)[\lambda_\pi]$, $H^{d,\ord}_{\kap^D}(K_r,R)[\pi]$, etc. to be the image of projection with respect 
to $e_{\pi,S}$, all of the main theorems remain true without modification.  Better still, we can choose $\bar{e}_{\pi,S}$ to be an idempotent modulo $p$ and lift it to an idempotent in characteristic $0$, in order to avoid introducing  extraneous divisibilities by $p$ in the final result and eliminating interesting congruences between members of the $L$-packet.

\begin{rmk}[The Minimality Hypothesis]  This is a consequence of one part of the Gorenstein Hypothesis, and was included in order to work with a module $[\hat{I}_{\pi}\otimes \hat{I}_{\pi^{\flat}}]$ that is locally constant
on the Hida family.  One can easily eliminate this hypothesis, but the statement is no longer so clean.
\end{rmk}

\begin{rmk}[Unspecified local factors]  The volume factor $I_S$ is a placekeeper.  It might be more illuminating to replace $I_S$ by
$$\tilde{I}_S = \prod_{v \in S} L_v(m + \frac{1}{2},\pi_v,\chi_{u,v})^{-1}I_S$$
and write the specialized value of the $L$-function
$$c(\pi)\cdot \Omega_\pi(\varphi,\varphi^\flat)\cdot I_{\infty}(\chi,\kap)\tilde{I}_SL_p(m,\ord,\pi,\chi_u)\frac{L(m + \frac{1}{2},\pi,\chi_u)}{P_{\pi}}.$$
Here $L(s,\pi,\chi_u)$ denotes the standard $L$-function without the archimedean factors.  Written this way, one sees that the inverted local Euler factors
$L_v(m + \frac{1}{2},\pi_v,\chi_{u,v})^{-1}$ can give rise to exceptional zeroes.  

Ideally one would like to choose an optimal vector in $[\hat{I}_{\pi}\otimes \hat{I}_{\pi^{\flat}}]$ and to adapt the local Eisenstein sections at primes in $S$ to this choice.
This would settle the issues of minimality and local factors simultaneously.  At present we do not see how to carry this out.
\end{rmk}

\begin{rmk}[The congruence factors]  It is expected Ñ at least under the Gorenstein hypothesis Ñ that a congruence factor $c(\pi)$ can be chosen to be 
the specialization at $\pi$ of a canonical $p$-adic analytic function $\mathbf{c}$ that interpolates the normalized and $p$-stabilized value at $s = 1$ of the adjoint $L$-function
$L(s,\pi,Ad)$.   The factor $c(\pi)$ that appears in Main Theorem \ref{axiomaticmainthm2} 
depends on the choice of period $Q_{\pi}$, which in turn depends on the choice of $f$ in Lemma \ref{period2}.  As $\pi$ varies, the vector $f$ can be
chosen uniformly in the Hida family, but there is no obvious preferred choice.  For this reason, one can only define the hypothetical analytic function
$\mathbf{c}$ up to a unit in the Hecke algebra.  This is a persistent problem in the theory, and it has been noted by Hida in \cite{Hida-genuine}.
\end{rmk}

\section*{Acknowledgements}
This project has been developing over many years and the authors have benefited from the advice of numerous colleagues and from the hospitality of the institutions -- including UCLA, the Institute for Advanced Study, and Boston University -- that have provided the space to pursue our collaboration.   

The authors wish to reiterate our thanks to Ching-Li Chai, Matthew Emerton, and Eric Urban for their suggestions that have strongly influenced the present paper, and to add our thanks to Barry Mazur for asking questions that have motivated a number of our choices.   We also thank David Hansen, Paul Garrett, Zheng Liu, and Xin Wan for helpful discussions.  We are deeply grateful to Haruzo Hida for answering our questions and for his consistent support.

Finally, we thank the four referees for their patient and careful reading of the manuscript, which helped us enormously to improve the exposition and to eliminate debris that remained from the many previous versions of the paper in spite of our persistent efforts to remove it.

The authors are grateful for support from several funding sources.  E.E.'s research was partially supported by National Science Foundation Grants DMS-1751281, DMS-1559609, and DMS-1249384.  During an early part of the project, her research was was partially supported by an AMS-Simons Travel Grant.  M.H.'s research received funding from the European Research Council under the European Community's Seventh Framework Programme (FP7/2007-2013) / ERC Grant agreement no. 290766 (AAMOT).  M.H. was partially supported by NSF Grant DMS-1404769.  J.L.'s research was partially supported by RGC-GRF grant 16303314 of HKSAR.  C.S.'s research was partially supported by National Science Foundation Grants DMS-0758379 and DMS-1301842.

\bibliographystyle{amsalpha}  
\bibliography{EHLSbibliography}
\end{document}